\newcommand{\LL}{{\cal L}}
\newcommand{\CPM}{{\bf CPM}}
\newcommand{\comp}{\text{\footnotesize$\circ$}}
\newcommand{\ot}{\otimes}
\newcommand{\cA}{{\cal A}}
\newcommand{\cB}{{\cal B}}
\newcommand{\cC}{{\cal C}}
\newcommand{\cD}{{\cal D}}
\newcommand{\cF}{{\cal F}}
\newcommand{\cG}{{\cal G}}
\newcommand{\cI}{{\cal I}}
\newcommand{\cJ}{{\cal J}}
\newcommand{\cK}{{\cal K}}
\newcommand{\cP}{{\cal P}}
\newcommand{\cQ}{{\cal Q}}
\newcommand{\cV}{{\cal V}}
\newcommand{\cW}{{\cal W}}
\newcommand{\tr}{\mathop{\rm tr}\nolimits}
\newcommand{\Tr}{\mathop{\rm Tr}\nolimits}  % Trace
\newcommand{\Trc}{{\mathbb T}}              % Trace class
\newcommand{\cpmTr}{\widetilde{\Tr}{}}      % Trace for CPM (Sec 4.2.5)
\newcommand{\cpmTrc}{\widetilde{\Trc}}      % Trace class for CPM (Sec 4.2.5)
\newcommand{\apair}[1]{\langle#1\rangle}
\newcommand{\stt}[1]{\stackrel{#1}{\longrightarrow}}
\newcommand{\imp}{\Rightarrow}          % implication (math mode)
\newcommand{\funnel}{                           % directed equality
    \stackrel{}{\textrm{\settowidth{\unitlength}{$\imp$}%
    \begin{picture}(1,.48)
        \put(.925,-.16){\oval(1.7,.57)[tl]}
        \put(.925,.64){\oval(1.7,.57)[bl]}
    \end{picture}}}}
\newcommand{\funnels}{                          % bidirected equality
    \stackrel{}{\textrm{\settowidth{\unitlength}{$\imp$}%
    \begin{picture}(1,.48)
        \put(.5,-.16){\oval(.85,.57)[t]}
        \put(.5,.64){\oval(.85,.57)[b]}
    \end{picture}}}}
\newcommand{\iprod}[1]{\langle#1\rangle}
\begin{document}

\title{Categorical models of
computation: partially traced categories and presheaf models of
quantum computation}
\author{Octavio Malherbe, B.Sc., M.Sc.}
\copyrightfalse
\copyrightyear{2010}
\figurespagefalse
\tablespagefalse
\beforepreface
\prefacesection{Abstract}

This dissertation has two main parts. The first part deals with
questions relating to Haghverdi and Scott's notion of partially traced
categories. The main result is a representation theorem for such
categories: we prove that every partially traced category can be
faithfully embedded in a totally traced category. Also conversely,
every monoidal subcategory of a totally traced category is partially
traced, so this characterizes the partially traced categories
completely. The main technique we use is based on Freyd's
paracategories, along with a partial version of Joyal, Street, and
Verity's Int construction. Along the way, we discuss some new examples
of partially traced categories, mostly arising in the context of
quantum computation.

The second part deals with the construction of categorical models of
higher-order quantum computation. We construct a concrete semantic
model of Selinger and Valiron's quantum lambda calculus, which has
been an open problem until now. We do this by considering presheaf
categories over appropriate base categories arising from first-order
quantum computation. The main technical ingredients are Day's
convolution theory and Kelly and Freyd's notion of continuity of
functors. We first give an abstract description of the properties
required of the base categories for the model construction to work;
then exhibit a specific example of base categories satisfying these
properties.

    \prefacesection{Acknowledgements}

I want first to express my very deep gratitude to Phil Scott and Peter Selinger.
This thesis would never have come into existence without their advice and encouragement.
I have also benefited from stimulating discussions with Sergey Slavnov, Beno\^it Valiron and Mark Weber.
I am grateful to my examiners, Richard Blute, Robin Cockett, Pieter Hofstra and Benjamin Steinberg, for their helpful
comments and useful suggestions on this work.
I am in addition particularly indebted to the University of Ottawa and Dalhousie University.
Finally, I would like to thank In\'es and Reina for supporting me throughout my time as a student.

    \prefacesection{}
    \begin{center}
    \textit{To Magdalena}
   \end{center}

\afterpreface

\chapter{Introduction}

Quantum computers are computing devices which are based on the laws of quantum physics. While no actual general-purpose quantum computer has yet been built, research in the last two decades indicates that quantum computers would be vastly more powerful than classical computers. For instance, Shor proved in 1994 that the integer factoring problem can be solved in polynomial time on a quantum computer, while no efficient classical algorithm is known.

The goal of this research is to extend existing connections between logic and computation, and to apply them to the field of quantum computation. Logic has been applied to the study of classical computation in many ways. For instance, the lambda calculus, a prototypical programming language invented by Church and Curry in the 1930's, can be simultaneously regarded as a programming language and as a formalism for writing mathematical proofs. This observation has become the basis for the development of several modern programming languages, including ML, Haskell, and Lisp.

Recent research by Selinger, Valiron, and others has shown that the logical system which corresponds most closely to quantum computation is the so-called ``linear logic" of Girard. Linear logic, a resource sensitive logic, formalizes one of the central principles of quantum physics, the so-called ``no-cloning property", which asserts that a given quantum state cannot be replicated. This property is reflected on the logical side by the requirement that a given logical assumption (or ``resource") can only be used once. However, the correspondence between linear logic and quantum computation has only been established at the syntactic level; it is an important open question how to construct semantic models of higher-order quantum computation.

In a series of fundamental works, Girard has examined dynamical models of proofs in linear logic and their evaluation under normalization, using C*-algebras and functional analysis. This program, which he calls ``The Geometry of Interaction", has recently received increased
attention as having deep connections with quantum computation and quantum protocols. See especially the work of Abramsky and Coecke \cite{Ab coecke A} and of
 Haghverdi and Scott \cite{HS05a}, \cite{HS09}, who have given categorical descriptions of it. Using the work of Joyal, Street and Verity they organize these ideas systematically into a theoretical framework based on the abstract notion of a traced monoidal category.
Scott and Haghverdi showed how these techniques could be re-introduced and extended to handle a typed categorical version of Girard's ``Geometry of Interaction" through the notion of a partially traced category.

One of the objectives of this thesis is to systematically
explore this new notion of partially traced category by providing a representation
theorem which establishes a precise correspondence between partially traced categories and
totally traced categories.  Also, we want to use this framework to elucidate how to build new partially traced categories in connection with some standard models of quantum computation.

A second objective of this thesis is to construct mathematical semantical models of higher-order quantum computation. While the algorithmic aspects of quantum computation have been analyzed extensively, the consideration of quantum computation as a programming paradigm in need of a programming language has been explored far less.

One of the most fruitful methods used to
explore the general idea of computational effect in computer science has been the use
of computational monads in the sense of Moggi. We study models that exhibit this feature based on linear logic, taking insights and inspiration from Day's characterization of
convolution in presheaf categories.  In addition we use Freyd-Kelly's notion of continuous functors, as
well as Selinger's models
for first-order quantum computation.

The basic idea is to start from existing low level models of quantum computation, such as the category of superoperators, and to use a Yoneda type
construction to adapt and extend these models to a higher order quantum situation. The tool used to lift this category is Day's theory for obtaining monoidal structure in presheaf categories. Also, this work partly builds on previous research by Benton et al. on categorical models of linear logic.
More precisely, we give a method for constructing models that depends on a family of possible choices.

Specifically, the model construction depends on a sequence of
 categories and functors $\cB \rightarrow \cC\rightarrow \cD$, and on a family $\Gamma$ of cones
 in $\cD$. We use this data to obtain a pair of adjunctions
 \[\xymatrix{
[\cB^{op},{\bf Set}]\ar@<1ex>[rr]^{L}&&[\cC^{op},{\bf Set}]\ar@<1ex>[rr]^{F}\ar@<1ex>[ll]^{\Phi^{*}}_{\bot}&&[\cD^{op},{\bf Set}]_{\Gamma}
\ar@<1ex>[ll]^{G}_{\bot}}\]
and give sufficient conditions on $\cB \rightarrow \cC\rightarrow \cD$ and $\Gamma$ so that the
 resulting structure is a model of the quantum lambda calculus.

This provides a general framework in which one can describe various
 classes of models that depend on the concrete choice of the parameters $\cB$, $\cC$, $\cD$, and $\Gamma$.

\chapter{Some mathematical background}

The aim of this chapter is to review some basic categorical
 background material that is needed to understand this thesis. For a more detailed discussion, see \cite{MacLane 91}, \cite{Borceux94}, and
 \cite{LambekScott86}. The reader who is already familiar with category theory can
 skip this chapter initially, and refer back to it when needed.

\section{Monads and adjunctions}

In what follows, $Id_{\cC}$ is the identity functor on a category $\cC$ and $1_G$ is the identity natural transformation on a functor $G$. Given a category $\cC$, the symbol $\cC(A,B)$ denotes the set of morphisms from $A$ to $B$.
\begin{definition}[Adjunction]
\rm
Let $\cA$ and $\cB$ be categories. An \textit{adjunction} from $\cA$ to $\cB$ is a quadruple $(F,G,\eta,\varepsilon)$ where $F:\cA\rightarrow\cB$ and $G:\cB\rightarrow\cA$ are functors and $\eta:Id_{\cA}\Rightarrow GF$ and $\varepsilon:FG\Rightarrow Id_{\cB}$ are natural transformations such that: $(G\varepsilon)\circ (\eta G)=1_G$ and $(\varepsilon F)\circ (F\eta)=1_F$. The functor $F$ is said to be a \textit{left adjoint} for $G$ or $G$ a \textit{right adjoint} for $F$ and we use the following notation: $F\dashv G$ or $(F,G,\eta,\varepsilon):\mathcal{A}\rightharpoonup \mathcal{B}$ or even more graphically
$$\xymatrix{
\mathcal{A}\ar@<1ex>[r]^{F}& \mathcal{B.} \ar@<1ex>[l]^{G}_{\bot}}$$.
\end{definition}
\begin{definition}[Monads]
\rm
A \textit{monad} or a \textit{triple} on a category $\cC$ is a 3-tuple $(T, \eta, \mu)$ where $T: \cC \rightarrow \cC$ is an endofunctor and $\eta: Id_\cC \Rightarrow T$ (unit law), $\mu: T^2 \Rightarrow T$ (multiplication law) are two natural transformations,  satisfying the following conditions:
$$\xymatrix{T \circ Id_{\cC} \ar@{=>}[r]^{T\eta} \ar@{=}[dr] & T^2 \ar@{=>}[d]^{\mu} & Id_{\cC}\circ T \ar@{=>}[l]_{\eta T} \ar@{=}[dl] & & T^3 \ar@{=>}[r]^{T \mu} \ar@{=>}[d]_{\mu T} & T^2 \ar@{=>}[d]^{\mu} \\  & T & & & T^2 \ar@{=>}[r]^{\mu} & T }$$

\end{definition}
\begin{theorem}[Huber]\label{HUBER}
If $F\dashv G$ with unit $\eta:Id_{\cA}\Rightarrow GF$ and co-unit $\varepsilon:FG\Rightarrow Id_{\cB}$, then $(GF,\eta,G\varepsilon F)$is a monad on $\cA$.
\end{theorem}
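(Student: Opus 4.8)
The plan is to verify directly the three defining conditions of a monad for the triple $(T,\eta,\mu)$ with $T = GF$ and $\mu = G\varepsilon F$. First I would check that the data has the right shape: $T = GF:\cA\to\cA$ is an endofunctor, $\eta:Id_{\cA}\Rightarrow GF = T$ is already the adjunction unit, and whiskering $\varepsilon:FG\Rightarrow Id_{\cB}$ on the left by $G$ and on the right by $F$ yields $\mu = G\varepsilon F: GFGF \Rightarrow GF$, that is $\mu:T^2\Rightarrow T$, as required. It then remains to establish the two unit laws $\mu\circ T\eta = 1_T = \mu\circ\eta T$ and the associativity law $\mu\circ T\mu = \mu\circ\mu T$.

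For the unit laws I would read them off from the triangle identities. The left unit law unfolds to $(G\varepsilon F)\circ(GF\eta)$; since both factors lie in the image of $G$, functoriality of $G$ lets me pull $G$ out, giving $G\big((\varepsilon F)\circ(F\eta)\big) = G(1_F) = 1_{GF}$ by the second triangle identity $(\varepsilon F)\circ(F\eta) = 1_F$. The right unit law unfolds to $(G\varepsilon F)\circ(\eta GF)$, which is exactly the whiskering by $F$ of the first triangle identity: $\big((G\varepsilon)\circ(\eta G)\big)F = (1_G)F = 1_{GF}$.

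The associativity law is the step that needs the most care, and is where I expect the real work to be. The key input is the naturality of the counit: evaluating $\varepsilon$ against its own components gives the identity $\varepsilon\circ(\varepsilon FG) = \varepsilon\circ(FG\varepsilon)$ of natural transformations $FGFG\Rightarrow Id_{\cB}$ (this is just the naturality square of $\varepsilon$ at the morphisms $\varepsilon_Z$, equivalently the interchange law). I would then whisker this entire equation by $G$ on the left and by $F$ on the right. The bookkeeping amounts to checking, via the interchange law for whiskering, that $G(\varepsilon FG)F = \mu T$ and $G(FG\varepsilon)F = T\mu$, after which the whiskered identity reads $\mu\circ\mu T = \mu\circ T\mu$. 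The only genuine obstacle is keeping the two whiskerings straight; once the outer and inner factors are correctly identified with $\mu$, $\mu T$, and $T\mu$, associativity is immediate.
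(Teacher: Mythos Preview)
Your proposal is correct and is the standard direct verification of the monad axioms from the triangle identities and naturality of $\varepsilon$. The paper itself gives no argument at all, merely citing Lambek and Scott~\cite{LambekScott86}, so your write-up actually supplies a proof where the paper defers to the literature.
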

\begin{proof}
See Lambek and Scott \cite{LambekScott86}.
\end{proof}

Suppose we have two adjunctions: $(F,G,\eta,\varepsilon):\mathcal{A}\rightharpoonup \mathcal{B}$ and $(F',G',\eta',\varepsilon'):\mathcal{B}\rightharpoonup \mathcal{C}$
\[\xymatrix{
\cA\ar@<1ex>[rr]^{F}&&\cB\ar@<1ex>[rr]^{F'}\ar@<1ex>[ll]^{G}_{\bot}&&\cC\ar@<1ex>[ll]^{G'}_{\bot}}\]
We can consider the composite:
$(F'F,GG',G\eta'F
\circ \eta,\varepsilon'\circ F'\varepsilon
G'):\mathcal{A}\rightharpoonup \mathcal{C}$ yielding an adjunction from $\cA$ to $\cC$. Therefore, by Theorem~\ref{HUBER}
$(T,\tilde{\eta},\tilde{\mu})$ with $T=GG'F'F$,
$\tilde{\eta}=G\eta'F\circ\eta $ and
$\tilde{\mu}=GG'(\varepsilon'\circ F'\varepsilon G')F'F$ is a monad defined by this new adjunction.

Next we recall the comparison theorem for the Kleisli category.
\begin{definition}\label{Kleisli category}
Given a monad $(T,\eta, \mu)$ on a category $\cC$, the {\em Kleisli category} $\cC_{T}$ is determined by the following conditions:
\begin{itemize}
\item[-] $Obj(\cC_{T})=Obj(\cC)$
\item[-] $\cC_{T}(A,B)=\cC(A,TB)$
\item[-] $f^{K}\circ_{K}g^{K}=\mu_{C}\circ T(g)\circ f$  when  $A\stackrel {f^{K}}\rightarrow B$ and $B\stackrel {g^{K}}\rightarrow C$ are arrows in $\cC_{T}$. The identity is given by $1^{K}_{C}=\eta_{C}:C\rightarrow TC$.
\end{itemize}
\end{definition}
There is an adjunction between the category $\cC$ and the Kleisli
category $\cC_T$, given by the following:
\begin{itemize}
\item $F_{T}(A)=A$ and $F_{T}(f)=\eta_{B}\circ f$  if $A\stackrel {f}\rightarrow B$ is an arrow in $\cC$.
\item $G_{T}(B)=T(B)$ and $G_{T}(f^{K})=\mu_{B}\circ T(f)$  if $A\stackrel {f^{K}}\rightarrow B$ is an arrow in $\cC_{T}$.
\end{itemize}

The adjunction
$F_T\dashv G_T$ has the following universal property: given any other
adjunction $F\dashv G$ such that $G\circ F=T$, there exists a unique
functor
$C:\cC_{T}\rightarrow \cD$, called {\em the comparison functor}, with the following properties $C\circ F_{T}=F$ and $G\circ C=G_{T}$.
\begin{itemize}
\item $C(A)=F(A)$ on objects and $F_{T}(f)=\eta_{B}\circ f$  if $A\stackrel {f}\rightarrow B$ is an arrow in $\cC$.
\item $C(f)=\varepsilon_{FB}\circ F(f)$ when $A\stackrel {f^{K}}\rightarrow B$ is an arrow in $\cC_{T}$.
\end{itemize}

$$\xymatrix@=25pt{
 \cC\ar@<1ex>[dd]^{F_{T}}\ar@<1ex>[rr]^{F}&&\cD\ar@<1ex>[ll]^{G}_{\bot} \\
 &&\\
 \cC_{T}\ar@<1ex>[uu]^{G_{T}}_{\vdash}\ar[rruu]_{C}&&
}$$

First we evaluate the identity: $C(1^{K}_{A})=C(\eta_A)=\varepsilon_{FA}\circ F(\eta_A)=1_{FA}$ by definition of the adjoint pair.

Now, suppose we have $A\stackrel {f^{K}}\rightarrow B $ and $B\stackrel {g^{K}}\rightarrow C$ a pair of arrows in $\cC_{T}$ i.e. a pair $A\stackrel {f}\rightarrow GFB $ and $B\stackrel {g}\rightarrow GFC$ in $\cC$. We want to prove that $C(g\circ_{K}f)=C(g)\circ C(f)$. We have that:

$$\xymatrix@=25pt{
 FA\ar[rrdd]_{F(g\circ_{K}f)=F(\mu_{C}T(g)f)}\ar[rr]^{Ff}&&FGFB\ar[d]_{FTg=FGFg}\ar[rr]^{\varepsilon_{FB}} && FB\ar[d]^{Fg}\\
                              &&FGFGFC\ar[d]^{F(\mu_{C})=FG\varepsilon_{FC}}\ar[rr]^{\varepsilon_{FGFC}}&& FGFC\ar[d]^{\varepsilon_{FC}}\\
                              &&FGFC\ar[rr]_{\varepsilon_{FC}}&&FC
}$$
Where the top square commutes by naturality of $\varepsilon$ with $F(g)$ and the bottom square by naturality of $\varepsilon$ with $\varepsilon_{FC}$.
The top leg of the diagram is $C(g)\circ C(f)$ since $C(f)=\varepsilon_{FB}\circ Ff$ and $C(g)=\varepsilon_{FC}\circ Fg$. The bottom leg is $C(g\circ_{K}f)=\varepsilon_{FC}\circ F(g\circ_{K}f)$.

Notice that the comparison functor is fully faithful. The definition of the functor between hom-sets is given by
$$C:\cC_{T}(A,B)\rightarrow \cD(C(A),C(B))$$
$$(A\stackrel {f}\rightarrow TB)\longmapsto (FA\stackrel {F(f)}\rightarrow  FGFB\stackrel {\varepsilon_{FB}}\rightarrow FB)$$
Therefore define a function $C^{-1}$ by
$$C^{-1}:\cD(FA,FB)\rightarrow \cC_{T}(A,B)$$
$$(FA\stackrel {g}\rightarrow FB)\longmapsto (A\stackrel {\eta_{A}}\rightarrow  GFA\stackrel {G(g)}\rightarrow GFB)$$
i.e., $C^{-1}(g)=G(g)\circ\eta_{A}$.

\section{Monoidal categories}
\begin{definition}
\rm
A \textit{monoidal} category, also often called \textit{tensor} category, is a category $\cV$ with a  unit object $I\in\cV$ together with a bifunctor $\otimes:\cV\times\cV\rightarrow \cV$ and natural isomorphisms $\rho:A\otimes I\stackrel{\cong}\rightarrow A$, $\lambda:I\otimes A\stackrel{\cong}\rightarrow A$, $\alpha:A\otimes (B\otimes C)\stackrel{\cong}\rightarrow (A\otimes B)\otimes C$, satisfying the following coherence axioms:

\[
    \xymatrix@=14pt{
            A\otimes (I\otimes B) \ar[dr]_{1\otimes\lambda}  \ar[rr]^{\alpha}& & (A\otimes I) \otimes B \ar[ld]^{\rho\otimes 1}  \\
                              &  A\otimes B &      }
\]
and
\[
    \xymatrix@=24pt{
 A\otimes (B\otimes (C\otimes D))\ar[d]^{\alpha}\ar[r]^{\alpha}&(A\otimes B)\otimes (C\otimes D) \ar[r]^{\alpha} &  ((A\otimes B)\otimes C)\otimes D \ar[d]^{\alpha}\\
 (A\otimes ((B\otimes C)\otimes D)\ar[rr]_{\alpha}&      &    (A\otimes (B\otimes C))\otimes D.}
 \]

\end{definition}

\begin{definition}
\rm
A \textit{symmetric} monoidal category consists of a monoidal category $(\cV,\otimes,I,\alpha,\rho,\lambda)$ with a chosen natural isomorphism $\sigma:A\otimes B\stackrel{\cong}\rightarrow B\otimes A$, called {\em symmetry}, which satisfies the following coherence axioms:
$$\begin{array}{cc}
\xymatrix@=14pt{
A\otimes B \ar[r]^{\sigma}  \ar[rd]_{id}& B \otimes A \ar[d]^{\sigma}  \\
                              &  A\otimes B
} \hspace{3cm}&
\xymatrix@=14pt{
A\otimes I \ar[r]^{\sigma}  \ar[rd]_{\rho}& I\otimes A \ar[d]^{\lambda}  \\
                              &  A
}\end{array}$$
and
\[
    \xymatrix{
            A\otimes (B\otimes C)\ar[d]^{1\otimes\sigma} \ar[r]^{\alpha}  & (A\otimes B)\otimes C\ar[r]^{\sigma}  & C\otimes (A\otimes B) \ar[d]^{\alpha}  \\
    A\otimes (C\otimes B)\ar[r]^{\alpha} &( A\otimes C)\otimes B \ar[r]^{\sigma\otimes 1} & (C\otimes A)\otimes B.      }
\]
\end{definition}

\begin{definition}
\rm
A \textit{symmetric monoidal closed category} category is a symmetric monoidal category $\cV$ for which each functor $-\otimes B:\cV\rightarrow\cV$ has a right adjoint $[B,-]:\cV\rightarrow\cV$, i.e. :  $$\cV(A\otimes B,C)\cong\cV(A,[B,C]).$$
\end{definition}

\begin{definition}\label{MONOIDAL FUNCTOR}
\rm
A \textit{monoidal functor} $(F,m_{A,B},m_I)$ between monoidal categories $(\cV,\otimes,I,\alpha,\rho,\lambda)$ and $(\cW,\otimes',I',\alpha',\rho',\lambda')$ is a functor $F:\cV\rightarrow\cW$ equipped with:
\begin{itemize}
\item[-]
morphisms $m_{A,B}:F(A)\otimes'F(B)\rightarrow F(A\otimes B)$ natural in $A$ and $B$,
\item[-] a morphism $m_I:I'\rightarrow F(I)$,
\end{itemize}
which satisfy the following coherence axioms:
\[
    \xymatrix{
            FA\otimes' (FB\otimes' FC)\ar[d]^{\alpha'} \ar[r]^{1\otimes' m}  & FA\otimes' F(B\otimes C)\ar[r]^{m}  & F(A\otimes (B\otimes C)) \ar[d]^{F\alpha}  \\
    (FA\otimes' FB)\otimes' FC\ar[r]^{m\otimes' 1} &F( A\otimes B)\otimes' FC\ar[r]^{m} & F((A\otimes B)\otimes C)  }
\]

$$\begin{array}{cc}
\xymatrix@=20pt{
FA\otimes' I' \ar[r]^{\rho'}  \ar[d]_{1\otimes'm}& FA   \\
     FA\otimes' FI  \ar[r]_{m}                       &  F(A\otimes I) \ar[u]^{F\rho}
} \hspace{3cm}&
\xymatrix@=20pt{
I'\otimes' FA \ar[d]^{m\otimes'1} \ar[r]^{\lambda'}  &  FA  \\
 FI\otimes'FA   \ar[r]_{m}        &  F(I\otimes A )\ar[u]^{F(\lambda)}
.}\end{array}$$
\end{definition}

A monoidal functor is \textit{strong} when $m_I$ and for every $A$ and $B$ $m_{A,B}$ are isomorphisms. It is said to be \textit{strict} when all the $m_{A,B}$ and $m_I$ are identities.

\begin{remark}
\rm
Throughout the remainder of this exposition whenever we write $(F,m)$ we symbolize a monoidal functor where $m$ not only represents the natural transformation $m_{A,B}:FA\otimes FB\rightarrow F(A\otimes B)$ but also $m_{I}:I\rightarrow FI$ relating the units of the two monoidal categories.
\end{remark}

\begin{definition}
\rm If $\cV$ and $\cW$ are symmetric monoidal categories with natural symmetry maps $\sigma$ and $\sigma'$, a \textit{symmetric monoidal functor} is a monoidal functor $(F,m_{A,B},m_I)$ satisfying the following axiom:
$$\xymatrix@=20pt{
FA\otimes' FB \ar[r]^{\sigma'}  \ar[d]_{m}& FB\otimes'FA \ar[d]^{m}  \\
     F(A\otimes B)  \ar[r]_{F(\sigma)}                       &  F(B\otimes A)
}$$
\end{definition}

\begin{definition}
\label{MONOIDAL NATURAL TRANFORMATION}
\rm
A \textit{monoidal natural transformation} $\theta:(F,m)\rightarrow (G,n)$ between monoidal functors is a natural transformation
$\theta_A:FA\rightarrow GA$ such that the following axioms hold:
$$\begin{array}{cc}
\xymatrix@=20pt{
FA\otimes' FB \ar[rr]^{m}\ar[d]_{\theta_{A}\otimes'\theta_{B}}
  && F(A\otimes B)\ar[d]^{\theta_{A\otimes B}}\\
GA\otimes' GB \ar[rr]_{n}
  && G(A\otimes B)
}\hspace{3cm}&
\xymatrix@=25pt{
I' \ar[r]^{m_I}\ar[rd]_{n_I}
  & FI\ar[d]^{\theta_I}\\
  & GI.
}\end{array}$$
\end{definition}
\section{Monoidal adjunctions and monoidal monads}
\begin{definition}
\rm
A \textit{monoidal adjunction}
\[\xymatrix{
(\cV,\otimes ,I)\ar@<1ex>[r]^{(F,m)}& (\cW,\otimes',I') \ar@<1ex>[l]^{(G,n)}_{\bot}}\]
between two monoidal categories $\cV$ and $\cW$
consists of an adjunction $(F,G,\eta,\varepsilon)$ in which $(F,m)$ and $(G,n)$ are monoidal functors
and the unit $\eta:Id\Rightarrow G\circ F$ and the counit $\varepsilon:F\circ G\Rightarrow Id$ are monoidal natural transformations, as defined in Definition~\ref{MONOIDAL NATURAL TRANFORMATION}.
\end{definition}

\begin{definition}
\rm
Let $(\cV,\otimes,I)$ be a monoidal category. A \textit{monoidal monad} $(T,\eta,\mu,m)$ on $\cV$ is a monad $(T,\eta,\mu)$ such that the endofunctor $T:\cV\rightarrow\cV$ is a monoidal functor $(T,m)$ with $m_{A,B}:TA\otimes TB \rightarrow T(A\otimes B)$ and $m:I \rightarrow TI$ as coherence maps, and the natural transformations $\eta: Id \Rightarrow T$ and $\mu:T^2\Rightarrow T$ are monoidal natural transformations.
\end{definition}

\begin{lemma}\label{2.3.2a}
  Let $T$ be a monoidal monad. Consider the Kleisli adjunction
  $\xymatrix{\mathcal{C}\ar@<1ex>[r]^{F_{T}}&
    \mathcal{C}_{T}\ar@<1ex>[l]^{G_{T}}_{\bot}}$ as in
  Definition~\ref{Kleisli category}. Then $\cC_{T}$ is a monoidal
  category and $F_{T}\dashv G_{T}$ is a monoidal adjunction, where
  \begin{itemize}
  \item[-] $m^{T}_{A,B}:F_{T}(A)\otimes F_{T}(B)\rightarrow F_{T}(A\otimes B)$ is
given by $\eta:A\otimes B\rightarrow T(A\otimes B)$,
  \item[-] $m^{T}_I=\eta_I:I\rightarrow T(I)$,
  \item[-] $n^{T}_{A,B}:G_{T}(A)\otimes G_{T}(B)\rightarrow G_{T}(A\otimes B)$ is
given by $m_{A,B}:T(A)\otimes T(B)\rightarrow T(A\otimes B)$, and
  \item[-] $n^{T}_I=\eta_I:I\rightarrow T(I)$.
  \end{itemize}
\end{lemma}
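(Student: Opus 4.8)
The plan is to transport the monoidal structure of $\cC$ along the free functor $F_{T}$. On objects I take the tensor and unit of $\cC_{T}$ to be those of $\cC$; on a pair of Kleisli arrows $f^{K}\colon A\to B$ and $g^{K}\colon C\to D$ (i.e.\ $f\colon A\to TB$, $g\colon C\to TD$ in $\cC$) I define their tensor to be the Kleisli arrow $A\otimes C\to B\otimes D$ represented by $m_{B,D}\circ(f\otimes g)$; and I take the coherence isomorphisms of $\cC_{T}$ to be the $F_{T}$-images $\alpha^{T}=F_{T}(\alpha)$, $\lambda^{T}=F_{T}(\lambda)$, $\rho^{T}=F_{T}(\rho)$. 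I would then verify in turn: (i) this tensor is a bifunctor; (ii) $F_{T}$ is strict monoidal for $m^{T}$; (iii) the $\alpha^{T},\lambda^{T},\rho^{T}$ are natural and satisfy the pentagon and triangle; (iv) $(G_{T},n^{T})$ is lax monoidal; and (v) $\eta$ and $\varepsilon$ are monoidal natural transformations. Items (i)--(iii) give that $\cC_{T}$ is monoidal, and (ii),(iv),(v) give the monoidal adjunction.

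The central verification is bifunctoriality, and I expect it to be the main obstacle. Preservation of identities is quick: $1^{K}_{A}\otimes 1^{K}_{C}$ is represented by $m_{A,C}\circ(\eta_{A}\otimes\eta_{C})$, which equals $\eta_{A\otimes C}=1^{K}_{A\otimes C}$ precisely because $\eta$ is a monoidal natural transformation. Preservation of Kleisli composition is the heart of the matter: expanding both Kleisli composites and the tensors, I would use naturality of $m$ to bring the two inner legs together, recognise $T(m)\circ m$ as the monoidal comparison $m^{(2)}$ of the functor $T^{2}$, and then apply the identity $\mu\circ m^{(2)}=m\circ(\mu\otimes\mu)$ expressing that $\mu$ is monoidal. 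It is exactly here that the monoidal-monad hypothesis on $\mu$ is indispensable; without it the tensor would fail to respect composition.

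For (ii), a one-line computation using $\eta$ monoidal gives $F_{T}(u)\otimes F_{T}(v)=F_{T}(u\otimes v)$, and since $F_{T}$ is the identity on objects with $m^{T}_{A,B}=\eta_{A\otimes B}=1^{K}_{A\otimes B}$ the comparison maps are identities, so $F_{T}$ is strict monoidal. The pentagon and triangle for $\alpha^{T},\lambda^{T},\rho^{T}$ then follow by applying the functor $F_{T}$ to the coherence diagrams of $\cC$. Naturality of these isomorphisms with respect to arbitrary Kleisli arrows is not automatic, since $F_{T}$ is neither full nor faithful in general; I would establish it by direct computation. For $\alpha^{T}$, both sides reduce --- using the unit laws $\mu\circ T\eta=1$ and $\mu\circ\eta T=1$ --- to a composite in $\cC$, and the two are matched by invoking the associativity-coherence axiom of the monoidal functor $T$ (relating $m$ to $\alpha$) together with naturality of $\alpha$ and $\eta$; the cases of $\lambda^{T},\rho^{T}$ are analogous, using the unit-coherence axioms of $T$.

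It remains to treat the adjunction. Using $\mu\circ T\eta=1$ one has $G_{T}F_{T}=T$ on the nose and $G_{T}(\alpha^{T})=T(\alpha)$, so the lax-monoidal coherence of $(G_{T},n^{T})$ with $n^{T}=m$ and $n^{T}_{I}=\eta_{I}=m_{I}$ is nothing but the monoidal-functor coherence of $T$ itself. The composite monoidal structure on $G_{T}F_{T}=T$ is again $m$, so the statement that the unit $\eta$ is a monoidal natural transformation is part of the monoidal-monad hypothesis. Finally, for the counit $\varepsilon_{A}=1^{K}_{TA}$, both monoidal-naturality squares collapse under $\mu\circ\eta T=1$: the first to the Kleisli identity $\varepsilon_{A}\otimes\varepsilon_{B}=(m_{A,B})^{K}$, and the unit square to $1^{K}_{I}$, both of which I would confirm by substitution into the composition formula. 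Assembling (i)--(v) yields that $\cC_{T}$ is monoidal and that $F_{T}\dashv G_{T}$ is a monoidal adjunction with the stated data.
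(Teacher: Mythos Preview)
Your proposal is correct and follows the standard direct-verification route. However, the paper does not actually give a proof of this lemma: it is stated without proof in Section~2.3, treated as a well-known fact about monoidal monads, and later invoked in Section~7.9. So there is no ``paper's own proof'' to compare against beyond the bare statement.

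Your outline is the canonical argument. The identification of the key step is exactly right: bifunctoriality of the Kleisli tensor hinges on the identity $m\circ(\mu\otimes\mu)=\mu\circ T(m)\circ m$, which is precisely the monoidality of $\mu$. Your observation that naturality of $\alpha^{T},\lambda^{T},\rho^{T}$ with respect to \emph{arbitrary} Kleisli arrows is not a formal consequence of $F_{T}$ being strict monoidal (since $F_{T}$ is not full) is also correct, and your plan to reduce it to the monoidal-functor coherence axioms of $T$ is the right one. The treatment of $(G_{T},n^{T})$ and the monoidality of $\eta,\varepsilon$ is likewise straightforward as you describe. Nothing is missing.
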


\begin{definition}
\rm
A \textit{strong monad} $(T,\eta,\mu,t)$ is a monad $(T,\eta,\mu)$ and a natural transformation $t_{A,B}:A\otimes TB\rightarrow T(A\otimes B)$ called a {\em strength} satisfying the following axioms:
$$\begin{array}{cc}
    \xymatrix{
            I\otimes TA \ar[dr]_{\lambda}  \ar[r]^{t_{I,A}} & T(I\otimes A) \ar[d]^{T(\lambda )}  \\
                              &  T(A)      }\hspace{3cm}&
    \xymatrix{
            A\otimes B \ar[dr]_{\eta_{A\otimes B}}  \ar[r]^{1\otimes \eta_B} & A\otimes TB \ar[d]^{t_{A,B}}  \\
                              &  T(A\otimes B)      }\end{array}$$

\[
    \xymatrix{
 (A\otimes B)\otimes TC\ar[rr]^{t_{A\otimes B,C}} \ar[d]_{\alpha_{A,B,TC}}& & T((A\otimes B)\otimes
 C)\ar[d]^{T(\alpha_{A,B,C})}\\
 A\otimes (B\otimes TC)\ar[r]_{1\otimes t_{B,C}}&   A\otimes T(B\otimes C)   \ar[r]_{t_{A,B\otimes C}}     &    T(A\otimes (B\otimes C))}
 \]

\[ \xymatrix{
A\otimes T^2B \ar[d]^{1\otimes \mu_B} \ar[r]^{t_{A,TB}} & T(A\otimes
TB)\ar[r]^{T(t_{A,B})}& T^2(A\otimes B)\ar[d]^{\mu_{A\otimes B}}\\
A\otimes TB\ar[rr]^{t_{A,B}} & & T(A\otimes B).}\]
\end{definition}

\begin{remark}
\rm
Let $(T,\eta,\mu,m)$ be a symmetric monoidal monad. A strong monad can be defined in which the strength $t_{A,B}$ is given by the following formula:
$$A\otimes TB\stackrel{\eta\otimes 1}\longrightarrow TA\otimes TB\stackrel{m_{A,B}}\longrightarrow T(A\otimes B)$$
see Theorem 2.1 in~\cite{KOCK72}.
\end{remark}

We conclude this section with a
theorem by Kelly.

\begin{proposition}[Kelly]
\label{KELLY STRONG-MONOIDAL ADJ}
Let $(F,m):\cC \rightarrow\cC'$ be a monoidal functor. Then $F$ has a right adjoint $G$ for which the adjunction $(F,m)\dashv (G,n)$ is monoidal if and only if $F$ has a right adjoint $F\dashv G$ and $F$ is strong monoidal.
\end{proposition}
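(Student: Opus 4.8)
The plan is to prove Proposition~\ref{KELLY STRONG-MONOIDAL ADJ} by establishing the two directions separately. The backward direction ($\Leftarrow$) is the substantive one: assuming $F \dashv G$ with $(F,m)$ strong monoidal, I must build the coherence maps $n$ making $(G,n)$ monoidal and verify that $\eta$ and $\varepsilon$ become monoidal natural transformations. The forward direction ($\Rightarrow$) should be almost immediate: if the adjunction is monoidal then by definition $(F,m)$ is monoidal, so it remains only to see that the $m_{A,B}$ and $m_I$ are isomorphisms, i.e. that $F$ is strong. I expect this to follow by a standard mates/adjunction argument, so I would dispatch it first and spend the bulk of the work on the converse.

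For the backward direction, the key idea is to \emph{define} the comparison maps for $G$ as the mates of those for $F$ under the adjunction. Concretely, since $m_{A,B}:FA\otimes' FB \to F(A\otimes B)$ is an isomorphism, I would define
\[
n_{A,B}:GA\otimes GB \longrightarrow G(A\otimes B)
\]
as the adjoint transpose of the composite
\[
F(GA\otimes GB)\stackrel{m^{-1}}{\longrightarrow} FGA\otimes' FGB \stackrel{\varepsilon_{A}\otimes'\varepsilon_{B}}{\longrightarrow} A\otimes' B,
\]
that is, $n_{A,B}=G(\varepsilon_A\otimes'\varepsilon_B)\circ G(m^{-1}_{GA,GB})\circ \eta_{GA\otimes GB}$, and similarly define $n_I:I\to GI$ as the transpose of $m_I^{-1}:FI\to I'$. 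The strongness of $F$ is exactly what makes this definition possible, since it is where the inverse $m^{-1}$ is used.

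With $n$ in hand, the remaining work is verification, organized into three blocks. First I would check the monoidal-functor coherence axioms for $(G,n)$ (the associativity hexagon and the two unit triangles of Definition~\ref{MONOIDAL FUNCTOR}); these follow from the corresponding axioms for $(F,m)$ together with naturality of $\varepsilon$, transported across the adjunction. Second, I would verify that the unit $\eta$ and counit $\varepsilon$ are monoidal natural transformations in the sense of Definition~\ref{MONOIDAL NATURAL TRANFORMATION}, using the triangle identities $(G\varepsilon)\circ(\eta G)=1_G$ and $(\varepsilon F)\circ(F\eta)=1_F$; here the compatibility is essentially forced by the way $n$ was defined as a mate of $m$.

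The main obstacle I anticipate is keeping the transpose bookkeeping honest: all the diagrams are of the ``slide a morphism across the adjunction'' type, so the real content is repeated use of naturality of $\eta,\varepsilon$ and the triangle identities, and the danger is sign/direction errors rather than any deep difficulty. The cleanest way to avoid grinding through each hexagon by hand is to invoke the general \emph{doctrinal adjunction} principle: mates of a monoidal (lax) transformation are again monoidal, and the mate construction sends the monoidal structure on $F$ to a well-defined monoidal structure on its adjoint precisely when $F$ is strong. I would state the explicit formulas for $n$ and $n_I$ as above, note that the coherence axioms and the monoidality of $\eta,\varepsilon$ are instances of this mates calculus, and present one representative diagram chase (say, the unit triangle) in full to illustrate the pattern, leaving the structurally identical remaining chases to the reader.
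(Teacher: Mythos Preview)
Your proposal is correct and follows essentially the same approach as the paper. The paper gives only a sketch of the backward direction, defining $n_{A,B}$ and $n_I$ precisely as the adjoint transposes you write down (characterized by the equations $\varepsilon_{A\otimes' B}\circ F(n_{A,B}) = (\varepsilon_A\otimes'\varepsilon_B)\circ m_{GA,GB}^{-1}$ and $\varepsilon_{I'}\circ F(n_I)=m_I^{-1}$), and then defers the verification of the coherence axioms to the literature, citing exactly Kelly's doctrinal adjunction paper that you invoke; your treatment is in fact more complete than the paper's own sketch, since you also outline the forward direction.
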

\begin{proof}
Here we give a sketch; see~\cite{IMKELLY86},~\cite{KELLY74} or~\cite{MELLIES} for a detailed proof.
Since we have that $\mathcal{C'}(FA,B)\cong \mathcal{C}(A,GB)$ then there is a unique $n_{A,B}$ and $n_I$ such that:
$$\begin{array}{cc}
\xymatrix@=25pt{
F(GA\otimes GB) \ar[rr]^{F(n_{A,B})}\ar[d]_{m_{GA,GB}^{-1}}
  && FG(A\otimes' B) \ar[d]^{\epsilon_{A\otimes B}}\\
 FGA\otimes' FGB\ar[rr]_{\epsilon_A\otimes \epsilon_B}
  && A\otimes' B
}\hspace{2cm} &
\xymatrix@=25pt{
FI \ar[r]^{F(n_I)}\ar[rd]_{m_{I}^{-1}}
  & FGI' \ar[d]^{\epsilon_{I'}}\\
  & I'
}\end{array}$$
Then using the adjunction we check that this candidates satisfy the definition.
\end{proof}

\section{The finite coproduct completion of a category}\label{The finite coproduct completion of a cat}
We recall some properties of the finite coproduct completion of a
 category. A reference can be found in~\cite{CARBONI-JOHNSTONE}.
\begin{definition}
\rm
Let us consider the category ${\bf FinSet}$ whose objects are finite sets $A=\{a_1,\dots,a_n\}$ and whose arrows are functions. To avoid any problem about the size of this category, we assume without loss of generality that all objects of ${\bf FinSet}$ are
 subsets of a given fixed infinite set; thus ${\bf FinSet}$ can be regarded as
 a small category.
\end{definition}
Note that {\bf FinSet} has finite coproducts and products.
\begin{definition}
\label{DEF FINITE COMPL}
\rm Let $\cC$ be a category. The category $\cC^{+}$ has as its objects finite families of objects of $\cC$: $V=\{V_{a}\}_{a\in A}$, with $A$ a finite set. A morphism from $V=\{V_a\}_{a\in A}$ to $W=\{W_b\}_{b\in B}$ consists of the following two items:
\begin{itemize}
\item[-] a function $\phi:A\rightarrow B$
\item[-] a family $f=\{f_a\}_{a\in A}$ of morphisms of $\cC$\\
$f_{a}:V_{a}\rightarrow W_{\phi(a)}$.
\end{itemize}
\end{definition}

\texttt{Notation}: We shall denote a morphism of $\cC^{+}$ as a pair $F=(\phi,f)$. Moreover, sometimes we write $V^{a}_{b}$ instead of  $(V_{a})_{b}$ to emphasize some particular set index subscript, and in the same way for arrows.

Before we study any possible structure in $\cC^{+}$ we observe that this is really a category. The identity map is given by taking $\phi=id_A$ the identity function on $A$ and
$f_{a}=1_{V_a}$,  the identity map in $\cC$,  for every $a\in A$.\\
 Composition is defined by the following rule: if $F=(\phi,f)$ and $G=(\psi,g)$ then $G\circ_{\cC^{+}}F=(\psi\circ\phi,\{g_{\phi(a)}\circ f_{a}\}_{a\in A})$.

To verify the associative law for the composition we have that if $F=(\phi,f)$, $G=(\psi,g)$ and $H=(\lambda,h)$ then:
\begin{center}
$H\circ (G\circ F)=H\circ (\psi\circ\phi,\{g_{\phi(a)}\circ f_{a}\}_{a\in A})=(\lambda\circ (\psi\circ\phi),\{h_{\psi\circ\phi(a)}\circ (g_{\phi(a)}\circ f_{a})\}_{a\in A})=$

$((\lambda\circ \psi)\circ\phi,\{(h_{\psi(\phi(a))}\circ g_{\phi(a)})\circ f_{a}\}_{a\in A})=
(\lambda\circ\psi,\{h_{\psi(b)}\circ g_{b}\}_{b\in B})\circ F=(H\circ G)\circ F.$
\end{center}

\begin{lemma}
$\cC^{+}$ has finite coproducts.\\
\end{lemma}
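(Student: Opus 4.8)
The plan is to lift the coproduct structure of the index category $\FinSet$ to $\cC^{+}$, exploiting the fact noted above that $\FinSet$ has finite coproducts. Concretely, I would construct an initial object and binary coproducts separately, in each case letting the index level do all the genuine work while the family of $\cC$-morphisms is carried along essentially for free.

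For the initial object I would take the empty family $0$, indexed by the empty set $\emptyset$. Given any object $W=\{W_b\}_{b\in B}$, a morphism $0\to W$ consists of a function $\emptyset\to B$ together with a family of morphisms indexed by $\emptyset$; since the empty function is the unique function out of $\emptyset$ and the accompanying family is necessarily empty, there is exactly one such morphism. Hence $0$ is initial, i.e. it is the empty coproduct.

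For binary coproducts, given $V=\{V_a\}_{a\in A}$ and $V'=\{V'_{a'}\}_{a'\in A'}$, I would take the index set to be the coproduct $A+A'$ in $\FinSet$, with injections $\kappa:A\to A+A'$ and $\kappa':A'\to A+A'$, and define $V\oplus V'$ to be the family indexed by $A+A'$ that restricts to $V$ along $\kappa$ and to $V'$ along $\kappa'$ (explicitly, $(V\oplus V')_{\kappa(a)}=V_a$ and $(V\oplus V')_{\kappa'(a')}=V'_{a'}$, which is unambiguous since every element of $A+A'$ lies in the image of exactly one injection). The two coprojections would be $(\kappa,\{1_{V_a}\}_{a\in A})$ and $(\kappa',\{1_{V'_{a'}}\}_{a'\in A'})$, built from the index injections together with identity morphisms of $\cC$. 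To verify the universal property, suppose $(\phi,f):V\to W$ and $(\phi',f'):V'\to W$ are given. I would produce the mediating morphism $(\psi,h):V\oplus V'\to W$ by setting $\psi=[\phi,\phi']:A+A'\to B$, the copairing supplied by the universal property of the coproduct in $\FinSet$, and letting $h$ combine $f$ and $f'$ via $h_{\kappa(a)}=f_a$ and $h_{\kappa'(a')}=f'_{a'}$; this is well defined and exhaustive precisely because of the disjointness of the images of $\kappa$ and $\kappa'$. Commutativity with the coprojections and uniqueness of the mediating morphism then both reduce to the corresponding statements for $\psi$ in $\FinSet$, since composing $(\psi,h)$ with a coprojection merely restricts $\psi$ along an injection and leaves the morphism family unchanged (the coprojections carry identities).

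The verifications are almost entirely routine bookkeeping, so I do not anticipate a serious obstacle. The only point requiring a little care is keeping the two layers of data cleanly separated — the index functions on the one hand and the $\cC$-valued morphism families on the other — and observing that both the existence and the uniqueness of the mediating map are governed entirely by the coproduct in $\FinSet$, while the morphism-family component is forced once the underlying index function has been fixed.
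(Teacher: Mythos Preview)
Your proposal is correct and takes essentially the same approach as the paper: both build the coproduct in $\cC^{+}$ by taking the coproduct of index sets in $\FinSet$, concatenating the families, and using identity morphisms for the coprojections, with the empty family as initial object. If anything, your account is more thorough---the paper simply writes down the coproduct object, the injections, and the initial object without explicitly verifying the universal property, whereas you spell out the mediating morphism and its uniqueness.
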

\begin{proof}
 On objects we have that if $V=\{V_a\}_{a\in A}$, $W=\{W_b\}_{b\in B}$ then $V\oplus W=\{Z_c\}_{c\in C}$ where $C=A+B$ is the coproduct in ${\bf FinSet}$. We take $Z_{in_{1}(a)}=V_{a}$ and $Z_{in_{2}(b)}=W_{b}$ for every $a\in A$, $b\in B$. Thus, $V\oplus W$ is just a concatenation of families of objects of $\cC$.

Injections maps are defined in the following way:
\begin{center}
$\{V_a\}_{a\in A}\stackrel{i^{1}}\longrightarrow\{Z_c\}_{c\in C}$\,\,\, and\,\,\,  $\{W_b\}_{b\in B}\stackrel{i^{2}}\longrightarrow\{Z_c\}_{c\in C}$
\end{center}
where $i^{1}=(in_{1},Id_{A}^{V})$, $i^{2}=(in_{2},Id_{B}^{W})$ are given by:

\begin{center}
$A\stackrel{in_{1}}\longrightarrow A+B$\,\,\,
$B\stackrel{in_{2}}\longrightarrow A+B$ \mbox{injections in ${\bf FinSet}$}
\end{center}

\noindent
and $Id_{A}^{V}=\{1_a^{V}\}_{a\in A}$, $Id_{B}^{W}=\{1_b^{W}\}_{b\in B}$ where
$V_{a}\stackrel{1^{V}_{a}}\longrightarrow V_{a}$ and $W_{b}\stackrel{1^{W}_{b}}\longrightarrow W_{b}$ are identities in $\cC$.

\texttt{Notation}: Sometimes we shall use $V\oplus W$ for $Z$, so we have the following notation $V\oplus W=\{(V\oplus W)_c\}_{c\in A+B}$.

There is also an initial object that we shall denote by $\epsilon$. It is the empty family of objects. The unique morphism $\epsilon \stackrel{\epsilon_{W}}\longrightarrow \{W_b\}_{b\in B}$ is given by $\epsilon_{W}=(\emptyset,\emptyset)$.

\end{proof}

With any category $\cC$, we associate a functor $I:\cC\rightarrow \cC^{+}$ as follows, $I(V)=\{V_{*}\}_{*\in 1}$, $V_{*}=V$ and when there is a $f:V\rightarrow W$ in $\cC$ then $I(f)=(id_{1},\{f_{*}\}_{*\in 1})$ with $f_{*}=f$.

\begin{proposition}
\label{COCOMPLETION OF A CATEGORY}
Given any category $\cA$ with finite coproducts $\,\,\coprod\,\,$ and any functor $F:\cC\rightarrow\cA$, there is a unique finite coproduct preserving functor $G:\cC^{+}\rightarrow \cA$, up to natural isomorphism, such that $G\circ I =F$.
$$\xymatrix@=25pt{
\cC\ar[d]_{I}\ar[r]^{F}&\cA\\
\cC^{+}\ar[ru]_{G}&
}$$
\end{proposition}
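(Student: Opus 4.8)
The plan is to establish the universal property of the finite coproduct completion $\cC^{+}$ by explicitly constructing the functor $G$ from the given data, verifying its properties, and then proving uniqueness up to natural isomorphism. Since every object $V=\{V_a\}_{a\in A}$ of $\cC^{+}$ is, by construction, a concatenation of one-object families $I(V_a)$, and since coproducts in $\cC^{+}$ are exactly such concatenations (as shown in the preceding lemma), every object of $\cC^{+}$ is canonically a finite coproduct of objects in the image of $I$; specifically $V \cong \coprod_{a\in A} I(V_a)$. This is the structural fact that forces the definition of $G$ and drives the whole argument.

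First I would define $G$ on objects by setting $G(V)=\coprod_{a\in A} F(V_a)$, using the chosen finite coproducts in $\cA$, with $G(\epsilon)$ the initial object of $\cA$. On a morphism $(\phi,f):V\to W$ with $\phi:A\to B$ and $f_a:V_a\to W_{\phi(a)}$, I would define $G(\phi,f)$ to be the unique map $\coprod_{a\in A}F(V_a)\to\coprod_{b\in B}F(W_b)$ induced by the coproduct universal property, whose component at $a$ is the composite $F(V_a)\xrightarrow{F(f_a)}F(W_{\phi(a)})\to\coprod_{b\in B}F(W_b)$, where the second arrow is the coproduct injection indexed by $\phi(a)$. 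Next I would verify that $G$ is a functor: functoriality on identities is immediate since $\phi=\mathrm{id}$ and $f_a=1_{V_a}$ give the identity cotuple, and functoriality on composites reduces to the fact that the composition rule $G\circ F=(\psi\circ\phi,\{g_{\phi(a)}\circ f_a\})$ matches, component-by-component, the pasting of the induced coproduct maps. I would then check $G\circ I=F$ directly: $I(V)$ is the singleton family $\{V_*\}_{*\in 1}$, so $G(I(V))=\coprod_{*\in 1}F(V)\cong F(V)$, with the one-element coproduct giving a canonical isomorphism natural in $V$.

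To see that $G$ preserves finite coproducts, I would use that $V\oplus W$ is the concatenation indexed by $A+B$, so $G(V\oplus W)=\coprod_{c\in A+B}F(Z_c)$, and the canonical iso $\coprod_{c\in A+B}F(Z_c)\cong\big(\coprod_{a\in A}F(V_a)\big)\oplus\big(\coprod_{b\in B}F(W_b)\big)$ follows from associativity and commutativity of coproducts in $\cA$; compatibility with the injections $i^1,i^2$ of $\cC^{+}$ is then routine. The initial object $\epsilon$ is sent to the initial object of $\cA$ by definition, so nullary coproducts are preserved as well.

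The genuinely delicate step is uniqueness up to natural isomorphism. Given any finite-coproduct-preserving $G'$ with $G'\circ I\cong F$, I would argue that for each object $V=\{V_a\}_{a\in A}$ the canonical decomposition $V\cong\coprod_{a\in A}I(V_a)$ in $\cC^{+}$, combined with preservation of coproducts by $G'$, forces $G'(V)\cong\coprod_{a\in A}G'(I(V_a))\cong\coprod_{a\in A}F(V_a)=G(V)$, and these isomorphisms assemble into a natural isomorphism $G\cong G'$. The main obstacle here is bookkeeping: I must check that the comparison isomorphisms are natural not only in the ``fiber'' maps $f_a$ but also across the index function $\phi$, which means verifying that the coherence isomorphisms witnessing preservation of coproducts by $G'$ are compatible with reindexing. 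Making this naturality square commute for a general morphism $(\phi,f)$ — rather than separately for injections and for fiberwise maps — is where the argument requires genuine care, and it is essentially a coherence argument for the symmetric monoidal (coproduct) structure that I would organize by first treating injections, then arbitrary $\phi$, then general $f$.
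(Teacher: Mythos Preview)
Your proposal is correct and follows essentially the same approach as the paper: define $G$ on objects by $G(\{V_a\}_{a\in A})=\coprod_{a\in A}F(V_a)$, on morphisms by the cotuple $[i_{F(W_{\phi(a)})}\circ F(f_a)]_{a\in A}$, verify functoriality and coproduct preservation, and derive uniqueness from the decomposition $V\cong\bigoplus_{a\in A}I(V_a)$ in $\cC^{+}$. The paper's treatment of the uniqueness step is somewhat sketchier on the naturality verification than yours; your explicit flagging of the reindexing coherence for a general $(\phi,f)$ is a reasonable elaboration of what the paper handles more tersely by decomposing an arbitrary morphism as a copairing of $i_{W_{\phi(a)}}\circ I(f_a)$ and appealing to coproduct preservation of $H$.
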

\begin{proof}
We shall begin by considering the definition of the functor $G:\cC^{+}\rightarrow \cA$ that assigns to each object $V=\{V_a\}_{a\in A}$ the coproduct $G(\{V_a\}_{a\in A})=\coprod _{a\in A}F(V_a)$ in the category $\cA$. For any arrow $\{V_{a}\}_{a\in A}\stackrel{(\phi,f)}\longrightarrow \{W_{b}\}_{b\in B}$ we define $G(\phi,f)=[i_{F(W_{\phi(a)})}\circ F(f_a)]_{a\in A}$ as the unique arrow in $\cA$ such that the following diagram commutes:

$$\xymatrix@=25pt{
F(V_a)\ar[d]_{i_{F(V_a)}}\ar[r]^{F(f_a)}&F(W_{\phi(a)})\ar[d]^{i_{F(W_{\phi(a)})}}\\
\coprod_{a\in A}F(V_a)\ar@{-->}[r]_{G(\phi,f)}&\coprod_{b\in B}F(W_b)
}$$
We must show that $G$ is a functor. To see this, suppose we have
$$\{V_a\}_{a\in A}\stackrel{(\phi,f)}\longrightarrow \{W_b\}_{b\in B}\stackrel{(\psi,g)}\longrightarrow\{Z_c\}_{c\in C}$$
then by hypothesis
$$\xymatrix@=25pt{
F(W_b)\ar[d]_{i_{F(W_b)}}\ar[r]^{F(g_b)}&F(Z_{\psi(b)})\ar[d]^{i_{F(Z_{\psi(b)})}}\\
\coprod_{b\in B}F(W_b)\ar[r]_{G(\psi,g)}&\coprod_{c\in C}F(Z_c)
}$$
therefore using the case $b=\phi(a)$ we obtain

$$\xymatrix@=25pt{
F(V_a)\ar[d]_{i_{F(V_a)}}\ar[r]^{F(f_a)}&F(W_{\phi(a)})\ar[d]^{i_{F(W_{\phi(a)})}}\ar[r]^{F(g_{\phi(a)})}&F(Z_{\psi(\phi(a))})\ar[d]^{i_{F(Z_{\psi(\phi(a))})}}\\
\coprod_{a\in A}F(V_a)\ar[r]_{G(\phi,f)}&\coprod_{b\in B}F(W_b)\ar[r]_{G(\psi,g)}&\coprod_{c\in C}F(Z_c)
}$$
then by unique existence property of coproducts we have that $G((\psi,g)\circ(\phi,f))=G(\psi,g)\circ G(\phi,f)$. Also by uniqueness it is easily to check that $G(id_A,id^V_A)=id_{\coprod_{a\in A}F(V_a)}$.

The functor $G$ preserves coproducts. To see this let us consider $V^{i}=\{V^{i}_{a}\}_{a\in A_i}$, $i\in I$ then
$$G(\oplus _{i\in I}V^{i})=G(\oplus _{i\in I}\{V^{i}_{a}\}_{a\in A_i})=G(\{Z_c\}_{c\in \oplus_{i\in I}A_i})=\coprod_{c\in \oplus_{i\in I}A_i}F(Z_c)\cong\coprod_{i\in I}(\coprod_{a\in A_i} F(V^i_a))=$$
$$=\coprod_{i\in I}G(\{V^i_a\}_{a\in A_i})=\coprod_{i\in I}G(V^i)$$
with $Z_c=V^i_a$ if  $in_{A_i}(a)=c$.
It remains to verify that $G$ is unique up to natural isomorphism. Suppose there is another $H$ preserving coproducts such that $H\circ I=F$.
Therefore, using the definitions given above of coproduct in $\cC^{+}$, the functor $G$ and the fact that by hypothesis $H$ preserves coproducts, we calculate on objects
$$H(\{V_a\}_{a\in A})\cong H(\oplus_{a\in A}\{V^a_{*}\}_{*\in 1})\cong \coprod_{a\in A}H(\{V^a_{*}\}_{*\in 1})=$$
$$=\coprod_{a\in A}H(I(V_a))=\coprod_{a\in A}F(V_a)=G(\{V_a\}_{a\in A})$$

Suppose we have a morphism $\{V_{a}\}_{a\in A}\stackrel{(\phi,f)}\longrightarrow \{W_{b}\}_{b\in B}$ with $\phi:A\rightarrow B$ and $f=\{f_a\}_{a\in A}$ then using the coproduct in $\cC^{+}$ we consider a decomposition of it, up to isomorphism, in the following way
$$\oplus_{a\in A}\{V^a_{*}\}_{*\in 1}\stackrel{[i_{W_{\phi(a)}}\circ I(f_a)]_{a\in A}}\longrightarrow \oplus_{b\in B}\{W^b_{*}\}_{*\in 1}$$
these morphisms are explicitly given by
$$\{V^a_{*}\}_{*\in 1}\stackrel{I(f_a)}\longrightarrow \{W^{\phi(a)}_{*}\}_{*\in 1}\stackrel{i_{W_{\phi(a)}}}\longrightarrow \oplus_{b\in B}\{W^b_{*}\}_{*\in 1}$$
where $I(f_a)=(id_1,\{f^a_{*}\}_{*\in 1})$, $i_{W_{\phi(a)}}=(in_{\phi(a)},\{1^{W^{\phi(a)}}_{*}\}_{*\in 1})$ with
$1\stackrel{in_{\phi(a)}}\longrightarrow \oplus_{B}1$,  $W_{\phi(a)}\stackrel{1^{W^{\phi(a)}}_{*}=1}\longrightarrow W_{\phi(a)}$ and
$\oplus_{b\in B}\{V^b_{*}\}_{*\in 1}=\{Z_c\}_{c\in \oplus_{B}1}$ with $Z_{in_{b}(*)}=W^{b}_{*}=W_b$.

Since $H$ preserves coproducts
$$H([i_{W_{\phi(a)}}\circ I(f_a)]_{a\in A})\cong[H(i_{W_{\phi(a)}})\circ H(I(f_a))]_{a\in A}=[i_{F(W_{\phi(a)})}\circ F(f_a)]_{a\in A}=G(\phi,f)$$
where the second equality is justified by the following
$$H(\{W^{\phi(a)}_{*}\}_{*\in 1})\stackrel{H(i_{W_{\phi(a)}})}\longrightarrow H(\oplus_{b\in B}\{W^b_{*}\}_{*\in 1})$$
hence using again that $H$ preserves coproducts, up to isomorphism, we have
$$H(\{W^{\phi(a)}_{*}\}_{*\in 1})\stackrel{H(i_{W_{\phi(a)}})}\longrightarrow \coprod_{b\in B}H(\{W^b_{*}\}_{*\in 1})$$
this means by definition of the functor $I$,
$$H(I(W_{\phi(a)}))\stackrel{H(i_{W_{\phi(a)}})}\longrightarrow \coprod_{b\in B}H(I(W_b))$$
but, by hypothesis we know that $H\circ I=F$,
$$F(W_{\phi(a)})\stackrel{i_{F(W_{\phi(a)})}}\longrightarrow \coprod_{b\in B}F(W_b)$$
\end{proof}

\begin{corollary}
$\cC^{+}$ is the free finite coproduct completion generated by $\cC$.
\end{corollary}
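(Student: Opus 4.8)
The plan is to recognize that the corollary is essentially a repackaging of the work already done, once we make explicit what \emph{free finite coproduct completion} means. The free finite coproduct completion of $\cC$ should be a category $\cC^{+}$ with finite coproducts, together with a functor $I:\cC\rightarrow\cC^{+}$, enjoying the universal property that for every category $\cA$ with finite coproducts, precomposition with $I$ yields an equivalence
$$I^{*}\colon [\cC^{+},\cA]_{\oplus}\ \xrightarrow{\ \simeq\ }\ [\cC,\cA],$$
where $[\cC^{+},\cA]_{\oplus}$ denotes the category of finite-coproduct-preserving functors and arbitrary natural transformations. Equivalently, $(-)^{+}$ is left adjoint to the forgetful functor from categories with finite coproducts (and coproduct-preserving functors) to categories, with unit $I$. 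So the task is to assemble the three ingredients of this definition from the preceding results.

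First I would observe that two of the three ingredients are already established: the Lemma above exhibits finite coproducts $\oplus$ on $\cC^{+}$ with initial object $\epsilon$, and the functor $I:\cC\rightarrow\cC^{+}$, $V\mapsto\{V_{*}\}_{*\in1}$, has been defined just before Proposition~\ref{COCOMPLETION OF A CATEGORY}. For the third ingredient, essential surjectivity of $I^{*}$ and uniqueness of the extension up to natural isomorphism are \emph{exactly} the statement of Proposition~\ref{COCOMPLETION OF A CATEGORY}: any $F:\cC\to\cA$ into a finitely cocomplete $\cA$ extends to a coproduct-preserving $G:\cC^{+}\to\cA$ with $G\circ I=F$, and $G$ is unique up to isomorphism.

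The only point not literally contained in the Proposition — and hence the step I would treat as the real content here — is the promotion of this object-level universal property to the full $2$-dimensional one, namely faithfulness and fullness of $I^{*}$ on natural transformations. For faithfulness I would use that a coproduct-preserving $G$ reconstructs every object as $G\{V_{a}\}_{a\in A}\cong\coprod_{a\in A}G(I(V_{a}))$ via the injections $G(i_{a})$; since any natural transformation $\theta:G\Rightarrow G'$ between coproduct-preserving functors commutes with these injections, it is determined by its restriction $I^{*}\theta=\{\theta_{I(V_{a})}\}$ along $I$, so $I^{*}$ is injective on $2$-cells. For fullness, given a natural transformation $\tau:G\circ I\Rightarrow G'\circ I$, I would define $\theta_{\{V_{a}\}_{a\in A}}$ to be the cotupling $[\,G'(i_{a})\circ\tau_{V_{a}}\,]_{a\in A}$ and check, again using the uniqueness clause of the coproduct and the naturality of $\tau$, that this assembles into a well-defined natural transformation with $I^{*}\theta=\tau$.

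Finally I would record that $I$ is itself fully faithful — a direct computation gives $\cC^{+}(I(V),I(W))\cong\cC(V,W)$, since a morphism $\{V_{*}\}_{*\in1}\to\{W_{*}\}_{*\in1}$ forces $\phi=\mathrm{id}_{1}$ and reduces to a single arrow $V\to W$ in $\cC$ — so that $\cC$ genuinely embeds into $\cC^{+}$ and the construction deserves the name \emph{completion}. Combining the Lemma, Proposition~\ref{COCOMPLETION OF A CATEGORY}, and these verifications yields the displayed universal property, which is the definition of the free finite coproduct completion. I do not expect any genuine obstacle: the entire argument is bookkeeping around the universal property of coproducts, the one mild subtlety being the passage from the up-to-isomorphism uniqueness proved in the Proposition to the bijection-on-$2$-cells needed for an honest equivalence of functor categories.
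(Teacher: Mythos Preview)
Your proposal is correct, but it does considerably more than the paper. In the paper, this corollary is stated with no proof at all: the author treats Proposition~\ref{COCOMPLETION OF A CATEGORY} as \emph{being} the defining universal property of the free finite coproduct completion, so the corollary is a one-line restatement. That is, the paper is content with the 1-categorical universal property (existence of a coproduct-preserving extension, unique up to natural isomorphism), and does not pass to the 2-categorical equivalence $[\cC^{+},\cA]_{\oplus}\simeq[\cC,\cA]$ that you work out. Your extra verifications (fullness and faithfulness of $I^{*}$ on natural transformations, full faithfulness of $I$) are sound and standard, and they yield a genuinely stronger statement; but from the paper's point of view they are unnecessary, since the weaker universal property already suffices for everything that follows.
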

\begin{proposition}
\label{MONO FINITE COMPL}
If $\cC$ is a symmetric monoidal category then $\cC^{+}$ is also a symmetric monoidal category.
\end{proposition}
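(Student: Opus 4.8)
The plan is to lift the symmetric monoidal structure of $\cC$ to $\cC^{+}$ ``componentwise in $\cC$ and by cartesian product on the index sets'', exploiting the fact (noted just before Definition~\ref{DEF FINITE COMPL}) that $\FinSet$ carries a symmetric monoidal structure given by finite products with unit the one-element set $1$. Concretely, for objects $V=\{V_a\}_{a\in A}$ and $W=\{W_b\}_{b\in B}$ I would set
\[
V\otimes W := \{\, V_a\otimes W_b\,\}_{(a,b)\in A\times B},
\]
with monoidal unit the one-element family $\{I\}_{*\in 1}$ on the unit $I$ of $\cC$ (that is, $I(I)$ for the embedding $I\colon\cC\to\cC^{+}$); note $A\times B$ is again a finite set and each $V_a\otimes W_b$ an object of $\cC$, so this is well defined. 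On morphisms $(\phi,f)\colon V\to V'$ and $(\psi,g)\colon W\to W'$ I would define $(\phi,f)\otimes(\psi,g) := (\phi\times\psi,\{f_a\otimes g_b\}_{(a,b)})$, where $\phi\times\psi$ is the product function on index sets and $f_a\otimes g_b$ is formed in $\cC$. Bifunctoriality is then immediate from bifunctoriality of $\otimes$ in $\cC$ together with functoriality of $\times$ on functions.

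Next I would produce the structural isomorphisms by combining the canonical bijections of finite sets with the coherence isomorphisms of $\cC$, applied at each index. Thus the associator $\alpha_{V,W,U}$ has as its index component the canonical bijection $A\times(B\times C)\cong(A\times B)\times C$ and, at index $(a,(b,c))$, the $\cC$-component $\alpha^{\cC}_{V_a,W_b,U_c}$; the unitors are built from $1\times A\cong A$, $A\times 1\cong A$ and $\lambda^{\cC},\rho^{\cC}$; and the symmetry $\sigma_{V,W}$ has index component the swap $A\times B\cong B\times A$ and $\cC$-component $\sigma^{\cC}_{V_a,W_b}$. Each is an isomorphism in $\cC^{+}$ because both of its data, a bijection of index sets and a family of isomorphisms of $\cC$, are invertible. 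Naturality of each is checked indexwise and reduces to naturality of the corresponding transformation in $\cC$ together with the automatic naturality of the set-level bijections.

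Finally I would verify the coherence axioms: the triangle and pentagon for the monoidal structure, and the two symmetry triangles and the hexagon for the symmetry. The observation that makes this routine is that every such diagram in $\cC^{+}$ splits into two independent layers, a diagram of bijections between index sets and, at each index, a diagram of coherence isomorphisms in $\cC$. The index layer commutes because $(\FinSet,\times,1)$ is itself a symmetric monoidal category, so the relevant pentagon, triangle, and hexagon of canonical bijections commute; the $\cC$ layer commutes by the corresponding coherence axiom of $\cC$. Since a morphism of $\cC^{+}$ is determined by its index function together with its family of $\cC$-morphisms, two composites agree as soon as their index functions agree and their $\cC$-components agree at each index, so commutativity at both layers yields commutativity in $\cC^{+}$.

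The main obstacle is bookkeeping rather than anything conceptual: one must keep careful track of how indices are transported through the set-level bijections so that the $\cC$-components are composed at matching indices and every composite is well-typed. Once the convention is fixed, reading each structural map as ``reassociate or permute the index set, then apply the corresponding $\cC$-coherence isomorphism in each fibre'', the verification is mechanical, no step requiring more than the coherence already available in $\cC$ and the triviality of coherence for cartesian products of sets. An alternative, more abstract route would invoke the universal property of Proposition~\ref{COCOMPLETION OF A CATEGORY} to extend $\otimes$, but producing the bifunctor and all of its coherence data canonically in that way is more delicate than the explicit construction, so I would prefer the direct argument above.
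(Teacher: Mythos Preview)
Your proposal is correct and follows essentially the same approach as the paper: both define the tensor on $\cC^{+}$ by taking cartesian products on index sets and the $\cC$-tensor componentwise, build the structural isomorphisms by pairing the canonical bijections of finite sets with the coherence isomorphisms of $\cC$ at each index, and verify coherence by observing that each diagram factors into a $\FinSet$-layer and a $\cC$-layer, each of which commutes by the respective coherence. Your explanation of the two-layer decomposition is in fact more explicit than the paper's, which simply states that ``coherence follows by definition, coherence in {\bf FinSet} and coherence in the symmetric monoidal category $\cC$.''
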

\begin{proof}
Assume that $V=\{V_a\}_{a\in A}$ and $W=\{W_b\}_{b\in B}$ are objects in $\cC^{+}$ then we take $V\otimes_{\cC^{+}} W=\{V_a\otimes W_b\}_{(a,b)\in A\times B}$ where $A\times B$ is the finite product of sets.

The tensor extends to morphisms, if $V\stackrel{F}\longrightarrow X$, $W\stackrel{G}\longrightarrow Y$, with $X=\{X_c\}_{c\in C}$, $Y=\{Y_d\}_{d\in D}$, $F=(\phi,f)$, $G=(\psi,g)$ then $F\otimes G=(\phi\times\psi,f\bar{\otimes} g)$ is given by the following data:
\begin{itemize}
\item[-]$\phi\times \psi:A\times B\rightarrow C\times D$, $(\phi\times \psi)(a,b)=(\phi(a),\psi(b))$
\item[-]$f\bar{\otimes} g=\{(f\bar{\otimes} g)_{(a,b)}\}_{(a,b)\in A\times B}$ where we have that\\
$(f\bar{\otimes} g)_{(a,b)}:(V\otimes W)_{(a,b)}\longrightarrow (X\otimes Y)_{(\phi\times\psi)(a,b)}$
is defined by:
$$f_a\otimes g_b:V_a\otimes W_b\longrightarrow X_{\phi(a)}\otimes Y_{\psi(b)}$$
\end{itemize}
To prove that $-\otimes_{\cC^{+}}-: \cC^{+}\times\cC^{+}\rightarrow\cC^{+}$ is a bifunctor one first calculates the definition by using that $1_{A\times B}=1_A\times 1_B$ and $1_{V_a}\otimes 1_{W_a}=1_{V_a\otimes W_b}$.

Next, we shall prove that $(F\circ F')\otimes (G\circ G')=(F\otimes G)\circ (F'\otimes G')$. Suppose:
$F'=(\phi,f)$, $F= (\eta,h)$, $G'=(\psi,g)$, $G=(\xi,k)$ where
$$\{V_a\}_{a\in A}\stackrel{(\phi,f)}\longrightarrow\{X_c\}_{c\in C} \stackrel{(\eta,h)}\longrightarrow\{Z_e\}_{e\in E}$$
and
$$\{W_b\}_{b\in B}\stackrel{(\psi,g)}\longrightarrow\{Y_d\}_{d\in D} \stackrel{(\xi,k)}\longrightarrow\{H_f\}_{f\in F}$$
Therefore, $(F\circ F')\otimes (G\circ G')=((\eta\circ\phi)\times(\xi\circ\psi),\{(h_{\phi(a)}\circ f_a)\otimes (k_{\psi(b)}\circ g_b)\}_{(a,b)\in A\times B})=((\eta\times\xi)\circ (\phi\times \psi),\{(h_{\phi(a)}\otimes k_{\psi(b)})\circ (f_a\otimes g_b)\}_{(a,b)\in A\times B})=(F\otimes G)\circ (F'\otimes G')$
where we simplify the notation of the tensor symbol.
The unit of the tensor is given by $I=\{I_{*}\}_{* \in \{*\}}$.
The tensor functor is equipped with the following set of isomorphisms:
\begin{itemize}
\item[-] $V\otimes I \stackrel{\bar{\rho}}\longrightarrow V$, and $I\otimes V \stackrel{\bar{\lambda}}\longrightarrow V$ where $V=\{V_a\}_{a\in A}$, $I=\{I_{*}\}_{* \in \{*\}}$ then $V\otimes I=\{V_a\otimes I_{*}\}_{(a,*)\in A\times 1}$.\\
These maps are given by: $\bar{\rho}=(\rho,\textsl{r})$ with $\rho:A\times \{*\}\rightarrow A$, $\rho(a,*)=a$ and with $\textsl{r}=\{r_{(a,*)}\}_{(a,*)\in A\times 1}$ where $r_{(a,*)}=r_{V_a}$,
$ V_a\otimes I \stackrel{r_{V_a}}\longrightarrow  V_a$. In an analogous way is defined $\bar{\lambda}=(\lambda,\textsl{l})$.
\item[-]If $V=\{V_a\}_{a\in A}$ and $W=\{W_b\}_{b\in B}$ then $\bar{\sigma}=(\sigma, \textsl{s})$ with
$\sigma:A\times B\rightarrow B\times A$, $\sigma(x,y)=(y,x)$ and $\textsl{s}=\{s_{(x,y)}\}_{(x,y)\in A\times B}$, where $s_{(x,y)}=s$ i.e,
$$V_x\otimes W_y\stackrel{s}\longrightarrow W_y\otimes V_x$$
\item[-]If $V=\{V_a\}_{a\in A}$, $W=\{W_b\}_{b\in B}$, $Z=\{Z_c\}_{c\in C}$, then $\bar{\alpha}=(\alpha, \textsl{a})$ with $\alpha:A\times (B\times C)\rightarrow (A\times B)\times C$, $\alpha(x,(y,z))=((x,y),z)$ and $\textsl{a}=\{a_{(x,(y,z))}\}_{(x,(y,z))\in A\times(B\times C)}$, where $a_{(x,(y,z))}=a$ i.e.,
$$V_x\otimes (W_y\otimes Z_z)\stackrel{a}\longrightarrow
(V_x\otimes W_y)\otimes Z_z$$
\end{itemize}
Coherence follows by definition, coherence in {\bf FinSet} and coherence in the symmetric monoidal category $\cC$.
\end{proof}
\begin{remark}
\rm
Notice that the distributivity condition $V\otimes(W\oplus Z)\cong(V\otimes W)\oplus (V\otimes Z)$ is satisfied with the map:
$$D:V\otimes(W\oplus Z)\rightarrow(V\otimes W)\oplus (V\otimes Z)$$
where $V=\{V_a\}_{a\in A}$, $W=\{W_b\}_{b\in B}$, $Z=\{Z_c\}_{c\in C}$, $D=(\delta,Id)$ in which $\delta$ is the bijective function $\delta:(A+B)\times C\rightarrow(A\times C)+(B\times C)$ and $Id=\{1_{d}\}_{d\in(A+B)\times C}$.
\end{remark}
\begin{example}
If $\textbf{1}$ is the one object, one arrow strict symmetric monoidal category with the evident monoidal structure then $\textbf{1}^{+}\cong{\bf FinSet}$ and $\otimes_{\textbf{1}^{+}}=\times$ and $I=1$.
\end{example}

\begin{proposition}
\label{MONOIDAL FUNCTOR PSI}
 Under the hypotheses of Proposition~\ref{COCOMPLETION OF A CATEGORY}, assume that the categories $\cC$ and $\cA$ are symmetric monoidal. Then $I$ is a symmetric monoidal functor. If moreover $F$ is a symmetric monoidal functor and tensor distributes over coproducts in $A$, then $G$ is a symmetric monoidal functor. Moreover, if $F$ is strong monoidal then so is $G$.
\end{proposition}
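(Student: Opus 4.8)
The plan is to handle the three assertions in order, constructing the monoidal structure on $G$ from that of $F$ together with the distributivity isomorphisms of $\cA$. First consider $I$. Since $I(V)=\{V_*\}_{*\in 1}$, the tensor of Proposition~\ref{MONO FINITE COMPL} gives $I(V)\otimes_{\cC^{+}}I(W)=\{V_*\otimes W_*\}_{(*,*)\in 1\times 1}$, which under the canonical bijection $1\times 1\cong 1$ in $\FinSet$ is precisely $\{(V\otimes W)_*\}_{*\in 1}=I(V\otimes W)$; similarly the monoidal unit $\{(I_\cC)_*\}_{*\in 1}$ of $\cC^{+}$ is exactly $I(I_\cC)$. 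Hence the coherence maps for $I$ may be taken to be identities, so $I$ is strong symmetric monoidal (in fact strict up to this canonical reindexing), the symmetry axiom being inherited from $\cC$ since $\bar\sigma$ acts componentwise by $\sigma$.

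Next I would build the structure maps of $G$. For $V=\{V_a\}_{a\in A}$ and $W=\{W_b\}_{b\in B}$ we have $G(V)\otimes G(W)=\big(\coprod_{a\in A}F(V_a)\big)\otimes\big(\coprod_{b\in B}F(W_b)\big)$. Since $\otimes$ distributes over coproducts in $\cA$, iterating the distributivity isomorphism yields a canonical iso $d\colon G(V)\otimes G(W)\cong\coprod_{(a,b)\in A\times B}\big(F(V_a)\otimes F(W_b)\big)$. On the other hand, by the definitions of $\otimes_{\cC^{+}}$ and of $G$ one has $G(V\otimes_{\cC^{+}}W)=\coprod_{(a,b)\in A\times B}F(V_a\otimes W_b)$, indexed over the very same set $A\times B$. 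I would therefore define $m^{G}_{V,W}$ as $d$ followed by the coproduct $\coprod_{(a,b)}m^{F}_{V_a,W_b}$ of the coherence maps of $F$; the crucial point is that the two sides share the index set $A\times B$, so the map is simply ``$m^{F}$ in each component'' after distributing. For the unit I set $m^{G}_I=m^{F}_I\colon I_\cA\to F(I_\cC)=G(I_{\cC^{+}})$, using $G(I_{\cC^{+}})=\coprod_{*\in 1}F(I_\cC)=F(I_\cC)$.

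It then remains to verify naturality and the coherence axioms, and these I would reduce to componentwise statements using functoriality of $\coprod$ and the explicit formula $G(\phi,f)=[\,i\circ F(f_a)\,]_a$ from Proposition~\ref{COCOMPLETION OF A CATEGORY}. Naturality of $m^{G}$ follows from naturality of the distributivity isomorphisms together with naturality of $m^{F}$. For the associativity pentagon, the unit triangles, and the symmetry square, each composite, after applying $d$ the appropriate number of times, becomes a coproduct over $A\times B$ (resp. $A\times B\times C$) of the corresponding axiom for $(F,m^{F})$ evaluated componentwise; one assembles these from the associator, unitors and symmetry of $\cA$, the coherence of the distributivity isomorphisms with $\alpha,\rho,\lambda,\sigma$, and the matching axiom for $F$. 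Finally, the strong case is immediate: $d$ is always invertible and a coproduct of isomorphisms is an isomorphism, so if $F$ is strong---whence each $m^{F}_{V_a,W_b}$ and $m^{F}_I$ are isos---then $m^{G}_{V,W}$ and $m^{G}_I$ are isos and $G$ is strong symmetric monoidal.

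The step I expect to be the main obstacle is the associativity (and symmetry) coherence for $G$: the distributivity isomorphisms of $\cA$ carry their own coherence with respect to $\alpha$ and $\sigma$, and one must interleave these with the coherence of $F$ so that both legs of each diagram genuinely agree in every component. This is essentially a matter of keeping the $A\times B\times C$ indexing straight across the three applications of $d$ while invoking the componentwise axiom for $F$; it is routine but is where the actual content of the statement resides, the remainder being formal.
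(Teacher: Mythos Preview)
Your proposal is correct and follows essentially the same route as the paper: the mediating map for $G$ is built as the distributivity isomorphism followed by the coproduct of the $m^{F}$'s (the paper writes this as $\vartheta=\xi\circ\phi$ with $\xi=[j_{a,b}\circ m^{F}_{V_a,W_b}]$), the unit is $m^{F}_I$, and the coherence axioms are reduced componentwise using Laplaza's coherence for distributivity together with the axioms for $F$. The paper spells out the $\rho$-triangle in detail as a representative case, which is exactly the kind of componentwise verification you describe.
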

\begin{proof}
We first show that \textsl{I} is a monoidal functor by considering:
$$\textsl{I}(V)\otimes \textsl{I}(W)\stackrel{u}\longrightarrow I(V\otimes W)$$
where $V=\{V_*\}_{*\in 1}$, $W=\{W_*\}_{*\in 1}$ and $u=(\mu,\{1^{V}_{*}\otimes 1^{W}_{*}\}_{(*,*)\in 1\times 1})$ with $\mu:1\times 1\rightarrow 1$ and $1^{V}_{*}\otimes 1^{W}_{*}=1_V\otimes 1_W$. It is easy to check that all the axioms of the definition are satisfied. As an example we have that by routine calculations the following axiom is satisfied:
$$\xymatrix@=25pt{
\{V_*\otimes I_*\}_{(*,*)\in 1\times 1}\ar[d]_{1\otimes 1}\ar[rrr]^{(\rho,\{r_{*,*}\}_{(*,*)\in 1\times 1})}&&&\{V_*\}_{*\in 1}\\
\{V_*\otimes I_*\}_{(*,*)\in 1\times 1}\ar[rrr]^{u}&&&\{(V\otimes I)_*\}_{*\in 1}\ar[u]_{I(r)}
}$$
since $\rho_I=\mu$ and $(r_{V})_{\mu(*,*)}=(r_{V})_{*}=r_V=r_{*,*}$.

Next assuming that $(F,m)$ is monoidal we wish to show that $G$ is also a monoidal functor.

Since we also assumed that the category $(\cA,\rho^{\otimes},\lambda^{\otimes},\alpha^{\otimes},\rho^{\oplus},\lambda^{\oplus},\alpha^{\oplus},\delta,\sigma^{\otimes},\sigma^{\oplus},\lambda^{0},\rho^{0},I,\textsl{0})$ is  symmetric distributive then there exists isomorphisms of type:
$$\phi:(\coprod_{a\in A}F(V_a))\otimes (\coprod_{b\in B}F(W_b))\stackrel{\cong}\rightarrow\coprod_{a\in A,b\in B}F(V_a)\otimes F(W_b)$$
generated by these isomorphisms.\\
We consider the unique arrow $\xi$ given by the universal property of the coproduct:
\begin{equation}
\label{equation: psi}
\xymatrix@=25pt{
F(V_a)\otimes F(W_b)\ar[d]_{m_{V_a,W_b}}\ar[rrrr]^{i_{a,b}}&&&&\coprod_{a\in A,b\in B}F(V_a)\otimes F(W_b)\ar[d]^{\xi=[m_{V_a,W_b}\circ j_{a,b}]_{a\in A,b\in B}}\\
F(V_a\otimes W_b)\ar[rrrr]^{j_{a,b}}&&&&\coprod_{a\in A,b\in B}F(V_a\otimes W_b)
}
\end{equation}
Using these maps we define the mediating arrow $\vartheta:G(V)\otimes G(W)\rightarrow G(V\otimes W)$ as the composition $\vartheta_{V,W}=\xi\circ\phi$. We also have that $\vartheta_{I}:I\rightarrow G(I)$ is given by $m_I$.

To show that $\vartheta$ satisfies the axioms of a symmetric monoidal functor we shall only provide the proof of one of the diagrams. This is justified by obvious coproduct properties: the exterior diagram commutes for every $a\in A$ and this implies that the interior diagram commutes by pre-composing with injections $i$ and using the universal property of coproducts:

$$\xymatrix@=25pt{
F(V_a)\otimes I\ar[rrrr]^{\rho}\ar[ddd]_{1\otimes m_{I}}\ar[dr]_{i}&&&&F(V_a)\ar[ld]^{i}\\
&\coprod_{a\in A}(F(V_a)\otimes I)\ar[d]_{[i\circ (1\otimes m_I)]_{a\in A}}\ar[rr]^{[i_{F(V_a)}\circ\rho]_{a\in A}}&&\coprod_{a\in A}F(V_a)&\\
&\coprod_{a\in A}(F(V_a)\otimes FI)\ar[rr]^{\xi}&&\coprod_{a\in A}F(V_a\otimes I)\ar[u]_{[i_{F(V_a)}\circ F(\rho)]_{a\in A}}&\\
F(V_a)\otimes FI\ar[ru]_{i}\ar[rrrr]_{m_{V_a,I}}&&&&F(V_a\otimes I)\ar[ul]_{i}\ar[uuu]_{F(\rho)}&\\
}$$
Then by coherence \cite{Laplaza72}, distributivity of the tensor through coproduct:
$$\xymatrix@=25pt{
(A\coprod B)\otimes I\ar[rrd]^{\rho}\ar[rr]^{\delta}&&(A\otimes I)\coprod (B\otimes I)\ar[d]^{\rho\coprod\rho}\\
&&A\coprod B
}$$
naturality and by definition of $\vartheta$ we may infer that:
$$\xymatrix@=25pt{
(\coprod F(V_a))\otimes I\ar[rrrd]^{\rho}\ar[ddd]_{(\coprod_{a\in A}1)\otimes m_{I}}\ar[dr]_{\bar{\delta}}&&&&\\
&\coprod_{a\in A}(F(V_a)\otimes I)\ar[d]_{\coprod_{a\in A}(1\otimes m_I)}\ar[rr]_{\coprod_{a\in A}\rho_{F(V_a)}}&&\coprod_{a\in A}F(V_a)&\\
&\coprod_{a\in A}(F(V_a)\otimes FI)\ar[rr]^{\xi}&&\coprod_{a\in A}F(V_a\otimes I)\ar[u]_{\coprod_{a\in A}F(\rho_{V_a})}&\\
(\coprod F(V_a))\otimes FI\ar[ru]^{\phi}\ar[rrru]_{\vartheta_{V,I}}&&&&&\\
}$$
commutes,
which turns to be:
$$\xymatrix@=25pt{
G(V)\otimes I\ar[d]^{1\otimes \vartheta_I}\ar[rr]^{\rho}&&G(V)\\
G(V)\otimes G(I)\ar[rr]^{\vartheta_{V,I}}&&G(V\otimes I)\ar[u]_{G(\rho)}
}$$
Similarly one could prove the rest of the axioms.

Notice that if the mediating arrows $m_{V_a,W_b}$ are isomorphisms in diagram~(\ref{equation: psi}) above then $\xi$ is an isomorphism. Therefore this implies that $\vartheta_{V,W}$ is an isomorphism for every $V$ and $W$ i.e., $G$ is a strong functor.
\end{proof}

\section{The functor $\Phi:{\bf FinSet}\rightarrow \cC^{+}$.}\label{THE FUNCTOR PHI FROM SETS}
Now we turn to prove that when $\cC$ is affine, there exists a functor $\Phi:{\bf FinSet}\rightarrow \cC^{+}$ which is fully faithful and preserves tensor and coproduct.

\begin{definition}\label{AFFINE CATEGORY}
 A monoidal category $\cC$ is called {\em affine} if
 the tensor unit $I$ is a terminal object.
\end {definition}
\begin{lemma}\label{fully-faithful strong monoidal functor PHI}
Let $\cC$ be an affine category. Then there exists a fully-faithful strong monoidal functor  $\,\,\,\Phi:({\bf FinSet},\times,1)\rightarrow (\cC^{+},\otimes_{\cC^{+}},I)$ that preserves coproducts.
\end{lemma}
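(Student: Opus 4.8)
The plan is to define $\Phi$ on objects by sending a finite set $A$ to the constant family $\{I\}_{a\in A}$, where $I$ is the tensor unit of $\cC$ (which, by affineness, is also a terminal object), and on a function $\phi\colon A\to B$ by $\Phi(\phi)=(\phi,\{1_I\}_{a\in A})$, where each component $1_I\colon I\to I$ is legitimate because $(\Phi B)_{\phi(a)}=I$. Functoriality is then immediate, since all components are identities and composition in $\cC^{+}$ multiplies them: $\Phi(\psi\circ\phi)=(\psi\circ\phi,\{1_I\})=\Phi(\psi)\circ\Phi(\phi)$, and $\Phi(\mathrm{id}_A)=\mathrm{id}_{\Phi A}$.

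First I would establish full faithfulness, which is exactly where the affineness hypothesis is used. By Definition~\ref{DEF FINITE COMPL}, a morphism $\Phi A\to\Phi B$ in $\cC^{+}$ is a pair $(\psi,g)$ with $\psi\colon A\to B$ and components $g_a\colon I\to(\Phi B)_{\psi(a)}=I$. Because $I$ is terminal, the only arrow $I\to I$ is $1_I$, which forces $g_a=1_I$ for every $a$. Hence every morphism $\Phi A\to\Phi B$ is of the form $(\psi,\{1_I\})=\Phi(\psi)$ for a unique underlying function $\psi$, so $\Phi\colon{\bf FinSet}(A,B)\to\cC^{+}(\Phi A,\Phi B)$ is a bijection.

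Next I would verify that $\Phi$ is strong monoidal. Unwinding the tensor of Proposition~\ref{MONO FINITE COMPL}, one has $\Phi(A)\otimes_{\cC^{+}}\Phi(B)=\{I\otimes I\}_{(a,b)\in A\times B}$ whereas $\Phi(A\times B)=\{I\}_{(a,b)\in A\times B}$; the unitor $\rho_I\colon I\otimes I\to I$ of $\cC$ (which coincides with $\lambda_I$) gives, componentwise over the identity bijection of $A\times B$, an isomorphism $\Phi(A)\otimes\Phi(B)\stackrel{\cong}{\longrightarrow}\Phi(A\times B)$. Moreover $\Phi(1)=\{I\}_{\ast\in 1}$ is literally the monoidal unit of $\cC^{+}$, so the unit comparison is an identity and $\Phi$ is strong. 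The monoidal and symmetry coherence diagrams then reduce componentwise to coherence in $\cC$ together with the cartesian coherence of ${\bf FinSet}\cong\mathbf{1}^{+}$, and I would discharge them exactly as in Proposition~\ref{MONO FINITE COMPL}. Coproduct preservation is immediate, and in fact strict: $\Phi(A)\oplus\Phi(B)=\{I\}_{c\in A+B}=\Phi(A+B)$, with injections $(in_1,\{1_I\})=\Phi(in_1)$ and $(in_2,\{1_I\})=\Phi(in_2)$, and $\Phi(\emptyset)=\epsilon$. Alternatively, $\Phi$ can be obtained abstractly as the coproduct-preserving extension (via Proposition~\ref{COCOMPLETION OF A CATEGORY} and the identification ${\bf FinSet}\cong\mathbf{1}^{+}$) of the strong monoidal functor $\mathbf{1}\to\cC$ picking out $I$, whence strong monoidality follows from Proposition~\ref{MONOIDAL FUNCTOR PSI}.

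The only conceptually subtle point is full faithfulness: it is precisely the affineness of $\cC$ (terminality of $I$) that collapses $\cC^{+}(\Phi A,\Phi B)$ onto ${\bf FinSet}(A,B)$, and without it $\Phi$ would in general fail to be full. The remaining work, namely the verification of the monoidal and symmetry coherence diagrams, is routine but the most labour-intensive part of the argument.
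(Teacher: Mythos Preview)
Your proposal is correct and follows essentially the same approach as the paper: define $\Phi$ by constant-$I$ families with identity components, derive full faithfulness from terminality of $I$, and verify strict coproduct preservation and strong monoidality directly. The only minor difference is that you (correctly) use the unitor $\rho_I\colon I\otimes I\to I$ as the tensor comparison, whereas the paper treats $\Phi(A)\otimes\Phi(B)\to\Phi(A\times B)$ as an identity, tacitly identifying $I\otimes I$ with $I$.
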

\begin{proof}
We shall begin by considering the functor $\Phi$ which assigns to each finite set $A$ a family
$\Phi(A)=\{C_a\}_{a\in A}$, such that for every $a\in A$, $C_a=I$ is the unit of the category $\cC$.\\
Now let $A\stackrel{\phi}\longrightarrow B$ be a function in ${\bf FinSet}$, then $$\Phi(A)\stackrel{\Phi(\phi)}\longrightarrow \Phi(B)\,\,\mbox{with}\,\,\,
\Phi(\phi)=(\phi,Id_{A})\,\,\mbox{and}\,\,\,\,Id_{A}=\{1_a\}_{a\in A},\,\,\,\,\,I\stackrel{1_a=id_{I}}\longrightarrow I.$$
The kind of functor obtained in this way has been motivated in order to satisfy the following properties which are essential for the model.

\textsl{$\Phi$ is faithful:}
The way we define morphisms in $\cC^{+}$ allows us to infer that if $\Phi(\phi)=\Phi(\psi)$ then $\phi=\psi$.

\textsl{$\Phi$ is full:}
Suppose we have a pair $(\phi,f)\in \cC^{+}(\Phi(A),\Phi(B))$ then $f=\{f_{a}\}_{a\in A}$ with $I\stackrel{f_a}\longrightarrow I$; since $I$ is a terminal object this implies that $f_a=1_a=\,\,\,!\,\,\,$ for every $a\in A$. Therefore $\Phi(\phi)=(\phi,f)$.

\textsl{$\Phi$ preserves coproducts:}

Take objects $A$ and $B$; then by definition we have that
$$\Phi(A\oplus B)=\{C_c\}_{c\in A\oplus B}=\{C_a\}_{a\in A}\oplus\{C_b\}_{b\in B}=\Phi(A)\oplus \Phi(B).$$

Suppose we have two arrows $A\stackrel{\phi}\longrightarrow C$, $B\stackrel{\psi}\longrightarrow D$ then:
$$\Phi(\phi\oplus \psi)=(\phi\oplus \psi,Id_{A\oplus B})=(\phi\oplus \psi,Id_{A}\oplus Id_{B})\stackrel{def.}=(\phi,Id_A)\oplus (\psi,Id_B)=\Phi(\phi)\oplus \Phi(\psi)$$

\textsl{$\Phi$ preserves tensor product:}

Assuming $A$ and $B$ are finite sets then
$$\Phi(A\times B)=\{C_{(a,b)}\}_{(a,b)\in A\times B}=\{C_{a}\otimes C_{b}\}_{(a,b)\in A\times B}=\{C_{a}\}_{a\in A}\otimes \{C_{b}\}_{b\in B}=\Phi(A)\otimes\Phi(B)$$
at the level of objects.
If $A\stackrel{\phi}\longrightarrow C$, $B\stackrel{\psi}\longrightarrow D$ then we have that naturality is satisfied:
$$\Phi(\phi\times \psi)=(\phi\times \psi,Id_{A\times B})=(\phi\times \psi,Id_{A}\bar{\otimes} Id_{B})=(\phi,Id_A)\otimes (\psi,Id_B)=\Phi(\phi)\otimes \Phi(\psi)$$
since $Id_{A}\bar{\otimes} Id_{B}=\{(1\otimes 1)_{(a,b)}\}_{(a,b)\in A\times B}=\{1_a\otimes 1_b\}_{(a,b)\in A\times B}=\{1_{(a,b)}\}_{(a,b)\in A\times B}=Id_{A\times B}$.

Also, $\Phi(1)=\Phi(\{*\})=\{C_{*}\}_{*\in 1}=I_{\cC^{+}}$.

This implies that $\Phi$ is a monoidal functor with identity $id:\Phi(A)\otimes\Phi(B)\rightarrow\Phi(A\times B)$, $id:I\rightarrow\Phi(1)$ as mediating natural transformations. It is a routine exercise to show that the remaining equations of a monoidal functor, involving the structural maps $\alpha$, $\rho$ and $\lambda$, are satisfied.

For example, the diagram
$$\xymatrix@=25pt{
\Phi(B)\otimes I\ar[d]_{1\otimes 1}\ar[r]^{\bar{\rho}}&\Phi(B)\\
\Phi(B)\otimes \Phi(1)\ar[r]^{1}&\Phi(B\times 1)\ar[u]_{\Phi(\rho)}
}$$
is satisfied. To see this, we calculate $\Phi(\rho)=(\rho,\{1_{(a,*)}\}_{(a,*)\in A\times 1})$. On the other hand by definition we have that
$\bar{\rho}=(\rho,\textsl{r})$ with $\rho:A\times \{*\}\rightarrow A$, $\rho(a,*)=a$ and with $\textsl{r}=\{r_{(a,*)}\}_{(a,*)\in A\times 1}$ where $r_{(a,*)}=r_{V_a}$,
$ V_a\otimes I \stackrel{r_{V_a}}\longrightarrow  V_a$ but since $V_a=I$ this implies $I\stackrel{r_{V_a}=1_{I}}\longrightarrow I$. Hence, these two arrows are equal.
\end{proof}

\section{Affine monoidal categories}\label{FREE AFFINE MONOIDAL CATEGORY}
Recall from Definition~\ref{AFFINE CATEGORY} that a monoidal category is affine when
 the tensor unit $I$ is a terminal object. The following construction is well-known.

\begin{definition}[Free affine symmetric monoidal category]
\rm
Let $\cK$ be a category. The \textit{free affine symmetric monoidal category} $\cF wm(\cK)$ is the category defined as follows:
\begin{itemize}
\item[(a)] objects are finite sequences of objects of $\cK$:
$$\{V_{i}\}_{i\in [n]}=\{V_1,\dots,V_n\}$$
\item[(b)] maps $(\phi,\{f_i\}_{i\in [m]}):\{V_{i}\}_{i\in [n]}\longrightarrow \{W_{i}\}_{i\in [m]}$ are determined by:
\begin{itemize}
\item[-] an injective function $\phi:[m]\rightarrow [n]$
\item[-] a family of morphism $f_{i}:V_{\phi(i)}\rightarrow W_{i}$ in the category $\cK$
\end{itemize}
\item[(c)] composition $(\phi,\{f_i\}_{i\in [m]})\circ(\psi,\{g_i\}_{i\in [s]})=(\psi\circ\phi,\{f_i\circ g_{\phi(i)}\}_{i\in [s]})$
\item[(d)] the unit  is given by the empty sequence.
\item[(e)] the tensor $\otimes$ is given by concatenation of sequences of objects and arrows:
$$\{V_{i}\}_{i\in [n]}\otimes\{W_{i}\}_{i\in [m]}=\{Z_{i}\}_{i\in [n+m]}$$
where $Z_i=V_i$ if $1\leq i\leq n$ and $Z_i=W_{i-n}$ if $n+1\leq i\leq n+m$
$$\{V_{i}\}_{i\in [n]}\otimes\{W_{i}\}_{i\in [m]}=\{P_{i}\}_{i\in [n+m]}\stackrel{(\phi,f)\otimes(\psi,g)}\longrightarrow \{Q_{i}\}_{i\in [s+t]}=\{X_{i}\}_{i\in [s]}\otimes\{Y_{i}\}_{i\in [t]}$$
given by $(\phi,f)\otimes(\psi,g)=(\phi+\psi,f+g)$ where $\phi+\psi:[s+t]\rightarrow[n+m]$ is defined by $(\phi+\psi)(i)=\phi(i)$ if $1\leq i\leq s$ and $(\phi+\psi)(i)=\psi(i-s)+n\,\,\,$ if $s+1\leq i\leq s+t$ and $f+g=\{(f+g)_j\}_{j\in [s+t]}$ where $(f+g)_j:P_{(\phi+\psi)(j)}\rightarrow Q_j$ is defined by $(f+g)_j=f_j$ if $1\leq j\leq s$ and $(f+g)_j=g_{j-s}$ if $s+1\leq j\leq s+t$

\item[(f)] the canonical isomorphisms are strict given by $\textsl{l}=\textsl{r}=1$, $a=1$ and symmetries by $s=(\sigma,1)$ with $\sigma:[n+m]\rightarrow[n+m]$ such that $\sigma(i)=i+n$ if $1\leq i\leq m$ and $\sigma(i)=i-m$ if $m+1\leq i\leq n+m$.
\end{itemize}
\end{definition}
\begin{remark}
\rm
The tensor unit of $\cF wm(\cK)$ is a terminal object:
$$\{V_{i}\}_{i\in [n]}\stackrel{(\emptyset,\emptyset)}\longrightarrow \{\}$$
for every $\{V_{i}\}_{i\in [n]}$ object in $\cK$. In addition, notice that $\cF wm(\cK)(\{\},\{V_{i}\}_{i\in [n]})=\emptyset $ if $\{V_{i}\}_{i\in [n]}\neq \{\}$.
\end{remark}
\begin{proposition}
\label{FREE WEEK SYM MON}
Given any symmetric monoidal category $\cA$ whose tensor unit is terminal and any functor $F:\cK\rightarrow\cA$, there is a unique strong monoidal functor $G:\cF wm(\cK)\rightarrow \cA$, up to isomorphism, such that $G\circ I =F$.
$$\xymatrix@=25pt{
\cK\ar[d]_{I}\ar[r]^{F}&\cA\\
\cF wm(\cK)\ar[ru]_{G}&
}$$
\end{proposition}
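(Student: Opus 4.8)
The plan is to prove a universal property for $\cF wm(\cK)$ by the same strategy used in Proposition~\ref{COCOMPLETION OF A CATEGORY}, but now tracking the monoidal (rather than coproduct) structure. Since objects of $\cF wm(\cK)$ are finite sequences $\{V_i\}_{i\in[n]}$ and the tensor is concatenation, every object decomposes as an iterated tensor $I(V_1)\otimes\cdots\otimes I(V_n)$ of objects in the image of $I$, with the empty sequence being the unit. This forces the definition of $G$ on objects: to preserve the tensor (up to the strong-monoidal isomorphisms) and to satisfy $G\circ I=F$, we must set $G(\{V_i\}_{i\in[n]})=F(V_1)\otimes\cdots\otimes F(V_n)$ in $\cA$, with $G(\{\,\})=I_{\cA}$. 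This simultaneously defines $G$ and begins the uniqueness argument, exactly as the coproduct decomposition did before.

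Next I would define $G$ on morphisms. Given $(\phi,\{f_i\}_{i\in[m]}):\{V_i\}_{i\in[n]}\to\{W_i\}_{i\in[m]}$ with $\phi:[m]\to[n]$ injective, the map $G(\phi,f):F(V_1)\otimes\cdots\otimes F(V_n)\to F(W_1)\otimes\cdots\otimes F(W_m)$ is the composite that, using the symmetry and unit isomorphisms of $\cA$, permutes the $n$ factors so the $\phi$-image factors come to the front in the correct order, discards (via the unique maps into $I_{\cA}$, or rather via the unit isomorphisms since the discarded factors must be coherently deleted) the factors not in the image of $\phi$, and then applies $F(f_i)$ in each surviving slot. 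Here the hypothesis that $\cA$ is affine is essential: deleting a factor $F(V_j)$ for $j\notin\mathrm{im}(\phi)$ requires the canonical map $F(V_j)\to I_{\cA}$, which exists and is unique precisely because $I_{\cA}$ is terminal. I would then verify functoriality (identities and the composition rule $(c)$) using coherence for symmetric monoidal categories and the terminality of $I_{\cA}$ to handle the bookkeeping of which factors are deleted versus permuted.

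To show $G$ is strong monoidal, I would exhibit the structural isomorphism $G(V)\otimes G(W)\cong G(V\otimes W)$: since $V\otimes W$ is literally the concatenation, $G(V\otimes W)=F(V_1)\otimes\cdots\otimes F(V_n)\otimes F(W_1)\otimes\cdots\otimes F(W_m)$, which is canonically isomorphic (by associativity $\alpha$ in $\cA$) to $(F(V_1)\otimes\cdots\otimes F(V_n))\otimes(F(W_1)\otimes\cdots\otimes F(W_m))=G(V)\otimes G(W)$; on units $G(\{\,\})=I_{\cA}$ strictly. The required coherence diagrams then reduce to Mac Lane's coherence theorem for symmetric monoidal categories applied in $\cA$. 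Finally, for uniqueness up to isomorphism, I would argue as in the earlier proposition: any strong monoidal $H$ with $H\circ I=F$ must send a sequence, which is a tensor of $I$-images, to the corresponding tensor of $F$-images up to the coherent isomorphisms, and must act on morphisms in the way dictated by strong monoidality together with naturality, so $H\cong G$.

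The main obstacle I anticipate is the treatment of morphisms, specifically the interaction between the injectivity of $\phi$, the factor-deletion via terminality, and the symmetry isomorphisms used for reindexing. Defining $G(\phi,f)$ coherently and then checking that it respects composition $(\phi\psi, \{f_i\circ g_{\phi(i)}\})$ is the genuinely delicate step, because one must confirm that composing two ``permute-delete-apply'' composites in $\cA$ agrees on the nose with the single such composite built from the composed data; this is where affineness and the symmetric monoidal coherence theorem do the real work, and it is the part most prone to subtle sign/indexing errors rather than conceptual difficulty.
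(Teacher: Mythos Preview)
Your proposal is correct and matches the paper's own proof essentially point for point: define $G$ on objects as the iterated tensor of $F$-images (empty sequence to $I_{\cA}$), define $G$ on morphisms by deleting the non-$\phi$-image factors via the terminal maps $!:F(V_j)\to I_{\cA}$, rearranging via the symmetric monoidal coherence isomorphism, and applying $F(f_i)$ slotwise, then obtain the strong monoidal structure from the associator rebracketing and argue uniqueness from the tensor decomposition of sequences. The paper presents the morphism definition in the order delete--rearrange--apply rather than your permute--delete--apply, but by coherence and terminality these agree, and your identification of the functoriality check as the delicate step is exactly where the paper appeals to coherence of $\cA$.
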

\begin{proof}(sketch)
The functor $G$ is defined on objects by:
$G(\{\})=I$ and $G(\{V_{i}\}_{i\in [n]})=(\ldots(F(V_1)\otimes F(V_2)\otimes F(V_3)  )\ldots\otimes F(V_n))$.

Let $(\phi,\{f_i\}_{i\in [m]}):\{V_{i}\}_{i\in [n]}\rightarrow \{W_{i}\}_{i\in [m]}$ be a map in $\cF wm(\cK)$ then
$$G(\phi,\{f_i\}_{i\in [m]}):G(\{V_{i}\}_{i\in [n]})\rightarrow G(\{W_{i}\}_{i\in [m]})$$
is given by

$$\xymatrix@=4pt{
(F(V_1)\otimes F(V_2))\otimes\ldots\otimes F(V_n)=G(\{V_{i}\}_{i\in [n]})\ar[dddd]^{(x_1\otimes x_2)\ldots\otimes x_n}\ar@{..>}[rr]^{}&&(F(W_1)\otimes F(W_2))\ldots F(W_m)=G(\{W_{i}\}_{i\in [m]})\\
&&\\
&&\\
&&\\
(X_1\otimes X_2)\otimes\ldots\otimes X_n\ar[rr]_{\cong}&&(F(V_{\phi(1)})\otimes F(V_{\phi(2)}))\ldots F(V_{\phi(m)})\ar[uuuu]_{(F(f_1)\otimes F(f_2))\ldots\otimes F(f_m)}&
}$$
where $x_i=1_{F(V_i)}:F(V_i)\rightarrow F(V_i)$ if $i\in \phi([m])$ and $x_i=\,\,\,!:F(V_i)\rightarrow I$ if $i\in [n]-\phi([m])$.

Using coherence of the category $\cA$ we prove that $G$ is a strong functor:  the mediating isomorphism is given by the unique morphism that shifts all the parenthesis to the left:
$$G(\{V_{i}\}_{i\in [n]})\otimes G(\{W_{i}\}_{i\in [m]})\stackrel{m}\longrightarrow G(\{V_{i}\}_{i\in [n]}\otimes\{W_{i}\}_{i\in [m]})$$
and
$$I\stackrel{m^0=1}\longrightarrow G(\{\}).$$

To prove uniqueness we use the fact that
$x_i=\,\,\,!:F(V_i)\rightarrow I$ transforms into $x_i=\,\,\,!:G\circ I(V_i)\rightarrow G\{\}$ if $i\in [n]-\phi([m])$ and also that the coherence structure is preserved, up to isomorphism, for any functor satisfying these conditions.
\end{proof}

\begin{corollary}
$Fwm(\cK)$ is the free affine symmetric monoidal category generated by $\cK$.
\end{corollary}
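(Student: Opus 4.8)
The plan is to observe that ``being the free affine symmetric monoidal category on $\cK$'' is precisely the assertion that $\cF wm(\cK)$, equipped with a canonical inclusion functor $I:\cK\rightarrow\cF wm(\cK)$, enjoys the universal property already established in Proposition~\ref{FREE WEEK SYM MON}. So the corollary is essentially a matter of assembling the pieces already in place and checking that the objects and morphisms involved are of the expected kind.

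First I would confirm that $\cF wm(\cK)$ is itself an object of the target structure, namely an affine symmetric monoidal category. Its symmetric monoidal structure is given in items (e)--(f) of the definition, with the empty sequence as tensor unit and concatenation as tensor; since the coherence isomorphisms $\textsl{l}$, $\textsl{r}$, $a$ are taken to be identities, the coherence axioms hold trivially. The Remark immediately following the definition shows that the empty sequence $\{\}$ is terminal, since for every object $\{V_{i}\}_{i\in[n]}$ there is exactly one morphism $(\emptyset,\emptyset):\{V_{i}\}_{i\in[n]}\rightarrow\{\}$. Hence the tensor unit is terminal and $\cF wm(\cK)$ is affine. Next I would name the unit functor $I:\cK\rightarrow\cF wm(\cK)$, sending an object $V$ to the one-element sequence $\{V\}$ and a morphism $f$ to $(\mathrm{id}_{[1]},\{f\})$, exactly as in the $\cC^{+}$ case. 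With this data fixed, Proposition~\ref{FREE WEEK SYM MON} states precisely that for every affine symmetric monoidal category $\cA$ and every functor $F:\cK\rightarrow\cA$ there is a strong monoidal functor $G:\cF wm(\cK)\rightarrow\cA$, unique up to isomorphism, with $G\circ I=F$. This is the defining universal property of the free affine symmetric monoidal category, so the corollary follows.

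The only point requiring genuine care --- and the closest thing to an obstacle --- is matching the $2$-categorical flavour of the universal property to the bare word ``free''. Freeness here should be read as an adjunction between the forgetful $2$-functor from affine symmetric monoidal categories (with strong monoidal functors and monoidal natural isomorphisms) to $\mathbf{Cat}$ and the assignment $\cK\mapsto\cF wm(\cK)$. The clause ``unique up to isomorphism'' in Proposition~\ref{FREE WEEK SYM MON} is exactly this bicategorical universal property, not an on-the-nose one, and I would spell this out so the reader does not mistake it for strict uniqueness. I would also emphasise that the \emph{strongness} of $G$ (rather than mere laxness) is precisely what the left-bracketing construction in the proof of Proposition~\ref{FREE WEEK SYM MON} delivers, and that this is the correct notion of morphism for the universal property to characterise $\cF wm(\cK)$ among affine symmetric monoidal categories.
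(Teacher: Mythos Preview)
Your proposal is correct and matches the paper's approach: the paper gives no separate proof for this corollary, treating it as an immediate restatement of the universal property just established in Proposition~\ref{FREE WEEK SYM MON}. Your additional remarks about the 2-categorical reading of ``unique up to isomorphism'' and the strongness of $G$ are accurate elaborations, but they go beyond what the paper itself records.
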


\begin{example}
To illustrate the definition of the functor $G$ in the proof of Proposition~\ref{FREE WEEK SYM MON}, let us consider $(\phi,\{f_i\}_{i\in [2]}):\{V_1,V_2,V_3\}\rightarrow \{W_1,W_2\}$ with $\phi:[2]\rightarrow [3]$, $\phi(1)=3,\phi(2)=1$ then
$$G(\phi,\{f_i\}_{i\in [2]}):G(\{V_1,V_2,V_3\})\rightarrow G(\{W_1,W_2\})$$ is given by
$$\xymatrix@=25pt{
(F(V_1)\otimes F(V_2))\otimes F(V_3)=G(\{V_1,V_2,V_3\})\ar[d]_{F(f_2)\otimes !\otimes F(f_1)}\ar@{..>}[rr]^{G(\phi,\{f_i\}_{i\in [2]})}&&F(W_1)\otimes F(W_2)=G(\{W_1,W_2\})\\
(F(W_2)\otimes I)\otimes F(W_1)\ar[rr]_{\rho\otimes 1}&&F(W_2)\otimes F(W_1).\ar[u]_{\sigma}&
}$$

\end{example}

\section{Traced monoidal categories}\label{TRACED MONOIDAL CATEGORIES}
We recall the definition of a trace from~\cite{JSV96}.

\begin{definition}
\rm A \textit{trace} for a symmetric monoidal category $(\cC,\otimes,I,\rho,\lambda,s)$ consists of a family of functions
$$\Tr^U_{A,B}:\cC(A\otimes U,B\otimes U)\rightarrow \cC(A,B)$$
natural in $A$, $B$, and dinatural in $U$, satisfying the following axioms:

\textbf{Vanishing I:}\\
$\Tr^{I}_{X,Y}(f)=f$,\\

 \textbf{Vanishing II:}\\
$\Tr^{U\otimes V}_{X,Y}(g)=\Tr^{U}_{X,Y}(\Tr^{V}_{X\otimes U,Y\otimes U}(g))$,\\

\textbf{Superposing:}\\
$\Tr^U_{A\otimes C,B\otimes D}((1_B\otimes \sigma^{-1}_{D,U})\circ (f\otimes g)\circ (1_A\otimes \sigma_{C,U}))=\Tr^U_{A,B}(f)\otimes g=\\
\Tr^U_{A\otimes C,B\otimes D}((1_B\otimes \sigma_{U,D})\circ (f\otimes g)\circ (1_A\otimes \sigma^{-1}_{U,C}))$,\\

\textbf{Yanking:}\\
 For every $U$, we have $\Tr^{U}_{U,U}(\sigma_{U,U})=1_{U}$.\\

Explicitly, naturality and
dinaturality mean the following

\textbf{Naturality in $A$ and $B$:}\\
For any $g:X'\rightarrow X$ and $h:Y\rightarrow Y'$ we have that
$$\Tr^{U}_{X',Y'}((h\otimes 1_{U})\circ f\circ(g\otimes 1_{U})=h \circ \Tr^{U}_{X,Y}(f)\circ g.$$\\

\textbf{Dinaturality in $U$:}\\
 For any $f:X\otimes U\rightarrow Y\otimes U'$, $g:U'\rightarrow U$ we have that
$$\Tr^{U}_{X,Y}((1_{Y}\otimes g)\circ f)=\Tr^{U'}_{X,Y}(f\circ (1_{X}\otimes g)).$$\\

\end{definition}

\begin{definition}
\rm
Suppose we have two traced monoidal categories $(\cV,\Tr)$ and $(\cW,\widehat{\Tr})$. We say that a strong monoidal functor $(F,m):\cV\rightarrow\cW$ is \textit{traced monoidal} when it preserves the trace operator in the following way: for $f:A\otimes U\rightarrow B\otimes U$
$$\widehat{\Tr}^{FU}_{FA,FB}(m^{-1}_{A,U}\circ F(f)\circ m_{A,U})=F(\Tr^U_{A,B}(f)):FA\rightarrow FB.$$
\end{definition}

\section{Graphical language}

 Graphical calculi are an important tool for reasoning about monoidal
 categories, dating back at least to the work of Penrose~\cite{Pen}.
 There are various graphical languages which are provably complete for
 reasoning about diagrams in different kinds of monoidal
 categories. They allow efficient geometrical and topological insights
 to be used in a kind of calculus of ``wirings'', which simplifies
 diagrammatic reasoning. See~\cite{Sel 2009} for a detailed survey of
 such graphical languages.

 In particular, there is a graphical language for traced monoidal
 categories, which was already used in the original paper of Joyal,
 Street, and Verity~\cite{JSV96}. The axioms of traced monoidal
 categories are represented in the following way.

Naturality:\hspace{0.8in}\includegraphics[height=0.6in]{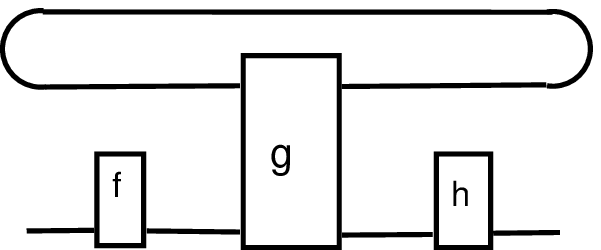}=\includegraphics[height=0.6in]{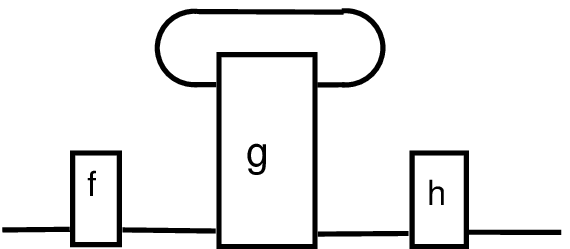}\\

Dinaturality:\hspace{0.8in}\includegraphics[height=0.6in]{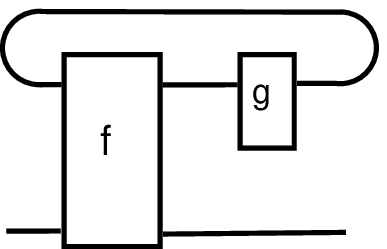}=\includegraphics[height=0.6in]{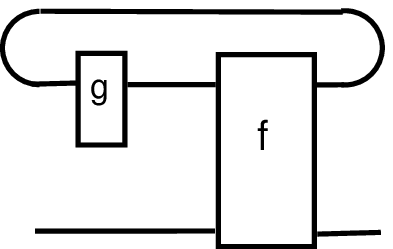}\\

Vanishing I:\hspace{0.8in} \includegraphics[height=0.6in]{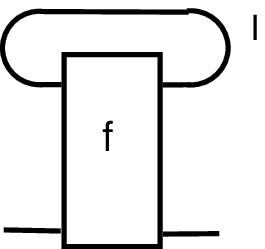}=\includegraphics[height=0.6in]{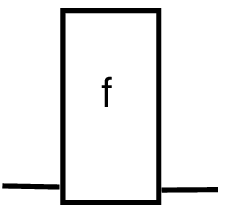}\\

Vanishing II:\hspace{0.8in} \includegraphics[height=0.6in]{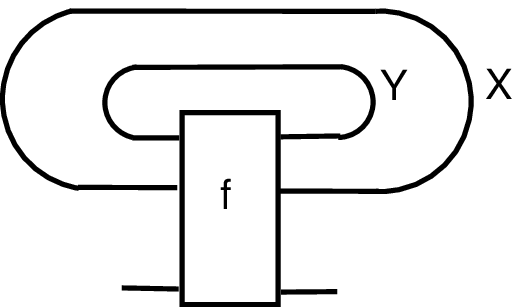}=\includegraphics[height=0.6in]{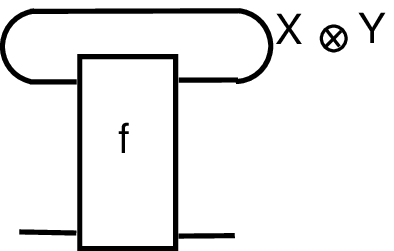}\\

Superposing (equivalent formulation):\hspace{0.5in}
\includegraphics[height=0.6in]{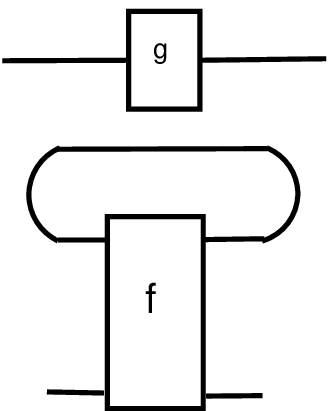}=\includegraphics[height=0.6in]{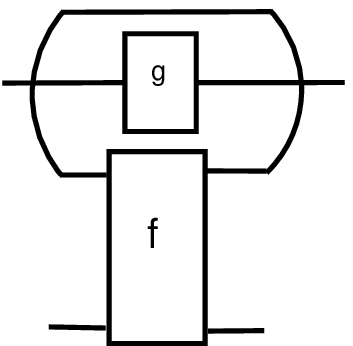}\\

Yanking:\hspace{0.8in}
\includegraphics[height=0.6in]{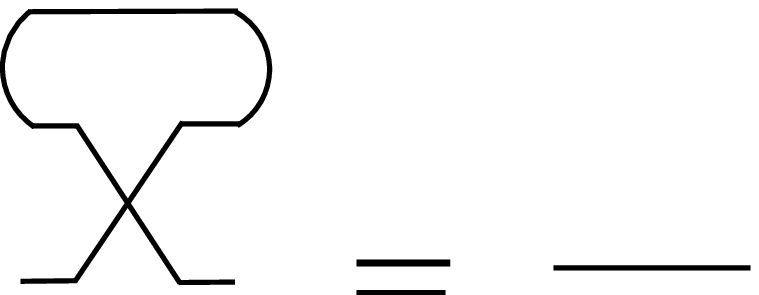}\\

Strength (equivalent formulation of superposing):\hspace{0.5in}
\includegraphics[height=0.6in]{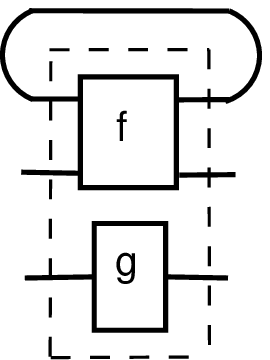}=\includegraphics[height=0.6in]{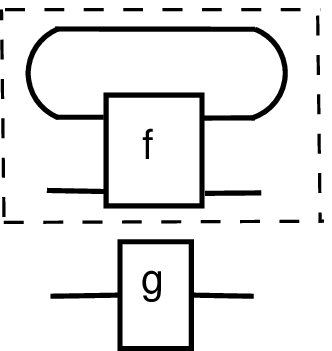}\\

The following theorem
shows the validity of such diagrammatic reasoning in compact closed categories:

\begin{theorem}[Coherence, see~\cite{Sel 2009}]
A well-formed equation between morphisms in the language of symmetric traced categories follows from the
axioms of symmetric traced categories if and only if it holds, up to isomorphism of diagrams,
in the graphical language.
\end{theorem}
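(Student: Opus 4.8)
The plan is to prove the two directions of the biconditional separately, following the pattern of Joyal and Street's coherence theorem for symmetric monoidal categories and its extension to the traced setting. The forward (soundness) direction is the routine one: I would verify that the graphical translation of each generating axiom --- Vanishing I and II, Superposing, Yanking, together with naturality and dinaturality of $\Tr$ --- is a valid isotopy of diagrams. For instance, Yanking becomes the statement that a feedback wire that loops back and crosses itself can be pulled straight; Vanishing II becomes the equivalence between one loop over $U \otimes V$ and two nested loops; and Superposing becomes the statement that a traced loop drawn beside an untouched wire is isotopic to the evident juxtaposition. Since each axiom is realized by an isotopy, and the graphical semantics is a functor out of the free traced category that is invariant under the structural isomorphisms, any equation provable from the axioms is forced to hold up to isomorphism of diagrams.

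For the completeness direction I would argue that every morphism of the free symmetric traced category on a given signature can be brought, using only the axioms, into a normal form consisting of a single outermost trace $\Tr^U(f)$ whose body $f$ lives in the underlying free symmetric monoidal category (and contains no further traces). This is achieved by repeatedly applying Vanishing II to amalgamate nested traces into one, Vanishing I to delete trivial traces, and naturality together with Superposing to push all remaining structure inside a single loop. The problem then reduces to two pieces: the coherence theorem for symmetric monoidal categories (Joyal--Street) applied to the trace-free body $f$, which identifies equality of bodies with isotopy of their progressive diagrams; and a bookkeeping argument showing that the isotopy class of the closed-up diagram determines the pair $(U,f)$ up to the equivalence generated by dinaturality in $U$ and the symmetry coherence that permutes the traced wires.

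The hard part will be this last piece: controlling the topology of the feedback loops. Unlike the trace-free case, where diagrams are \emph{progressive} (monotone in the vertical direction) and the isotopy theory is comparatively simple, a traced diagram contains wires that turn back on themselves, so one must track how these loops wind and cross the other strings. The cleanest way to discharge this obstacle is to pass through the $\mathrm{Int}$ (geometry of interaction) construction of Joyal, Street and Verity: the traced category embeds faithfully into a compact closed category in which the trace is realized by composition with units and counits (``cups'' and ``caps''). The graphical calculus for compact closed categories is already known to be complete up to isotopy, and the trace loops become ordinary cup--cap composites whose isotopies are governed by the snake identities. Transporting that completeness result back along the faithful embedding, and checking that the isotopies used there are exactly those generated by the traced axioms, yields the claim; a detailed execution of precisely this program is carried out in the survey~\cite{Sel 2009}, to which I would defer for the full argument.
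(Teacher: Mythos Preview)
The paper does not actually prove this theorem: it is stated as a background result with a bare citation to Selinger's survey~\cite{Sel 2009}, and no argument is given in the text. Your proposal therefore already goes beyond what the paper does, since you sketch the soundness/completeness strategy (graphical invariance of the axioms, a single-trace normal form, and reduction via the $\mathrm{Int}$-embedding to the known compact closed coherence theorem) before likewise deferring to~\cite{Sel 2009} for the full details; that sketch is accurate and is indeed the shape of the proof carried out in the cited survey.
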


Here by isomorphism of diagrams we mean a bijective correspondence between wires and boxes in which the structure of the graph is preserved.

\section{Compact closed categories}

\begin{definition}
\rm
A \textsl{compact closed} category is a symmetric monoidal category $\cV$ for which every object $A$ has assigned another object $A^{*}$, called  the dual, and a pair of arrows $\eta:I\rightarrow A^{*}\otimes A$ (unit), $\varepsilon:A\otimes A^{*}\rightarrow I$ (counit) such that the following diagrams commute:
$$
\xymatrix@=24pt{
  A\ar[d]_{1}\ar[rr]^{\rho}&&  A\otimes I \ar[rr]^{1\otimes \eta}& & A\otimes (A^* \otimes A) \ar[d]^{\alpha}\\
                 A             &&  I\otimes A \ar[ll]^{\lambda^{-1}}& &    (A\otimes A^*) \otimes A\ar[ll]^{\varepsilon\otimes 1}}
$$
and also,
$$
\xymatrix@=24pt{
  A^{*}\ar[d]_{1}\ar[rr]^{\lambda}&&  I\otimes A^{*} \ar[rr]^{\eta\otimes 1}& & (A^{*}\otimes A) \otimes A^{*} \ar[d]^{\alpha^{-1}}\\
                 A^{*}            &&  A^{*}\otimes I \ar[ll]^{\rho^{-1}}& &    A^{*}\otimes (A \otimes A^{*}).\ar[ll]^{1\otimes \varepsilon}}
$$

\end{definition}

In a compact closed category we can define a functor $(-)^*:\cV^{op}\rightarrow \cV$ where if $f:A\rightarrow B$ then $f^{*}:B^*\rightarrow A^*$ is given by:
$$B^*\stackrel{\lambda}\longrightarrow I\otimes B^* \stackrel{\eta\otimes 1}\longrightarrow A^*\otimes A\otimes B\stackrel{1\otimes f\otimes 1}\longrightarrow A^*\otimes B\otimes B^* \stackrel{1\otimes \varepsilon}\longrightarrow A^*\otimes I\stackrel{\rho^{-1}}\longrightarrow A^* .$$

\begin{proposition}
Let $(\cV,\otimes,\eta,\varepsilon)$ be a compact closed category. There exists a trace, which we call the \textit{canonical trace}, defined by:
$$\Tr^U_{A,B}(f)=(1\otimes\varepsilon\sigma)(f\otimes 1)(1\otimes\eta).$$
Moreover every symmetric strong monoidal functor between compact categories is traced monoidal with respect to the canonical trace.
\end{proposition}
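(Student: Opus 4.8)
The statement has two parts, and I would treat them in turn.

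\textbf{Part 1: the formula is a trace.} Reading the defining composite with the evident unit, associativity and symmetry isomorphisms inserted, $\Tr^U_{A,B}(f)$ is the ``feedback'' morphism that creates a $U^*\otimes U$ pair with $\eta$, runs $f$ on the $U$ strand, and annihilates the leftover $U,U^*$ with $\varepsilon$ after a symmetry. I would then verify the six trace axioms directly. \emph{Naturality} in $A$ and $B$ reduces to bifunctoriality of $\otimes$ and naturality of $\alpha,\rho,\lambda,\sigma$, which let the maps $g\otimes 1$ and $h\otimes 1$ slide along the $U$-loop without meeting $\eta$ or $\varepsilon$. \emph{Dinaturality} in $U$ is the compatibility of $\eta$ and $\varepsilon$ with the transpose of a map $g\colon U'\to U$, and follows from the triangle identities defining the dual. \emph{Vanishing I} is immediate because $\eta$ and $\varepsilon$ at $I$ are, up to the unit isomorphisms, identities on $I$, so the loop is trivial. \emph{Vanishing II} follows from the canonical decomposition of the unit and counit at $U\otimes V$ into those at $U$ and at $V$; substituting this decomposition exhibits the single trace over $U\otimes V$ as the trace over $U$ of the trace over $V$. \emph{Superposing} holds because tensoring the loop by a fixed morphism $g$ commutes with the feedback, again by bifunctoriality and naturality of $\sigma$. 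Finally, \emph{Yanking}, $\Tr^U_{U,U}(\sigma_{U,U})=1_U$, is exactly one of the two snake (triangle) identities in the compact closed definition. I expect the conceptually cleanest presentation is via the graphical calculus for compact closed categories recalled in~\cite{Sel 2009}: there $\eta$ and $\varepsilon$ are bent wires, the trace is the literal loop, and every axiom above becomes a planar isotopy of diagrams, with yanking being the straightening of an $S$-bend. The algebraic chase just indicated is then the formal justification for the pictures.

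\textbf{Part 2: functors preserve the canonical trace.} The crucial lemma is that a symmetric strong monoidal functor $(F,m)$ preserves duals: $FU^*$ becomes a dual of $FU$ in $\cW$, with unit $m^{-1}\circ F(\eta)\circ m_I$ and counit $m_I^{-1}\circ F(\varepsilon)\circ m$. I would check the two triangle identities for this candidate dual from functoriality of $F$ applied to the triangle identities in $\cV$ together with the monoidal coherence axioms for $(F,m)$; uniqueness of duals then identifies these with the chosen compact structure on $FU$. Since the canonical trace on each side is assembled solely from $\eta$, $\varepsilon$, $\sigma$ and structural isomorphisms, and since $F$ is symmetric (so it carries $\sigma$ to $\sigma'$ after conjugating by $m$) and strong (so it transports $\alpha,\rho,\lambda$), substituting these preservation facts term by term into the defining formula yields
$$\widehat{\Tr}^{FU}_{FA,FB}\bigl(m^{-1}\circ F(f)\circ m\bigr)=F\bigl(\Tr^U_{A,B}(f)\bigr),$$
which is exactly the condition for $(F,m)$ to be traced monoidal.

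The main obstacle in both parts is the bookkeeping of coherence data. In Part~1 the Vanishing~II and Superposing axioms require associators and symmetries to be inserted so that the relevant $\eta$/$\varepsilon$ pairs line up, and this is where a careless argument can go astray; the graphical calculus largely dissolves this difficulty by making the structural isomorphisms invisible and leaving only the genuine content, namely the snake identity for yanking and the decomposition of units for vanishing. In Part~2 the real work is threading the mediating isomorphisms $m$ and $m_I$ through $F$ applied to the trace formula and checking that they cancel correctly against the transported morphism $m^{-1}\circ F(-)\circ m$; once the preservation-of-duals lemma is established, the remainder is formal.
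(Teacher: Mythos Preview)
Your proof outline is correct and follows the standard approach (direct verification of the trace axioms via the compact closed structure, with preservation of duals under symmetric strong monoidal functors for the second part). The paper itself does not give a proof of this proposition; it simply cites Joyal, Street, and Verity~\cite{JSV96}.
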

\begin{proof}
See~\cite{JSV96}.
\end{proof}
\begin{proposition}\label{uniqueness compact}
Let $\cC$ be a compact closed category.  Then $\cC$ has a unique trace, i.e., the canonical trace
$$\Tr^U_{A,B}(f)=(1\otimes\varepsilon\sigma)(f\otimes 1)(1\otimes\eta).$$
\end{proposition}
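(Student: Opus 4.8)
The plan is to show that \emph{any} trace $T$ on $\cC$ must be given by the canonical formula; since the right-hand side of that formula makes no reference to $T$, this forces every trace to coincide with the canonical one $\Tr$ of the previous proposition, which is exactly uniqueness. So I fix an arbitrary trace $T$ and a morphism $f:A\otimes U\to B\otimes U$, and aim to prove
$$T^U_{A,B}(f)=(1\otimes\varepsilon\sigma)\circ(f\otimes 1)\circ(1\otimes\eta),$$
reading the right-hand side as $(1_B\otimes\varepsilon_U)\circ(f\otimes 1_{U^*})\circ(1_A\otimes\sigma_{U^*,U})\circ(1_A\otimes\eta_U)$ up to the coherence isomorphisms.

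First I would use the compact-closed ``snake'' (the two triangle diagrams of the definition) to rewrite the identity on the traced object as a composite through $U^*\otimes U$, namely $1_U=(\varepsilon_U\otimes 1_U)\circ(1_U\otimes\eta_U)$. Inserting this on the output $U$-wire of $f$ leaves $f$ unchanged, so $T^U_{A,B}(f)=T^U_{A,B}\big((1_B\otimes\varepsilon_U\otimes 1_U)\circ p\big)$, where $p=(1_{B\otimes U}\otimes\eta_U)\circ f:A\otimes U\to (B\otimes U\otimes U^*)\otimes U$. The map $1_B\otimes\varepsilon_U$ touches only the non-traced part of the codomain, so \textbf{Naturality} pulls it outside the trace,
$$T^U_{A,B}(f)=(1_B\otimes\varepsilon_U)\circ T^U_{A,\,B\otimes U\otimes U^*}(p).$$
This is precisely the step that produces the counit $\varepsilon$ appearing in the canonical formula.

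It then remains to evaluate $T^U_{A,\,B\otimes U\otimes U^*}(p)$, which is the heart of the argument. In $p$ the traced copy of $U$ is the one \emph{created} by $\eta_U$, whereas the input copy of $U$ is the one fed into $f$; the two are joined only by the trace feedback. I would apply \textbf{Dinaturality} to slide the cup $\eta_U$ around this feedback loop, turning the loop into a direct plugging of the $U$-leg of $\eta_U$ into the $U$-input of $f$; what survives of the loop is then a bare symmetry $\sigma_{U,U}$, which \textbf{Yanking} ($T^U_{U,U}(\sigma_{U,U})=1_U$) collapses to the identity. Using \textbf{Superposing} to carry the inert $A$, $B$ and $U^*$ wires along, this leaves exactly $(f\otimes 1_{U^*})\circ(1_A\otimes\sigma_{U^*,U})\circ(1_A\otimes\eta_U)$, which completes the formula and hence the proof.

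The main obstacle is not conceptual but the precise management of the coherence isomorphisms $\alpha,\lambda,\rho,\sigma$ while carrying out these moves: the snake identity, the naturality and dinaturality applications, and the superposing step each hold only up to reassociation and unit maps, and one must verify that these isomorphisms assemble so that the residual morphism is \emph{literally} the canonical expression rather than merely isomorphic to it. A safer way to organise exactly the same moves — which I would use to guard against bracketing and coherence slips — is to redraw each step in the graphical calculus and invoke the coherence theorem quoted above, so that every equality becomes an evident isomorphism of diagrams.
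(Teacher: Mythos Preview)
The paper does not actually prove this proposition: it simply cites Appendix~B of \cite{Has08}. Your outline is essentially the standard argument found there, and it is correct. One small point of terminology: the step you call ``Dinaturality to slide the cup $\eta_U$ around the feedback loop'' is not a single application of dinaturality (which would require a map $g:V\to U$ acting \emph{only} on the traced wire, whereas $\eta_U$ produces $U^*\otimes U$ jointly). The way to make that step precise is to first commute $\eta_U$ past $f$ by bifunctoriality, pull $f\otimes 1_{U^*}$ out by \textbf{Naturality} in the output, use \textbf{Superposing} to strip off the inert $A$, and then reduce the residual $T^U(1_U\otimes\eta_U)$ by rewriting $1_U\otimes\eta_U\cong\sigma_{U^*\otimes U,U}\circ(\eta_U\otimes 1_U)$, applying \textbf{Naturality} in the input to extract $\eta_U$, decomposing the remaining symmetry via the hexagon, and finishing with \textbf{Superposing} plus \textbf{Yanking} on $\sigma_{U,U}$. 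This yields exactly your target expression $(f\otimes 1_{U^*})\circ(1_A\otimes\sigma_{U^*,U})\circ(1_A\otimes\eta_U)$. So your plan is sound; only the label ``dinaturality'' on that middle step is slightly off.
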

\begin{proof}
Appendix B of~\cite{Has08}.
\end{proof}

\chapter{Categories of completely positive maps}

\section{Completely positive maps}\label{COMP POSITIVE MAPS}
\begin{definition}
  Let $H$ be a finite dimensional Hilbert space, i.e., a finite
  dimensional complex inner product space. Let us write $\LL(H)$ for
  the space of linear functions $\rho:H\to H$. Equivalently, we can
  write $\LL(H) = H^*\otimes H$.
 \end{definition}

Recall that the {\em adjoint} of a linear function $F:H\to K$ is
 defined to be the unique function $F^{\dagger}:K\to H$ such that
 $\iprod{F^{\dagger}v,w} = \iprod{v,Fw}$, for all $v\in K$ and $w\in
 H$.

 \begin{definition}
   Let $H, K$ be finite dimensional Hilbert spaces. A linear function
   $F:\LL(H)\to\LL(K)$ is said to be {\em completely positive} if it
   can be written in the form
   \[ F(\rho) = \sum_{i=1}^{m} F_i\rho F_i^{\dagger},
   \]
   where $F_i:H\to K$ is a linear function for $i=1,\ldots,m$.
 \end{definition}

 \begin{definition}
   The category $\CPM_s$ of {\em simple completely positive maps} has
   finite dimensional Hilbert spaces as objects, and the morphisms
   $F:H\to K$ are completely positive maps $F:\LL(H)\to\LL(K)$.
 \end{definition}

 \begin{definition}\label{COMPLETE POSITIVE MAPS}
   The category $\CPM$ of {\em completely positive maps} is defined as
   $\CPM=\CPM_s^{\oplus}$, the biproduct completion of $\CPM_s$.
   Specifically, the objects of $\CPM$ are finite sequences
   $(H_1,\ldots,H_n)$ of finite-dimensional Hilbert spaces, and a
   morphism $F:(H_1,\ldots,H_n)\to (K_1,\ldots,K_m)$ is a matrix
   $(F_{ij})$, where each $F_{ij}:H_j\to K_i$ is a completely positive
   map. Composition is defined by matrix multiplication.
 \end{definition}

 \begin{remark}
   In quantum mechanics, completely positive maps correspond to
   general transformations between quantum systems. Two special cases
   are of note: first, $F(\rho) = U\rho U^{\dagger}$, where $U$ is a
   unitary transformation. This represents the unitary evolution of an
   isolated quantum system. Second,
   \[ F(\rho) = (P_1\rho P_1^{\dagger}, \ldots, P_m\rho P_m^{\dagger}),
   \]
   where $P_1,\ldots,P_m$ is a system of commuting self-adjoint
   projections. This corresponds to measurement with possible outcomes
   $1,\ldots,m$. For more details on the physical interpretation, see
   e.g.~\cite{nielsen} or~\cite{Sel 2004}.
 \end{remark}

 \begin{remark}
   Note that the category $\CPM$ is the same (up to equivalence) as
   the category ${\bf W}$ of~\cite{Sel 2004} and the category ${\bf CPM}({\bf
     FdHilb})^{\oplus}$ of~\cite{Sel 2005}.
 \end{remark}

 Note that for any two finite dimensional Hilbert spaces $V$ and $W$,
 there is a canonical isomorphism
 $\phi_{V,W} : \LL(V\otimes W) \to \LL(V)\otimes \LL(W).$

 \begin{remark}
 The categories ${\bf CPM}_s$ and ${\bf CPM}$ are symmetric monoidal.
 For ${\bf CPM}_s$, the tensor product is given on objects by the tensor
 product defined on Hilbert spaces $V\bar{\otimes} W = V\otimes W$, and
 on morphisms by the following map $f\bar{\otimes} g$:

 $$\xymatrix@=25pt{
\LL(V\otimes W) \ar[rr]^{f\bar{\otimes} g}\ar[d]_{\phi_{V,W}}& &\LL(X \otimes Y)\ar[d]^{\phi_{X,Y}}\\
 \LL(V)\otimes \LL(W)\ar[rr]^{f\otimes g} & &  \LL(X)\otimes \LL(Y).
 }$$

 The left and right unit, associativity, and symmetry maps are
 inherited from the symmetric monoidal structure of Hilbert spaces.
 For the symmetric monoidal structure on ${\bf CPM}$, define
$$(V_i)_{i\in I} \otimes (W_j)_{j\in J} = (V_i\otimes W_j)_{i\in I, j\in J}.$$
 This extends to morphisms in an obvious way. For details, see~\cite{Sel 2004}.
\end{remark}

\section{Superoperators}\label{SUPEROPERATORS}

\begin{definition}
\rm We say that a linear map $F:\LL(V)\rightarrow \LL(W)$
is a \textit{trace preserving} linear function when it satisfies
\begin{equation}
\tr_W(F(\rho))=\tr_V(\rho)\label{TRACE PRESERVING CONDITION}
\end{equation}
for all positive $\rho\in \LL(V).$
$F$ is called {\em trace
non-increasing} when it satisfies
\begin{equation}
\tr_W(F(\rho))\leq \tr_V(\rho)\label{TRACE NON-INCREASING CONDITION}
\end{equation}
for all positive $\rho \in \LL(V).$
\end{definition}

\begin{definition}
 A linear function $F :\LL(V)\rightarrow \LL(W)$ is called a
 {\em trace preserving superoperator} if it is completely positive and
 trace preserving, and it is called a {\em trace non-increasing
 superoperator} if it is completely positive and trace non-increasing.
 \end{definition}

\begin{definition}
  A completely positive map $F:(H_1,\ldots,H_n) \to
 (K_1,\ldots,K_m)$ in the category ${\bf CPM}$ is called a {\em trace
 preserving superoperator} if for all $j$ and all positive $\rho\in \LL(H_j)$,
 \[
       \sum_i \tr(F_{ij}(\rho)) = \tr(\rho),
 \]
 and a {\em trace non-increasing superoperator} if for all $j$ and all
 positive $\rho\in \LL(H_j)$,
 \[
       \sum_i \tr(F_{ij}(\rho)) \leq \tr(\rho).
 \]

 \end{definition}

 \begin{definition}\label{Q OPLUS CATEGORY}\label{Q SIMPLE SUPEROPERATORS}\label{TRACE PRESERVING CATEGORY}
 We define four symmetric monoidal categories of
 superoperators. All of them are symmetric monoidal subcategories of ${\bf CPM}$.
 \begin{itemize}
 \item[-] ${\bf Q}$ and $\cQ'$ have the same objects as ${\bf CPM}$, and ${\bf Q}_s$ and $\cQ'_s$ have the
 same objects as ${\bf CPM}_s$.
 \item[-] The morphisms of ${\bf Q}$  and ${\bf Q}_s$ are trace non-increasing superoperators,
   and the morphisms of $\cQ'$ and $\cQ'_s$ are trace preserving
   superoperators.
 \end{itemize}
\end{definition}

The six categories defined in this chapter are summarized in the
 following table:
 \[
 \begin{array}{|c|c|c|}
   \hline
   & \mbox{simple} & \mbox{non-simple} \\\hline
   \mbox{no trace condition} & {\bf CPM}_s & {\bf CPM} \\\hline
   \mbox{trace non-increasing} & {\bf Q}_s & {\bf Q} \\\hline
   \mbox{trace preserving} & \cQ'_s & \cQ' \\\hline
 \end{array}
 \]

\begin{remark} The categories ${\bf Q}$, ${\bf Q}_s$, $\cQ'$, and $\cQ'_s$ are all symmetric
 monoidal. The symmetric monoidal structure is as in ${\bf CPM}$ and ${\bf CPM}_s$, and
 it is easy to check that all the structural maps are trace
 preserving.
 \end{remark}

\begin{lemma}
${\bf Q}$ and $\cQ'$ have finite coproducts.
\end{lemma}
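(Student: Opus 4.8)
The plan is to show that the coproduct in ${\bf Q}$ and $\cQ'$ is computed exactly as in $\CPM$, namely by concatenation of sequences, with the empty sequence as initial object. Since $\CPM=\CPM_s^{\oplus}$ is a biproduct completion, for objects $V=(H_1,\ldots,H_n)$ and $W=(K_1,\ldots,K_m)$ the biproduct $V\oplus W=(H_1,\ldots,H_n,K_1,\ldots,K_m)$ already serves as a coproduct in $\CPM$, with injection matrices $\iota_1:V\to V\oplus W$, $\iota_2:W\to V\oplus W$ and copairing $[f,g]:V\oplus W\to Z$ for $f:V\to Z$, $g:W\to Z$. I would argue that all of this data already lives in the subcategories, so that the coproduct of $\CPM$ restricts to a coproduct in ${\bf Q}$ and $\cQ'$.

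First I would record the key structural observation: the trace conditions defining ${\bf Q}$ and $\cQ'$ are stated \emph{columnwise}. A matrix $(F_{ij}):(H_1,\ldots,H_n)\to(K_1,\ldots,K_m)$ is trace non-increasing (resp.\ trace preserving) precisely when, for each \emph{domain} index $j$ and each positive $\rho\in\LL(H_j)$, the single quantity $\sum_i\tr(F_{ij}(\rho))$ is $\leq\tr(\rho)$ (resp.\ $=\tr(\rho)$). No condition couples different columns. Because concatenation of domains and copairing of morphisms only juxtapose columns without altering them, every columnwise inequality (or equality) is preserved verbatim.

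Concretely I would check two things. First, the injections are trace preserving: the $j$-th column of $\iota_1$ is the identity $H_j\to H_j$ sitting in row $j$, so $\sum_i\tr((\iota_1)_{ij}(\rho))=\tr(\rho)$, and likewise for $\iota_2$; hence both injections lie in $\cQ'$ and a fortiori in ${\bf Q}$. Second, for the universal property, given $f,g$ in ${\bf Q}$ (resp.\ $\cQ'$), the copairing $[f,g]$ has as its columns exactly the columns of $f$ followed by those of $g$; thus for a domain component coming from $V$ the sum $\sum_i\tr([f,g]_{ij}(\rho))$ equals the corresponding sum for $f$, and for one coming from $W$ it equals the corresponding sum for $g$. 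Each is therefore $\leq\tr(\rho)$ (resp.\ $=\tr(\rho)$), so $[f,g]$ again lies in the subcategory. Uniqueness of the mediating morphism is inherited directly from $\CPM$, since the inclusion is faithful and the copairing is already unique there. Finally the empty sequence is initial: the unique map out of it is the empty matrix, whose defining conditions hold vacuously, there being no domain indices $j$ to test.

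The verifications are elementary once the columnwise viewpoint is in place, so there is no serious obstacle; the only point requiring care is to resist expecting a \emph{biproduct}. In $\cQ'$ the projections fail to be trace preserving, since a projection deletes an entire block and a deleted column yields $\sum_i\tr=0\neq\tr(\rho)$; and in ${\bf Q}$ the pairing of two maps can double the column trace and so need not be non-increasing. Thus only the coproduct, and not the product, survives the restriction, which is exactly what the lemma asserts.
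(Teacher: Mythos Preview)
Your proposal is correct and follows essentially the same approach as the paper: both show that the coproduct structure of $\CPM$ (injections and copairing) restricts to ${\bf Q}$ and $\cQ'$ by checking the trace conditions. The paper's proof is a single line (``The injection and copairing maps are as in ${\bf CPM}$; we only need to show that they are trace preserving. But this is trivially true''), whereas you spell out the columnwise argument explicitly and add the helpful remark on why the biproduct does not restrict.
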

\begin{proof}

The injection and copairing maps are as in ${\bf CPM}$; we only need
 to show that they are trace preserving. But this is trivially true.
\end{proof}

\chapter{Partially traced categories}

Traced monoidal categories were introduced by Joyal, Street and Verity~\cite{JSV96} as an attempt to organize properties from different fields of mathematics, such as algebraic topology and computer science. This abstraction has been useful in formulating new insights in concrete topics of theoretical computer science such as feedback, fixed-point operators, the execution formula in Girard's Geometry
of Interaction (GoI) \cite{GIRARD 88}, etc.
In this spirit, an axiomatization for partially traced symmetric monoidal categories was introduced by Haghverdi and Scott \cite{HS05a} providing an appropriate framework for a typed version of the Geometry of Interaction.

 An important part of the treatment of the dynamics of proofs in the Geometry of Interaction relies on the expressiveness of its model: proofs are interpreted as linear operators in Hilbert spaces and an invariant for the cut-elimination process is modelled by a convergent sum in some linear space. Haghverdi and Scott \cite{HS05a} have demonstrated that the categorical notion of partially traced category is a useful tool for capturing the dynamic behavior of all of these conceptual ideas as described by Girard. The word ``partial" here refers to the fact that the trace operator is defined on a subset of the set of morphisms $Hom(A\otimes U,B\otimes U)$ called the {\em trace class}. A large portion of Haghverdi and Scott's work is concerned with constructing the appropriate abstract notion of a typed GoI aided by the idea of orthogonality in the sense of Hyland and Schalk. Partial traces play a central role in Haghverdi and Scott's
 work. For example, their analysis of the idea of an abstract
 algorithm concerns the interplay with the execution formula defined
 in terms of a partially traced category. The categorical formula agrees with the original formula of Girard in some concrete
 Hilbert spaces and the execution formula in this new setting is an invariant of the cut-elimination process.

In this chapter, we give some examples of partially traced categories,
 including an example in the context of quantum computation. We also
 provide a method for constructing more examples by proving that each
 monoidal subcategory of a (totally or partially) traced category is
 partially traced.

\section{Partially traced categories}

We recall the definition of a monoidal partially traced category from \cite{HS05a}.

\begin{definition}\label{Kleene equality}
 Let $f$ and $g$ be partially defined operations. We write
 $f(x)\downarrow\,\,\,$ if $f(x)$ is defined, and $f(x)\uparrow$ if it is
 undefined. Following Freyd and Scedrov~\cite{FREYD SCEDROV Categories Allegories},
 we also write $f(x)\funnels g(x)$ if $f(x)$ and $g(x)$ are either
 both undefined, or else they are both defined and equal. The relation
 ``$\funnels$'' is known as {\em Kleene equality}. We also write
 $f(x)\funnel g(x)$ if either $f(x)$ is undefined, or else $f(x)$ and
 $g(x)$ are both defined and equal. The relation ``$\funnel$'' is
 known as {\em directed Kleene equality}.
\end{definition}
 \begin{definition} Suppose $(\cC,\otimes,I,\rho,\lambda,s)$  is a symmetric monoidal
 category. A {\em partial trace} is given by a family of partial
 functions $\Tr^U_{X,Y} : \cC(X\otimes U, Y\otimes U) \rightharpoonup
 \cC(X,Y)$, satisfying the following axioms:

\noindent
{\bf Naturality:}\\
For any $f:X\otimes U\rightarrow Y\otimes U$, $g:X'\rightarrow X$ and $h:Y\rightarrow Y'$ we have that
$$h \Tr^{U}_{X,Y}(f)g\funnel \Tr^{U}_{X',Y'}((h\otimes 1_{U}) f(g\otimes 1_{U})).$$

\noindent
\textbf{Dinaturality:}\\
 For any $f:X\otimes U\rightarrow Y\otimes U'$, $g:U\rightarrow U'$ we have
$$\Tr^{U}_{X,Y}((1_{Y}\otimes g)f)\funnels \Tr^{U'}_{X,Y}(f(1_{X}\otimes g)).$$

 \noindent
\textbf{Vanishing I:}\\
For every $f:X\otimes I\rightarrow Y\otimes I$ we have
$$\Tr^{I}_{X,Y}(f)\funnels \rho_{Y}f\rho^{-1}_{X}.$$

\noindent
\textbf{Vanishing II:}\\
 For every $g:X\otimes U\otimes V\rightarrow Y\otimes U\otimes V$, if
 $$\Tr^V_{X\otimes U,Y\otimes U}(g)\downarrow, $$ then
 $$\Tr^{U\otimes V}_{X,Y}(g)\funnels \Tr^{U}_{X,Y}(\Tr^{V}_{X\otimes U,Y\otimes U}(g)).$$

 \noindent
\textbf{Superposing:}\\
 For any $f:X\otimes U\rightarrow Y\otimes U$ and $g:W\rightarrow Z$,
 $$g\otimes \Tr^{U}_{X,Y}(f)\funnel \Tr^{U}_{W\otimes X,Z\otimes Y}(g\otimes f).$$

 \noindent
\textbf{Yanking:}\\
 For any $U$,
$$\Tr^{U}_{U,U}(\sigma_{U,U})\funnels 1_{U}.$$
\end{definition}

\begin{definition}\label{DEFINITION TRACE CLASS}
A {\em partially traced} category is a symmetric monoidal
 category with a partial trace.
\end{definition}

 \begin{remark} Comparing this to the definition of a traced monoidal
 category in Section~\ref{TRACED MONOIDAL CATEGORIES}, we see that a traced monoidal category is
 exactly the same as a partially traced category where the trace
 operation happens to be total. We sometimes refer to traced monoidal
 categories as {\em totally} traced monoidal categories, when we want
 to emphasize that they are not partial.
\end{remark}
 \begin{definition}
 The subset of $\cC(X\otimes U,Y\otimes U)$ where
 $\Tr^U_{X,Y}$ is defined is sometimes called the {\em trace class}, and
 is written
 \[
          \Trc^U_{X,Y} = \{ f:X\otimes U\rightarrow Y\otimes U \mid \Tr^U_{X,Y}(f)\downarrow \}.
 \]
\end{definition}

\begin{lemma}
Let $(\mathcal{C},\otimes,I,\Tr,s)$ be a partially traced category. The
 superposition axioms is equivalent to the following axiom (called
 {\em strength}):\\
 For $f:A\otimes U\rightarrow B\otimes U$ and $g:C\rightarrow D$,
\begin{center}
$\Tr^{U}_{A,B}(f)\otimes g\funnel  \Tr^{U}_{A\otimes C,B\otimes D}((1_B\otimes s_{U,D})\circ (f\otimes g)\circ (1_A\otimes s_{C,U})).$
\end{center}
\end{lemma}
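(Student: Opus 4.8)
The plan is to exploit the fact that the superposing and strength axioms differ only in the side on which the parameter $g$ is tensored, together with the observation that the two sides are interchanged by the symmetry $s$. The bridge between them is the naturality of $s$ (part of the symmetric monoidal structure) combined with the \textbf{Naturality} axiom for the partial trace, neither of which depends on superposing or strength. Tensoring with a fixed $g$ and pre/post-composing with isomorphisms are total operations that preserve definedness, so the directed Kleene equalities will line up without forcing me into the $\funnels$ case.

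For superposing $\imp$ strength, I would fix $f:A\otimes U\to B\otimes U$ and $g:C\to D$ and assume $\Tr^U_{A,B}(f)\downarrow$. Naturality of $s$ applied to $\Tr^U_{A,B}(f):A\to B$ and $g:C\to D$ gives the honest equality
$$\Tr^U_{A,B}(f)\otimes g = s_{D,B}\circ(g\otimes\Tr^U_{A,B}(f))\circ s_{A,C}.$$
Applying superposing to the middle factor (legitimate since $\Tr^U_{A,B}(f)\downarrow$) shows $\Tr^U_{C\otimes A,D\otimes B}(g\otimes f)\downarrow$ and rewrites it, and then the Naturality axiom of the partial trace, with $h=s_{D,B}$ as post-composite and $s_{A,C}$ as pre-composite, absorbs the flanking symmetries into the trace:
$$\Tr^U_{A,B}(f)\otimes g = \Tr^U_{A\otimes C,B\otimes D}\big((s_{D,B}\otimes1_U)\circ(g\otimes f)\circ(s_{A,C}\otimes1_U)\big),$$
the right-hand trace being defined. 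It then remains to verify the coherence identity
$$(s_{D,B}\otimes1_U)\circ(g\otimes f)\circ(s_{A,C}\otimes1_U) = (1_B\otimes s_{U,D})\circ(f\otimes g)\circ(1_A\otimes s_{C,U}),$$
a diagram chase from naturality of $s$ and the coherence theorem for symmetric monoidal categories (with associators suppressed as in the statement), which delivers exactly the strength integrand.

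The converse, strength $\imp$ superposing, is entirely symmetric. Given $f:X\otimes U\to Y\otimes U$ and $g:W\to Z$ with $\Tr^U_{X,Y}(f)\downarrow$, I would write $g\otimes\Tr^U_{X,Y}(f)=s_{Y,Z}\circ(\Tr^U_{X,Y}(f)\otimes g)\circ s_{W,X}$ by naturality of $s$, apply strength to the middle factor, and use trace-naturality to move the symmetries inside, reducing to the mirror coherence identity that the resulting integrand equals $g\otimes f$.

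The main obstacle is not the categorical idea, which is merely conjugation by $s$, but two points of bookkeeping. First, at each step I must check that the directionality of $\funnel$ is respected: definedness of $\Tr^U_{A,B}(f)$ must propagate forward through tensoring and conjugation by isomorphisms so that the single-directional hypothesis of superposing yields the single-directional conclusion of strength (and back). Second, the coherence identity relating the two integrands, although transparent from the informal element-wise picture $a\otimes c\otimes u\mapsto b\otimes g(c)\otimes u'$, must be justified purely from the symmetry and associativity axioms rather than by such a computation; this is where the care of a formal diagram chase is actually needed.
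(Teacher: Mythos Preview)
Your proposal is correct and follows essentially the same approach as the paper's proof: in each direction you conjugate by symmetries using naturality of $s$, apply the hypothesis (superposing or strength) to the middle factor, invoke the Naturality axiom of the partial trace to absorb the flanking symmetries into the trace, and finish with the same coherence identity $(s_{D,B}\otimes 1_U)\circ(g\otimes f)\circ(s_{A,C}\otimes 1_U)=(1_B\otimes s_{U,D})\circ(f\otimes g)\circ(1_A\otimes s_{C,U})$. Your attention to the directionality of $\funnel$ and to the purely diagrammatic justification of the coherence step is appropriate and matches what the paper does.
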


\begin{proof}
$(\Rightarrow)$ First, from the original version we shall prove this second version.\\
By hypothesis and by naturality of the symmetries we have:
\begin{center}
$g\otimes f\in \Trc^U_{C\otimes A,D\otimes B}$ and\\
$s_{DB}\circ \Tr^U_{C\otimes A,D\otimes B}(g\otimes f)\circ s_{AC}=s_{DB}\circ (g\otimes \Tr^U_{A,B}(f))\circ s_{AC}=\Tr^U_{AB}(f)\otimes g$.
\end{center}
Thus by the naturality axiom we have that since $g\otimes f\in \Trc^U_{C\otimes A,D\otimes B}$:
\begin{center}
$(s_{DB}\otimes 1_U)\circ(g\otimes f)\circ (s_{AC}\otimes 1_U)\in\Trc^U_{A\otimes C, B\otimes D}$ and\\

$\Tr^U_{A\otimes C, B\otimes D}(s_{DB}\otimes 1_U)\circ (g\otimes f)\circ (s_{AC}\otimes 1_U)=s_{DB}\circ \Tr^U_{C\otimes A,D\otimes B}(g\otimes f)\circ s_{AC}$.
\end{center}
Finally by coherence we obtain:
\begin{center}
$(s_{DB}\otimes 1_U)\circ(g\otimes f)\circ (s_{AC}\otimes 1_U)=(1_B\otimes s_{UD})\circ (f\otimes g)\circ (1_A\otimes s_{CU})$
\end{center}
$(\Leftarrow)$ Conversely by hypothesis and composing with symmetries we get:
\begin{center}
$(1_B\otimes s_{UD})\circ (f\otimes g)\circ (1_A\otimes s_{CU})\in \Trc^U_{A\otimes C,B\otimes D}$ and

$s_{BD}\circ \Tr^{U}_{A\otimes C,B\otimes D}((1_B\otimes s_{U,D})\circ (f\otimes g)\circ (1_A\otimes s_{C,U}))\circ s_{CA}=s_{BD}\circ (\Tr^{U}_{A,B}(f)\otimes g)\circ s_{BD}$.
\end{center}
Which implies by the naturality axiom that:
\begin{center}
$\alpha=(s_{BD}\otimes 1_U)\circ (1_B\otimes s_{U,D})\circ (f\otimes g)\circ (1_A\otimes s_{C,U})\circ (s_{CA}\otimes 1_U)\in \Trc^U_{C\otimes A,D\otimes B}$ and\\

$\Tr^U_{C\otimes A,D\otimes B}(\alpha)=g\otimes \Tr^U_{A,B}(f).$
\end{center}
But by coherence $\alpha=g\otimes f$.
\end{proof}

\section{Examples of partially traced categories}
\subsection{Finite dimensional vector spaces}
Among the examples that motivated this notion of partially traced category in Definition~\ref{DEFINITION TRACE CLASS} a particularly important one~\cite{HS05a},~\cite{HS09} is the category $(\textbf{Vect}_{fn},\oplus,\textbf{0})$ of finite dimensional vector spaces and linear transformations, with biproduct $\oplus$ as the tensor product.

We recall that in an additive category a morphism $f: X\oplus U\rightarrow Y\oplus V$ is characterized by compositions with injections and projections: $f_{ij}=\pi_{i} \comp f\comp in_{j}$, $1\leq i,j\leq 2$. We denote $f$ by a matrix of morphisms of type
$\left[\begin{array}{llll}
f_{11} & f_{12}\\
f_{21} & f_{22} \\
\end{array}\right]$ where composition corresponds to multiplication of matrices.
\begin{definition}\label{PARTIAL TRACE IN FINITE VECTOR SPACE}
\rm
The \textsl{trace class in} $(\textbf{Vect}_{fn},\oplus,\textbf{0})$ is defined as follows:
we say that $f:X\oplus U\rightarrow Y\oplus U\in \Trc^U_{X,Y}$ iff $I-f_{22}$ is invertible, where $I=id$ on $U$.\\
When this is the case we define $\Tr^U_{X,Y}(f)=f_{11}+f_{12}(I-f_{22})^{-1}f_{21}$.
\end{definition}

\begin{proposition}
 With the operation defined in
 Definition~\ref{PARTIAL TRACE IN FINITE VECTOR SPACE}, the category of finite dimensional vector spaces is
 partially traced.
 \end{proposition}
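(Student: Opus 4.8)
The plan is to represent every morphism $f:X\oplus U\to Y\oplus U$ by its $2\times 2$ matrix $\bigl(\begin{smallmatrix} f_{11} & f_{12} \\ f_{21} & f_{22}\end{smallmatrix}\bigr)$ relative to the biproduct decompositions, recalling that in this additive setting composition is matrix multiplication, that $\otimes=\oplus$, and that the monoidal product of two morphisms is their block-diagonal (biproduct) map. I would then verify the six axioms by direct computation. The observation used throughout is that both the trace class condition and the trace formula depend only on the feedback block $f_{22}:U\to U$; this makes several of the axioms into genuine equalities with matching domains of definition, so that the directed and strong Kleene equalities hold automatically.

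First I would dispose of the routine axioms. For \textbf{Naturality}, pre- and post-composing with $g\oplus 1_U$ and $h\oplus 1_U$ leaves the $(2,2)$ block equal to $f_{22}$ and replaces $f_{11},f_{12},f_{21}$ by $hf_{11}g,\,hf_{12},\,f_{21}g$; substituting into the formula and factoring out $h$ and $g$ gives exactly $h\,\Tr^U_{X,Y}(f)\,g$, with both sides defined simultaneously. For \textbf{Vanishing I} the feedback object is the monoidal unit $\mathbf 0$, so $f_{12}$ and $f_{21}$ vanish and the formula collapses to $f_{11}=\rho_Y f\rho_X^{-1}$. For \textbf{Superposing}, writing $g\otimes f$ in the three-block form indexed by $W,X,U$ shows its feedback corner is again $f_{22}$ and that the feedback legs only touch the $X,Y$ blocks, so the trace is the block-diagonal $g\oplus\Tr^U_{X,Y}(f)=g\otimes\Tr^U_{X,Y}(f)$. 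For \textbf{Yanking}, $\sigma_{U,U}$ has matrix $\bigl(\begin{smallmatrix}0&1\\1&0\end{smallmatrix}\bigr)$, so $f_{22}=0$, $I-f_{22}=I$ is invertible, and the formula returns $1\cdot I^{-1}\cdot 1=1_U$.

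Next, \textbf{Dinaturality} (with $f:X\oplus U\to Y\oplus U'$ and $g:U'\to U$): the two sides trace $(1_Y\oplus g)f$ over $U$ and $f(1_X\oplus g)$ over $U'$, whose feedback blocks are $gf_{22}$ and $f_{22}g$ respectively. The key inputs are the standard fact that $I_U-gf_{22}$ is invertible iff $I_{U'}-f_{22}g$ is invertible, so the two sides are defined together (giving the strong equality $\funnels$), together with the push-through identity $g(I_{U'}-f_{22}g)^{-1}=(I_U-gf_{22})^{-1}g$, which follows at once from $(I_U-gf_{22})g=g(I_{U'}-f_{22}g)$. Inserting this identity turns $f_{11}+f_{12}(I_U-gf_{22})^{-1}gf_{21}$ into $f_{11}+f_{12}g(I_{U'}-f_{22}g)^{-1}f_{21}$, which is the other side.

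The main obstacle is \textbf{Vanishing II}, where a genuine block computation is unavoidable. Writing $g:X\oplus U\oplus V\to Y\oplus U\oplus V$ as a $3\times 3$ matrix with feedback corner $\bigl(\begin{smallmatrix}e&k\\q&r\end{smallmatrix}\bigr)$ on $U\oplus V$, the hypothesis that $\Tr^V$ is defined says $I_V-r$ is invertible, and the inner trace replaces the $V$-corner by its partial feedback, producing a new $2\times 2$ system on $X\oplus U$ whose feedback block is $e'=e+k(I_V-r)^{-1}q$. I would then invoke the Schur complement criterion: given $I_V-r$ invertible, the full block $I_{U\oplus V}-\bigl(\begin{smallmatrix}e&k\\q&r\end{smallmatrix}\bigr)$ is invertible iff its Schur complement $I_U-e'$ is, which matches the two trace class conditions. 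Finally I would expand the single trace $\Tr^{U\oplus V}_{X,Y}(g)$ using the explicit block-inverse (Schur) formula for the inverse of $I_{U\oplus V}-\bigl(\begin{smallmatrix}e&k\\q&r\end{smallmatrix}\bigr)$, and check that the resulting four terms regroup exactly as $a'+b'(I_U-e')^{-1}d'$, the iterated trace, where $a',b',d'$ are the $X\to Y$, $U\to Y$, and $X\to U$ blocks corrected by the same $(I_V-r)^{-1}$ partial-feedback substitution. This bookkeeping is the only delicate part; everything else is formal matrix algebra.
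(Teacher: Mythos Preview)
Your proposal is correct and is the standard direct verification via block-matrix algebra. The paper itself does not give a proof of this proposition at all: it simply cites \cite{HS05a},~\cite{HS09}. That said, your argument is exactly in the spirit of what the paper does later for the analogous $({\bf CPM},\oplus)$ case (Section~\ref{4.2.5}), where the same two ingredients you identify --- the Schur-complement invertibility criterion (Lemma~\ref{INVERSE MATRIX}) and the push-through identity $(I-AB)^{-1}A=A(I-BA)^{-1}$ (Lemma~\ref{INVERSE COMPOSITION MATRIX}) --- are stated as lemmas and used to handle Vanishing~II and Dinaturality respectively. So your approach matches the methodology the paper employs elsewhere, and presumably matches what the cited references do for $\textbf{Vect}_{fn}$.
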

\begin{proof}~\cite{HS05a},~\cite{HS09}.
\end{proof}

\subsection{Stochastic relations}

In order to capture classical probabilistic computation (as a
 stepping stone towards quantum computation), we now describe a trace
 class in the category ${\bf Srel}$ of stochastic relations. In fact, this partial trace arises from the canonical total trace on $({\bf Vect}_{fn},\otimes)$ by a general construction that we will examine in detail in Section~\ref{TRACE IN A MONOIDAL SUBCATEGORY}. Note that it differs from the trace on ${\bf Srel}$ given by Abramsky~\cite{Abr96},
~\cite{THESIS HAGVERDI}. Abramsky's trace is with respect to the coproduct structure $\oplus$ and is total; here we discuss a partial trace with respect to the tensor structure $\otimes$.

The category of stochastic relations attempts to model the probability of a bit being in states 0 or 1, or more
generally, of a variable taking a specific value in a finite set of
possible values. Morphisms in this category correspond to the behaviours of finitary probabilistic systems.
The  general category of stochastic relations, ${\bf Srel}$, is described in~\cite{Abr96} and~\cite{AHS02}. It arises as the Kleisli category of the Giry Monad~\cite{GIRY}. We look at the special case where the objects are finite sets.

\begin{definition}\label{STOCHASTIC RELATION}
The category $\textbf{Srel}_{fn}$ of \textit{finite
stochastic relations} consists of the following data:
\begin{itemize}
\item[-] objects are finite sets: $A$, $B$\ldots
\item[-] morphisms:  $\xymatrix{A \ar[r]^{f} & B}$ are finite matrices $f:B\times A\rightarrow [0,1]$ such that $\forall a\in A$
$$\sum_{b\in B}f(b,a)\leq 1.$$
\end{itemize}

The composite of two morphisms is defined by matrix multiplication:

If $\xymatrix{A \ar[r]^{f} & B}$ and $\xymatrix{B \ar[r]^{g} & C}$ then $g\circ f:C\times A\rightarrow [0,1]$ is:
 \[(g\circ f)(c,a)= \sum_{b\in B}g(c,b).f(b,a).\]
It is immediate that composition as defined above is associative, with identities
$1_A:A\times A\rightarrow [0,1]$, defined $1_A(x,y)=\left \{ \begin{array}{ll}
1 & \textrm{if $x=y$}\\
0 & \textrm{if $x\neq y$.}
\end{array} \right.$
\end{definition}

\begin{remark}
 Note that we allow $\sum_{b\in B}f(b,a)\leq 1$, rather
 than requiring equality. This is also called a ``partial" stochastic
 relation. A probability that is less than 1 corresponds to a
 computational process that may not terminate.
 \end{remark}

One obtains a symmetric monoidal category $(\textbf{Srel}_{fn},\otimes, I)$ where the tensor product on objects is given by the set product $A\otimes B=A\times B$. For arrows $ f:A\rightarrow B$ and $g:C\rightarrow D$, i.e., $f:B\times A\rightarrow [0,1]$ and $g:D\times C\rightarrow [0,1]$ then we have $f\otimes g:A\otimes C\rightarrow B\otimes D$ is given by a map of type $f\otimes g:B\times D\times A\times C \rightarrow [0,1]$, where
$$(f\otimes g)(b,d,a,c)=f(b,a)\cdot g(d,c)\,.$$
Let $A,B$ be finite sets. There is a canonical way to encode a function $f:A\rightarrow B$ as a stochastic map:  we write $\hat{f}:B\times A\rightarrow [0,1]$ where $\hat{f}(b,a)=1$ if $f(a)=b$ and $\hat{f}(b,a)=0$ otherwise. We define the symmetric monoidal coherence isomorphisms by applying this codification to the coherence structure of the cartesian category {\bf FinSet} of finite sets.
\begin{definition}\label{TRACE STOCH FORMULA}
Let $f:X\otimes U \rightarrow Y\otimes U$ be a stochastic map. We define
the following trace class $\Trc^{U}_{X,Y}\subseteq
\textbf{Srel}_{fn}(X\otimes U,Y\otimes U)$ for all $X$ and $Y$:
\begin{center}
$f\in \Trc^{U}_{X,Y}$ iff $\sum_{y\in Y}\sum_{u\in U}f(y,u,x,u)\leq
1, \forall x\in X$
\end{center}
and a partial trace:
\begin{center}
$\Tr^{U}_{X,Y}:\Trc^{U}_{X,Y}\rightarrow \textbf{Srel}_{fn}(X,Y)$ with
$\Tr^{U}_{X,Y}(f)(y,x)=\sum_{u\in U}f(y,u,x,u).$
\end{center}
\end{definition}
\begin{proposition}
 The formula given in Definition~\ref{TRACE STOCH FORMULA} defines a partial trace on ${\bf Srel}_{fn}$.
 \end{proposition}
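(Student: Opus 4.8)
The plan is to verify directly that the formula of Definition~\ref{TRACE STOCH FORMULA} satisfies the six axioms of a partial trace. Throughout I write a morphism $f:X\otimes U\to Y\otimes U$ as a matrix $f(y,u';x,u)$ with input $(x,u)$ and output $(y,u')$, so that composition is matrix multiplication and the tensor of two maps multiplies entries. The organizing first step is to record \emph{well-definedness}: the trace-class condition $\sum_{y,u}f(y,u,x,u)\le 1$ says exactly that the candidate output $\Tr^U_{X,Y}(f)(y,x)=\sum_u f(y,u,x,u)$ is again a substochastic matrix, hence a genuine morphism of ${\bf Srel}_{fn}$. Nonnegativity of the entries is automatic, each entry is bounded by the column sum hence by $1$, and the column-sum bound is the trace-class inequality itself. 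It is also useful, as motivation and as a sanity check on the computations below, to note that this formula is precisely the restriction to substochastic matrices of the partial trace that $({\bf Vect}_{fn},\otimes)$ carries as a compact closed category, summing over the diagonal of the traced index $u$; once the general result of Section~\ref{TRACE IN A MONOIDAL SUBCATEGORY} on monoidal subcategories is available this would give the proposition by restriction, but since that is proved later I give the direct verification here.

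Next I would dispatch the three ``algebraic'' axioms by expanding the relevant composites as matrix products and summing over the diagonal. For \textbf{Naturality}, pre- and post-composing $f$ with $g\otimes 1_U$ and $h\otimes 1_U$ leaves the traced index untouched, since the $1_U$ factors force Kronecker deltas on $u$; one computes $\Tr^U((h\otimes 1_U)f(g\otimes 1_U))(y',x')=\sum_{y,x}h(y',y)\,\Tr^U(f)(y,x)\,g(x,x')$, which is $h\,\Tr^U(f)\,g$, and the composite lies in the trace class because $h\,\Tr^U(f)\,g$ is again substochastic, yielding the directed equality $\funnel$. \textbf{Superposing} is analogous: the entry $g(z,w)\,f(y,u,x,u)$ factors out of the diagonal sum to give $\Tr^U(g\otimes f)=g\otimes\Tr^U(f)$, and membership follows from $(\sum_z g(z,w))(\sum_y\Tr^U(f)(y,x))\le 1$, again an instance of $\funnel$. \textbf{Dinaturality} is a symmetric computation: both $\Tr^U((1_Y\otimes g)f)$ and $\Tr^{U'}(f(1_X\otimes g))$ reduce, after expansion, to $\sum_{u,u'}g(u,u')\,f(y,u',x,u)$, and their two trace-class conditions are literally the same inequality, so here one obtains the full Kleene equality $\funnels$.

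For the remaining three axioms: \textbf{Vanishing I} is immediate, since the tensor unit is a one-element set, so the sum over $u\in I$ is a single term and $\Tr^I$ merely strips the trivial $I$-components, matching $\rho_Y f\rho^{-1}_X$ (and is always defined). \textbf{Yanking} uses that the symmetry is the matrix of the swap, $s((a,b),(c,d))=1$ iff $a=d$ and $b=c$; summing over the diagonal $b$ gives $\sum_b s((a,b),(c,b))=\delta_{a,c}=1_U(a,c)$, and the bound holds, so $\funnels$. \textbf{Vanishing II} is the one place needing slightly more care: one checks that the single diagonal sum $\sum_{(u,v)}g(\cdots)$ over $U\otimes V$ equals the iterated sum $\sum_u(\sum_v g(\cdots))$, using that the associativity and unit coherence isomorphisms of $({\bf Srel}_{fn},\times)$ act as bijections (pure relabellings) on the underlying sets; and one verifies that, granting the hypothesis $\Tr^V_{X\otimes U,Y\otimes U}(g)\downarrow$, the trace-class condition defining $\Tr^{U\otimes V}(g)\downarrow$ coincides with that defining $\Tr^U(\Tr^V(g))\downarrow$, so both sides are defined together and agree.

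The main obstacle is not conceptual but bookkeeping: keeping the matrix index conventions consistent, confirming that every symmetric-monoidal coherence isomorphism of $({\bf Srel}_{fn},\times)$ acts trivially on underlying elements so that it contributes only relabelling and not genuine weighting, and, most importantly, tracking the Kleene-equality directions. The axioms split into a directed group ($\funnel$: Naturality, Superposing), where I must show that definedness of the left side forces definedness of the right via the substochastic inequalities, and a symmetric group ($\funnels$: Dinaturality, Vanishing~I, Vanishing~II, Yanking), where the two trace-class conditions must be shown to coincide exactly. Vanishing~II carries the extra wrinkle of its conditional hypothesis, so I would treat its domain-of-definition analysis with the most care.
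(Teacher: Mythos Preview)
Your proposal is correct and follows essentially the same approach as the paper: a direct verification of the six partial-trace axioms by expanding the composites as matrix sums and tracking the trace-class inequalities. Your write-up is more tightly organized (grouping axioms by the direction of Kleene equality, and flagging the Section~\ref{TRACE IN A MONOIDAL SUBCATEGORY} shortcut), but the computations you sketch line up with the paper's step by step.
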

 \begin{proof}
 We check the axioms of partial trace.\\
\textbf{Naturality:}\\
Let $f\in \Trc^{U}_{X,Y}$ and $g:X'\rightarrow X$ and
$h:Y\rightarrow Y'$ be stochastic maps, first we want to prove that
\begin{center}
$(h\otimes 1_U)f(g\otimes 1_U)\in\Trc^{U}_{X',Y'}$ with $(h\otimes
1_U)f(g\otimes 1_U):X'\otimes U\rightarrow Y'\otimes U.$
\end{center}
Since we have a map of type
$(h\otimes 1_U)f:X\otimes U\rightarrow Y'\otimes U$
we evaluate:
\begin{center}
$(h\otimes 1_U)f(y',u,x,v) =\sum_{y\in Y,u'\in U}(h\otimes
1_U)(y',u,y,u')f(y,u',x,v)=\sum_{y\in Y,u'\in
U}h(y',y)1(u,u')f(y,u',x,v)=\sum_{y\in Y}h(y',y)f(y,u,x,v).$
\end{center}

Now we compose again:
\begin{center}
$(h\otimes 1_U)f(g\otimes 1_U)(y',u,x',v)=\sum_{x\in X,u'\in U}(h\otimes
1_U)f(y',u,x,u')(g\otimes 1_U)(x,u',x',v)=\sum_{x\in X,u'\in U}(\sum_{y\in
Y}h(y',y)f(y,u,x,u')).g(x,x').1_U(u',v)=\sum_{x\in X,y\in
Y}h(y',y)f(y,u,x,v)g(x,x').$
\end{center}

Thus $(h\otimes 1_U)f(g\otimes 1_U)\in\Trc^{U}_{X',Y'}$  iff
$\,\sum_{y'\in Y',u\in U}(h\otimes 1_U)f(g\otimes 1_U)(y',u,x',u)\leq 1\,,
\forall x'\in X'.$\\
We know by hypothesis that $f\in \Trc^{U}_{X,Y}$ which implies that
$\sum_{y\in Y,u\in U}f(y,u,x,u)\leq 1\,,\forall x\in X.$ On the other hand
we also know that $\sum_{x\in X}g(x,x')\leq 1$ $\forall x'\in X$ and
$\sum_{y'\in Y'}h(y',y)\leq 1$ $\forall y\in Y$ since $g:X'\rightarrow X$
and $h:Y\rightarrow Y'$ are stochastic maps.\\
Thus,
\begin{center}
$\sum_{x\in X}(\sum_{y\in Y,u\in U}f(y,u,x,u))g(x,x')\leq \sum_{x\in
X}1.g(x,x')\leq 1$ $\forall x'\in X'.$
\end{center}
Therefore,
\begin{center}
$\sum_{x\in X,y\in Y,u\in U}f(y,u,x,u))g(x,x')\leq 1$ $\forall x'\in X'.$
\end{center}
Now using this and the fact that $\sum_{y'\in Y'}h(y',y)\leq 1$:
\begin{center}
$\sum_{x\in X,y\in Y,u\in U}(\sum_{y'\in
Y'}h(y',y)).f(y,u,x,u))g(x,x')\leq$\\
$ \sum_{x\in X,y\in Y,u\in U}1.f(y,u,x,u))g(x,x')\leq 1$  $\forall x'\in
X'.$
\end{center}
This implies the following:
\begin{center}
$\sum_{x\in X,y\in Y,u\in U,y'\in Y'}h(y',y))f(y,u,x,u))g(x,x')\leq 1$
$\forall x'\in X'.$
\end{center}
Therefore,
\begin{center}
$\sum_{y'\in Y',u\in U}(\sum_{x\in X,y\in
Y}h(y',y))f(y,u,x,u))g(x,x')=\sum_{y'\in Y',u\in U}(h\otimes
1_U)f(g\otimes 1_U)(y',u,x',u)\leq 1$ $\forall x'\in X'$
\end{center}
which implies that the following assertion holds:
\begin{center}
$(h\otimes 1_U)f(g\otimes 1_U)\in\Trc^{U}_{X',Y'}.$
\end{center}
Next, we preliminary compute the partial trace. For that purpose, we first need some previous
calculations:
\begin{center}
$\Tr^U_{X',Y'}((h\otimes 1_U)f(g\otimes 1_U))(y',x')=\sum_{u\in
U}(\sum_{x\in X,y\in Y}h(y',y)f(y,u,x,u))g(x,x')).$
\end{center}
If we apply the definition of partial trace to $f$ and compose with $h$
then this comes down to
\begin{center}
$h\circ \Tr^U_{X,Y}(f)(y',x)=\sum_{y\in Y}h(y',y).(\sum_{u\in
U}f(y,u,x,u))=\sum_{y\in Y,u\in U}h(y',y)f(y,u,x,u).$
\end{center}
Similarly, we compose with $g$
\begin{center}
$((h\Tr^U_{X,Y}(f)) g)(y',x')=\sum_{x\in
X}(h\Tr^U_{X,Y}(f))(y',x).g(x,x')=\sum_{x\in X}(\sum_{y\in Y,u\in
U}h(y',y)f(y,u,x,u)).g(x,x')=\sum_{x\in X,y\in Y,u\in
U}h(y',y)f(y,u,x,u)g(x,x')$
\end{center}
which proves that both previous calculations are equal.

\textbf{Yanking:}\\
Let $\sigma :A\otimes B\rightarrow B\otimes A$ be defined as the matrix
$\sigma :B\times A\times A\times B\rightarrow [0,1]$ with
\begin{center}
$\sigma(b,a,a',b')=1$ iff $b=b'$ and $a=a'$ otherwise is $0$.
\end{center}
It may be seen immediately that if $\sigma :U\otimes U\rightarrow U\otimes
U$
\begin{center}
$\Tr^U_{U,U}(\sigma )(u,v)=\sum_{x\in U}\sigma (u,x,v,x)=1$ if and only if
$u=x=v$ otherwise is $0$.
\end{center}
Then, since $1_U(u,v)=1$ if and only if $u=v$, otherwise it is $0$ we obtain
that
\begin{center}
$\Tr^U_{U,U}(\sigma )(u,v)=1_U(u,v)$ for every $u$ and $v$.
\end{center}

\textbf{Dinaturality:}\\
Consider the stochastic maps $f:X\otimes U\rightarrow Y\otimes U'$ and
$g:U'\rightarrow U$. First we want to prove that
\begin{center}
$(1_Y \otimes g)f\in \Trc^U_{X,Y}$ if and only if $f(1_X\otimes
g)\in \Trc^{U'}_{X,Y}.$
\end{center}
By definition of trace class we know that
\begin{center}
$(1_Y \otimes g)f\in \Trc^U_{X,Y}$ if and only if $\sum_{y\in Y,u\in
U}(1_Y \otimes g)f(y,u,x,u)\leq 1$  $\forall x\in X.$
\end{center}
Also, by definition of composition in the category $\textbf{Srel}_{fn}$:
\begin{center}
$(1_Y \otimes g)f(y,u,x,v)=\sum_{y'\in Y,u'\in U'}(1_Y \otimes
g)(y,u,y',u')f(y',u',x.v)=\sum_{y'\in Y,u'\in
U'}1_Y(y,y')g(u,u')f(y',u',x,v)=\sum_{u'\in U'}g(u,u')f(y,u',x,v).$
\end{center}
Thus, we have seen that
\begin{center}
$(1_Y \otimes g)f\in \Trc^U_{X,Y}$ if and only if $\sum_{y\in Y,u\in
U}(\sum_{u'\in U'}g(u,u')f(y,u',x,u))\leq 1$ $\forall x\in X.$
\end{center}
Following a similar argument we have that
\begin{center}
$f(1_X\otimes g)\in \Trc^{U'}_{X,Y}$ if and only if $\sum_{y\in
Y,u'\in U'}f(1_X \otimes g)(y,u',x,u')\leq 1$  $\forall x\in X.$
\end{center}
But, again by definition of composition
\begin{center}
$f(1_X\otimes g)(y,u',x,v')=\sum_{x'\in X,u\in U}f(y,u',x',u)(1_X\otimes
g)(x',u,x,v')=\sum_{x'\in X,u\in U}f(y,u',x',u)1_X(x',x)g(u,v')=\sum_{u\in
U}f(y,u',x,u)g(u,v').$
\end{center}
This means that
\begin{center}
$f(1_X\otimes g)\in \Trc^{U'}_{X,Y}$ if and only if $\sum_{y\in
Y,u'\in U'}(\sum_{u\in U}f(y,u',x,u)g(u,u'))\leq 1$  $\forall x\in X.$
\end{center}
This implies that the condition on the trace class is satisfied. Next, it remains to calculate
the corresponding partial traces.
\begin{center}
$\Tr^U_{X,Y}((1_Y \otimes g)f)(y,x)=\sum_{u\in U}(1_Y \otimes
g)f(y,u,x,u)=\sum_{u\in U}(\sum_{u'\in U'}g(u,u')f(y,u',x,u))=\sum_{u\in
U,u'\in U'}g(u,u')f(y,u',x,u)=\sum_{u'\in U',u\in
U}f(y,u',x,u))g(u,u')=\sum_{u'\in U'}f(1_X\otimes
g)(y,u',x,u')=\Tr^{U'}_{X,Y}(f(1_X\otimes g))(y,x).$
\end{center}
\textbf{Vanishing I:}\\
Let $f:X\otimes I\rightarrow Y\otimes I$ be a stochastic map. Therefore, this
implies by definition
\begin{center}
$\sum_{y\in Y,u\in \{\ast\}}f(y,u,x,\ast)=\sum_{y\in
Y}f(y,\ast,x,\ast)\leq 1$ for every $x \in X.$
\end{center}
Thus, this is equivalent to
\begin{center}
$\sum_{y\in Y,u\in \{\ast\}}f(y,u,x,u)\leq 1$ for every $x \in X$
\end{center}
which is the condition $f\in \Trc^I_{X,Y}.$\\
Now, we compute the partial traces.
Let us consider the following composition
\begin{center}
$X\stackrel{\rho^{-1}_X}{\rightarrow}X\otimes
I\stackrel{f}{\rightarrow}Y\otimes I\stackrel{\rho_Y}{\rightarrow}Y.$
\end{center}
We have
\begin{center}
\begin{eqnarray*}
f\rho^{-1}_X(y,\ast,x)&=&\sum_{x'\in X,u\in
I}f(y,\ast,x',u)\rho^{-1}_X(x',u,x)\\
&=&f(y,\ast,x,\ast)
\end{eqnarray*}

$f\rho^{-1}_X(y,\ast,x)=\sum_{x'\in X,u\in
I}f(y,\ast,x',u)\rho^{-1}_X(x',u,x)=f(y,\ast,x,\ast).$
\end{center}

\noindent
Now, we compose with $\rho_Y$ to get:
\begin{eqnarray*}
\rho_Y(f\rho^{-1}_X)(y,x)&=&\sum_{y'\in Y,u\in
I}\rho_Y(y,y',u)(f\rho^{-1}_X)(y',u,,x)\\
&=&
f\rho^{-1}_X(y,\ast,x) = f(y,\ast,x,\ast)
\end{eqnarray*}

\begin{center}
$\rho_Y(f\rho^{-1}_X)(y,x)=\sum_{y'\in Y,u\in
I}\rho_Y(y,y',u)(f\rho^{-1}_X)(y',u,,x)=
f\rho^{-1}_X(y,\ast,x)=f(y,\ast,x,\ast)$
\end{center}
which clearly means that
\begin{center}
$\Tr^I_{X,Y}(f)(y,x)=f(y,\ast,x,\ast)=\rho_Yf\rho^{-1}_X(y,x)$ for every
$x\in X$ and $y\in Y.$
\end{center}
Thus, we proved that $\Tr^I_{X,Y}(f)=\rho_Y f\rho^{-1}_X$.

\noindent
\textbf{Vanishing II:}\\
Suppose we have a stochastic map $g:X\otimes U\otimes V\rightarrow
Y\otimes U\otimes V$ such that $g\in \Trc^V_{X\otimes U, Y\otimes
U}.$ We need to check that
\begin{center}
$g\in \Trc^{U\otimes V}_{X, Y}$ if and only if $\Tr^V_{X\otimes U,
Y\otimes U}(g)\in \Trc^{U}_{X,Y}.$
\end{center}
By definition, it follows that
\begin{center}
$g\in \Trc^{U\otimes V}_{X, Y}$ if and only if $\sum_{y\in
Y,(u,v)\in U\times V}g(y,u,v,x,u,v)\leq 1.$
\end{center}
On the other hand we have
\begin{center}
$\Tr^V_{X\otimes U, Y\otimes U}(g)(y,u,x,u')=\sum_{v\in V}g(y,u,v,x,u',v).$
\end{center}
We obtain
\begin{center}
$\Tr^V_{X\otimes U, Y\otimes U}(g)\in \Trc^{U}_{X,Y}$ if and only if
$\sum_{y\in Y}(\sum_{u\in U}\Tr^V_{X\otimes U, Y\otimes
U}(g)(y,u,x,u)=\sum_{y\in Y,u\in U,v\in V}g(y,u,v,x,u,v)\leq 1.$
\end{center}
Thus, we have shown that both conditions are equivalent. Now we move to the
calculation of the partial traces.
\begin{eqnarray*}
\Tr^{U\otimes V}_{X, Y}(g)(y,x) & = & \sum_{(u,v)\in U\times
V}g(y,u,v,x,u,v)\\
& = & \sum_{u\in U}\sum_{v\in V}g(y,u,v,x,u,v) \\
& = &\sum_{u\in U}\Tr^{V}_{X\otimes U,Y\otimes U}(g)(y,u,x,u)\\
& = & \Tr^{U}_{X,Y}(\Tr^{V}_{X\otimes U,Y\otimes U}(g))(y,x).
\end{eqnarray*}
In conclusion we obtain that
\begin{center}
$\Tr^{U\otimes V}_{X, Y}(g)=\Tr^{U}_{X,Y}(\Tr^{V}_{X\otimes U,Y\otimes
U}(g)).$
\end{center}

\textbf{Superposing:}\\
Consider the stochastic maps $f:X\otimes U\rightarrow Y\otimes U$ with
$f\in \Trc^U_{X,Y}$  and $g:W\rightarrow Z$. First, we want to prove
that
\begin{center}
$g\otimes f\in \Trc^{U}_{W\otimes X,Z\otimes Y}.$
\end{center}
In order to prove this we have that
$$
\begin{array}{ll}
g\otimes f\in \Trc^{U}_{W\otimes X,Z\otimes Y}& \\
\mbox{if and only if}&  \sum_{(z,y)\in Z\times Y,u\in U}g\otimes
f(z,y,u,w,x,u)\leq 1   \forall w\in W ,  \forall x\in X    \\
\mbox{if and only if}&   \sum_{z\in Z,y\in Y,u\in U}g(z,w)f(y,u,x,u)\leq 1
\forall w\in W ,  \forall x\in X \\
  \mbox{if and only if}&   \sum_{z\in Z}g(z,w)\sum_{y\in Y,u\in
U}f(y,u,x,u)\leq 1 ,  \forall w\in W ,  \forall x\in X.
\end{array}
$$

\noindent
Here the last equivalence is true since $g$ is stochastic i.e.,
$\sum_{z\in Z}g(z,w)\leq 1$, $\forall w\in W$. Since we have that $f\in
\Trc^U_{X,Y}$ this implies $\sum_{y\in Y,u\in U}f(y,u,x,u)\leq 1$,
$\forall x\in X.$
We show now that the partial traces are equal.
\begin{center}
$\Tr^{U}_{W\otimes X,Z\otimes Y}(g\otimes f)(z,y,w,x)=\sum_{u\in
U}(g\otimes f)(z,y,u,w,x,u)=\sum_{u\in
U}g(z,w).f(y,u,x,u)=g(z,w).\sum_{u\in
U}f(y,u,x,u)=g(z,w).\Tr^{U}_{X,Y}(f)(y,x)=g\otimes
\Tr^{U}_{X,Y}(f)(z,y,w,x).$
\end{center}
This means that
\begin{center}
$\Tr^{U}_{W\otimes X,Z\otimes Y}(g\otimes f)=g\otimes \Tr^{U}_{X,Y}(f).$
\end{center}
\end{proof}

\subsection{Total trace on completely positive maps with $\otimes$}

 In this section, we define a total trace on the category $\textbf{CPM}_s$ of
 simple completely positive maps (see Section~\ref{COMP POSITIVE MAPS}). As a matter of
 fact, this category is compact closed, and therefore is has a unique
 total trace. Here, we describe it explicitly via a Kraus operator-sum representation.
\newcommand{\fHilb}{{\bf fHilb}}

Recall that the category $\fHilb$ of finite dimensional Hilbert spaces and
 linear maps is compact closed, and therefore (totally)
 traced. Let, $\cal H_A,\cal H_B$ and $\cal H_C$  be finite dimensional
 Hilbert spaces with orthonormal bases $\{e_i\},\{f_i\}$ and $\{w_i\}$,
 respectively, and let $F:\cal H_A\otimes\cal H_B\rightarrow\cal H_C\otimes\cal H_B$ be a linear function, i.e.,
$$F=\sum_{j,l,k,m}F_{j,l,k,m}|w_j,f_k\rangle\langle e_l,f_m|.$$
 Then $\tr_B (F) =\sum_{j,l,k}F_{j,l,k,k}|w_j\rangle\langle e_l|$ defines a total trace on $\fHilb$.

\begin{proposition}
Let $\textit{F}:\cal L(\cal H_A)\otimes \cal L(\cal H_B)\rightarrow \cal L(\cal H_C)\otimes \cal L(\cal H_B)$ be a complete positive map with representation $\textit{F}=\sum_{j=1}^n F_j\rho F_j^{\dagger}$. Then $\Tr^{A,C}_B(\textit{F})(\rho)=\sum_{j=1}^n \tr_B F_j \rho\  \tr_B F_j^{\dagger}$ defines a
(total) trace on the category ${\bf CPM}_s$.
\end{proposition}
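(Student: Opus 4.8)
The plan is to avoid verifying the six partial-trace axioms by hand and instead to identify the displayed formula with the canonical trace. Since $\CPM_s$ is compact closed, Proposition~\ref{uniqueness compact} guarantees that it carries a \emph{unique} total trace, namely the canonical one $\Tr(f)=(1\otimes\varepsilon\sigma)(f\otimes 1)(1\otimes\eta)$. Hence it suffices to show that this canonical trace is computed Kraus-operator-wise by the stated formula; once this is done the formula is automatically a trace, and in particular its value is independent of the chosen Kraus representation of $F$.

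First I would introduce the functor $D:\fHilb\to\CPM_s$ that is the identity on objects and sends a linear map $g$ to the completely positive map $D(g)(\rho)=g\rho g^{\dagger}$. Functoriality is immediate from $(hg)\rho(hg)^{\dagger}=h(g\rho g^{\dagger})h^{\dagger}$, and $D$ is symmetric strong monoidal: on objects the two tensors agree, and $D(g)\bar\otimes D(g')=D(g\otimes g')$ because $\bar\otimes$ is defined by conjugating $\otimes$ with the canonical isomorphisms $\phi_{V,W}:\LL(V\otimes W)\to\LL(V)\otimes\LL(W)$, which intertwine conjugation by $g\otimes g'$ with $D(g)\otimes D(g')$. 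Both $\fHilb$ and $\CPM_s$ are compact closed, so by the proposition recalled above (every symmetric strong monoidal functor between compact closed categories preserves the canonical trace) $D$ sends the $\fHilb$-trace to the $\CPM_s$-trace. Applying this to a single Kraus operator $F_j:\cal H_A\otimes\cal H_B\to\cal H_C\otimes\cal H_B$, and using the explicit formula $\tr_B(F_j)=\sum_{j',l,k}(F_j)_{j',l,k,k}|w_{j'}\rangle\langle e_l|$ recalled for $\fHilb$, gives
$$\Tr^{A,C}_B(D(F_j))=D(\tr_B F_j)=\bigl(\rho\mapsto (\tr_B F_j)\,\rho\,(\tr_B F_j)^{\dagger}\bigr).$$

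It remains to pass from a single Kraus operator to the sum. I would observe that the canonical trace is additive in its argument: it is obtained from $f$ by tensoring with a fixed identity and pre- and post-composing with the fixed morphisms $1\otimes\eta$ and $1\otimes\varepsilon\sigma$, and in $\CPM_s$ both composition and the tensor $\bar\otimes$ are bilinear on the underlying linear maps between the spaces $\LL(-)$. Since $F=\sum_{j=1}^{n}D(F_j)$ as a morphism of $\CPM_s$, additivity then yields
$$\Tr^{A,C}_B(F)=\sum_{j=1}^{n}\Tr^{A,C}_B(D(F_j))=\Bigl(\rho\mapsto\sum_{j=1}^{n}(\tr_B F_j)\,\rho\,(\tr_B F_j)^{\dagger}\Bigr),$$
which is the claim. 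The only genuinely technical point is checking that $D$ is a symmetric strong monoidal functor (in particular the compatibility of $\bar\otimes$ with the isomorphisms $\phi_{V,W}$), which I expect to be the main obstacle; the additivity of the trace and the representation-independence of the formula then come for free.
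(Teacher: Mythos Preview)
Your argument is correct, but it is not the route the paper takes in the proof proper. The paper proceeds by brute force: it first checks well-definedness of the formula directly, using the unitary freedom of Kraus decompositions (if $E_i=\sum_j U_{ij}F_j$ with $U$ unitary, then $\sum_i(\tr_B E_i)\rho(\tr_B E_i)^{\dagger}=\sum_j(\tr_B F_j)\rho(\tr_B F_j)^{\dagger}$), and then verifies each trace axiom (naturality, dinaturality, vanishing I and II, yanking) by hand on Kraus representatives, using the corresponding identities for the $\fHilb$-trace $\tr_B$.

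Your approach --- invoke compact closedness of $\CPM_s$, use that the functor $D:\fHilb\to\CPM_s$, $g\mapsto g(-)g^{\dagger}$, is symmetric strong monoidal and hence trace-preserving, and then extend to sums by additivity --- is exactly what the paper sketches in the \emph{Remark immediately following} the proof, where it notes that compact closedness already forces a unique trace and that ``it is easy to check that the trace is indeed computed as above''. So you have essentially fleshed out that remark rather than reproduced the official proof. The trade-off is clear: the paper's proof is self-contained and does not rely on knowing in advance that $\CPM_s$ is compact closed, while yours is shorter and more conceptual but front-loads the work into establishing the compact closed structure and the strong-monoidality of $D$. Both are valid; the additivity step (that the canonical trace is linear in $f$ because composition and $\bar\otimes$ are bilinear on the underlying linear maps, and sums of CP maps are CP) is the only place one might want an extra line of justification.
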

\begin{proof}
Suppose we take two representations of
\begin{center}
$\textit{F}(\rho)=\sum_{i=1}^n E_i\rho E_i^{\dagger}=\sum_{j=1}^n F_j\rho F_j^{\dagger}.$
\end{center}
Then
\begin{center}
$\Tr^{A,C}_B(\textit{F})(\rho)$=$\sum_{i=1}^n \tr_B F_i\, \rho\, \tr_B F_i^{\dagger}$=$\sum_{i=1}^n \tr_B (\sum_j U_{i,j}F_j) \rho\  \tr_B (\sum_j U_{i,j}F_j)^{\dagger}
=\sum_{i=1}^n  (\sum_j U_{i,j}\tr_B F_j) \rho\ (\sum_j U_{i,j}^{*}\tr_B F_j^{\dagger})=
\sum_{i,j,k}U_{i,j}U_{i,k}^{*}\tr_B F_j\,\, \rho\,\, \tr_B F_k^{\dagger}=\sum_{j,k}(\sum_{i}U_{i,j}U_{i,k}^{*})\tr_B F_j \,\rho\, \tr_B F_k^{\dagger}=\sum_{j,k}(\sum_{i}U_{k,i}^{\dagger}U_{i,j})\tr_B F_j \rho\ \tr_B F_k^{\dagger}=\sum_{j,k}\delta_{k,j}\tr_B F_j \rho\ \tr_B F_k^{\dagger}=\sum_{j}\tr_B F_j \rho\ \tr_B F_j^{\dagger}$
\end{center}
since $U$ is unitary.\\
Now we check all the axioms.\\

\textbf{Naturality:}\\
Let us consider $f=\sum_iU_i-U_i^{\dagger}$ and $g=\sum_jV_j-V_j^{\dagger}$ where $\textit{f}:\cal{L}(\cal{H}_A)\otimes \cal{L}(\cal{H}_B)\rightarrow \cal{L}(\cal{H}_C)\otimes \cal{L}(\cal{H}_B)$ and
$\textit{g}:\cal{L}(\cal{H}_{A'})\rightarrow \cal{L}(\cal{H}_A)$. \\
Since $f(g\otimes id)=(\lambda\rho\sum_iU_i\rho U_i^{\dagger})(\lambda\rho\sum_jV_j\rho V_j^{\dagger}\otimes id)=\lambda\rho\sum_{i,j}U_i(V_j\otimes I)\rho (V_j^{\dagger}\otimes I)U_i^{\dagger}$ therefore, we have:
\begin{center}
$\Tr_{A'C}^B(f(g\otimes id))=\lambda\rho\sum_{i,j}\tr_B(U_i(V_j\otimes I))\rho\, \tr_B((V_j^{\dagger}\otimes I)U_i^{\dagger})=\lambda\rho\sum_{i,j}(\tr_B U_i) V_j\rho V_j^{\dagger}(\tr_B U_i^{\dagger})=\lambda\rho\sum_{i,j}(\tr_B U_i)\rho (\tr_B U_i^{\dagger})\comp\lambda\rho\sum_jV_j\rho V_j^{\dagger}=\Tr_{AC}^B(f)\comp g$.
\end{center}

\textbf{Dinaturality:}\\
Suppose we have $f=\sum_iU_i-U_i^{\dagger}$ and $g=\sum_jV_j-V_j^{\dagger}$ where $\textit{f}:\cal{L}(\cal{H}_A)\otimes \cal{L}(\cal{H}_B)\rightarrow \cal{L}(\cal{H}_C)\otimes \cal{L}(\cal{H}_{B'})$ and
$\textit{g}:\cal{L}(\cal{H}_{B'})\rightarrow \cal{L}(\cal{H}_B)$. \\
Then
\begin{center}
$\Tr_{AC}^B((1\otimes g)f)=\Tr_{AC}^B((\lambda\rho\sum_j(I\otimes V_j)\rho(I\otimes V_j^{\dagger}))\comp(\lambda\rho\sum_i U_i\rho U_i^{\dagger}))= \Tr_{AC}^B(\lambda\rho\sum_{i,j}(I\otimes V_j)(U_i\rho U_i^{\dagger})(I\otimes V_j^{\dagger}))=\sum_{i,j}\tr_B((I\otimes V_j)U_i)\rho\, \tr_B(U_i^{\dagger}(I\otimes V_j^{\dagger}))=\sum_{i,j}\tr_{B'}(U_i(I\otimes V_j))\rho\, \tr_{B'}((I\otimes V_j^{\dagger})U_i^{\dagger})=
\Tr_{AC}^{B'}(\lambda\rho\sum_{i,j}U_i(I\otimes V_j)\rho (I\otimes V_j^{\dagger})U_i^{\dagger})=
\Tr_{AC}^{B'}((\lambda\rho\sum_i U_i\rho U_i^{\dagger})\comp(\lambda\rho\sum_j(I\otimes V_j)\rho(I\otimes V_j^{\dagger})))=
\Tr_{AC}^{B'}(f(1\otimes g)).$
\end{center}

\textbf{Vanishing I:}\\
Consider the map  $\textit{f}:\cal{L}(\cal{H}_A)\otimes \cal{L}(\cal{H}_I)\rightarrow \cal{L}(\cal{H}_B)\otimes \cal{L}(\cal{H}_{I})$ with the following representation $f=\sum_iU_i-U_i^{\dagger}$, so \\
$\Tr_{A,B}^I(f)=\lambda\rho \sum_i \tr_{I}U_i\rho\,\tr_{I}U_i^{\dagger}=\sum_i U_i\rho\,U_i^{\dagger}.$

\textbf{Vanishing II:}\\
Let us consider $\textit{g}:\cal{L}(\cal{H}_X)\otimes \cal{L}(\cal{H}_U)\otimes \cal{L}(\cal{H}_V)\rightarrow \cal{L}(\cal{H}_Y)\otimes\cal{L}(\cal{H}_U)\otimes \cal{L}(\cal{H}_{V})$ with representation $g=\sum_iE_i-E_i^{\dagger}$ then:
\begin{center}
$T_{X,Y}^{U}(T_{X\otimes U, Y\otimes U}^{V}(g))=T_{X,Y}^{U}(\lambda \rho \sum_i \tr_V E_i\rho\, \tr_V E_i^{\dagger})=
\lambda \rho \sum_i \tr_U (\tr_V (E_i))\rho\, \tr_U (\tr_V (E_i^{\dagger}))=\lambda \rho \sum_i \tr_{U\otimes V} (E_i))\rho\, \tr_{U\otimes V} (E_i^{\dagger})=T_{X,Y}^{U\otimes V}(g).$
\end{center}

\textbf{Yanking:}\\
Before we study the proof of this axiom we consider a representation of the symmetric isomorphism:
\begin{center}
$\sigma_{N,M}:\cal{L}(\cal{H}_N)\otimes \cal{L}(\cal{H}_M)\rightarrow \cal{L}(\cal{H}_M)\otimes \cal{L}(\cal{H}_{N}).$
\end{center}
Let $\{e^n_{i}\}$, $\{e^m_{j}\}$ be an orthonormal basis for $\cal{H}_N$ and $\cal{H}_M$ respectively. Then $\{E^n_{i,j}\}$ and $\{E^m_{k,l}\}$ are orthonormal basis for $\cal{L}(\cal{H}_N)$ and $\cal{L}(\cal{H}_N)$ respectively  with $E^n_{i,j}=e^n_{i}e_{j}^{n\dagger}$, $E^{m}_{k,l}=e^{m}_{k}e^{n\dagger}_{l}$ and $\apair{A,B}=\tr(A^{\dagger}B)$ as a inner product.\\
Thus we have:
\begin{center}
$\sigma (E^n_{i,j}\otimes E^m_{k,l})=\sigma (|e^n_{i}\rangle\langle e^n_{j}|\otimes |e^m_{k}\rangle\langle e^m_{l}|)=\sigma (|e^n_{i}\rangle |e^m_{k}\rangle \otimes \langle e^n_{j}|\langle e^m_{l}|)=U(|e^n_{i}\rangle | e^m_{k}\rangle \otimes \langle e^n_{j}|\langle e^m_{l}|)U^{\dagger}= U|e^n_{i},e^m_{k}\rangle\otimes(U|e^n_{j},e^m_{l}\rangle)^{\dagger}=|e^m_{k},e^n_{i}\rangle\otimes(|e^m_{l},e^n_{j}\rangle)^{\dagger}=|e^m_{k},e^n_{i}\rangle\otimes \langle e^m_{l},e^n_{j}|=|e^m_{k}\rangle\langle e^m_{l}|\otimes |e^n_{i}\rangle\langle e^n_{j}|=E^m_{k,l}\otimes E^n_{i,j}$
\end{center}
for every vector basis where the action $U$ is defined by $U|e^n_i\otimes e^m_j\rangle=|e^m_j\otimes e^n_i \rangle $ on the basis of the tensor space. This implies that $\sigma(A)=UAU^{\dagger}$ for every $A\in \cal{L}(\cal{H}_N)\otimes \cal{L}(\cal{H}_M)$.

Now, let $\sigma_{N,N}:\cal{L}(\cal{H}_N)\otimes \cal{L}(\cal{H}_N)\rightarrow \cal{L}(\cal{H}_N)\otimes \cal{L}(\cal{H}_{N})$ be the symmetric natural isomorphism with the representation  $\sigma_{N,N}=U-U^{\dagger}$,  $\sigma_{N,N}:\cal{H}_N\otimes\cal{H}_N\rightarrow\cal{H}_N\otimes\cal{H}_{N}$ where
$U=\sum_{k,l}|e^m_l\rangle\langle e^n_k|\otimes |e^n_k\rangle\langle e^m_l|$ and $U^{\dagger}=\sum_{k,l}|e^n_k\rangle\langle e^n_l|\otimes |e^n_l\rangle\langle e^n_k|$. Thus we have that $\tr_NU=\sum_{k,l}|e^n_l\rangle\langle e^n_k|\otimes \tr(|e^n_k\rangle\langle e^n_l|)=\sum_{k,l}|e^n_l\rangle\langle e^n_k|\otimes \langle e^n_k| e^n_l\rangle=\sum_{k=l}|e^n_l\rangle\langle e^n_k|=\sum_{l}|e^n_l\rangle\langle e^n_l|=Id_N$. In an analogous way we trace $U^{\dagger}$ obtaining the identity. Hence
$\Tr^N_{N,N}(\sigma_{N,N})(\rho)=\tr_NU\rho\,\tr_NU^{\dagger}=Id_N\rho Id_N=\rho$.
\end{proof}
\begin{remark} The category $\textbf{CPM}_s$ is compact closed, due to the
 existence of a monoidal functor $F:\textbf{fHilb} \rightarrow\textbf{CPM}_s$ which is onto
 objects. (This functor takes each object to itself, and each linear
 map $f$ to $F(\rho) = f\rho f^{\dagger})$. This already implies that this
 category is traced, and moreover that the trace is unique by
 Proposition~\ref{uniqueness compact}.  It is easy to check that the trace is indeed
 computed as above.
 \end{remark}

\subsection{Partial trace in the category $\textbf{Vect}$}\label{PARTIAL TRACE KERN-IMAGE CONDITION}
In Definition~\ref{PARTIAL TRACE IN FINITE VECTOR SPACE}, we considered a partial trace on
the category of finite dimensional vector spaces with $\oplus$ as a tensor product. Now, we relax conditions on the definition of the trace class and we define another partial trace on vector spaces for not necessarily finite dimensions.
\begin{definition}\label{DEFINITION RELAX CONDITION}
\rm
Let $(\textbf{Vect},\oplus,0)$ be the symmetric monoidal category of vector spaces and linear transformations with the monoidal tensor taken to be the direct
sum. We define a trace class in the following way. Given a map $f:V\oplus U\rightarrow W\oplus U$ we say $f\in \Trc^U_{V,W}$ iff
\begin{itemize}\label{X}
\item $im f_{21}\subseteq im (I-f_{22})$ and
\item $ker(I-f_{22})\subseteq ker f_{12}$,
\end{itemize}
where $I$ is the identity map. Whenever these conditions are satisfied we define $\Tr^{U}_{V,W}(f):V\rightarrow W$:
\begin{center}
$\Tr^{U}_{V,W}(f)(v)=f_{11}(v)+f_{12}(u)$ for some $u\in U$ such that $(I-f_{22})(u)=f_{21}(v).$
\end{center}
\end{definition}
To show that this is well-defined, suppose $u'$ is another candidate satisfying $$(I-f_{22})(u')=f_{21}(v).$$

Then $(I-f_{22})(u-u')=0$ which implies by the second condition of Definition~\ref{DEFINITION RELAX CONDITION} that
$f_{12}(u)=f_{12}(u')$. This shows that the value of the trace does not depend on the choice of the pre-image, but on its existence.\\

\begin{remark}
Notice that the partial trace of Definition~\ref{DEFINITION RELAX CONDITION}
 generalizes that of Definition~\ref{PARTIAL TRACE IN FINITE VECTOR SPACE}. Indeed, if $I-f_{22}$ is
 invertible, then $im(I-f_{22})=U$ and $ker(I-f_{22})=0$, which implies that Definition~\ref{DEFINITION RELAX CONDITION} is trivially satisfied and in this case, $\Tr^{U}_{V,W}(f) = f_{11} + f_{12}
(I-f_{22})^{-1} f_{21}$ (where $u=(I-f_{22})^{-1}f_{21}(v)$).
Moreover, Definition~\ref{DEFINITION RELAX CONDITION} is strictly more general than Definition~\ref{PARTIAL TRACE IN FINITE VECTOR SPACE}, because the identity maps are traceable in Definition~\ref{DEFINITION RELAX CONDITION},
 but not in Definition~\ref{PARTIAL TRACE IN FINITE VECTOR SPACE}.
 \end{remark}

\begin{theorem}
 The formula given in Definition~\ref{DEFINITION RELAX CONDITION} is a partial trace.
\end{theorem}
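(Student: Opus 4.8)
The plan is to fix, once and for all, the block representation of a morphism $f\colon V\oplus U\rightarrow W\oplus U$ induced by the biproduct, writing $f_{ij}=\pi_i\comp f\comp in_j$ so that $f_{11}\colon V\to W$, $f_{12}\colon U\to W$, $f_{21}\colon V\to U$, $f_{22}\colon U\to U$, and to reformulate Definition~\ref{DEFINITION RELAX CONDITION} \emph{relationally} rather than through an inverse. Concretely, I would record the equivalent description: $f\in\Trc^U_{V,W}$ iff (i) for every $v\in V$ the feedback equation $(I-f_{22})(u)=f_{21}(v)$ has a solution $u$, and (ii) the value $f_{12}(u)$ is independent of the chosen solution; and when these hold, $\Tr^U_{V,W}(f)(v)=f_{11}(v)+f_{12}(u)$. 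Condition (i) is exactly $\operatorname{im}f_{21}\subseteq\operatorname{im}(I-f_{22})$, while (ii) is exactly $\ker(I-f_{22})\subseteq\ker f_{12}$ (two solutions differ by an element of $\ker(I-f_{22})$). Linearity of $v\mapsto\Tr^U_{V,W}(f)(v)$ is then routine: a solution for $v+v'$ can be taken to be the sum of solutions, and condition (ii) makes the assignment well defined. Working with this implicit characterization is essential, since $I-f_{22}$ need not be invertible, so the closed formula $f_{11}+f_{12}(I-f_{22})^{-1}f_{21}$ is unavailable.

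I would then dispatch the four easy axioms directly from this description. For \textbf{Yanking}, $\sigma_{U,U}$ has blocks $f_{11}=0$, $f_{12}=f_{21}=1_U$, $f_{22}=0$, so $I-f_{22}=1_U$, the unique solution is $u=v$, and $\Tr(\sigma_{U,U})=1_U$. For \textbf{Vanishing I}, the feedback object is the monoidal unit $\mathbf 0$, whence $u=0$ and $\Tr^{\mathbf 0}(f)=f_{11}=\rho_W f\rho_V^{-1}$. For \textbf{Superposing}, the blocks of $g\oplus f$ over the feedback variable $U$ are $f_{21}'=\bigl[\begin{smallmatrix}0\\ f_{21}\end{smallmatrix}\bigr]$, $f_{12}'=[\,0\ f_{12}\,]$, $f_{22}'=f_{22}$, so the trace-class conditions are literally those of $f$, and the value computes to $(g(w),\,f_{11}(x)+f_{12}(u))=g\oplus\Tr^U(f)$. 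For \textbf{Naturality}, pre/post-composition by $g\oplus 1_U$ and $h\oplus 1_U$ leaves $f_{22}$ unchanged and replaces $f_{21},f_{12}$ by $f_{21}g,\,hf_{12}$; since $\operatorname{im}(f_{21}g)\subseteq\operatorname{im}f_{21}$ and $\ker f_{12}\subseteq\ker(hf_{12})$, membership in $\Trc$ is preserved (this is why the axiom is stated with directed Kleene equality $\funnel$), and the solution $u$ for input $g(v)$ yields $h(f_{11}(g(v))+f_{12}(u))=(h\,\Tr^U(f)\,g)(v)$.

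For \textbf{Dinaturality} the feedback object changes, so the point is to compare feedback over $U$ (whose relevant operator is $I_U-g f_{22}$) with feedback over $U'$ (operator $I_{U'}-f_{22}g$), where $f_{22}\colon U\to U'$ and $g\colon U'\to U$. The key algebraic facts are the ``swap'' identities $g(I_{U'}-f_{22}g)=(I_U-g f_{22})g$ and $f_{22}(I_U-g f_{22})=(I_{U'}-f_{22}g)f_{22}$. Using these I would exhibit a value-preserving bijection between solution sets: given a $U'$-solution $u'$ of $(I_{U'}-f_{22}g)(u')=f_{21}(v)$, put $u=g(u')$, so $(I_U-gf_{22})(u)=g f_{21}(v)$ and $f_{12}(u)=f_{12}g(u')$; conversely, given a $U$-solution $u$ of $(I_U-gf_{22})(u)=g f_{21}(v)$, put $u'=f_{21}(v)+f_{22}(u)$ and check $(I_{U'}-f_{22}g)(u')=f_{21}(v)$ and $f_{12}g(u')=f_{12}(u)$. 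This transfers both trace-class conditions and equates the two traces, giving the full Kleene equality $\funnels$.

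The hard part will be \textbf{Vanishing II}, so I would save it for last. Writing $g$ as a $3\times 3$ block matrix indexed by $X,U,V$, the combined trace over $U\oplus V$ requires solving the simultaneous feedback system in $(u,v)$, whereas the iterated trace first solves $(I-g_{VV})(v)=g_{VX}(x)+g_{VU}(u)$ for $v$ (this is the inner trace, assumed defined) and then solves the resulting $U$-feedback equation for the outer map $h=\Tr^V_{X\oplus U,Y\oplus U}(g)$. By linearity I would split the inner solution as $v=v_0+v_1$, where $(I-g_{VV})(v_1)=g_{VX}(x)$ and $(I-g_{VV})(v_0)=g_{VU}(u)$, and verify that the outer equation $(I-h_{UU})(u)=h_{UX}(x)$ is \emph{exactly} the first row of the combined system with this $v$, while the output $g_{XX}(x)+g_{XU}(u)+g_{XV}(v)$ equals $h_{XX}(x)+h_{XU}(u)$. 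The delicate point, and the reason the hypothesis $\Tr^V(g)\downarrow$ appears, is that with $I-g_{VV}$, $I-g_{VV}$-quotients, and $I-h_{UU}$ all possibly non-invertible I must argue purely about existence and independence of solutions: I would show existence of a combined solution is equivalent to existence of an outer solution (the inner one being guaranteed), and that independence of the combined value follows from independence of the inner value (assumed) together with independence of the outer value. Matching these two layers of well-definedness cleanly is the main obstacle; once it is done, the equality $\Tr^{U\oplus V}_{X,Y}(g)\funnels\Tr^{U}_{X,Y}(\Tr^{V}_{X\oplus U,Y\oplus U}(g))$ follows.
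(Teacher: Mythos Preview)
Your proposal is correct and takes essentially the same approach as the paper. The paper carries out exactly the block computations you outline: for Naturality, Superposing, Yanking, and Vanishing I it reads off the blocks of the composite and checks the image/kernel conditions; for Dinaturality it uses precisely your two constructions $u\mapsto u'=f_{21}(v)+f_{22}(u)$ and $u'\mapsto u=g(u')$ (and their analogues for the kernel side); and for Vanishing II it performs your splitting $v=v_1+v_2$ with $(I-g_{33})(v_1)=g_{31}(x)$ and $(I-g_{33})(v_2)=g_{32}(u)$, matching the iterated feedback to the combined one. Your relational reformulation (existence of a solution plus independence of the value) is a slightly cleaner packaging of what the paper does implicitly, and your observation that the two Dinaturality constructions are mutually inverse on solution sets is a tidier way to organize the image and kernel arguments than the paper's separate case-by-case verifications, but the mathematical content is the same.
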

\begin{proof}
$\textbf{Naturality:}$\\
Let $f\in \Trc^{U}_{X,Y}$, $g:X'\rightarrow X$ and $h:Y\rightarrow Y'$ be linear maps. First, we want to prove that
\begin{center}
$(h\oplus 1_U)f(g\oplus 1_U)\in\Trc^{U}_{X',Y'}$ with $(h\oplus
1_U)f(g\oplus 1_U):X'\oplus U\rightarrow Y'\oplus U.$
\end{center}
The following equations are satisfied by naturality on injections and projections:\\
\begin{itemize}
\item $((h\oplus 1_U)f(g\oplus 1_U))_{11}=hf_{11}g$
\item $((h\oplus 1_U)f(g\oplus 1_U))_{12}=hf_{12}$
\item $((h\oplus 1_U)f(g\oplus 1_U))_{21}=f_{21}g$
\item $((h\oplus 1_U)f(g\oplus 1_U))_{22}=f_{22}.$
\end{itemize}

Thus, we have $$im((h\oplus 1_U)f(g\oplus 1_U))_{21}=im f_{21}g\subseteq im f_{21}\subseteq im (I-f_{22})=im (I-((h\oplus 1_U)f(g\oplus 1_U))_{22})$$ by the hypotheses, properties of the image,  and  the equations above.\\
Also, $$ker(I-((h\oplus 1_U)f(g\oplus 1_U))_{22})=ker(I-f_{22})\subseteq ker f_{12}\subseteq ker\, hf_{12}=ker ((h\oplus 1_U)f(g\oplus 1_U))_{12}$$ by the equations above, by the hypothesis, the properties of the  kernel  and the equations above again.\\
Now, we want to check the value of the trace. In view of the definition, we may write:
\begin{center}
$\Tr^{U}_{X',Y'}((h\oplus 1_U)f(g\oplus 1_U))(x)=((h\oplus 1_U)f(g\oplus 1_U))_{11}(x)+((h\oplus 1_U)f(g\oplus 1_U))_{12}(u)$
\end{center}
for some $u\in U$ such that $(I-((h\oplus 1_U)f(g\oplus 1_U))_{22})(u)=(h\oplus 1_U)f(g\oplus 1_U))_{21}(x).$
But, this implies using the equations above that:
\begin{center}
$\Tr^{U}_{X',Y'}((h\oplus 1_U)f(g\oplus 1_U))(x)=hf_{11}g(x)+hf_{12}(u)=h(f_{11}(g(x))+f_{12}(u))=h\Tr^{U}_{X,Y}(f)(g(x))=h\Tr^{U}_{X,Y}(f)g(x)$
\end{center}
for some $u\in U$ such that $(I-f_{22})(u)=f_{21}(g(x))$.\\

$\textbf{Dinaturality}:$\\
For any $f:X\otimes U\rightarrow Y\otimes U'$, $g:U'\rightarrow U$ we must prove that
\begin{center}
$(1_Y\otimes g)f \in \Trc^U_{X,Y}$ iff $f(1_X\otimes g)\in \Trc^{U'}_{X,Y}$
\end{center}
and also we need to check:
$\Tr^{U}_{X,Y}((1_{Y}\otimes g)f)=\Tr^{U'}_{X,Y}(f(1_{X}\otimes g)).$\\
On the one hand, we know by naturality on injections and projections that we have the following equations:
\begin{itemize}
\item $((1_Y\oplus g)f)_{11}=f_{11}$
\item $((1_Y\oplus g)f)_{12}=f_{12}$
\item $((1_Y\oplus g)f)_{21}=gf_{21}$
\item $((1_Y\oplus g)f)_{22}=gf_{22}.$
\end{itemize}
On the other hand we know:
\begin{itemize}
\item $(f(1_X\oplus g))_{11}=f_{11}$
\item $(f(1_X\oplus g))_{12}=f_{12}g$
\item $(f(1_X\oplus g))_{21}=f_{21}$
\item $(f(1_X\oplus g))_{22}=f_{22}g.$
\end{itemize}
First, let us now prove the following equivalence:
\begin{center}
$im((1_Y\oplus g)f)_{21}\subseteq im(I-((1_Y\oplus g)f)_{22})$ iff $im (f(1_X\oplus g))_{21}\subseteq im (I-(f(1_X\oplus g))_{22}).$
\end {center}
By the equations above, it corresponds to the following equivalence:
\begin{center}
$im\, gf_{21}\subseteq im(I-gf_{22})$ iff $im f_{21}\subseteq im (I-f_{22}g).$
\end {center}
($\Rightarrow$) Given $x=f_{21}(z)$ for some $z$ we want to prove that $x\in im (I-f_{22}g).$\\
Since, by hypothesis $g(x)=g(f_{21}(z))\in im(I-gf_{22})$ then $g(f_{21}(z))=z'-g(f_{22}(z'))$ for some $z'$, which implies that $g(f_{21}(z)+f_{22}(z'))=z'.$
Thus, now choose $v=f_{21}(z)+f_{22}(z')$ allowing us to obtain:
\begin{center}
$v-f_{22}(g(v))=f_{21}(z)+f_{22}(z')-f_{22}(g(v))=f_{21}(z)+f_{22}(z')-f_{22}(z')=f_{21}(z)=x.$
\end {center}
($\Leftarrow$) Given $y=g(f_{21}(u))$ for some $u$ we want to prove $y\in im(I-gf_{22}).$\\
Since by hypothesis there is a $z$ such that $f_{21}(u)=z-f_{22}(g(z))$ consider $v=g(z)$; then we get the following:
\begin{center}
$(I-gf_{22})(v)=g(z)-g(f_{22}(g(z)))=g(z-f_{22}(g(z)))=g(f_{21}(u))=y.$
\end {center}
Next, we want to check the following:
\begin{center}
$ker (I-((1_Y\oplus g)f)_{22})\subseteq ker ((1_Y\oplus g)f)_{12}$ iff $ker(I-(f(1_X\oplus g))_{22})\subseteq ker(f(1_X\oplus g))_{12}$
\end{center}
which by the equations above is equivalent to:
\begin{center}
$ker (I-gf_{22})\subseteq ker f_{12}$ iff $ker (I-f_{22}g)\subseteq ker f_{12}g.$
\end{center}
($\Rightarrow $) If $z=f_{22}g(z)$ then  $g(z)=g(f_{22}(g(z)))$ which implies that $g(z)\in ker (I-gf_{22})$ and by hypothesis that $f_{12}(g(z))=0$ i.e., $z\in ker f_{12}g$. \\
($\Leftarrow $) If $v-gf_{22}(v)=0$ then choosing $z=f_{22}(v)$ there is a $z$ such that $g(z)=v$. But, clearly $z\in ker (I-f_{22}g)$ since $v=gf_{22}(v)$ implies:
\begin{center}
$(I-f_{22}g)(z)=f_{22}(v)-f_{22}g(f_{22}(v))=f_{22}(v)-f_{22}(g(f_{22}(v)))=f_{22}(v)-f_{22}(v)=0.$
\end{center}
Then by hypothesis $z\in ker f_{12}g$, which means that $f_{12}g(z)=0$ i.e., $f_{12}(v)=0$.
Hence, we proved that if $v-gf_{22}(v)=0$ then $f_{12}(v)=0.$\\
Now we are ready to check the values of the traces.
\begin{center}
$\Tr^{U}_{X,Y}((1_Y\oplus g)f)(u)=((1_Y\oplus g)f)_{11}(u)+((1_Y\oplus g)f)_{12}(v)$ for some $v$ with $(I-((1_Y\oplus g)f)_{22})(v)=((1_Y\oplus g)f)_{21}(u)$
\end{center}
which by the equations above we get:
\begin{center}
$\Tr^{U}_{X,Y}((1_Y\oplus g)f)(u)=f_{11}(u)+f_{12}(v)$ for some $v$ such that $(I-gf_{22})(v)=gf_{21}(u).$
\end{center}
On the other hand we have that:
\begin{center}
$\Tr^{U'}_{X,Y}(f(1_X\oplus g))(u)=(f(1_{X}\oplus g))_{11}(u)+(f(1_{X}\oplus g))_{12}(v')$ for some $v'$ such that $(I-(f(1_{X}\oplus g))_{22})(v')=(f(1_{X}\oplus g))_{21}(u)$
\end{center}
and again by the equations above:
\begin{center}
$\Tr^{U'}_{X,Y}(f(1_X\oplus g))(u)=f_{11}+f_{12}g(v')$ for some $v'$ such that $(I-f_{22}g)(v')=f_{21}(u).$
\end{center}
($\Rightarrow$) Given $v$ as above there is a $v'$ such that $g(v')=v$ since we have $(I-gf_{22})(v)=gf_{21}(u)$ then $v=g(f_{22}(v)+f_{21}(u))$ so choose $v'=f_{22}(v)+f_{21}(u)$ and this vector satisfies the condition required since
$(I-f_{22}g)(v')=v'-f_{22}g(v')=f_{22}(v)+f_{21}(u)-f_{22}g(v')=f_{21}(u).$\\
($\Leftarrow$) Choose $v=g(v')$ and then we get $(I-gf_{22})(v)=(I-gf_{22})(g(v'))=g(v')-gf_{22}g(v')=g(v'-f_{22}g(v'))=g(I-f_{22}g)(v'))=g(f_{21}(u))=gf_{21}(u).$

$\textbf{Vanishing I}:$\\
Now, we want to check that:
\begin{center}
$\widehat{\Trc}^{I}_{X,Y}=C(X\otimes I,Y\otimes I)$ and
$\Tr^{I}_{X,Y}(f)=\rho_{Y}f\rho^{-1}_{X}.$
\end{center}
Let us consider $f:X\oplus I\rightarrow Y\oplus I$, we notice first that $im f_{21}=im\,0=0\subseteq im(I-f_{22})$ and
$ker (I-f_{22})=ker I=0\subseteq ker f_{12}$ since $f_{12}$, $f_{21}$, $f_{22}$ are constant $0$ functions.\\
Next, we move to the value of the trace:
\begin{center}
$\Tr^{0}_{X,Y}(f)=f_{11}(u)+f_{12}(v)$ for some $v$ such that $(I-f_{22})(v)=f_{21}(u).$
\end{center}
Therefore, since $f_{21}=0$ we choose $v=0$ as a representative and we obtain:
\begin{center}
$\Tr^{0}_{X,Y}(f)=f_{11}(u)=\pi_{11}f\, in_{11}(u)=\rho_Y\,f\,\rho^{-1}_{X}(u)$.
\end{center}
since injection, projection and $\rho$ isomorphism coincide in this case.\\

$\textbf{Vanishing II}:$\\
For any $g:X\otimes U\otimes V\rightarrow Y\otimes U\otimes V$, with $g \in\Trc^{V}_{X\otimes U,Y\otimes U}$ we want to prove the following equivalence:
\begin{center}
$g \in\Trc^{U\otimes V}_{X,Y}$ iff $\Tr^V_{X\otimes U,Y\otimes U}(g)\in \Trc^{U}_{X,Y}.$
\end{center}
We are going to represent $g$ using matrix notation:
\begin{center}
$g=\left(
\begin{array}{ccc}
g_{11}&g_{12}&g_{13}\\
g_{21}&g_{22}&g_{23}\\
g_{31}&g_{32}&g_{33}.
\end{array}\right)$
\end{center}
First, we translate the general hypothesis $g \in\Trc^{V}_{X\otimes U,Y\otimes U}$ in terms of this matrix representation.
\begin{itemize}
\item $\widehat{g_{11}}=\left(
\begin{array}{ccc}
g_{11}&g_{12}\\
g_{21}&g_{22}
\end{array}\right):X\oplus U\rightarrow Y\oplus U$
\item $\widehat{g_{21}}=\left(
\begin{array}{ccc}
g_{31}&g_{32}
\end{array}\right):X\oplus U\rightarrow V$
\item $\widehat{g_{12}}=\left(
\begin{array}{ccc}
g_{13}\\
g_{23}
\end{array}\right):V\rightarrow Y\oplus U$
\item
$\widehat{g_{22}}=\left(
\begin{array}{ccc}
g_{33}
\end{array}\right):V\rightarrow V.$
\end{itemize}
Thus the condition $im\,\widehat{g_{21}}\subseteq im\,(I-\widehat{g_{22}})$ is actually $\,im\left(
\begin{array}{ccc}
g_{31}&g_{32}
\end{array}\right)\subseteq im\,(I-g_{33})$
which implies that:
$\forall x\in X,\forall u\in U,\exists v\in V:g_{31}(x)+g_{32}(u)+g_{33}(v)=v.$
On the other hand, the condition $ker(I-\widehat{g_{22}})\subseteq ker\,\widehat{g_{12}}$ is $ker(I-g_{33})\subseteq ker\left(
\begin{array}{ccc}
g_{13}\\
g_{23}
\end{array}\right)$
which implies that:
$\forall v\in V$ such that $g_{33}(v)=v$ then $g_{13}(v)+g_{23}(v)=0$.\\
We are now ready to translate the condition $g \in\Trc^{U\otimes V}_{X,Y}$ in terms of the matrix representation of $g$.
\begin{itemize}
\item $\tilde{g}_{11}=\left(
\begin{array}{ccc}
g_{11}
\end{array}\right):X\rightarrow Y$
\item $\tilde{g}_{21}=\left(
\begin{array}{ccc}
g_{21}\\
g_{31}
\end{array}\right):X\rightarrow U\oplus V$
\item $\tilde{g}_{12}=\left(
\begin{array}{ccc}
g_{12}&g_{13}
\end{array}\right):U\oplus V\rightarrow Y$
\item
$\tilde{g}_{22}=\left(
\begin{array}{ccc}
g_{22}&g_{23}\\
g_{32}&g_{33}
\end{array}\right):U\oplus V\rightarrow U\oplus V.$
\end{itemize}
Thus the condition $im\,\tilde{g}_{21}\subseteq im\,(I-\tilde{g}_{22})$ is actually $\,im\left(
\begin{array}{ccc}
g_{21}\\
g_{31}
\end{array}\right)\subseteq im\,(I-\left(
\begin{array}{ccc}
g_{22}&g_{23}\\
g_{32}&g_{33}
\end{array}\right))$
which implies that:
$\forall x\in X,\exists u\in U,\exists v\in V:
\left\{
\begin{array}{ccc}
g_{21}(x)+g_{22}(u)+g_{23}(v)=u \\
g_{31}(x)+g_{32}(u)+g_{33}(v)=v
\end{array}\right)$ \\
On the other hand, the condition $ker(I-\tilde{g}_{22})\subseteq ker\,\tilde{g}_{12}$ is $ker(I-\left(
\begin{array}{ccc}
g_{22}&g_{23}\\
g_{32}&g_{33}
\end{array}\right))\subseteq ker\left(
\begin{array}{ccc}
g_{12}&g_{13}
\end{array}\right )$
which implies that:
$ \forall u\in U, \forall v\in V$ such that $u=g_{22}(u)+g_{23}(v)$ and $v=g_{32}(u)+g_{33}(v)$ then $g_{12}(u)+g_{13}(v)=0.$\\

Now we express  $\Tr^V_{X\oplus U,Y\oplus U}(g)\in \Trc^U_{X,Y}$ in terms of the components of $g$\\
$\Tr^V_{X\oplus U,Y\oplus U}(g)(x,u)=\widehat{g_{11}}(x,u)+\widehat{g_{12}}(v)$ for some $v\in V$ such that $(I-\widehat{g_{22}})(v)=\widehat{g_{21}}(x,u)$ which implies:
\begin{center}
$\Tr^V_{X\oplus U,Y\oplus U}(g)(x,u)=(g_{11}(x)+g_{12}(u),g_{21}(x)+g_{22}(u))+(g_{13}(v),g_{23}(v))$ for some $v\in V$ such that $v-g_{33}(v)=g_{31}(x)+g_{32}(u).$
\end{center}
Now we renamed $\bar{g}=\Tr^V_{X\oplus U,Y\oplus U}(g)$ and compose with injections and projections.
\begin{itemize}
\item $\bar{g}_{11}=\pi_{1}\bar{g}\,in_{1}:X\rightarrow Y$, $\bar{g}_{11}(x)=g_{11}(x)+g_{13}(v_1)$ with $v_1$ such that $v_1-g_{33}(v_1)=g_{31}(x)$
\item $\bar{g}_{21}=\pi_{2}\bar{g}\,in_{1}:X\rightarrow U$, $\bar{g}_{21}(x)=g_{21}(x)+g_{23}(v_1)$ with $v_1$ such that $v_1-g_{33}(v_1)=g_{31}(x)$
\item $\bar{g}_{12}=\pi_{1}\bar{g}\,in_{2}:U\rightarrow Y$, $\bar{g}_{12}(u)=g_{12}(u)+g_{13}(v_2)$ with $v_2$ such that $v_2-g_{33}(v_2)=g_{32}(u)$
\item $\bar{g}_{22}=\pi_{2}\bar{g}\,in_{2}:U\rightarrow U$, $\bar{g}_{22}(u)=g_{22}(u)+g_{23}(v_2)$ with $v_2$ such that $v_2-g_{33}(v_2)=g_{32}(u).$
\end{itemize}
Thus we have that:
\begin{center}
$\bar{g}\in\Trc^U_{X,Y}$ iff $im\, \bar{g}_{21}\subseteq im\,(I-\bar{g}_{22})$ and $ker(I-\bar{g}_{22})\subseteq ker\bar{g}_{12}.$
\end{center}
By the equations above the condition $im\, \bar{g}_{21}\subseteq im\,(I-\bar{g}_{22})$ implies that
\begin{center}
$\forall x\in X, \forall v_1\in V$ such that $v_1-g_{33}(v_1)=g_{31}(x), \exists u\in U, \exists v_2\in V$ such that\\ $v_2-g_{33}(v_2)=g_{32}(u)$ and $g_{21}(x)+g_{23}(v_1)+g_{22}(u)+g_{23}(v_2)=u.$
\end{center}
On the other hand, the condition
$ker(I-\bar{g}_{22})\subseteq ker\bar{g}_{12}$ implies by the equations above that
\begin{center}
$\forall u\in U, \forall v_2 \in V$ such that $v_2-g_{33}(v_2)=g_{32}(u)$, if $g_{22}(u)+g_{23}(v_2)=u$ then $g_{12}(u)+g_{13}(v_2)=0.$
\end{center}
Now since we have all the conditions in term of $g$ we can prove the equivalence.

\vspace{1ex}

\noindent
$(\Rightarrow )$ We have by general hypothesis that
the condition $im\,\widehat{g_{21}}\subseteq im\,(I-\widehat{g_{22}})$ is actually
$\forall x\in X,\forall u\in U,\exists v\in V:g_{31}(x)+g_{32}(u)+g_{33}(v)=v$. We also have now as hypothesis that
the condition $im\,\tilde{g}_{21}\subseteq im\,(I-\tilde{g}_{22})$ is
$\forall x\in X,\exists u\in U,\exists v\in V:
g_{21}(x)+g_{22}(u)+g_{23}(v)=u $ and
$g_{31}(x)+g_{32}(u)+g_{33}(v)=v$.\\
By the equations above we want to prove that:
$$\forall x\in X, \forall v_1 \in V  \ \  \mbox{such that} \ \  v_1 -g_{33}(v_1)=g_{31}(x)\qquad (\alpha)$$ then $\exists u\in U, \exists v_2 \in V$ such that the following two
equations hold: $$v_2 -g_{33}(v_2 )=g_{32}(u)\qquad (\beta)$$   $$ g_{21}(x)+g_{23}(v_1 )+g_{22}(u)+g_{23}(v_2 )=u \qquad (\gamma).$$
By hypothesis given $x\in X$, let us consider $u,v$ such that $g_{21}(x)+g_{22}(u)+g_{23}(v)=u $ and
$g_{31}(x)+g_{32}(u)+g_{33}(v)=v$. Now choose $v_2=v-v_1$; then we have that $g_{21}(x)+g_{23}(v_1)+g_{22}(u)+g_{23}(v-v_1)=g_{21}(x)+g_{22}(u)+g_{23}(v)=u $ which proves equation $(\gamma)$ using  the first of the equations above. It can be seen that:\\
$v_2=v-v_1=g_{31}(x)+g_{32}(u)+g_{33}(v)-v_1=g_{31}(x)+g_{32}(u)+g_{33}(v)-(g_{33}(v_1)+g_{31}(x))=g_{32}(u)+g_{33}(v)-g_{33}(v_1)=g_{32}(u)+g_{33}(v-v_1)=g_{32}(u)+g_{33}(v_2) $ which proves equation $(\beta)$ using the equations above.

\noindent
$(\Leftarrow)$
Now assume the same general hypothesis as before: $\forall x\in X,\forall u\in U,\exists v\in V:g_{31}(x)+g_{32}(u)+g_{33}(v)=v$. We know by hypothesis that:\\
$\forall x\in X, \forall v_1 \in V$ such that $v_1 -g_{33}(v_1)=g_{31}(x), \exists u\in U, \exists v_2 \in V$ such that\\ $v_2 -g_{33}(v_2 )=g_{32}(u)$ and $g_{21}(x)+g_{23}(v_1 )+g_{22}(u)+g_{23}(v_2 )=u$.\\
We want to prove that:
$\forall x\in X, \exists u \in U, \exists v \in V  \mbox{such that}$ $$g_{21}(x)+g_{22}(u)+g_{23}(v)=u \qquad (\star) $$
$$g_{31}(x)+g_{32}(u)+g_{33}(v)=v \qquad (\star\star)$$
Using the general hypothesis with $u=0$ we obtain:
$\forall x\in X,\exists v_1\in V:$$$g_{31}(x)+g_{32}(0)+g_{33}(v_1)=v_1 . \qquad (\star\star\star) $$  Now by hypothesis we have:
given $ x\in X, v_1 \in V$, since $v_1=g_{31}(x)+g_{33}(v_1)$ we have that $ \exists u\in U, \exists v_2 \in V$ such that $$v_2 -g_{33}(v_2 )=g_{32}(u) \qquad(\textbf{1})$$  $$g_{21}(x)+g_{23}(v_1 )+g_{22}(u)+g_{23}(v_2 )=u \qquad (\textbf{2}).$$
Now consider $v=v_1+v_2$ we have by the equation above $(\textbf{2})$ that:
$g_{21}(x)+g_{23}(v_1+v_2 )+g_{22}(u)=u$ which proves $(\star)$. We also have that $v_1+v_2=g_{31}(x)+g_{33}(v_1)+g_{33}(v_2 )+g_{32}(u)$ by adding equations $(\textbf{1})$ and $(\star\star\star)$. Thus $v_1+v_2=g_{31}(x)+g_{33}(v_1+v_2 )+g_{32}(u)$ which proves $(\star\star)$.

Now we move to checking that the condition on kernels is also satisfied.
It follows from the general hypothesis that:
$\forall v\in V$ such that $g_{33}(v)=v$ then $g_{13}(v)+g_{23}(v)=0$.

\vspace{1ex}

\noindent
$(\Rightarrow)$ By hypothesis we know that the two equations $$\forall u\in U ,  \forall v\in V    u=g_{22}(u)+g_{23}(v)\qquad (\star)_1$$  $$v=g_{32}(u)+g_{33}(v)\qquad (\star)_2$$ imply $g_{12}(u)+g_{13}(v)=0$.
We want to prove that: $\forall u\in U$ if $\exists v_2\in V: v_2=g_{33}(v_2)+g_{32}(u)$ with $g_{22}(u)+g_{23}(v_2)=u$ then $g_{12}(u)+g_{13}(v_2)=0\qquad\qquad (\star\star).$\\
So, given $u\in U, v_2\in V: v_2=g_{33}(v_2)+g_{32}(u)$ with $g_{22}(u)+g_{23}(v_2)=u$ then by hypothesis since $u=g_{22}(u)+g_{23}(v_2)$ and $v_2=g_{32}(u)+g_{33}(v_2)$ so $(\star)_1$ and $(\star)_2$ are satisfied with $v=v_2$ which implies $g_{12}(u)+g_{13}(v_2)=0\,\,(\star\star)$.

\noindent
$(\Leftarrow)$By a similar argument with $v=v_2$.\\[1ex]
The values of the trace are conditioned by the implications above.
According to these equations we have that:
$\Tr^V_{X\oplus U,Y\oplus V}(g)(x,u)=\widehat{g_{11}}(x,u)+\widehat{g_{12}}(v)$ for some $v$ such $(I-\widehat{g_{22}})(v)=\widehat{g_{21}}(x,u)$. If we apply $\Tr^U_{X,Y}$ to this function, it is equivalent in terms of the $g$ to $\Tr^U_{X,Y}(\Tr^V_{X\oplus U,Y\oplus V}(g))(x)=g_{11}(x)+g_{13}(v_1+v_2)+g_{12}(u)$
with $u\in U$, $v_1\in V$, $v_2\in V$ such that: $u=g_{21}(x)+g_{23}(v_1+v_2)+g_{22}(u)$, $v_1=g_{31}(x)+g_{33}(v_1)$ and $v_2=g_{32}(u)+g_{33}(v_2)$.

On the other hand, we may also calculate $\Tr^{U\otimes V}_{X,Y}(g)(x)=\tilde{g}_{11}(x)+\tilde{g}_{12}(u,v)$ for some $u\in U$, $v\in V$ such that $(I-\tilde{g}_{22}(u,v))=\tilde{g}_{21}(x)$ and we get by the equations above:
$\Tr^{U\otimes V}_{X,Y}(g)(x)=g_{11}(x)+g_{13}(v)+g_{12}(u)$ with $u=g_{21}(x)+g_{23}(v)+g_{22}(u)$, $v=g_{31}(x)+g_{33}(v)+g_{32}(u)$.
In both implications we obtain the same value of the trace. Notice that the value is independent of the choice of the vectors that satisfy the auxiliary conditions.  When we chose $v=v_1+v_2$ we have:\\
$\Tr^U_{X,Y}(\Tr^V_{X\oplus U,Y\oplus V}(g))(x)=g_{11}(x)+g_{13}(v_1+v_2)+g_{12}(u)=\Tr^{U\otimes V}_{X,Y}(g)(x)$\\
and when we chose $v_2=v-v_1$ we have \\
$\Tr^U_{X,Y}(\Tr^V_{X\oplus U,Y\oplus V}(g))(x)=g_{11}(x)+g_{13}(v_1+v_2)+g_{12}(u)=g_{11}(x)+g_{13}(v_1+v-v_1)+g_{12}(u)=g_{11}(x)+g_{13}(v)+g_{12}(u)= \Tr^{U\otimes V}_{X,Y}(g)(x)$.\\

$\textbf{Superposing}$:\\
Suppose now that $f\in \widehat{\Trc}^{U}_{X,Y}$ and $g:W\rightarrow Z$; we want to prove that $g\oplus
f\in \widehat{\Trc}^{U}_{W\oplus X,Z\oplus Y}$.\\
First, we start writing the matrix representation of $g\oplus f$ in terms of $g$.
\begin{itemize}
\item $(g\oplus f)_{11}=g\oplus f_{11}:W\oplus X\rightarrow Z\oplus Y$
\item $(g\oplus f)_{21}=\left(
\begin{array}{ccc}
0&f_{21}
\end{array}\right):W\oplus X\rightarrow U$
\item $(g\oplus f)_{12}=\left(
\begin{array}{ccc}
0\\
f_{12}
\end{array}\right):U\rightarrow Z\oplus Y$
\item $(g\oplus f)_{22}=f_{22}:U\rightarrow U.$
\end{itemize}
If $z\in im\,\,(g\oplus f)_{21}$ then $z=0w+f_{21}(x)$ for some $w\in W$, $x\in X$ which by hypothesis and the equation above  implies that $f_{21}(x)\in  im\,\,(I-f_{22})=im\,\,(I-(g\oplus f)_{22})$.\\
On the other hand, we have $ker (I-(g\oplus f)_{22})=ker (I- f_{22})\subseteq ker f_{12}\subseteq ker \left(
\begin{array}{ccc}
0\\
f_{12}
\end{array}\right)=ker (g\oplus f)_{12}$
by hypothesis and properties of kernels.\\
Now we evaluate the traces:
\begin{center}
$\Tr^U_{W\oplus X,Z\oplus Y}(g\oplus f)(w,x)=(g\oplus f)_{11}(w,x)+(g\oplus f)_{12}(u)=g\oplus f_{11}(w,x)+\left(
\begin{array}{ccc}
0\\
f_{12}
\end{array}\right)(u)=(g(w),f_{11}(x))+(0,f_{12}(u))=(g(w),f_{11}(x)+f_{12}(u))=(g(w),\Tr^U_{X,Y}(f)(x)=(g\oplus \Tr^U_{X,Y}(f))(w,x)$
\end{center}
with
$u-f_{22}(u)=\left(
\begin{array}{ccc}
0&f_{21}
\end{array}\right)(w,x)$ which by the equations above is equivalent to $u-f_{22}(u)=f_{21}(x).$
Thus
\begin{center}
$\Tr^U_{W\oplus X,Z\oplus Y}(g\oplus f)=g\oplus \Tr^U_{X,Y}(f).$
\end{center}
$\textbf{Yanking:}$\\
We want to prove that $\sigma_{U,U}\in \Trc^{U}_{U,U}$, and also $\Tr^U_{U,U}(\sigma_{U,U})=1_U$ where $\sigma_{U,U}:U\oplus U\rightarrow U\oplus U$ is the coherent isomorphism.
\begin{itemize}
\item $\sigma_{11}=\pi_{1}\sigma_{UU}\,in_{1}:U\rightarrow U$, with $\sigma_{11}=0$
\item $\sigma_{21}=\pi_{2}\sigma_{UU}\,in_{1}:U\rightarrow U$, with $\sigma_{21}=id_U$
\item $\sigma_{12}=\pi_{1}\sigma_{UU}\,in_{2}:U\rightarrow U$, with $\sigma_{12}=id_{U}$
\item $\sigma_{22}=\pi_{2}\sigma_{UU}\,in_{2}:U\rightarrow U$, with $\sigma_{22}=0.$
\end{itemize}
Thus, we have $\sigma_{21}(u)=u=(I-\sigma_{22})(u)$ which means that $im\,\,\sigma_{21}\subseteq im\,\,(I-\sigma_{22})$.\\
On the other hand we have that if $u=\sigma_{22}(u)$ then $u=0$. This means that $ker (I-\sigma_{22})\subseteq ker\,\, \sigma_{12}$.\\
The value of the trace is the following:
\begin{center}
$\Tr^U_{U,U}(\sigma_{UU})(u)=\sigma_{11}(u)+\sigma_{12}(v)=0+v=v$
\end{center}
with the condition: $(I-\sigma_{22})(v)=\sigma_{21}(u)$ for some $v\in U$. But this implies by the equations above that $v=u$. Thus $\Tr^U_{U,U}(\sigma_{UU})(u)=u$, i.e., $\Tr^U_{U,U}(\sigma_{UU})=id_U$.

\end{proof}

\subsection{Completely positive maps with $\oplus$}\label{4.2.5}
\begin{definition}\label{PARTIAL TRACE IN CPM+}
\rm
On the category $\textbf{CPM}$ with monoidal structure $\oplus$, we define a partial
trace as follows. We say that $f\in \cpmTrc^{U}_{X,Y}$ for some objects $X$, $Y$, $U$ iff
\begin{itemize}
\item[(a)]$(I-f_{22})$ is invertible as linear function and
\item[(b)] the inverse map $(I-f_{22})^{-1}$ is a completely positive map.
\end{itemize}
We define $\cpmTr^{U}_{X,Y}(f)=f_{11}+f_{12}(I-f_{22})^{-1}f_{21}$ where $I$ is the identity map.
\end{definition}
Thus, we are demanding that $(I-f_{22})^{-1}$ should be regarded as an inverse in the category \textbf{CPM}.
\begin{lemma}\label{INVERSE MATRIX}
Let $M=\left[\begin{array}{ll}
A & B\\
C& D
\end{array} \right]$
be a partitioned matrix with sub-block $A\in Mat_{m\times m}$, $B\in Mat_{m\times n}$, $C\in Mat_{n\times m}$ and $D\in Mat_{n\times n}$. Assume $D$ is
invertible. Then $M$ is invertible if and only if $A-BD^{-1}C$ is invertible.
\end{lemma}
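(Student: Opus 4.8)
The plan is to exhibit an explicit block factorization of $M$ into a product of three matrices, two of which are manifestly invertible regardless of the blocks, so that the invertibility of $M$ becomes equivalent to that of the remaining factor, which carries the information of the Schur complement $A - BD^{-1}C$.

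Concretely, I would first record the identity
$$M = \left[\begin{array}{cc} I & BD^{-1} \\ 0 & I \end{array}\right]\left[\begin{array}{cc} A - BD^{-1}C & 0 \\ 0 & D \end{array}\right]\left[\begin{array}{cc} I & 0 \\ D^{-1}C & I \end{array}\right],$$
which is verified simply by performing the two block multiplications on the right and using $DD^{-1}=I$; the cross terms involving $BD^{-1}C$ cancel to recover the $(1,1)$ entry $A$, and the $(1,2)$ and $(2,1)$ entries reproduce $B$ and $C$. This step uses only the hypothesis that $D$ is invertible, and it is the part requiring the most care (checking that all block dimensions match, with $A \in Mat_{m\times m}$, $D\in Mat_{n\times n}$, etc., and that the off-diagonal cancellations are exactly right), though it is entirely routine linear algebra.

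Next I would observe that the two outer unipotent block-triangular factors are invertible for any choice of blocks, their inverses being obtained by negating the single off-diagonal block. Hence $M$ is the product of two invertible matrices with the middle block-diagonal factor, so $M$ is invertible if and only if that middle factor is invertible. A block-diagonal matrix is invertible precisely when each diagonal block is; since $D$ is invertible by hypothesis, this is equivalent to the invertibility of $A - BD^{-1}C$, which is exactly the claim.

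I expect no genuine obstacle here beyond the bookkeeping in verifying the factorization. An alternative route is to apply multiplicativity of the determinant to the same identity, yielding $\det M = \det D \cdot \det(A - BD^{-1}C)$ and hence the equivalence at once; but I prefer the factorization argument, since it is purely structural and, as a bonus, exhibits the explicit inverse of $M$ in terms of $(A-BD^{-1}C)^{-1}$ and $D^{-1}$, which is precisely the form one needs when checking the completely positive inverse condition in Definition~\ref{PARTIAL TRACE IN CPM+}.
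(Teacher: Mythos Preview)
Your proof is correct. The paper actually states this lemma without proof, treating it as a standard linear-algebra fact, so there is nothing to compare against directly. That said, your block LDU factorization is precisely the argument that yields the explicit inverse formula the paper writes down a few lines later (the displayed inverse of $I - \left(\begin{smallmatrix} e & f \\ n & p \end{smallmatrix}\right)$ in terms of the Schur complement and $q=(I-p)^{-1}$), so your approach is exactly in line with how the result is used.
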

\begin{lemma}\label{INVERSE COMPOSITION MATRIX}
Let us consider $A\in Mat_{m\times n}$ and $B\in Mat_{n\times m}$. Then $(I_{m}-AB)$ is invertible if and only if $(I_{n}-BA)$ is invertible and $(I_{m}-AB)^{-1}A=A(I_{n}-BA)^{-1}$.
\end{lemma}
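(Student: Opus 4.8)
The plan is to reduce both assertions to the single unconditional algebraic identity
\[
A(I_n - BA) = A - ABA = (I_m - AB)A,
\]
which holds with no invertibility hypothesis whatsoever, together with its mirror image $B(I_m - AB) = (I_n - BA)B$. This is the ``push-through'' relation already in naked form, and everything in the lemma is squeezed out of it.

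First I would prove the invertibility equivalence by writing down an explicit inverse. Assume $(I_n - BA)$ is invertible and set $S = I_m + A(I_n - BA)^{-1}B$. Using the identity in the form $(I_m - AB)A = A(I_n - BA)$, the cross terms collapse at once: for instance $(I_m - AB)\,A(I_n - BA)^{-1}B = A(I_n - BA)(I_n - BA)^{-1}B = AB$, so that $(I_m - AB)S = (I_m - AB) + AB = I_m$, and symmetrically, via the mirror identity, $S(I_m - AB) = I_m$. Hence $(I_m - AB)$ is invertible with inverse $S$. Since the whole setup is symmetric under the exchange $(A,m) \leftrightarrow (B,n)$, running the same argument with the roles of $A$ and $B$ swapped shows conversely that invertibility of $(I_m - AB)$ forces invertibility of $(I_n - BA)$. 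This yields the ``if and only if''.

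Second, for the push-through identity itself, I would assume both factors invertible — by the first part these two hypotheses are equivalent — and simply left-multiply the displayed identity $A(I_n - BA) = (I_m - AB)A$ by $(I_m - AB)^{-1}$ and right-multiply by $(I_n - BA)^{-1}$. The pairs $(I_m - AB)^{-1}(I_m - AB)$ and $(I_n - BA)(I_n - BA)^{-1}$ cancel to identities, leaving exactly $(I_m - AB)^{-1}A = A(I_n - BA)^{-1}$.

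I do not anticipate a genuine obstacle: since $A$ and $B$ are rectangular and certainly do not commute, the only thing that must be handled with care is the bookkeeping of the order of every noncommutative factor. As an alternative to the explicit-inverse route for the equivalence, one could instead invoke Lemma~\ref{INVERSE MATRIX} applied to the block matrix $\left[\begin{smallmatrix} I_m & A \\ B & I_n \end{smallmatrix}\right]$, whose Schur complement relative to the $I_n$ block is $I_m - AB$ and, after a block transposition, relative to the $I_m$ block is $I_n - BA$; but I prefer the explicit inverse $S$, since it is self-contained and produces the push-through identity as a by-product rather than requiring a separate argument.
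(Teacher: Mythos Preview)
Your proof is correct. The paper actually states this lemma without proof, treating it as a standard linear-algebra fact, so there is nothing to compare against; your explicit-inverse argument via $S = I_m + A(I_n - BA)^{-1}B$ and the push-through identity $A(I_n - BA) = (I_m - AB)A$ is the canonical one and would serve perfectly well as the missing proof.
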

\begin{proposition}
$( {\bf CPM},\oplus, 0)$ is a partially traced category with respect to Definition~\ref{PARTIAL TRACE IN CPM+}.
\end{proposition}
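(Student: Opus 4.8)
The plan is to reduce every axiom to the already-established partial trace on $\mathbf{Vect}$ (the theorem following Definition~\ref{DEFINITION RELAX CONDITION}), using the fact that $\CPM$ embeds faithfully into $\mathbf{Vect}$. Concretely, let $U:\CPM\to\mathbf{Vect}$ be the functor sending an object $(H_1,\dots,H_n)$ to the vector space $\bigoplus_i\LL(H_i)$ and sending a completely positive map to its underlying linear map. Since a morphism of $\CPM$ is determined by its matrix of linear maps, $U$ is faithful, and since the biproduct $\oplus$ of $\CPM$ is concatenation of tuples, $U$ is strict monoidal for $\oplus$ with $U(0)=0$; in particular the block decomposition of $f:X\oplus U\to Y\oplus U$ is preserved by $U$. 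Two observations make the reduction work. First, if $f\in\cpmTrc^{U}_{X,Y}$ then $I-f_{22}$ is invertible, so the image/kernel conditions of Definition~\ref{DEFINITION RELAX CONDITION} hold trivially, $U(f)$ lies in the $\mathbf{Vect}$ trace class, and $U(\cpmTr^{U}_{X,Y}(f))=\Tr^{U}(U(f))$; thus any equality of traces proved in $\mathbf{Vect}$ transports back to $\CPM$ by faithfulness of $U$. Second, the trace value $f_{11}+f_{12}(I-f_{22})^{-1}f_{21}$ is a sum and composite of completely positive maps — using condition (b) that $(I-f_{22})^{-1}$ is completely positive — and is therefore again a morphism of $\CPM$, so the operation is well-typed.

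Granting these two points, the trace-value equations in all six axioms follow immediately from the corresponding equations in $\mathbf{Vect}$, and what remains is to check, for each axiom, that the $\CPM$ trace-class membership conditions (which additionally require complete positivity of the inverse) behave as the axiom demands. For Naturality and Superposing the $(2,2)$-block of the transformed map equals $f_{22}$ (as in the $\mathbf{Vect}$ identities $((h\oplus 1_U)f(g\oplus 1_U))_{22}=f_{22}$ and $(g\oplus f)_{22}=f_{22}$), so conditions (a) and (b) are literally the same for $f$ and for its transform; for Vanishing I the traced-out object is the unit $0$, so $I-f_{22}$ is the identity on the zero space and both conditions are vacuous while $f_{12},f_{21}$ are zero, giving value $f_{11}=\rho_Y f\rho_X^{-1}$; and for Yanking $\sigma_{U,U}$ has $(2,2)$-block $0$, whence $I-\sigma_{22}=1_U$ is trivially invertible with completely positive inverse and the value is $\sigma_{11}+\sigma_{12}\,1_U\,\sigma_{21}=1_U$. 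These four cases are routine.

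The two substantive cases are Dinaturality and Vanishing II, where the $(2,2)$-block changes and one must show that condition (b) is preserved. For Dinaturality the two blocks are $gf_{22}$ and $f_{22}g$; Lemma~\ref{INVERSE COMPOSITION MATRIX} gives that $I-gf_{22}$ is invertible iff $I-f_{22}g$ is, and the identity $(I-gf_{22})^{-1}=I+g(I-f_{22}g)^{-1}f_{22}$ (verified by direct multiplication) shows that, since $g$ and $f_{22}$ are completely positive, $(I-gf_{22})^{-1}$ is completely positive whenever $(I-f_{22}g)^{-1}$ is; by symmetry condition (b) is equivalent on the two sides, so the Kleene equality of trace classes holds. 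For Vanishing II, write $g:X\oplus U\oplus V\to Y\oplus U\oplus V$ in blocks and note that its $(U\oplus V)$-block is $\tilde g_{22}=\bigl(\begin{smallmatrix}g_{22}&g_{23}\\ g_{32}&g_{33}\end{smallmatrix}\bigr)$, while the hypothesis $\cpmTr^{V}_{X\oplus U,Y\oplus U}(g)\downarrow$ says $I-g_{33}$ is invertible with completely positive inverse. Applying Lemma~\ref{INVERSE MATRIX} with $D=I-g_{33}$ shows $I-\tilde g_{22}$ is invertible iff its Schur complement $(I-g_{22})-g_{23}(I-g_{33})^{-1}g_{32}=I-\overline g_{22}$ is invertible, where $\overline g_{22}$ is exactly the $(U,U)$-block of $\overline g:=\cpmTr^{V}_{X\oplus U,Y\oplus U}(g)$; this matches condition (a). For condition (b) I will use the block inverse formula: the off-diagonal blocks of $I-\tilde g_{22}$ are $-g_{23}$ and $-g_{32}$, so the sign cancels and every block of $(I-\tilde g_{22})^{-1}$ is a sum and composite of the completely positive maps $(I-g_{33})^{-1}$, $(I-\overline g_{22})^{-1}$, $g_{23}$, $g_{32}$; conversely $(I-\overline g_{22})^{-1}$ is the top-left block of $(I-\tilde g_{22})^{-1}$, hence completely positive whenever the latter is (projections and injections are completely positive). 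This equivalence of condition (b) under Schur complementation is the main obstacle, and once it is in place Vanishing II reduces, like the rest, to the $\mathbf{Vect}$ computation together with faithfulness of $U$.
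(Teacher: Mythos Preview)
Your proof is correct and closely parallels the paper's, but with two differences worth noting. First, you organize the trace-value equalities by pushing them through the faithful functor $U:\CPM\to\mathbf{Vect}$ to the already-established partial trace of Definition~\ref{DEFINITION RELAX CONDITION}; the paper instead cites \cite{HS05a} for condition~(a) and computes the value equalities directly in each axiom. Your reduction is cleaner and makes explicit the point (implicit in the paper) that the trace value lies in $\CPM$ because it is a sum and composite of completely positive maps. Second, and more substantively, for the positivity half of Dinaturality you invoke the resolvent identity $(I-gf_{22})^{-1}=I+g(I-f_{22}g)^{-1}f_{22}$, which immediately exhibits the inverse as a sum of completely positive maps; the paper instead proves this implication by an element-level argument, fixing an arbitrary signature $\tau$ and a positive $A\in V_\tau\otimes V_{u'}$ and chasing positivity through the equation $(Id_\tau\otimes(I_{u'}-fg))(B)=A$ using that $f,g$ are completely positive. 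Your argument is shorter and more algebraic; the paper's has the minor advantage of not needing to guess the resolvent identity. For Vanishing~II both you and the paper use the same explicit block-inverse (Schur complement) formula and the same projection/injection trick for the converse direction, so the arguments coincide there.
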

\begin{proof}
The partial trace axioms, restricted to condition (a) of Definition~\ref{PARTIAL TRACE IN CPM+}, are basically proved in \cite{HS05a}. This picture is completed by adding the proof of the trace axioms for the positiveness condition (b) of Definition~\ref{PARTIAL TRACE IN CPM+}.

\textbf{Vanishing I:}\\
This follows from the definition of the unit $I$ as the empty list and the fact that the identity map is an invertible map where its inverse is a completely positive map. Thus $\cpmTrc^{I}_{X,Y}=\mbox{\textbf{CPM}}(X,Y)$ and $\cpmTr^{I}_{X,Y}(f)=f$ for every $f\in \cpmTrc^{I}_{X,Y}$.\\

$\textbf{Superposing:}$\\
Let us consider $ f\in \cpmTrc^{U}_{X,Y}$ and $g:W\rightarrow Z$ then $g\oplus f\in \cpmTrc^{U}_{W\oplus X,Z\oplus Y}$ since $(g\oplus f)_{22}=f_{22}$.
We also have:
$\cpmTr^{U}_{W\oplus X,Z\oplus Y}(g\oplus f)=
\left[\begin{array}{llll}
g & 0\\
0&  f_{11}+f_{12}(I-f_{22})^{-1}f_{21}\\
\end{array}\right]=
g\oplus \cpmTr^{U}_{X,Y}(f)$.
\\

$\textbf{Naturality:}$\\
If $f \in \cpmTrc^{U}_{X,Y}$ and we have two arrows $g:X'\rightarrow X$, $h:Y\rightarrow Y'$ then since
$$((h\oplus id_U)f(g\oplus id_U))_{22}=f_{22}$$
 always is satisfied for linear maps since composition computes as matrix product i.e.,\\

$\left[\begin{array}{llll}
h & 0\\
0& 1_{u}
\end{array}\right]$.
$\left[\begin{array}{llll}
f_{11} & f_{12}\\
f_{21} & f_{22}
\end{array}\right]$.
$\left[\begin{array}{llll}
g & 0\\
0& 1_{u}
\end{array}\right]=
\left[\begin{array}{llll}
hf_{11}g & hf_{12}\\
f_{21}g & f_{22}
\end{array}\right]$\\

Thus then the conditions remain exactly the same, meaning that
$$((h\oplus id_u)f(g\oplus id_u))_{22}=f_{22}\in \cpmTrc^{U}_{X',Y'}.$$
Moreover
$$\cpmTr^{U}_{X',Y'}((h\oplus 1_{u}) f(g\oplus 1_{u})=hf_{11}g+(hf_{12})(f_{22})(f_{21}g) =h(f_{11}+f_{12}f_{22}f_{21})g=
h \cpmTr^{U}_{X,Y}(f)g.$$

$\bf Yanking:$\\
Note that $s_{U,U}\in \cpmTrc^{U}_{U,U}$ since $(s_{U,U})_{2,2}=0$ which implies that $I-(s_{U,U})_{2,2}$ is invertible and $(I-0)^{-1}$
is a completely positive map.
Moreover $\cpmTr^{U}_{U,U}(\sigma_{U,U})=1_{u}$ since $(s_{U,U})_{1,1}=(s_{U,U})_{2,2}=0$ and $(s_{U,U})_{1,2}=(s_{U,U})_{2,1}=1_{u}$.\\

$\bf Vanishing\,\,\,II:$\\
Let us consider $g:X \oplus U \oplus V \rightarrow Y \oplus U \oplus V$, we write using matrix notation $g=
\left[\begin{array}{llll}
a & b &  c\\
d & e &  f\\
m & n & p\\
\end{array}\right]$.

Now, assuming by hypothesis that $g \in \cpmTrc^{V}_{X\oplus U,Y\oplus U}$, i.e., $I-p$ is invertible and $(I-p)^{-1}$ is a completely positive map we must show that $g \in \cpmTrc^{U\oplus V}_{X,Y}$ iff $\cpmTr^{V}_{X\oplus U,Y\oplus U}(g)\in \cpmTrc^{U}_{X,Y}$.

 First, we analyze the conditions of definition \ref{PARTIAL TRACE IN CPM+} in terms of its matrix term components. If we represent functions using matrix notation we have:

 $\cpmTr^{V}_{X\oplus U,Y\oplus U}(g)
\left(\begin{array}{llll}
x\\
u\\
\end{array}\right)
=\left[
\left(
\begin{array}{llll}
a & b \\
d & e \\
\end{array}
\right)
+
\left(\begin{array}{llll}
c\\
f\\
\end{array}\right)
\comp (1-p)^{-1} \comp
 \left(
\begin{array}{llll}
m& n\\
\end{array}
\right)
\right]
\left(\begin{array}{llll}
x\\
u\\
\end{array}\right)
$
and we obtain
\begin{center}
$(\cpmTr^{V}_{X\oplus U,Y\oplus U}(g))_{22}(u)=(e+f(1-p)^{-1}n)(u)$
\end{center}
by composing with the second injection and the second projection.\\
Thus we know by definition:\\
$\cpmTr^{V}_{X\oplus U,Y\oplus U}(g)\in \cpmTrc^{U}_{X,Y}$ i.e., $I-e-f(1-p)^{-1}n$ is invertible and $(I-e-f(1-p)^{-1}n)^{-1}$ is a complete positive map.\\
On the other hand, $g \in \cpmTrc^{U\oplus V}_{X,Y}$ i.e., $I-\left(\begin{array}{llll}
 e &  f\\
 n & p\\
\end{array}\right)$ is invertible and $(I-\left(\begin{array}{llll}
 e &  f\\
 n & p\\
\end{array}\right))^{-1}$ is a complete positive map.
Also we obtain the explicit inverse by

\[
  (I-\left(\begin{array}{llll}
      e &  f\\
      n & p\\
    \end{array}\right))^{-1}=
  \left[\begin{array}{llll}
      (I-e-fqn)^{-1} &  (I-e-fqn)^{-1}(f)q\\
      qn(I-e-fqn)^{-1} & q+qn(I-e-fqn)^{-1}fq
    \end{array}\right]
\]
where $q = (I-p)^{-1}.$
Now we prove the equivalence on the trace class:

First of all, by Lemma~\ref{INVERSE MATRIX} above we get:
if we know that $(I-p)$ is invertible then
$(I-\left(\begin{array}{llll}
 e &  f\\
 n & p\\
\end{array}\right))$ is invertible iff $I-e-f(1-p)^{-1}n$ is invertible, which means that the first part of the definition is satisfied.
Also from Lemma~\ref{INVERSE MATRIX} we have that the equation on traces
$\cpmTr^{U\oplus V}_{X,Y}(g)=\cpmTr^{U}_{X,Y}(\cpmTr^V_{X\oplus U,Y\oplus U}(g))$ is satisfied by
using matrix multiplication and the explicit inverse
$(I-\left(\begin{array}{llll}
 e &  f\\
 n & p\\
\end{array}\right))^{-1}$  written above.\\

Positiveness condition (b):\\
$(\Leftarrow )$
Injections and projections are completely positive maps and by the fact that $g$ is a completely positive map this implies by definition that $n$ and $f$ are complete positive maps. Also, $(I-e-f(1-p)^{-1}n)^{-1}=(I-(\cpmTr^{V}_{X\oplus U,Y\oplus U}(g))_{22})^{-1}$ is a completely positive map by conditional hypothesis and $(I-p)^{-1}$ is also a completely positive map by the general hypothesis. This implies that $(I-\left(\begin{array}{llll}
 e &  f\\
 n & p\\
\end{array}\right))^{-1}$
is a completely positive since each component of the matrix is obtained by sum and composition of completely positive maps.

$(\Rightarrow)$
If $(I-\left(\begin{array}{llll}
 e &  f\\
 n & p\\
\end{array}\right))^{-1}$ is a completely positive map then
$\pi _1 \comp (I-\left(\begin{array}{llll}
 e &  f\\
 n & p\\
\end{array}\right))^{-1}
\comp i_1= (I-e-f(I-p)^{-1}n)^{-1}$ is a completely positive map where $\pi_1$ and $i_1$ are the first projection and first injection. Therefore, we showed that $(I-e-f(I-p)^{-1}n)^{-1}=(I-(e+f(1-p)^{-1}n))^{-1}=(I-(\cpmTr^{v}_{X\oplus U,Y\oplus U}(g))_{22})^{-1}$ is a completely positive map.\\

$\textbf{Dinaturality:} $\\
Now, suppose $f:X\oplus U\rightarrow Y\oplus U'$ and $g:U'\rightarrow U$ are completely positive maps then we want to prove that $(id_y\oplus g)\comp f \in \cpmTrc^{U}_{X,Y}$ iff $f\comp (id_x\oplus g) \in \cpmTrc^{U'}_{X,Y}$ which means that $((id_y\oplus g)\comp f)_{22}=g\comp f_{22}$ satisfies conditions (a) and (b) of Definition~\ref{PARTIAL TRACE IN CPM+} if and only if $(f\comp (id_x\oplus g))_{22}=f_{22}\comp g$ does.

Given $f:X\oplus U\rightarrow Y\oplus U'$ and $g:U'\rightarrow U$ by Lemma~\ref{INVERSE COMPOSITION MATRIX} above,  $I-g\comp f_{22}$ is invertible if and only if $I-f_{22}\comp g$ is invertible and we have that
$$\cpmTr^{U}_{X,Y}((1_{y}\oplus g)f)=f_{11}+f_{12}(I-gf_{22})^{-1}gf_{21}=f_{11}+f_{12}g(I-f_{22}g)^{-1}f_{21}=\cpmTr^{U'}_{X,Y}(f(1_{x}\oplus g)).$$

Therefore, it suffices to prove the following:
if $I_{u}-g\comp f$ is invertible and $(I_{u}-g\comp f)^{-1}$ is a completely positive map then
$(I_{u'}-f\comp g)^{-1}$ is a completely positive map, where $f:U\rightarrow U'$ and $g:U'\rightarrow U$.\\
We know by hypothesis that
$$\forall \tau, \forall A'\in V_{\tau}\otimes V_{u},\,\,\, \mbox{if}\,\,\, A'\geq 0 \,\,\,\mbox{then}\,\,\, (Id_{\tau}\otimes (I_{u}-g\comp f)^{-1})(A')\geq 0$$
and we want to prove that
$$\forall \tau, \forall A\in V_{\tau}\otimes V_{u'},\,\,\,\mbox{if}\,\,\, A\geq 0\,\,\,\mbox{then}\,\,\, (Id_{\tau}\otimes (I_{u'}-f\comp g))^{-1})(A)\geq 0.$$
Suppose we name $(Id_{\tau}\otimes (I_{u'}-f\comp g)^{-1})(A)=B$ then by hypothesis
\begin{equation}\label{EQUATION POSITIVENESS}
A=(Id_{\tau}\otimes (I_{u'}-f\comp g))(B)\geq 0.
\end{equation}
Since $g$ is a completely positive map this implies that: if $A\geq 0$ then $(Id_{\tau}\otimes g)(A)\geq 0$; next we apply this property to equation~(\ref{EQUATION POSITIVENESS}).

\noindent
So, we get:\\
$0\leq (Id_{\tau}\otimes g)\comp (Id_{\tau}\otimes (I_{u'}-f\comp g))(B)=(Id_{\tau}\otimes g(I_{u'}-f\comp g))(B)=\\(Id_{\tau}\otimes (g-g\comp f\comp g))(B) =(Id_{\tau}\otimes (I_{u}-g\comp f)g)(B)=(Id_{\tau}\otimes (I_{u}-g\comp f))\comp (Id_{\tau}\otimes g)(B)\,\,\,$
which implies (rename it $C$)
$$(Id_{\tau}\otimes (I_{u}-g\comp f))((Id_{\tau}\otimes g)(B))=C \geq 0.$$
Thus we have
$$(Id_{\tau}\otimes g)(B)=(Id_{\tau}\otimes (I_{u}-g\comp f))^{-1}(C)=(Id_{\tau}\otimes (I_{u}-g\comp f)^{-1})(C)\geq 0$$
since $(I_{u}-g\comp f)^{-1}$ is a completely positive map by hypothesis. Therefore,  $(Id_{\tau}\otimes g)(B)\geq 0$ and on the other hand $f$ is a completely positive map which implies $(Id_{\tau}\otimes f)((Id_{\tau}\otimes g)(B))\geq 0$, which means $(Id_{\tau}\otimes f\comp g)(B)\geq 0$.

Finally, since if $A\geq 0$ implies $(Id_{\tau}\otimes I_{u'})(B)- (Id_{\tau}\otimes f\comp g)(B)\geq 0$ by equation~(\ref{EQUATION POSITIVENESS}) this implies that $B\geq (Id_{\tau}\otimes f\comp g)(B)$ hence $B=(Id_{\tau}\otimes (I_{u'}-f\comp g)^{-1})(A)\geq 0$ by transitivity for every $\tau$.
For the converse implication we repeat this argument interchanging $f$ and $g$.
\end{proof}

\section{Partial trace in a monoidal subcategory of a partially traced category}
\label{TRACE IN A MONOIDAL SUBCATEGORY}
The aim of this section is to provide a general construction of partially traced categories as
subcategories of other partially (or totally) traced categories.

Suppose $(\cD,\otimes,I,s,\Tr)$ is a partially traced category with trace
$$\Tr^{U}_{X,Y}:\cD(X\otimes U,Y\otimes U)\rightharpoonup \cD(X,Y).$$
Given a monoidal subcategory $\cC\subseteq \cD$, we get a partial trace on $\cC$,
defined by
$\widehat{\Tr}^U_{X,Y}(f)=\Tr^U_{X,Y}(f)$ if $\Tr^U_{X,Y}(f)\downarrow$
and $\Tr^U_{X,Y}(f)\in\cC(X,Y)$, and undefined otherwise.

More generally, we shall show a method of constructing one partially traced category from another in such a way that the first one is faithfully embedded in the second.\\
\begin{proposition}
\label{PROPOSITION SUBCATEGORY}
Let $F:\cC\rightarrow \cD$ be a faithful strong symmetric monoidal functor with $(\cD,\otimes,I,s,\Tr)$ a partially traced category and $(\cC,\otimes,I,s)$ a symmetric monoidal category. Then we obtain a partial trace $\widehat{\Tr}$ on $\cC$ as follows.
 For $f:X\otimes U\rightarrow Y\otimes U$, we define
 $\widehat{\Tr}^U_{X,Y}(f)=g$ if there exists some (necessarily unique)
 $g:X\rightarrow Y$ such that $F(g) = \Tr^{FU}_{FX,XY}(m^{-1}_{Y,U}\circ F(f)\circ
 m_{X,U})$ is defined, and $\widehat{\Tr}^U_{X,Y}(f)$ undefined otherwise.
\end{proposition}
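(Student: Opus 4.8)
The plan is to transport the partial trace on $\cD$ back along $F$, using the faithfulness of $F$ to guarantee uniqueness of the putative trace $g$, and the fact that $F$ is strong symmetric monoidal to check that the transported operation inherits each of the six partial-trace axioms from the corresponding axiom in $\cD$. The key observation is that the formula $F(g)=\Tr^{FU}_{FX,FY}(m^{-1}_{Y,U}\circ F(f)\circ m_{X,U})$ is precisely the condition that makes $F$ a traced monoidal functor; so morally $\widehat{\Tr}$ is defined to be ``the unique thing $F$ could send to the honest trace of the transported morphism, when such a thing exists.''

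First I would dispatch well-definedness. Since $F$ is faithful, if $g,g'$ both satisfy $F(g)=F(g')=\Tr^{FU}_{FX,FY}(m^{-1}_{Y,U}\circ F(f)\circ m_{X,U})$, then $g=g'$; this is the uniqueness asserted parenthetically in the statement, and it shows $\widehat{\Tr}^U_{X,Y}$ is a well-defined partial function. Next, for each axiom the strategy is uniform: given the hypotheses of the axiom in $\cC$, apply $F$, use the strong monoidal coherence isomorphisms $m$ to rewrite the transported morphisms (so that, e.g., $F(f\otimes g)$ becomes $m\circ(Ff\otimes Fg)\circ m^{-1}$ up to the appropriate coherence cells), invoke the corresponding axiom for $\Tr$ in $\cD$, and finally use faithfulness of $F$ together with the fact that $F$ preserves the structural maps $\sigma,\rho,\lambda$ to pull the resulting equation back down to $\cC$. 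For the directed-Kleene-equality axioms (Naturality, Superposing) one must be slightly careful: the definedness on the $\cD$-side is guaranteed by the hypothesis, but one must also check that the resulting trace value in $\cD$ lands in the image of $F$, i.e. is of the form $F(g)$; this is exactly where the ``$\funnel$'' (rather than ``$\funnels$'') direction is needed and matches the one-sided definedness built into those axioms. For the Kleene-equality axioms (Dinaturality, Vanishing I, Yanking) the argument is symmetric in both directions.

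The main obstacle, and the step deserving the most care, is Vanishing II. There the hypothesis is that $\widehat{\Tr}^V_{X\otimes U,Y\otimes U}(g)\downarrow$, which by definition produces an \emph{inner} witness $h:X\otimes U\to Y\otimes U$ in $\cC$ with $F(h)=\Tr^{FV}_{\ldots}(\cdots)$; I must then feed $F(h)$ into the outer trace $\Tr^{FU}$ in $\cD$ and match it against the single trace $\Tr^{F(U\otimes V)}$, using the coherence isomorphism $m_{U,V}:FU\otimes FV\to F(U\otimes V)$ together with the dinaturality of $\Tr$ in $\cD$ to identify $\Tr^{FU\otimes FV}$ with $\Tr^{F(U\otimes V)}$. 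The bookkeeping of these coherence cells — ensuring that the two-step transported morphism and the one-step transported morphism agree after inserting $m_{U,V}$ and its inverse — is the delicate part; everything else reduces to routine diagram chasing once the strong-monoidal rewriting conventions are fixed. I would therefore set up, once and for all at the start of the proof, the abbreviation $\tilde f := m^{-1}_{Y,U}\circ F(f)\circ m_{X,U}$ for the transported morphism and record the basic compatibility $\widetilde{g\circ f}=\tilde g\circ\tilde f$ and $\widetilde{f\otimes g}=\tilde f\otimes\tilde g$ (up to coherence), so that each axiom verification becomes a short transcription of its $\cD$-counterpart.
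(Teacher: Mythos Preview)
Your proposal is correct and follows essentially the same approach as the paper: for each of the six axioms you transport the data along $F$, rewrite with the strong-monoidal coherence cells $m$, invoke the corresponding axiom in $\cD$, and use faithfulness to descend; the paper carries this out in detail axiom by axiom, and your identification of Vanishing~II as the delicate case (requiring dinaturality in $\cD$ against $m_{U,V}$ to mediate between $\Tr^{F(U\otimes V)}$ and $\Tr^{FU\otimes FV}$, together with the monoidal-functor coherence identity $(m_{Y,U}\otimes 1)\circ m_{Y\otimes U,V}^{-1}=(1\otimes m_{U,V}^{-1})\circ m_{Y,U\otimes V}^{-1}$) matches the paper's treatment exactly. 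Your two-condition bookkeeping---definedness in $\cD$ plus landing in the image of $F$---is precisely how the paper structures each verification.
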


\begin{proof}
To clarify the notation used here we recall that there are two partial functions:

 \[  \widehat{\Tr}^U_{X,Y} : \cC(X\otimes U, Y\otimes U)\rightharpoonup \cC(X,Y)
 \]
 and
 \[  \Tr^U_{X,Y}: \cD(X\otimes U, Y\otimes U) \rightharpoonup \cD(X,Y).
 \]

Then we have two maps
$$\widehat{\Tr}^{U}_{X,Y}:\widehat{\Trc}^U_{X,Y}\rightarrow \cC(X,Y)$$
where $\widehat{\Trc}^{U}_{X,Y}\subseteq \cC(X\otimes U,Y\otimes U)$ and we also have
$$\Tr^{U}_{X,Y}:\Trc^U_{X,Y}\rightarrow \cD(X,Y)$$
where $\Trc^{U}_{X,Y}\subseteq \cD(X\otimes U,Y\otimes U).$\\

$\textbf{Naturality}:$\\

For any $X$, $Y$, $U$ objects in $\cC$, $f \in \widehat{\Trc}^{U}_{X,Y}$ and $g:X'\rightarrow X$, $h:Y\rightarrow Y'$ arrows in $\cC$. We want to prove that the two conditions given above hold:\\
\textbf{(1)} we must prove that
$$m^{-1}_{Y'U}\circ F(h\otimes 1_U)F(f)F(g\otimes 1_U)\circ m_{X',U}\in \Trc^{FU}_{FX',FY'}$$
By naturality of the map $m^{-1}$ with $h$, $g$ and identities we have:
\begin{equation}
\label{NATURALITY OF M}
m^{-1}_{Y'U}\circ F(h\otimes 1_U)=(F(h)\otimes 1_{FU})\circ m^{-1}_{YU}\,\,\,\mbox{and also}\,\,\,F(g\otimes 1_U)\circ m_{X',U}=m_{X,U}\circ (F(g)\otimes 1_{FU}).
\end{equation}
Consequently, we need to prove that
$$(F(h)\otimes 1_{FU})\circ m^{-1}_{YU}\circ F(f)\circ m_{X,U}\circ (F(g)\otimes 1_{FU})\in \Trc^{FU}_{FX',FY'}.$$
Notice that by hypothesis
$$m^{-1}_{YU}\circ F(f)\circ m_{X,U}\in \Trc^{FU}_{FX,FY}.$$
Then by the naturality axiom in the category $\cD$ we have that
$$(F(h)\otimes 1_{FU})\circ m^{-1}_{YU}\circ F(f)\circ m_{X,U}\circ (F(g)\otimes 1_{FU})\in \Trc^{FU}_{FX',FY'}$$
and also

$\Tr^{FU}_{FX',FY'}((F(h)\otimes 1_{FU})\circ m^{-1}_{YU}\circ F(f)\circ m_{X,U}\circ (F(g)\otimes 1_{FU}))=$

$=F(h)\circ \Tr^{FU}_{FX,FY}(m^{-1}_{YU}\circ F(f)\circ m_{X,U})\circ F(g).$

\textbf{(2)} Since by hypothesis there exists an arrow $p_1:X\rightarrow Y$ such that $$F(p_1)=\Tr^{FU}_{FX,FY}(m^{-1}_{YU}\circ F(f)\circ m_{X,U})$$
 then

$\Tr^{FU}_{FX',FY'}( m^{-1}_{Y'U}\circ F((h\otimes 1_U)\circ f\circ (g\otimes 1_U))\circ m_{X',U})=\,\,\,\,\,\mbox{(equation~(\ref{NATURALITY OF M}) above)}$

$=\Tr^{FU}_{FX',FY'}((F(h)\otimes 1_{FU})\circ m^{-1}_{YU}\circ F(f)\circ m_{X,U}\circ (F(g)\otimes 1_{FU}))=\,(\mbox{naturality axiom in}\,\, \cD\,\,\mbox{and hyp.})$

$=F(h)\circ F(p_1)\circ F(g)=F(h\circ p_1\circ g).$

This means that we can choose $p_2=h\circ p_1\circ g.$

Now we are able to compute the trace:
$$\widehat{\Tr}^{U}_{X',Y'}((h\otimes 1_U)f(g\otimes 1_U))=p_2=h\circ p_1\circ g=h\circ\widehat{\Tr}^{U}_{X,Y}(f)\circ g.$$

\textbf{Dinaturality}:\\
For any $f:X\otimes U\rightarrow Y\otimes U'$, $g:U'\rightarrow U$ where $f$ and $g$ are in $\cC$ we must prove that
\begin{center}
$(1_Y\otimes g)f \in \widehat{\Trc}^U_{X,Y}$ iff $f(1_X\otimes g)\in \widehat{\Trc}^{U'}_{X,Y}.$
\end{center}
We must check condition (1) and (2).

\textbf{(1)} By definition we have
\begin{equation}
\label{TRACE CLASS MEMBER}
(1_Y\otimes g)f \in\widehat{\Trc}^U_{X,Y}\,\,\,\,\mbox{implies}\,\,\,m^{-1}_{Y,U}\circ F((1_Y\otimes g)f)\circ m_{X,U}\in \Trc^{FU}_{FX,FY}.
\end{equation}
But in view of the naturality of $m$ it follows that $m^{-1}_{Y,U}\circ F(1_Y\otimes g)=(F(1)\otimes F(g))\circ m^{-1}_{Y,U'}.$ Then we can replace it in~(\ref{TRACE CLASS MEMBER}) obtaining:
$$m^{-1}_{Y,U}\circ F((1_Y\otimes g)f)\circ m_{X,U}=m^{-1}_{Y,U}\circ F(1_Y\otimes g)\circ Ff\circ m_{X,U}=$$
$$(1_{FY}\otimes F(g))\circ m^{-1}_{Y,U'}\circ Ff\circ m_{X,U}\in\Trc^{FU}_{FX,FY}.$$
It now follows by the dinaturality axiom of the category $\cD$ that this condition is equivalent to proving:
$$m^{-1}_{Y,U'}\circ F(f)\circ m_{X,U}\circ (1_{FX}\otimes F(g))\in \Trc^{FU'}_{FX,FY}$$
and again by naturality of $m$ we have that
$m_{X,U}\circ (1_{FX}\otimes F(g))=(F(1\otimes g))\circ m_{X,U'}$
and we replace it:
$$m^{-1}_{Y,U'}\circ F(f)\circ F((1\otimes g)\circ m_{X,U'}=m^{-1}_{Y,U'}\circ F(f(1\otimes g))\circ m_{X,U'}\in\Trc^{FU'}_{FX,FY}$$
which is condition (1) in the definition $\,\,\,\,f\circ(1_X\otimes g)\in\widehat{\Trc}^{U'}_{X,Y}$. In the same way we prove the converse.

\textbf{(2)} Also there is an arrow $p_1$ such that $F(p_1)=\Tr^{FU}_{FX,FY}(m^{-1}_{Y,U}\circ F((1_Y\otimes g)f)\circ m_{X,U})$ if and only if there is an arrow $F(p_2)=\Tr^{FU'}_{FX,FY}(m^{-1}_{Y,U'}\circ F(f(1_X\otimes g))\circ m_{X,U'})$. Since the value of the trace remains invariant under the dinaturality axiom and all the transformations made in part (1) then it is enough to take $p_1=p_2$.

\textbf{Vanishing I}:\\
Now we want to check that: $\widehat{\Trc}^{I}_{X,Y}=\cC(X\otimes I,Y\otimes I)$.
Given any $f:X\otimes I\rightarrow Y\otimes I$ we want to prove that $f\in\widehat{\Trc}^{I}_{X,Y}$ by verifying conditions (1) and (2).\\
\textbf{(1)} Let us consider $g=(1_{FY}\otimes m^{-1}_I)\circ m^{-1}_{Y,I}\circ F(f)\circ m_{X,I}\circ (1_{FX}\otimes m_I)$.
By the vanishing I axiom in the category $\cD$ we know that $g\in \widehat{\Trc}^{I}_{FX,FY}$. Then, since $(1_{FY}\otimes m^{-1}_I)\circ (1_{FY}\otimes m_I)\circ g=g\in\widehat{\Trc}^{I}_{FX,FY}$ we can apply the dinaturality axiom in $\cD$ to conclude that $(1_{FY}\otimes m_I)\circ g \circ (1_{FX}\otimes m^{-1}_I)\in\widehat{\Trc}^{FI}_{FX,FY}$ but we have that  $(1_{FY}\otimes m_I)\circ g \circ (1_{FX}\otimes m^{-1}_I)=m^{-1}_{Y,I}\circ F(f)\circ m_{X,I}$. So we proved that
$m^{-1}_{Y,I}\circ F(f)\circ m_{X,I}\in \widehat{\Trc}^{FI}_{FX,FY}$.

\textbf{(2)} Since $g\in \cD(FX\otimes I,FY\otimes I)$ we can say also, by the dinaturality axiom, that
$$\Tr^I_{FX,FY}(g)=\Tr^{FI}_{FX,FY}(m^{-1}_{Y,I}\circ F(f)\circ m_{X,I})$$
but on the other hand we know that
$$\Tr^I_{FX,FY}(g)=\rho_{FY}\circ g\circ \rho^{-1}_{FX}$$
by vanishing I in $\cD$ which implies that

$\Tr^I_{FX,FY}(g)=\rho_{FY}\circ (1_{FY}\otimes m^{-1}_I)\circ m^{-1}_{Y,I}\circ F(f)\circ m_{X,I}\circ (1_{FX}\otimes m_I)\circ \rho^{-1}_{FX}= \mbox{(since  $F$ is monoidal )} ~~
  F(\rho_Y)\circ F(f)\circ F(\rho^{-1}_X)=F(\rho_Y\circ f\circ \rho^{-1}_X) . $

Thus there exists a $p=\rho_Y\circ f\circ \rho^{-1}_X$ such that $F(p)=\Tr^{FI}_{FX,FY}(m^{-1}_{Y,I}\circ F(f)\circ m_{X,I})$. Also notice that we prove that $\widehat{\Tr}^{I}_{X,Y}(f)=p=\rho_Y\circ f\circ \rho^{-1}_X$, which is the equation of the trace value in the category $\cC$.

\textbf{Vanishing II}:\\
Let $g:X\otimes U\otimes V\rightarrow Y\otimes U\otimes V$ be an arrow in the category $\cC$. By hypothesis, we are given $g \in\widehat{\Trc}^{V}_{X\otimes U,Y\otimes U}$ (general hypothesis) and we want to prove the following equivalence:
\begin{center}
$g \in\widehat{\Trc}^{U\otimes V}_{X,Y}$ iff $\widehat{\Tr}^V_{X\otimes U,Y\otimes U}(g)\in \widehat{\Trc}^{U}_{X,Y}.$
\end{center}
According to the general hypothesis there is a map:
$$F(X\otimes U)\otimes FV\stackrel{m_{X\otimes U,V}}\longrightarrow F(X\otimes U\otimes V)\stackrel{Fg}\longrightarrow F(Y\otimes U\otimes V)\stackrel{m^{-1}_{Y\otimes U,V}}\longrightarrow F(Y\otimes U)\otimes FV\in\widehat{\Trc}^{FV}_{F(X\otimes U),F(Y\otimes U)}$$
and also there exists $p_1:X\otimes U\rightarrow Y\otimes U$ such that
$$F(p_1)=\Tr^{FV}_{F(X\otimes U),F(Y\otimes U)}(m^{-1}_{Y\otimes U,V}\circ F(g)\circ m_{X\otimes U,V})\,\,\,\mbox{i.e., by definition}\,\,\,p_1=\widehat{\Tr}^{V}_{X\otimes U,Y\otimes U}(g).$$

$(\Rightarrow)$ We have a conditional hypothesis $g \in\widehat{\Trc}^{U\otimes V}_{X,Y}$ which asserts that the map:
$$FX\otimes F(U\otimes V)\stackrel{m_{X, U\otimes V}}\longrightarrow F(X\otimes U\otimes V)\stackrel{Fg}\longrightarrow F(Y\otimes U\otimes V)\stackrel{m^{-1}_{Y,U\otimes V}}\longrightarrow FY\otimes F(U\otimes V)\in\widehat{\Trc}^{F(U\otimes V)}_{FX,FY}$$
and also that there exists an $p_2:X\rightarrow Y$ such that
$$F(p_2)=\Tr^{F(U\otimes V)}_{FX,FY}(m^{-1}_{Y,U\otimes V}\circ F(g)\circ m_{X,U \otimes V})\,\,\,\mbox{i.e., by definition}\,\,\,p_2=\widehat{\Tr}^{U\otimes V}_{X,Y}(g).$$

Recalling that $p_1=\widehat{\Tr}^{V}_{X\otimes U,Y\otimes U}(g)$, we want to prove that $p_1\in \widehat{\Trc}^{U}_{X,Y}$. For that purpose, we shall prove the two conditions that characterize the trace class definition which are the following:

(1) the map
$$FX\otimes FU\stackrel{m_{X, U}}\longrightarrow F(X\otimes U)\stackrel{F(p_1)}\longrightarrow F(Y\otimes U)\stackrel{m^{-1}_{Y,U}}\longrightarrow FY\otimes FU)\in\Trc^{FU}_{FX,FY}$$

(2) there exists an $p_3:X\rightarrow Y$ such that
$$F(p_3)=\Tr^{FU}_{FX,FY}(m^{-1}_{Y,U}\circ F(p_1)\circ m_{X,U})\,\,\,\mbox{i.e., by definition}\,\,\,$$
$$p_3=\widehat{\Tr}^{U}_{X,Y}(p_1)=\widehat{\Tr}^{U}_{X,Y}(\widehat{\Tr}^{V}_{X\otimes U,Y\otimes U}(g)).$$

\textbf{(1)} To prove condition (1) we notice that since by definition
$$F(p_1)=\Tr^{FV}_{F(X\otimes U),F(Y\otimes U)}(m^{-1}_{Y\otimes U,V}\circ F(g)\circ m_{X\otimes U,V})$$
then we must prove that
$$FX\otimes FU\stackrel{m_{X, U}}\longrightarrow F(X\otimes U)\stackrel{\Tr^{FV}_{F(X\otimes U),F(Y\otimes U)}(m^{-1}_{}\circ F(g)\circ m_{})}\longrightarrow F(Y\otimes U)\stackrel{m^{-1}_{Y,U}}\longrightarrow FY\otimes FU)\in\widehat{\Trc}^{FU}_{FX,FY}.$$
But since
$$m^{-1}_{Y\otimes U,V}\circ F(g)\circ{m_{X\otimes U,V}}\in\Trc^{FV}_{F(X\otimes U),F(Y\otimes U)}$$
this condition allows us to apply the naturality axiom in the category $\cD$:\\
$$FX\otimes FU\otimes FV\stackrel{m_{X, U}\otimes 1_{FV}}\longrightarrow F(X\otimes U)\otimes FV\stackrel{m^{-1}_{Y\otimes U,V}\circ F(g)\circ m_{X\otimes U,V}}\longrightarrow F(Y\otimes U)FV\stackrel{m^{-1}_{Y,U}\otimes FV}\longrightarrow FY\otimes FU\otimes FV$$

$\in\widehat{\Trc}^{FV}_{FX\otimes FU,FY\otimes FU}.$

\noindent
And also by the same axiom we have that:\\

$\Tr^{FV}_{FX\otimes FU,FY\otimes FU}((m^{-1}_{Y,U}\otimes 1_{FV})\circ m^{-1}_{Y\otimes U,V}\circ F(g)\circ m_{X\otimes U,V}\circ (m_{X, U}\otimes 1_{FV}))=$\\

$=m^{-1}\circ \Tr^{FV}_{F(X\otimes U),F(Y\otimes U)}(m^{-1}_{Y\otimes U,V}\circ F(g)\circ m_{X\otimes U,V})\circ m.$

\noindent
Hence, this is equivalent to proving that:
$$\Tr^{FV}_{FX\otimes FU,FY\otimes FU}((m^{-1}_{Y,U}\otimes 1_{FV})\circ m^{-1}_{Y\otimes U,V}\circ F(g)\circ m_{X\otimes U,V}\circ (m_{X, U}\otimes 1_{FV}))\in \Tr^{FU}_{FX,FY}.$$
Consequently, by vanishing II in the category $\cD$, it would be enough that the map
$$\lambda=(m^{-1}_{Y,U}\otimes 1_{FV})\circ m^{-1}_{Y\otimes U,V}\circ F(g)\circ m_{X\otimes U,V}\circ (m_{X, U}\otimes 1_{FV})\in \Trc^{FU\otimes FV}_{FX,FY}$$
since we know that $\lambda\in\Trc^{FV}_{FX\otimes FU,FY\otimes FU}$.
But by coherence of monoidal functors we have:\\

$(m^{-1}_{Y,U}\otimes 1_{FV})\circ m^{-1}_{Y\otimes U,V}\circ F(g)\circ m_{X\otimes U,V}\circ (m_{X, U}\otimes 1_{FV})=$\\

$=(1_{FX}\otimes m^{-1}_{U,V})\circ m^{-1}_{Y,U\otimes V}\circ F(g)\circ m_{X,U\otimes V}\circ (1_{FX}\otimes m_{U, V}).$

Therefore, by the dinaturality axiom in the category $\cD$:
$$(1_{FX}\otimes m^{-1}_{U,V})\circ m^{-1}_{Y,U\otimes V}\circ F(g)\circ m_{X,U\otimes V}\circ (1_{FX}\otimes m_{U, V})\in \Trc^{FU\otimes FV}_{FX,FY}$$
if and only if
$$(m^{-1}_{Y,U\otimes V}\circ F(g)\circ m_{X,U\otimes V})\circ (1_{FX}\otimes m_{U, V})\circ (1_{FX}\otimes m^{-1}_{U,V})\in \Trc^{F(U\otimes V)}_{FX,FY} $$
which is valid since this is the conditional hypothesis.\\

\textbf{(2)} We shall prove that there exists an arrow $p_3:X\rightarrow Y$ in $\cC$ such that $$F(p_3)=\Tr^{FU}_{FX,FY}(m^{-1}_{Y,U}\circ F(p_1)\circ m_{X,U}).$$
For that purpose, take $p_3=p_2$. Hence by the conditional hypothesis if $g\in \widehat{\Trc}^{U\otimes V}_{X,Y}$ holds then there is $p_2$ with
$$F(p_2)=\Tr^{F(U\otimes V)}_{FX,FY}(m^{-1}_{Y,U\otimes V}\circ F(g)\circ m_{X,U \otimes V}).$$
Therefore, this is equal to,\\

$\Tr^{F(U\otimes V)}_{FX,FY}((m^{-1}_{Y,U\otimes V}\circ F(g)\circ m_{X,U\otimes V})\circ (1_{FX}\otimes m_{U, V})\circ (1_{FX}\otimes m^{-1}_{U,V}))=$\\

$\Tr^{FU\otimes FV}_{FX,FY}((1_{FX}\otimes m^{-1}_{U,V})\circ m^{-1}_{Y,U\otimes V}\circ F(g)\circ m_{X,U\otimes V}\circ (1_{FX}\otimes m_{U, V}))= (\mbox{dinaturality})$\\

$\Tr^{FU}_{FX,FY}(\Tr^{FV}_{FX\otimes FU,FY\otimes FU}((1_{FX}\otimes m^{-1}_{U,V})\circ m^{-1}_{Y,U\otimes V}\circ F(g)\circ m_{X,U\otimes V}\circ (1_{FX}\otimes m_{U, V})))=(\mbox{vanishing II})$\\

$\Tr^{FU}_{FX,FY}(\Tr^{FV}_{FX\otimes FU,FY\otimes FU}((m^{-1}_{Y,U}\otimes 1_{FV})\circ m^{-1}_{Y\otimes U,V}\circ F(g)\circ m_{X\otimes U,V}\circ (m_{X, U}\otimes 1_{FV})))= (\mbox{coherence})$\\

$\Tr^{FU}_{FX,FY}(m^{-1}_{Y,U}\circ \Tr^{FV}_{F(X\otimes U),F(Y\otimes U)}(m^{-1}_{Y\otimes U,V}\circ F(g)\circ m_{X\otimes U,V}) \circ m_{X,U})= \,\,\,\mbox{(naturality axiom)}$\\

$\Tr^{FU}_{FX,FY}(m^{-1}_{Y,U}\circ F(p_1)\circ m_{X,U})=\,\,\, \mbox{(definition of general hypothesis)}.$\\

\noindent
So we have proved that:
$$F(p_2)=\Tr^{FU}_{FX,FY}(m^{-1}_{Y,U}\circ F(p_1)\circ m_{X,U})$$
which means that
$$\widehat{\Tr}^{U}_{X,Y}(\widehat{\Tr}^{V}_{X\otimes U,Y\otimes U}(g))=\widehat{\Tr}^{U}_{X,Y}(p_1)=p_3=p_2=\widehat{\Tr}^{U\otimes V}_{X,Y}(g).$$

\noindent
$(\Leftarrow)$ Similarly, we prove the converse. The proof is just a matter of using the converse hypothesis of vanishing II in the category $\cD$.

\textbf{Superposing:}\\
Suppose $f\in \widehat{\Trc}^{U}_{X,Y}$ and $g:W\rightarrow Z$ with $g\in \cC$ we want to prove that $g\otimes
f\in \widehat{\Trc}^{U}_{W\otimes X,Z\otimes Y}$ by checking conditions (1) and (2). Also we want to show that
$$\widehat{\Tr}^U_{W\otimes X,Z\otimes Y}(g\otimes f)=g\otimes \widehat{\Tr}^U_{X,Y}(f).$$
\textbf{(1)} By hypothesis we know that
 $$FX\otimes FU\stackrel{m_{X, U}}\longrightarrow F(X\otimes U)\stackrel{F(f)}\longrightarrow F(Y\otimes U)\stackrel{m^{-1}_{Y,U}}\longrightarrow FY\otimes FU)\in\widehat{\Trc}^{FU}_{FX,FY}$$
and also there exists an arrow $p_1:X\rightarrow Y$ such that
$$F(p_1)=\Tr^{FU}_{FX,FY}(m^{-1}_{Y,U}\circ F(f)\circ m_{X,U}).$$
Then by the superposing axiom in the category $\cD$ it follows that
$$F(g)\otimes (m^{-1}_{Y,U}\circ F(f)\circ m_{X,U})\in \Trc^{FU}_{FW\otimes FX,FW\otimes FY}$$
and the trace value turns out to be
$$\Tr^{FU}_{FW\otimes FX,FZ\otimes FY}(F(g)\otimes (m^{-1}_{Y,U}\circ F(f)\circ m_{X,U}))=F(g)\otimes \Tr^{FU}_{FX,FY}(m^{-1}_{Y,U}\circ F(f)\circ m_{X,U}).$$
But by functoriality of the tensor we obtain
$$F(g)\otimes (m^{-1}_{Y,U}\circ F(f)\circ m_{X,U})=(1_{FZ}\otimes m^{-1}_{Y,U})\circ (F(g)\otimes F(f))\circ (1_{FW}\otimes m_{X,U})=\beta $$
(To simplify notation, we name this equation $\beta$).\\
We can apply the naturality axiom in the category $\cD$ and we obtain:
$$(m_{Z,Y}\otimes 1_{FU})\circ \beta\circ(m^{-1}_{W,X}\otimes 1_{FU})\in \Trc^{FU}_{F(W\otimes X),F(Z\otimes Y)}$$
and
$$\Tr^{FU}_{F(W\otimes X),F(Z\otimes Y)}((m_{Z,Y}\otimes 1_{FU})\circ \beta\circ(m^{-1}_{W,X}\otimes 1_{FU}))=m_{Z,Y}\circ \Tr^{FU}_{FW\otimes FX,FZ\otimes FY}(\beta)\circ m^{-1}_{W,X}$$
but by naturality and monoidal functor axioms we have that
$$(m_{Z,Y}\otimes 1_{FU})\circ \beta\circ(m^{-1}_{W,X}\otimes 1_{FU})=m_{Z\otimes Y,U}\circ F(g\otimes f)\circ m_{W\otimes X,U}.$$
Therefore, we proved that $m^{-1}\circ F(g\otimes f)\circ m\in \Trc^{FU}_{F(W\otimes X),F(Z\otimes Y}.$

\textbf{(2)}  Let us consider $\beta=(1_{FZ}\otimes m^{-1}_{Y, U})\circ (Fg\otimes Ff)\circ(1_{FW}\otimes m_{X,U})$.
It follows that

$\Tr^{FU}_{F(W\otimes X),F(Z\otimes Y)}((m_{Z,Y}\otimes 1_{FU})\circ\beta\circ (m^{-1}_{W,X}\otimes 1_{FU}))=\,\,\,\mbox{(naturality axiom)}$\\

$=m_{Z,Y}\circ \Tr^{FU}_{FW\otimes FX,FZ\otimes FY}(\beta)\circ m^{-1}_{W,X}=\mbox{(functoriality of the tensor)}$\\

$=m_{Z,Y}\circ \Tr^{FU}_{FW\otimes FX,FZ\otimes FY}(F(g)\otimes(m^{-1}_{Y,U}\circ Fg\circ m_{X,U}))\circ m^{-1}_{W,X}=\mbox{(superposing)}$\\

$=m_{Z,Y}\circ(F(g)\otimes \Tr^{FU}_{FW\otimes FX,FZ\otimes FY}(m^{-1}_{Y,U}\circ Fg\circ m_{X,U}))\circ m^{-1}_{W,X}=\mbox{(by hypothesis)}$\\

$=m_{Z,Y}\circ(F(g)\otimes F(p_1))\circ m^{-1}_{W,X}=\mbox{(by naturality of $m$)}$\\

$=F(g\otimes p_1)$.

Thus, we proved that there exists an arrow $p_2=g\otimes p_1$ such that
$$F(p_2)=\Tr^{FU}_{F(W\otimes X),F(Z\otimes Y)}((m_{Z,Y}\otimes 1_{FU})\circ\beta\circ (m^{-1}_{W,X}\otimes 1_{FU})).$$

On the other hand, we have by naturality and the fact that $F$ is a monoidal functor:
$$(m_{Z,Y}\otimes 1_{FU})\circ\beta\circ (m^{-1}_{W,X}\otimes 1_{FU})=m^{-1}_{Z\otimes Y,U}\circ F(g\otimes f)\circ m_{W\otimes X,U}$$
which means, according to our definition, that
$$\widehat{\Tr}^{U}_{W\otimes X,Z\otimes Y}(g\otimes f)=p_2=g\otimes p_1=g\otimes \widehat{\Tr}^{U}_{X,Y}(f).$$

\textbf{Yanking:}\\
Let us consider  $\sigma:U\otimes U\rightarrow U\otimes U$; we want to prove that $\sigma_{U,U}\in \widehat{\Trc}^{U}_{U,U}$ and $\widehat{\Tr}^U_{U,U}(\sigma_{U,U})=1_U$. To show that $\sigma_{U,U}\in \widehat{\Trc}^{U}_{U,U}$ we recall from the trace class definition that we must check two conditions:

\textbf{(1)} First, we notice that since $F$ is a symmetric monoidal  functor and by the yanking axiom in the category $\cD$:
$$m^{-1}_{U,U}\circ F(\sigma_{U,U})\circ m_{U,U}=\sigma_{FU,FU}\in \Trc^{U}_{U,U}.$$
From which it follows that $\Tr^{FU}_{FU,FU}(\sigma_{FU,FU})=1_{FU}=F(1_U)$.

\textbf{(2)} Therefore there exists an arrow $p=1_U$ such that
$$F(1_U)=\Tr^{FU}_{FU,FU}(m^{-1}_{U,U}\circ F(\sigma_{U,U})\circ m_{U,U}).$$
Hence, we are saying that $\widehat{\Tr}^{U}_{U,U}(\sigma_{U,U})=p=1_{U}$.
\end{proof}

\section{Another partial trace on completely
positive maps with $\oplus$}
\begin{definition}
Consider the forgetful functor $F:(\textbf{CPM},\oplus)\rightarrow(\textbf{Vect}_{fn},\oplus)$, where $(\textbf{Vect}_{fn},\oplus,\textbf{0},\Trc)$ is partially traced by Definition~\ref{PARTIAL TRACE IN FINITE VECTOR SPACE}, i.e., $\textbf{CPM}$  is a monoidal subcategory of $\textbf{Vect}_{fn}$. We define a partial trace $\widehat{\Tr}$ with trace class given by $\widehat{\Trc}$ on $\textbf{CPM}$ by the method of Section~\ref{TRACE IN A MONOIDAL SUBCATEGORY}.
\end{definition}

\begin{remark}
Comparing this with the partial trace (on the same category) defined in Section~\ref{4.2.5}, we note that if $f$ and $(I-f_{22})^{-1}$ are completely positive then
$$f_{11}+f_{12}(I-f_{22})^{-1}f_{21}$$
is a completely positive map. This implies that $\cpmTrc^{U}_{X,Y}$ as in Definition~\ref{PARTIAL TRACE IN CPM+} satisfies: $\cpmTrc^{U}_{X,Y}\subseteq\widehat{\Trc}^{U}_{X,Y}$. However, consider the $\textbf{CPM}$-map $f:U\oplus U\rightarrow U\oplus U$ given by the following matrix:
$$\left(\begin{array}{llll}
 I &  0\\
 0 & 2I\\
\end{array}\right)$$
We have $f_{11}=I$, $f_{21}=f_{12}=0$ and $f_{22}=2I$. Then $I-f_{22}=I-2I=(-1)I$ is an invertible map with inverse $(-1)I$ but is not a positive map. On the other hand, $f_{11}+f_{12}(I-f_{22})^{-1}f_{21}=I+0((-1)I)0=I$ is a $\textbf{CPM}$-map, i.e., $f\in\widehat{\Trc}^U_{U,U}$ but $f\notin \cpmTrc^U_{U,U}$.
\end{remark}

\section{Partial trace on superoperators with $\oplus$ and $\otimes$}

As an application of the construction of Section~\ref{TRACE IN A MONOIDAL SUBCATEGORY}, we now focus on the category \textbf{Q} which is not a compact closed category. We discuss examples
of partial traces in connection with its two monoidal structures.

\begin{example}
$(\textbf{\cQ},\oplus)$ has a total trace operator $\Tr^{u}_{x,y}:\textbf{Q}(x\oplus u,y\oplus u)\rightarrow \textbf{Q}(x,y)$ defined by
$\Tr^u_{x,y}(f)=f_{11}+\sum_{i=0}^{\infty}f_{12}f_{22}^{i}f_{21}\,$, see \cite{Sel 2004} for details.
\end{example}
\begin{example}
 By Proposition~\ref{PROPOSITION SUBCATEGORY}, $(\textbf{\cQ},\oplus)$ has a partial trace
 $\Tr^{u}_{x,y}:\textbf{Q}(x\oplus u,y\oplus u)\rightharpoonup\textbf{Q}(x,y),$ given by $\Tr^u_{x,y}(f)\funnels f_{11}+f_{12}(I-f_{22})^{-1}f_{21}$.
 \end{example}
 \begin{example}
 Another partial trace on $(\textbf{\cQ},\oplus)$ is given by considering the forgetful functor from $\textbf{Q}$ to the category of vector spaces $(\textbf{Vect},\oplus)$ with the kernel-image partial trace of Definition~\ref{DEFINITION RELAX CONDITION} given in Section~\ref{PARTIAL TRACE KERN-IMAGE CONDITION}. Notice that the identity is a superoperator satisfying Definition~\ref{DEFINITION RELAX CONDITION} which implies that these two partial traces still remain different on $ \textbf{Q}$.
 \end{example}
 \begin{example}
We can consider the category $(\textbf{Q}_s,\otimes)$ of simple superoperators as a subcategory of the compact closed category $(\textbf{CPM}_s,\otimes)$, see Definition~\ref{Q SIMPLE SUPEROPERATORS}. It has a partial trace $\Tr$ given by Proposition~\ref{PROPOSITION SUBCATEGORY} where $\Tr^{U}_{X,Y}: \textbf{Q}_s(X\otimes U, Y\otimes U) \rightharpoonup \textbf{Q}_s(X,Y)$ is the canonical trace on $\textbf{CPM}_s$. Since linear maps $f$ in the category of finite dimensional vector spaces are continuous functions we can prove that for every completely positive map there exists a $0<\lambda\leq 1$ such that $\lambda f$ is a superoperator. Then, for every unit map $\eta_{U}:I\rightarrow U^{*}\otimes U$ in \textbf{CPM}
 there exists a $\lambda_{U}$ such that $\lambda_U.\eta_U$ is a superoperator. Therefore, if $\lambda_{U}^{-1}.f$ is a superoperator then $f\in\Trc^U_{X,Y}$.
\end{example}

\chapter[A representation theorem]
         {A representation theorem for partially traced categories}

The goal of this chapter is to prove a strong converse to Proposition~\ref{TRACE IN A MONOIDAL SUBCATEGORY}, i.e.: every partially traced category arises as a monoidal subcategory of a totally traced category. More precisely, we show
 that every partially traced category can be faithfully embedded in a
 compact closed category in such a way that the trace is preserved.

 Our construction uses a partial version of the $``Int"$ construction of
 Joyal, Street, and Verity~\cite{JSV96}. When we try to apply the $Int$
 construction to a partially traced category $\cC$, we find that the
 composition operation in ${\rm Int}(C)$ is a well-defined operation only if
 the trace is total. We therefore consider a notion of ``categories"
 with partially defined composition, namely, Freyd's paracategories~\cite{HerMat03}. Specifically, we introduce the notion of a strict symmetric
 compact closed paracategory.

 We first show that every partially traced category can be fully and
 faithfully embedded in a compact closed paracategory, by an analogue
 of the $Int$ construction.  We then show that every compact closed
 paracategory can be embedded (faithfully, but not necessarily fully)
 in a compact closed (total) category, using a construction similar to
 Freyd's. Finally, every compact closed category is (totally) traced,
 yielding the desired result.

\section{Paracategories}

The aim of this section is to recall Freyd's notion of paracategory.
 A reference on this subject is~\cite{HerMat03}. Informally, a paracategory is a
 category with partially defined composition.

\begin{definition}
\rm A \textit{(directed) graph} $\mathcal{C}$ consists of:\\
$\bullet$  a class of elements called objects $obj (\mathcal{C})$\\
$\bullet$  for every pair of objects $A,B$ a set $\mathcal{C}(A,B)$ called arrows from $A$ to $B$. Let $Arrow(\mathcal{C})$ be the class of all the arrows in $\mathcal{C}$.
\end{definition}
\begin{definition}
\rm Let $\mathcal{C}$ be a graph. We define $\mathcal{P(C)}$, the {\em path category} of $\mathcal{C}$, by $obj(\mathcal{P(C)})=obj(\mathcal{C})$ and arrows from $A_0$ to $A_n$ are finite sequences $(A_0,f_0,A_1,f_1,\ldots,A_n)$ of alternating objects and arrows of the graph $\mathcal{C}$, where $n\geq 0$. We say that $n$ is the {\em length} of
the path. Two arrows are equal when the sequences coincide. Composition is defined by concatenation and the identity arrow at $A$ is the path of zero length $(A)$ with an object $A$. We write $\epsilon_A=(A)$ for the identity arrow.
\end{definition}

\texttt{Notation:} For the sake of simplification, we often write $\vec{f}=f_1,f_2,\ldots,f_n$ for a path and the symbol $``;"$ or $``,"$ for concatenation.

Recall the definition of Kleene equality
``$\funnels$'' and directed Kleene equality ``$\funnel$'' from
Definition~\ref{Kleene equality}.

We write $\phi (f)\downarrow$ to say a partial function $\phi$ is defined on input $f$.
\begin{definition}\label{DEF PARACATEGORY}
\rm A \textit{paracategory} $(\mathcal{C},[-])$ consists of a directed graph $\mathcal{C}$ and a partial operation $[-]:Arrow(\mathcal{P(C)})\rightharpoonup Arrow(\cC)$ called {\em composition}, which satisfies the following axioms:
\begin{itemize}
\item[(a)] for all $A$, $[\epsilon_A]\downarrow$, i.e.,
$[-]$ is a total operation on empty paths
\item[(b)] for paths of length one, $[f]\downarrow$ and $[f]=f$
\item[(c)] for all paths $\vec{r}:A\to B$, $\vec{f}:B\to C$, and
$\vec{s}:C\to D$, if $[\vec{f}]\downarrow$ then
$$[\vec{r},[\vec{f}],\vec{s}]\funnels[\vec{r},\vec{f},\vec{s}].$$
\end{itemize}
\end{definition}
We introduce the following notation:
\begin{itemize}
\item[-] $1_A=[(A)]=[\epsilon_A]$ for every object $A$ in $\mathcal{C}$.
\item[-] for a path $\vec{f}=f_1,f_2,\ldots,f_n$ and an operation $\otimes$, defined on $\cC$ (see Definition~\ref{DEFINITION SSMPC}), we extend it to the category of paths using the following notation:
$$ 1\otimes_{p}\vec{f}=1\otimes f_1,1\otimes f_2,\ldots,1\otimes f_n$$
and in the same way: $\vec{f}\otimes_{p} 1$. We drop the symbol $p$ when it is clear from the context.
\end{itemize}

\begin{definition}
\rm Let $(\mathcal{C},[-])$ and $(\mathcal{D},[-]')$ be two paracategories. A \textit{functor} between paracategories is a graph morphism $F:Obj(\mathcal{C})\rightarrow Obj(\mathcal{D})$, $F:\mathcal{C}(A,B)\rightarrow\mathcal{D}(FA,FB)$ such that when $[\vec{p}]\downarrow$ then $F[\,\vec{p}\,]=[\,\vec{Fp}\,]'$. Let ${\bf PCat}$ be the category of (small) paracategories and functors.\\
We say that such a functor is \textit{faithful} if it is faithful as a morphism of graphs.\\
\end{definition}

\begin{remark} Every category $\cC$ can be regarded as a paracategory with
$[f_1,\ldots,f_n] = f_n \circ \ldots \circ f_1$. In this case,
composition is a totally defined operation. This yields a forgetful functor ${\bf Cat}\rightarrow {\bf PCat}$.
\end{remark}

\section{Symmetric monoidal paracategories}

\begin{definition}
\label{DEFINITION SSMPC}
\rm A \textit{strict symmetric monoidal paracategory} $(\mathcal{C},[-],\otimes,I,\sigma)$, also called an \textit{ssmpc}, consists of:
\begin{itemize}
\item a paracategory $(\mathcal{C},[-])$
\item a total operation $\otimes:\mathcal{C}\times\mathcal{C}\rightarrow\mathcal{C}$ which satisfies:\\
$(A\otimes B)\otimes C=A\otimes (B\otimes C)$ on objects, $(f\otimes g)\otimes h=f\otimes (g\otimes h)$ on arrows (associative); there is an object $I$ such that $A\otimes I=I\otimes A=A$ and $f\otimes 1_I=1_I\otimes f=f$ for every object $A$ and arrow $f$ (unit). Subject to the following conditions:
\begin{itemize}
\item[(a)] $1_A\otimes 1_B=1_{A\otimes B}$.\\
\item[(b)] $[f,f']\otimes [g,g']\funnel [f\otimes g,f'\otimes g']$ where $f,g,f',g'$ are arrows of $\mathcal{C}$ and  $\funnel$ denotes Kleene directed equality.\\
\item[(c)] $1\otimes [\, \vec{p}\, ]\funnel [\, 1\otimes \vec{p}\, ]$ and $[ \,\vec{p}\, ]\otimes 1 \funnel [\, \vec{p}\otimes 1 \,]$
\end{itemize}
\item for all objects $A$ and $B$ there is an arrow $\sigma_{A,B}:A\otimes B\rightarrow B\otimes A$ such that:
\begin{itemize}
\item[-] for every $f:B\otimes A\rightarrow X$, $g:Y\rightarrow A\otimes B$, $[\sigma_{A,B},f]\downarrow$ and $[g,\sigma_{A,B}]\downarrow$

\item[-] for every $f:A\rightarrow A'$ and $g:B\rightarrow B'$: $[f\otimes 1_B,\sigma]=[\sigma,1_B\otimes f]$ and $[1_A\otimes g,\sigma]=[\sigma,g\otimes 1_A]$
\item[-] for every $A$ and $B$: $[\sigma_{A,B},\sigma_{B,A}]=1_{A\otimes B}$
\item[-] for every $A$, $B$, and $C$: $[\sigma_{A,B}\otimes 1_C, \sigma_{B,A\otimes C}]=1_A\otimes \sigma_{B,C}.$
\end{itemize}
\end{itemize}
\end{definition}

\begin{remark}
\rm Conditions (b) and (c) are equivalent to the condition $[f_1,\dots,f_n]\otimes [g_1\dots,g_n]\funnel [f_1\otimes g_1,\dots,f_n\otimes g_n]$ for all natural numbers $n$.
\end{remark}

\begin{proposition}\label{PARACAT2}
Let $(\mathcal{C},[-],\otimes,I,\sigma)$ be a ssmpc. Then for paths $p$, $q$ of length one we have that $[p\otimes 1_C,1_B\otimes q]\downarrow$, $[1_A\otimes q,p\otimes 1_D]\downarrow$ and are equal to $p\otimes q$. Moreover, for paths $\vec{p}$ and $\vec{q}$
$[\,1_A\otimes \vec{q},\vec{p}\otimes 1_D\,]\funnels[\,\vec{p}\otimes 1_C,1_B\otimes \vec{q}\,]$.
\end{proposition}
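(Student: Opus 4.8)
The plan is to handle the two assertions of the proposition in turn, deriving the statement about arbitrary paths from the statement about single arrows.

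For the first assertion I would read off both bracketed composites directly from the tensor axiom Definition~\ref{DEFINITION SSMPC}(b). The only preliminary fact needed is that for a single arrow $p:A\to B$ one has $[p,1_B]=p$ and $[1_A,p]=p$. Indeed $1_B=[\epsilon_B]$, so the paracategory associativity axiom Definition~\ref{DEF PARACATEGORY}(c) gives $[p,[\epsilon_B]]\funnels[p,\epsilon_B]=[p]=p$; since $p$ is defined this Kleene equality becomes an honest equality, and the other case is symmetric. Now I apply Definition~\ref{DEFINITION SSMPC}(b) with $f=p$, $f'=1_B$, $g=1_C$, $g'=q$: the left-hand side $[p,1_B]\otimes[1_C,q]=p\otimes q$ is defined, because the tensor of two arrows is a total operation, so the directed Kleene equality $\funnel$ forces $[p\otimes 1_C,1_B\otimes q]$ to be defined and equal to $p\otimes q$. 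Taking instead $f=1_A$, $f'=p$, $g=q$, $g'=1_D$ yields $[1_A\otimes q,p\otimes 1_D]=p\otimes q$ in the same way. This is exactly the first assertion.

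For the second assertion I would argue by a ``bubble-sort'' that rewrites the path $1_A\otimes\vec q,\,\vec p\otimes 1_D$ into $\vec p\otimes 1_C,\,1_B\otimes\vec q$ by repeatedly interchanging one arrow $1_X\otimes q_j$ with an adjacent arrow $p_i\otimes 1_Y$. The engine is a local swap lemma: for any paths $\vec u,\vec v$ for which $\vec u,\,1_X\otimes q,\,p\otimes 1_Y,\,\vec v$ is a legal path,
\[
[\vec u,1_X\otimes q,p\otimes 1_Y,\vec v]\funnels[\vec u,p\otimes 1_{Y'},1_{X'}\otimes q,\vec v].
\]
To prove it, note that by the first assertion the two length-two composites $[1_X\otimes q,p\otimes 1_Y]$ and $[p\otimes 1_{Y'},1_{X'}\otimes q]$ are both defined and equal to $p\otimes q$; hence Definition~\ref{DEF PARACATEGORY}(c), applied with an inner path of length two, lets me bracket each of them inside the ambient path, so both sides are Kleene equal to $[\vec u,p\otimes q,\vec v]$, and transitivity and symmetry of $\funnels$ give the lemma. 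Iterating this lemma (moving each $p_i\otimes 1$ leftward past every $1_A\otimes q_j$, the left factor of the $q$-arrows being upgraded from $A$ to $B$ and the right factor of the $p$-arrows downgraded from $D$ to $C$ as they cross) transforms $L:=[1_A\otimes\vec q,\vec p\otimes 1_D]$ into $R:=[\vec p\otimes 1_C,1_B\otimes\vec q]$ through a finite chain of Kleene equalities, whence $L\funnels R$. I would phrase the iteration as an induction on the length $|\vec p|+|\vec q|$, the local swap lemma being the inductive step.

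The delicate point throughout the second part is the partiality. The naive attempt is to peel the last arrow off $\vec q$ and invoke the statement for the shorter path as an induction hypothesis; but this fails, because substituting a Kleene-equal sub-path inside a larger composite is legitimate only when that sub-path actually composes, and Definition~\ref{DEF PARACATEGORY}(c) can \emph{flatten} a defined inner bracket but cannot \emph{manufacture} one. The bubble-sort circumvents exactly this: the only sub-paths ever bracketed are the length-two adjacent swaps, and the first assertion guarantees these are unconditionally defined. Consequently no hypothesis that $[\vec p]$ or $[\vec q]$ be defined is ever invoked, and the argument produces the Kleene equality uniformly in the defined and undefined cases. Checking that the tensor factors and the domains and codomains line up at each swap is routine bookkeeping, and is where most of the writing, but none of the real difficulty, lies.
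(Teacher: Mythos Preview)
Your proposal is correct and follows essentially the same approach as the paper. Both parts match: the paper also derives $[p,1_B]=p$ and $[1_A,p]=p$ from the paracategory axioms and then invokes axiom~(b) of Definition~\ref{DEFINITION SSMPC} for the length-one case, and for the general case it carries out exactly the bubble-sort you describe, bracketing and unbracketing adjacent length-two pairs via axiom~(c) and moving each $p_i\otimes 1$ leftward past the $1\otimes q_j$'s one swap at a time. One minor remark: phrasing the iteration as an induction on $|\vec p|+|\vec q|$ is slightly off, since a single swap does not decrease that quantity; the natural induction variable is the number of inversions (equivalently $|\vec p|\cdot|\vec q|$), or one can induct on $|\vec p|$ after first bubbling $p_1$ all the way to the front. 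The paper simply says ``iterate this procedure'' and shows the first few steps, so your write-up is at the same level of detail.
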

\begin{proof}
Let us first prove the result for paths of length 1, say $p:A\rightarrow B$, $q:C\rightarrow D$. Observe that $[p,1_B]=[p,[(B)]]=[p,(B)]=[p]=p$ since the last equation is defined and by the axioms. In the same way $p=[1_A,p]$, $[1_C,q]=q=[q,1_D]$. Then $p\otimes q=[p,1_B]\otimes [1_C,q]\funnel [p\otimes 1_C,1_B\otimes q]$ and $p\otimes q=[1_A,p]\otimes [q,1_D]\funnel[1_A\otimes q,p\otimes 1_D]$, by condition (b) of Definition~\ref{DEFINITION SSMPC}, which implies that $[1_A\otimes q,p\otimes 1_D]\downarrow$, $[p\otimes 1_C,1_B\otimes q]\downarrow$ and $[1_A\otimes q,p\otimes 1_D]=[p\otimes 1_C,1_B\otimes q]= p \otimes q$.\\
Now since we have already proved that $[1_A\otimes q,p\otimes 1_D]\downarrow$, $[p\otimes 1_C,1_B\otimes q]\downarrow$ and that they are equal we can use the axioms of paracategories and extend this to $[1_A\otimes \vec{q},\vec{p}\otimes 1_D]\funnels[\vec{p}\otimes 1_C,1_B\otimes \vec{q}]$ by iterating this procedure in the following way:\\
\begin{align*}
[1_A\otimes \vec{q},\vec{p}\otimes 1_D] &= [1_A\otimes q_1,\dots,1_A\otimes q_n,p_1\otimes 1_D,\dots,p_m\otimes 1_D]\\
                    &\funnels [1_A\otimes q_1,\dots,[1_A\otimes q_n,p_1\otimes 1_D],\dots,p_m\otimes 1_D] \\
                    &\funnels [1_A\otimes q_1,\dots,[p_1\otimes 1_C,1_B\otimes q_n],\dots,p_m\otimes 1_D]\\
                    &\funnels [1_A\otimes q_1,\dots,p_1\otimes 1_C,1_B\otimes q_n,\dots,p_m\otimes 1_D]\\
                    &\funnels\dots \mbox{we move}\,\,\, p_1\otimes 1_C\,\,\, \mbox{to the first position}\\
                    &\funnels [p_1\otimes 1_C,1_B\otimes q_1,\dots ,1_A\otimes q_n,p_2\otimes 1_C,\dots,p_m\otimes 1_D]\\
                    &\funnels\dots \mbox{we iterate this procedure $m-1$ times}\\
                    &\funnels[\vec{p}\otimes 1_C,1_B\otimes \vec{q}].
\end{align*}

\end{proof}
\begin{definition}
\rm
Let $(\mathcal{C},[-],\otimes,I,\sigma)$  and $(\mathcal{D},[-]',\otimes',I',\sigma')$ be two ssmpcs. A functor between them is \textit{strict monoidal} when $F(A)\otimes' F(B)=F(A\otimes B)$, $F(I)=I'$ on objects, $F(f)\otimes' F(g)=F(f\otimes g)$ and $F(\sigma)=\sigma'$ on arrows.
\end{definition}

\section{The completion of symmetric monoidal paracategories}

From now on $\cC$ denotes a ssmpc. We wish to prove the following theorem:
\begin{theorem}\label{PARACAT7}
Every strict symmetric monoidal paracategory can be faithfully embedded in a strict symmetric monoidal category.
\end{theorem}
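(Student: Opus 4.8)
The plan is to realize the embedding as a quotient of the free (path) category on the underlying graph of $\cC$, in the spirit of Freyd's construction referenced in~\cite{HerMat03}. First I would form the path category $\mathcal{P}(\cC)$ and let $\approx$ be the smallest congruence with respect to concatenation that contains every pair $(\vec f,([\,\vec f\,]))$ for which the partial composite $[\,\vec f\,]\downarrow$, where $([\,\vec f\,])$ denotes the length-one path on the arrow $[\,\vec f\,]$. Setting $\bar{\cC}=\mathcal{P}(\cC)/\!\approx$ gives a genuine category (a congruence quotient of a category is a category), and I would define $E\colon\cC\to\bar{\cC}$ by $E(A)=A$ on objects and $E(f)=[(f)]_{\approx}$ on arrows. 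By the defining relation, whenever $[\,f_1,\dots,f_n\,]\downarrow=g$ we have $(f_1,\dots,f_n)\approx(g)$, so $E(f_n)\circ\cdots\circ E(f_1)=E(g)$; together with axioms (a),(b) of Definition~\ref{DEF PARACATEGORY} this shows $E$ is a functor of paracategories preserving identities and all defined composites.

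Next I would equip $\bar{\cC}$ with a strict symmetric monoidal structure making $E$ strict monoidal. On objects the tensor is the (strictly associative, unital) operation already present on $\cC$. On morphisms I would define $[\vec p]\otimes[\vec q]$ by the ``staircase'' path $(\vec p\otimes 1_C),(1_B\otimes\vec q)$, using the pointwise operations $\vec p\otimes 1$ and $1\otimes\vec q$ of the notation following Definition~\ref{DEF PARACATEGORY}. Well-definedness on $\approx$-classes, the identity law $1_A\otimes 1_B=1_{A\otimes B}$, and bifunctoriality reduce to conditions (a),(b),(c) of Definition~\ref{DEFINITION SSMPC} together with the interchange $[\,1_A\otimes\vec q,\vec p\otimes 1_D\,]\funnels[\,\vec p\otimes 1_C,1_B\otimes\vec q\,]$ of Proposition~\ref{PARACAT2}, all now read as honest equations in the quotient. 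The symmetry is $\sigma^{\bar\cC}_{A,B}=E(\sigma_{A,B})$, and the naturality square, the involutivity $[\sigma_{A,B},\sigma_{B,A}]=1_{A\otimes B}$, and the hexagon $[\sigma_{A,B}\otimes 1_C,\sigma_{B,A\otimes C}]=1_A\otimes\sigma_{B,C}$ required in $\bar\cC$ are exactly the $\sigma$-clauses of Definition~\ref{DEFINITION SSMPC}. By construction $E(f)\otimes E(g)=E(f\otimes g)$ and $E(\sigma)=\sigma^{\bar\cC}$, so $E$ is strict symmetric monoidal.

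The hard part will be faithfulness of $E$, that is, showing $(f)\approx(g)$ forces $f=g$ for parallel arrows $f,g$ of $\cC$. I would attack this by giving an explicit description of $\approx$ as the equivalence generated by a \emph{contraction} rewriting relation on paths: replace a contiguous defined subpath $\vec h$ (of length $\geq 2$) by the single arrow $[\,\vec h\,]$. Since each genuine contraction strictly decreases length, this relation is terminating, and a length-one path is already irreducible; if I can show it is confluent, then each $\approx$-class has a unique normal form, whence $(f)\approx(g)$ implies the identical normal form and thus $f=g$. Disjoint contractions commute trivially, and a contraction nested inside another is reconciled by the associativity axiom (c) of Definition~\ref{DEF PARACATEGORY}, which gives $[\,\vec r,[\,\vec h\,],\vec s\,]\funnels[\,\vec r,\vec h,\vec s\,]$. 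The genuinely delicate case is that of \emph{overlapping} redexes, where two defined composites share arrows but neither contains the other; here I must use axiom (c) in both directions to show the two one-step reducts both contract to the composite of the union, so that local confluence holds, and then invoke Newman's lemma. Controlling these overlaps is where the paracategory axioms do the essential work, and is the step I expect to require the most care.

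Finally, I would note that $\bar\cC$ is the desired strict symmetric monoidal category and $E$ the required faithful strict monoidal embedding, completing the theorem; this completion is the piece that, combined with the compact-closed $Int$-style construction of the previous sections, yields the full representation theorem.
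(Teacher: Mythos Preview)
Your construction of the quotient category and its strict symmetric monoidal structure is essentially the paper's: the paper also takes the smallest congruence $\sim$ on $\mathcal{P}(\cC)$ compatible with concatenation and with $\otimes 1$, $1\otimes$, containing all pairs $(\vec f,[\,\vec f\,])$ with $[\,\vec f\,]\downarrow$. (Your congruence is only required to be closed under concatenation, but closure under $-\otimes 1_C$ follows automatically from axiom~(c) of Definition~\ref{DEFINITION SSMPC}, so the two congruences coincide.) The definition of $\hat\otimes$ on classes via the staircase path, and of the symmetry via $E(\sigma)$, is exactly what the paper does.

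Where your proposal diverges from the paper is the faithfulness argument, and here there is a genuine gap. The rewriting system you describe is \emph{not} confluent on paracategories. In the overlapping case, take length-one paths $f,g,h$ with $[f,g]\downarrow$ and $[g,h]\downarrow$; nothing in the axioms forces $[f,g,h]\downarrow$. If it is undefined, then by axiom~(c) both $[[f,g],h]$ and $[f,[g,h]]$ are undefined too, so the two one-step reducts $([f,g],h)$ and $(f,[g,h])$ are distinct normal forms with no common reduct. Your claimed ``composite of the union'' simply does not exist in general, so the critical pair does not close and Newman's lemma does not apply. The upshot is that unique normal forms fail, and your route to $ (f)\approx(g)\Rightarrow f=g$ collapses.

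The paper sidesteps rewriting entirely. It introduces an explicit ``observational'' congruence
\[
\vec p \sim_{\hat{\mathcal S}} \vec q \quad\Longleftrightarrow\quad \forall\,\vec r,\vec s,A,B:\ [\,\vec r,1_A\otimes\vec p\otimes 1_B,\vec s\,]\ \funnels\ [\,\vec r,1_A\otimes\vec q\otimes 1_B,\vec s\,],
\]
checks directly that $\hat{\mathcal S}$ satisfies the congruence axioms, and observes that $\hat{\mathcal S}$ separates length-one paths (take $\vec r,\vec s$ empty and $A=B=I$). Since your $\approx$ is the smallest congruence, $\approx\subseteq\hat{\mathcal S}$, and faithfulness follows immediately. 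This is both shorter and robust against the partiality that breaks the confluence argument.
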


\begin{definition}\label{PARACAT3}
\rm A {\em congruence relation} $\mathcal{S}$ on $\mathcal{P(C)}$ is given as
 follows: for every pair of objects $A$, $B$, an equivalence relation
   $\sim_{\cal S}^{A,B}$ on the hom-set $\mathcal{P(C)}(A,B)$, satisfying the following
   axioms. We usually omit the superscripts when they are clear from
   the context.

\begin{itemize}
\item[(1)] If $\vec p \sim_{\cal S} \vec p\,'$ and $\vec q \sim_{\cal S} \vec q\,'$, then
    $\vec p;\vec q \sim_{\cal S} \vec p\,';\vec q\,'$.
\item[(2)] Whenever $[\vec{p}\,]\downarrow$, then $\vec{p} \sim_{\cal S} [\vec{p}\,]$.
\item[(3)] If $\vec p \sim_{\cal S} \vec q$, then $\vec p\otimes 1 \sim_{\cal S} \vec
     q\otimes 1$ and $1\otimes \vec p \sim_{\cal S} 1\otimes \vec q$.

\end{itemize}
\end {definition}
\begin{remark}
Technically Definition~\ref{PARACAT3} can be regarded as a ``congruence subcategory" on $\mathcal{P(C)}$, i.e., $\mathcal{S}$ is a subcategory of $\mathcal{P(C)}\times\mathcal{P(C)}$ satisfying axioms $(2)$ and $(3)$.
\end{remark}
\begin{definition}
 We define a particular congruence relation $\hat{\mathcal{S}}$ as follows: $\vec{p} \sim_{\hat{\mathcal{S}}} \vec{q}$ if and only if $\forall\vec{r},\vec{s},\forall A,B\in Obj(\mathcal{C})\,\, [\,\vec{r},1_A\otimes \vec{p}\otimes 1_B,\vec{s}\,]\funnels[\,\vec{r},1_A\otimes \vec{q}\otimes 1_B,\vec{s}\,]$.
\end{definition}

\begin{remark}\label{PARACAT1}
\rm
It should be observed that $\vec{p}\sim_{\hat{\mathcal{S}}}\vec{q}$ implies $[\,\vec{p}\,]\funnels[\,\vec{q}\,]$ by letting $\vec{r}, \vec{s}$ be empty lists and $A = B = I$.
\end{remark}
Let us check that $\hat{\mathcal{S}}$ is a congruence relation.

\begin{lemma}
$\hat{\mathcal{S}}$ is a congruence relation.
\end{lemma}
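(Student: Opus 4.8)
The plan is to first check that $\sim_{\hat{\mathcal{S}}}$ is a genuine equivalence relation on each hom-set $\mathcal{P(C)}(A',B')$, and then to verify the three congruence axioms (1)--(3) of Definition~\ref{PARACAT3} in turn. Reflexivity is immediate, since the defining condition of $\vec{p}\sim_{\hat{\mathcal{S}}}\vec{q}$ is a Kleene equality $\funnels$ between two expressions that become literally identical when $\vec p=\vec q$; symmetry holds because $\funnels$ is symmetric, and transitivity because $\funnels$ is transitive (if two consecutive pairs are each either both-undefined or both-defined-and-equal, so is the outer pair). Throughout I would lean on two bookkeeping facts that hold by definition of the extended operation $\otimes_p$ on paths: that $1_A\otimes_p(\vec p;\vec q)\otimes_p 1_B$ is the concatenation $(1_A\otimes_p\vec p\otimes_p 1_B);(1_A\otimes_p\vec q\otimes_p 1_B)$, and that $\otimes$ is strictly associative with $1_C\otimes 1_B=1_{C\otimes B}$ by condition (a) of Definition~\ref{DEFINITION SSMPC}.

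For axiom (1), given $\vec p\sim_{\hat{\mathcal{S}}}\vec p\,'$ and $\vec q\sim_{\hat{\mathcal{S}}}\vec q\,'$, I would rewrite the target $[\vec r,1_A\otimes(\vec p;\vec q)\otimes 1_B,\vec s]$ as $[\vec r,1_A\otimes\vec p\otimes 1_B,1_A\otimes\vec q\otimes 1_B,\vec s]$ via the concatenation fact, then substitute in two stages. Instantiating the hypothesis $\vec p\sim_{\hat{\mathcal{S}}}\vec p\,'$ with the right-context $(1_A\otimes\vec q\otimes 1_B);\vec s$ replaces $\vec p$ by $\vec p\,'$; instantiating $\vec q\sim_{\hat{\mathcal{S}}}\vec q\,'$ with the left-context $\vec r;(1_A\otimes\vec p\,'\otimes 1_B)$ replaces $\vec q$ by $\vec q\,'$. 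Chaining the two resulting Kleene equalities by transitivity gives $[\vec r,1_A\otimes(\vec p;\vec q)\otimes 1_B,\vec s]\funnels[\vec r,1_A\otimes(\vec p\,';\vec q\,')\otimes 1_B,\vec s]$, as needed. Axiom (3) is the easiest: since $1_A\otimes(\vec p\otimes 1_C)\otimes 1_B=1_A\otimes\vec p\otimes 1_{C\otimes B}$ and $1_A\otimes(1_C\otimes\vec p)\otimes 1_B=1_{A\otimes C}\otimes\vec p\otimes 1_B$ by associativity and (a), both assertions follow by merely reinstantiating the hypothesis $\vec p\sim_{\hat{\mathcal{S}}}\vec q$ with $B$ replaced by $C\otimes B$ (respectively $A$ replaced by $A\otimes C$).

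The step I expect to be the crux is axiom (2), namely $[\vec p]\downarrow\ \Rightarrow\ \vec p\sim_{\hat{\mathcal{S}}}[\vec p]$. The key lemma is that $[\,1_A\otimes_p\vec p\otimes_p 1_B\,]\downarrow$ and equals the single arrow $1_A\otimes[\vec p]\otimes 1_B$. I would derive this by applying condition (c) of Definition~\ref{DEFINITION SSMPC} twice: first $1_A\otimes[\vec p]\funnel[\,1_A\otimes_p\vec p\,]$, whose left side is defined (as $\otimes$ is total and $[\vec p]\downarrow$), so the right side is defined and equal; then, with $\vec p\,'=1_A\otimes_p\vec p$, the relation $[\vec p\,']\otimes 1_B\funnel[\,\vec p\,'\otimes_p 1_B\,]$ yields $[\,1_A\otimes_p\vec p\otimes_p 1_B\,]=1_A\otimes[\vec p]\otimes 1_B$. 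Care is needed with the \emph{direction} of the directed Kleene equality $\funnel$: it transports definedness from a tensor of a composite to the composite of the tensors, which is exactly the direction available here. Finally the paracategory associativity axiom (c) of Definition~\ref{DEF PARACATEGORY}, applied to the now-defined path $1_A\otimes_p\vec p\otimes_p 1_B$, gives $[\vec r,[\,1_A\otimes_p\vec p\otimes_p 1_B\,],\vec s]\funnels[\vec r,1_A\otimes_p\vec p\otimes_p 1_B,\vec s]$; rewriting the bracketed composite as $1_A\otimes[\vec p]\otimes 1_B$ and using symmetry of $\funnels$ produces precisely the defining condition for $\vec p\sim_{\hat{\mathcal{S}}}[\vec p]$. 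Combining the equivalence-relation check with axioms (1)--(3) then completes the proof.
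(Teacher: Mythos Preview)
Your proposal is correct and follows essentially the same approach as the paper: axiom (1) is handled by unfolding the concatenation and substituting in two stages using suitable contexts, axiom (3) by re-instantiating with $A\otimes C$ or $C\otimes B$, and axiom (2) by using condition (c) of the ssmpc definition to obtain $[\,1_A\otimes_p\vec p\otimes_p 1_B\,]\downarrow$ with value $1_A\otimes[\vec p]\otimes 1_B$, followed by the paracategory axiom (c). You give slightly more detail than the paper (explicitly checking the equivalence-relation axioms and unpacking the two applications of ssmpc~(c)), but the argument is the same.
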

\begin{proof}
We need to show axioms $(1)$, $(2)$,
and $(3)$. To show $(1)$, assume $\vec{p}\sim_{\hat{\mathcal{S}}} \vec{q}$ and $\vec{u}\sim_{\hat{\mathcal{S}}} \vec{t}$ we have to check that $\vec{p};\vec{u}\sim_{\hat{\mathcal{S}}} \vec{q};\vec{t}$. Consider arbitrary $\vec{r}$, $\vec{s}$, $A$, $B$. We have:
\begin{center}
$[\,\vec{r},1_A\otimes (\vec{p};\vec{u})\otimes 1_B,\vec{s}\,]\funnels[\,\vec{r},1_A\otimes \vec{p}\otimes 1_B,1_A\otimes \vec{u}\otimes 1_B,\vec{s}\,]\funnels[\,\vec{r},1_A\otimes \vec{q}\otimes 1_B,1_A\otimes \vec{u}\otimes 1_B,\vec{s}\,].$
\end{center}
The first equation is by definition of the tensor $\otimes_p$ on paths, the second equation is because by hypothesis we have that: $\vec{p}\sim_{\hat{\mathcal{S}}}\vec{q}$ implies $[\,\vec{r'},1_A\otimes \vec{p}\otimes 1_B,\vec{s'}\,]\funnels[\,\vec{r'},1_A\otimes \vec{q}\otimes 1_B,\vec{s'}\,]$ with $\vec{r}=\vec{r'}$ and $\vec{s'}=1_A\otimes \vec{u}\otimes 1_B,\vec{s}$.
In a similar way we have that:
\begin{center}
$[\,\vec{r},1_A\otimes (\vec{q};\vec{t})\otimes 1_B,\vec{s}\,]\funnels[\,\vec{r},1_A\otimes \vec{q}\otimes 1_B,1_A\otimes \vec{t}\otimes 1_B,\vec{s}\,]\funnels[\,\vec{r},1_A\otimes \vec{q}\otimes 1_B, 1_A\otimes \vec{u}\otimes 1_B,\vec{s}\,].$
\end{center}
It follows that $\vec{p};\vec{u}\sim_{\hat{\mathcal{S}}} \vec{q};\vec{t}$.
To prove $(2)$, assume $[\vec p]\downarrow$, and let
$\vec r,\vec s,A,B$ be given.
We observe first that $1_A\otimes [\, \vec{p}\, ]\otimes 1_B\funnel [\, 1_A\otimes \vec{p}\,\otimes 1_B ]$ by $(c)$ in the definition of a ssmpc. Then $[\,\vec{p}\,]\downarrow$ implies that $1_A\otimes [\, \vec{p}\, ]\otimes 1_B\downarrow$ and then $[\, 1_A\otimes \vec{p}\,\otimes 1_B ]\downarrow$ and and they are equal.
Thus we have by one of the axioms of paracategory that:
\begin{center}
$[\,\vec{r},1_A\otimes [\, \vec{p}\, ]\otimes 1_B, \vec{s}\,]\funnels[\,\vec{r},[\, 1_A\otimes \vec{p}\,\otimes 1_B\,] ,\vec{s}\,]\funnels[\,\vec{r},1_A\otimes \vec{p} \otimes 1_B, \vec{s}\,].$
\end{center}
To prove $(3)$, assume $\vec p\sim_{\hat{\mathcal{S}}} \vec p'$.
We observe that this implies
for every $C\in Obj(\mathcal{C})$, $[\,\vec{r},1_A\otimes \vec{p}\otimes 1_C\otimes 1_B,\vec{s}\,]\funnels[\,\vec{r},1_A\otimes \vec{p'}\otimes 1_C\otimes 1_B,\vec{s}\,]$, $\forall\vec{r},\vec{s},\forall A,B\in Obj(\mathcal{C})$, therefore $\vec{p}\otimes 1\sim_{\hat{\mathcal{S}}}\vec{p'}\otimes 1$. In a similar way we get the other equation.
\end{proof}
\begin{definition}
\rm
 Let $\sim$ be the smallest congruence relation on $\mathcal{P(C)}$,
i.e., the intersection of all congruence relations.
\end{definition}

\begin{proposition}\label{PARACAT6}
$\vec{p}\sim\vec{q}$ implies $[\,\vec{p}\,]\funnels[\,\vec{q}\,]$.
\end{proposition}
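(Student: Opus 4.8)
The plan is to exploit the fact that $\sim$, being defined as the intersection of \emph{all} congruence relations on $\mathcal{P(C)}$, is contained in every individual congruence relation; in particular it is contained in the specific congruence relation $\hat{\mathcal{S}}$ constructed above. Concretely, I first observe that $\sim\,\subseteq\,\hat{\mathcal{S}}$: since $\hat{\mathcal{S}}$ was shown in the preceding lemma to be a congruence relation, and $\sim$ is by definition the smallest such relation, any pair related by $\sim$ is a fortiori related by $\hat{\mathcal{S}}$. Thus $\vec{p}\sim\vec{q}$ immediately yields $\vec{p}\sim_{\hat{\mathcal{S}}}\vec{q}$.

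The second and final step is to invoke Remark~\ref{PARACAT1}, where it was recorded that $\vec{p}\sim_{\hat{\mathcal{S}}}\vec{q}$ implies $[\,\vec{p}\,]\funnels[\,\vec{q}\,]$ (taking $\vec{r}$ and $\vec{s}$ to be empty paths and $A=B=I$ in the definition of $\hat{\mathcal{S}}$). Chaining the two implications gives $\vec{p}\sim\vec{q}\Rightarrow[\,\vec{p}\,]\funnels[\,\vec{q}\,]$, as desired.

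There is essentially no hard step here: the entire content has already been front-loaded into the verification that $\hat{\mathcal{S}}$ is a congruence relation and into the observation of Remark~\ref{PARACAT1}. The only point deserving a word of care is the well-definedness of $\sim$ as ``the smallest congruence relation'', i.e., that an arbitrary intersection of congruence relations is again a congruence relation; but this is routine, since each of the closure axioms (1)--(3) of Definition~\ref{PARACAT3} is preserved under intersection (a pair lies in the intersection precisely when it lies in every factor, and each factor satisfies the axioms). I would note this once and then carry out the two-line argument above.
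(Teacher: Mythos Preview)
Your proposal is correct and matches the paper's proof essentially line for line: the paper also argues that since $\sim$ is the intersection of all congruence relations, $\vec{p}\sim\vec{q}$ implies $\vec{p}\sim_{\hat{\mathcal{S}}}\vec{q}$, and then invokes Remark~\ref{PARACAT1}. Your additional remark about why the intersection of congruence relations is again a congruence relation is a reasonable aside, but the paper does not bother to state it.
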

\begin{proof}
Since $(\vec{p},\vec{q})$ is in the intersection of all congruence relations then in particular  $\vec{p}\sim_{\hat{\mathcal{S}}}\vec{q}$ which implies that $[\,\vec{p}\,]\funnels[\,\vec{q}\,]$ by Remark~\ref{PARACAT1}.\\
\end{proof}
\begin{corollary}\label{PARACAT6b} For paths $p,q:A\to B$ of length 1, $p\sim q$ iff
 $p=q$.
\end{corollary}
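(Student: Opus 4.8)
The plan is to derive this corollary directly from Proposition~\ref{PARACAT6} together with axiom (b) in the definition of a paracategory (Definition~\ref{DEF PARACATEGORY}), so that almost all of the work has already been done.

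The forward implication $p = q \Rightarrow p \sim q$ is immediate: each relation $\sim_{\cal S}^{A,B}$ is by definition an equivalence relation, and the intersection of all congruence relations is again a congruence relation, hence $\sim$ is reflexive; thus $p \sim p$ for every path.

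For the converse, I would start from the hypothesis $p \sim q$ with $p,q : A\to B$ of length one and apply Proposition~\ref{PARACAT6}, which yields $[\,p\,]\funnels[\,q\,]$. The key observation is then that for paths of length one, axiom (b) of Definition~\ref{DEF PARACATEGORY} guarantees $[\,p\,]\downarrow$ with $[\,p\,]=p$, and likewise $[\,q\,]\downarrow$ with $[\,q\,]=q$. Since both composites are therefore defined, the Kleene equality $[\,p\,]\funnels[\,q\,]$ collapses to an ordinary equality $[\,p\,]=[\,q\,]$; substituting $[\,p\,]=p$ and $[\,q\,]=q$ gives $p=q$.

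There is essentially no obstacle here: the entire force of the statement is packaged into Proposition~\ref{PARACAT6}, and the sole role of the length-one hypothesis is to ensure, via axiom (b), that the composites $[\,p\,]$ and $[\,q\,]$ are defined and equal to $p$ and $q$ respectively, thereby turning the Kleene equality of composites into a literal equality of arrows.
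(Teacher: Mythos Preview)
Your proposal is correct and matches the paper's approach exactly: the paper's proof reads ``Obvious from Proposition~\ref{PARACAT6} and axiom (b) of paracategories,'' and you have simply unpacked precisely that reasoning. One trivial remark: what you call the ``forward implication'' ($p=q \Rightarrow p\sim q$) is actually the backward direction of the biconditional as stated, but this is only a labeling issue and the mathematics is sound.
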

 \begin{proof} Obvious from Proposition~\ref{PARACAT6} and axiom (b) of paracategories.
 \end{proof}

We now introduce the following notation:
$$ \vec{f}\otimes_{p}\vec{g}=(\vec{f}\otimes_{p}1),(1\otimes_{p}\vec{g}).$$

Note that, as a path, this is not equal to $(1\otimes_p \vec{g}),(\vec{f}\otimes_p 1)$. However, we will
show that they are congruent.
When it is clear from the context we drop the letter $p$.

\begin{lemma}\label{PARACAT5}
Let $\mathcal{S}$ be a congruence relation of $\mathcal{P(C)}$. Then if $\vec{f}\sim_{\mathcal{S}}\vec{f'}$ and $\vec{g}\sim_{\mathcal{S}}\vec{g'}$ then $\vec{f}\otimes\vec{g} \sim_{\mathcal{S}}\vec{f'}\otimes\vec{g'}$.
\end{lemma}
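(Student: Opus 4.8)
The plan is to observe that the statement is an immediate consequence of the three defining axioms of a congruence relation in Definition~\ref{PARACAT3}, unwound through the definition of the tensor $\otimes_p$ on paths. No appeal to the partial composition $[-]$ or to Proposition~\ref{PARACAT2} is needed here; the entire content has already been packaged into axioms (1) and (3). The only genuine work is bookkeeping of source and target objects so that the concatenations involved are well-typed.

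First I would fix the types. Write $\vec{f},\vec{f'}:A\to B$ and $\vec{g},\vec{g'}:C\to D$, so that by the definition $\vec{f}\otimes_p\vec{g}=(\vec{f}\otimes_p 1),(1\otimes_p\vec{g})$ the path $\vec{f}\otimes\vec{g}$ is the concatenation $(\vec{f}\otimes 1_C);(1_B\otimes\vec{g}):A\otimes C\to B\otimes D$, and likewise $\vec{f'}\otimes\vec{g'}=(\vec{f'}\otimes 1_C);(1_B\otimes\vec{g'})$. Here the identity tensored on the right of $\vec{f}$ is $1_C$ and the identity tensored on the left of $\vec{g}$ is $1_B$, these being the objects dictated by the source and target of the two factors.

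Next, from the hypothesis $\vec{f}\sim_{\mathcal{S}}\vec{f'}$ I would apply axiom (3) in its right-tensor form to obtain $\vec{f}\otimes 1_C\sim_{\mathcal{S}}\vec{f'}\otimes 1_C$, and from $\vec{g}\sim_{\mathcal{S}}\vec{g'}$ apply axiom (3) in its left-tensor form to obtain $1_B\otimes\vec{g}\sim_{\mathcal{S}}1_B\otimes\vec{g'}$. These two congruent pairs live in the hom-sets $\mathcal{P(C)}(A\otimes C,B\otimes C)$ and $\mathcal{P(C)}(B\otimes C,B\otimes D)$ respectively, so they are composable by concatenation. Applying axiom (1) then yields $(\vec{f}\otimes 1_C);(1_B\otimes\vec{g})\sim_{\mathcal{S}}(\vec{f'}\otimes 1_C);(1_B\otimes\vec{g'})$, which by the definition of $\otimes$ on paths is exactly $\vec{f}\otimes\vec{g}\sim_{\mathcal{S}}\vec{f'}\otimes\vec{g'}$. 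The only point requiring care is to confirm that axiom (3) is being used for paths of arbitrary length (which is how it is stated, for arbitrary $\vec{p}\sim_{\mathcal{S}}\vec{q}$) and that the intermediate identity objects match up so the concatenation is legitimate; granting this, the argument is complete, and the ``hard part'' is simply recognizing that there is no hard part.
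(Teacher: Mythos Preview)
Your proof is correct and follows essentially the same approach as the paper: apply axiom (3) to each hypothesis to obtain $\vec{f}\otimes 1\sim_{\mathcal{S}}\vec{f'}\otimes 1$ and $1\otimes\vec{g}\sim_{\mathcal{S}}1\otimes\vec{g'}$, then combine via axiom (1). Your version is in fact a bit more careful than the paper's in tracking the identity objects so that the concatenation is well-typed.
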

\begin{proof}
By assumption $\vec{f}\sim_{\cal S}\vec{f'}$ therefore by $(3)$ we have $\vec{f}\otimes 1\sim_{\cal S}\vec{f'}\otimes 1$. Similarly $1\otimes\vec{q}\sim_{\cal S}1\otimes\vec{q'}$. Therefore by $(1)$, we have: $\vec{f}\otimes 1,1\otimes\vec{q}\sim_{\cal S}\vec{f'}\otimes 1,1\otimes\vec{q'}$ .\\
\end{proof}

\begin{lemma}\label{PARACAT4}
Let $\mathcal{S}$ be a congruence relation of $\mathcal{P(C)}$. Then $$\vec{f}\otimes 1,1\otimes \vec{g}\sim_{\cal S} 1\otimes\vec{g},\vec{f}\otimes 1.$$
\end{lemma}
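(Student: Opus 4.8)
The plan is to reduce the whole statement to a single ``base swap'' for paths of length one, and then bubble one path past the other by a nested induction, using the congruence axioms to perform each swap inside an arbitrary surrounding context. Throughout, $\mathcal{S}$ is an arbitrary congruence relation, so I may only appeal to axioms $(1)$--$(3)$ of Definition~\ref{PARACAT3} together with reflexivity, symmetry and transitivity of $\sim_{\mathcal{S}}$, plus the purely paracategorical facts of Proposition~\ref{PARACAT2}.

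First I would establish the base case: for arrows $p:A\to B$ and $q:C\to D$ (paths of length one),
\[
p\otimes 1_C,\;1_B\otimes q \;\sim_{\mathcal{S}}\; 1_A\otimes q,\;p\otimes 1_D.
\]
By Proposition~\ref{PARACAT2}, both $[p\otimes 1_C,1_B\otimes q]$ and $[1_A\otimes q,p\otimes 1_D]$ are defined and equal to the single arrow $p\otimes q$. Congruence axiom $(2)$ then gives $(p\otimes 1_C,1_B\otimes q)\sim_{\mathcal{S}}[p\otimes 1_C,1_B\otimes q]=p\otimes q$ and likewise $(1_A\otimes q,p\otimes 1_D)\sim_{\mathcal{S}}p\otimes q$, so transitivity closes the base case.

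Next I would prove the intermediate statement where $\vec f=f$ is a single arrow and $\vec g=g_1,\dots,g_n$ is arbitrary, by induction on $n$. Writing $\vec g=g_1,\vec g\,'$ with $g_1:C\to C_1$, I apply the base swap to the adjacent pair $f\otimes 1_C,\,1_B\otimes g_1$ and then the induction hypothesis to $f\otimes 1_{C_1},\,1_B\otimes\vec g\,'$; at each stage congruence axiom $(1)$ lets me replace the swapped sub-path inside the surrounding context (the untouched factors being related to themselves by reflexivity), and transitivity glues the two steps. Crucially, the concatenation $1_A\otimes g_1,\,1_A\otimes\vec g\,'$ recombines on the nose to $1_A\otimes\vec g$. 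I would then run the outer induction on the length of $\vec f=f_1,\vec f\,'$ in exactly the same shape: apply the induction hypothesis to $\vec f\,'\otimes 1,\,1\otimes\vec g$, then the single-arrow result to $f_1\otimes 1,\,1\otimes\vec g$, again assembling with axiom $(1)$ and transitivity, so that the final path is precisely $1\otimes\vec g,\,\vec f\otimes 1$.

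The only genuine obstacle is bookkeeping rather than mathematics: each elementary swap changes the object on which an identity acts (for instance $f\otimes 1_C$ becomes $f\otimes 1_{C_1}$ after it crosses $g_1:C\to C_1$), so I must track these subscripts carefully to keep every concatenation composable and to match the source and target types demanded by the base swap and by axiom $(1)$. It is worth noting that axiom $(3)$ of a congruence is not needed for this lemma; only axioms $(1)$ and $(2)$, the equivalence-relation properties of $\sim_{\mathcal{S}}$, and the length-one identity of Proposition~\ref{PARACAT2} are used.
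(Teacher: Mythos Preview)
Your proof is correct and follows essentially the same approach as the paper: both establish the single-step swap via Proposition~\ref{PARACAT2} together with congruence axiom~(2), and then propagate it through the whole path using axiom~(1) and transitivity. The only difference is the order of the bubble sort---the paper moves each $1\otimes g_j$ leftward past all the $f_i\otimes 1$, whereas you move each $f_i\otimes 1$ rightward via a nested induction---which is purely organizational.
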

\begin{proof}
Given $\vec{f}=f_1,\dots, f_n$ and $\vec{g}=g_1,\dots,g_m$ we have that by Proposition~\ref{PARACAT2} above $[f_n\otimes 1,1\otimes  g_1]\downarrow$, $[1\otimes g_1,f_n\otimes 1]\downarrow$ and are equal to $f_n\otimes g_1$. This yields, by Definition~\ref{PARACAT3} of congruence relation, the following sequence of equivalences:
\begin{center}
$f_n\otimes 1,1\otimes g_1\,\,\,\sim_{\cal S} \,\,\,[f_n\otimes 1,1\otimes g_1]=[1\otimes g_1,f_n\otimes 1]\,\,\,\sim_{\cal S}\,\,\, 1\otimes g_1,f_n\otimes  1$.
\end{center}
Which implies by composition:
\begin{center}
$f_1\otimes 1,\dots, f_{n-1}\otimes 1,\overbrace{(f_n\otimes 1,1\otimes g_1)},1\otimes g_2,\dots 1\otimes g_m \,\,\,\sim_ {\cal S}
f_1\otimes 1,\dots, f_{n-1}\otimes 1,\underbrace{(1\otimes g_1,f_n\otimes 1)}, 1\otimes g_2,\dots 1\otimes g_m $.
\end{center}
By iterating this procedure we end up moving $1\otimes g_1$ into the first place. We finish the proof by repeating this $m-1$ times.
\end{proof}

From now, $``;"$ denotes composition in the quotient category written in diagrammatic order (here this means concatenation of paths).
\begin{lemma}
Let $\mathcal{S}$ be a congruence relation defined on a strict symmetric monoidal paracategory $(\mathcal{C},[-],\otimes,I,\sigma)$. Then the quotient $(\mathcal{P(C)}/\mathcal{S},\hat{\otimes},I,s)$ is a strict symmetric monoidal category, where $\hat{\otimes}$ is the obvious tensor and $s=\overline{\sigma}$.
\end{lemma}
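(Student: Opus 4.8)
The plan is to verify, in turn, the category axioms, the bifunctoriality of $\hat\otimes$, its strictness, and the symmetry axioms, at each stage converting a \emph{defined composite in the paracategory} into a \emph{congruence} by means of axiom~(2) of Definition~\ref{PARACAT3} (whenever $[\vec a]\downarrow$, then $\vec a\sim[\vec a]$) and then exploiting that $\otimes$ is already strict on $\mathcal{C}$. Throughout, $\sim$ abbreviates $\sim_{\mathcal S}$ and $\overline{\vec p}$ denotes the $\sim$-class of a path $\vec p$. First, $\mathcal{P(C)}/\mathcal{S}$ is a category: composition is concatenation of paths, which descends to classes by axiom~(1) of Definition~\ref{PARACAT3}, so $\overline{\vec p}\,;\overline{\vec q}=\overline{\vec p;\vec q}$ is well defined, with identities the classes $\overline{\epsilon_A}$ of the empty paths; associativity and the unit laws are inherited from the strictly associative and unital concatenation in $\mathcal{P(C)}$.

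Next I would define $\hat\otimes$ by $A\hat\otimes B=A\otimes B$ on objects and $\overline{\vec f}\hat\otimes\overline{\vec g}=\overline{\vec f\otimes_p\vec g}=\overline{(\vec f\otimes 1),(1\otimes\vec g)}$ on morphisms. This is well defined by Lemma~\ref{PARACAT5}, and it preserves identities since $\epsilon_A\otimes_p\epsilon_B=\epsilon_{A\otimes B}$ (together with axiom~(a) of Definition~\ref{DEFINITION SSMPC}). The one substantial point is the interchange law
\[ (\overline{\vec f};\overline{\vec f'})\hat\otimes(\overline{\vec g};\overline{\vec g'})=(\overline{\vec f}\hat\otimes\overline{\vec g});(\overline{\vec f'}\hat\otimes\overline{\vec g'}). \]
Expanding both sides as classes of paths, the only discrepancy is the internal block $(\vec f'\otimes 1),(1\otimes\vec g)$ versus $(1\otimes\vec g),(\vec f'\otimes 1)$, and these are congruent by Lemma~\ref{PARACAT4}; axiom~(1) then lets me pre- and post-compose with $\vec f\otimes 1$ and $1\otimes\vec g'$ to conclude. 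I expect this to be the main obstacle: because the path-tensor $\vec f\otimes_p\vec g=(\vec f\otimes 1),(1\otimes\vec g)$ is defined asymmetrically, functoriality is not formal and genuinely requires the swapping Lemma~\ref{PARACAT4}.

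For strictness I would first reduce the tensor of morphisms to generators. For length-one paths $f,g$, Proposition~\ref{PARACAT2} gives $[f\otimes 1,1\otimes g]=f\otimes g$, so axiom~(2) yields $\overline{f}\hat\otimes\overline{g}=\overline{f\otimes g}$; that is, on generators $\hat\otimes$ is simply the single-arrow tensor of $\mathcal{C}$. Strict associativity and the unit laws on generators are then immediate from the strictness of $\otimes$ on $\mathcal{C}$, for instance $(\overline{f}\hat\otimes\overline{g})\hat\otimes\overline{h}=\overline{(f\otimes g)\otimes h}=\overline{f\otimes(g\otimes h)}=\overline{f}\hat\otimes(\overline{g}\hat\otimes\overline{h})$ and $\overline{f}\hat\otimes\overline{\epsilon_I}=\overline{f\otimes 1_I}=\overline{f}$. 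Since every morphism of the quotient is a composite of classes of length-one paths and $\hat\otimes$ is bifunctorial, these identities propagate from generators to all morphisms, giving strict associativity and strict unit on the nose.

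Finally I would set $s_{A,B}=\overline{\sigma_{A,B}}$ and read each clause of the symmetry part of Definition~\ref{DEFINITION SSMPC} through axiom~(2). The involution $s_{A,B};s_{B,A}=1_{A\otimes B}$ is exactly $[\sigma_{A,B},\sigma_{B,A}]=1_{A\otimes B}$, whence $s$ is invertible. Naturality follows by writing $f\otimes g\sim(f\otimes 1),(1\otimes g)$ (Proposition~\ref{PARACAT2}) and applying the two single-variable clauses $[f\otimes 1_B,\sigma]=[\sigma,1_B\otimes f]$ and $[1_A\otimes g,\sigma]=[\sigma,g\otimes 1_A]$, recombining with Proposition~\ref{PARACAT2} to obtain $(f\otimes g),\sigma\sim\sigma,(g\otimes f)$. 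The hexagon is the clause $[\sigma_{A,B}\otimes 1_C,\sigma_{B,A\otimes C}]=1_A\otimes\sigma_{B,C}$, again transferred by axiom~(2), and the residual unit-compatibility $s_{A,I}=1_A$ then follows by the standard strict coherence argument. Assembling the four stages shows that $(\mathcal{P(C)}/\mathcal{S},\hat\otimes,I,s)$ is a strict symmetric monoidal category.
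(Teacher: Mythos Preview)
Your proposal is correct and follows essentially the same architecture as the paper: the same definition of $\hat\otimes$ on classes, the same use of Lemma~\ref{PARACAT5} for well-definedness and Lemma~\ref{PARACAT4} for interchange, and the same treatment of the symmetry via the clauses of Definition~\ref{DEFINITION SSMPC} together with axiom~(2) of Definition~\ref{PARACAT3}.

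The one genuine difference is in verifying strict associativity and unit of $\hat\otimes$ on morphisms. The paper does this by a direct path manipulation: it expands $(\overline{\vec f}\hat\otimes\overline{\vec g})\hat\otimes\overline{\vec h}$ all the way down to the level of whiskered single arrows and uses the strict equations $1_A\otimes 1_B=1_{A\otimes B}$ and $(f\otimes g)\otimes h=f\otimes(g\otimes h)$ in $\mathcal C$ to rebracket the entire path. You instead reduce to generators via Proposition~\ref{PARACAT2} (so $\overline f\hat\otimes\overline g=\overline{f\otimes g}$ on length-one classes) and then invoke bifunctoriality to propagate from generators to arbitrary morphisms. Both arguments are valid; yours is slightly more conceptual, while the paper's is more self-contained (it does not need to appeal to the ``functors agreeing on generators agree everywhere'' principle, which in turn relies on padding paths to equal length with identities). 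For naturality of $s$, the paper in fact uses exactly your reduction-to-generators strategy, checking only one variable on length-one paths and extending by composition.
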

\begin{proof}
Let $(\mathcal{C},[-],\otimes,I,\sigma)$ be a strict symmetric monoidal paracategory. It induces a strict symmetric monoidal category $(\mathcal{P(C)}/\mathcal{S},\hat{\otimes},I,s)$ in the following way:\\
The objects of $\mathcal{P(C)}/\mathcal{S}$ are the same as the objects of the graph $\mathcal{C}$ and the arrows $\overline{\vec{f}}=\overline{f_1\ldots,f_n}$ are $\mathcal{S}$-equivalence classes of paths. Composition on classes is induced by composition on paths by axiom $(1)$ of congruences. The identity is the class of the identity of the path category.\\
A bifunctor $\hat{\otimes}:P(C)/\mathcal{S} \times P(C)/\mathcal{S}\rightarrow P(C)/\mathcal{S}$ is  defined by $\overline{\vec{f}}\hat{\otimes}\overline{\vec{g}}=\overline{\vec{f}\otimes_{p}1,1\otimes_{p}\vec{g}}$. The tensor is well-defined by the Lemma~\ref{PARACAT5} above.\\
We must check the interchange law: $$(\overline{\vec{f}}\hat{\otimes}\overline{\vec{g}});(\overline{\vec{f'}}\hat{\otimes}\overline{\vec{g'}})=(\overline{\vec{f}};\overline{\vec{f'}})\hat{\otimes}(\overline{\vec{g}};\overline{\vec{g'}})$$
We have:

\begin{center}
$\vspace{3mm}(\overline{\vec{f}}\hat{\otimes}\overline{\vec{g}});(\overline{\vec{f'}}\hat{\otimes}\overline{\vec{g'}})=(\overline{\vec{f}\otimes 1,1\otimes \vec{g}});(\overline{\vec{f'}\otimes 1,1\otimes \vec{g'}})=
\overline{\vec{f}\otimes 1,1\otimes \vec{g},\vec{f'}\otimes 1,1\otimes \vec{g'}}=
\vspace{3mm}\overline{\vec{f}\otimes 1};\overline{1\otimes \vec{g},\vec{f'}\otimes 1};\overline{1\otimes \vec{g'}}\stackrel{(*)}=
\overline{\vec{f}\otimes 1};\overline{\vec{f'}\otimes 1,1\otimes \vec{g}};\overline{1\otimes \vec{g'}}=
\overline{\vec{f}\otimes 1,\vec{f'}\otimes 1,1\otimes \vec{g},1\otimes \vec{g'}}=
\overline{(\vec{f},\vec{f'})\otimes 1,1\otimes (\vec{g},\vec{g'})}=
\overline{(\vec{f},\vec{f'})\otimes_p (\vec{g},\vec{g'})}=
\overline{(\vec{f},\vec{f'})}\hat{\otimes}\overline{(\vec{g},\vec{g'})}
=(\overline{\vec{f}};\overline{\vec{f'}})\hat{\otimes}(\overline{\vec{g}};\overline{\vec{g'}})$.
\end{center}

Where in $(*)$ we used the property of the Lemma~\ref{PARACAT4} above: $\overline{\,1\otimes \vec{g},\vec{f'}\otimes 1\,}=\overline{\,\vec{f'}\otimes 1,1\otimes \vec{g}}$.\\
Also we want to check that
$\overline{\epsilon_{A\otimes B}}=\overline{\epsilon_A}\hat{\otimes}\overline{\epsilon_B}$.

$$\overline{\epsilon_A}\hat{\otimes}\overline{\epsilon_B}=\overline{\epsilon_A \otimes_p 1_B,1_A\otimes_p \epsilon_B}=\overline{\epsilon_{A\otimes B},\epsilon_{A\otimes B}}=\overline{\epsilon_{A\otimes B}}$$

since $[\,1_{A\otimes B},1_{A\otimes B}\,]\downarrow $.\\
Given paths $\vec{f}:A\rightarrow B$, $\vec{g}:C\rightarrow D$ and $\vec{h}:E\rightarrow F$ we check the associative property:
\begin{center}
$\vspace{3mm}(\overline{\vec{f}}\hat{{\otimes}}\overline{\vec{g}})\hat{\otimes}\overline{\vec{h}}=
(\overline{\vec{f}\otimes_p\vec{g}})\hat{\otimes}\overline{\vec{h}}=
\overline{(\vec{f}\otimes 1_C,1_B\otimes \vec{g})}\hat{\otimes}\overline{\vec{h}}=
\overline{(\vec{f}\otimes 1_C, 1_B\otimes \vec{g})\otimes_p\vec{h}}=
\vspace{3mm}\overline{(\vec{f}\otimes 1_C,1_B\otimes \vec{g})\otimes 1_E,1_{B\otimes D}\otimes\vec{h}}=
\overline{\vec{f}\otimes 1_C\otimes 1_E,1_B\otimes \vec{g}\otimes 1_E, 1_{B\otimes D}\otimes\vec{h}}=
\vspace{3mm}\overline{\vec{f}\otimes 1_{C\otimes E},1_B\otimes \vec{g}\otimes 1_E, 1_B\otimes 1_{D}\otimes\vec{h}}=
\overline{\vec{f}\otimes 1_{C\otimes E},1_B\otimes (\vec{g}\otimes 1_E, 1_{D}\otimes\vec{h})}=
\overline{\vec{f}\otimes_p (\vec{g}\otimes 1_E, 1_{D}\otimes\vec{h})}=
\overline{\vec{f}\otimes_p (\vec{g}\otimes_p\vec{h})}=
\overline{\vec{f}}\hat{\otimes} (\overline{\vec{g}\otimes_p\vec{h}})=
\overline{\vec{f}}\hat{\otimes} (\overline{\vec{g}}\hat{\otimes}\overline{\vec{h}}).$
\end{center}

Also if $\vec{f}:A\rightarrow B$ and $1_I:I\rightarrow I$ then:
\begin{center}
$\overline{\vec{f}}\hat{\otimes}\overline{1_I}=\overline{\vec{f}\otimes_p 1_I}=\overline{\vec{f}\otimes 1_I,1_B\otimes 1_I}=\overline{\vec{f},1_B}=\overline{\vec{f}}.$
\end{center}
Since $\vec{f}\otimes 1_I=\overrightarrow{f\otimes 1_I}=\vec{f}$ and $1_B\otimes 1_I=1_B$. In the same way we get $\overline{{1_I}}\hat{\otimes}\overline{\vec{f}}=\overline{\vec{f}}$.\\
The symmetry is defined as $s_{A,B}:A\hat{\otimes} B\rightarrow B\hat{\otimes}A$, $s_{A,B}=\overline{\sigma_{A,B}}$.
This arrow is an isomorphism since $[\sigma_{A,B},\sigma_{B,A}]\downarrow$ implies $\sigma_{A,B},\sigma_{B,A}\sim_{\cal S}[\sigma_{A,B},\sigma_{B,A}]$ and then:
$$s_{A,B}; s_{B,A}=\overline{\sigma_{A,B}};\overline{\sigma_{B,A}}=\overline{\sigma_{A,B},\sigma_{B,A}}=\overline{[\sigma_{A,B},\sigma_{B,A}]}=\overline{1_{A\otimes B}}.$$
Similarly  $s_{B,A};s_{A,B}=\overline{1_{B\otimes A}}$.\\
Next, we check the following coherence diagram: $(s_{A,B}\otimes 1_C); s_{B,A\otimes C}=1_A\otimes s_{B,C}$.\\
\begin{center}
$\vspace{3mm}(s_{A,B}\otimes 1_C); s_{B,A\otimes C}=(\overline{\sigma_{A,B}}\hat{\otimes} \overline{1_C});\overline{\sigma_{B,A\otimes C}}=\overline{(\sigma_{A,B}\otimes 1_C)};\overline{\sigma_{B,A\otimes C}}=\overline{(\sigma_{A,B}\otimes 1_C),\sigma_{B,A\otimes C}}=\overline{[(\sigma_{A,B}\otimes 1_C),\sigma_{B,A\otimes C}]}=\overline{1_A\otimes \sigma_{B,C}}=\overline{1_A}\hat{\otimes}\overline{\sigma_{B,C}}=1_A\hat{\otimes}s_{B,C}.$
\end{center}
Next we prove naturality of the map $s_{A,B}:A\hat{\otimes} B\rightarrow B\hat{\otimes}A$. To see this, it is enough to prove it on simple path of length one and then extend it by composition.  Let us consider $\overline{f}:A\rightarrow A'$ and since $\overline{[\sigma_{A,B},1\otimes f]}\downarrow$  \\
$\vspace{3mm}s_{A,B}; (\overline{1}\hat{\otimes}\overline{f})=\overline{\sigma_{A,B}};(\overline{1}\hat{\otimes}\overline{f})=\overline{\sigma_{A,B}};\overline{1\otimes f}=\overline{\sigma_{A,B},1\otimes f}=\overline{[\sigma_{A,B},1\otimes f]}=\overline{[f\otimes 1,\sigma_{A',B}]}=\overline{f\otimes 1,\sigma_{A',B}}=\overline{f\otimes 1};\overline{\sigma_{A',B}}=(\overline{f}\hat{\otimes}\overline{1}); s_{A',B}.$\\

For the general case we iterate this, applying the above equation several times.

\end{proof}

{\bf Proof of Theorem~\ref{PARACAT7}}
\begin{proof}

A functor between paracategories $F:(\mathcal{C},[-],\otimes,I,\sigma)\rightarrow (\mathcal{P(C)}/\mathcal{S},\hat{\otimes},I,s)$, where the category  $\mathcal{P(C)}/\mathcal{S}$ is taken as a (total) paracategory, is defined in the following way:
\begin{itemize}
\item[-]on objects as the identity and
\item[-] on arrows $F(f)=\overline{f}$ as the projection on classes.
\end{itemize}
Observe that $F$ preserves identities and composition when $[\vec{f}]$ is defined:
$$ F[\vec{f}]=\overline{[\vec{f}]}=\overline{\vec{f}}=\overline{f_1,\ldots,f_n}=\overline{f_1};\ldots;\overline{f_n}= Ff_1;\ldots; Ff_n.$$
Following the definition, we have that $F$ preserves symmetries: $F(\sigma)=\overline{\sigma}=s$.\\
In addition, if $f:A\rightarrow C$ and $g:B\rightarrow D$ then
$$F(f\otimes g)=\overline{f\otimes g}=\overline{[f\otimes 1_B,1_C\otimes g]}=\overline{f\otimes 1_B,1_C\otimes g}=\overline{f\otimes_p g}=Ff\hat{\otimes}Fg$$
where the last sequence of equations is justified by Proposition~\ref{PARACAT2}, the property above, axioms and by definition of congruence relation.\\
Moreover, if $\mathcal{S}$ is the smallest congruence relation, or indeed any
congruence relation satisfying $\mathcal{S}\subseteq \hat{\mathcal{S}}$, then $F$ is faithful by Corollary~\ref{PARACAT6b}.
\end{proof}

\section{Compact closed paracategories}

\begin{definition}
\rm A \textit{(strict symmetric) compact closed paracategory} $(\mathcal{C},[-],\otimes,I,\sigma,\eta,\epsilon)$ is a strict symmetric monoidal paracategory such that for every object $A$ there is an object $A^*$ and arrows $\eta_A:I\rightarrow A\otimes A^*$, $\epsilon_A :A^*\otimes A\rightarrow I$ such that $[\eta_A\otimes 1_A,1_A\otimes\epsilon_A ]\downarrow$, $[1_{A^*}\otimes\eta_A,\epsilon_A\otimes 1_{A^*}]\downarrow$ and  $[\eta_A\otimes 1_A,1_A\otimes\epsilon_A ]=1_A$, $[1_{A^*}\otimes\eta_A,\epsilon_A\otimes 1_{A^*}]=1_{A^*}$.
\end{definition}

\begin{theorem}\label{FAITHFUL EMBED COMP CLOSED PARA}
Every compact closed paracategory can be
faithfully embedded in a compact closed category.
\end{theorem}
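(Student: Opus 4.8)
The plan is to build directly on Theorem~\ref{PARACAT7}. Given a compact closed paracategory $(\mathcal{C},[-],\otimes,I,\sigma,\eta,\epsilon)$, I would first forget the duality and apply the completion of Theorem~\ref{PARACAT7} to the underlying strict symmetric monoidal paracategory, taking $\mathcal{S}$ to be the smallest congruence $\sim$. This produces a strict symmetric monoidal category $(\mathcal{P(C)}/\sim,\hat{\otimes},I,s)$ together with the strict monoidal functor $F:\mathcal{C}\to\mathcal{P(C)}/\sim$ that is the identity on objects and sends each arrow $f$ to its class $\overline{f}$. Faithfulness is immediate from Corollary~\ref{PARACAT6b}, since the arrows of $\mathcal{C}$ are exactly the length-one paths and $\sim\subseteq\hat{\mathcal{S}}$ because $\hat{\mathcal{S}}$ is itself a congruence.

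Next I would equip $\mathcal{P(C)}/\sim$ with a compact closed structure by transporting the data of $\mathcal{C}$ verbatim: for each object $A$ keep the same dual $A^{*}$, and take as unit and counit the classes $\overline{\eta_A}:I\to A\otimes A^{*}$ and $\overline{\epsilon_A}:A^{*}\otimes A\to I$. These are well defined, with no choice of representative involved, because $\eta_A$ and $\epsilon_A$ are prescribed length-one paths; and every object of the quotient has a dual since the compact closed paracategory supplies the data for all $A$, including those of the form $B^{*}$. Because $\mathcal{P(C)}/\sim$ is strict, the coherence isomorphisms $\alpha$, $\rho$, $\lambda$ appearing in the definition of a compact closed category collapse to identities, so verifying compact closure reduces to the two snake (triangle) identities.

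The heart of the argument is then a short computation. Using $\overline{f}\,\hat{\otimes}\,\overline{g}=\overline{f\otimes g}$ for length-one paths (which follows from Proposition~\ref{PARACAT2}) and composition-by-concatenation in the quotient, the first snake composite becomes
\[
(\overline{\eta_A}\,\hat{\otimes}\,\overline{1_A});(\overline{1_A}\,\hat{\otimes}\,\overline{\epsilon_A})
   =\overline{\eta_A\otimes 1_A};\overline{1_A\otimes\epsilon_A}
   =\overline{\eta_A\otimes 1_A,\,1_A\otimes\epsilon_A}
   =\overline{1_A}.
\]
The last equality uses axiom $(2)$ of Definition~\ref{PARACAT3}: since the compact closed paracategory axioms give $[\eta_A\otimes 1_A,1_A\otimes\epsilon_A]\downarrow$ with value $1_A$, we have $\eta_A\otimes 1_A,\,1_A\otimes\epsilon_A\sim_{\mathcal{S}}[\eta_A\otimes 1_A,1_A\otimes\epsilon_A]=1_A$, so the classes agree. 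The symmetric computation starting from $[1_{A^{*}}\otimes\eta_A,\epsilon_A\otimes 1_{A^{*}}]=1_{A^{*}}$ yields the other identity. Hence $\mathcal{P(C)}/\sim$ is compact closed, and $F$ preserves $(-)^{*}$, $\eta$ and $\epsilon$ on the nose, so it is the desired faithful embedding.

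The only real care I expect is bookkeeping of two kinds. First, checking that the ``definedness'' hypotheses packaged into the compact closed paracategory axioms are precisely what axiom $(2)$ of congruence requires, and that the strictness of the quotient genuinely reconciles the paper's (non-strict) definition of a compact closed category with the bare snake equations available in the paracategory; in particular, the starred-object conventions of the two definitions differ, and matching them exactly may require composing $\overline{\eta_A}$ and $\overline{\epsilon_A}$ with the symmetry $s$, which is harmless in the symmetric setting. Second, confirming that no additional congruence axiom is needed to carry the duality to the quotient, which is the case because the snake identities involve only $\otimes$ and composition, both already respected by $\sim$. No completion machinery beyond Theorem~\ref{PARACAT7} is required; the duality data simply descends.
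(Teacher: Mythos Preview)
Your proposal is correct and follows essentially the same approach as the paper: apply Theorem~\ref{PARACAT7} with the smallest congruence to obtain the faithful embedding into $\mathcal{P(C)}/{\sim}$, transport the duality data by setting $\eta'=\overline{\eta_A}$ and $\epsilon'=\overline{\epsilon_A}$, and verify the snake identities via exactly the computation you wrote, using that $[\eta_A\otimes 1_A,1_A\otimes\epsilon_A]\downarrow$ together with axiom~(2) of the congruence. The paper's proof is slightly terser but otherwise identical, including the intermediate step $\overline{\eta_A\otimes 1_A,1_A\otimes\epsilon_A}=\overline{[\eta_A\otimes 1_A,1_A\otimes\epsilon_A]}=\overline{1_A}$.
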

\begin{proof}
Let us consider the paracategory $(\mathcal{C},[-],\otimes,I,\sigma,\eta,\epsilon)$.\\
As a result of the proof of Theorem~\ref{PARACAT7} above, it suffices to show that $(\mathcal{P(C)}/\mathcal{S},\hat{\otimes},I,s,\eta',\epsilon')$ is compact closed, where $\eta'=\overline{\eta}$ and $\epsilon'=\overline{\epsilon}$. Notice that by definition the functor $F$ preserves $\eta$ and $\epsilon$.
Consequently, the compactness diagrams are satisfied, since the condition $[\eta\otimes 1, 1\otimes \epsilon]\downarrow$ implies:\\
\begin{align*}
\overline{\eta}\hat{\otimes}\overline{1_A};\overline{1_A}\hat{\otimes}\overline{\epsilon}    &= \overline{\eta\otimes 1_A}; \overline{1_A\otimes \epsilon}\\
                    &= \overline{\eta\otimes 1_A,1_A\otimes \epsilon}\\
                    &= \overline{[\eta\otimes 1_A,1_A\otimes \epsilon]}\\
                    &= \overline{1_A}.\\
                    &
\end{align*}
In the same way,  $\overline{1_{A^*}}\hat{\otimes}\overline{\eta};\overline{\epsilon}\hat{\otimes}\overline{1_{A^*}}= \overline{1_{A^*}}$

\end{proof}

\section{Freeness}

We can strengthen Theorem~\ref{PARACAT7} by noting that the faithful
 embedding satisfies a universal property.
\begin{theorem}\label{Freeness}
The category $(\mathcal{P(C)/{\sim}},\hat{\otimes},I,s)$ satisfies the following property: for any strict symmetric monoidal category $\mathcal{D}$ and any strict symmetric monoidal functor $G:\mathcal{C}\rightarrow \mathcal{D}$ between paracategories,
there exists a unique strict symmetric monoidal functor $L:\mathcal{P(C)/{\sim}}\rightarrow \mathcal{D}$ such that $L\circ F=G$, where $F$ is the inclusion map defined in Theorem~\ref{PARACAT7} above.

$$\xymatrix@=25pt{
\mathcal{C}\ar[rr]^{F}\ar[rrd]_{G}
  && \mathcal{P(C)/{\sim}} \ar[d]^{L}\\
  && \mathcal{D}
}$$
\end{theorem}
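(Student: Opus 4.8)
I want to prove that the embedding $F:\mathcal{C}\to\mathcal{P(C)}/{\sim}$ is the universal arrow from the ssmpc $\mathcal{C}$ to strict symmetric monoidal categories, viewed as (total) paracategories. The natural route is to build $L$ on the path category $\mathcal{P(C)}$ first, then show it descends to the quotient by $\sim$. Concretely, I would define $L$ on objects by $L(A)=G(A)$, and on a path $\vec{f}=f_1,\ldots,f_n:A\to B$ by $L(\overline{\vec{f}})=G(f_n)\circ\cdots\circ G(f_1)$, i.e.\ the composite in $\mathcal{D}$ (written in diagrammatic order, this is $G f_1;\cdots;G f_n$). Since $\mathcal{D}$ is a genuine category, this composite is always defined, so $L$ is well-defined on $\mathcal{P(C)}$; functoriality on paths is immediate because path-composition is concatenation, which $L$ sends to composition in $\mathcal{D}$, and the empty path goes to the identity.

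**Descent to the quotient.** The crux is showing $L$ respects $\sim$, i.e.\ $\vec{p}\sim\vec{q}$ implies $L(\vec{p})=L(\vec{q})$ in $\mathcal{D}$. For this I would define a relation $\mathcal{S}_L$ on $\mathcal{P(C)}$ by $\vec{p}\sim_{\mathcal{S}_L}\vec{q}$ iff $L(\vec{p})=L(\vec{q})$, and verify that $\mathcal{S}_L$ is a congruence relation in the sense of Definition~\ref{PARACAT3}. Axiom (1) holds because $L$ is a functor on paths, so equal composites compose to equal composites. Axiom (3) follows because $G$ is strict monoidal: $L(\vec{p}\otimes 1)=L(\vec{p})\otimes_{\mathcal{D}} 1$ and $L(1\otimes\vec{p})=1\otimes_{\mathcal{D}}L(\vec{p})$, and tensoring with a fixed object preserves equalities. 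Axiom (2), namely $[\vec{p}]\downarrow$ implies $\vec{p}\sim_{\mathcal{S}_L}[\vec{p}]$, is exactly the statement that $G$ preserves composition as a functor of paracategories: $L([\vec{p}])=G[\vec{p}]=G(f_n)\circ\cdots\circ G(f_1)=L(\vec{p})$ using the condition $F[\vec{p}]=[\vec{Fp}]$ built into the definition of a functor between paracategories. Since $\sim$ is the \emph{smallest} congruence relation and $\mathcal{S}_L$ is a congruence, $\sim\;\subseteq\;\mathcal{S}_L$, so $L$ factors through the quotient; I then reuse the symbol $L$ for the induced functor $\mathcal{P(C)}/{\sim}\to\mathcal{D}$.

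**Strict monoidality, the triangle, and uniqueness.** That $L$ is strict symmetric monoidal on $\mathcal{P(C)}/{\sim}$ follows from strictness of $G$: on objects $L(A\otimes B)=G(A\otimes B)=GA\otimes GB=LA\otimes LB$ and $L(I)=I_{\mathcal{D}}$; on arrows I check $L(\overline{\vec{f}}\,\hat{\otimes}\,\overline{\vec{g}})=L(\overline{\vec{f}})\otimes L(\overline{\vec{g}})$ by expanding $\overline{\vec{f}}\,\hat{\otimes}\,\overline{\vec{g}}=\overline{\vec{f}\otimes 1,1\otimes\vec{g}}$, applying $L$ as a composite, and using the interchange law in $\mathcal{D}$ together with $G$'s strictness; symmetry is preserved because $L(s_{A,B})=L(\overline{\sigma_{A,B}})=G(\sigma_{A,B})=\sigma^{\mathcal{D}}_{GA,GB}$. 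The triangle $L\circ F=G$ holds by construction: on a length-one path $F(f)=\overline{f}$ and $L(\overline{f})=G(f)$. For uniqueness, suppose $L'$ is another strict symmetric monoidal functor with $L'\circ F=G$. Every arrow of $\mathcal{P(C)}/{\sim}$ is $\overline{f_1,\ldots,f_n}=\overline{f_1};\cdots;\overline{f_n}=F(f_1);\cdots;F(f_n)$, a composite of arrows in the image of $F$; since $L'$ preserves composition and agrees with $G=L\circ F$ on each $F(f_i)$, it must agree with $L$ on the composite, forcing $L'=L$.

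**Expected obstacle.** The only genuinely delicate point is the verification of congruence axiom (2) for $\mathcal{S}_L$, where I must be careful that the hypothesis ``$G$ is a functor of paracategories'' is exactly what guarantees $L$ sends a path and its (defined) composite to the same arrow of $\mathcal{D}$; everything else is a routine bookkeeping of strictness and the interchange law, paralleling the monoidal-structure computations already carried out in the lemma preceding Theorem~\ref{PARACAT7}.
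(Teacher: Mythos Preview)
Your proposal is correct and follows essentially the same approach as the paper: you define the kernel congruence $\mathcal{S}_L$ (the paper calls it $\mathcal{S}$), verify axioms (1)--(3) of Definition~\ref{PARACAT3} in the same way, invoke minimality of $\sim$ to descend to the quotient, and check strict monoidality via the interchange law. One small addition on your side is the explicit uniqueness argument, which the paper's proof omits despite the statement asserting it.
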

\begin{proof}
Consider the set $\mathcal{S}$:
$$\{(\vec{f},\vec{g})\in\mathcal{P(C)}\times \mathcal{P(C)}:\,\,\,G(f_1)\circ\dots\circ G(f_n)=G(g_1)\circ\dots\circ G(g_m)\,\}$$
where $\vec{f}=f_1,\ldots ,f_n$ and $\vec{g}=g_1,\ldots,g_m.$

We claim that $\mathcal{S}$ is a congruence relation in the sense of Definition~\ref{PARACAT3} stipulated above.
Clearly, it is an equivalence relation. To show that it satisfies axiom $(1)$, assume
\begin{center}
$p_1,\dots p_n\sim_{\mathcal{S}} q_1,\dots,q_m$ and $f_1,\dots f_s \sim_{\mathcal{S}}\,q_1,\dots,g_t$, then by hypothesis $G(p_1)\circ\dots\circ G(p_n)=G(q_1)\circ\dots\circ G(q_m)$ and $G(f_1)\circ\dots\circ G(f_s)=G(g_1)\circ\dots\circ G(g_t)$.
\end{center}
Then by composing the left hand side and the right hand side we get the condition
$$p_1,\dots ,p_n,f_1,\dots ,f_s\sim_{\mathcal{S}}q_1,\dots,q_m,g_1,\dots,g_t.$$
To show $(3)$, assume
$p_1,\dots,p_n\sim_{\hat{\mathcal{S}}}q_1,\dots ,q_m$, then $G(p_1)\circ\dots\circ G(p_n)=G(q_1)\circ\dots\circ G(q_m)$ which in $\mathcal{C}$ implies that  $(G(p_1)\circ\dots\circ G(p_n))\otimes G1=(G(q_1)\circ\dots\circ G(q_m))\otimes G1$ and by the tensor property of $G$ and functoriality we obtain $G(p_1\otimes 1)\circ\dots\circ G(p_n\otimes 1)=G(q_1\otimes 1)\circ\dots\circ G(q_m\otimes 1)$, which means $p_1\otimes 1,\dots,p_n\otimes 1\sim_{\mathcal{S}}q_1\otimes 1,\dots ,q_m\otimes 1.$
In the same way $\vec{p}\sim_{\mathcal{S}}\vec{q}$ implies $1\otimes\vec{p}\sim_{\mathcal{S}}1\otimes\vec{q}$.\\
To show $(2)$, since $G$ is a functor between paracategories, we have  $G(p_1)\circ\dots G(p_n)=G([\,\vec{p}\,])$ when $[\,\vec{p}\,]\downarrow$ hence $\vec{p}\sim_{\mathcal{S}}[\,\vec{p}\,]$.\\
Now we define the functor $L$ in the following way:
\begin{center}
$L(A)=G(A)$ on objects and $L(\overline{\vec{p}})=G(p_1)\circ\dots\circ G(p_n) $, where $\vec{p}=p_1,\dots , p_n$.
\end{center}
It should be apparent that $F$ is well-defined since when $\vec{p}\sim\vec{q}$ then in particular it is true that $\vec{p}\sim_{\mathcal{S}}\vec{q}$ and this implies $L(\overline{\vec{p}})=L(\overline{\vec{q}})$.\\
We check functoriality:

\begin{align*}
 L(\overline{p_1,\dots ,p_n};\overline{q_1,\dots,q_m})   &= L(\overline{p_1,\dots ,p_n,q_1,\dots,q_m}) \\
                    &= G(p_1)\circ\dots\circ G(p_n)\circ G(q_1)\circ\dots\circ G(q_m) \\
                    &= L(\overline{p_1,\dots ,p_n})\circ L(\overline{q_1,\dots ,q_m} )
\end{align*}

and $$L(\overline{(A)})=L(\overline{1_A})=G(1_A)=1_{GA}.$$

Furthermore $L$ is strict symmetric monoidal:\\
\begin{align*}
 L(\overline{\vec{p}}\hat{\otimes}\overline{\vec{q}})   &= L(\overline{\vec{p}\otimes 1,1\otimes\vec{q}})\\
                    &= G(p_1\otimes 1)\circ\dots\circ G(p_n\otimes 1)\circ G(1\otimes q_1)\circ\dots\circ G(1\otimes q_m)\\
                    &= (G(p_1)\otimes G1)\circ\dots\circ (G(p_n)\otimes G1)\circ (G1\otimes G(q_1))\circ\dots\circ (G1\otimes G(q_m))\\
                    &= (G(p_1)\otimes 1)\circ\dots\circ (G(p_n)\otimes 1)\circ (1\otimes G(q_1))\circ\dots\circ (1\otimes G(q_m))\\
                    &= (G(p_1)\circ\dots\circ G(p_n))\otimes 1)\circ (1\otimes (G(q_1)\circ\dots\circ G(q_m))\\
                    &=(G(p_1)\circ\dots\circ G(p_n))\otimes (G(q_1)\circ\dots\circ G(q_m))\\
                    &=L(\overline{\vec{p}})\otimes L(\overline{\vec{q}})
\end{align*}

Finally, since $G$ is strict symmetric $L(s)=L(\overline{\sigma})=G(\sigma)=\sigma'$ where $\sigma '$ is the symmetry of the category $\mathcal{D}$.

\end{proof}

\section{Partially traced categories and the partial Int construction}

Joyal, Street, and Verity proved in \cite{JSV96} that every (totally) traced
 monoidal category $\cC$ can be faithfully embedded in a compact closed
 category $Int(\cC)$. Here, we give a similar construction for {\em
 partially} traced categories. We call the corresponding construction
 the {\em partial Int construction}, or the $Int^p$ construction for
 short. When $\cC$ is a partially traced category, $Int^p(\cC)$ will be a
 compact closed {\em para}category.

\begin{definition}
\rm Let $(\mathcal{C},\otimes,I,\sigma)$ be a symmetric monoidal category. There is a graph  $Int^{p}(\mathcal{C})$ associated to this category defined in the following way:
\begin{itemize}
\item objects: are a pair of object $(A^+,A^-)$ of the category $\mathcal{C}$.
\item arrows: $f^{Int^p}:(A^+,A^-)\rightarrow (B^+,B^-)$ are arrows of type $f:A^+\otimes B^-\rightarrow B^+\otimes A^-$ in the category $\mathcal{C}$.
\end{itemize}
\end{definition}
When it is clear from the context we drop the symbol $Int^p$ on the arrows of $Int^p(\mathcal{C})$.

We want to define a partial composition on this graph. For that purpose, consider the following natural transformation,
uniquely induced by the symmetric monoidal structure, for $n\geq 0$:
\begin{center}
$\gamma_n:A_1\otimes A_2\otimes\dots A_{n-1}\otimes A_{n}\rightarrow A_n\otimes A_{n-1}\dots A_2\otimes A_1.$
\end{center}
Also, given a path $\vec{p}=p_1,\dots,p_m\in \mathcal{P}(Int^{p}(\cC))$, using graphical language of symmetric monoidal
categories, we shall define an arrow $\epsilon(\vec{p})\in \mathcal{C}$ in the following way:
if $\vec{p}=p_1,\dots,p_m$ then $\epsilon(\vec{p})$ pictorially is equal to:

For $m=1$ arrow:\\

\[\xymatrix@!0{
&*{}\ar@{-}[rd]&*{}\ar@{-}[r]&*{}\ar@{-}[rd]&*{}\\
&*{}\ar@{-}[ru]&*{}\ar@{}[rd]|{\fbox{\rule[-.6cm]{0cm}{1cm}$p_1$}}&*{}\ar@{-}[ru]&*{}&*{}\\
&*{}\ar@{-}[r]&*{}&*{}\ar@{-}[r]&*{}
}\]

For $m=2$ arrows:\\
\[\xymatrix@!0{
&*{}\ar@{-}[rd]&*{}\ar@{-}[r]&*{}\ar@{-}[rd]&*{}\ar@{-}[r]&*{}\ar@{-}[rd]&*{}\\
&*{}\ar@{-}[ru]&*{}\ar@{}[rd]|{\fbox{\rule[-.6cm]{0cm}{1cm}$p_1$}}&*{}\ar@{-}[ru]&*{}\ar@{}[rd]|{\fbox{\rule[-.6cm]{0cm}{1cm}$p_2$}}&*{}\ar@{-}[ru]&*{}\\
&*{}\ar@{-}[r]&*{}&*{}\ar@{-}[r]&*{}&*{}\ar@{-}[r]&*{}
}\]

For $m=3$ arrows:\\

\[\xymatrix@!0{
&*{}&*{}\ar@{-}[rd]&*{}\ar@{-}[r]&*{}\ar@{-}[rd]&*{}\ar@{-}[r]&*{}\ar@{-}[rd]&*{}&*{}\\
&*{}\ar@{-}[rd]&*{}\ar@{-}[ru]&*{}\ar@{-}[rd]&*{}\ar@{-}[ru]&*{}\ar@{-}[rd]&*{}\ar@{-}[ru]&*{}\ar@{-}[rd]&*{}\\
&*{}\ar@{-}[ru]&*{}&*{}\ar@{-}[ru]&*{}&*{}\ar@{-}[ru]&*{}&*{}\ar@{-}[ru]&*{}\\
&*{}\ar@{-}[r]&*{}\ar@{}[ru]|{\fbox{\rule[-.6cm]{0cm}{1cm}$p_1$}}&*{}\ar@{-}[r]&*{}\ar@{}[ru]|{\fbox{\rule[-.6cm]{0cm}{1cm}$p_2$}}&*{}\ar@{-}[r]&*{}\ar@{}[ru]|{\fbox{\rule[-.6cm]{0cm}{1cm}$p_3$}}&*{}\ar@{-}[r]&*{}
}\]
and so on.

In order to get $\epsilon(p_1,\dots,p_m)$ we form a pyramid of $m-1$ layers of symmetries.
\begin{definition}\label{DEFINITION INT PARACATEGORY}
\rm Let $(\mathcal{C},\otimes,I,\Tr,s)$ be a symmetric monoidal partially traced category. We turn the graph $Int^p(C)$ into a paracategory
by defining a partial composition operation $[\vec p\,]$. First of all, when it is applied to an empty path it will be defined as the identity arrow i.e., $[(A^+,A^-)]=1_{A^+\otimes A^-}.$ On path of length one it will be by definition the same arrow, i.e., $[f]=\Tr^{U}(\epsilon(f)(1_{X_1^+}\otimes \sigma_{X^{-}_2,X^{-}_1}))=f$ with $U=X^{-}_1.$\\
Suppose that we have a family of arrows $f_i^{Int^p}:(X_i^+,X_i^-)\rightarrow (X_{i+1}^+,X_{i+1}^-)$ with $1\leq i\leq n$ ($n\geq 2$) in the graph $Int^p(\mathcal{C})$ such that $dom(f_{i+1})=cod(f_i)$ and $1\leq i\leq n-1$.
Let $U=X^-_n\otimes X^-_{n-1}\otimes\dots\otimes X_3^-\otimes X_{2}^-$ and the permutation $\gamma$
$$X^-_n\otimes X^-_{n-1}\otimes\dots\otimes X_3^-\otimes X_{2}^-\stackrel{\gamma}\longrightarrow X^-_2\otimes X^-_{3}\otimes\dots \otimes X_{n-1}^-\otimes X_{n}^-.$$

We define the following operation for $n\geq 2$:
\begin{center}
$[f_1,\dots,f_n]\funnels \Tr^{U}(\epsilon(f_1,\dots,f_n)(1_{X_1^+}\otimes 1_{X_{n+1}^-}\otimes \gamma_{n-1})).$
\end{center}
\end{definition}

Note that therefore, $[f_1,\ldots,f_n]$ is defined if and only if
$$\epsilon(f_1,\dots,f_n)(1_{X_1^+}\otimes 1_{X_{n+1}^-}\otimes \gamma)\in \Trc^U.$$

We show now that the operation $[-]$ satisfies the axioms required in order to be a paracategory.

\begin{lemma}
Let $(\mathcal{C},\otimes,I,\Tr,s)$ be a strict symmetric monoidal partially traced category. The operation defined in Definition~\ref{DEFINITION INT PARACATEGORY} determines a paracategory $(Int^p(\mathcal{C}),[-])$.
\end{lemma}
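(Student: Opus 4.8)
The plan is to verify the three paracategory axioms (a), (b), (c) of Definition~\ref{DEF PARACATEGORY} for the operation $[-]$ of Definition~\ref{DEFINITION INT PARACATEGORY}. Axiom (a) is immediate: $[(A^+,A^-)]=1_{A^+\otimes A^-}$ is defined outright, so $[-]$ is total on empty paths. For axiom (b) I would unwind the single-box expression $\Tr^{X_1^-}(\epsilon(f)(1_{X_1^+}\otimes\sigma_{X_2^-,X_1^-}))$ and show, using the \textbf{Yanking} and \textbf{Dinaturality} axioms of the partial trace together with naturality of $\sigma$, that the traced-out wire simply yanks straight; in particular the expression is always defined and equals $f$.

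The substance of the lemma is axiom (c): for paths $\vec r:A\to B$, $\vec f:B\to C$, $\vec s:C\to D$ with $[\vec f\,]\downarrow$, one must show $[\vec r,[\vec f\,],\vec s\,]\funnels[\vec r,\vec f,\vec s\,]$. My approach is to read both sides off the single plaited network $\epsilon(\vec r,\vec f,\vec s\,)$. The right-hand side is obtained by a single trace over the whole feedback bundle $W$ of negative wires of all intermediate objects; I would split this bundle as $W\cong W_{\vec r,\vec s}\otimes V$, where $V$ collects exactly the negative wires internal to the sub-path $\vec f$ and $W_{\vec r,\vec s}$ collects the remaining feedback wires. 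The left-hand side first forms $[\vec f\,]$ by tracing the $\epsilon$-diagram of $\vec f$ over $V$, and then traces the resulting one-box path $\vec r,[\vec f\,],\vec s$ over $W_{\vec r,\vec s}$.

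The two descriptions are reconciled by \textbf{Vanishing II} of the partial trace: since $[\vec f\,]\downarrow$ means exactly that the inner trace over $V$ is defined, its conditional form yields $\Tr^{W}(-)\funnels\Tr^{W_{\vec r,\vec s}}(\Tr^{V}(-))$, which is precisely the Kleene equality demanded by (c). To make the hypotheses of Vanishing II literally applicable I would first use \textbf{Naturality}, \textbf{Dinaturality} and \textbf{Superposing} (in its equivalent strength form, together with the permutations $\gamma_{n-1}$ and the symmetries built into $\epsilon$) to route the $V$-wires into the innermost position and to factor the $W_{\vec r,\vec s}$-wires to the outside, so that the network is presented in the shape $\Tr^{W_{\vec r,\vec s}\otimes V}(g)$ with $g$ in the domain required by the axiom. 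Each such routing step is a Kleene- or directed-Kleene equality that preserves definedness, so no trace-class membership is lost along the way.

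I expect the main obstacle to be the bookkeeping of this wire routing: matching the pyramids of symmetries in $\epsilon(\vec r,\vec f,\vec s\,)$ and in $\epsilon(\vec r,[\vec f\,],\vec s\,)$, and the permutations occurring in the two separate trace formulas, against the clean $W_{\vec r,\vec s}\otimes V$ splitting that Vanishing II requires. Because the trace here is only partial, the coherence theorem for traced categories cannot be invoked wholesale to justify the diagram surgery; instead every rearrangement must be reduced to an explicit application of one of the six partial-trace axioms, with definedness tracked at each stage. Once the splitting is set up correctly, axiom (c) follows from a single invocation of Vanishing II, and the remaining verifications are routine coherence calculations in $\mathcal{C}$.
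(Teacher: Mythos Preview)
Your proposal is correct and follows essentially the same strategy as the paper's proof. Both arguments pivot on \textbf{Vanishing II}: the hypothesis $[\vec f\,]\downarrow$ is lifted, via \textbf{Superposing} and \textbf{Naturality}, to the statement that the combined network lies in $\Trc^V$ (the general hypothesis of Vanishing II), after which the equivalence $\alpha\in\Trc^{U\otimes V}\Leftrightarrow\Tr^V(\alpha)\in\Trc^U$ is exactly the Kleene equality $[\vec r,\vec f,\vec s\,]\funnels[\vec r,[\vec f\,],\vec s\,]$. The paper then uses \textbf{Dinaturality} with an explicit permutation $y=\gamma;\tau^{-1};(\gamma'^{-1}\otimes\gamma''^{-1})$ to reorder the traced objects, followed by coherence surgery on the pyramids of symmetries; this is precisely the ``wire-routing bookkeeping'' you anticipate as the main obstacle, and your identification of the required tools (naturality, dinaturality, superposing, tracked step by step rather than by global coherence) matches what the paper actually does.
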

\begin{proof}
Properties $(a)$ and $(b)$ of Definition~\ref{DEF PARACATEGORY} hold by definition. The goal is to prove
$(c)$, i.e., if $[\vec{g}]\downarrow$ then $[\vec{f},[\vec{g}],\vec{h}]\funnels[\vec{f},\vec{g},\vec{h}]$ for every $\vec{f}$ and $\vec{h}$. The value of the trace remains always invariant or follows the variations that the axioms trace dictate.\\
Without loss of generality we are going to represent these paths using graphical language in a concrete situation. Therefore, suppose we have $\vec{f}=f_1,f_2$, $\vec{g}=g_1,g_2,g_3,g_4$ and $\vec{h}=h_1,h_2,h_3$. The most general case follows the same pattern.\\

The fact that $[\vec{g}]\downarrow$ means that the map:

\begin{equation}\label{g equation}
\xymatrix@R=10pt@C=18pt{
&*{}\ar@/_1pc/@{.}[dd]|{V}
\ar@{-}[rdd]&*{}\ar@{-}[rr]&*{}&*{}\ar@{-}[rd]&*{}\ar@{-}[r]&*{}\ar@{-}[rd]&*{}\ar@{-}[r]&*{}\ar@{-}[rd]&*{}\ar@{-}[rr]&*{}&*{}\ar@/^1pc/@{.}[dd]|{V}
\\
&*{}\ar@{-}[r]&*{}\ar@{-}@{-}[r]&*{}\ar@{-}[rd]&*{}\ar@{-}[ru]&*{}\ar@{-}[rd]&*{}\ar@{-}[ru]&*{}\ar@{-}[rd]&*{}\ar@{-}[ru]&*{}\ar@{-}[rd]&*{}\ar@{-}[r]&*{}\\
&*{}\ar@{-}[ruu]&*{}\ar@{-}[rd]&*{}\ar@{-}[ru]&*{}\ar@{-}[rd]&*{}\ar@{-}[ru]&*{}\ar@{-}[rd]&*{}\ar@{-}[ru]&*{}\ar@{-}[rd]&*{}\ar@{-}[ru]&*{}\ar@{-}[rd]&*{}\\
&*{}\ar@{-}[r]&*{}\ar@{-}[ru]&*{}&*{}\ar@{-}[ru]&*{}&*{}\ar@{-}[ru]&*{}&*{}\ar@{-}[ru]&*{}&*{}\ar@{-}[ru]&*{}\\
&*{}\ar@{-}[rr]&*{}\ar@{-}[r]&*{}\ar@{-}[ru]|{\fbox{\rule[-.6cm]{0cm}{1cm}$g_1$}}&*{}\ar@{-}[r]&*{}\ar@{-}[ru]|{\fbox{\rule[-.6cm]{0cm}{1cm}$g_2$}}&*{}\ar@{-}[r]&*{}\ar@{-}[ru]|{\fbox{\rule[-.6cm]{0cm}{1cm}$g_3$}}&*{}\ar@{-}[r]&*{}\ar@{-}[ru]|{\fbox{\rule[-.6cm]{0cm}{1cm}$g_4$}}&*{}\ar@{-}[r]&*{}
}
\end{equation}

(without the dotted lines) is in the trace class $\Trc^V$. We symbolize that it is in the trace class of this type with these dotted lines. Moreover, $[\vec{f},[\vec{g}],\vec{h}]\downarrow$ means that:

\begin{equation}\label{f g h equation}
\xymatrix@R=10pt@C=18pt{
\ar@/_1pc/@{.}[dddd]|{U}\ar@{-}[rdddd]&*{}\ar@{-}[rrrr]&*{}&*{} &*{}
&*{}\ar@{-}[rd]&*{}\ar@{-}[r] &*{}\ar@{-}[rd]&*{}\ar@{-}[r]
&*{}\ar@{-}[rd]&*{}\ar@{-}[rrrrr]  &*{}  &*{}  &*{}  &*{}  &*{}
\ar@/^1pc/@{.}[dddd]|{U}&*{}\\
\ar@{-}[rdd]&*{}\ar@{-}[rrr]&*{}&*{}
&*{}\ar@{-}[rd]&*{}\ar@{-}[ru]&*{}\ar@{-}[rd]&*{}\ar@{-}[ru]&*{}\ar@{-}[rd]&*{}\ar@{-}[ru]&*{}\ar@{-}[rd]&*{}\ar@{-}[rrrr]
 &*{}  &*{}  &*{}  &*{}  &*{}\\
\ar@{-}[r]&*{}\ar@{-}[rr]&*{}&*{}\ar@{-}[rd]&*{}\ar@{-}[ru]&*{}\ar@{-}[rd]&*{}\ar@{-}[ru]&*{}\ar@{-}[rd]&*{}\ar@{-}[ru]&*{}\ar@{-}[rd]&*{}\ar@{-}[ru]&*{}\ar@{-}[rd]&*{}\ar@{-}[rrr]
 &*{}  &*{}  &*{}  &*{}\\
\ar@{-}[ruu]&*{}\ar@{-}[r]&*{}\ar@{-}[rd]&*{}\ar@{-}[ru]&*{}\ar@{-}[rd]&*{}\ar@{-}[ru]&*{}\ar@{-}[rd]&*{}\ar@{-}[ru]&*{}\ar@{-}[rd]&*{}\ar@{-}[ru]&*{}\ar@{-}[rd]&*{}\ar@{-}[ru]&*{}\ar@{-}[rd]&*{}\ar@{-}[rr]&*{}&*{}&*{}\\
\ar@{-}[ruuuu]&*{}\ar@{-}[rd]&*{}\ar@{-}[ru]&*{}\ar@{-}[rd]&*{}\ar@{-}[ru]&*{}\ar@{-}[rd]&*{}\ar@{-}[ru]&*{}\ar@{-}[rd]&*{}\ar@{-}[ru]&*{}\ar@{-}[rd]&*{}\ar@{-}[ru]&*{}\ar@{-}[rd]&*{}\ar@{-}[ru]&*{}\ar@{-}[rd]&*{}\ar@{-}[r]&*{}&*{}\\
\ar@{-}[r]  &*{}  \ar@{-}[ru]&*{}   &*{}\ar@{-}[ru]&*{}   &*{}\ar@{-}[ru]&*{}
&*{}\ar@{-}[ru]&*{}  &*{}\ar@{-}[ru]&*{}    &*{}\ar@{-}[ru]&*{} &*{}\ar@{-}[ru]
&*{}\ar@{-}[r]&*{} &*{}\\
\ar@{-}[rr]  &*{} &*{}\ar@{}[ru]|{\fbox{\rule[-.6cm]{0cm}{1cm}$f_1$}} &*{}\ar@{-}[r]
 &*{} \ar@{}[ru]|{\fbox{\rule[-.6cm]{0cm}{1cm}$f_2$}}
&*{}\ar@{-}[r]&*{}\ar@{}[ru]|{\fbox{\rule[-.6cm]{0cm}{1cm}$[\vec{g}]$}}&*{}\ar@{-}[r]
      &*{}\ar@{}[ru]|{\fbox{\rule[-.6cm]{0cm}{1cm}$h_1$}}&*{}\ar@{-}[r]
&*{}\ar@{}[ru]|{\fbox{\rule[-.6cm]{0cm}{1cm}$h_2$}}&*{}\ar@{-}[r]&*{}\ar@{}[ru]|{\fbox{\rule[-.6cm]{0cm}{1cm}$h_3$}}
&*{} \ar@{-}[rr]&*{}  &*{}&*{}
}
\end{equation}

without the dotted lines is in trace class $\Trc^U$. We want to obtain $[\vec{f},\vec{g},\vec{h}]$. So, for that purpose, we start by replacing the first diagram~(\ref{g equation}) traced on $V$ into the second diagram~(\ref{f g h equation}). Then we apply superposition, and the naturality axiom and we get the following diagram:

\begin{equation}\label{f g h UV equation}
\xymatrix@R=10pt@C=16pt{
\ar@/_1pc/@{.}[dd]|{V}\ar@{-}[rrrrrr]&*{}&*{}&*{}&*{}&*{}&*{}\ar@{-}[rddddd]&*{}\ar@{-}[rrrrrrrrrr]&*{}&*{}&*{}&*{}&*{}&*{}&*{}&*{}&*{}&*{}\ar@{-}[rddd]&*{}\ar@{-}[rrrrrrr]&*{}&*{}&*{}&*{}&*{}&*{}&*{}\ar@/^1pc/@{.}[dd]|{V}&*{}\\
\ar@{-}[rrrrrr]&*{}&*{}&*{}&*{}&*{}&*{}\ar@{-}[rddddd]&*{}\ar@{-}[rrrr]&*{}&*{}&*{}&*{}\ar@{-}[rd]&*{}\ar@{-}[rrrrr]&*{}&*{}&*{}&*{}&*{}\ar@{-}[rddd]&*{}\ar@{-}[rrrrrrr]&*{}&*{}&*{}&*{}&*{}&*{}&*{}&*{}\\
\ar@{-}[rrrrrr]&*{}&*{}&*{}&*{}&*{}&*{}\ar@{-}[rddddd]&*{}\ar@{-}[rrrr]&*{}&*{}&*{}&*{}\ar@{-}[ru]&*{}\ar@{-}[rrrrr]&*{}&*{}&*{}&*{}&*{}\ar@{-}[rddd]&*{}\ar@{-}[rrrrrrr]&*{}&*{}&*{}&*{}&*{}&*{}&*{}\\
\ar@/_1pc/@{.}[dddd]|{U}\ar@{-}[rdddd]&*{}\ar@{-}[rrrr]&*{}&*{} &*{}
&*{}\ar@{-}[rd]&*{}\ar@{-}[ruuu] &*{}
\ar@{-}[rrrr]&*{}&*{}&*{}&*{}\ar@{-}[rd]&*{}\ar@{-}[rrrrr]&*{}&*{}&*{}&*{}&*{}\ar@{-}[rddd]&*{}\ar@{-}[rd]&*{}\ar@{-}[r]&*{}\ar@{-}[rd]&*{}\ar@{-}[rrrr]&*{}&*{}&*{}&*{}\ar@/^1pc/@{.}[dddd]|{U}&*{}\\
\ar@{-}[rdd]&*{}\ar@{-}[rrr]&*{}&*{}
&*{}\ar@{-}[rd]&*{}\ar@{-}[ru]&*{}\ar@{-}[ruuu]&*{}\ar@{-}[rrrr]&*{}&*{}&*{}&*{}\ar@{-}[ru]&*{}\ar@{-}[rrrrr]&*{}&*{}&*{}&*{}&*{}\ar@{-}[rddd]&*{}\ar@{-}[ru]&*{}\ar@{-}[rd]&*{}\ar@{-}[ru]&*{}\ar@{-}[rd]&*{}\ar@{-}[rrr]&*{}&*{}&*{}&*{}\\
\ar@{-}[r]&*{}\ar@{-}[rr]&*{}&*{}\ar@{-}[rd]&*{}\ar@{-}[ru]&*{}\ar@{-}[rd]&*{}\ar@{-}[ruuu]&*{}
\ar@{-}[rdd]&*{}\ar@{-}[rr]&*{}&*{}\ar@{-}[rd]&*{}\ar@{-}[r]&*{}\ar@{-}[rd]&*{}\ar@{-}[r]&*{}\ar@{-}[rd]&*{}\ar@{-}[rr]&*{}&*{}\ar@{-}[ruuuuu]&*{}\ar@{-}[rd]&*{}\ar@{-}[ru]&*{}\ar@{-}[rd]&*{}\ar@{-}[ru]&*{}\ar@{-}[rd]&*{}\ar@{-}[rr]&*{}&*{}&*{}\\
\ar@{-}[ruu]&*{}\ar@{-}[r]&*{}\ar@{-}[rd]&*{}\ar@{-}[ru]&*{}\ar@{-}[rd]&*{}\ar@{-}[ru]&*{}\ar@{-}[ruuu]&*{}\ar@{-}[r]&*{}\ar@{-}@{-}[r]&*{}\ar@{-}[rd]&*{}\ar@{-}[ru]&*{}\ar@{-}[rd]&*{}\ar@{-}[ru]&*{}\ar@{-}[rd]&*{}\ar@{-}[ru]&*{}\ar@{-}[rd]&*{}\ar@{-}[r]&*{}\ar@{-}[ruuuuu]&*{}\ar@{-}[ru]&*{}\ar@{-}[rd]&*{}\ar@{-}[ru]&*{}\ar@{-}[rd]&*{}\ar@{-}[ru]&*{}\ar@{-}[rd]&*{}\ar@{-}[r]&*{}&*{}\\
\ar@{-}[ruuuu]&*{}\ar@{-}[rd]&*{}\ar@{-}[ru]&*{}\ar@{-}[rd]&*{}\ar@{-}[ru]&*{}\ar@{-}[rd]&*{}\ar@{-}[ruuu]&*{}
\ar@{-}[ruu]&*{}\ar@{-}[rd]&*{}\ar@{-}[ru]&*{}\ar@{-}[rd]&*{}\ar@{-}[ru]&*{}\ar@{-}[rd]&*{}\ar@{-}[ru]&*{}\ar@{-}[rd]&*{}\ar@{-}[ru]&*{}\ar@{-}[rd]&*{}\ar@{-}[ruuuuu]&*{}\ar@{-}[rd]&*{}\ar@{-}[ru]&*{}\ar@{-}[rd]&*{}\ar@{-}[ru]&*{}\ar@{-}[rd]&*{}\ar@{-}[ru]&*{}\ar@{-}[rd]&*{}&*{}\\
\ar@{-}[r]  &*{}  \ar@{-}[ru]&*{}   &*{}\ar@{-}[ru]&*{}
&*{}\ar@{-}[ru]&*{}\ar@{-}[r]   &*{}
\ar@{-}[r]&*{}\ar@{-}[ru]&*{}&*{}\ar@{-}[ru]&*{}&*{}\ar@{-}[ru]&*{}&*{}\ar@{-}[ru]&*{}&*{}\ar@{-}[ru]&*{}\ar@{-}[r]&*{}\ar@{-}[ru]&*{}&*{}\ar@{-}[ru]&*{}&*{}\ar@{-}[ru]&*{}&*{}\ar@{-}[ru]&*{}&*{}\\
\ar@{-}[rr]  &*{} &*{}\ar@{}[ru]|{\fbox{\rule[-.6cm]{0cm}{1cm}$f_1$}} &*{}\ar@{-}[r]
 &*{} \ar@{}[ru]|{\fbox{\rule[-.6cm]{0cm}{1cm}$f_2$}} &*{}\ar@{-}[r]&*{}
\ar@{-}[r]&*{}\ar@{-}[rr]&*{}\ar@{-}[r]&*{}\ar@{-}[ru]|{\fbox{\rule[-.6cm]{0cm}{1cm}$g_1$}}&*{}\ar@{-}[r]&*{}\ar@{-}[ru]|{\fbox{\rule[-.6cm]{0cm}{1cm}$g_2$}}&*{}\ar@{-}[r]&*{}\ar@{-}[ru]|{\fbox{\rule[-.6cm]{0cm}{1cm}$g_3$}}&*{}\ar@{-}[r]&*{}\ar@{-}[ru]|{\fbox{\rule[-.6cm]{0cm}{1cm}$g_4$}}&*{}\ar@{-}[r]&*{}\ar@{-}[r]&*{}\ar@{-}[r]&*{}\ar@{-}[ru]|{\fbox{\rule[-.6cm]{0cm}{1cm}$h_1$}}&*{}\ar@{-}[r]&*{}\ar@{-}[ru]|{\fbox{\rule[-.6cm]{0cm}{1cm}$h_2$}}&*{}\ar@{-}[r]&*{}\ar@{-}[ru]|{\fbox{\rule[-.6cm]{0cm}{1cm}$h_3$}}&*{}\ar@{-}[r]&*{}&*{}
}
\end{equation}

Let us call this map $\alpha$ (without the dotted lines). Notice that since $[\vec{g}]\downarrow$ and after applying superposing and the naturality axioms we have that $\alpha\in \Trc^V$. This turns out to be the general condition that we need in order to use the Vanishing II axiom, i.e.,
if we consider $\alpha\in \Trc^V$ as a general hypothesis then the equivalence
$$\alpha\in\Trc^{U\otimes V}\,\, \Leftrightarrow \,\,\,\Tr^V(\alpha)\in \Trc^{U}$$
is precisely the condition required to apply the Vanishing II axiom in which the condition $[\vec{f},[\vec{g}],\vec{h}]$ translates into $\Tr^V(\alpha)\in \Trc^U$ and $[\vec{f},\vec{g},\vec{h}]$ into $\alpha\in\Trc^{U\otimes V}$.  Thus we can replace the previous diagram by the next one:

\[
\xymatrix@R=10pt@C=16pt{
\ar@/_1pc/@{.}[ddddddd]|{U\otimes
V}\ar@{-}[rrrrrr]&*{}&*{}&*{}&*{}&*{}&*{}\ar@{-}[rddddd]&*{}\ar@{-}[rrrrrrrrrr]&*{}&*{}&*{}&*{}&*{}&*{}&*{}&*{}&*{}&*{}\ar@{-}[rddd]&*{}\ar@{-}[rrrrrrr]&*{}&*{}&*{}&*{}&*{}&*{}&*{}\ar@/^1pc/@{.}[ddddddd]|{U\otimes
V}&*{}\\
\ar@{-}[rrrrrr]&*{}&*{}&*{}&*{}&*{}&*{}\ar@{-}[rddddd]&*{}\ar@{-}[rrrr]&*{}&*{}&*{}&*{}\ar@{-}[rd]&*{}\ar@{-}[rrrrr]&*{}&*{}&*{}&*{}&*{}\ar@{-}[rddd]&*{}\ar@{-}[rrrrrrr]&*{}&*{}&*{}&*{}&*{}&*{}&*{}&*{}\\
\ar@{-}[rrrrrr]&*{}&*{}&*{}&*{}&*{}&*{}\ar@{-}[rddddd]&*{}\ar@{-}[rrrr]&*{}&*{}&*{}&*{}\ar@{-}[ru]&*{}\ar@{-}[rrrrr]&*{}&*{}&*{}&*{}&*{}\ar@{-}[rddd]&*{}\ar@{-}[rrrrrrr]&*{}&*{}&*{}&*{}&*{}&*{}&*{}\\
\ar@{-}[rdddd]&*{}\ar@{-}[rrrr]&*{}&*{} &*{} &*{}\ar@{-}[rd]&*{}\ar@{-}[ruuu] &*{}
\ar@{-}[rrrr]&*{}&*{}&*{}&*{}\ar@{-}[rd]&*{}\ar@{-}[rrrrr]&*{}&*{}&*{}&*{}&*{}\ar@{-}[rddd]&*{}\ar@{-}[rd]&*{}\ar@{-}[r]&*{}\ar@{-}[rd]&*{}\ar@{-}[rrrr]&*{}&*{}&*{}&*{}&*{}\\
\ar@{-}[rdd]&*{}\ar@{-}[rrr]&*{}&*{}
&*{}\ar@{-}[rd]&*{}\ar@{-}[ru]&*{}\ar@{-}[ruuu]&*{}\ar@{-}[rrrr]&*{}&*{}&*{}&*{}\ar@{-}[ru]&*{}\ar@{-}[rrrrr]&*{}&*{}&*{}&*{}&*{}\ar@{-}[rddd]&*{}\ar@{-}[ru]&*{}\ar@{-}[rd]&*{}\ar@{-}[ru]&*{}\ar@{-}[rd]&*{}\ar@{-}[rrr]&*{}&*{}&*{}&*{}\\
\ar@{-}[r]&*{}\ar@{-}[rr]&*{}&*{}\ar@{-}[rd]&*{}\ar@{-}[ru]&*{}\ar@{-}[rd]&*{}\ar@{-}[ruuu]&*{}
\ar@{-}[rdd]&*{}\ar@{-}[rr]&*{}&*{}\ar@{-}[rd]&*{}\ar@{-}[r]&*{}\ar@{-}[rd]&*{}\ar@{-}[r]&*{}\ar@{-}[rd]&*{}\ar@{-}[rr]&*{}&*{}\ar@{-}[ruuuuu]&*{}\ar@{-}[rd]&*{}\ar@{-}[ru]&*{}\ar@{-}[rd]&*{}\ar@{-}[ru]&*{}\ar@{-}[rd]&*{}\ar@{-}[rr]&*{}&*{}&*{}\\
\ar@{-}[ruu]&*{}\ar@{-}[r]&*{}\ar@{-}[rd]&*{}\ar@{-}[ru]&*{}\ar@{-}[rd]&*{}\ar@{-}[ru]&*{}\ar@{-}[ruuu]&*{}\ar@{-}[r]&*{}\ar@{-}@{-}[r]&*{}\ar@{-}[rd]&*{}\ar@{-}[ru]&*{}\ar@{-}[rd]&*{}\ar@{-}[ru]&*{}\ar@{-}[rd]&*{}\ar@{-}[ru]&*{}\ar@{-}[rd]&*{}\ar@{-}[r]&*{}\ar@{-}[ruuuuu]&*{}\ar@{-}[ru]&*{}\ar@{-}[rd]&*{}\ar@{-}[ru]&*{}\ar@{-}[rd]&*{}\ar@{-}[ru]&*{}\ar@{-}[rd]&*{}\ar@{-}[r]&*{}&*{}\\
\ar@{-}[ruuuu]&*{}\ar@{-}[rd]&*{}\ar@{-}[ru]&*{}\ar@{-}[rd]&*{}\ar@{-}[ru]&*{}\ar@{-}[rd]&*{}\ar@{-}[ruuu]&*{}
\ar@{-}[ruu]&*{}\ar@{-}[rd]&*{}\ar@{-}[ru]&*{}\ar@{-}[rd]&*{}\ar@{-}[ru]&*{}\ar@{-}[rd]&*{}\ar@{-}[ru]&*{}\ar@{-}[rd]&*{}\ar@{-}[ru]&*{}\ar@{-}[rd]&*{}\ar@{-}[ruuuuu]&*{}\ar@{-}[rd]&*{}\ar@{-}[ru]&*{}\ar@{-}[rd]&*{}\ar@{-}[ru]&*{}\ar@{-}[rd]&*{}\ar@{-}[ru]&*{}\ar@{-}[rd]&*{}&*{}\\
\ar@{-}[r]  &*{}  \ar@{-}[ru]&*{}   &*{}\ar@{-}[ru]&*{}
&*{}\ar@{-}[ru]&*{}\ar@{-}[r]   &*{}
\ar@{-}[r]&*{}\ar@{-}[ru]&*{}&*{}\ar@{-}[ru]&*{}&*{}\ar@{-}[ru]&*{}&*{}\ar@{-}[ru]&*{}&*{}\ar@{-}[ru]&*{}\ar@{-}[r]&*{}\ar@{-}[ru]&*{}&*{}\ar@{-}[ru]&*{}&*{}\ar@{-}[ru]&*{}&*{}\ar@{-}[ru]&*{}&*{}\\
\ar@{-}[rr]  &*{} &*{}\ar@{}[ru]|{\fbox{\rule[-.6cm]{0cm}{1cm}$f_1$}} &*{}\ar@{-}[r]
 &*{} \ar@{}[ru]|{\fbox{\rule[-.6cm]{0cm}{1cm}$f_2$}} &*{}\ar@{-}[r]&*{}
\ar@{-}[r]&*{}\ar@{-}[rr]&*{}\ar@{-}[r]&*{}\ar@{-}[ru]|{\fbox{\rule[-.6cm]{0cm}{1cm}$g_1$}}&*{}\ar@{-}[r]&*{}\ar@{-}[ru]|{\fbox{\rule[-.6cm]{0cm}{1cm}$g_2$}}&*{}\ar@{-}[r]&*{}\ar@{-}[ru]|{\fbox{\rule[-.6cm]{0cm}{1cm}$g_3$}}&*{}\ar@{-}[r]&*{}\ar@{-}[ru]|{\fbox{\rule[-.6cm]{0cm}{1cm}$g_4$}}&*{}\ar@{-}[r]&*{}\ar@{-}[r]&*{}\ar@{-}[r]&*{}\ar@{-}[ru]|{\fbox{\rule[-.6cm]{0cm}{1cm}$h_1$}}&*{}\ar@{-}[r]&*{}\ar@{-}[ru]|{\fbox{\rule[-.6cm]{0cm}{1cm}$h_2$}}&*{}\ar@{-}[r]&*{}\ar@{-}[ru]|{\fbox{\rule[-.6cm]{0cm}{1cm}$h_3$}}&*{}\ar@{-}[r]&*{}&*{}
}
\]

By coherence we can replace this part of the diagram:

\[\xymatrix@R=10pt@C=16pt{
\ar@{-}[rddddd]&*{}\ar@{-}[r]&*{}\\
\ar@{-}[rddddd]&*{}\ar@{-}[r]&*{}\\
\ar@{-}[rddddd]&*{}\ar@{-}[r]&*{}\\
\ar@{-}[ruuu]&*{}\ar@{-}[r]&*{}\\
\ar@{-}[ruuu]&*{}\ar@{-}[r]&*{}\\
\ar@{-}[ruuu]&*{}\ar@{-}[rdd]&*{}\\
\ar@{-}[ruuu]&*{}\ar@{-}[r]&*{}\\
\ar@{-}[ruuu]&*{}\ar@{-}[ruu]&*{}\\
\ar@{-}[r]&*{}\ar@{-}[r]&*{}\\
\ar@{-}[r]&*{}\ar@{-}[r]&*{}
}\]

by this one\\

\[\xymatrix@R=10pt@C=16pt{
\ar@{-}[rdd]&*{}\ar@{-}[rddddd]&*{}\\
\ar@{-}[r]&*{}\ar@{-}[rddddd]&*{}\\
\ar@{-}[ruu]&*{}\ar@{-}[rddddd]&*{}\\
\ar@{-}[r]&*{}\ar@{-}[ruuu]&*{}\\
\ar@{-}[r]&*{}\ar@{-}[ruuu]&*{}\\
\ar@{-}[r]&*{}\ar@{-}[ruuu]&*{}\\
\ar@{-}[r]&*{}\ar@{-}[ruuu]&*{}\\
\ar@{-}[r]&*{}\ar@{-}[ruuu]&*{}\\
\ar@{-}[r]&*{}\ar@{-}[r]&*{}\\
\ar@{-}[r]&*{}\ar@{-}[r]&*{}
}\]

So, by this substitution and functoriality we get:\\

\[
\xymatrix@R=10pt@C=16pt{
\ar@{-}[rdd]&*{}\ar@{-}[rrrrr]&*{}&*{}&*{}&*{}&*{}\ar@{-}[rddddd]&*{}\ar@{-}[rrrrrrrrrr]&*{}&*{}&*{}&*{}&*{}&*{}&*{}&*{}&*{}&*{}\ar@{-}[rddd]&*{}\ar@{-}[rrrrrrr]&*{}&*{}&*{}&*{}&*{}&*{}&*{}&*{}\\
\ar@{-}[r]&*{}\ar@{-}[rrrrr]&*{}&*{}&*{}&*{}&*{}\ar@{-}[rddddd]&*{}\ar@{-}[rrrr]&*{}&*{}&*{}&*{}\ar@{-}[rd]&*{}\ar@{-}[rrrrr]&*{}&*{}&*{}&*{}&*{}\ar@{-}[rddd]&*{}\ar@{-}[rrrrrrr]&*{}&*{}&*{}&*{}&*{}&*{}&*{}&*{}\\
\ar@{-}[ruu]&*{}\ar@{-}[rrrrr]&*{}&*{}&*{}&*{}&*{}\ar@{-}[rddddd]&*{}\ar@{-}[rrrr]&*{}&*{}&*{}&*{}\ar@{-}[ru]&*{}\ar@{-}[rrrrr]&*{}&*{}&*{}&*{}&*{}\ar@{-}[rddd]&*{}\ar@{-}[rrrrrrr]&*{}&*{}&*{}&*{}&*{}&*{}&*{}\\
\ar@{-}[rdddd]&*{}\ar@{-}[rrrr]&*{}&*{} &*{} &*{}\ar@{-}[rd]&*{}\ar@{-}[ruuu] &*{}
\ar@{-}[rrrr]&*{}&*{}&*{}&*{}\ar@{-}[rd]&*{}\ar@{-}[rrrrr]&*{}&*{}&*{}&*{}&*{}\ar@{-}[rddd]&*{}\ar@{-}[rd]&*{}\ar@{-}[r]&*{}\ar@{-}[rd]&*{}\ar@{-}[rrrr]&*{}&*{}&*{}&*{}&*{}\\
\ar@{-}[rdd]&*{}\ar@{-}[rrr]&*{}&*{}
&*{}\ar@{-}[rd]&*{}\ar@{-}[ru]&*{}\ar@{-}[ruuu]&*{}\ar@{-}[rrrr]&*{}&*{}&*{}&*{}\ar@{-}[ru]&*{}\ar@{-}[rrrrr]&*{}&*{}&*{}&*{}&*{}\ar@{-}[rddd]&*{}\ar@{-}[ru]&*{}\ar@{-}[rd]&*{}\ar@{-}[ru]&*{}\ar@{-}[rd]&*{}\ar@{-}[rrr]&*{}&*{}&*{}&*{}\\
\ar@{-}[r]&*{}\ar@{-}[rr]&*{}&*{}\ar@{-}[rd]&*{}\ar@{-}[ru]&*{}\ar@{-}[rd]&*{}\ar@{-}[ruuu]&*{}
\ar@{-}[r]&*{}\ar@{-}[rr]&*{}&*{}\ar@{-}[rd]&*{}\ar@{-}[r]&*{}\ar@{-}[rd]&*{}\ar@{-}[r]&*{}\ar@{-}[rd]&*{}\ar@{-}[rr]&*{}&*{}\ar@{-}[ruuuuu]&*{}\ar@{-}[rd]&*{}\ar@{-}[ru]&*{}\ar@{-}[rd]&*{}\ar@{-}[ru]&*{}\ar@{-}[rd]&*{}\ar@{-}[rr]&*{}&*{}&*{}\\
\ar@{-}[ruu]&*{}\ar@{-}[r]&*{}\ar@{-}[rd]&*{}\ar@{-}[ru]&*{}\ar@{-}[rd]&*{}\ar@{-}[ru]&*{}\ar@{-}[ruuu]&*{}\ar@{-}[r]&*{}\ar@{-}@{-}[r]&*{}\ar@{-}[rd]&*{}\ar@{-}[ru]&*{}\ar@{-}[rd]&*{}\ar@{-}[ru]&*{}\ar@{-}[rd]&*{}\ar@{-}[ru]&*{}\ar@{-}[rd]&*{}\ar@{-}[r]&*{}\ar@{-}[ruuuuu]&*{}\ar@{-}[ru]&*{}\ar@{-}[rd]&*{}\ar@{-}[ru]&*{}\ar@{-}[rd]&*{}\ar@{-}[ru]&*{}\ar@{-}[rd]&*{}\ar@{-}[r]&*{}&*{}\\
\ar@{-}[ruuuu]&*{}\ar@{-}[rd]&*{}\ar@{-}[ru]&*{}\ar@{-}[rd]&*{}\ar@{-}[ru]&*{}\ar@{-}[rd]&*{}\ar@{-}[ruuu]&*{}
\ar@{-}[r]&*{}\ar@{-}[rd]&*{}\ar@{-}[ru]&*{}\ar@{-}[rd]&*{}\ar@{-}[ru]&*{}\ar@{-}[rd]&*{}\ar@{-}[ru]&*{}\ar@{-}[rd]&*{}\ar@{-}[ru]&*{}\ar@{-}[rd]&*{}\ar@{-}[ruuuuu]&*{}\ar@{-}[rd]&*{}\ar@{-}[ru]&*{}\ar@{-}[rd]&*{}\ar@{-}[ru]&*{}\ar@{-}[rd]&*{}\ar@{-}[ru]&*{}\ar@{-}[rd]&*{}&*{}\\
\ar@{-}[r]  &*{}  \ar@{-}[ru]&*{}   &*{}\ar@{-}[ru]&*{}
&*{}\ar@{-}[ru]&*{}\ar@{-}[r]   &*{}
\ar@{-}[r]&*{}\ar@{-}[ru]&*{}&*{}\ar@{-}[ru]&*{}&*{}\ar@{-}[ru]&*{}&*{}\ar@{-}[ru]&*{}&*{}\ar@{-}[ru]&*{}\ar@{-}[r]&*{}\ar@{-}[ru]&*{}&*{}\ar@{-}[ru]&*{}&*{}\ar@{-}[ru]&*{}&*{}\ar@{-}[ru]&*{}&*{}\\
\ar@{-}[rr]  &*{} &*{}\ar@{}[ru]|{\fbox{\rule[-.6cm]{0cm}{1cm}$f_1$}} &*{}\ar@{-}[r]
 &*{} \ar@{}[ru]|{\fbox{\rule[-.6cm]{0cm}{1cm}$f_2$}} &*{}\ar@{-}[r]&*{}
\ar@{-}[r]&*{}\ar@{-}[rr]&*{}\ar@{-}[r]&*{}\ar@{-}[ru]|{\fbox{\rule[-.6cm]{0cm}{1cm}$g_1$}}&*{}\ar@{-}[r]&*{}\ar@{-}[ru]|{\fbox{\rule[-.6cm]{0cm}{1cm}$g_2$}}&*{}\ar@{-}[r]&*{}\ar@{-}[ru]|{\fbox{\rule[-.6cm]{0cm}{1cm}$g_3$}}&*{}\ar@{-}[r]&*{}\ar@{-}[ru]|{\fbox{\rule[-.6cm]{0cm}{1cm}$g_4$}}&*{}\ar@{-}[r]&*{}\ar@{-}[r]&*{}\ar@{-}[r]&*{}\ar@{-}[ru]|{\fbox{\rule[-.6cm]{0cm}{1cm}$h_1$}}&*{}\ar@{-}[r]&*{}\ar@{-}[ru]|{\fbox{\rule[-.6cm]{0cm}{1cm}$h_2$}}&*{}\ar@{-}[r]&*{}\ar@{-}[ru]|{\fbox{\rule[-.6cm]{0cm}{1cm}$h_3$}}&*{}\ar@{-}[r]&*{}&*{}
}
\]

From now, we are going to permute the objects that are traced in order to get the formula $[\vec{f},\vec{g},\vec{h}]$. The dinaturality axiom allows us to commute the objects that are traced by composing with a permutation and pre-composing with its inverse. For that purpose we define a permutation which will impose an order at the level of objects in such a way that creates a sequence where the objects that are connected to $\vec{g}$ follow the objects connected to $\vec{f}$ and the objects of $\vec{h}$ follow the objects of $\vec{g}$:

\begin{center}
$\tau:A_1\otimes A_2\dots\otimes A_n\otimes B_m\otimes C_1\otimes\dots\otimes C_{s-1}\otimes B_1\otimes\dots\otimes B_{m-1}\rightarrow A_1\otimes A_2\dots\otimes A_n\otimes B_1\otimes\dots\otimes B_{m-1} B_m\otimes C_1\otimes\dots\otimes C_{s-1}.$
\end{center}
Also, by definition of our product we have the permutations associated with $[\vec{f},[\vec{g}],\vec{h}]$:
$$\gamma':C_{s-1}\otimes C_{s-2}\otimes\dots C_2\otimes C_1\otimes B_m\otimes A_n\dots A_2\otimes A_1\rightarrow
A_1\otimes A_2\dots A_n\otimes B_m\otimes C_1\otimes C_2\dots C_{s-1}$$
and with $[\vec{g}]$:
$$\gamma'':B_{m-1}\otimes B_{m-2}\otimes\dots\otimes B_2\otimes B_1\rightarrow B_1\otimes B_2\otimes\dots\otimes B_{m-2}\otimes B_{m-1}$$
and with $[\vec{f},\vec{g},\vec{h}]$:
$$C_{s-1}\otimes C_{s-2}\otimes\dots C_1\otimes B_m\otimes B_{m-1}\otimes\dots B_1\otimes A_n\otimes A_{n-1}\dots\otimes A_1\stackrel{\gamma}\rightarrow A_1\otimes\dots A_n\otimes B_1\dots\otimes B_{m}\otimes C_1\dots C_{s-1}$$

As we said, we want to compose and pre-compose with a permutation, let us call it $y$, for our purpose this permutation should satisfy:
$$ y;(\gamma'\otimes\gamma'')=\gamma;\tau^{-1}.$$
Thus, since all this map are invertible we define:
$$ y=\gamma;\tau^{-1};(\gamma'^{-1}\otimes\gamma''^{-1}).$$

In our concrete graphical description after applying dinaturality we get:

\[
\xymatrix@R=10pt@C=16pt{
\ar@{}[rddddddd]|{y}\ar@{.}[r]\ar@{.}[ddddddd]&*{}\ar@/^2pc/@{.}[r]|{\gamma'\otimes
\gamma''}\ar@{.}[ddddddd]\ar@{-}[rdd]&*{}\ar@{-}[rrrrr]&*{}&*{}&*{}&*{}&*{}\ar@{-}[rddddd]&*{}\ar@{-}[rrrrrrrrrr]&*{}&*{}&*{}&*{}&*{}&*{}&*{}&*{}&*{}&*{}\ar@{-}[rddd]&*{}\ar@{-}[rrrrrrr]&*{}&*{}&*{}&*{}&*{}&*{}&*{}\ar@{.}[ddddddd]\ar@{.}[r]&*{}\ar@{.}[ddddddd]\ar@{}[lddddddd]|{y^{-1}}\\
&*{}\ar@{-}[r]&*{}\ar@{-}[rrrrr]&*{}&*{}&*{}&*{}&*{}\ar@{-}[rddddd]&*{}\ar@{-}[rrrr]&*{}&*{}&*{}&*{}\ar@{-}[rd]&*{}\ar@{-}[rrrrr]&*{}&*{}&*{}&*{}&*{}\ar@{-}[rddd]&*{}\ar@{-}[rrrrrrr]&*{}&*{}&*{}&*{}&*{}&*{}&*{}&*{}\\
&*{}\ar@{-}[ruu]&*{}\ar@{-}[rrrrr]&*{}&*{}&*{}&*{}&*{}\ar@{-}[rddddd]&*{}\ar@{-}[rrrr]&*{}&*{}&*{}&*{}\ar@{-}[ru]&*{}\ar@{-}[rrrrr]&*{}&*{}&*{}&*{}&*{}\ar@{-}[rddd]&*{}\ar@{-}[rrrrrrr]&*{}&*{}&*{}&*{}&*{}&*{}&*{}\\
&*{}\ar@{-}[rdddd]&*{}\ar@{-}[rrrr]&*{}&*{} &*{} &*{}\ar@{-}[rd]&*{}\ar@{-}[ruuu]
&*{}
\ar@{-}[rrrr]&*{}&*{}&*{}&*{}\ar@{-}[rd]&*{}\ar@{-}[rrrrr]&*{}&*{}&*{}&*{}&*{}\ar@{-}[rddd]&*{}\ar@{-}[rd]&*{}\ar@{-}[r]&*{}\ar@{-}[rd]&*{}\ar@{-}[rrrr]&*{}&*{}&*{}&*{}&*{}\\
&*{}\ar@{-}[rdd]&*{}\ar@{-}[rrr]&*{}&*{}
&*{}\ar@{-}[rd]&*{}\ar@{-}[ru]&*{}\ar@{-}[ruuu]&*{}\ar@{-}[rrrr]&*{}&*{}&*{}&*{}\ar@{-}[ru]&*{}\ar@{-}[rrrrr]&*{}&*{}&*{}&*{}&*{}\ar@{-}[rddd]&*{}\ar@{-}[ru]&*{}\ar@{-}[rd]&*{}\ar@{-}[ru]&*{}\ar@{-}[rd]&*{}\ar@{-}[rrr]&*{}&*{}&*{}&*{}\\
&*{}\ar@{-}[r]&*{}\ar@{-}[rr]&*{}&*{}\ar@{-}[rd]&*{}\ar@{-}[ru]&*{}\ar@{-}[rd]&*{}\ar@{-}[ruuu]&*{}
\ar@{-}[r]&*{}\ar@{-}[rr]&*{}&*{}\ar@{-}[rd]&*{}\ar@{-}[r]&*{}\ar@{-}[rd]&*{}\ar@{-}[r]&*{}\ar@{-}[rd]&*{}\ar@{-}[rr]&*{}&*{}\ar@{-}[ruuuuu]&*{}\ar@{-}[rd]&*{}\ar@{-}[ru]&*{}\ar@{-}[rd]&*{}\ar@{-}[ru]&*{}\ar@{-}[rd]&*{}\ar@{-}[rr]&*{}&*{}&*{}\\
&*{}\ar@{-}[ruu]&*{}\ar@{-}[r]&*{}\ar@{-}[rd]&*{}\ar@{-}[ru]&*{}\ar@{-}[rd]&*{}\ar@{-}[ru]&*{}\ar@{-}[ruuu]&*{}\ar@{-}[r]&*{}\ar@{-}@{-}[r]&*{}\ar@{-}[rd]&*{}\ar@{-}[ru]&*{}\ar@{-}[rd]&*{}\ar@{-}[ru]&*{}\ar@{-}[rd]&*{}\ar@{-}[ru]&*{}\ar@{-}[rd]&*{}\ar@{-}[r]&*{}\ar@{-}[ruuuuu]&*{}\ar@{-}[ru]&*{}\ar@{-}[rd]&*{}\ar@{-}[ru]&*{}\ar@{-}[rd]&*{}\ar@{-}[ru]&*{}\ar@{-}[rd]&*{}\ar@{-}[r]&*{}&*{}\\
\ar@{.}[r]&*{}\ar@{-}[ruuuu]&*{}\ar@{-}[rd]&*{}\ar@{-}[ru]&*{}\ar@{-}[rd]&*{}\ar@{-}[ru]&*{}\ar@{-}[rd]&*{}\ar@{-}[ruuu]&*{}
\ar@{-}[r]&*{}\ar@{-}[rd]&*{}\ar@{-}[ru]&*{}\ar@{-}[rd]&*{}\ar@{-}[ru]&*{}\ar@{-}[rd]&*{}\ar@{-}[ru]&*{}\ar@{-}[rd]&*{}\ar@{-}[ru]&*{}\ar@{-}[rd]&*{}\ar@{-}[ruuuuu]&*{}\ar@{-}[rd]&*{}\ar@{-}[ru]&*{}\ar@{-}[rd]&*{}\ar@{-}[ru]&*{}\ar@{-}[rd]&*{}\ar@{-}[ru]&*{}\ar@{-}[rd]&*{}\ar@{.}[r]&*{}\\
\ar@{.}[r]&*{}\ar@{-}[r]  &*{}  \ar@{-}[ru]&*{}   &*{}\ar@{-}[ru]&*{}
&*{}\ar@{-}[ru]&*{}\ar@{-}[r]   &*{}
\ar@{-}[r]&*{}\ar@{-}[ru]&*{}&*{}\ar@{-}[ru]&*{}&*{}\ar@{-}[ru]&*{}&*{}\ar@{-}[ru]&*{}&*{}\ar@{-}[ru]&*{}\ar@{-}[r]&*{}\ar@{-}[ru]&*{}&*{}\ar@{-}[ru]&*{}&*{}\ar@{-}[ru]&*{}&*{}\ar@{-}[ru]&*{}\ar@{.}[r]&*{}\\
\ar@{.}[r]&*{}\ar@{-}[rr]  &*{} &*{}\ar@{}[ru]|{\fbox{\rule[-.6cm]{0cm}{1cm}$f_1$}}
&*{}\ar@{-}[r]  &*{} \ar@{}[ru]|{\fbox{\rule[-.6cm]{0cm}{1cm}$f_2$}}
&*{}\ar@{-}[r]&*{}
\ar@{-}[r]&*{}\ar@{-}[rr]&*{}\ar@{-}[r]&*{}\ar@{-}[ru]|{\fbox{\rule[-.6cm]{0cm}{1cm}$g_1$}}&*{}\ar@{-}[r]&*{}\ar@{-}[ru]|{\fbox{\rule[-.6cm]{0cm}{1cm}$g_2$}}&*{}\ar@{-}[r]&*{}\ar@{-}[ru]|{\fbox{\rule[-.6cm]{0cm}{1cm}$g_3$}}&*{}\ar@{-}[r]&*{}\ar@{-}[ru]|{\fbox{\rule[-.6cm]{0cm}{1cm}$g_4$}}&*{}\ar@{-}[r]&*{}\ar@{-}[r]&*{}\ar@{-}[r]&*{}\ar@{-}[ru]|{\fbox{\rule[-.6cm]{0cm}{1cm}$h_1$}}&*{}\ar@{-}[r]&*{}\ar@{-}[ru]|{\fbox{\rule[-.6cm]{0cm}{1cm}$h_2$}}&*{}\ar@{-}[r]&*{}\ar@{-}[ru]|{\fbox{\rule[-.6cm]{0cm}{1cm}$h_3$}}&*{}\ar@{-}[r]&*{}\ar@{.}[r]&*{}
}
\]
and using the equation that we defined above
$y;(\gamma'\otimes\gamma'')=\gamma;\tau^{-1}$ we replace it and we obtain:\\

\[
\xymatrix@R=10pt@C=16pt{
\ar@/^2pc/@{.}[r]|{\gamma}\ar@{.}[ddddddddd]\ar@{.}[rr]\ar@{-}[rddddddd]&*{}\ar@/^2pc/@{.}[r]|{\tau^{-1}}\ar@{-}[rddd]&*{}\ar@{.}[ddddddddd]\ar@{-}[rrrr]&*{}&*{}&*{}&*{}\ar@{-}[rddddd]&*{}\ar@{-}[rrrrrrrrrr]&*{}&*{}&*{}&*{}&*{}&*{}&*{}&*{}&*{}&*{}\ar@{-}[rddd]&*{}\ar@{-}[rrrrrrr]&*{}&*{}&*{}&*{}&*{}&*{}&*{}\ar@{.}[ddddddddd]\ar@{.}[r]\ar@{-}[rdd]&*{}\ar@{.}[ddddddddd]&*{}\\
\ar@{-}[rddddd]&*{}\ar@{-}[rddd]&*{}\ar@{-}[rrrr]&*{}&*{}&*{}&*{}\ar@{-}[rddddd]&*{}\ar@{-}[rrrr]&*{}&*{}&*{}&*{}\ar@{-}[rd]&*{}\ar@{-}[rrrrr]&*{}&*{}&*{}&*{}&*{}\ar@{-}[rddd]&*{}\ar@{-}[rrrrrrr]&*{}&*{}&*{}&*{}&*{}&*{}&*{}\ar@{-}[rdd]&*{}&*{}\\
\ar@{-}[rddd]&*{}\ar@{-}[rddd]&*{}\ar@{-}[rrrr]&*{}&*{}&*{}&*{}\ar@{-}[rddddd]&*{}\ar@{-}[rrrr]&*{}&*{}&*{}&*{}\ar@{-}[ru]&*{}\ar@{-}[rrrrr]&*{}&*{}&*{}&*{}&*{}\ar@{-}[rddd]&*{}\ar@{-}[rrrrrrr]&*{}&*{}&*{}&*{}&*{}&*{}&*{}\ar@{-}[rdd]&*{}\\
\ar@{-}[rd]&*{}\ar@{-}[ruuu]&*{}\ar@{-}[rrr]&*{} &*{}
&*{}\ar@{-}[rd]&*{}\ar@{-}[ruuu]&*{}
\ar@{-}[rrrr]&*{}&*{}&*{}&*{}\ar@{-}[rd]&*{}\ar@{-}[rrrrr]&*{}&*{}&*{}&*{}&*{}\ar@{-}[rddd]&*{}\ar@{-}[rd]&*{}\ar@{-}[r]&*{}\ar@{-}[rd]&*{}\ar@{-}[rrrr]&*{}&*{}&*{}&*{}\ar@{-}[ruuu]&*{}&*{}\\
\ar@{-}[ru]&*{}\ar@{-}[ruuu]&*{}\ar@{-}[rr]&*{}
&*{}\ar@{-}[rd]&*{}\ar@{-}[ru]&*{}\ar@{-}[ruuu]&*{}\ar@{-}[rrrr]&*{}&*{}&*{}&*{}\ar@{-}[ru]&*{}\ar@{-}[rrrrr]&*{}&*{}&*{}&*{}&*{}\ar@{-}[rddd]&*{}\ar@{-}[ru]&*{}\ar@{-}[rd]&*{}\ar@{-}[ru]&*{}\ar@{-}[rd]&*{}\ar@{-}[rrr]&*{}&*{}&*{}\ar@{-}[ruuu]&*{}&*{}\\
\ar@{-}[ruuu]&*{}\ar@{-}[ruuu]&*{}\ar@{-}[r]&*{}\ar@{-}[rd]&*{}\ar@{-}[ru]&*{}\ar@{-}[rd]&*{}\ar@{-}[ruuu]&*{}
\ar@{-}[r]&*{}\ar@{-}[rr]&*{}&*{}\ar@{-}[rd]&*{}\ar@{-}[r]&*{}\ar@{-}[rd]&*{}\ar@{-}[r]&*{}\ar@{-}[rd]&*{}\ar@{-}[rr]&*{}&*{}\ar@{-}[ruuuuu]&*{}\ar@{-}[rd]&*{}\ar@{-}[ru]&*{}\ar@{-}[rd]&*{}\ar@{-}[ru]&*{}\ar@{-}[rd]&*{}\ar@{-}[rr]&*{}\ar@{-}[r]&*{}\ar@{-}[r]&*{}&*{}\\
\ar@{-}[ruuuuu]&*{}\ar@{-}[r]&*{}\ar@{-}[rd]&*{}\ar@{-}[ru]&*{}\ar@{-}[rd]&*{}\ar@{-}[ru]&*{}\ar@{-}[ruuu]&*{}\ar@{-}[r]&*{}\ar@{-}@{-}[r]&*{}\ar@{-}[rd]&*{}\ar@{-}[ru]&*{}\ar@{-}[rd]&*{}\ar@{-}[ru]&*{}\ar@{-}[rd]&*{}\ar@{-}[ru]&*{}\ar@{-}[rd]&*{}\ar@{-}[r]&*{}\ar@{-}[ruuuuu]&*{}\ar@{-}[ru]&*{}\ar@{-}[rd]&*{}\ar@{-}[ru]&*{}\ar@{-}[rd]&*{}\ar@{-}[ru]&*{}\ar@{-}[rd]&*{}\ar@{-}[r]&*{}\ar@{-}[r]&*{}&*{}\\
\ar@{-}[ruuuuuuu]&*{}\ar@{-}[r]&*{}\ar@{-}[ru]&*{}\ar@{-}[rd]&*{}\ar@{-}[ru]&*{}\ar@{-}[rd]&*{}\ar@{-}[ruuu]&*{}
\ar@{-}[r]&*{}\ar@{-}[rd]&*{}\ar@{-}[ru]&*{}\ar@{-}[rd]&*{}\ar@{-}[ru]&*{}\ar@{-}[rd]&*{}\ar@{-}[ru]&*{}\ar@{-}[rd]&*{}\ar@{-}[ru]&*{}\ar@{-}[rd]&*{}\ar@{-}[ruuuuu]&*{}\ar@{-}[rd]&*{}\ar@{-}[ru]&*{}\ar@{-}[rd]&*{}\ar@{-}[ru]&*{}\ar@{-}[rd]&*{}\ar@{-}[ru]&*{}\ar@{-}[rd]&*{}\ar@{-}[r]&*{}&*{}\\
\ar@{-}[r]  &*{}  \ar@{-}[r]&*{}   &*{}\ar@{-}[ru]&*{}
&*{}\ar@{-}[ru]&*{}\ar@{-}[r]   &*{}
\ar@{-}[r]&*{}\ar@{-}[ru]&*{}&*{}\ar@{-}[ru]&*{}&*{}\ar@{-}[ru]&*{}&*{}\ar@{-}[ru]&*{}&*{}\ar@{-}[ru]&*{}\ar@{-}[r]&*{}\ar@{-}[ru]&*{}&*{}\ar@{-}[ru]&*{}&*{}\ar@{-}[ru]&*{}&*{}\ar@{-}[ru]&*{}\ar@{-}[r]&*{}&*{}\\
\ar@{.}[rr]\ar@{-}[r]  &*{}\ar@{-}[r]
&*{}\ar@{}[ru]|{\fbox{\rule[-.6cm]{0cm}{1cm}$f_1$}} &*{}\ar@{-}[r]  &*{}
\ar@{}[ru]|{\fbox{\rule[-.6cm]{0cm}{1cm}$f_2$}} &*{}\ar@{-}[r]&*{}
\ar@{-}[r]&*{}\ar@{-}[rr]&*{}\ar@{-}[r]&*{}\ar@{-}[ru]|{\fbox{\rule[-.6cm]{0cm}{1cm}$g_1$}}&*{}\ar@{-}[r]&*{}\ar@{-}[ru]|{\fbox{\rule[-.6cm]{0cm}{1cm}$g_2$}}&*{}\ar@{-}[r]&*{}\ar@{-}[ru]|{\fbox{\rule[-.6cm]{0cm}{1cm}$g_3$}}&*{}\ar@{-}[r]&*{}\ar@{-}[ru]|{\fbox{\rule[-.6cm]{0cm}{1cm}$g_4$}}&*{}\ar@{-}[r]&*{}\ar@{-}[r]&*{}\ar@{-}[r]&*{}\ar@{-}[ru]|{\fbox{\rule[-.6cm]{0cm}{1cm}$h_1$}}&*{}\ar@{-}[r]&*{}\ar@{-}[ru]|{\fbox{\rule[-.6cm]{0cm}{1cm}$h_2$}}&*{}\ar@{-}[r]&*{}\ar@{-}[ru]|{\fbox{\rule[-.6cm]{0cm}{1cm}$h_3$}}&*{}\ar@{-}[r]&*{}\ar@{-}[r]&*{}&*{}
}
\]

Now we split the diagram in two sets of different types of symmetries, those which are functorially free from the set of arrows $\{f_i,g_j,h_k:i,j,k\}$ and those that are not.
Here, in the next diagram, the dotted boxes contain part of the free ones:

\[
\xymatrix@R=10pt@C=16pt{
\ar@{-}[rddddddd]&*{}\ar@{.}[ddddd]\ar@{.}[rrrrrrrrrr]\ar@{-}[rddd]&*{}\ar@{-}[rrrr]&*{}&*{}&*{}&*{}\ar@{-}[rddddd]&*{}\ar@{-}[rrrrrrrrrr]&*{}&*{}&*{}&*{}\ar@{..}[dddddd]&*{}&*{}\ar@{.}[ddddd]&*{}&*{}&*{}&*{}\ar@{-}[rddd]&*{}\ar@{-}[rrrrrrr]&*{}&*{}&*{}&*{}&*{}&*{}&*{}\ar@{-}[rdd]&*{}\ar@{.}[dddd]\ar@{.}[lllllllllllll]\\
\ar@{-}[rddddd]&*{}\ar@{-}[rddd]&*{}\ar@{-}[rrrr]&*{}&*{}&*{}&*{}\ar@{-}[rddddd]&*{}\ar@{-}[rrrr]&*{}&*{}&*{}&*{}\ar@{-}[rd]&*{}\ar@{-}[rrrrr]&*{}&*{}&*{}&*{}&*{}\ar@{-}[rddd]&*{}\ar@{-}[rrrrrrr]&*{}&*{}&*{}&*{}&*{}&*{}&*{}\ar@{-}[rdd]&*{}&*{}&*{}\\
\ar@{-}[rddd]&*{}\ar@{-}[rddd]&*{}\ar@{-}[rrrr]&*{}&*{}&*{}&*{}\ar@{-}[rddddd]&*{}\ar@{-}[rrrr]&*{}&*{}&*{}&*{}\ar@{-}[ru]&*{}\ar@{-}[rrrrr]&*{}&*{}&*{}&*{}&*{}\ar@{-}[rddd]&*{}\ar@{-}[rrrrrrr]&*{}&*{}&*{}&*{}&*{}&*{}&*{}\ar@{-}[rdd]&*{}&*{}\\
\ar@{-}[rd]&*{}\ar@{-}[ruuu]&*{}\ar@{-}[rrr]&*{} &*{}
&*{}\ar@{-}[rd]&*{}\ar@{-}[ruuu]&*{}
\ar@{-}[rrrr]&*{}&*{}&*{}&*{}\ar@{-}[rd]&*{}\ar@{-}[rrrrr]&*{}&*{}&*{}&*{}&*{}\ar@{-}[rddd]&*{}\ar@{-}[rd]&*{}\ar@{-}[r]&*{}\ar@{-}[rd]&*{}\ar@{-}[rrrr]&*{}&*{}&*{}&*{}\ar@{-}[ruuu]&*{}&*{}&*{}\\
\ar@{-}[ru]&*{}\ar@{-}[ruuu]&*{}\ar@{-}[rr]&*{}
&*{}\ar@{-}[rd]&*{}\ar@{-}[ru]&*{}\ar@{-}[ruuu]&*{}\ar@{-}[rrrr]&*{}&*{}&*{}&*{}\ar@{-}[ru]&*{}\ar@{-}[rrrrr]&*{}&*{}&*{}&*{}&*{}\ar@{-}[rddd]&*{}\ar@{-}[ru]&*{}\ar@{-}[rd]&*{}\ar@{-}[ru]&*{}\ar@{-}[rd]&*{}\ar@{.}[d]\ar@{.}[rrrr]\ar@{-}[rrr]&*{}&*{}&*{}\ar@{-}[ruuu]&*{}&*{}&*{}\\
\ar@{-}[ruuu]&*{}\ar@{.}[r]\ar@{-}[ruuu]&*{}\ar@{.}[rrrddd]\ar@{-}[r]&*{}\ar@{-}[rd]&*{}\ar@{-}[ru]&*{}\ar@{-}[rd]&*{}\ar@{-}[ruuu]&*{}
\ar@{-}[r]&*{}\ar@{-}[rr]&*{}&*{}\ar@{-}[rd]&*{}\ar@{-}[r]&*{}\ar@{-}[rd]&*{}\ar@{-}[r]\ar@{.}[rrrrdddd]&*{}\ar@{-}[rd]&*{}\ar@{-}[rr]&*{}&*{}\ar@{-}[ruuuuu]&*{}\ar@{-}[rd]&*{}\ar@{-}[ru]&*{}\ar@{-}[rd]&*{}\ar@{-}[ru]&*{}\ar@{-}[rd]&*{}\ar@{-}[rr]&*{}\ar@{-}[r]&*{}\ar@{-}[r]&*{}&*{}&*{}\\
\ar@{-}[ruuuuu]&*{}\ar@{-}[r]&*{}\ar@{-}[rd]&*{}\ar@{-}[ru]&*{}\ar@{-}[rd]&*{}\ar@{-}[ru]&*{}\ar@{-}[ruuu]&*{}\ar@{-}[r]&*{}\ar@{-}@{-}[r]&*{}\ar@{-}[rd]&*{}\ar@{-}[ru]&*{}\ar@{-}[rd]&*{}\ar@{-}[ru]&*{}\ar@{-}[rd]&*{}\ar@{-}[ru]&*{}\ar@{-}[rd]&*{}\ar@{-}[r]&*{}\ar@{-}[ruuuuu]&*{}\ar@{-}[ru]&*{}\ar@{-}[rd]&*{}\ar@{-}[ru]&*{}\ar@{-}[rd]&*{}\ar@{-}[ru]&*{}\ar@{-}[rd]&*{}\ar@{-}[r]&*{}\ar@{-}[r]&*{}&*{}&*{}\\
\ar@{-}[ruuuuuuu]&*{}\ar@{-}[r]&*{}\ar@{-}[ru]&*{}\ar@{-}[rd]&*{}\ar@{-}[ru]&*{}\ar@{-}[rd]&*{}\ar@{-}[ruuu]&*{}
\ar@{-}[r]&*{}\ar@{-}[rd]&*{}\ar@{-}[ru]&*{}\ar@{-}[rd]&*{}\ar@{-}[ru]&*{}\ar@{-}[rd]&*{}\ar@{-}[ru]&*{}\ar@{-}[rd]&*{}\ar@{-}[ru]&*{}\ar@{-}[rd]&*{}\ar@{-}[ruuuuu]&*{}\ar@{-}[rd]&*{}\ar@{-}[ru]&*{}\ar@{-}[rd]&*{}\ar@{-}[ru]&*{}\ar@{-}[rd]&*{}\ar@{-}[ru]&*{}\ar@{-}[rd]&*{}\ar@{-}[r]&*{}&*{}&*{}\\
\ar@{-}[r]  &*{}  \ar@{-}[r]&*{}   &*{}\ar@{-}[ru]&*{}
&*{}\ar@{-}[ru]\ar@{.}[rrd]&*{}\ar@{-}[r]   &*{}
\ar@{-}[r]&*{}\ar@{-}[ru]&*{}\ar@{.}[rruu]&*{}\ar@{-}[ru]&*{}&*{}\ar@{-}[ru]&*{}&*{}\ar@{-}[ru]&*{}&*{}\ar@{-}[ru]&*{}\ar@{-}[r]&*{}\ar@{-}[ru]&*{}&*{}\ar@{-}[ru]&*{}&*{}\ar@{-}[ru]&*{}&*{}\ar@{-}[ru]&*{}\ar@{-}[r]&*{}&*{}&*{}\\
\ar@{-}[r]  &*{}\ar@{-}[r] &*{}\ar@{}[ru]|{\fbox{\rule[-.6cm]{0cm}{1cm}$f_1$}}
&*{}\ar@{-}[r]  &*{} \ar@{}[ru]|{\fbox{\rule[-.6cm]{0cm}{1cm}$f_2$}}
&*{}\ar@{-}[r]&*{}
\ar@{-}[r]&*{}\ar@{.}[rru]\ar@{-}[rr]&*{}\ar@{-}[r]&*{}\ar@{-}[ru]|{\fbox{\rule[-.6cm]{0cm}{1cm}$g_1$}}&*{}\ar@{-}[r]&*{}\ar@{-}[ru]|{\fbox{\rule[-.6cm]{0cm}{1cm}$g_2$}}&*{}\ar@{-}[r]&*{}\ar@{-}[ru]|{\fbox{\rule[-.6cm]{0cm}{1cm}$g_3$}}&*{}\ar@{-}[r]&*{}\ar@{-}[ru]|{\fbox{\rule[-.6cm]{0cm}{1cm}$g_4$}}&*{}\ar@{-}[r]&*{}\ar@{-}[r]&*{}\ar@{.}[rrrruuuu]\ar@{-}[r]\ar@{-}[r]&*{}\ar@{-}[ru]|{\fbox{\rule[-.6cm]{0cm}{1cm}$h_1$}}&*{}\ar@{-}[r]&*{}\ar@{-}[ru]|{\fbox{\rule[-.6cm]{0cm}{1cm}$h_2$}}&*{}\ar@{-}[r]&*{}\ar@{-}[ru]|{\fbox{\rule[-.6cm]{0cm}{1cm}$h_3$}}&*{}\ar@{-}[r]&*{}\ar@{-}[r]&*{}&*{}&*{}
}
\]
So, we replace this box:

\[
\xymatrix@R=10pt@C=16pt{
\ar@{-}[rddd]\ar@{.}[ddddd]\ar@{.}[r]&*{}\ar@{.}[rrrrr]\ar@{-}[rrrr]&*{}&*{}&*{}&*{}\ar@{-}[rddddd]&*{}\ar@{-}[rrrr]&*{}&*{}&*{}&*{}\ar@{..}[dddddd]&*{}&*{}&*{}&*{}&*{}&*{}\\
\ar@{-}[rddd]&*{}\ar@{-}[rrrr]&*{}&*{}&*{}&*{}\ar@{-}[rddddd]&*{}\ar@{-}[rrrr]&*{}&*{}&*{}&*{}&*{}&*{}&*{}&*{}&*{}&*{}\\
\ar@{-}[rddd]&*{}\ar@{-}[rrrr]&*{}&*{}&*{}&*{}\ar@{-}[rddddd]&*{}\ar@{-}[rrrr]&*{}&*{}&*{}&*{}&*{}&*{}&*{}&*{}&*{}&*{}\\
\ar@{-}[ruuu]&*{}\ar@{-}[rrr]&*{} &*{} &*{}\ar@{-}[rd]&*{}\ar@{-}[ruuu]&*{}
\ar@{-}[rrrr]&*{}&*{}&*{}&*{}&*{}&*{}&*{}&*{}&*{}&*{}\\
\ar@{-}[ruuu]&*{}\ar@{-}[rr]&*{}
&*{}\ar@{-}[rd]&*{}\ar@{-}[ru]&*{}\ar@{-}[ruuu]&*{}\ar@{-}[rrrr]&*{}&*{}&*{}&*{}&*{}&*{}&*{}&*{}&*{}&*{}\\
\ar@{.}[r]\ar@{-}[ruuu]&*{}\ar@{.}[rrrddd]\ar@{-}[r]&*{}\ar@{-}[rd]&*{}\ar@{-}[ru]&*{}\ar@{-}[rd]&*{}\ar@{-}[ruuu]&*{}
\ar@{-}[r]&*{}\ar@{-}[rr]&*{}&*{}\ar@{-}[rd]&*{}&*{}&*{}&*{}&*{}&*{}&*{}\\
&*{}&*{}\ar@{-}[ru]&*{}\ar@{-}[rd]&*{}\ar@{-}[ru]&*{}\ar@{-}[ruuu]&*{}\ar@{-}[r]&*{}\ar@{-}@{-}[r]&*{}\ar@{-}[rd]&*{}\ar@{-}[ru]&*{}&*{}&*{}&*{}&*{}&*{}&*{}\\
&*{}&*{}&*{}\ar@{-}[ru]&*{}\ar@{-}[rd]&*{}\ar@{-}[ruuu]&*{}
\ar@{-}[r]&*{}\ar@{-}[rd]&*{}\ar@{-}[ru]&*{}&*{}&*{}&*{}&*{}&*{}&*{}&*{}\\
  &*{}   &*{}&*{}   &*{}\ar@{-}[ru]\ar@{.}[rrd]&*{}\ar@{-}[r]   &*{}
\ar@{-}[r]&*{}\ar@{-}[ru]&*{}\ar@{.}[rruu]&*{}&*{}&*{}&*{}&*{}&*{}&*{}&*{}\\
&*{}&*{} &*{} &*{}&*{}&*{}\ar@{.}[rru]&*{}&*{}&*{}&*{}&*{}&*{}&*{}&*{}&*{}&*{}
}
\]
By this one:

\[
\xymatrix@R=10pt@C=16pt{
\ar@{-}[rrrrr]&*{}&*{}&*{}&*{}&*{}\ar@{-}[rd]&*{}\ar@{-}[r]&*{}&*{}\\
\ar@{-}[rrrr]&*{}&*{}&*{}&*{}\ar@{-}[rd]&*{}\ar@{-}[ru]&*{}\ar@{-}[r]&*{}&*{}\\
\ar@{-}[rrr]&*{}&*{}&*{}\ar@{-}[rd]&*{}\ar@{-}[ru]&*{}\ar@{-}[rd]&*{}\ar@{-}[r]&*{}&*{}\\
\ar@{-}[rr]&*{}&*{}\ar@{-}[rd]&*{}\ar@{-}[ru]&*{}\ar@{-}[rd]&*{}\ar@{-}[ru]&*{}\ar@{-}[r]&*{}&*{}\\
\ar@{-}[r]&*{}\ar@{-}[rd]&*{}\ar@{-}[ru]&*{}\ar@{-}[rd]&*{}\ar@{-}[ru]&*{}\ar@{-}[rd]&*{}\ar@{-}[r]&*{}&*{}\\
\ar@{-}[rd]&*{}\ar@{-}[ru]&*{}\ar@{-}[rd]&*{}\ar@{-}[ru]&*{}\ar@{-}[rd]&*{}\ar@{-}[ru]&*{}\ar@{-}[r]&*{}&*{}\\
\ar@{-}[ru]&*{}\ar@{-}[rd]&*{}\ar@{-}[ru]&*{}\ar@{-}[rd]&*{}\ar@{-}[ru]&*{}\ar@{-}[rr]&*{}&*{}&*{}\\
\ar@{-}[r]&*{}\ar@{-}[ru]&*{}\ar@{-}[rd]&*{}\ar@{-}[ru]&*{}\ar@{-}[rrr]&*{}&*{}&*{}&*{}\\
\ar@{-}[rr]&*{}&*{}\ar@{-}[ru]&*{}\ar@{-}[rrrr]&*{}&*{}&*{}&*{}&*{}\\
\ar@{-}[rrrrrrr]&*{}&*{}&*{}&*{}&*{}&*{}&*{}&*{}
}
\]
and this one:

\[
\xymatrix@R=10pt@C=16pt{
\ar@{-}[rrrr]\ar@{.}[ddddd]&*{}&*{}&*{}&*{}\ar@{-}[rddd]&*{}\ar@{-}[rrrrrrr]&*{}&*{}&*{}&*{}&*{}&*{}&*{}\ar@{-}[rdd]&*{}\ar@{.}[dddd]\ar@{.}[lllllllllllll]\\
\ar@{-}[rrrr]&*{}&*{}&*{}&*{}\ar@{-}[rddd]&*{}\ar@{-}[rrrrrrr]&*{}&*{}&*{}&*{}&*{}&*{}&*{}\ar@{-}[rdd]&*{}&*{}&*{}\\
\ar@{-}[rrrr]&*{}&*{}&*{}&*{}\ar@{-}[rddd]&*{}\ar@{-}[rrrrrrr]&*{}&*{}&*{}&*{}&*{}&*{}&*{}\ar@{-}[rdd]&*{}&*{}\\
\ar@{-}[rrrr]&*{}&*{}&*{}&*{}\ar@{-}[rddd]&*{}\ar@{-}[rd]&*{}\ar@{-}[r]&*{}\ar@{-}[rd]&*{}\ar@{-}[rrrr]&*{}&*{}&*{}&*{}\ar@{-}[ruuu]&*{}&*{}&*{}\\
\ar@{-}[rrrr]&*{}&*{}&*{}&*{}\ar@{-}[rddd]&*{}\ar@{-}[ru]&*{}\ar@{-}[rd]&*{}\ar@{-}[ru]&*{}\ar@{-}[rd]&*{}\ar@{-}[rrr]\ar@{.}[rrrr]&*{}&*{}&*{}\ar@{-}[ruuu]&*{}&*{}&*{}\\
\ar@{.}[rrrrdddd]\ar@{-}[r]&*{}\ar@{-}[rd]&*{}\ar@{-}[rr]&*{}&*{}\ar@{-}[ruuuuu]&*{}\ar@{-}[rd]&*{}\ar@{-}[ru]&*{}\ar@{-}[rd]&*{}\ar@{-}[ru]&*{}\ar@{.}[u]&*{}&*{}&*{}&*{}&*{}&*{}\\
&*{}\ar@{-}[ru]&*{}\ar@{-}[rd]&*{}\ar@{-}[r]&*{}\ar@{-}[ruuuuu]&*{}\ar@{-}[ru]&*{}\ar@{-}[rd]&*{}\ar@{-}[ru]&*{}&*{}&*{}&*{}&*{}&*{}&*{}\\
&*{}&*{}\ar@{-}[ru]&*{}\ar@{-}[rd]&*{}\ar@{-}[ruuuuu]&*{}\ar@{-}[rd]&*{}\ar@{-}[ru]&*{}&*{}&*{}&*{}&*{}&*{}&*{}&*{}&*{}\\
&*{}&*{}&*{}\ar@{-}[ru]&*{}\ar@{-}[r]&*{}\ar@{-}[ru]&*{}&*{}&*{}&*{}&*{}&*{}&*{}&*{}&*{}\\
&*{}&*{}&*{}&*{}\ar@{-}[r]&*{}\ar@{.}[rrrruuuu]&*{}&*{}&*{}&*{}&*{}&*{}&*{}&*{}&*{}
}
\]
By this other one:\\

\[
\xymatrix@R=10pt@C=16pt{
\ar@{-}[rd]&*{}\ar@{-}[r]&*{}\ar@{-}[rd]&*{}\ar@{-}[rrrr]&*{}&*{}&*{}&*{}&*{}\\
\ar@{-}[ru]&*{}\ar@{-}[rd]&*{}\ar@{-}[ru]&*{}\ar@{-}[rd]&*{}\ar@{-}[rrr]&*{}&*{}&*{}&*{}\\
\ar@{-}[rd]&*{}\ar@{-}[ru]&*{}\ar@{-}[rd]&*{}\ar@{-}[ru]&*{}\ar@{-}[rd]&*{}\ar@{-}[rr]&*{}&*{}&*{}\\
\ar@{-}[ru]&*{}\ar@{-}[rd]&*{}\ar@{-}[ru]&*{}\ar@{-}[rd]&*{}\ar@{-}[ru]&*{}\ar@{-}[rd]&*{}\ar@{-}[r]&*{}&*{}\\
\ar@{-}[rd]&*{}\ar@{-}[ru]&*{}\ar@{-}[rd]&*{}\ar@{-}[ru]&*{}\ar@{-}[rd]&*{}\ar@{-}[ru]&*{}\ar@{-}[rd]&*{}&*{}\\
\ar@{-}[ru]&*{}\ar@{-}[rd]&*{}\ar@{-}[ru]&*{}\ar@{-}[rd]&*{}\ar@{-}[ru]&*{}\ar@{-}[rd]&*{}\ar@{-}[ru]&*{}&*{}\\
\ar@{-}[r]&*{}\ar@{-}[ru]&*{}\ar@{-}[rd]&*{}\ar@{-}[ru]&*{}\ar@{-}[rd]&*{}\ar@{-}[ru]&*{}\ar@{-}[r]&*{}&*{}\\
\ar@{-}[rr]&*{}\ar@{-}[r]&*{}\ar@{-}[ru]&*{}\ar@{-}[rd]&*{}\ar@{-}[ru]&*{}\ar@{-}[rr]&*{}&*{}&*{}\\
\ar@{-}[rrr]&*{}\ar@{-}[rr]&*{}&*{}\ar@{-}[ru]&*{}\ar@{-}[rrr]&*{}&*{}&*{}&*{}\\
\ar@{-}[rrrrrrr]&*{}&*{}&*{}&*{}&*{}&*{}&*{}&*{}
}
\]
Finally, we get the desired diagram:

\[
\xymatrix@R=10pt@C=16pt{
\ar@{-}[rddddddd]
&*{}\ar@{-}[rrrrrrr]&*{}&*{}&*{}&*{}&*{}&*{}&*{}\ar@{-}[rd]&*{}\ar@{-}[r]&*{}\ar@{-}[rd]&*{}\ar@{-}[r]&*{}\ar@{-}[rd]&*{}\ar@{-}[rrrrrrr]&*{}&*{}&*{}&*{}&*{}&*{}&*{}&*{}\\
\ar@{-}[rddddd]
&*{}\ar@{-}[rrrrrr]&*{}&*{}&*{}&*{}&*{}&*{}\ar@{-}[rd]&*{}\ar@{-}[ru]&*{}\ar@{-}[rd]&*{}\ar@{-}[ru]&*{}\ar@{-}[rd]&*{}\ar@{-}[ru]&*{}\ar@{-}[rd]&*{}\ar@{-}[rrrrrr]&*{}&*{}&*{}&*{}&*{}&*{}&*{}\\
\ar@{-}[rddd]
&*{}\ar@{-}[rrrrr]&*{}&*{}&*{}&*{}&*{}\ar@{-}[rd]&*{}\ar@{-}[ru]&*{}\ar@{-}[rd]&*{}\ar@{-}[ru]&*{}\ar@{-}[rd]&*{}\ar@{-}[ru]&*{}\ar@{-}[rd]&*{}\ar@{-}[ru]&*{}\ar@{-}[rd]&*{}\ar@{-}[rrrrr]&*{}&*{}&*{}&*{}&*{}&*{}\\
\ar@{-}[rd]
&*{}\ar@{-}[rrrr]&*{}&*{}&*{}&*{}\ar@{-}[rd]&*{}\ar@{-}[ru]&*{}\ar@{-}[rd]&*{}\ar@{-}[ru]&*{}\ar@{-}[rd]&*{}\ar@{-}[ru]&*{}\ar@{-}[rd]&*{}\ar@{-}[ru]&*{}\ar@{-}[rd]&*{}\ar@{-}[ru]&*{}\ar@{-}[rd]&*{}\ar@{-}[rrrr]&*{}&*{}&*{}&*{}&*{}\\
\ar@{-}[ru]
&*{}\ar@{-}[rrr]&*{}&*{}&*{}\ar@{-}[rd]&*{}\ar@{-}[ru]&*{}\ar@{-}[rd]&*{}\ar@{-}[ru]&*{}\ar@{-}[rd]&*{}\ar@{-}[ru]&*{}\ar@{-}[rd]&*{}\ar@{-}[ru]&*{}\ar@{-}[rd]&*{}\ar@{-}[ru]&*{}\ar@{-}[rd]&*{}\ar@{-}[ru]&*{}\ar@{-}[rd]&*{}\ar@{-}[rrr]&*{}&*{}&*{}&*{}\\
\ar@{-}[ruuu]
&*{}\ar@{-}[rr]&*{}&*{}\ar@{-}[rd]&*{}\ar@{-}[ru]&*{}\ar@{-}[rd]&*{}\ar@{-}[ru]&*{}\ar@{-}[rd]&*{}\ar@{-}[ru]&*{}\ar@{-}[rd]&*{}\ar@{-}[ru]&*{}\ar@{-}[rd]&*{}\ar@{-}[ru]&*{}\ar@{-}[rd]&*{}\ar@{-}[ru]&*{}\ar@{-}[rd]&*{}\ar@{-}[ru]&*{}\ar@{-}[rd]&*{}\ar@{-}[rr]&*{}&*{}&*{}\\
\ar@{-}[ruuuuu]
&*{}\ar@{-}[r]&*{}\ar@{-}[rd]&*{}\ar@{-}[ru]&*{}\ar@{-}[rd]&*{}\ar@{-}[ru]&*{}\ar@{-}[rd]&*{}\ar@{-}[ru]&*{}\ar@{-}[rd]&*{}\ar@{-}[ru]&*{}\ar@{-}[rd]&*{}\ar@{-}[ru]&*{}\ar@{-}[rd]&*{}\ar@{-}[ru]&*{}\ar@{-}[rd]&*{}\ar@{-}[ru]&*{}\ar@{-}[rd]&*{}\ar@{-}[ru]&*{}\ar@{-}[rd]&*{}\ar@{-}[r]&*{}&*{}\\
\ar@{-}[ruuuuuuu]
&*{}\ar@{-}[rd]&*{}\ar@{-}[ru]&*{}\ar@{-}[rd]&*{}\ar@{-}[ru]&*{}\ar@{-}[rd]&*{}\ar@{-}[ru]&*{}\ar@{-}[rd]&*{}\ar@{-}[ru]&*{}\ar@{-}[rd]&*{}\ar@{-}[ru]&*{}\ar@{-}[rd]&*{}\ar@{-}[ru]&*{}\ar@{-}[rd]&*{}\ar@{-}[ru]&*{}\ar@{-}[rd]&*{}\ar@{-}[ru]&*{}\ar@{-}[rd]&*{}\ar@{-}[ru]&*{}\ar@{-}[rd]&*{}&*{}\\
                      \ar@{-}[r]&*{}\ar@{-}[ru]&*{}&*{}\ar@{-}[ru]&*{}&*{}\ar@{-}[ru]&*{}&*{}\ar@{-}[ru]&*{}&*{}\ar@{-}[ru]&*{}&*{}\ar@{-}[ru]&*{}&*{}\ar@{-}[ru]&*{}&*{}\ar@{-}[ru]&*{}&*{}\ar@{-}[ru]&*{}&*{}\ar@{-}[ru]&*{}&*{}\\
                      \ar@{-}[rr]&*{}&*{}\ar@{-}[ru]|{\fbox{\rule[-.6cm]{0cm}{1cm}$f_1$}}&*{}\ar@{-}[r]&*{}\ar@{-}[ru]|{\fbox{\rule[-.6cm]{0cm}{1cm}$f_2$}}&*{}\ar@{-}[r]&*{}\ar@{-}[ru]|{\fbox{\rule[-.6cm]{0cm}{1cm}$g_1$}}&*{}\ar@{-}[r]&*{}\ar@{-}[ru]|{\fbox{\rule[-.6cm]{0cm}{1cm}$g_2$}}&*{}\ar@{-}[r]&*{}\ar@{-}[ru]|{\fbox{\rule[-.6cm]{0cm}{1cm}$g_3$}}&*{}&*{}\ar@{-}[ru]|{\fbox{\rule[-.6cm]{0cm}{1cm}$g_4$}}&*{}\ar@{-}[r]&*{}\ar@{-}[ru]|{\fbox{\rule[-.6cm]{0cm}{1cm}$h_1$}}&*{}\ar@{-}[r]&*{}\ar@{-}[ru]|{\fbox{\rule[-.6cm]{0cm}{1cm}$h_2$}}&*{}\ar@{-}[r]&*{}\ar@{-}[ru]|{\fbox{\rule[-.6cm]{0cm}{1cm}$h_3$}}&*{}\ar@{-}[r]&*{}&*{}
}
\]

To go from $[\vec{f},\vec{g},\vec{h}]$ to $[\vec{f},[\vec{g}],\vec{h}]$ we use the same arguments in the
reverse order since $[\vec{g}]\downarrow$.
\end{proof}
Next, we wish to show that the paracategory $Int^p(C)$ is strict symmetric monoidal.
\begin{definition}
\rm Let $(\mathcal{C},\otimes,I,\Tr,s)$ be a symmetric monoidal partially traced category, the tensor in the graph $Int^p(\mathcal{C})$ is defined as follows:
\begin{itemize}
\item The unit is $(I,I)$
\item on objects: $(A^+,A^-)\otimes (B^+,B^-)=(A^+\otimes B^+,B^-\otimes A^-)$
\item on arrows: given $f^{Int^p}:(A^+,A^-)\rightarrow (C^+,C^-)$ and $g^{Int^p}:(B^+,B^-)\rightarrow (D^+,D^-)$ then $(f\otimes g)^{Int^p}:(A^+,A^-)\otimes (B^+,B^-)\rightarrow (C^+,C^-)\otimes (D^+,D^-)$ is defined by
\begin{center}
$A^+\otimes B^+\otimes D^-\otimes C^-\stackrel{s\otimes s}{\rightarrow} B^+\otimes A^+\otimes C^-\otimes D^-\stackrel{1\otimes f\otimes 1}{\rightarrow}  B^+\otimes C^+\otimes A^-\otimes D^-\stackrel{s\otimes s}{\rightarrow}  C^+\otimes B^+\otimes D^-\otimes A^-\stackrel{1\otimes g\otimes 1}{\rightarrow} C^+\otimes D^+\otimes B^-\otimes A^-.$
\end{center}
\end{itemize}
\end{definition}

Let us derive some immediate consequences of this definition:
\begin{itemize}
\item[(i)] $$1_{(A^+,A^-)}\otimes 1_{(B^+,B^-)}= \includegraphics[height=0.25in]{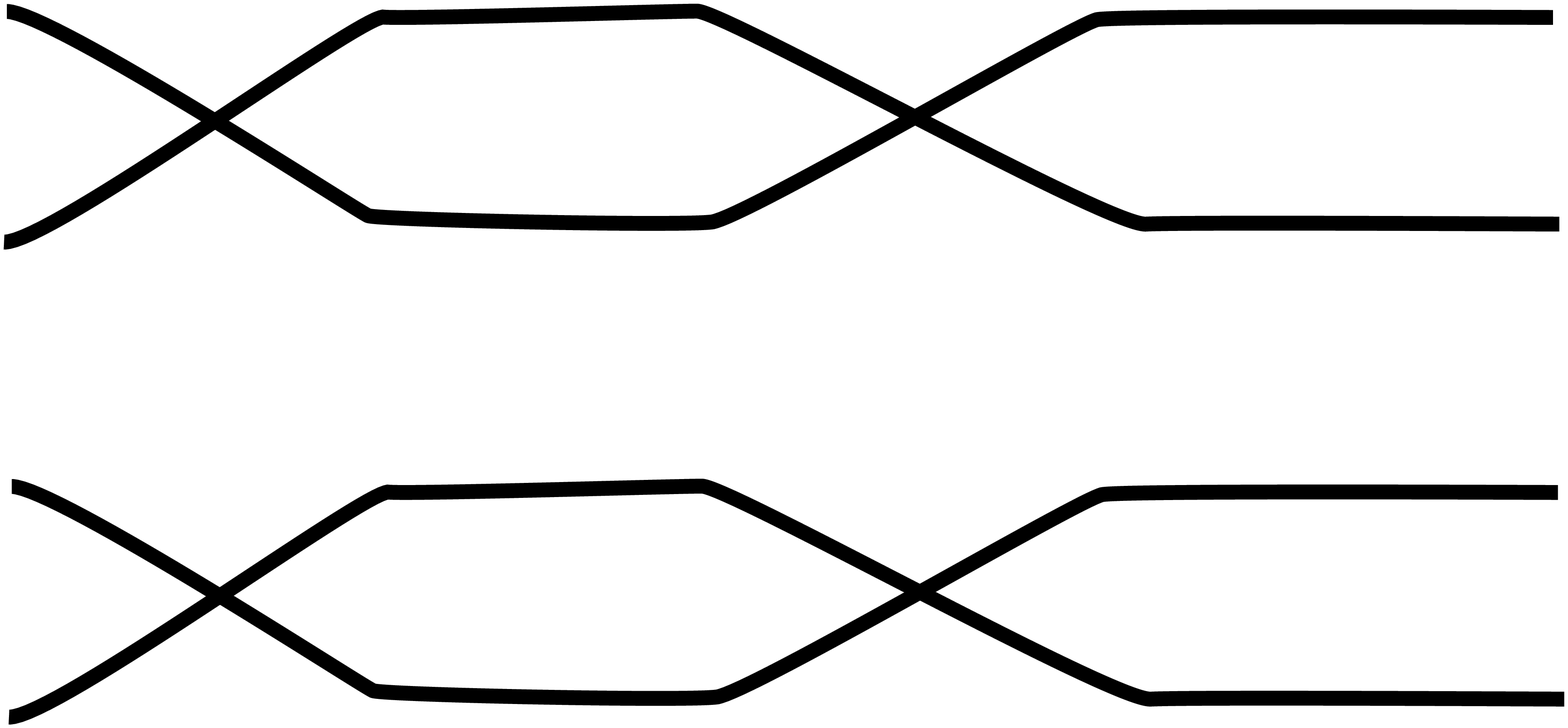}= \includegraphics[height=0.25in]{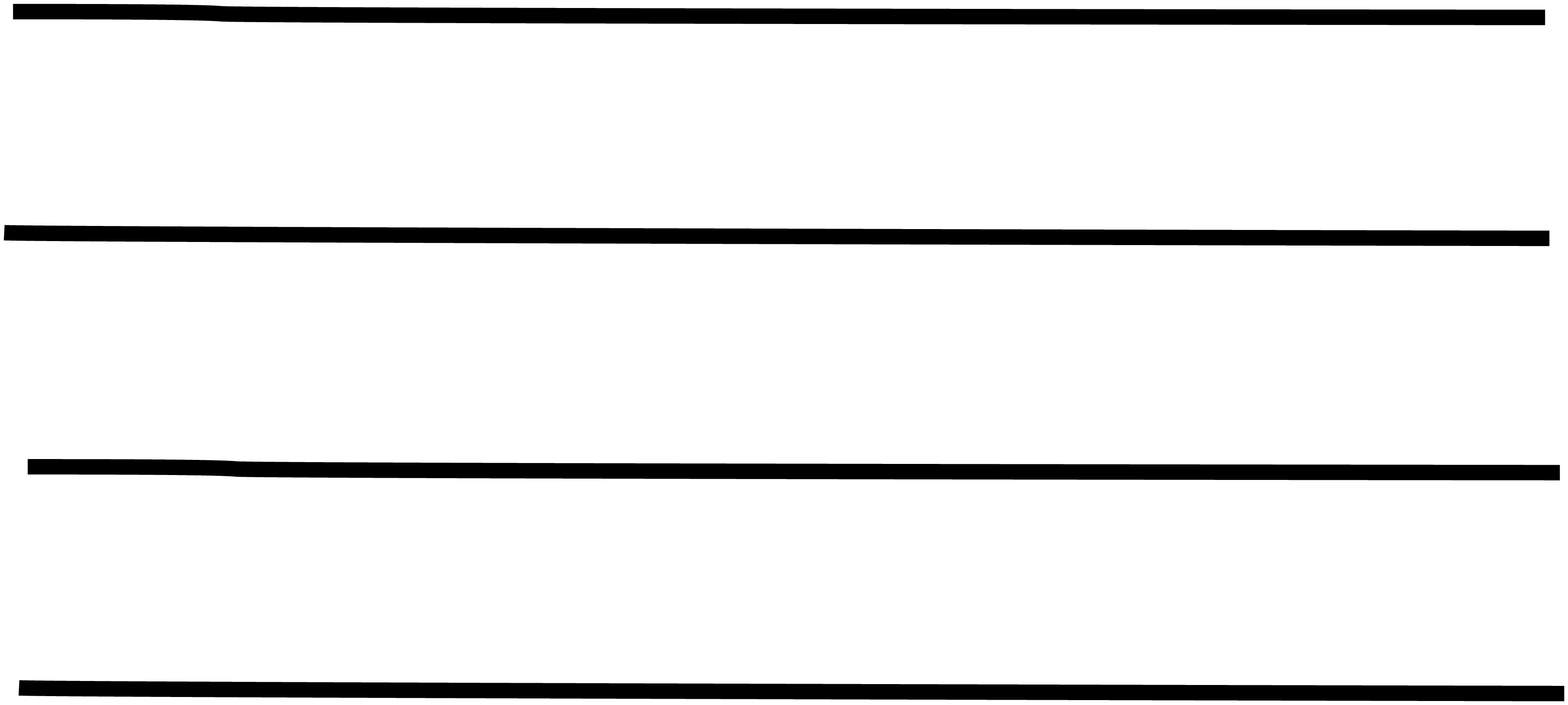}=1_{(A^+\otimes B^+,B^-\otimes A^-)}$$
\item[(ii)] $(A^+,A^-)\otimes (I,I)=(A^+\otimes I,I\otimes A^-)=(A^+,A^-)$
and $(I,I)\otimes (A^+,A^-)=(A^+,A^-)$.
\item[(iii)]$(A^+,A^-)\otimes ((B^+,B^-)\otimes (C^+,C^-))=(A^+\otimes B^+\otimes C^+,C^-\otimes B^-\otimes A^-)=((A^+,A^-)\otimes (B^+,B^-))\otimes (C^+,C^-).$
\end{itemize}

\begin{definition}
\rm The symmetry $(A^+,A^-)\otimes (B^+,B^-)\stackrel{\sigma}{\rightarrow} (B^+,B^-)\otimes (A^+,A^-)$  is defined in $Int^p(\mathcal{C})$ by the following formula: $\sigma=s_{A^+B^+}\otimes s_{A^-B^-}$.
\end{definition}
\begin{lemma}
\label{EXISTENCE SYMM COMP}
Let $(\mathcal{C},\otimes,I,\Tr,s)$ be a symmetric monoidal partial traced category. Given $f^{Int^p}:(Y^+,Y^-)\rightarrow (C^+,C^-)\otimes (D^+,D^-)$, and  $g^{Int^p}:(A^+,A^-)\otimes (B^+,B^-)\rightarrow (X^+,X^-)$ then $[f,\sigma]\downarrow$ and $[\sigma,g]\downarrow$.
\end{lemma}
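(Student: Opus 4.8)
The plan is to unwind the definition of composition in $Int^p(\mathcal{C})$ and reduce the claim to a single trace-class membership that is forced by the trace axioms. Writing $(X_1^+,X_1^-)=(Y^+,Y^-)$, $(X_2^+,X_2^-)=(C^+\otimes D^+,\,D^-\otimes C^-)$ and $(X_3^+,X_3^-)=(D^+\otimes C^+,\,C^-\otimes D^-)$, Definition~\ref{DEFINITION INT PARACATEGORY} gives
$$[f,\sigma]\funnels\Tr^{U}\bigl(\epsilon(f,\sigma)\circ(1_{X_1^+}\otimes 1_{X_3^-}\otimes\gamma_1)\bigr),\qquad U=X_2^-=D^-\otimes C^-,$$
and, by the remark following that definition, $[f,\sigma]\downarrow$ if and only if the argument $\beta:=\epsilon(f,\sigma)\circ(1_{X_1^+}\otimes 1_{X_3^-}\otimes\gamma_1)$ lies in $\Trc^{U}$. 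So the whole task is to prove $\beta\in\Trc^{U}$; I do \emph{not} need to compute the value of the trace.

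The key structural observation is that $\sigma=s_{C^+,D^+}\otimes s_{C^-,D^-}$ is a pure permutation that never mixes a ``positive'' wire with a ``negative'' one: it sends the positive input $X_2^+$ to the positive output $X_3^+$ by $s_{C^+,D^+}$, and the negative input $X_3^-$ to the negative output $X_2^-=U$ by $s_{C^-,D^-}$. Consequently, in the graphical description of $\beta$ the looped wire $U$ joins the $U$-output of $\sigma$ to the $U$-input of $f$, yet that $U$-output is produced by $\sigma$ \emph{solely} from the external input $X_3^-$ through a crossing; no output of $f$ ever returns to $f$. In block form the self-feedback component (the ``$f_{22}$'' of $\beta$ on $U$) is zero, so the feedback loop is spurious and passes only through symmetries of $\mathcal{C}$.

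From here I would make this precise with the graphical calculus for (partially) traced categories, straightening the traced wire by the moves that correspond to axioms preserving definedness. Concretely: the Naturality and Dinaturality axioms are two-directional equalities of Kleene type ($\funnel$, $\funnels$), so they transport trace-class membership back and forth and may be used to slide the crossings coming from $\sigma$ and from the pyramid $\epsilon(f,\sigma)$ out of the loop; the Superposing axiom propagates definedness of a trace through a tensor with an arbitrary arrow, since $g\otimes\Tr^{U}_{X,Y}(h)\funnel\Tr^{U}_{W\otimes X,Z\otimes Y}(g\otimes h)$ means $\Tr^{U}(h)\downarrow$ forces $\Tr^{U}(g\otimes h)\downarrow$; and the Yanking axiom supplies the base case $\Tr^{U}_{U,U}(\sigma_{U,U})\funnels 1_{U}$, which is defined. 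Combining these, $\beta$ is rearranged (up to $\funnels$) into the superposition of $f$, suitably rewired, with a bare crossing on $U$; definedness of the crossing's trace by Yanking then yields $\beta\in\Trc^{U}$ through Superposing and Naturality, so $[f,\sigma]\downarrow$. The statement $[\sigma,g]\downarrow$ follows from the mirror-image diagram by the same argument, with the roles of domain and codomain interchanged.

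I expect the main obstacle to be organisational rather than conceptual: keeping track of the pyramid $\epsilon(f,\sigma)$ together with the permutation $\gamma_1$, and ensuring that at every step the relevant trace axiom is invoked in the direction that \emph{establishes} definedness rather than merely equating two values already known to be defined. This bookkeeping is cleanest in the graphical language, exactly as in the proof that $Int^p(\mathcal{C})$ is a paracategory, where Vanishing~II played the enabling role; here Yanking plays the analogous part.
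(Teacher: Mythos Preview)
Your proposal is correct and follows essentially the same route as the paper: start from Yanking on $\sigma_{U,U}$ (with $U=D^-\otimes C^-$), use Superposing to tensor with $1_{Y^+}$, then Naturality to compose with $f$ and the remaining symmetries, and finally coherence to identify the result with the composition formula from Definition~\ref{DEFINITION INT PARACATEGORY}. One small inaccuracy: Naturality is only $\funnel$, not $\funnels$, so it transports definedness in one direction only---but that is exactly the direction you need here, and Dinaturality (which you also list) is not actually required for this particular lemma.
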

\begin{proof}
To simplify the notation we use the symbol ``;" for the composition in the category $\cC$ with the order given by graphical concatenation.

We first consider the composition of $f:Y^+\otimes D^-\otimes C^-\rightarrow C^+\otimes D^+\otimes Y^-$ with the following symmetries and identities in the category $\mathcal{C}$: $(1_{Y^+}\otimes s_{C^-D^-});f;(s_{C^+D^+}\otimes 1_{Y^-})$.\\
Next, since by the yanking axiom $s_{D^-\otimes C^-,D^-\otimes C^-}\in \Trc^{D^-\otimes C^-}_{D^-\otimes C^-,D^-\otimes C^-}$ and
$\Tr^{D^-\otimes C^-}_{D^-\otimes C^-,D^-\otimes C^-}(s_{D^-\otimes C^-,D^-\otimes C^-})=1_{D^-\otimes C^-}$  then by superposing axiom we have that  $1_{Y^+}\otimes s_{D^-\otimes C^-,D^-\otimes C^-}\in \Trc^{D^-\otimes C^-}_{Y^+\otimes D^-\otimes C^-, Y^+\otimes D^-\otimes C^-}$ and
\begin{center}
 $1_{Y^+}\otimes \Tr^{D^-\otimes C^-}_{D^-\otimes C^-,D^-\otimes C^-}(s_{D^-\otimes C^-, D^-\otimes C^-})=
  \Tr^{D^-\otimes C^-}_{Y^+\otimes D^-\otimes C^-, Y^+\otimes D^-\otimes C^-}(1_{Y^+}\otimes (s_{D^-\otimes C^-, D^-\otimes C^-})).$
 \end{center}
Therefore, by naturality we have that:\\
 $(1_{Y^+}\otimes s_{C^-D^-});\Tr^{D^-\otimes C^-}_{Y^+\otimes D^-\otimes C^-, Y^+\otimes D^-\otimes C^-}(1_{Y^+}\otimes (s_{D^-\otimes C^-, D^-\otimes C^-}));f;(s_{C^+D^+}\otimes 1_{Y^-})=$\\
 $\Tr^{D^-\otimes C^-}_{Y^+\otimes C^-\otimes D^-, Y^+\otimes D^-\otimes C^-}((1_{Y^+}\otimes s_{C^-D^-}\otimes 1_{D^-\otimes C^-});(1_{Y^+}\otimes s_{D^-\otimes C^- D^-\otimes C^-}));f;(s_{C^+D^+}\otimes 1_{Y^-})=$\\
by coherence:\\
 $\Tr^{D^-\otimes C^-}_{Y^+\otimes C^-\otimes D^-, Y^+\otimes D^-\otimes C^-}((1_{Y^+}\otimes s_{C^-\otimes D^-, D^-\otimes C^-});(1_{Y^+}\otimes 1_{D^-\otimes C^-}\otimes s_{C^- D^-}));f;(s_{C^+D^+}\otimes 1_{Y^-})=$\\

by the naturality axiom:\\
 $\Tr^{D^-\otimes C^-}_{Y^+\otimes C^-\otimes D^-, D^+\otimes C^+\otimes Y^-}((1_{Y^+}\otimes s_{C^-\otimes D^-, D^-\otimes C^-});(1_{Y^+}\otimes 1_{D^-\otimes C^-}\otimes s_{C^- D^-});f\otimes 1_{D^-\otimes C^-};(s_{C^+D^+}\otimes 1_{Y^-}\otimes 1_{D^-\otimes C^-}))=$\\

and by functoriality:\\
 $\Tr^{D^-\otimes C^-}_{Y^+\otimes C^-\otimes D^-, D^+\otimes C^+\otimes Y^-}((1_{Y^+}\otimes s_{C^-\otimes D^-, D^-\otimes C^-});(f\otimes 1_{C^-\otimes D^-});(s_{C^+ D^+}\otimes 1_{Y^-}\otimes s_{C^-D^-}))=$\\

Now by coherence, we can replace:\\
$$s_{C^+ D^+}\otimes 1_{Y^-}\otimes s_{C^-D^-}$$
by the following
$$(1_{C^+\otimes D^+}\otimes s_{Y^-, C^-\otimes D^-});(s_{C^+D^+}\otimes s_{C^-D^-}\otimes 1_{Y^-});(1_{D^+\otimes C^+}\otimes s_{D^-\otimes C^-, Y^-}).$$\\
Which, by definition, is $[f,s_{C^+D^+}\otimes s_{C^-D^-}]$, i.e., we proved that $[f,\sigma_{(C^+,C^-),(D^+,D^-)}]\downarrow$.\\
After repeating a similar argument as above, we have: $[s_{A^+B^+}\otimes s_{A^-B^-},f]\downarrow.$

Now we repeat the proof using graphical language. The purpose of this is to persuade the reader of the advantages of using this methodology.
We start with the following diagram
$$\includegraphics[height=0.4in]{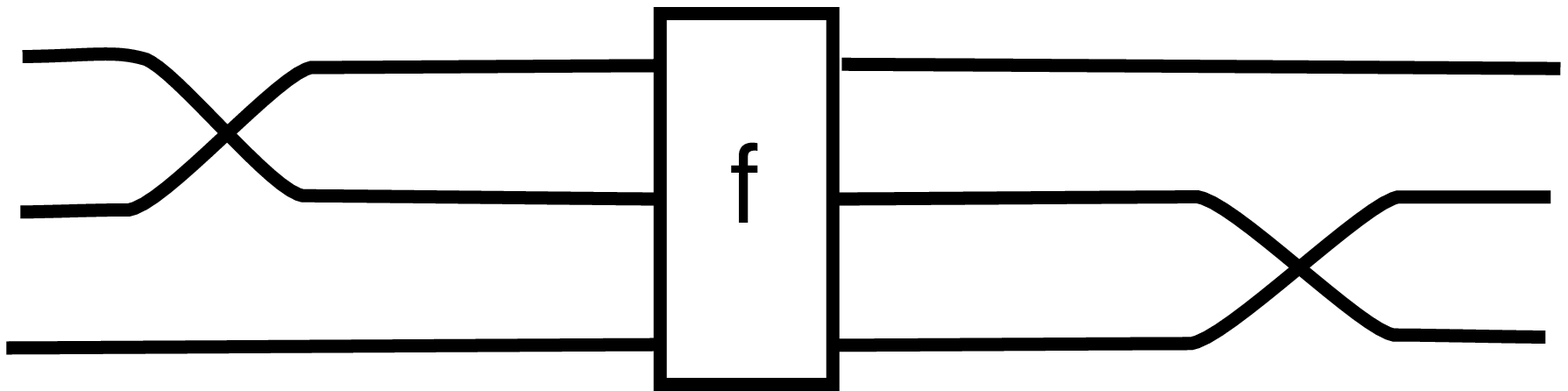}$$
by the yanking axiom the graphic inside the box is in the trace class
$$\includegraphics[height=1in]{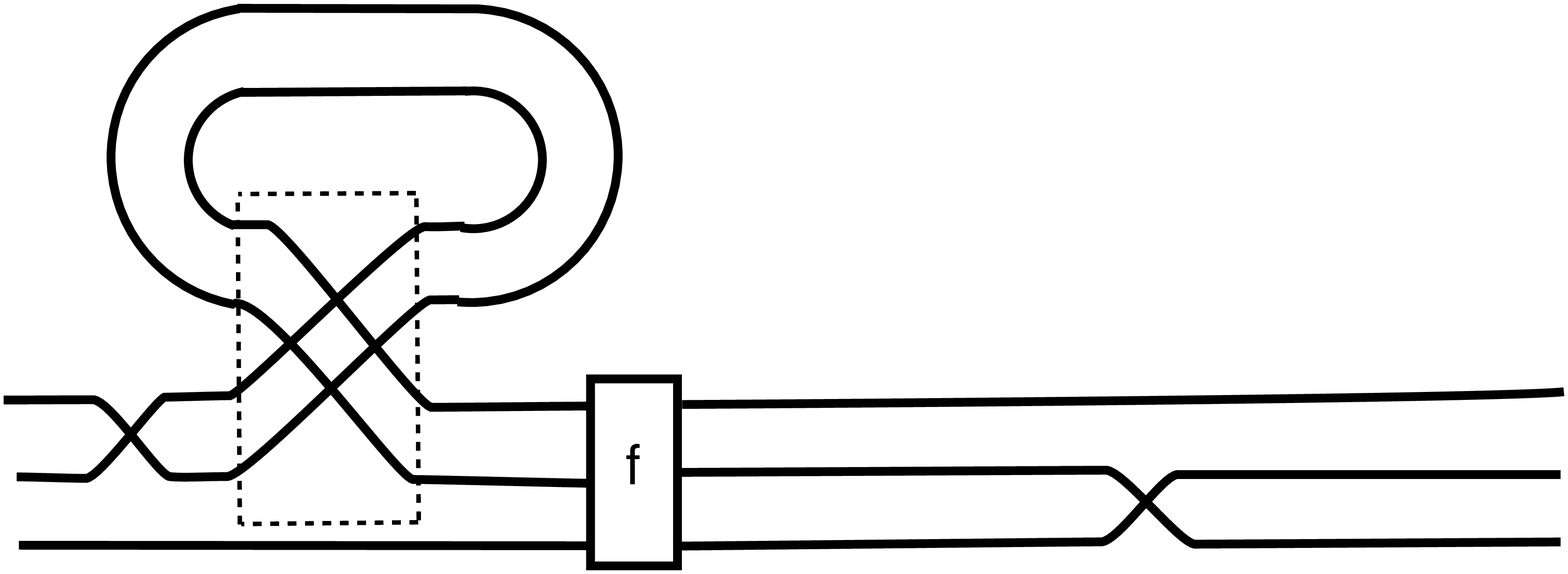}$$
by the superposition axiom
$$\includegraphics[height=1in]{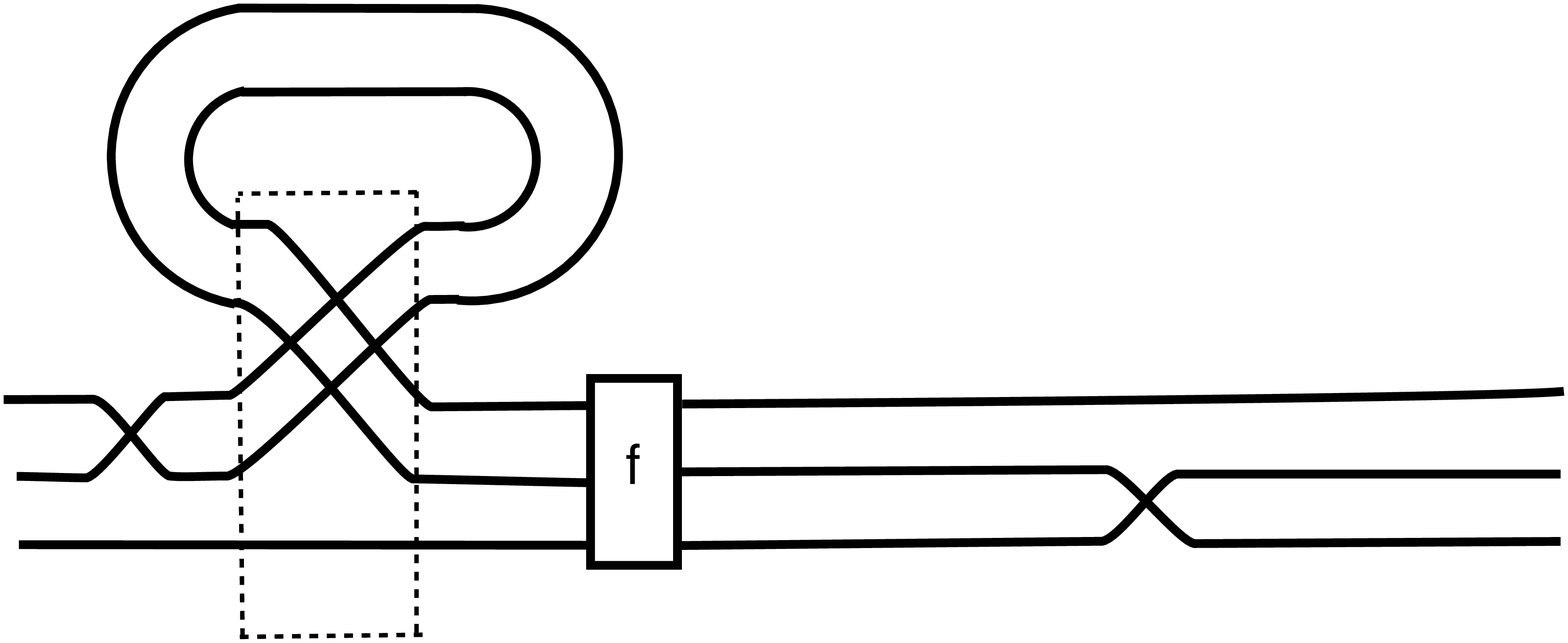}$$
naturality axiom
$$\includegraphics[height=1in]{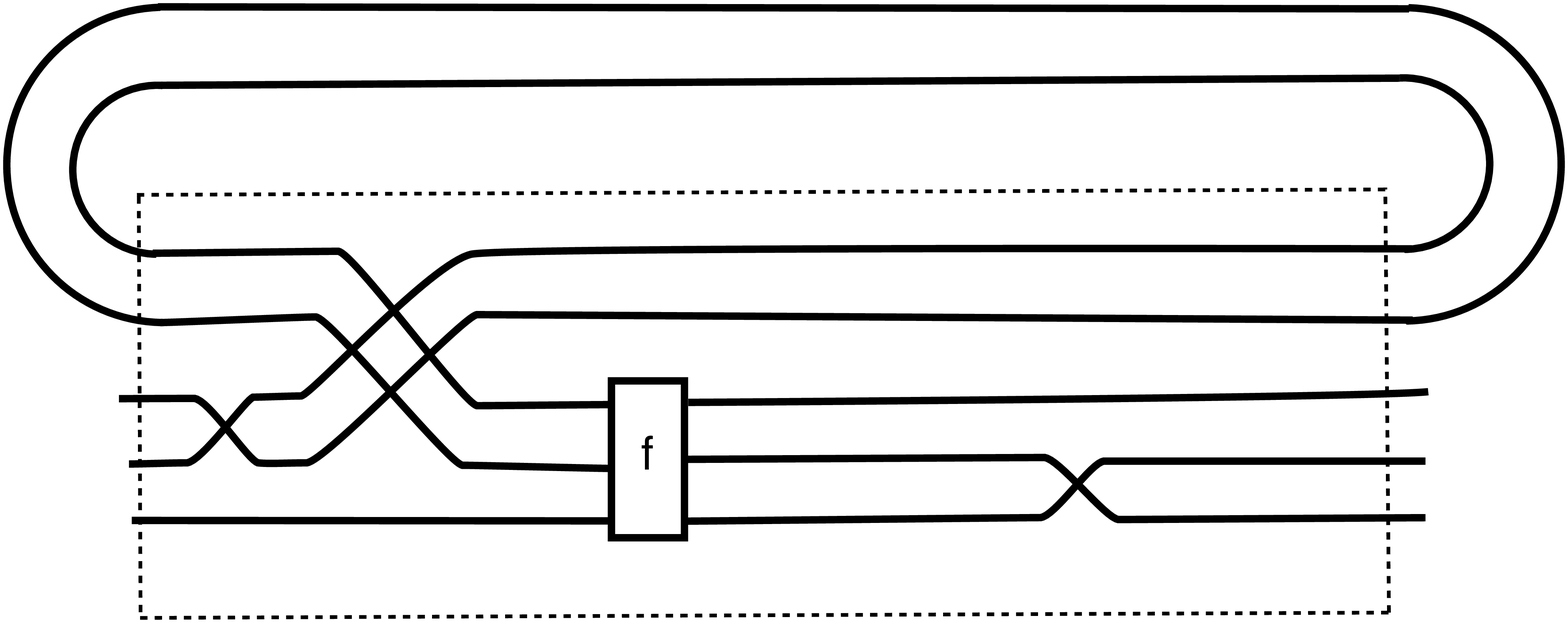}$$
naturality of the symmetry $\sigma$
$$\includegraphics[height=1in]{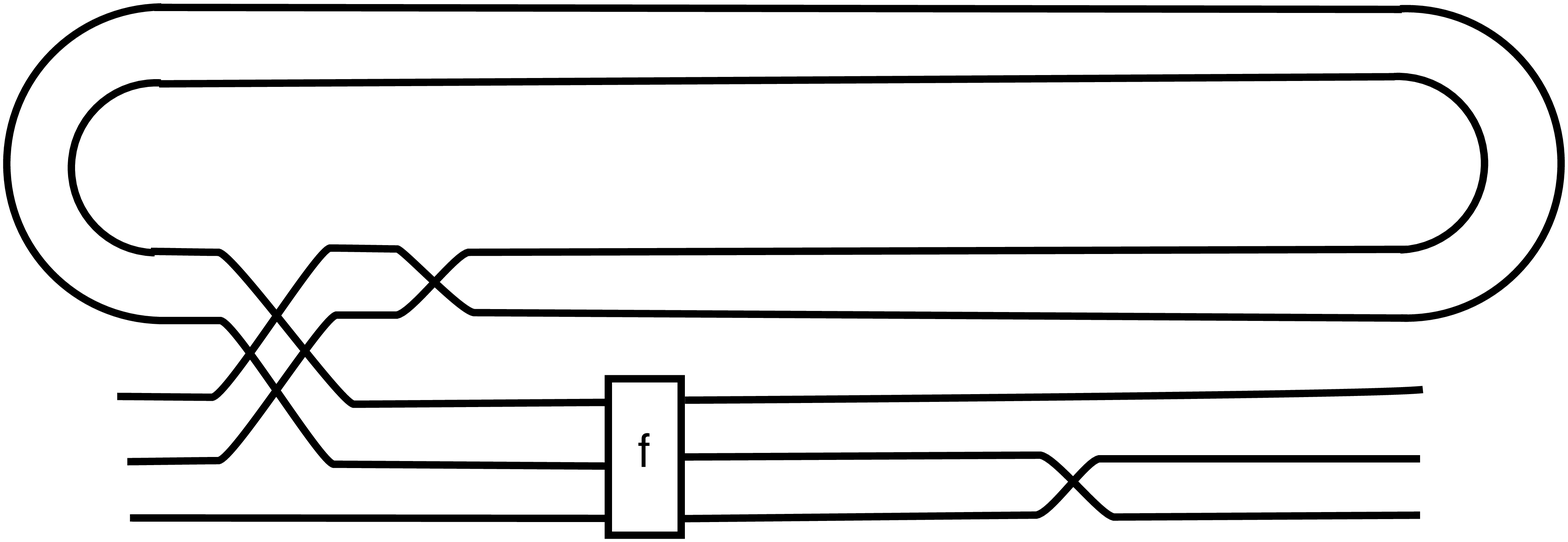}$$
functoriality
$$\includegraphics[height=1in]{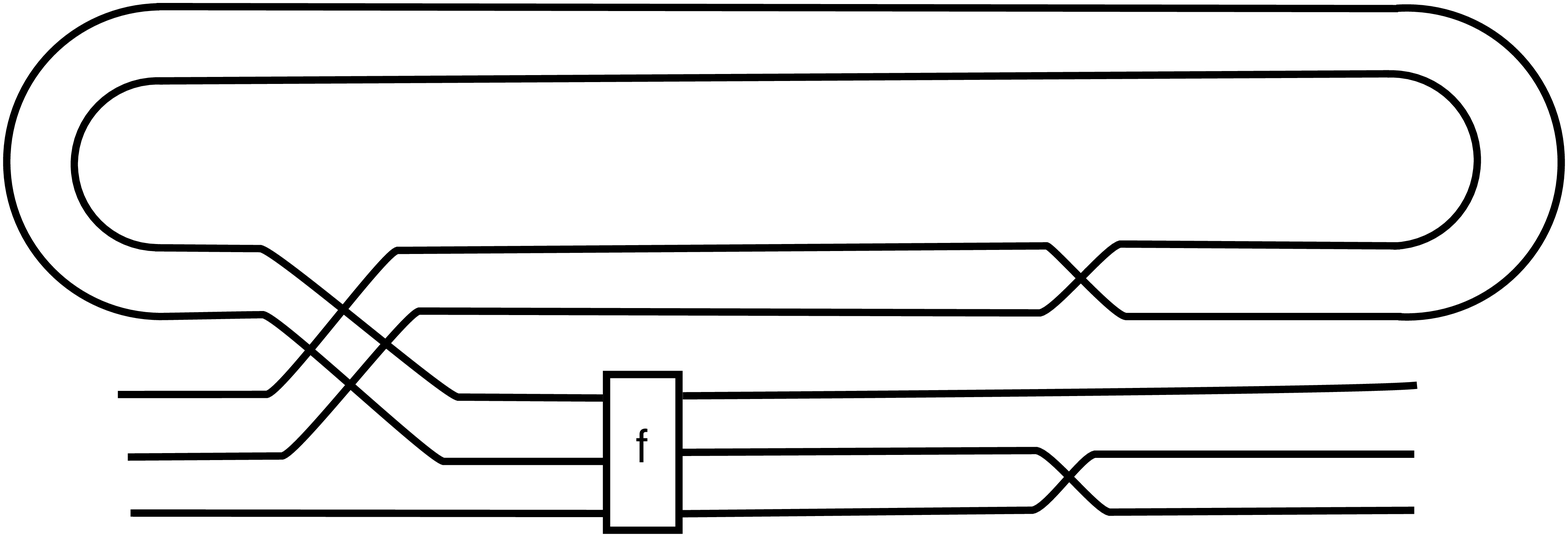}$$
by coherence given by naturality of $\sigma$ and coherence axiom in $\cC$
$$\includegraphics[height=1in]{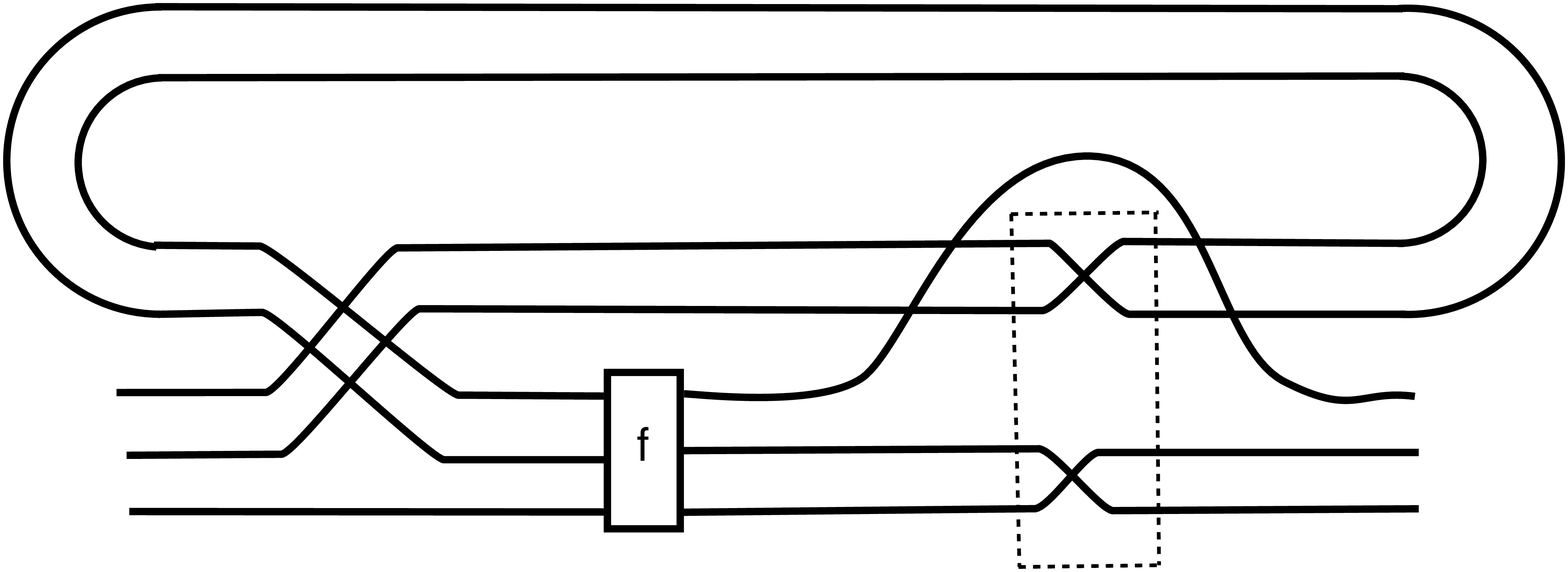}$$
 From now we use the graphical language systematically.
\end{proof}

\begin{lemma}
\label{SIGMA IS A NATURAL TRANSFORMATION}
$\sigma$ is a natural transformation.
\end{lemma}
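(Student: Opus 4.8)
The plan is to establish, for arrows $f^{Int^p}:(A^+,A^-)\to (A'^+,A'^-)$ and $g^{Int^p}:(B^+,B^-)\to (B'^+,B'^-)$ in $Int^p(\mathcal{C})$, the naturality square
\[
[\,f\otimes g,\ \sigma_{(A'^+,A'^-),(B'^+,B'^-)}\,]\funnels[\,\sigma_{(A^+,A^-),(B^+,B^-)},\ g\otimes f\,].
\]
The first thing I would do is dispose of definedness. Since $\sigma$ is built as $s\otimes s$ from the symmetries of $\mathcal{C}$, Lemma~\ref{EXISTENCE SYMM COMP} applies directly: taking its first clause with the role of ``$f$'' played by $f\otimes g$ (whose codomain is a tensor) gives $[f\otimes g,\sigma']\downarrow$, and taking its second clause with the role of ``$g$'' played by $g\otimes f$ (whose domain is a tensor) gives $[\sigma,g\otimes f]\downarrow$. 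Hence both sides are defined, and the Kleene equality collapses to an ordinary equality of arrows of $\mathcal{C}$, which I would compute from the trace formula of Definition~\ref{DEFINITION INT PARACATEGORY}.

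Next I would reduce the two-sided square to naturality in each variable. By Proposition~\ref{PARACAT2} we have $f\otimes g \funnels [\,f\otimes 1,\ 1\otimes g\,]$ for arrows of length one, so using the paracategory axiom (c) of Definition~\ref{DEF PARACATEGORY} the composite $[f\otimes g,\sigma']$ rewrites as $[\,f\otimes 1,\ 1\otimes g,\ \sigma'\,]$. It therefore suffices to prove the two single-variable equations
\[
[\,f\otimes 1,\ \sigma\,]\funnels[\,\sigma,\ 1\otimes f\,]
\qquad\text{and}\qquad
[\,1\otimes g,\ \sigma\,]\funnels[\,\sigma,\ g\otimes 1\,],
\]
since the full square is recovered by pasting these two together (again invoking axiom (c) and the bifunctoriality of $\otimes$ established earlier). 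Each of these is again defined on both sides by Lemma~\ref{EXISTENCE SYMM COMP}.

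The heart of the argument is then a graphical computation, carried out exactly in the style of Lemma~\ref{EXISTENCE SYMM COMP}. I would unfold $[f\otimes 1,\sigma]$ via Definition~\ref{DEFINITION INT PARACATEGORY} into a trace of an $\epsilon$-diagram with the wire-crossings of $\sigma=s\otimes s$ appended, and then transport the box $f$ across those crossings using naturality of $s$ in $\mathcal{C}$ together with the naturality and dinaturality axioms of the partial trace, finally collapsing the resulting loops of symmetries by yanking and coherence; the diagram for $[\sigma,1\otimes f]$ is reduced to the same normal form by the mirror-image manipulation. The main obstacle will be the bookkeeping of the traced (negative) wires: the composition formula traces over the \emph{reversed} tensor of the negative objects, so appending $\sigma$ permutes those wires, and one must repeatedly apply dinaturality — post-composing with a permutation and pre-composing with its inverse — to restore the canonical order before the trace values on the two sides can be identified. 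As in the previous lemma, the delicate point is separating the symmetries that are functorially free of $f$ (and can be pushed through the trace by naturality) from those that genuinely interact with it; once both composites are brought to a common normal form, coherence in $\mathcal{C}$ finishes the proof.
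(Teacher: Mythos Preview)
There is a genuine gap in your approach: you never invoke Vanishing~II, which is the crux of the paper's argument and which your announced toolkit (naturality, dinaturality, yanking, coherence) cannot replace. The composite $[f\otimes g,\sigma]$ in $Int^p(\mathcal{C})$ is, by Definition~\ref{DEFINITION INT PARACATEGORY}, a partial trace over the tensor $U\otimes V$ of the negative components of the middle object; the other side traces over a \emph{different} tensor. To compare them one must split $\Tr^{U\otimes V}$ into $\Tr^U(\Tr^V(-))$, manipulate the inner trace, and reassemble --- precisely what Vanishing~II licenses. The paper's proof makes this explicit: it first exhibits $h\in\Trc^V$ by building the $V$-loop from yanking plus superposing and naturality (this is the part that is ``in the style of Lemma~\ref{EXISTENCE SYMM COMP}''), then uses definedness of the composite ($h\in\Trc^{U\otimes V}$) to fire Vanishing~II, then manipulates $\Tr^V(h)$ by coherence and another yanking, and reassembles. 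Dinaturality alone only cycles a morphism around a single loop; it cannot split a double loop into nested single ones. Your reduction to the single-variable equations does not sidestep this: even $[f\otimes 1,\sigma]$ traces over a tensor $B^-\otimes A'^-$, so the same difficulty arises verbatim.

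A secondary issue: invoking Proposition~\ref{PARACAT2} to rewrite $f\otimes g$ as $[f\otimes 1,1\otimes g]$ is circular. That proposition is proved from axiom~(b) of Definition~\ref{DEFINITION SSMPC}, and establishing the ssmpc axioms for $Int^p(\mathcal{C})$ is exactly the goal of the present sequence of lemmas (Theorem~\ref{INT ES ssmpc}). You would need to show directly that $[f\otimes 1,1\otimes g]\downarrow$ in $Int^p(\mathcal{C})$ and equals $f\otimes g$ --- again a trace-class verification, again needing Vanishing~II. In any case the detour is unnecessary: the ssmpc definition only asks for the single-variable naturality equations $[f\otimes 1,\sigma]=[\sigma,1\otimes f]$ and $[1\otimes g,\sigma]=[\sigma,g\otimes 1]$, so you may attack those directly; but you will still need Vanishing~II to carry the graphical computation through.
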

\begin{proof}
We want to prove that $\sigma\circ (f\otimes g)=(f\otimes g)\circ\sigma$. Notice that we have already proved that $(f\otimes g)\circ\sigma\downarrow$ and $\sigma\circ (f\otimes g)\downarrow$ by Lemma~\ref{EXISTENCE SYMM COMP}. We have by assumption that $\sigma\circ (f\otimes g)$ is defined. In the graphical language this means that $h\in\Trc^{U\otimes V}$, where h is the following diagram:
$$\begin{array}{cc}
\includegraphics[height=0.7in]{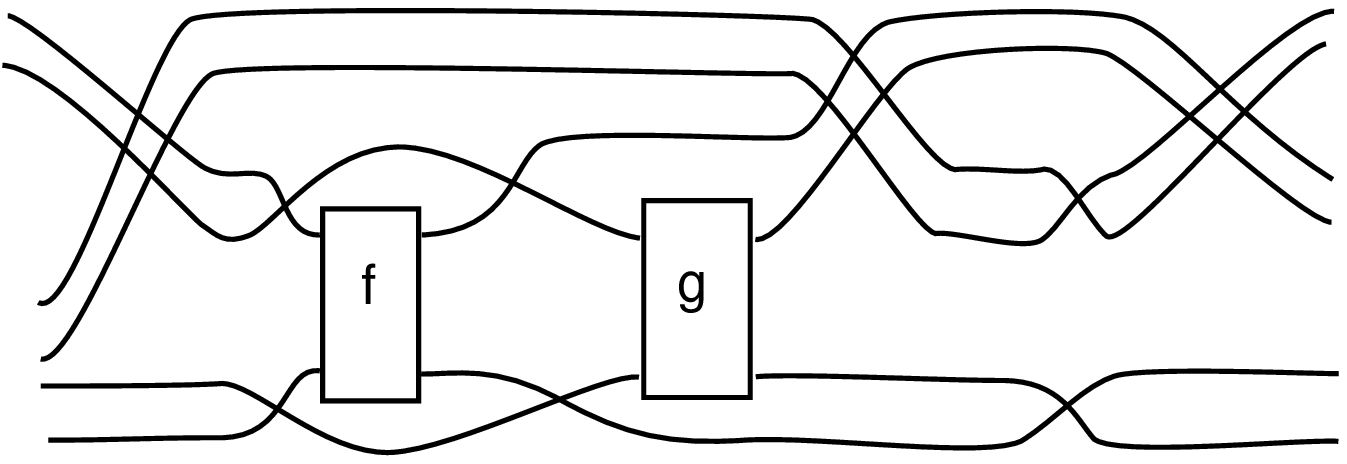} \hspace{0.1cm}&\mbox{the trace given by}\,\,\Tr^{U\otimes V}(h)=
\includegraphics[height=0.7in]{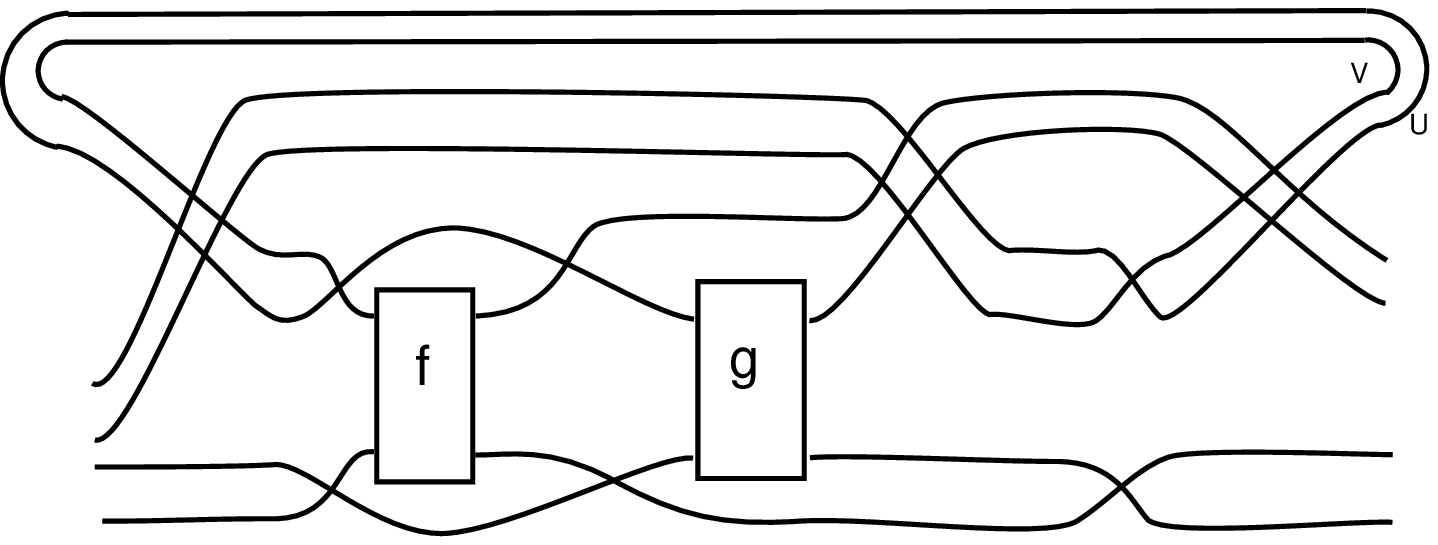}
\end{array}$$
Here the issue is to justify the use of the Vanishing II axiom. Putting the matter schematically without to much emphasis on the name of the objects, we want to split the trace over $U\otimes V$ by using a general hypothesis of type $h\in\Trc^V$ and a conditional hypothesis of type $h\in\Trc^{U\otimes V}$ and we must prove that $\Tr^{V}(h)\in\Trc^{U}$.
This is the kind of back and forward process of proof that we have repeatedly used before where the justification of the use of the axiom is also the proof that we need.
Let us start by considering the following diagram:
\begin{equation}
\includegraphics[height=0.6in]{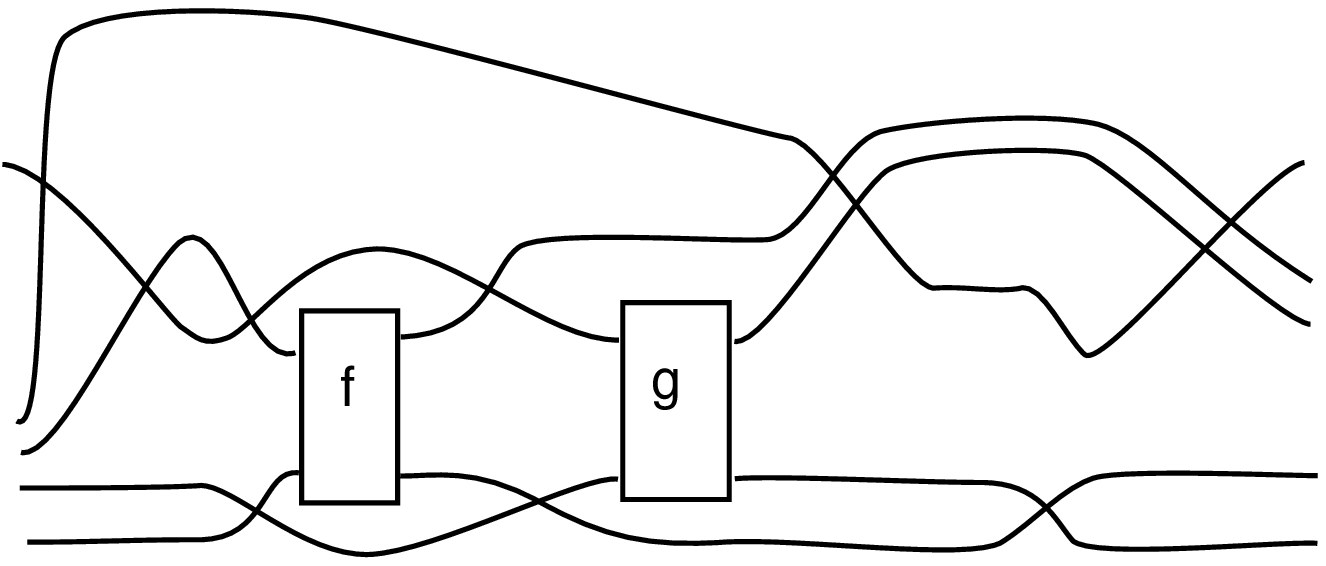}.
\end{equation}
 Then by the yanking axiom, which is totally defined: $\sigma\in\Trc^V$ and we can replace the former graph by this one $$\includegraphics[height=0.6in]{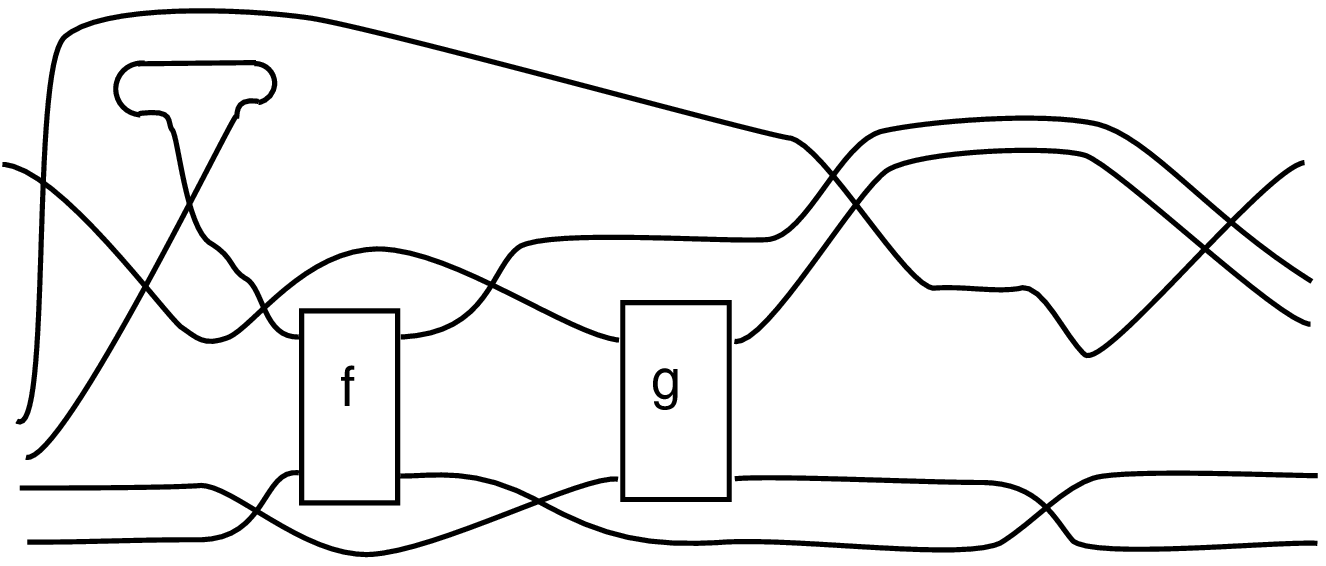},$$
and by the superposition axiom (both versions) we have that locally the diagram satisfies that it is part of the trace class $\Trc^V$ and the graph, after tracing it out is given by
$$\includegraphics[height=0.6in]{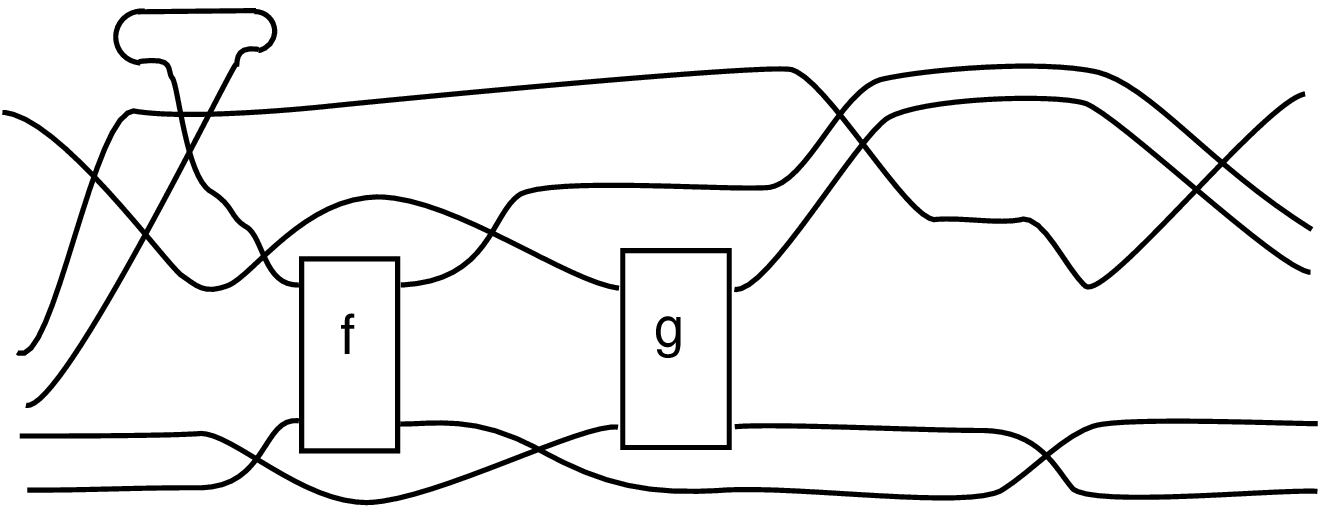}.$$
Then the naturality axiom allows us to include the full diagram in the trace class $\Trc^V$ and we are allowed also to trace it: $$\includegraphics[height=0.6in]{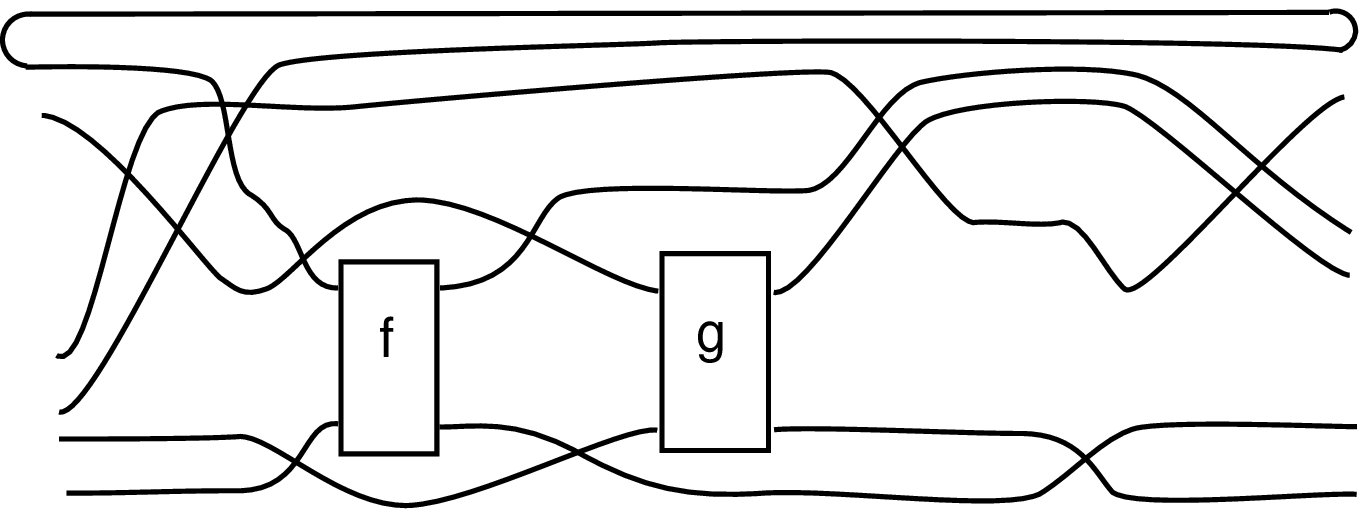}.$$
Finally, by coherence
$$\begin{array}{cc}
\includegraphics[height=0.6in]{naturaA2.eps}\in\Trc^{V} \hspace{0.1cm}&\mbox{and the trace is represented by\,\,\,\,\,\,\,\,} \includegraphics[height=0.6in]{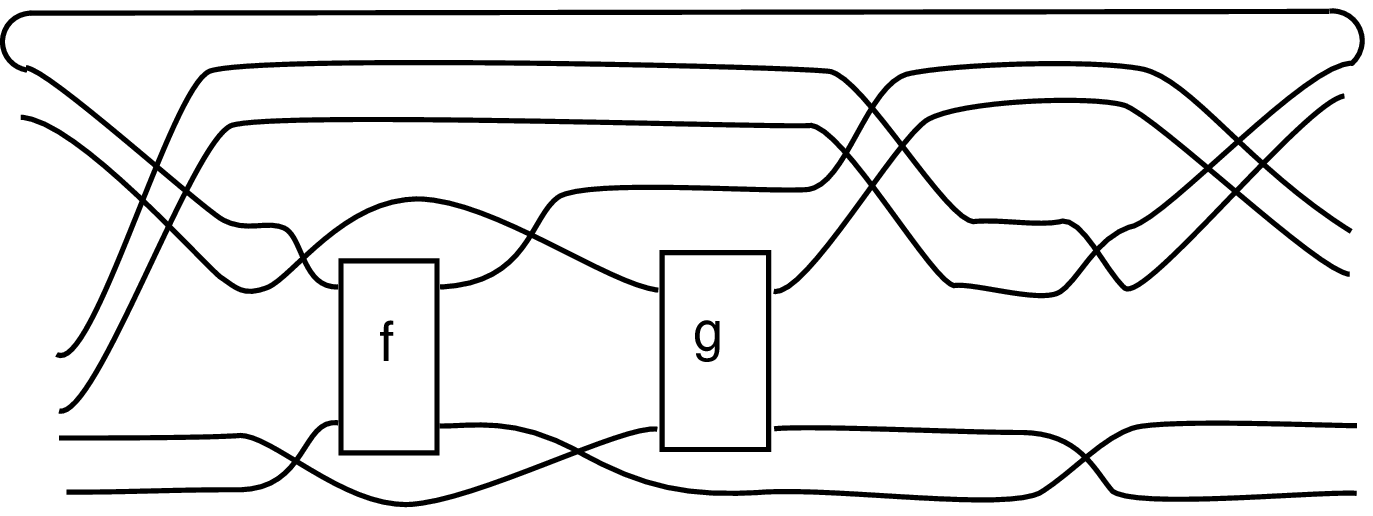}.
\end{array}$$
Now we are in a position where we can use the vanishing II axiom and to conclude that $\Tr^V(h)\in\Trc^U$ and of course the value of the trace is given by $\Tr^U(\Tr^V(h))=\Tr^{U\otimes V}(h)$.

After justifying the use of the vanishing II axiom we move to ensure that both diagram are equal.
First notice that the following diagrams are equivalent :
$$\begin{array}{cc}
\includegraphics[height=0.6in]{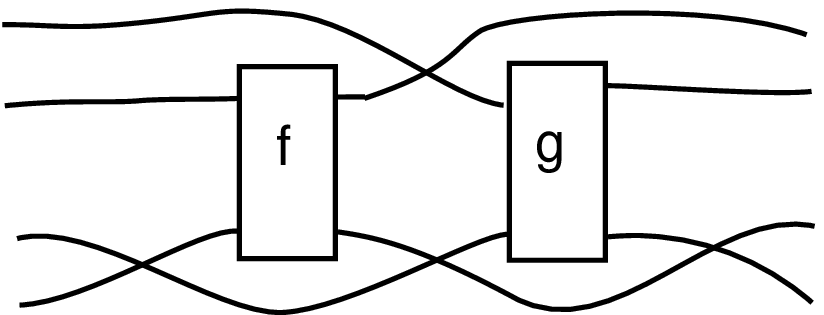}
\hspace{0.3cm}&\mbox{by coherence\,\,\,\,\,\,} \includegraphics[height=0.6in]{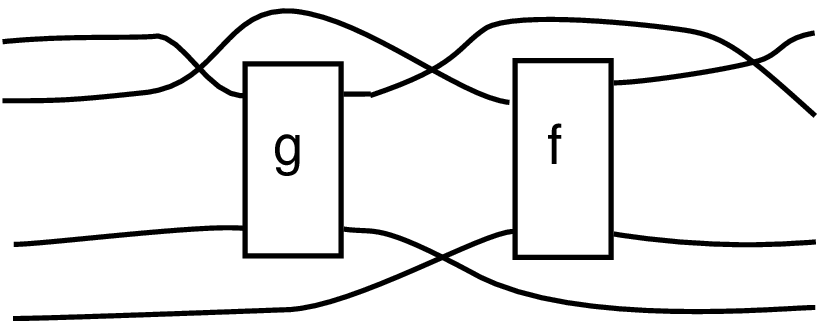}.
\end{array}$$

Starting with the last diagram and applying the axioms, where the existence of the trace is justified by the axiom that we are mentioning, we obtain:
$$\includegraphics[height=0.5in]{natura13.eps}
\,\mbox{coh.}\,\,
\includegraphics[height=0.6in]{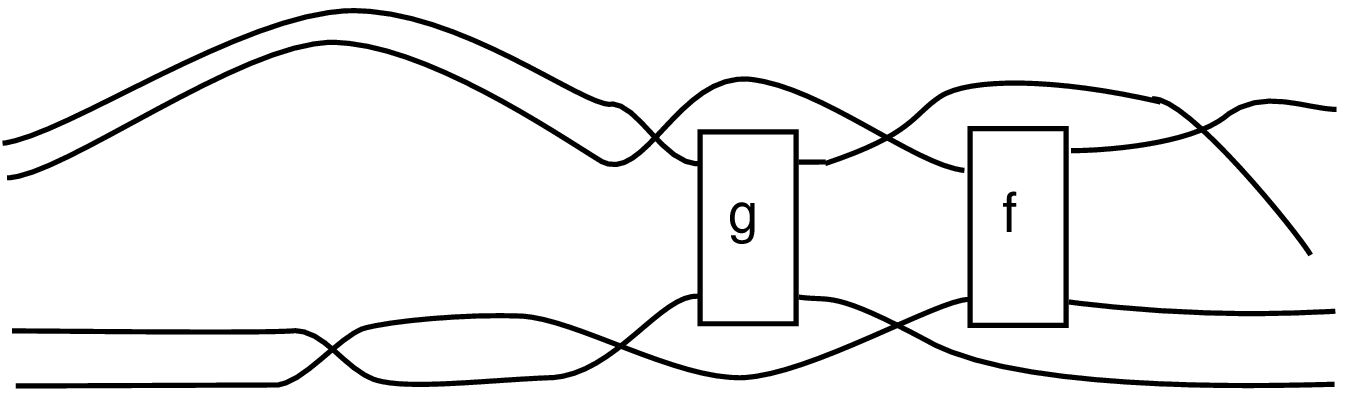}
\,\,\mbox{yank.}\,
\includegraphics[height=0.6in]{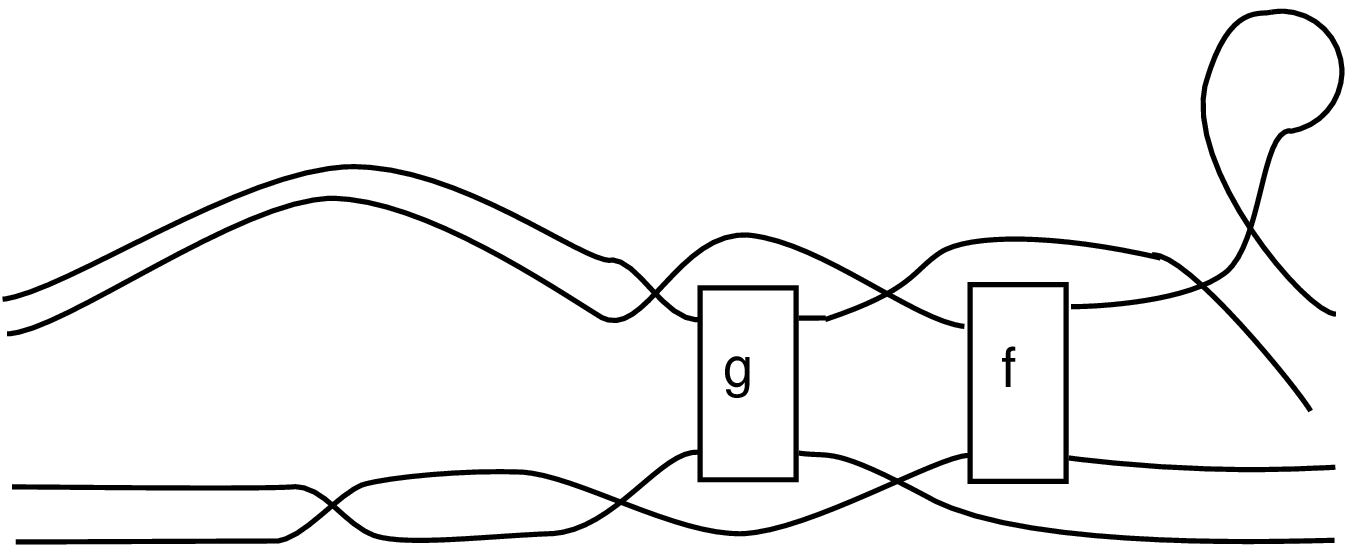}
\,\mbox{nat.}\,\,\,\,$$
\includegraphics[height=0.6in]{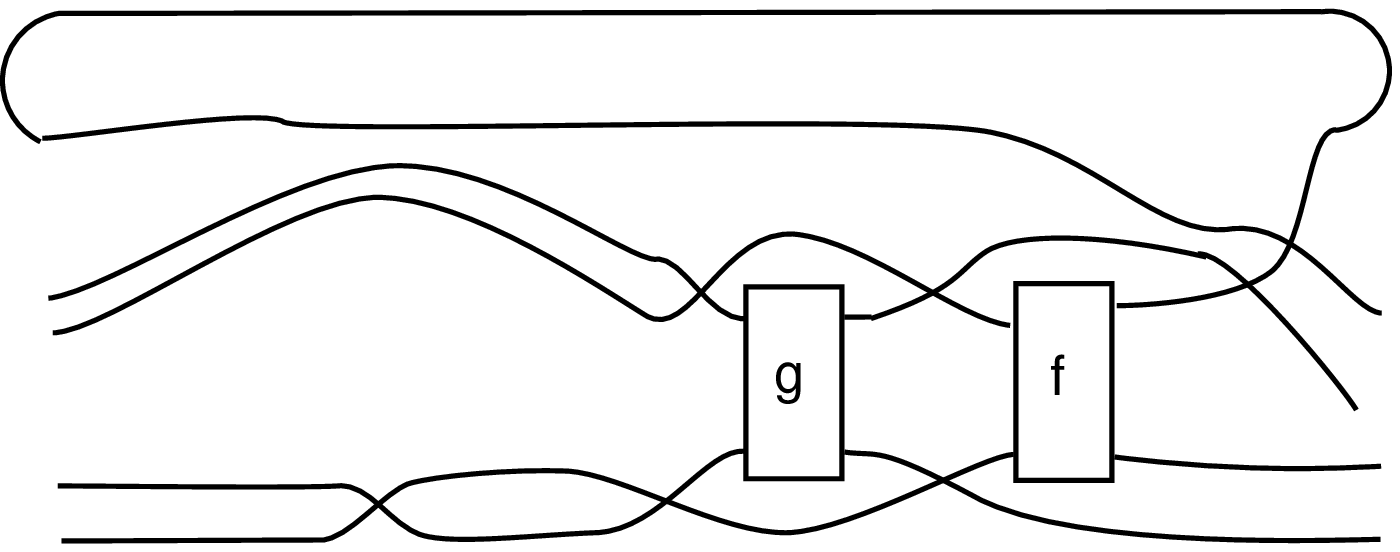}
$\,\,\,\,\mbox{coh.}\,\,\,\,$
\includegraphics[height=0.6in]{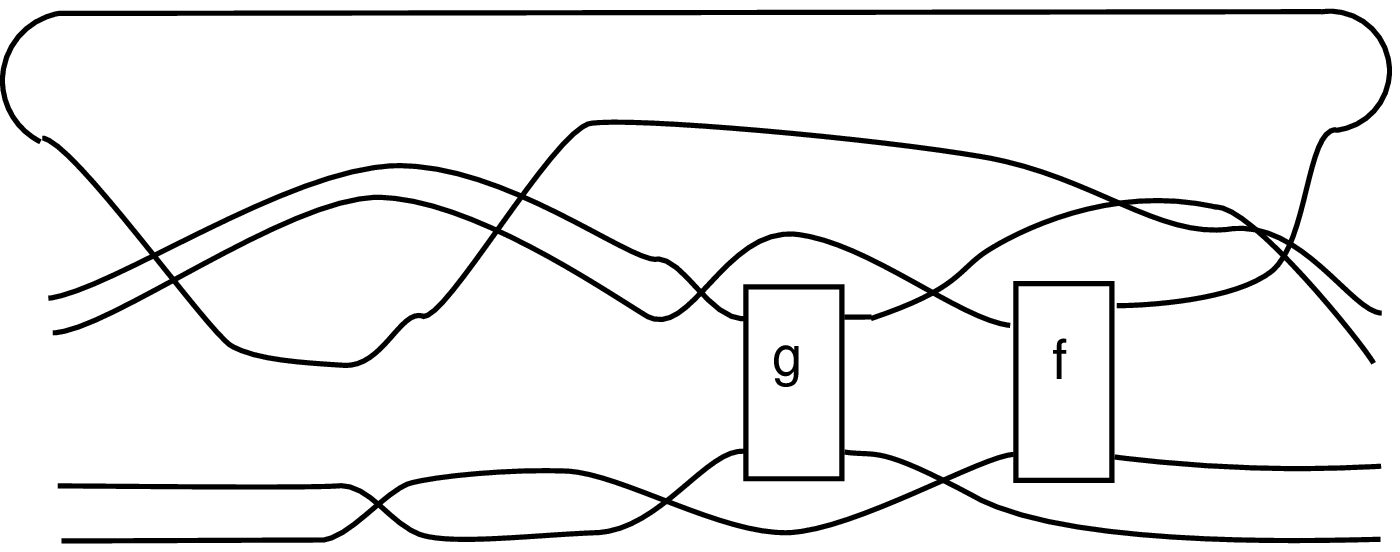}
$\,\,\,\,\mbox{yank., sup.}\,$
\includegraphics[height=0.6in]{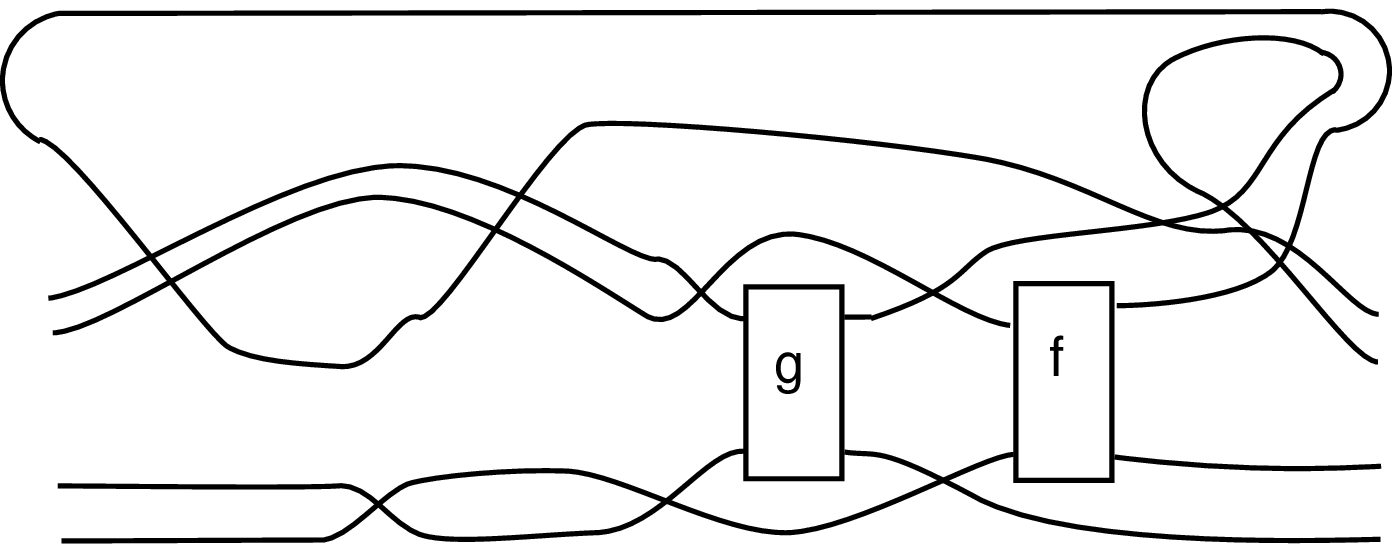}
$\,\mbox{coh.}\,$

\includegraphics[height=0.6in]{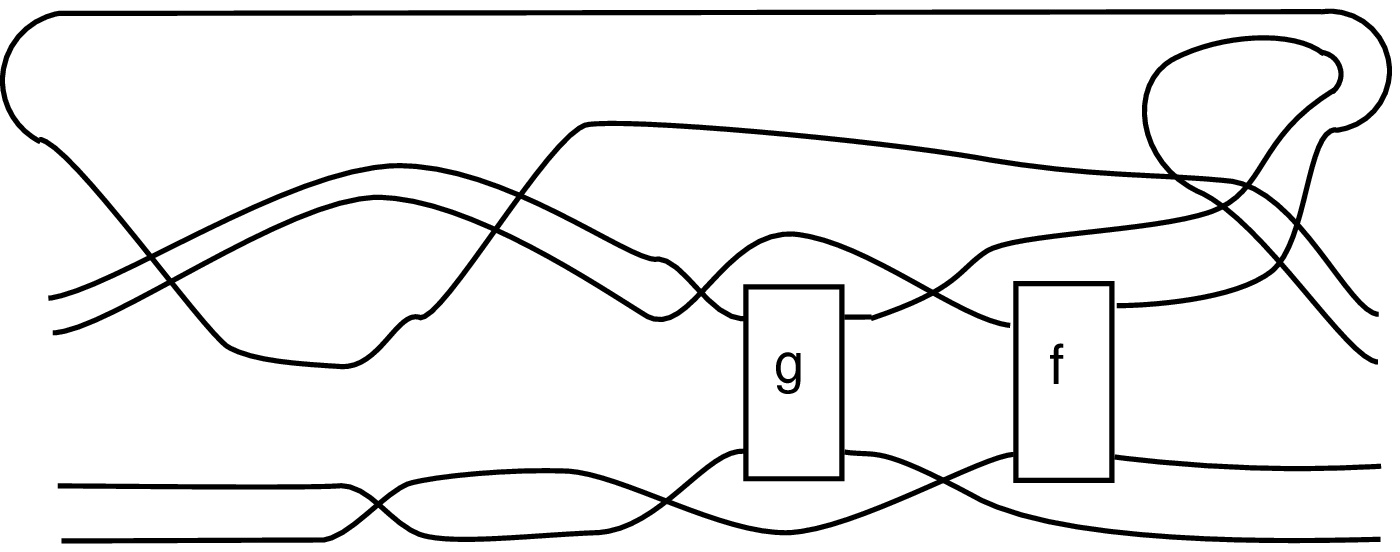}
$\,\,\,\,\mbox{nat.}\,\,\,$
\includegraphics[height=0.6in]{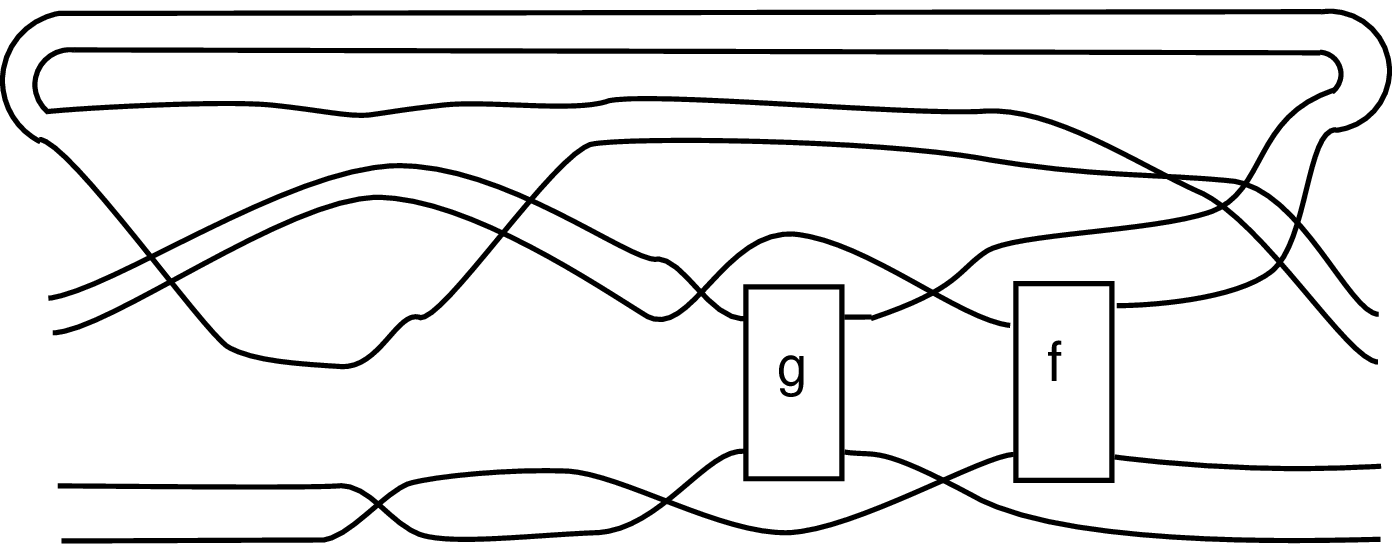}
$\,\,\,\,\mbox{coh.}\,\,\,\,$
\includegraphics[height=0.6in]{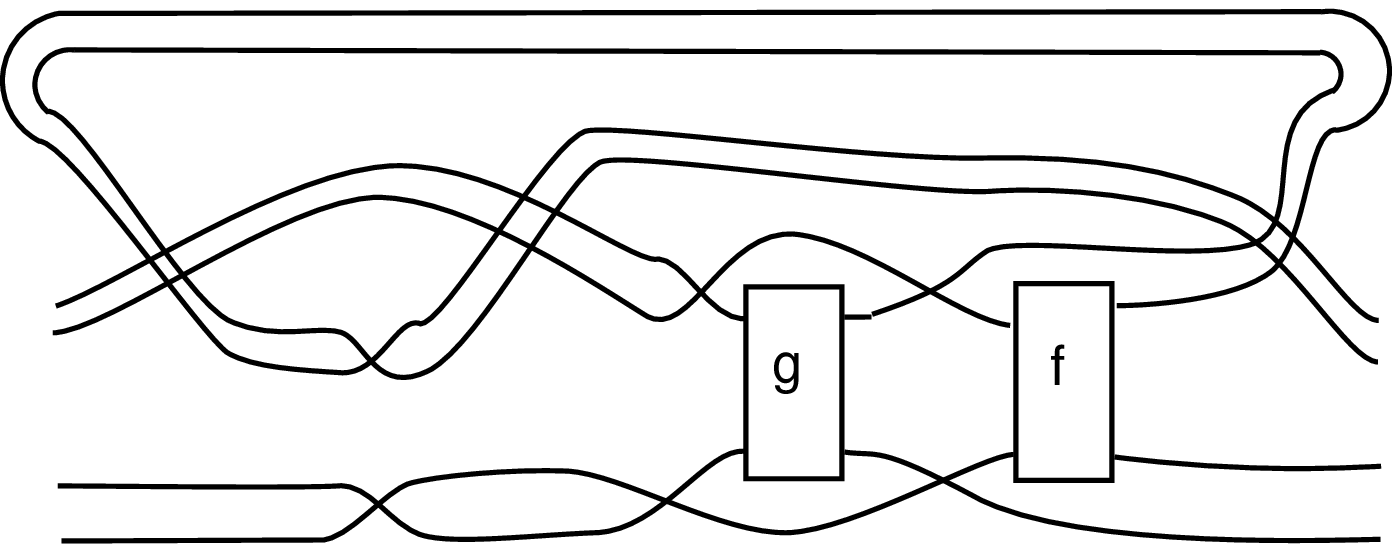}

where the last diagram is of type $\Tr^U(\Tr^V(h'))$. Therefore, by the same reasoning as given at the beginning of the proof we find that $\Tr^{U\otimes V}(h')=\Tr^U(\Tr^V(h'))$ also for this diagram.
As before we repeat our arguments to justify the existence and value of the trace for the case when we start with the graph
$$\includegraphics[height=0.6in]{naturaA12dia.eps}$$
obtaining the following diagram:
$$\includegraphics[height=0.8in]{naturaA1.eps}.$$
\end{proof}

\begin{lemma}
$[\sigma\otimes 1,\sigma]=1\otimes \sigma$.
\end{lemma}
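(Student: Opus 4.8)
The plan is to verify this identity as the last of the symmetry coherence axioms of Definition~\ref{DEFINITION SSMPC} for $Int^p(\mathcal{C})$, exploiting the fact that both factors of the composite are symmetries: consequently every internal wire that the $Int^p$-composition feeds back is itself a symmetry, and each such loop can be straightened by the \textbf{Yanking} axiom. First I would unfold the definitions. Writing $A=(A^+,A^-)$, $B=(B^+,B^-)$, $C=(C^+,C^-)$, the symmetry in $Int^p(\mathcal{C})$ is $\sigma_{A,B}=s_{A^+,B^+}\otimes s_{A^-,B^-}$, so that $\sigma_{A,B}\otimes 1_C$ and $\sigma_{B,A\otimes C}$ are concrete tensor products of $\mathcal{C}$-symmetries and identities on the positive and negative components. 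By Definition~\ref{DEFINITION INT PARACATEGORY} the composite $[\sigma_{A,B}\otimes 1_C,\sigma_{B,A\otimes C}]$ is the $\mathcal{C}$-trace $\Tr^{U}(\epsilon(\sigma_{A,B}\otimes 1_C,\sigma_{B,A\otimes C})(1\otimes 1\otimes\gamma))$, where $U$ is assembled from the negative components of the intermediate object $B\otimes A\otimes C$ and $\gamma$ is the reversing permutation from the definition of the product.

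Next I would argue definedness exactly as in Lemma~\ref{EXISTENCE SYMM COMP}. Since the traced wiring consists solely of symmetries, the \textbf{Yanking} axiom supplies a totally defined trace of the form $\Tr^{U}(s)=1$, and \textbf{Superposing} together with \textbf{Naturality} lift this to show that the whole diagram lies in $\Trc^{U}$; hence $[\sigma_{A,B}\otimes 1_C,\sigma_{B,A\otimes C}]\downarrow$. This is important because in a paracategory the coherence equation is only meaningful once the left-hand side is known to be defined.

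To evaluate the trace I would pass to the graphical language, as in the preceding lemmas. I would draw $\epsilon(\sigma_{A,B}\otimes 1_C,\sigma_{B,A\otimes C})$ precomposed with $\gamma$, and then collapse the fed-back symmetry strands one at a time by repeated use of \textbf{Yanking}, \textbf{Superposing}, \textbf{Naturality}, and coherence in $\mathcal{C}$: each cup--cap pair on a traced object straightens out, and the surviving positive and negative strands are reorganised by coherence. The target of the computation is the diagram representing $1_A\otimes\sigma_{B,C}=1_{(A^+,A^-)}\otimes(s_{B^+,C^+}\otimes s_{B^-,C^-})$, so it suffices to check that the two diagrams coincide up to isomorphism of diagrams, which by the coherence theorem for symmetric traced categories quoted earlier settles the equation of morphisms.

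The main obstacle I anticipate is the bookkeeping of the negative-component strands. In $Int^p(\mathcal{C})$ the negative parts compose in the reverse order, so $\gamma$ and the negative symmetries $s_{A^-,B^-}$ and $s_{B^-,C^-}$ interact nontrivially; the delicate point is to confirm that after yanking the traced negative wires the induced permutation on the remaining negative strands is precisely the one occurring in the negative half of $\sigma_{B,C}$. I expect this to reduce to a purely topological identity of diagrams, so that once the wires are laid out correctly the equality is forced by coherence; the genuine effort lies entirely in drawing the feedback correctly rather than in any nontrivial algebraic manipulation.
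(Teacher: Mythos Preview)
Your outline captures the overall shape but misses the technical crux that the paper flags as ``the key point'': the role of \textbf{Vanishing II}. The intermediate object here is $B\otimes A\otimes C$, whose negative part is $C^-\otimes A^-\otimes B^-$, so the trace defining $[\sigma_{A,B}\otimes 1_C,\sigma_{B,A\otimes C}]$ is over a threefold tensor. Your plan to ``collapse the fed-back symmetry strands one at a time by repeated use of Yanking, Superposing, Naturality, and coherence'' presupposes that you can peel off one traced strand and apply Yanking to it in isolation. In a \emph{partially} traced category that move is exactly the content of Vanishing II, and Vanishing II carries a hypothesis: the inner trace $\Tr^V(g)$ must already be defined before the equivalence $\Tr^{U\otimes V}(g)\funnels\Tr^U(\Tr^V(g))$ is available. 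You never invoke Vanishing II, nor do you explain how its hypothesis would be discharged at each stage.

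The paper handles this by running the argument in the opposite direction from yours: it starts from the untraced morphism $1_A\otimes\sigma_{B,C}$, introduces one trace at a time via Yanking (which is total, so the hypothesis of Vanishing II is automatically in hand), and then uses Vanishing II---applied twice---to amalgamate the nested traces into the single trace over $C^-\otimes A^-\otimes B^-$ that defines $[\sigma\otimes 1,\sigma]$. This build-up direction is what makes the definedness bookkeeping go through.

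A second issue: your closing appeal to ``the coherence theorem for symmetric traced categories'' is not valid here. That theorem, as quoted in the paper, is for \emph{totally} traced categories. In the partially traced setting the graphical language is only a heuristic, and each diagrammatic equality must be grounded in a chain of axiom applications whose side conditions on trace-class membership are checked explicitly. That is precisely why the paper's proofs of this lemma and the surrounding ones are so laborious.
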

\begin{proof}
Here again, as in Lemma~\ref{SIGMA IS A NATURAL TRANSFORMATION}, the key point is to justify the use of the Vanishing II axiom. We will apply this strategy twice. Since by Lemma~\ref{EXISTENCE SYMM COMP}  $[\sigma\otimes 1,\sigma]\downarrow$ we want to be able to use an scheme proof of type:  $g\in \Trc^{U\otimes V\otimes W}$ iff $\Tr^W(g)\in \Trc^{U\otimes V}$, but for this we need an hypothesis of type $g\in \Trc^{W}$. To justify this,
we want to split the trace over $U\otimes V$ by using a general hypothesis of type $h\in\Trc^V$ and a conditional hypothesis of type $h\in\Trc^{U\otimes V}$ and we must prove that $\Tr^{V}(h)\in\Trc^{U}$.

We start with the following diagram that represents $1\otimes\sigma$:
$$\includegraphics[height=1in]{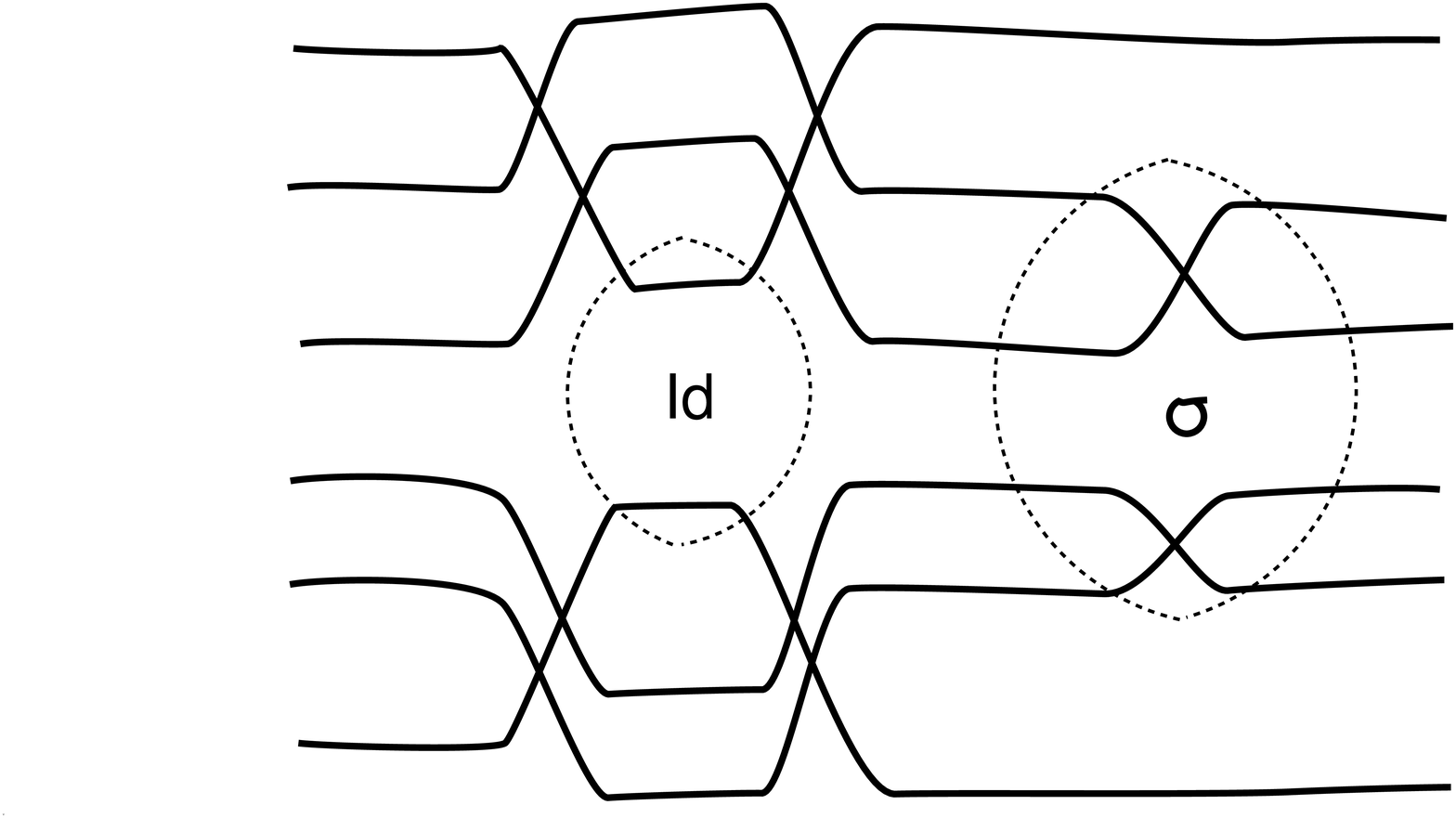}$$
which by coherence is equivalent to the following

$$\includegraphics[height=1in]{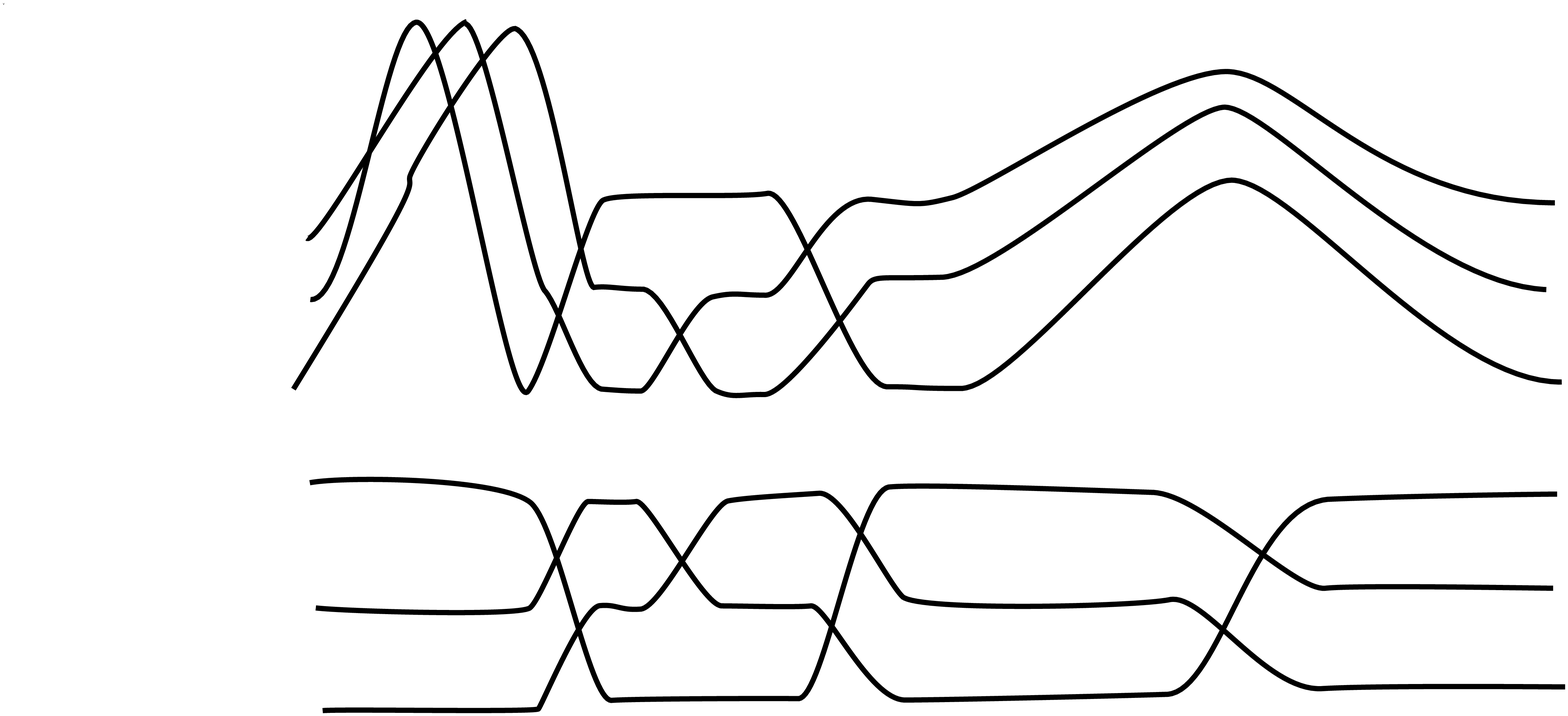}$$
 Therefore, by yanking, (let us name the variable $U$) we have

$$\includegraphics[height=1in]{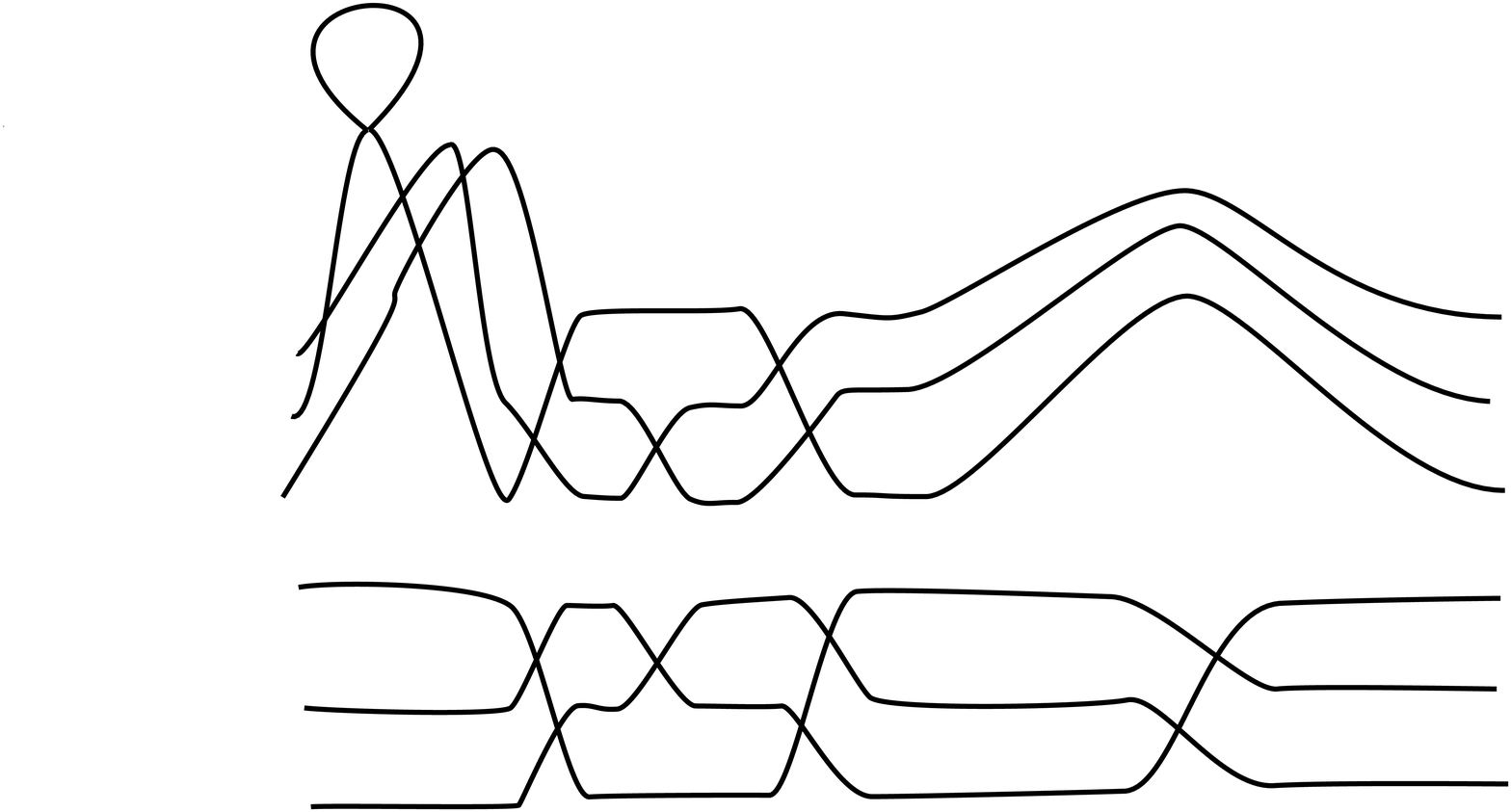}$$

Then by the naturality and the superposition axioms we obtain that it is equal to the trace represented by:

$$\includegraphics[height=1in]{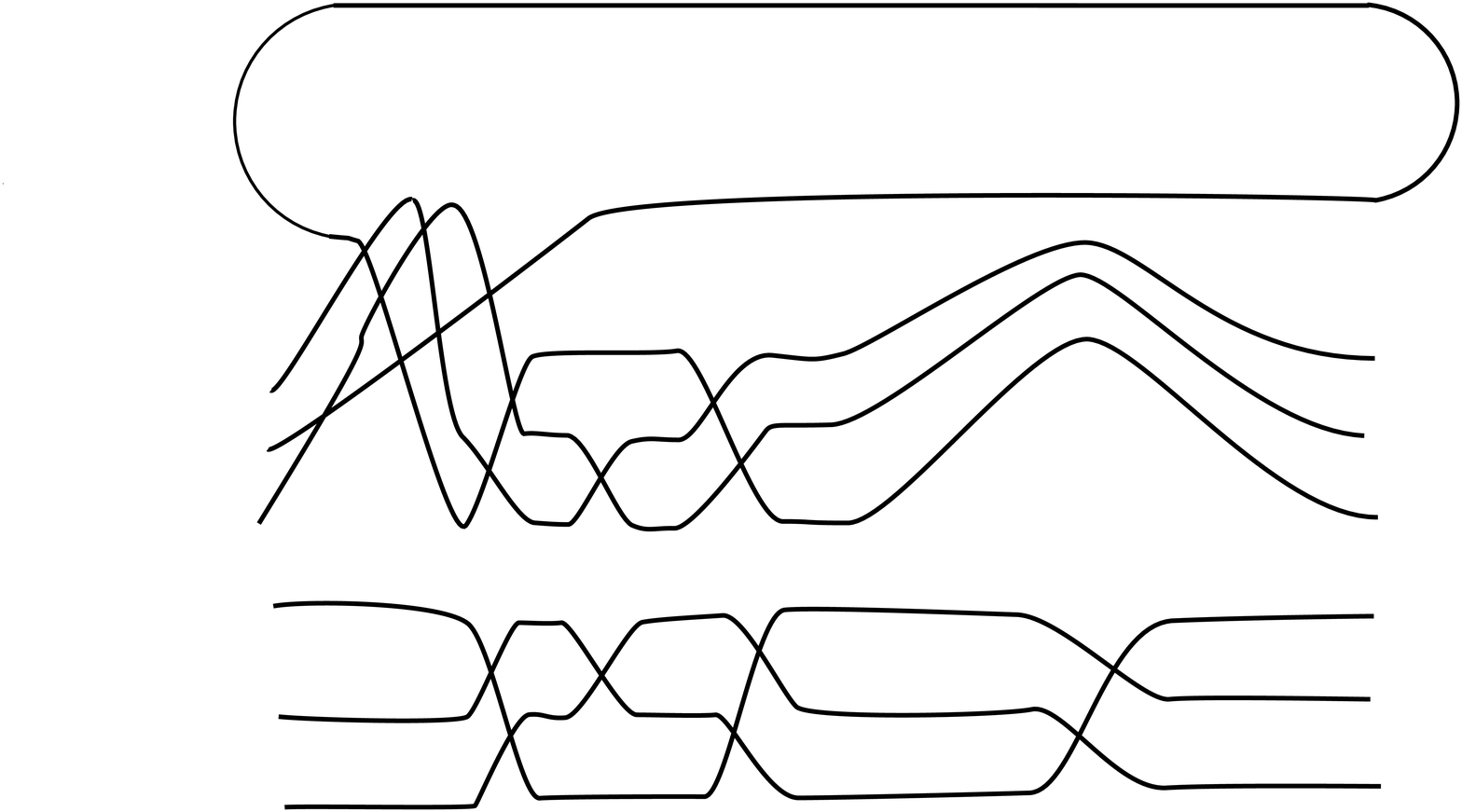}$$
in which the diagram below the trace, let us call it $h$, satisfies $h\in\Trc^U$. Notice that this is true because our axioms of partially traced category allow us to entail this last statement.

Now, by coherence we have that is equal to
$$\includegraphics[height=1in]{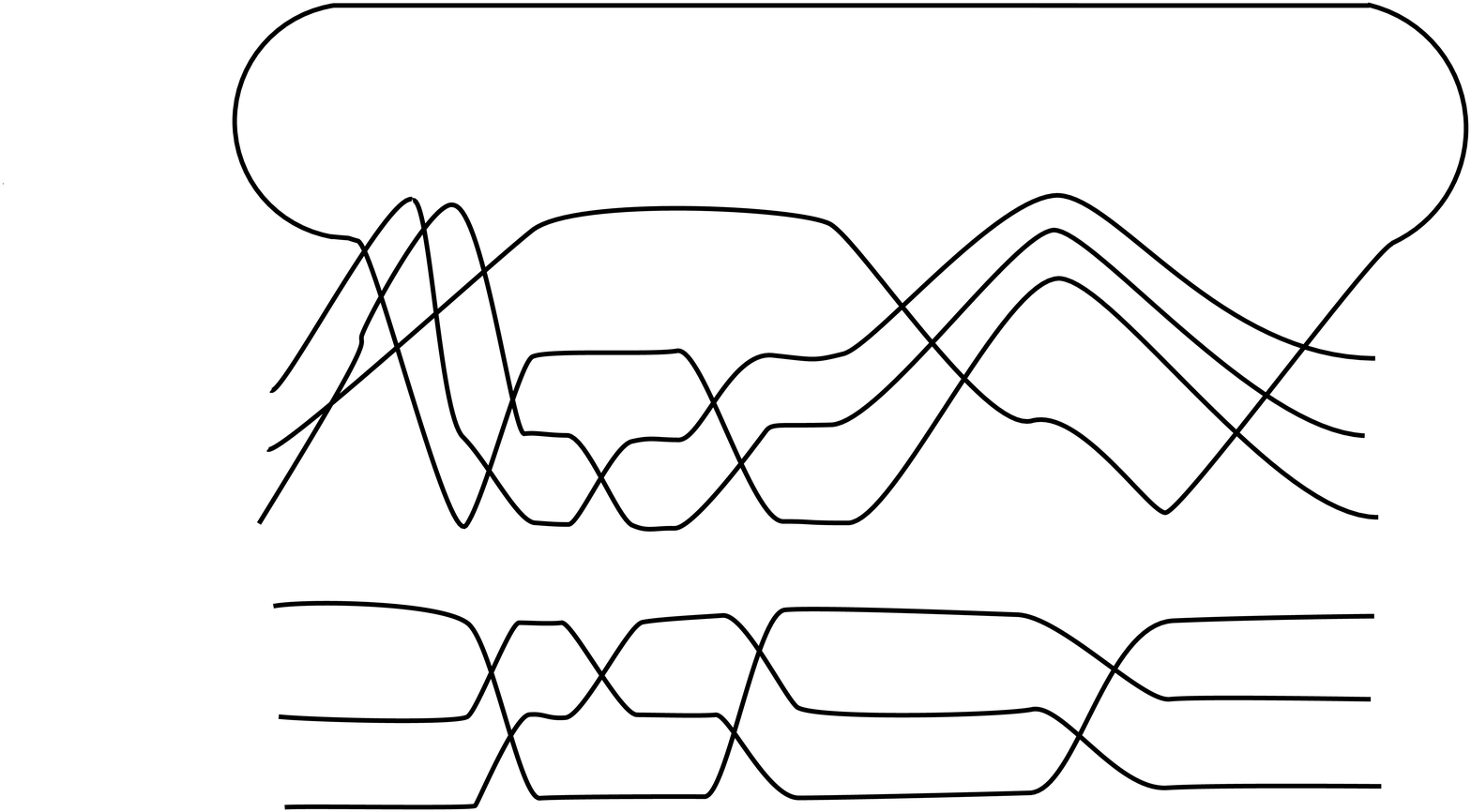}$$
Let us still call $h$ the new graph below the trace.
By yanking with respect to a variable $V$
$$\includegraphics[height=1in]{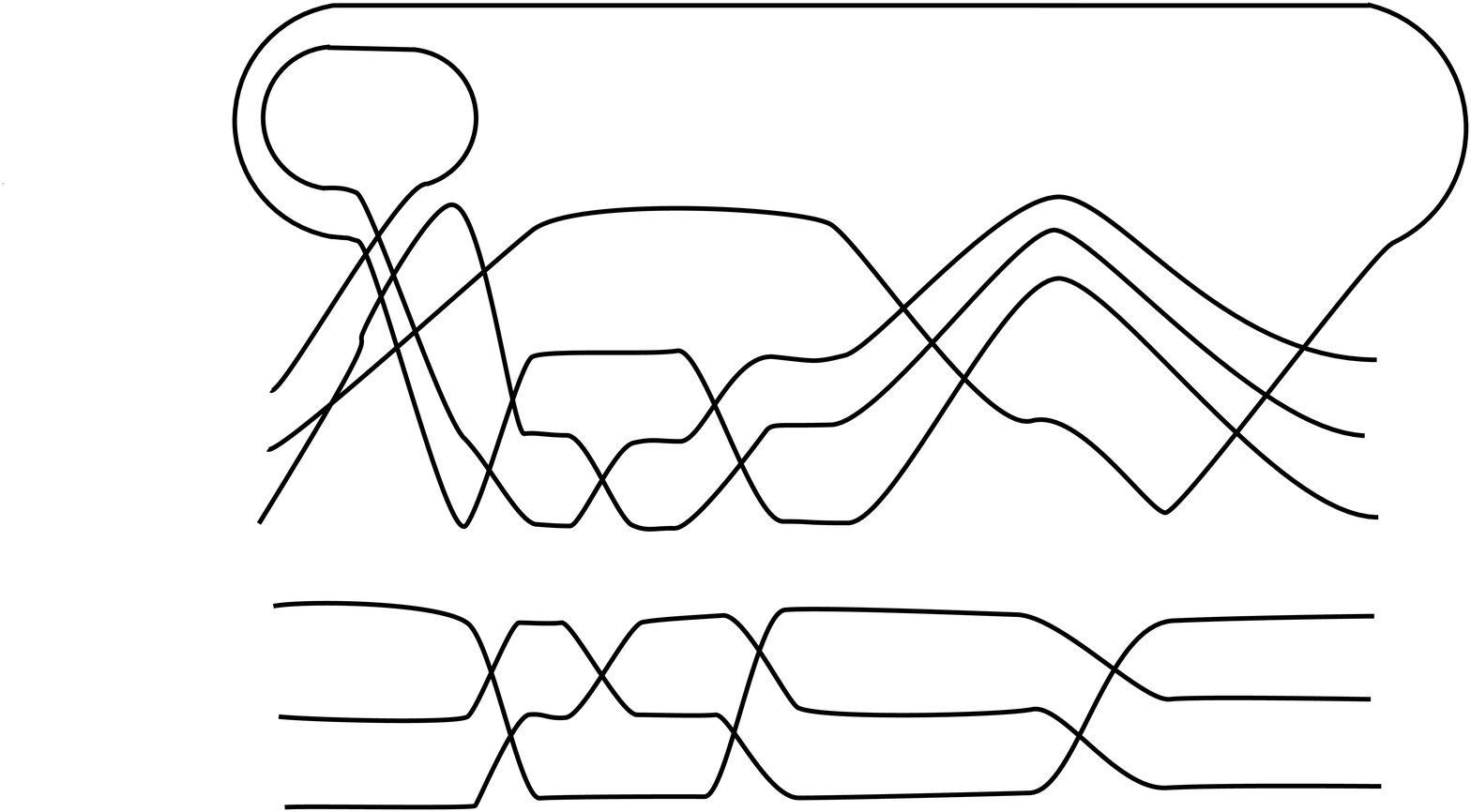}$$

Now again by naturality, superposition and coherence we conclude that the graph below the trace, name it $h'$, is in the trace class $\Trc^V$. Moreover, the value of the trace along $V$ is equal to $h$, i.e., $\Tr^V(h')=h$, which implies that is in the trace class $\Trc^U$, this means that we are allowed to use vanishing II and to conclude that $h'\in\Trc^{U\otimes V}$:

$$\includegraphics[height=0.8in]{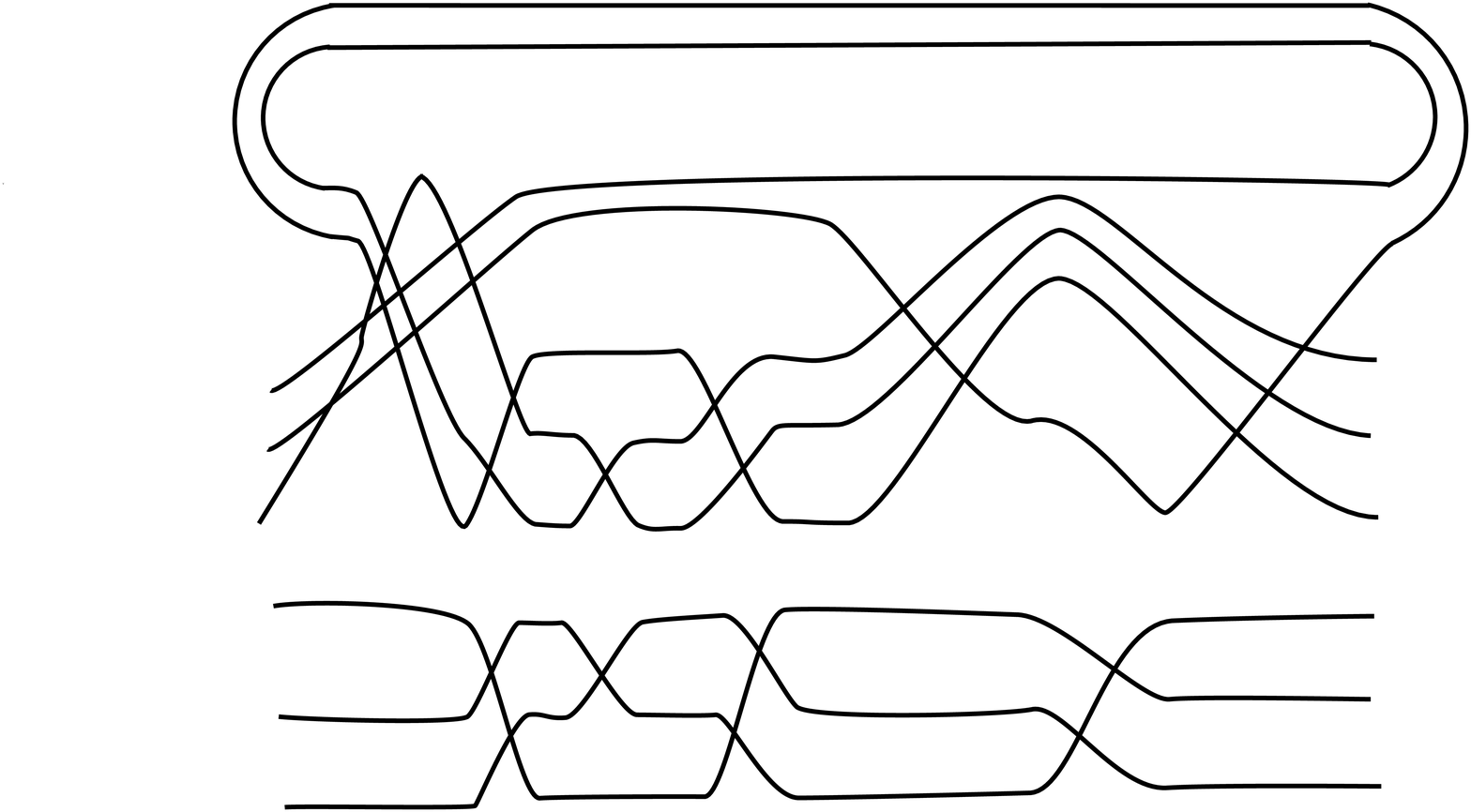}\,\,\,\,\,\,\mbox{and coherence}\,\,\,\,\,\includegraphics[height=0.8in]{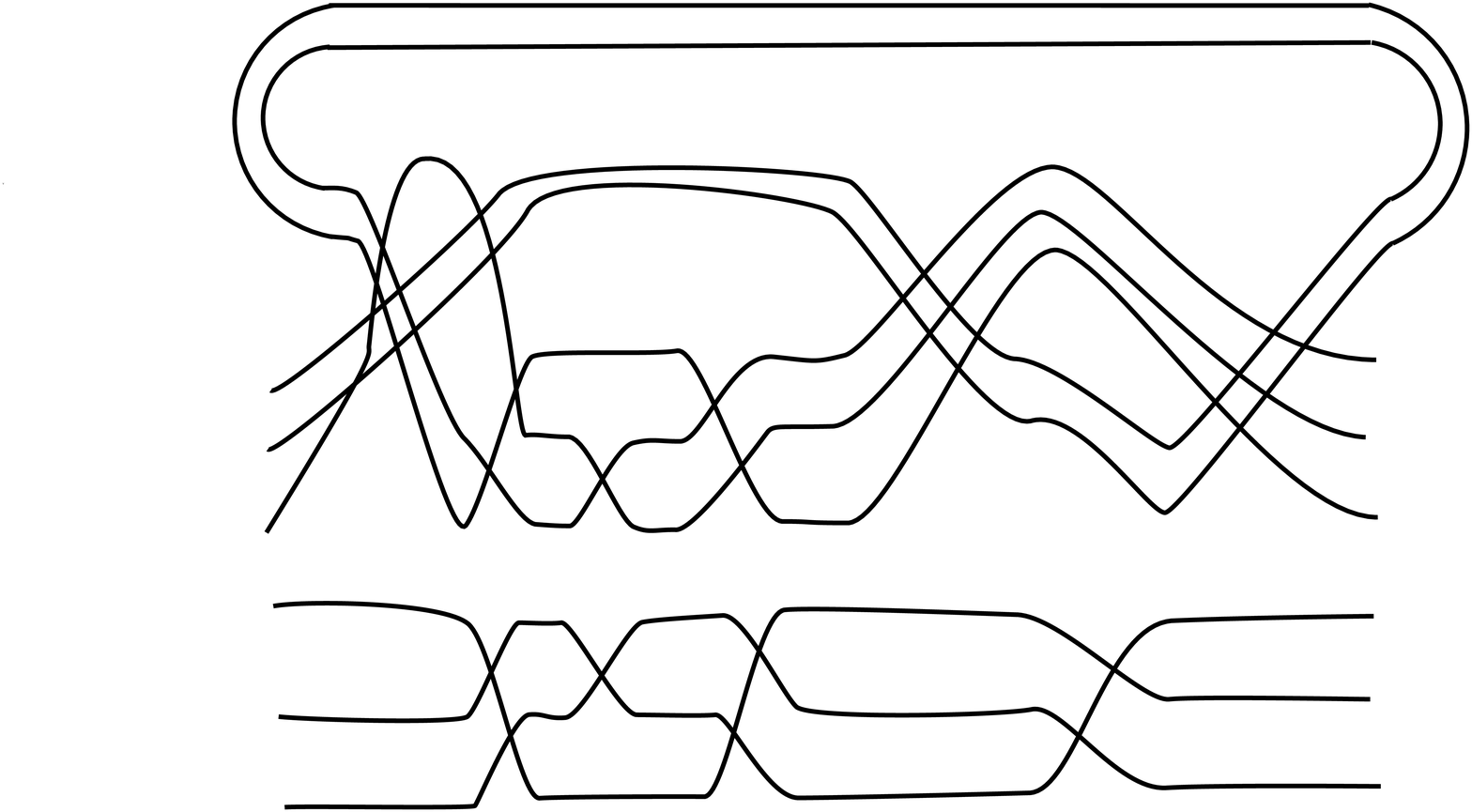}$$
Now we repeat the idea with a new parameter $W$.
$$\includegraphics[height=1in]{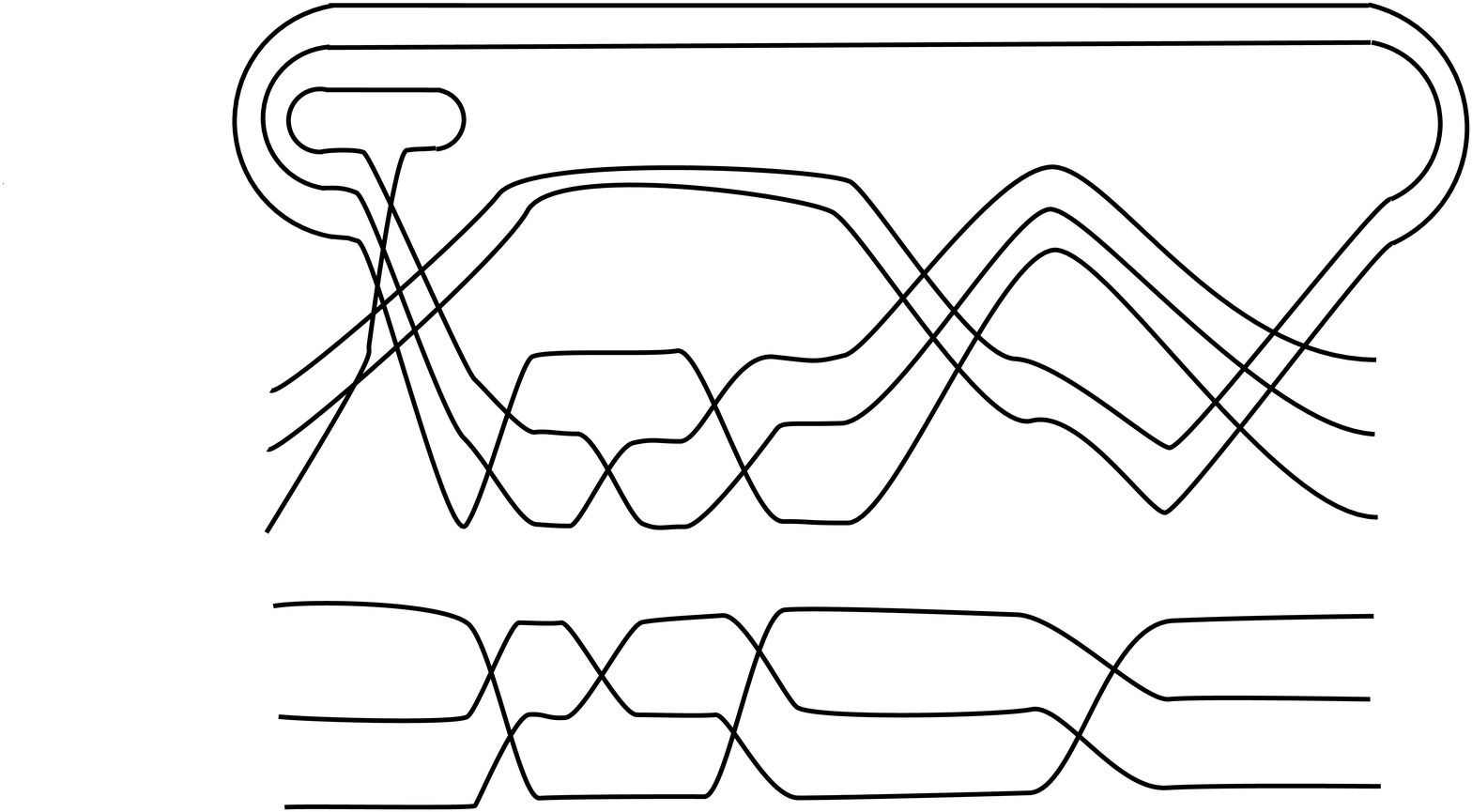}
\,\,\,\,\,\mbox{and}\,\,\,\,\,
\includegraphics[height=1in]{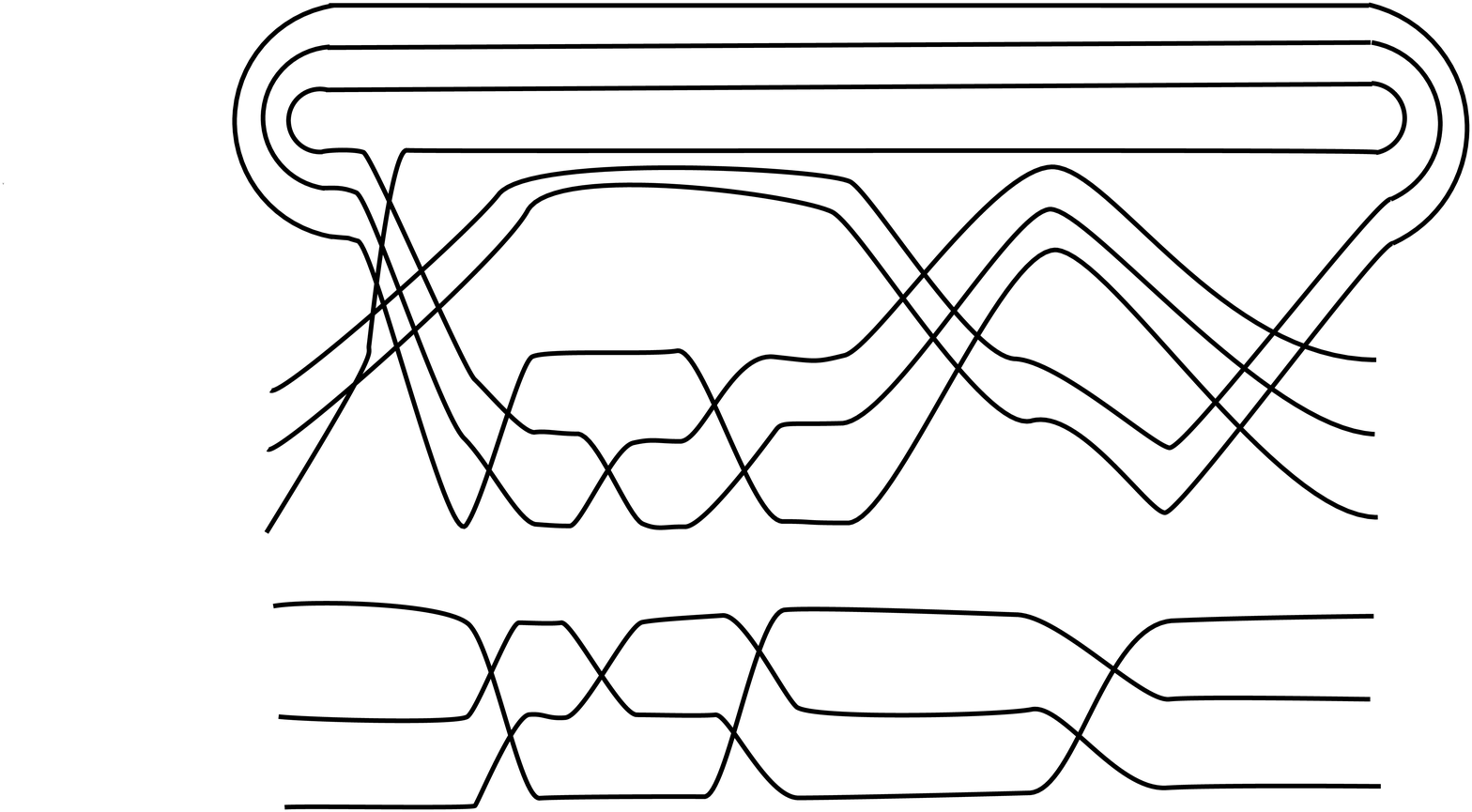}$$
Hence, this yields after applying vanishing II again
$$\includegraphics[height=1.2in]{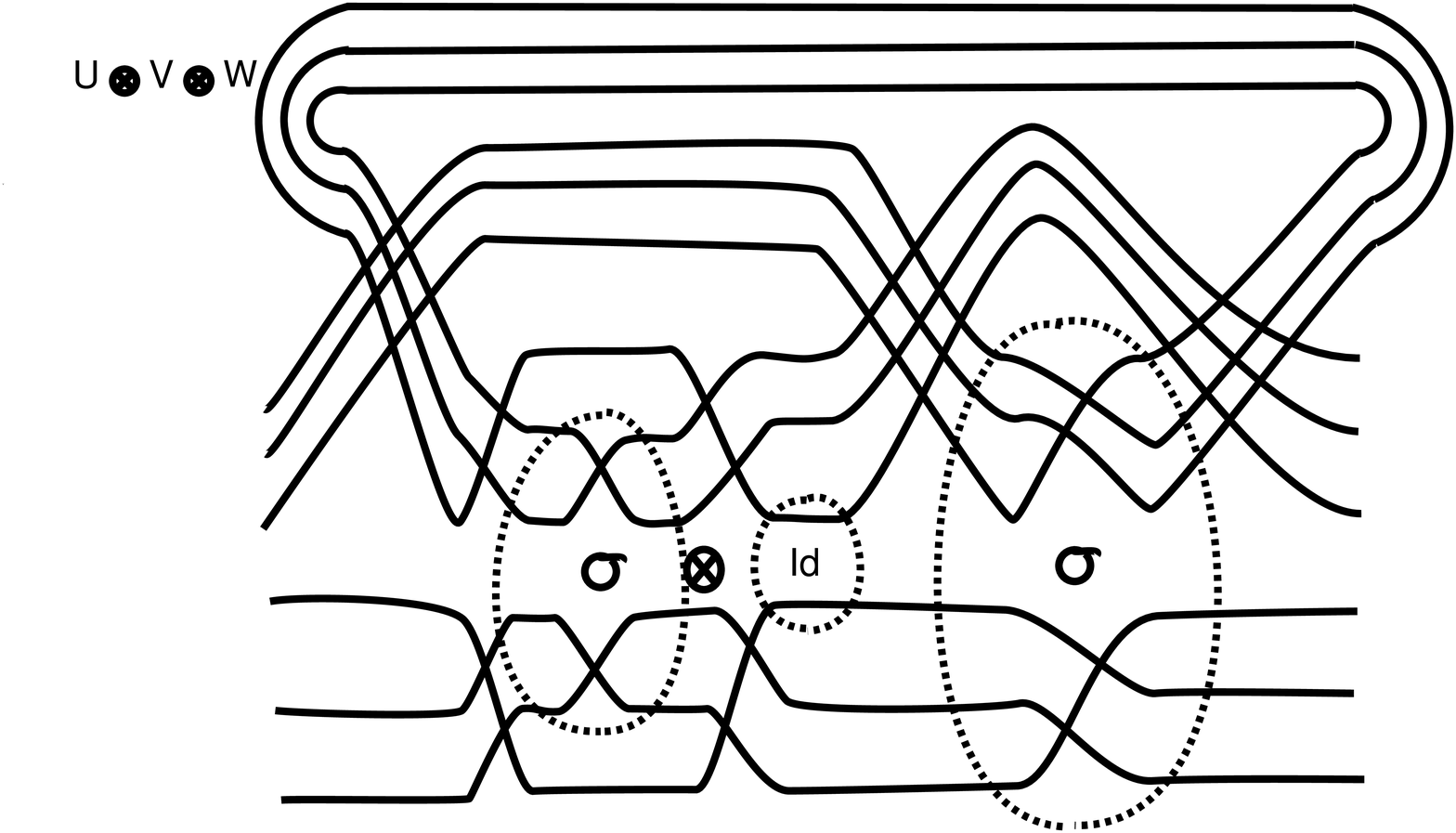}$$
which represents $[\sigma\otimes 1,\sigma]$.
\end{proof}
\begin{lemma}
 $1\otimes [\, \vec{p}\, ]\funnel [\, 1\otimes \vec{p}\, ]$ and $[ \,\vec{p}\, ]\otimes 1 \funnel [\, \vec{p}\otimes 1 \,]$.
\end{lemma}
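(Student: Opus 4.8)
The statement is precisely condition (c) of Definition~\ref{DEFINITION SSMPC} for the paracategory $Int^p(\cC)$, and since $\funnel$ is \emph{directed} Kleene equality the only content is this: \emph{if} $[\vec p\,]\downarrow$ (equivalently $1\otimes[\vec p\,]\downarrow$, as $\otimes$ is total on arrows), \emph{then} $[1\otimes\vec p\,]\downarrow$ and $[1\otimes\vec p\,]=1\otimes[\vec p\,]$, together with the mirror statement for $[\vec p\,]\otimes 1$. The plan is to prove this in the graphical language, exactly in the style of Lemmas~\ref{EXISTENCE SYMM COMP} and~\ref{SIGMA IS A NATURAL TRANSFORMATION}, by exhibiting the diagram that computes $[1\otimes\vec p\,]$ and reducing it to a superposition of the diagram for $[\vec p\,]$ with the two straight wires coming from the tensor factor $1_{(A^+,A^-)}$.

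First I would unwind the definitions. Writing $\vec p=p_1,\dots,p_n$ with $p_i\colon(X_i^+,X_i^-)\to(X_{i+1}^+,X_{i+1}^-)$, the left tensor $1_{(A^+,A^-)}\otimes p_i$ is an arrow between the objects $(A^+\otimes X_i^+,\,X_i^-\otimes A^-)$, so by Definition~\ref{DEFINITION INT PARACATEGORY} the composite $[1\otimes\vec p\,]$ is a single trace over $U'=(X_n^-\otimes A^-)\otimes\cdots\otimes(X_2^-\otimes A^-)$, i.e. over the original $X_i^-$ wires \emph{together with} the $n-1$ extra copies of $A^-$ produced by the identity feedback. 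The key geometric observation is that in $\epsilon(1\otimes\vec p\,)$ the wire $A^+$ runs straight from the global input to the global output, while the traced $A^-$ copies, being the backward parts of the identities $1_{A^+\otimes A^-}$, link up into trivial identity loops.

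The core of the argument is then to separate these two kinds of wire. I would use coherence (naturality of the symmetries $s$, as packaged in the $Int^p$-tensor formula) to slide the $A^+$ and $A^-$ strands until they run parallel to the rest of the diagram and do not interact with the $X_i^-$ strands. Then Vanishing~II lets me split $\Tr^{U'}$ as $\Tr^{U}(\Tr^{V}(-))$, with $V$ the tensor of the $A^-$ copies and $U$ the original $X_i^-$ wires; tracing out $V$ first collapses the identity loops by Yanking (and Dinaturality), and finally the Superposing axiom pulls the surviving straight $A^+$ strand out of $\Tr^{U}$, leaving — after a last coherence rearrangement into the $Int^p$-tensor form — exactly the morphism $1_{(A^+,A^-)}\otimes[\vec p\,]$. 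As in the earlier $Int^p$ lemmas, the genuine obstacle is not the final equality but the \emph{justification} of Vanishing~II: one must first produce the ``general hypothesis'' that the relevant sub-diagram lies in $\Trc^{V}$ — established by Yanking and Superposing on the $A^-$ loops — before the conditional equivalence $h\in\Trc^{U\otimes V}\Leftrightarrow\Tr^{V}(h)\in\Trc^{U}$ can legitimately be invoked; this back-and-forth bookkeeping is where the care is needed.

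Finally, the second equation $[\vec p\,]\otimes 1\funnel[\vec p\otimes 1]$ I would obtain by the mirror-image diagram, the only change being that the straight strands sit on the opposite side; alternatively it follows from the first equation by conjugating with the symmetry $\sigma$, which is available as a natural isomorphism by Lemma~\ref{SIGMA IS A NATURAL TRANSFORMATION}. Either route avoids repeating the full diagrammatic reduction.
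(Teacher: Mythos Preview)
Your proposal is correct and follows essentially the same route as the paper: start from the defined side, use Yanking and Superposing to introduce the extra traced $A^-$ loops into the trace class, invoke Vanishing~II (with the careful back-and-forth bookkeeping you describe) to merge them with the original $U$, and then use coherence and dinaturality to rearrange into the $\epsilon(\vec p\otimes 1)$ shape. The only presentational difference is that the paper carries this out for $[\vec p\,]\otimes 1$ first and adds the extra $A^-$ wires \emph{one at a time}, iterating the Yanking/Superposing/Naturality/Vanishing~II cycle and interleaving dinaturality permutations, whereas you describe it as a single split $\Tr^{U'}=\Tr^{U}\Tr^{V}$; in practice the partial-trace axioms force the same stepwise justification either way.
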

\begin{proof}
Without loss of generality we consider the case when $\vec{p}=p_1,p_2,p_3,p_4$.
By definition $[ \,\vec{p}\, ]\otimes 1$ is equal to:
\begin{center}
\includegraphics[height=1.2in]{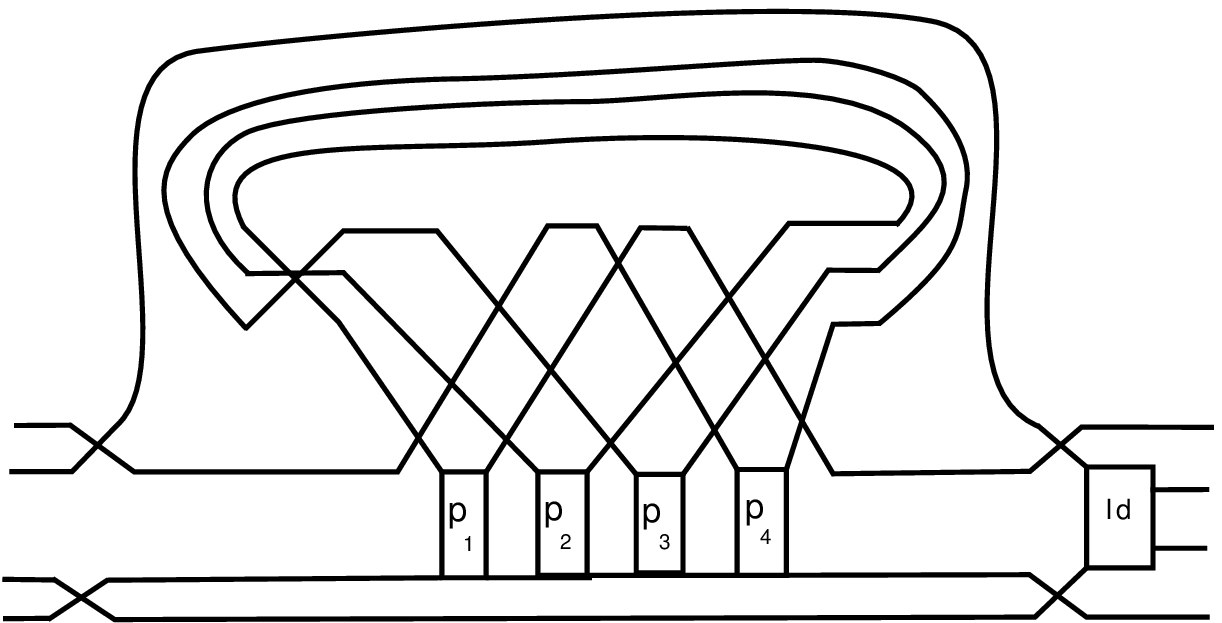}
\end{center}

Then using superposing axiom we obtain:
\begin{center}
\includegraphics[width=2in]{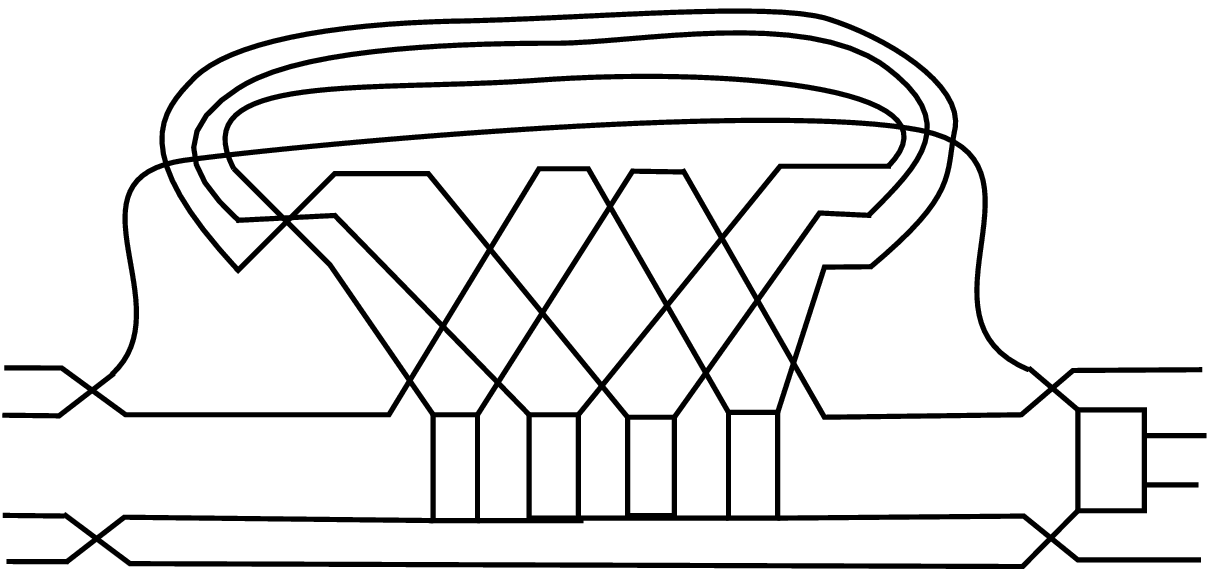}
\end{center}
and since by the yanking axiom $\Tr(\sigma)=1$, we have that:
\begin{center}
\includegraphics[width=2in]{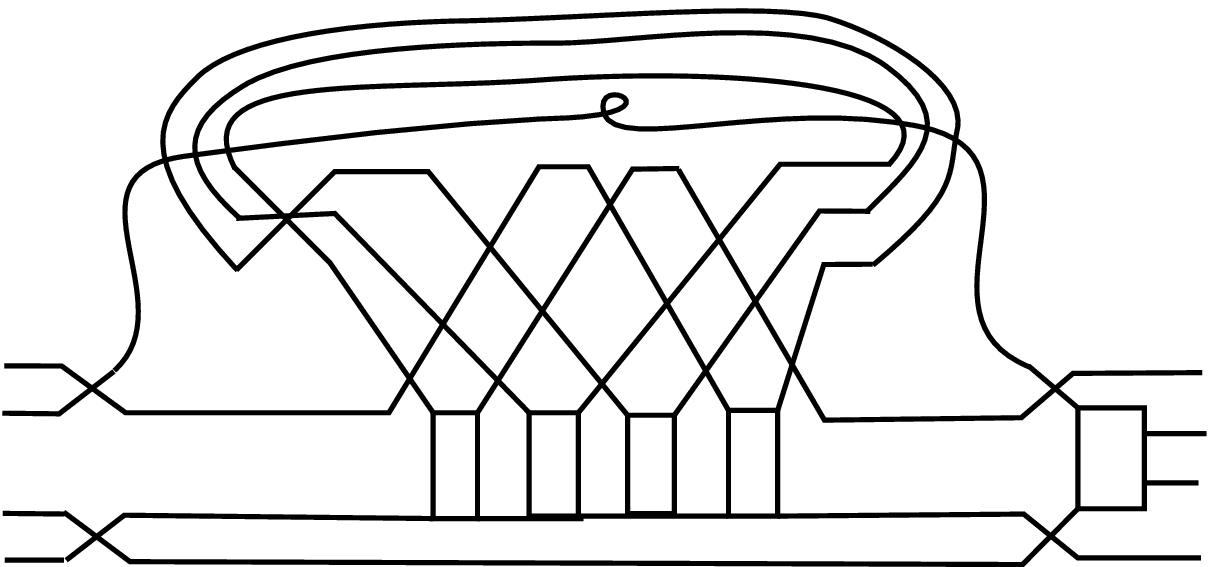}
\end{center}

Now by the fact that the trace is defined on symmetries this is the hypothesis that I need in order to apply superposing (equivalent version) axiom, thus by the same reason we can apply also the naturality axiom:
\begin{center}
\includegraphics[width=2in]{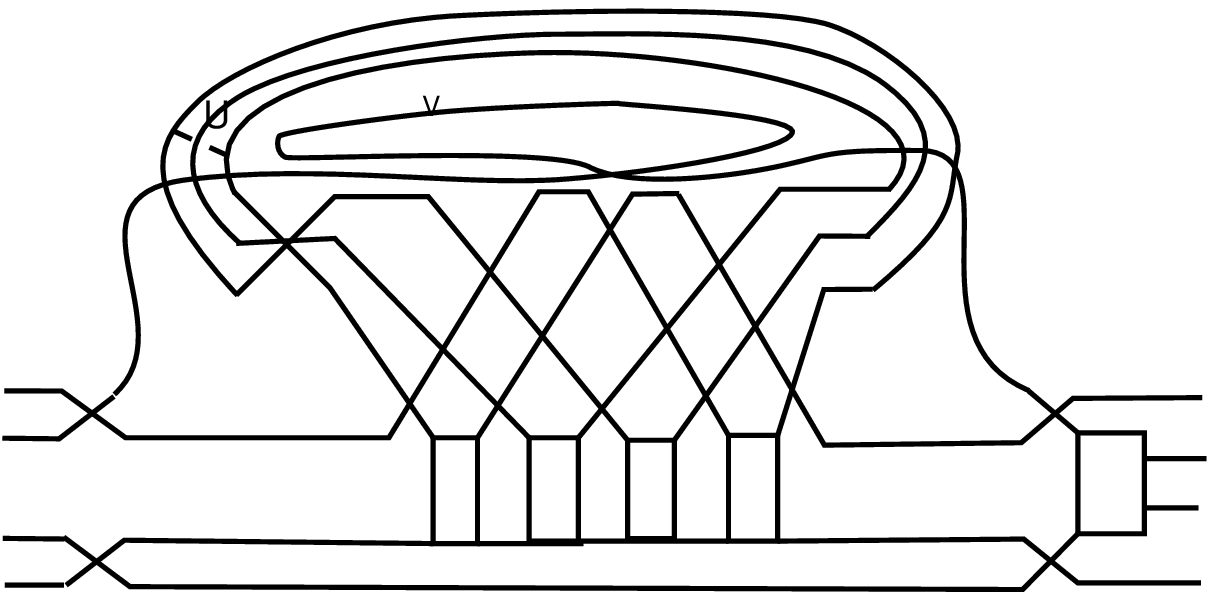}
\end{center}
We name $g$ the diagram without being traced, i.e., $g$ is
\begin{center}
\includegraphics[width=2in]{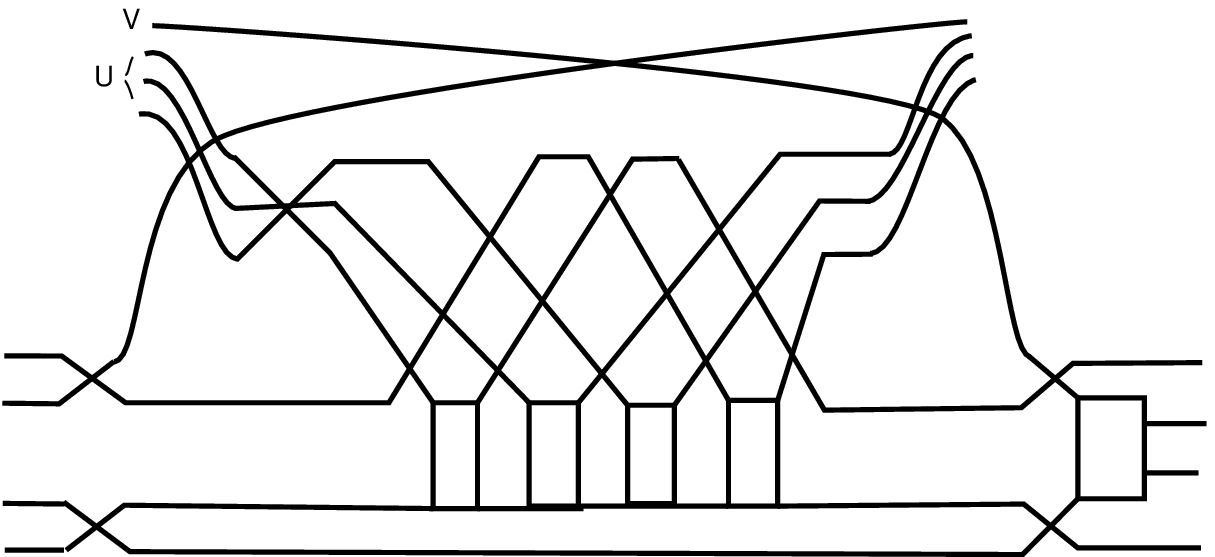}
\end{center}

Then $g\in \Trc^V$ by the reasons given above and if we reverse this procedure in fact we are showing that $\Tr^V(g)\in \Trc^U$  (after applying superposition, yanking and naturality and returning to the very beginning of the proof) thus we are satisfying the hypothesis of Vanishing II which means that $g\in \Trc^{U\otimes V}$. \\
Now we are allowed to apply the dinaturality axiom in order to permute the order of the objects that are going to be traced out:
\begin{center}
\includegraphics[width=2in]{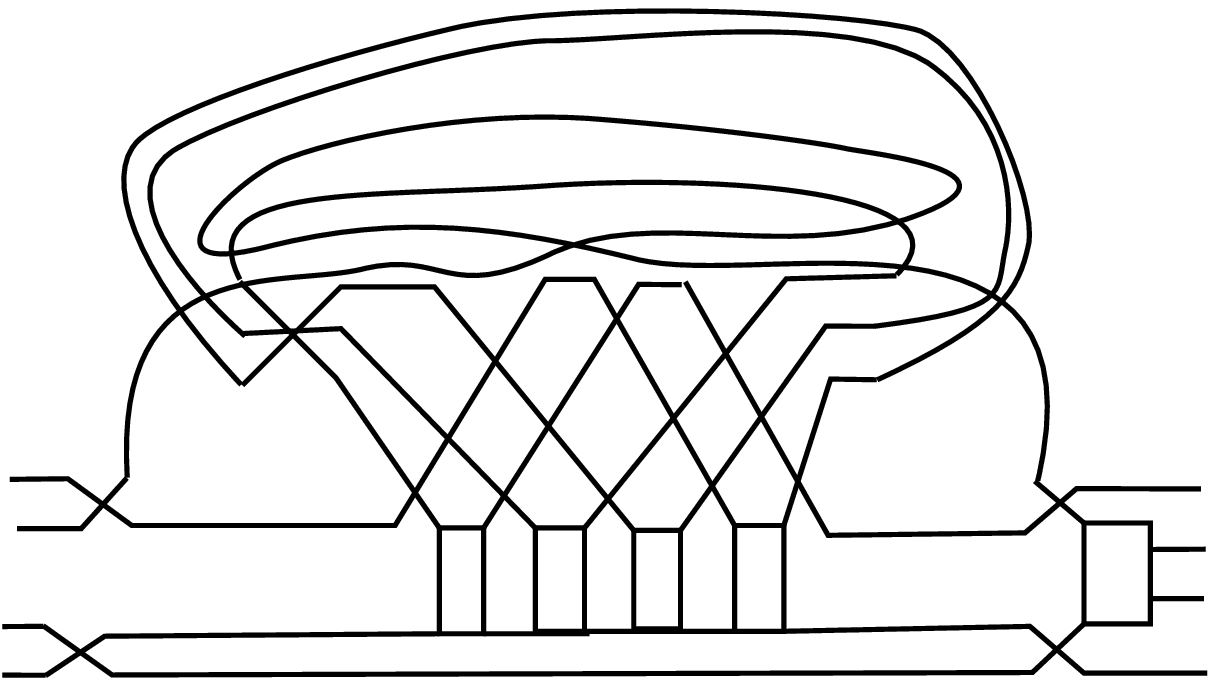}
\end{center}
Thus by coherence we have that:
\begin{center}
\includegraphics[width=2in]{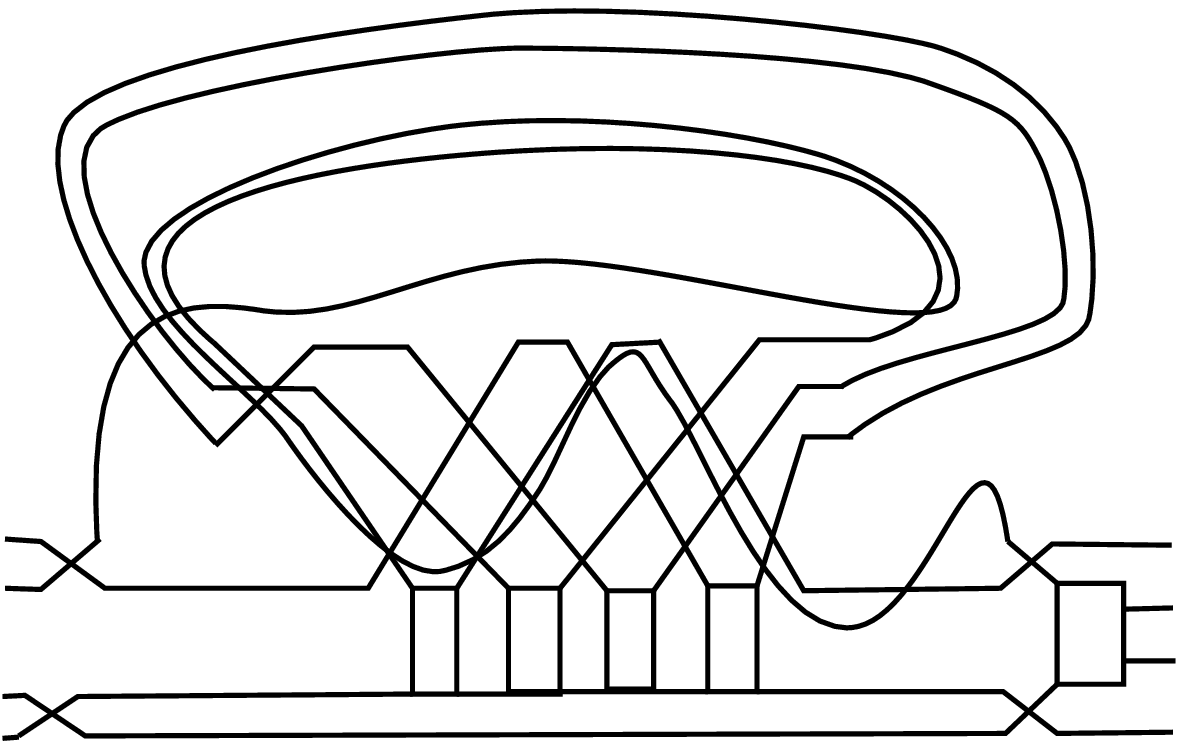}
\end{center}
Again by coherence:
\begin{center}
\includegraphics[width=2in]{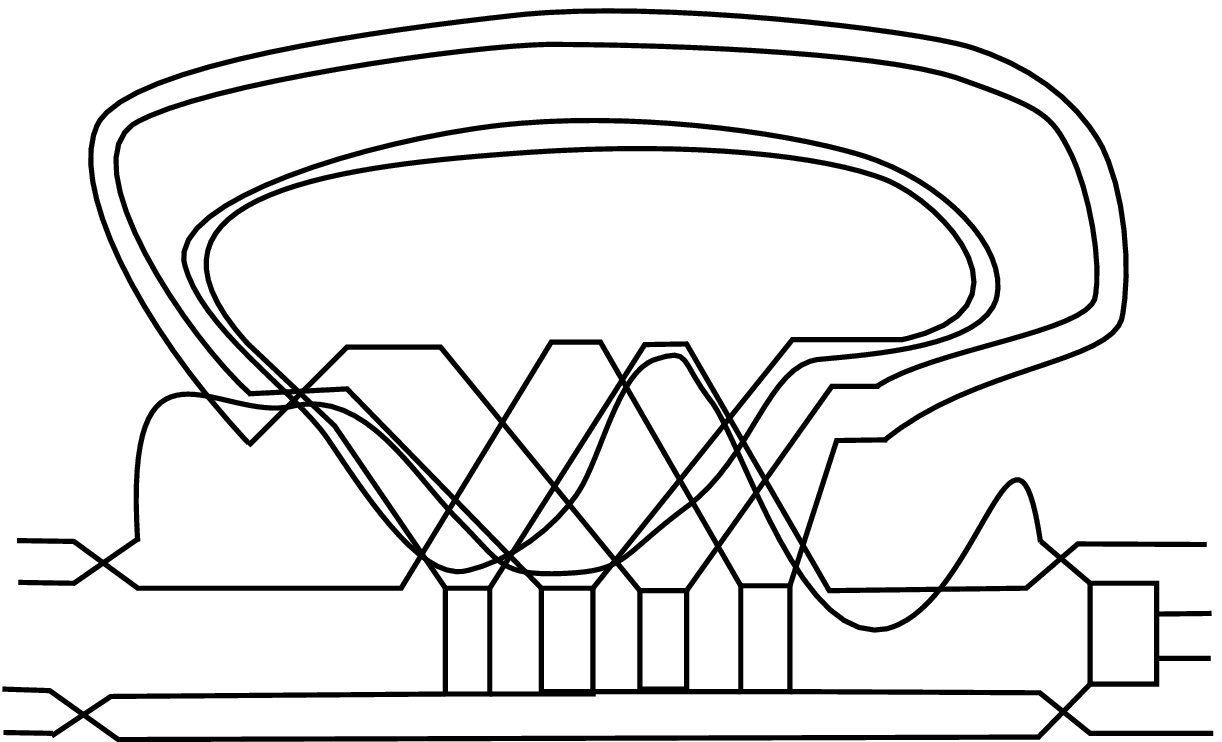}
\end{center}
Now by coherence and the yanking axiom:

\begin{center}
\includegraphics[width=2in]{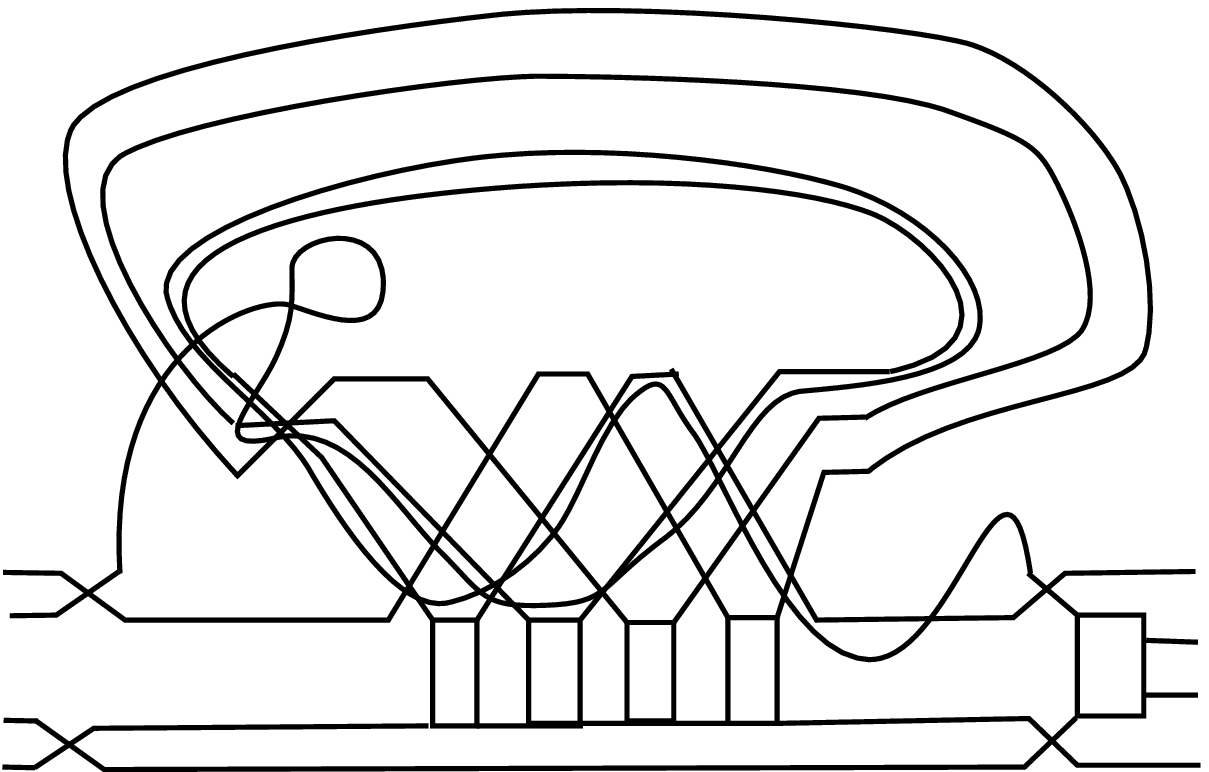}
\end{center}
Again since the trace is total on symmetries, and after applying superposing (equivalent version), the naturality axiom shows that:
\begin{center}
\includegraphics[width=2in]{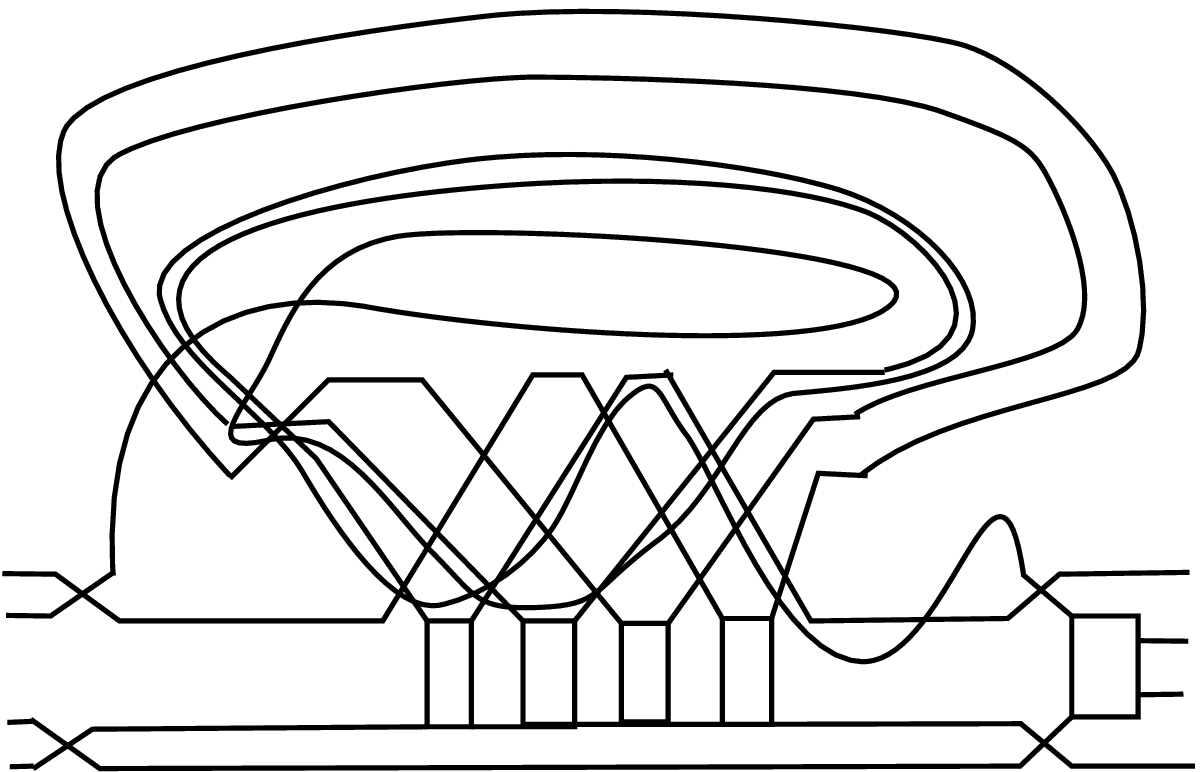}
\end{center}
In the same way as before we repeat what we did but now applied to the second line:
\begin{center}
\includegraphics[width=2in]{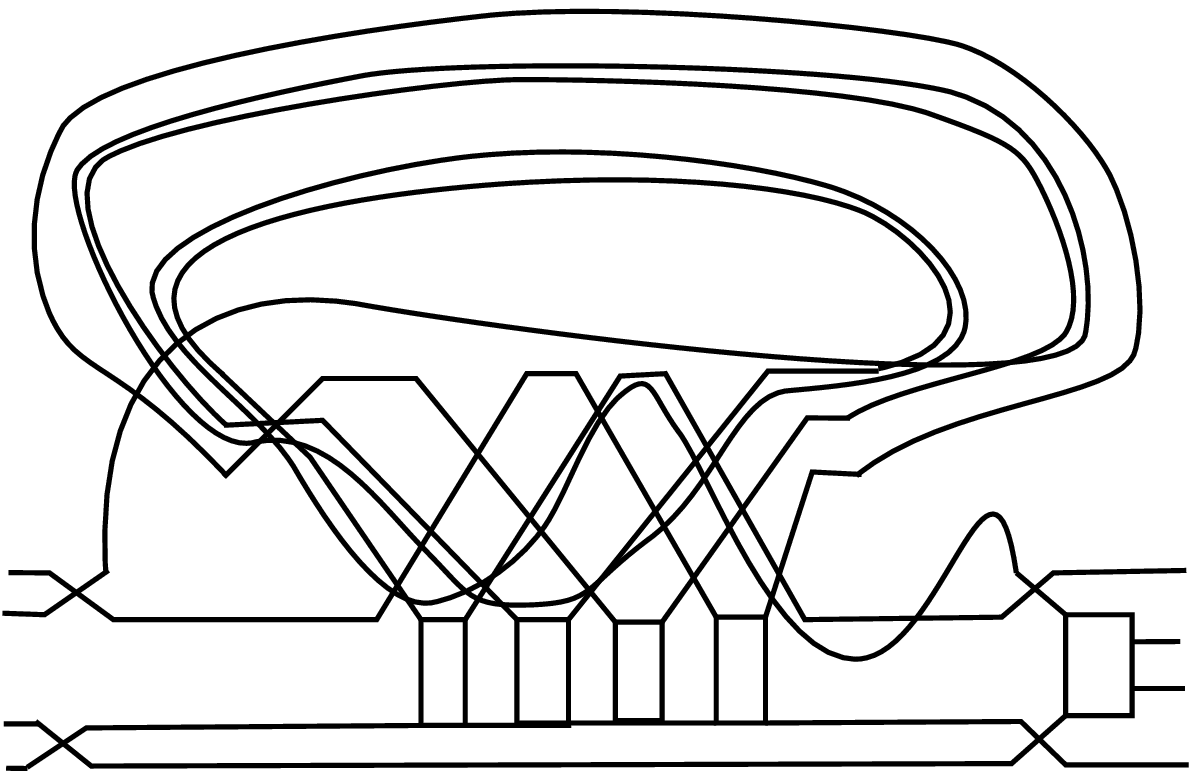}
\end{center}
Because the map involve are coherence maps:
\begin{center}
\includegraphics[width=2in]{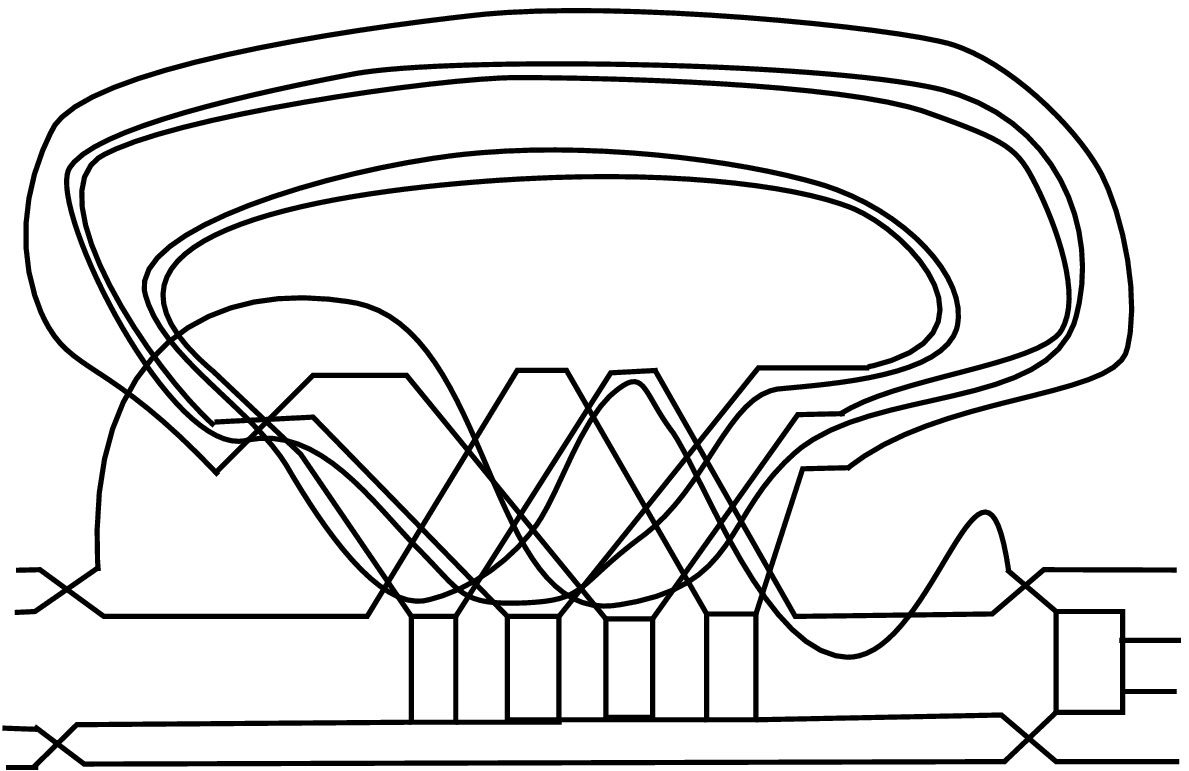}
\end{center}
Coherence:
\begin{center}
\includegraphics[width=2in]{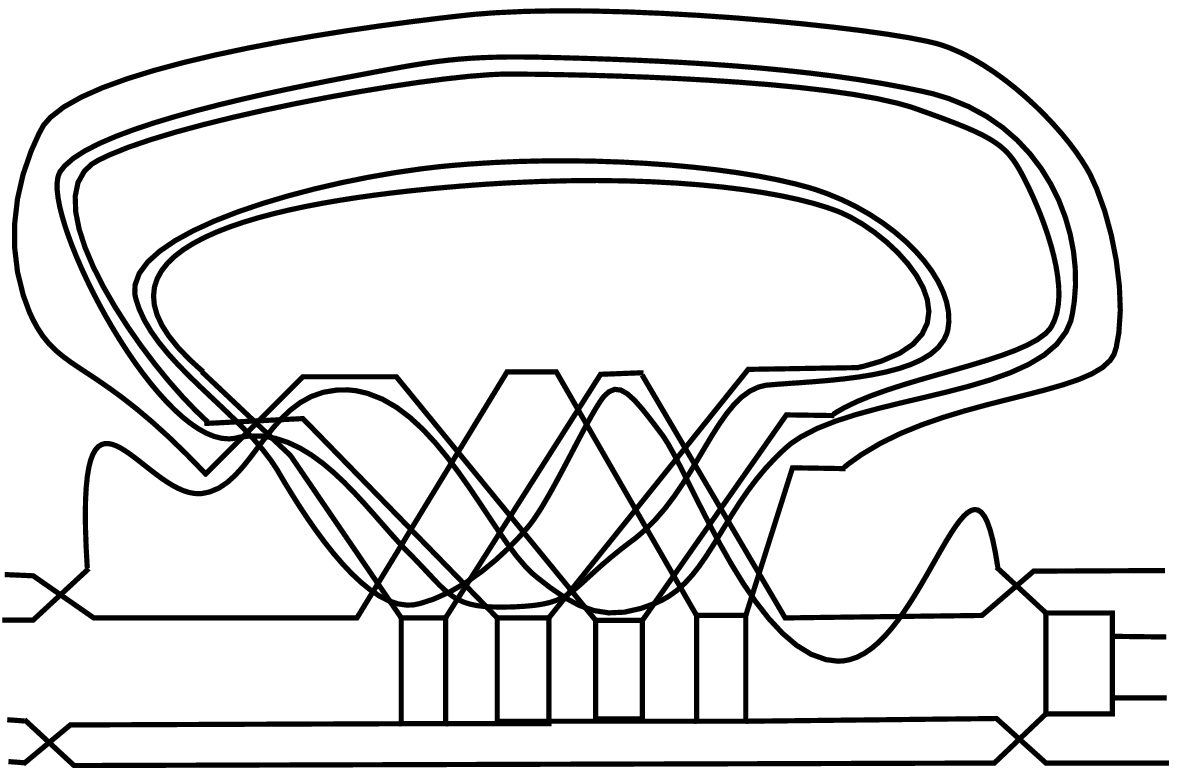}
\end{center}

Coherence and the yanking axiom:
\begin{center}
\includegraphics[width=2in]{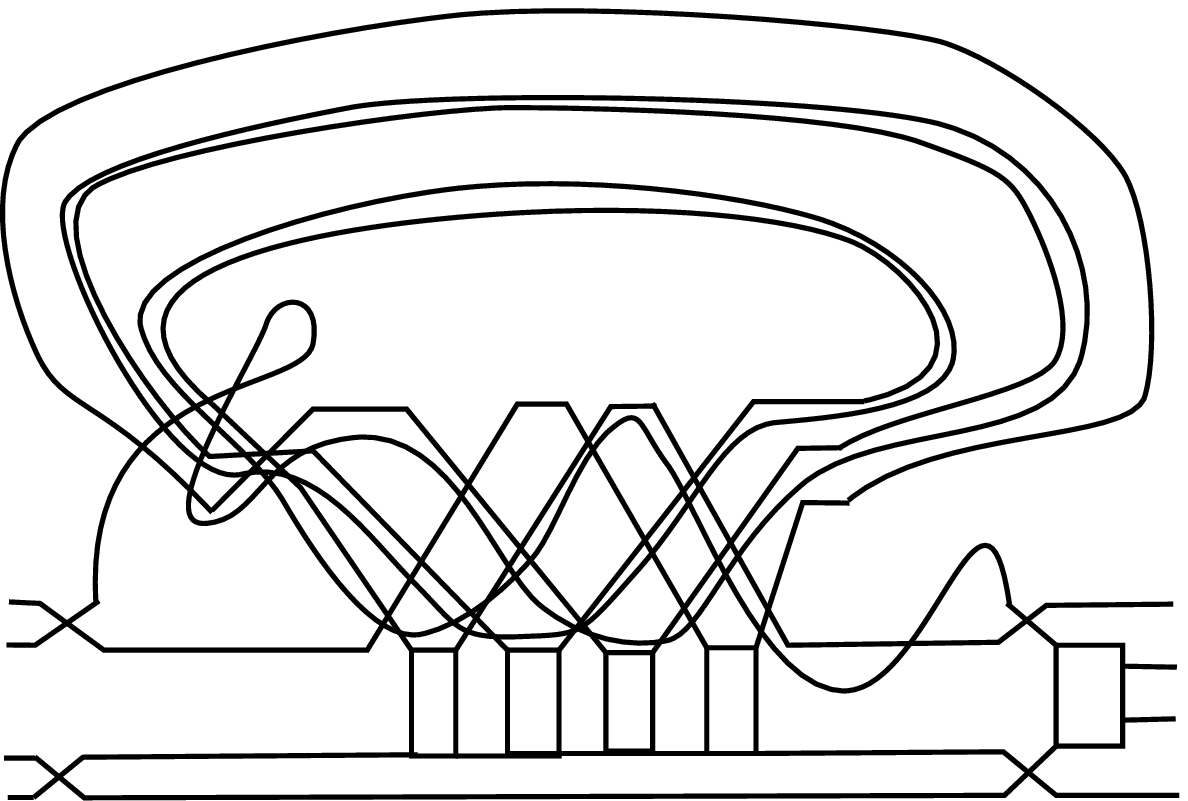}
\end{center}

Superposition and naturality:
\begin{center}
\includegraphics[width=2in]{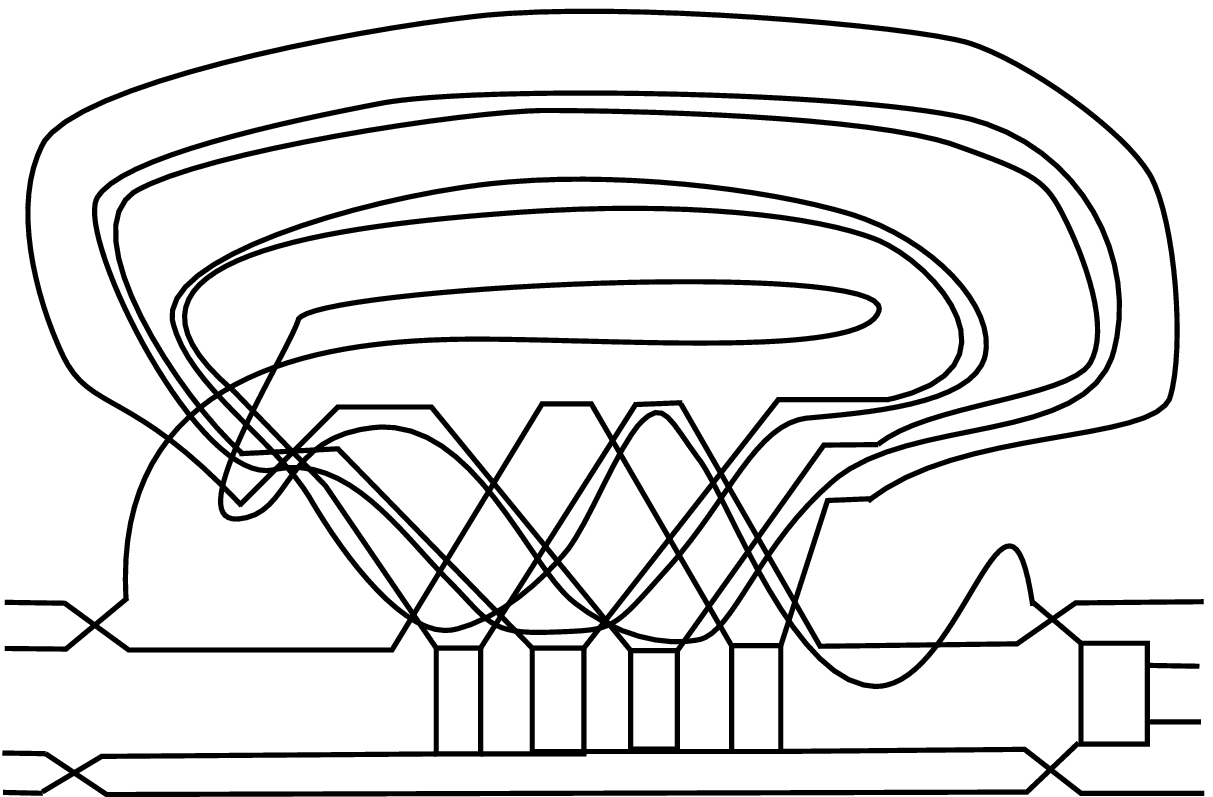}
\end{center}
Same argument as before applied to the third line:
\begin{center}
\includegraphics[width=2in]{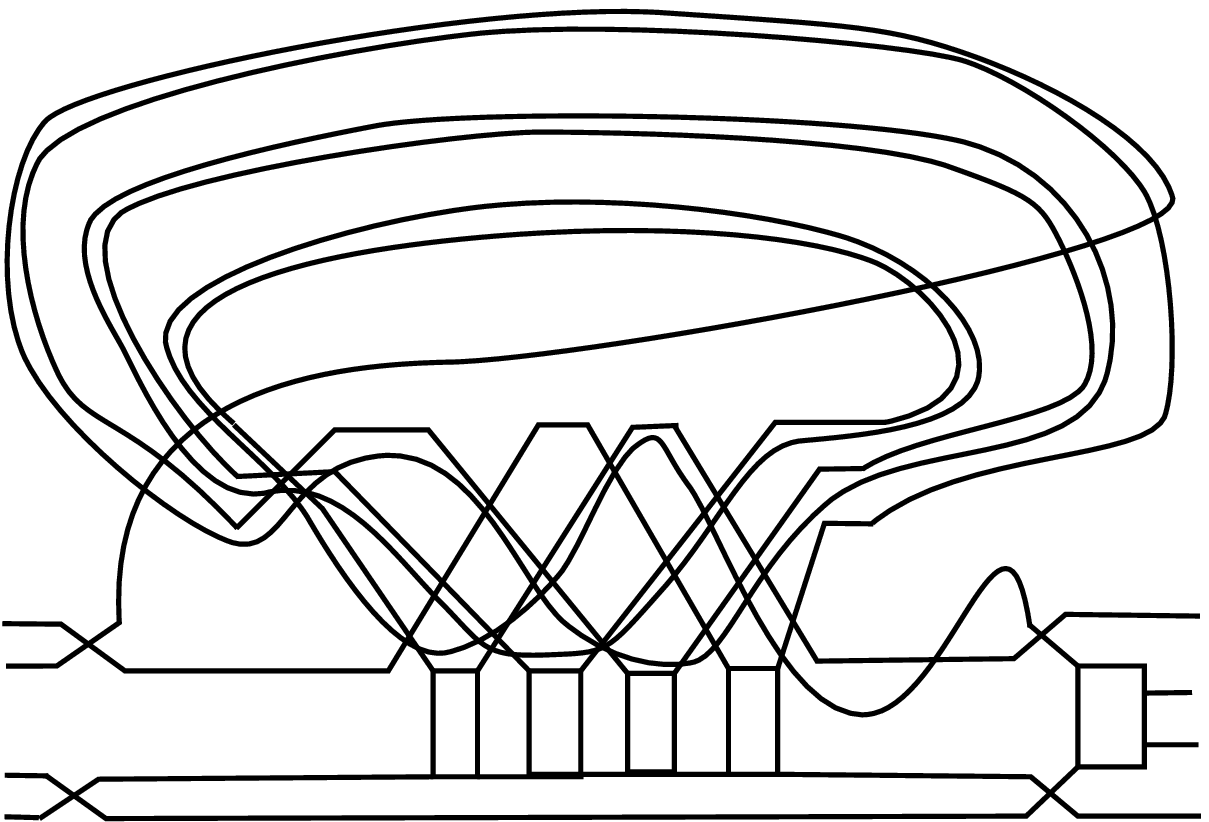}
\end{center}
Coherence allows us to express:
\begin{center}
\includegraphics[width=2in]{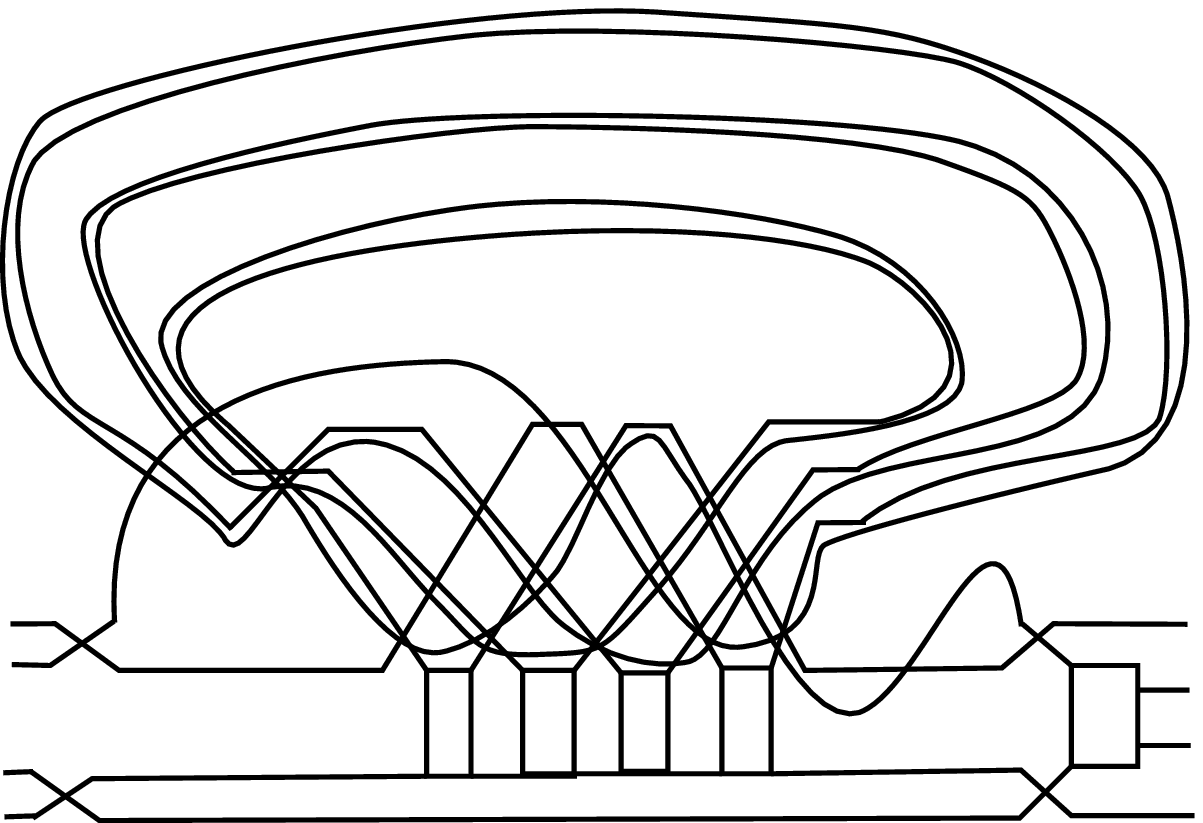}
\end{center}
We therefore have again by coherence:
\begin{center}
\includegraphics[width=2in]{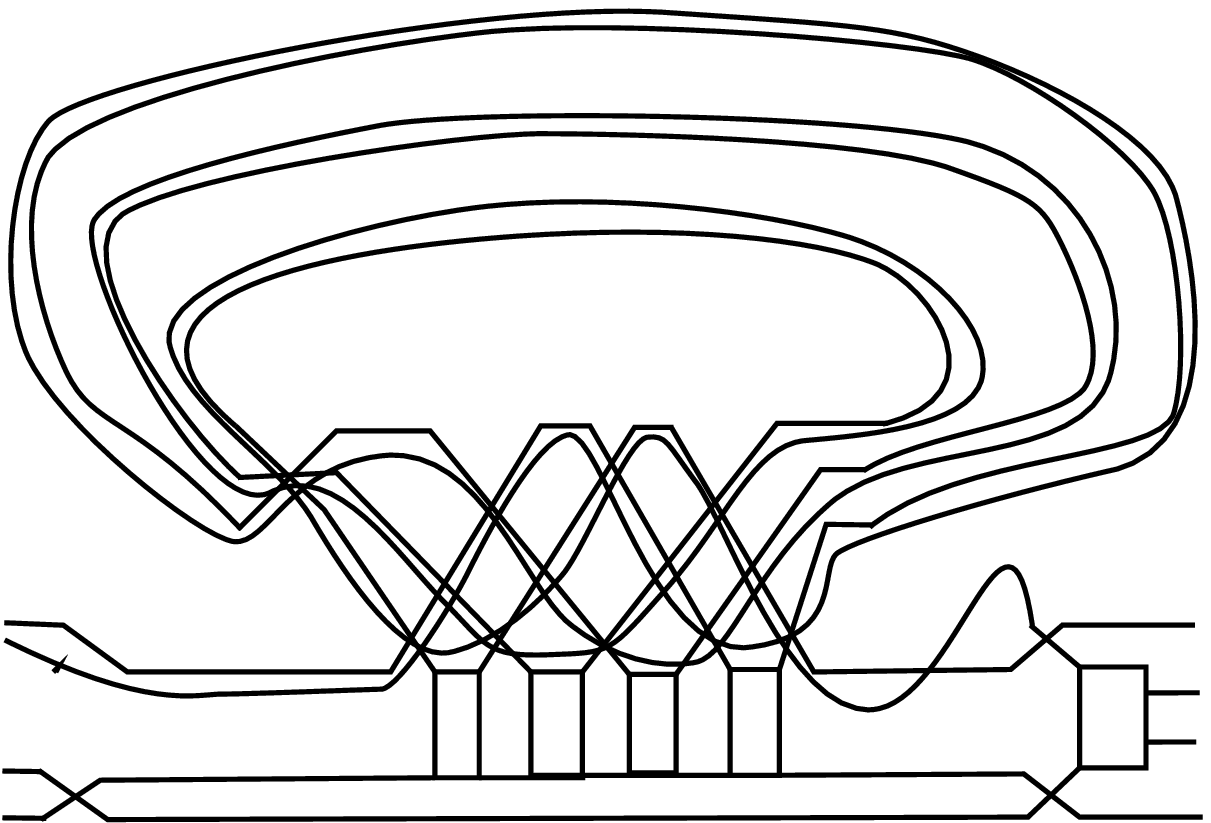}
\end{center}

Since $\sigma\circ \sigma^{-1}=1$:
\begin{center}
\includegraphics[width=2in]{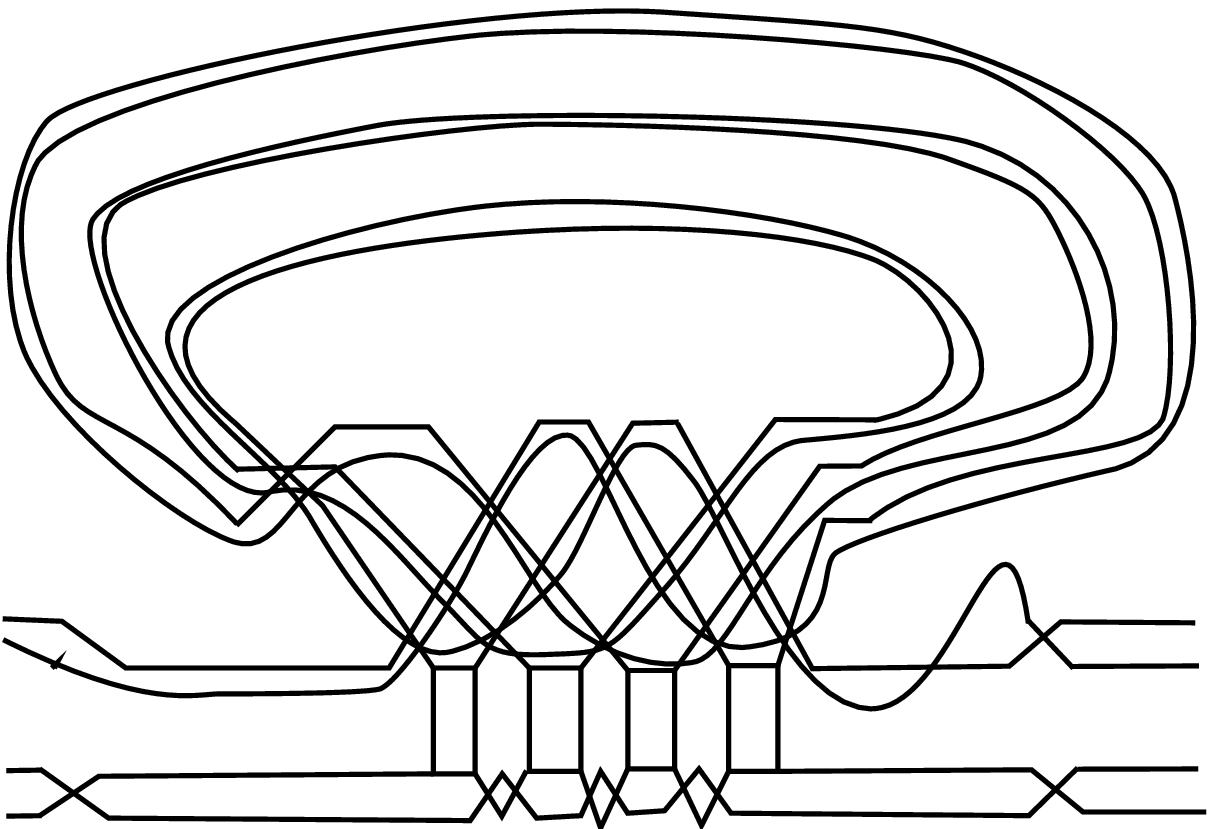}
\end{center}
Finally by coherence we get:
\begin{center}
\includegraphics[width=2in]{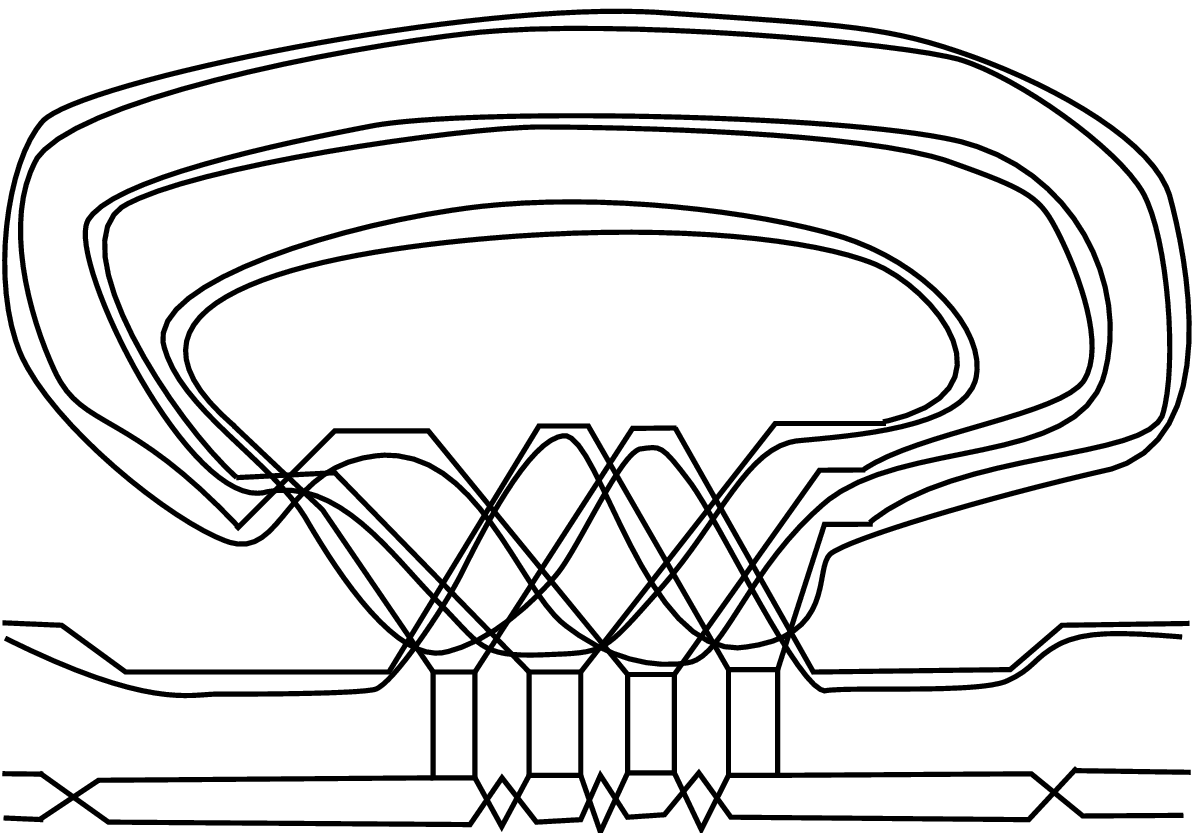}
\end{center}
Which is by definition $[\, \vec{p}\otimes 1 \,]$.
\end{proof}

\begin{theorem}\label{INT ES ssmpc}
Let $(\mathcal{C},\otimes,I,\Tr,s)$ be a symmetric monoidal partially traced category. The operation defined above $[-]$ determines a ssmpc $(Int^p(\mathcal{C}),[-],\otimes,I,\sigma)$.
\end{theorem}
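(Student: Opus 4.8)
The statement to prove is Theorem~\ref{INT ES ssmpc}: that $(Int^p(\mathcal{C}),[-],\otimes,I,\sigma)$ is a strict symmetric monoidal paracategory. The plan is to verify, one by one, the conditions in Definition~\ref{DEFINITION SSMPC}, relying heavily on the lemmas already proved in this section. Most of the structural work has in fact been distributed across the preceding lemmas, so the theorem is essentially an assembly statement: the main task is to check that every clause of the \emph{ssmpc} axioms has a corresponding lemma or immediate consequence of the definitions.

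First I would recall that the paracategory axioms $(a)$, $(b)$, $(c)$ of Definition~\ref{DEF PARACATEGORY} were already established for $(Int^p(\mathcal{C}),[-])$ in the earlier lemma whose proof is the long graphical Vanishing~II argument. So the remaining obligations concern the monoidal and symmetric structure. For the tensor operation $\otimes$ on $Int^p(\mathcal{C})$, I would verify that it is total and associative with unit $(I,I)$: these are exactly consequences (i), (ii), (iii) listed after the definition of the tensor, which handle $1_A\otimes 1_B=1_{A\otimes B}$ (condition (a)), the unit laws on objects, and associativity on objects. Conditions (b) and (c) of Definition~\ref{DEFINITION SSMPC}, namely $[f,f']\otimes [g,g']\funnel [f\otimes g,f'\otimes g']$ and $1\otimes[\vec p]\funnel[1\otimes\vec p]$ together with $[\vec p]\otimes 1\funnel[\vec p\otimes 1]$, are precisely the content of the final lemma of the section (the $1\otimes[\vec p]\funnel[1\otimes\vec p]$ lemma), modulo the observation in the Remark that (b) and (c) are equivalent to the single interchange condition $[f_1,\dots,f_n]\otimes[g_1,\dots,g_n]\funnel[f_1\otimes g_1,\dots,f_n\otimes g_n]$.

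For the symmetry clauses I would cite the three symmetry lemmas directly. The existence requirement that for every $f,g$ the composites $[\sigma,f]\downarrow$ and $[g,\sigma]\downarrow$ is Lemma~\ref{EXISTENCE SYMM COMP}. The naturality clauses $[f\otimes 1_B,\sigma]=[\sigma,1_B\otimes f]$ and $[1_A\otimes g,\sigma]=[\sigma,g\otimes 1_A]$ follow from Lemma~\ref{SIGMA IS A NATURAL TRANSFORMATION}, which shows $\sigma$ is a natural transformation. The hexagon-type coherence $[\sigma_{A,B}\otimes 1_C,\sigma_{B,A\otimes C}]=1_A\otimes\sigma_{B,C}$ is the lemma proving $[\sigma\otimes 1,\sigma]=1\otimes\sigma$. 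The involutivity $[\sigma_{A,B},\sigma_{B,A}]=1_{A\otimes B}$ remains to be checked explicitly: I would verify it by computing both sides with the yanking axiom, since $\sigma=s\otimes s$ is built from the underlying symmetry $s$ of $\cC$ and $[\sigma,\sigma]$ unwinds to a trace of a composite of symmetries that the yanking and coherence axioms collapse to the identity.

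The genuinely hard parts of this development are already encapsulated in the preceding lemmas—particularly the delicate back-and-forth applications of Vanishing~II required to show composition is well defined and that $\otimes$ interacts correctly with $[-]$—so the theorem itself is mainly bookkeeping. The one point demanding care is ensuring that the definedness side conditions ($\downarrow$) and the equality side conditions match the asymmetry of $\funnel$ versus $\funnels$ in each axiom; I would state each verification in the precise Kleene-equality form demanded by Definition~\ref{DEFINITION SSMPC} rather than assuming total definedness. With all clauses matched to their lemmas, the theorem follows by collecting these facts.
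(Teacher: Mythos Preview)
Your proposal is correct and matches the paper's proof exactly: the theorem is an assembly of the preceding lemmas, and the paper's own proof is the single sentence ``It follows from the previous lemmas.'' One minor point: the Remark you invoke says conditions (b) \emph{and} (c) together are equivalent to the general interchange, not that (c) alone yields (b); since the paper supplies no separate lemma for (b) either, in practice (b) is obtained by the same graphical argument as the (c)-lemma (or by first verifying directly in $Int^p(\cC)$ that $[p\otimes 1,1\otimes q]=p\otimes q$ for length-one $p,q$, which then gives (b) from (c) and the paracategory axioms), and your observation that the involutivity $[\sigma_{A,B},\sigma_{B,A}]=1$ is the one remaining clause not packaged in a stated lemma is accurate.
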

\begin{proof}
It follows from the previous lemmas.
\end{proof}

 Next, we wish to show that $Int^p(\cC)$ is a compact closed paracategory. Let $(I,I)\stackrel{\eta}\longrightarrow (A,B)\otimes (A,B)^*$ and $(A,B)^*\otimes (A,B)\stackrel{\varepsilon}\longrightarrow (I,I)$ be the unit and counit associated to the paracategory $Int^p(\cC)$. Actually, since $\cC$ is a strict category, we can regard these morphisms as $id:I\otimes A\otimes B\rightarrow A\otimes B\otimes I$ and $id:B\otimes A\otimes B\rightarrow I\otimes B\otimes A$ respectively.
\begin{lemma}\label{COMPACT EQUATION INT}
$[\eta\otimes 1,1\otimes\varepsilon]\downarrow$, $[1\otimes\eta,\varepsilon\otimes1]\downarrow$ and $[\eta\otimes 1,1\otimes\varepsilon]=1$, $[1\otimes\eta,\varepsilon\otimes1]=1$.
\end{lemma}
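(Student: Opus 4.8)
The plan is to verify the two compactness (``snake'' or ``zig-zag'') equations in the paracategory $Int^p(\cC)$ by unwinding the definitions of the unit $\eta$, counit $\varepsilon$, tensor, and partial composition, and then reducing everything to a diagram that is manifestly in the trace class and evaluates to an identity by the \textbf{Yanking} and \textbf{Vanishing I} axioms of the underlying partially traced category $\cC$. The key observation driving the whole argument is that $\eta$ and $\varepsilon$ are, at the level of $\cC$, nothing but identity maps (as noted just before the lemma, $\eta = id:I\otimes A\otimes B\to A\otimes B\otimes I$ and $\varepsilon = id:B\otimes A\otimes B\to I\otimes B\otimes A$). Consequently the composites $\epsilon(\eta\otimes 1,1\otimes\varepsilon)$ and $\epsilon(1\otimes\eta,\varepsilon\otimes 1)$, once the $\epsilon(-)$ pyramid of symmetries from Definition~\ref{DEFINITION INT PARACATEGORY} is spliced in, should simplify via coherence to a single symmetry on the traced-out object.

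First I would spell out, for the composite $[\eta\otimes 1_{(A,B)}, 1_{(A,B)}\otimes\varepsilon]$, exactly which object $U$ is being traced and what arrow of $\cC$ sits inside the trace. By Definition~\ref{DEFINITION INT PARACATEGORY} this is $\Tr^{U}(\epsilon(\eta\otimes 1,1\otimes\varepsilon)(1\otimes 1\otimes\gamma))$ for the appropriate permutation $\gamma$, with $U$ the tensor of the negative components coming from the intermediate object. Using the graphical language for traced monoidal categories (justified by the coherence theorem quoted in the excerpt), I would draw the two boxes $\eta$ and $\varepsilon$ as identity wires, insert the symmetry pyramid $\epsilon(-)$ and the permutation $\gamma$, and show by \textbf{coherence} that the resulting endomorphism of $X\otimes U$ is exactly a symmetry $\sigma_{U,U}$ superposed with an identity on the surviving wire. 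Then \textbf{Yanking} ($\Tr^{U}_{U,U}(\sigma_{U,U})\funnels 1_U$) gives that the trace is defined and equals the identity, which in $Int^p(\cC)$ reads as $[\eta\otimes 1,1\otimes\varepsilon]=1_{(A,B)}$.

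The second equation $[1\otimes\eta,\varepsilon\otimes 1]=1_{(A,B)^{*}}$ is entirely symmetric: I would run the identical reduction with the roles of the positive and negative components swapped, again landing on a bare symmetry inside the trace and invoking \textbf{Yanking}. The definedness claims $[\eta\otimes 1,1\otimes\varepsilon]\downarrow$ and $[1\otimes\eta,\varepsilon\otimes 1]\downarrow$ come for free from the same computation, since membership in the trace class $\Trc^{U}$ is precisely what \textbf{Yanking} guarantees for a symmetry (the trace of a symmetry is \emph{total}, a fact already used repeatedly in Lemmas~\ref{EXISTENCE SYMM COMP} and~\ref{SIGMA IS A NATURAL TRANSFORMATION}).

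I expect the main obstacle to be purely bookkeeping: correctly tracking the permutation $\gamma$ and the symmetry pyramid $\epsilon(-)$ so that the diagram really does collapse to a single $\sigma_{U,U}$ rather than some more complicated permutation that would require an extra appeal to \textbf{Vanishing I} or \textbf{Vanishing II}. In other words, the conceptual content is just Yanking, but the danger is mis-routing a wire when splicing the $Int^p$ composition formula into the compactness triangles. I would therefore do the reduction graphically in a small representative case (as the author does elsewhere in this chapter) and argue that the general case follows by the same coherence manipulations, keeping careful watch that every intermediate diagram stays in the appropriate trace class so that each rewriting step is legitimate.
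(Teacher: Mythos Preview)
There is a genuine gap in the shape you expect the traced argument to take. In $[\eta\otimes 1_{(A,B)},1_{(A,B)}\otimes\varepsilon]$ the intermediate object is $(A,B)\otimes(A,B)^*\otimes(A,B)=(A\otimes B\otimes A,\;B\otimes A\otimes B)$, so by Definition~\ref{DEFINITION INT PARACATEGORY} the traced object is $U=X_2^-=B\otimes A\otimes B$, and the map inside $\Tr^U$ is a permutation of the \emph{five} wires $A\otimes B\otimes B\otimes A\otimes B$. It therefore cannot be of the form ``$\sigma_{U,U}$ superposed with an identity on the surviving wire'': that would live on $Y\otimes U\otimes U$ for some $Y$, i.e.\ on at least six wires. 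A single application of Yanking does not discharge this trace.

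What actually happens---and what the paper does---is that the permutation is a composite of three nested ``caps'', one for each tensor factor of $U$. One must invoke \textbf{Vanishing~II} \emph{twice} (not as optional bookkeeping but as the load-bearing step) to split $\Tr^{B\otimes A\otimes B}$ into an iterated trace $\Tr^{B}\Tr^{A}\Tr^{B}$, and only then does each layer reduce by Yanking. The paper runs this in the build-up direction: starting from $1_{A\otimes B}$, it introduces three traces one at a time via Yanking, uses Superposing/Naturality/coherence to enlarge each to the full diagram, and finally merges them via Vanishing~II into the single $\Tr^U$ that defines the composite. This order matters in a \emph{partially} traced setting, because it certifies at every step that the relevant morphism lies in the trace class before Vanishing~II is applied. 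Your ``simplify down'' direction can be made to work, but you must first establish the inner-trace definedness hypotheses of Vanishing~II independently; you cannot get them from a single $\sigma_{U,U}$ that does not exist here.
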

\begin{proof}
Notice that $\sigma_{A,I}=id_A$ for every object $A\in \cC$.
We start with the identity map $(A,B)\stackrel{1}\longrightarrow (A,B)$ which is the map $1:A\otimes B\rightarrow A\otimes B$ in $\cC$.
Since,
$$1_{A\otimes B}=\includegraphics[height=0.4in]{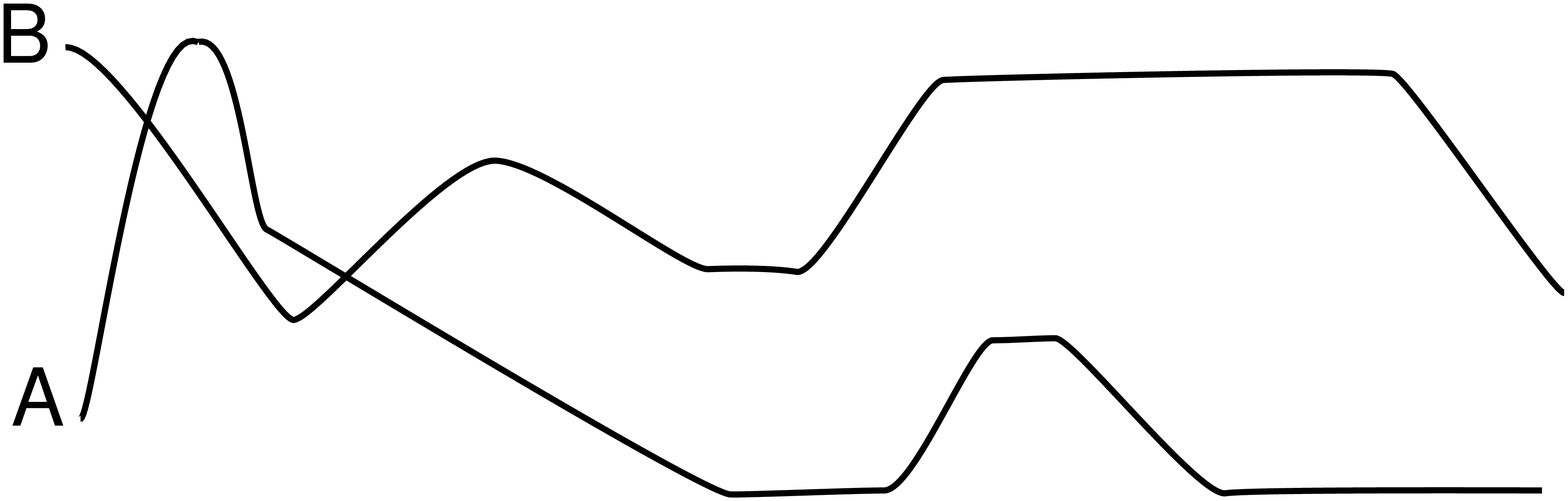}$$
holds by coherence, using the yanking axiom
$$\includegraphics[height=0.8in]{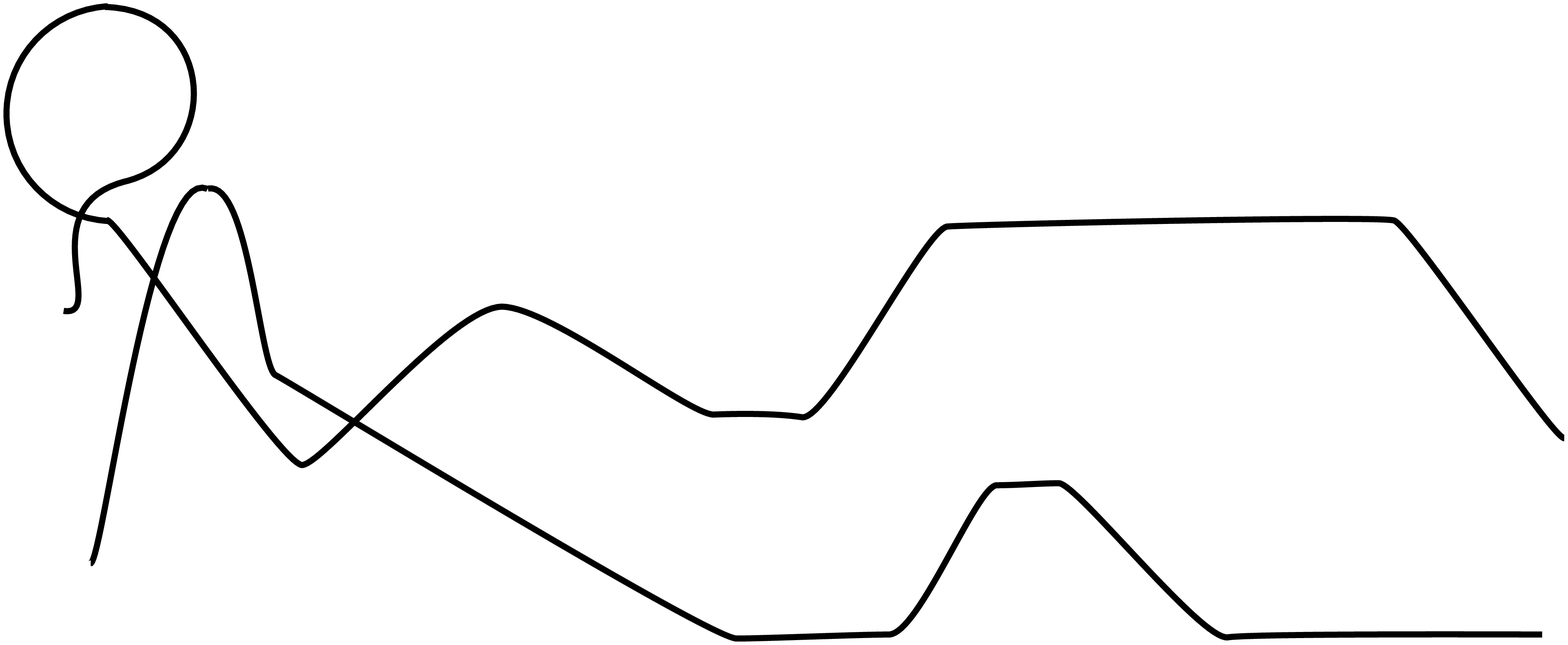}\,\,\,\,\,\,\,\mbox{and naturality}\,\,\,\,\,\,\,\includegraphics[height=0.8in]{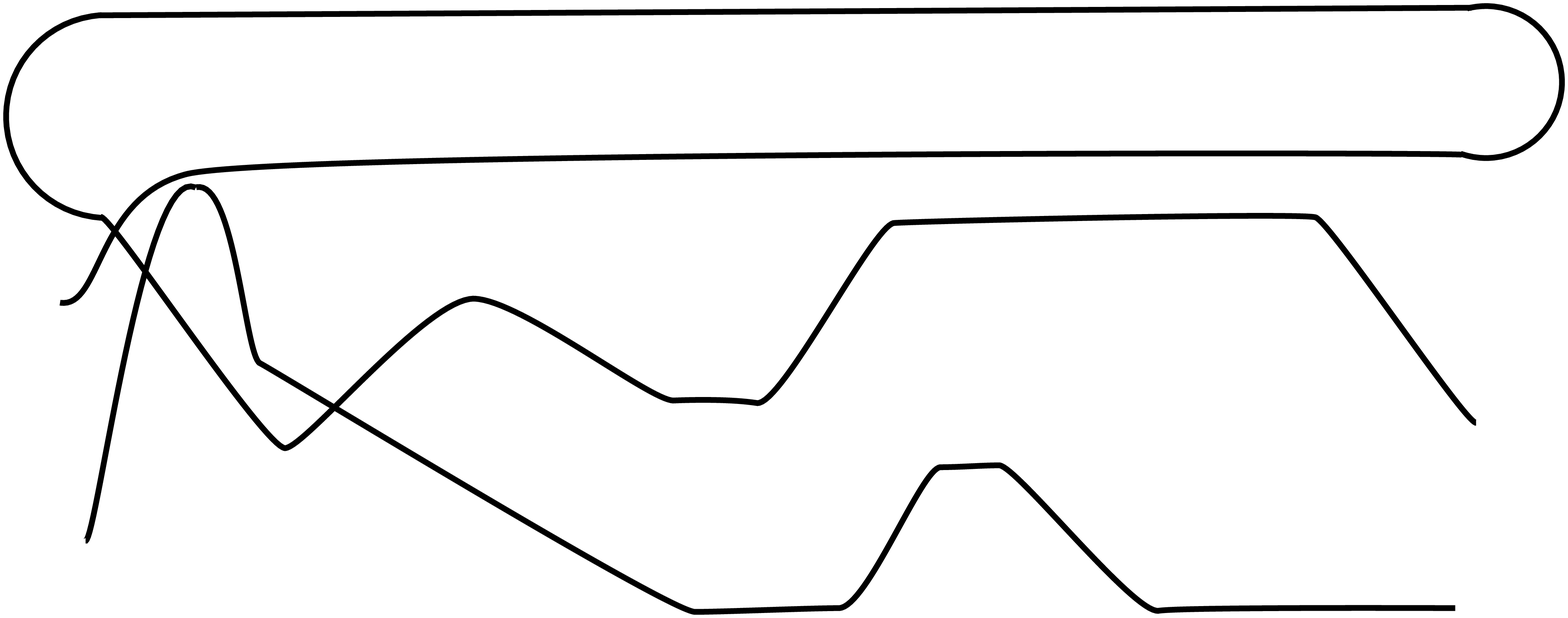}.$$
Notice that, all along this proof, we implicitly claim that the graph below the trace is in the corresponding trace class.
For instance, in the last diagram from the naturality axiom it follows that
$$\includegraphics[height=0.4in]{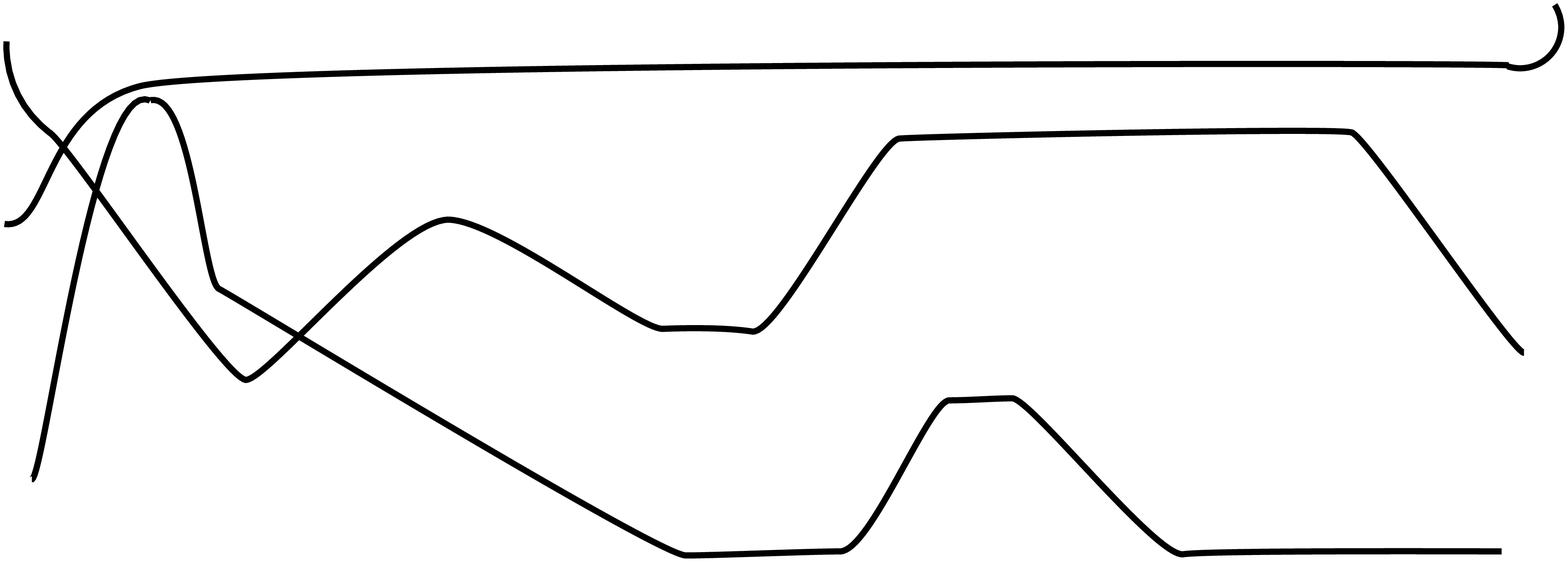}\in\Trc^B.$$
Then by superposing axiom and coherence
$$\includegraphics[height=0.8in]{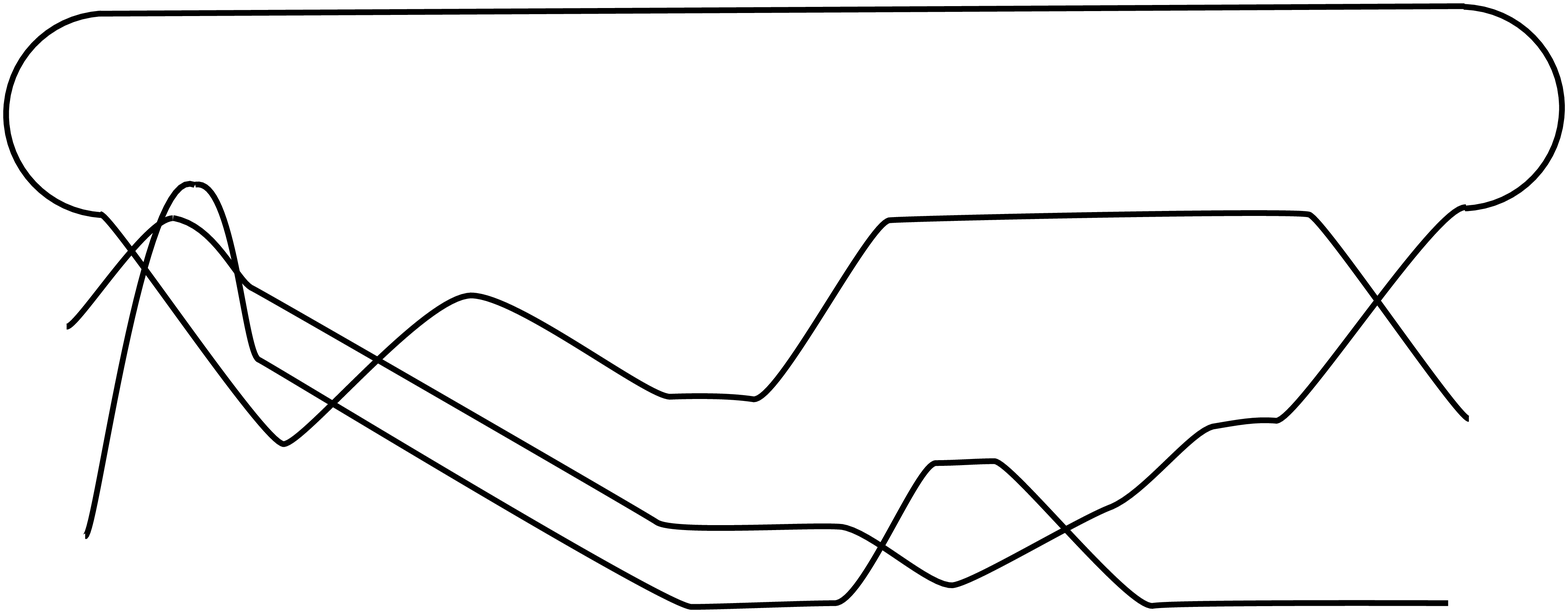}.$$
Therefore, by applying yanking and naturality again we obtain
$$\includegraphics[height=0.8in]{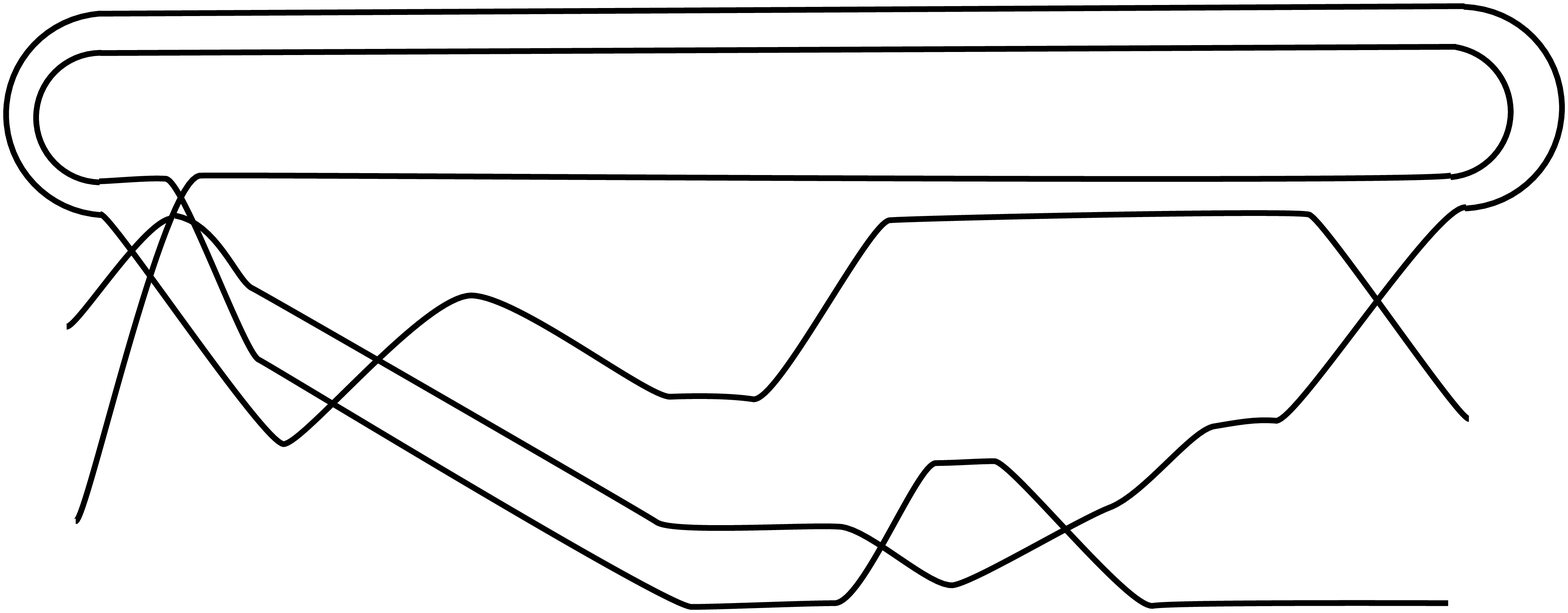}$$
where the graph below this new trace is in the trace class $\Trc^A$ and this, of course, will be preserved by any coherent modification of the graph.
We have from superposition and coherence axioms that
$$\includegraphics[height=0.8in]{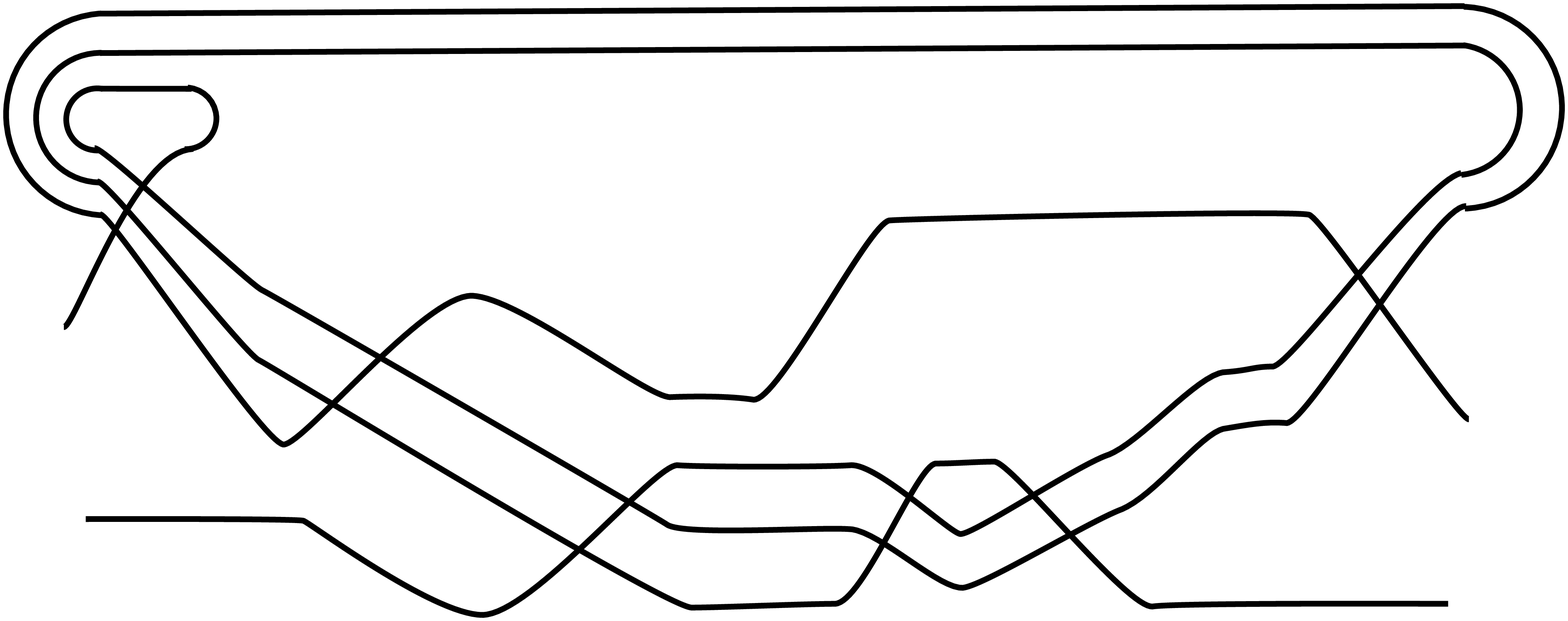}.$$
Again, naturality, superposition and coherence gives us
$$\includegraphics[height=0.8in]{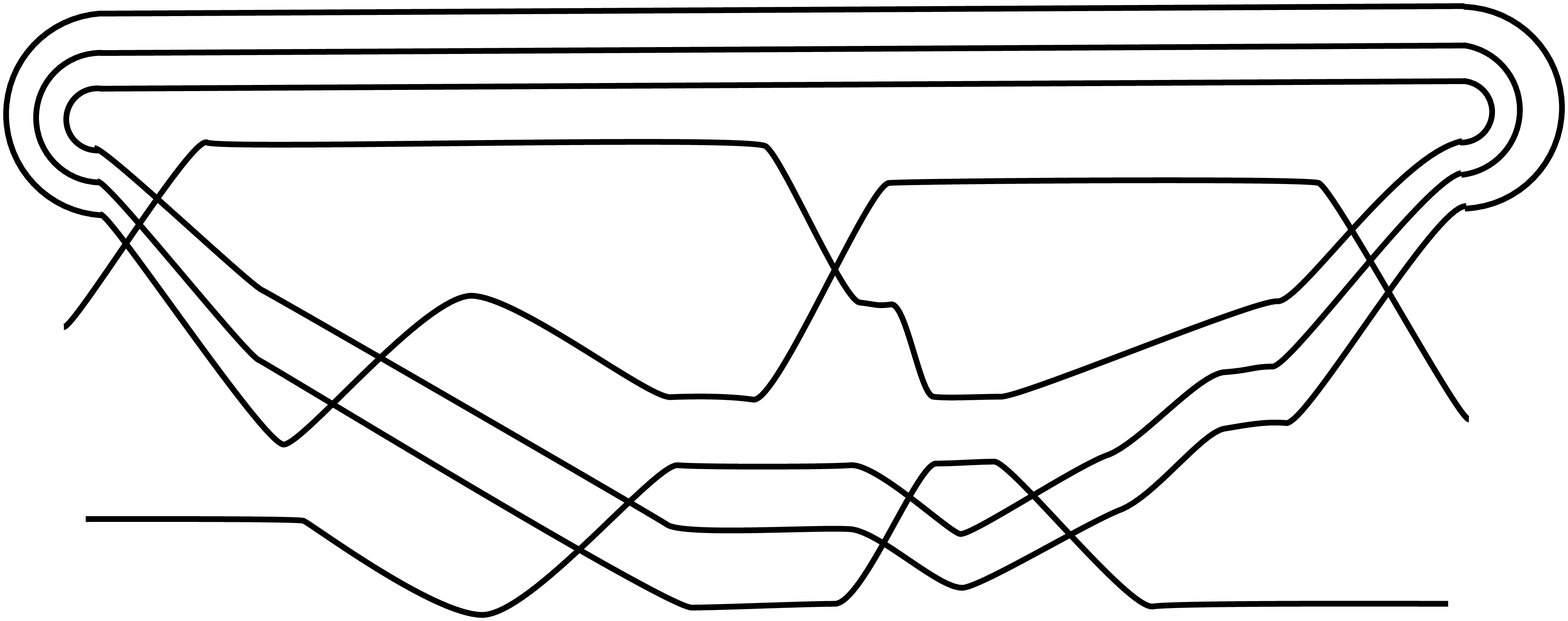}$$
where the graph below the trace is in the trace class $\Trc^B$. Since $\cC$ is a strict category then is equal to
$$\includegraphics[height=0.8in]{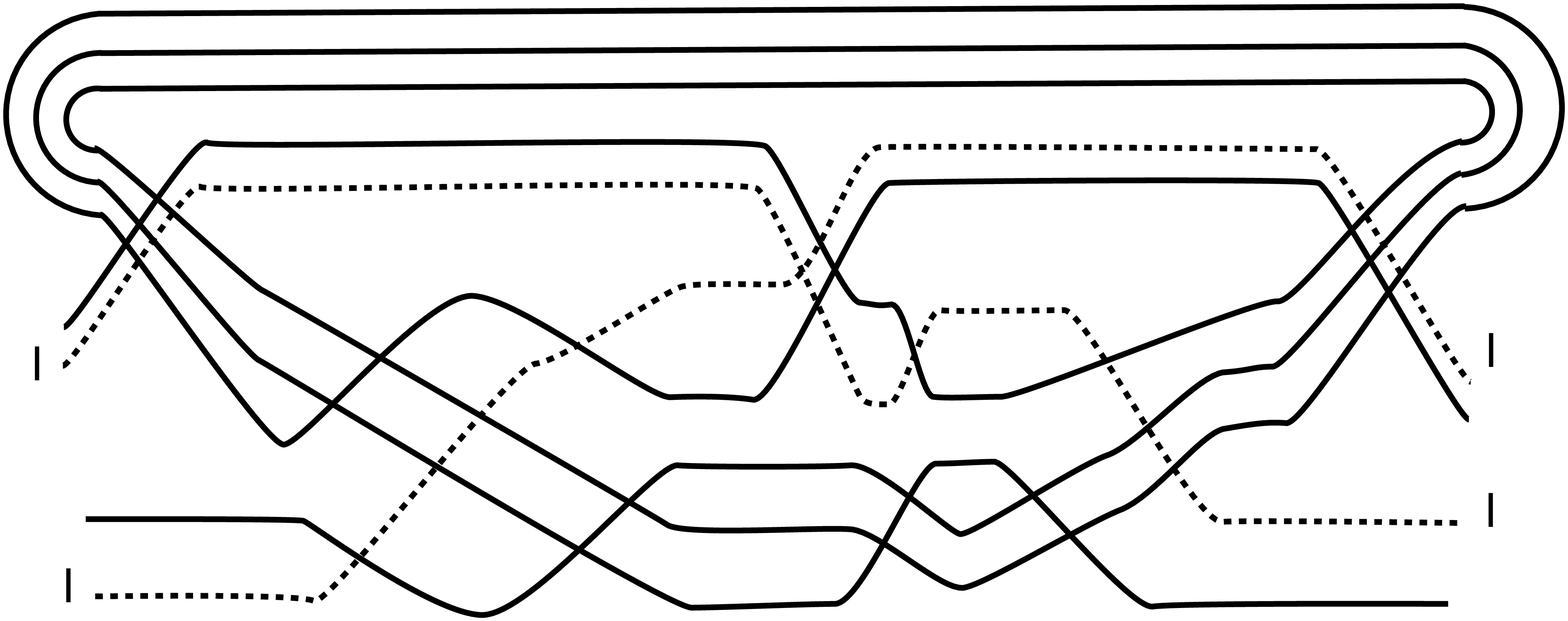}.$$
Finally, since the trace class conditions for applying vanishing II are satisfied, we apply the vanishing II axiom twice and we obtain that is equal to
$$\includegraphics[height=0.8in]{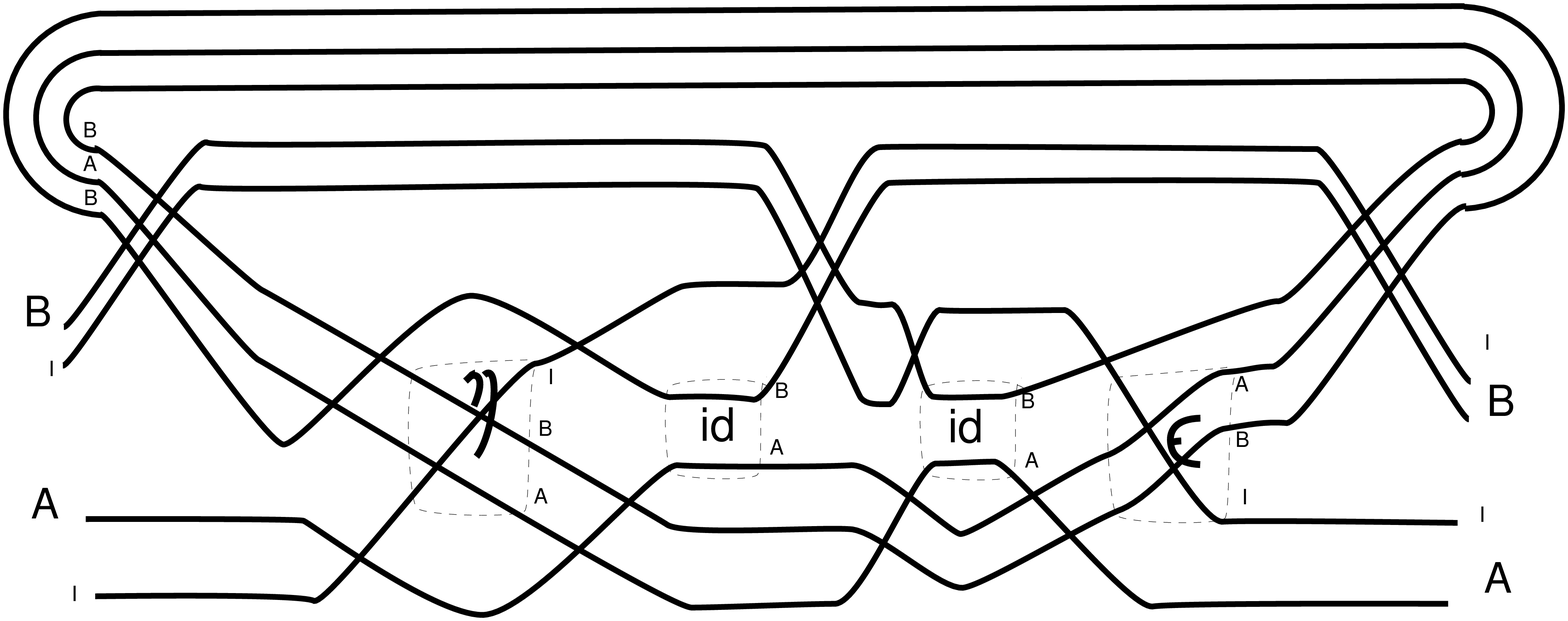}=[\eta\otimes 1,1\otimes\varepsilon].$$

In the same way as before we prove that $[1\otimes\eta,\varepsilon\otimes1]=1$.

We sketch schematically the rest of the proof leaving details to the reader. We start with the identity $1_{B\otimes A}$. \\

coherence:
$\includegraphics[height=0.3in]{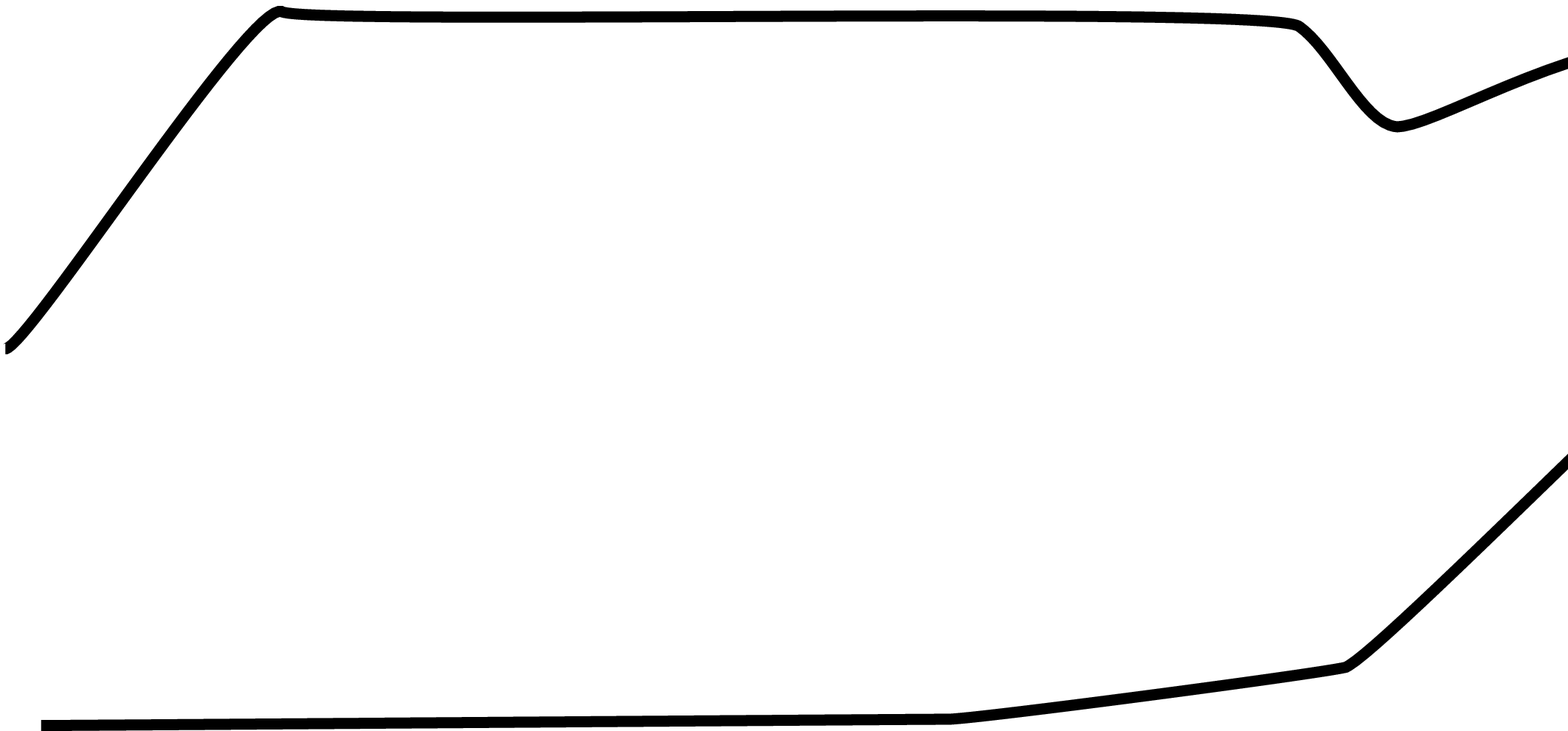}$
yanking:
$\includegraphics[height=0.4in]{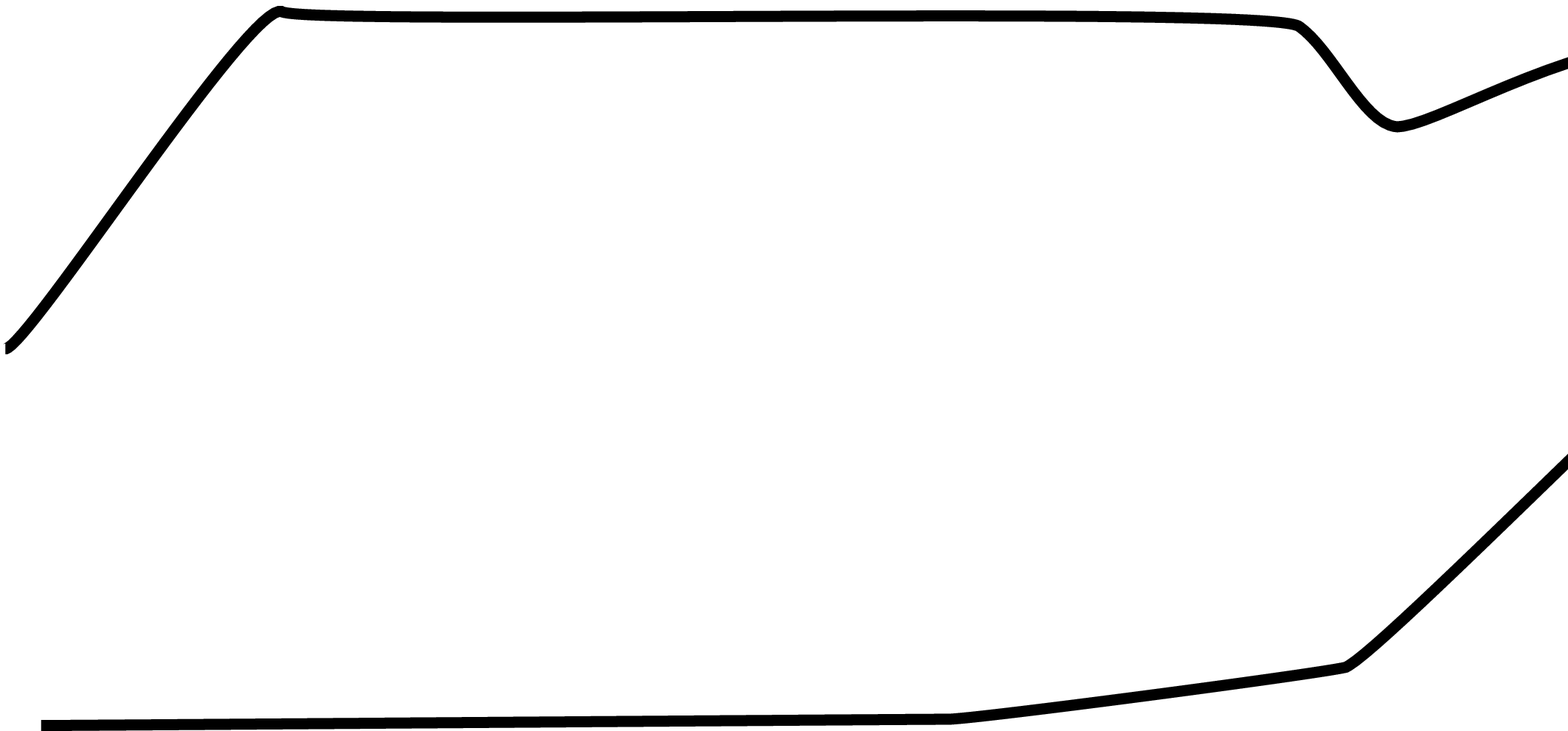}$
naturality:
$\includegraphics[height=0.4in]{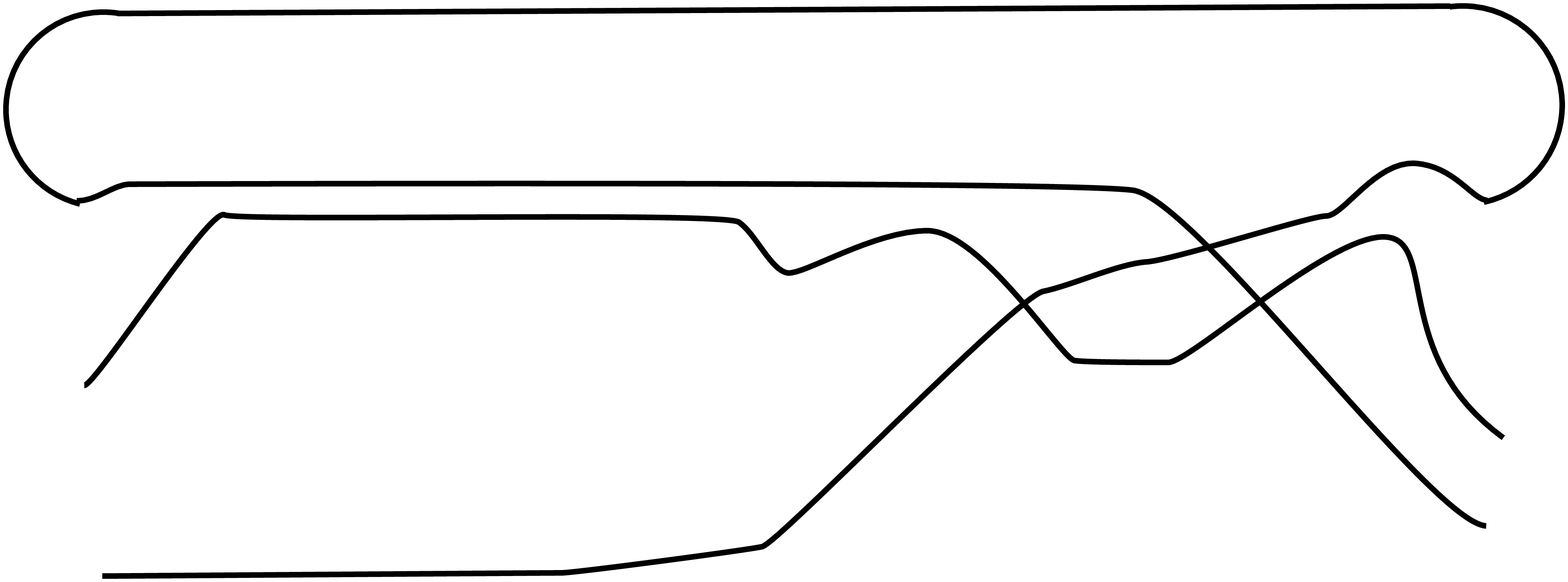}$\\

coh.:
$\includegraphics[height=0.4in]{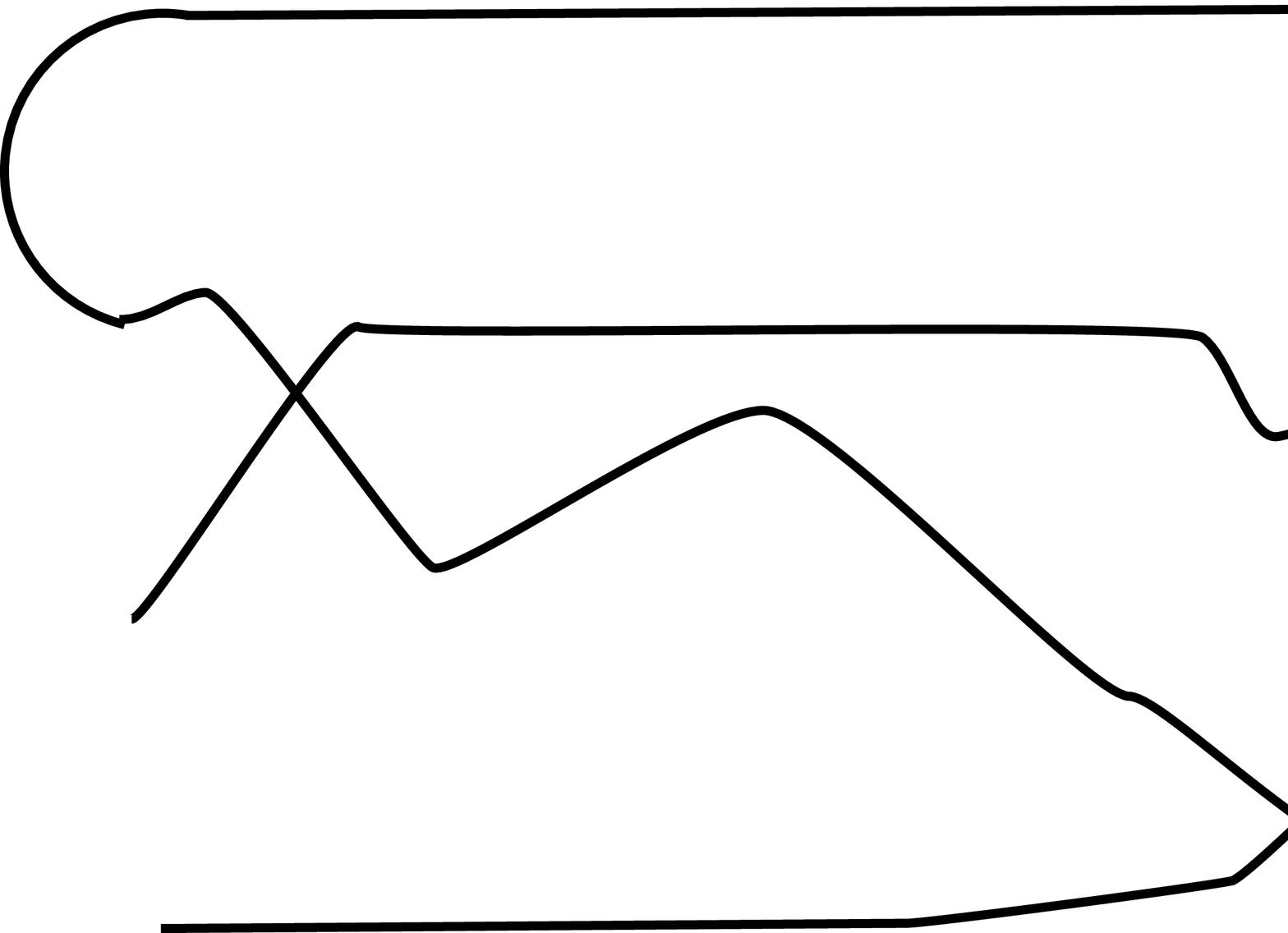}$
yanking:
$\includegraphics[height=0.4in]{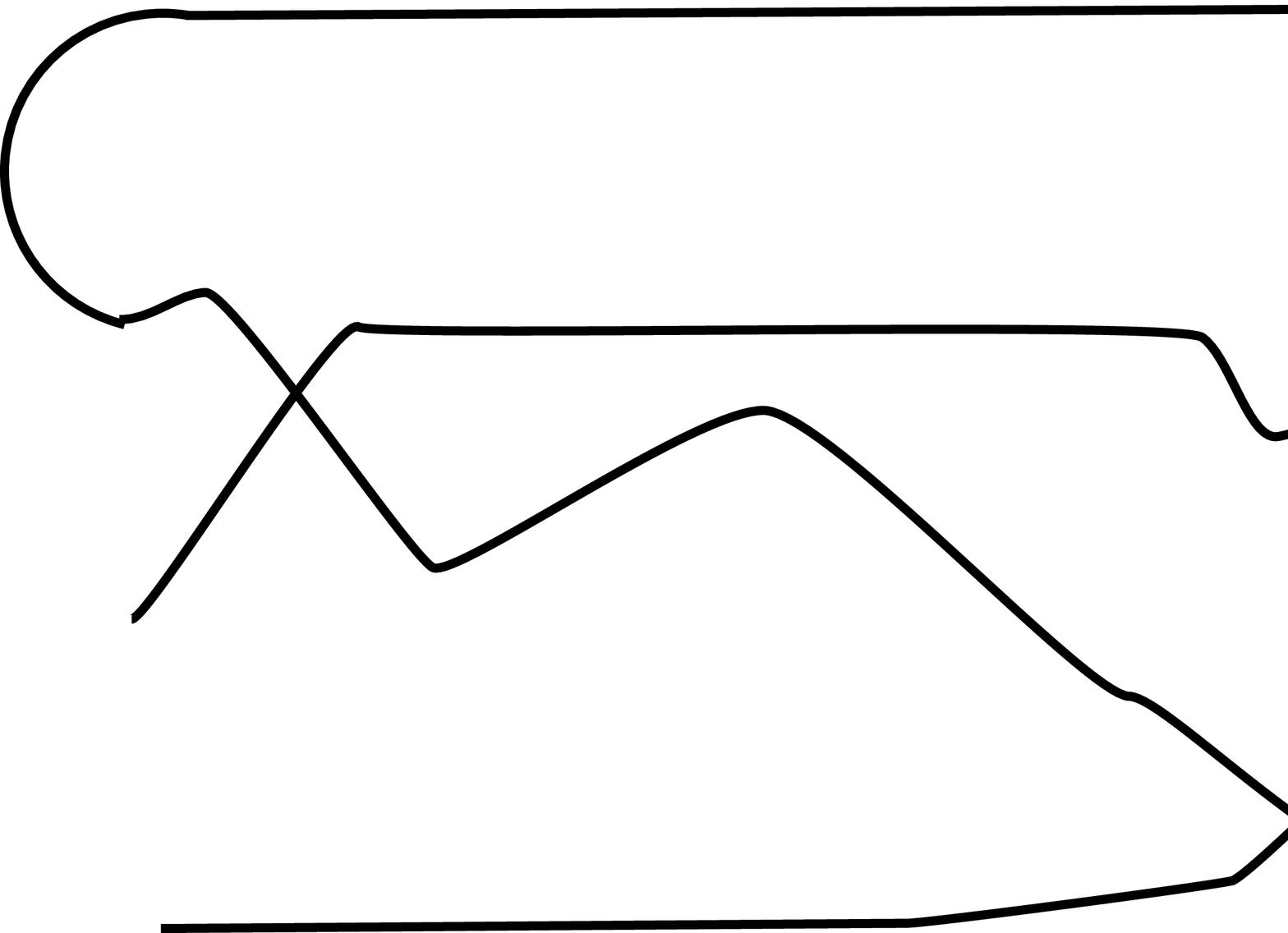}$
superposing:
$\includegraphics[height=0.4in]{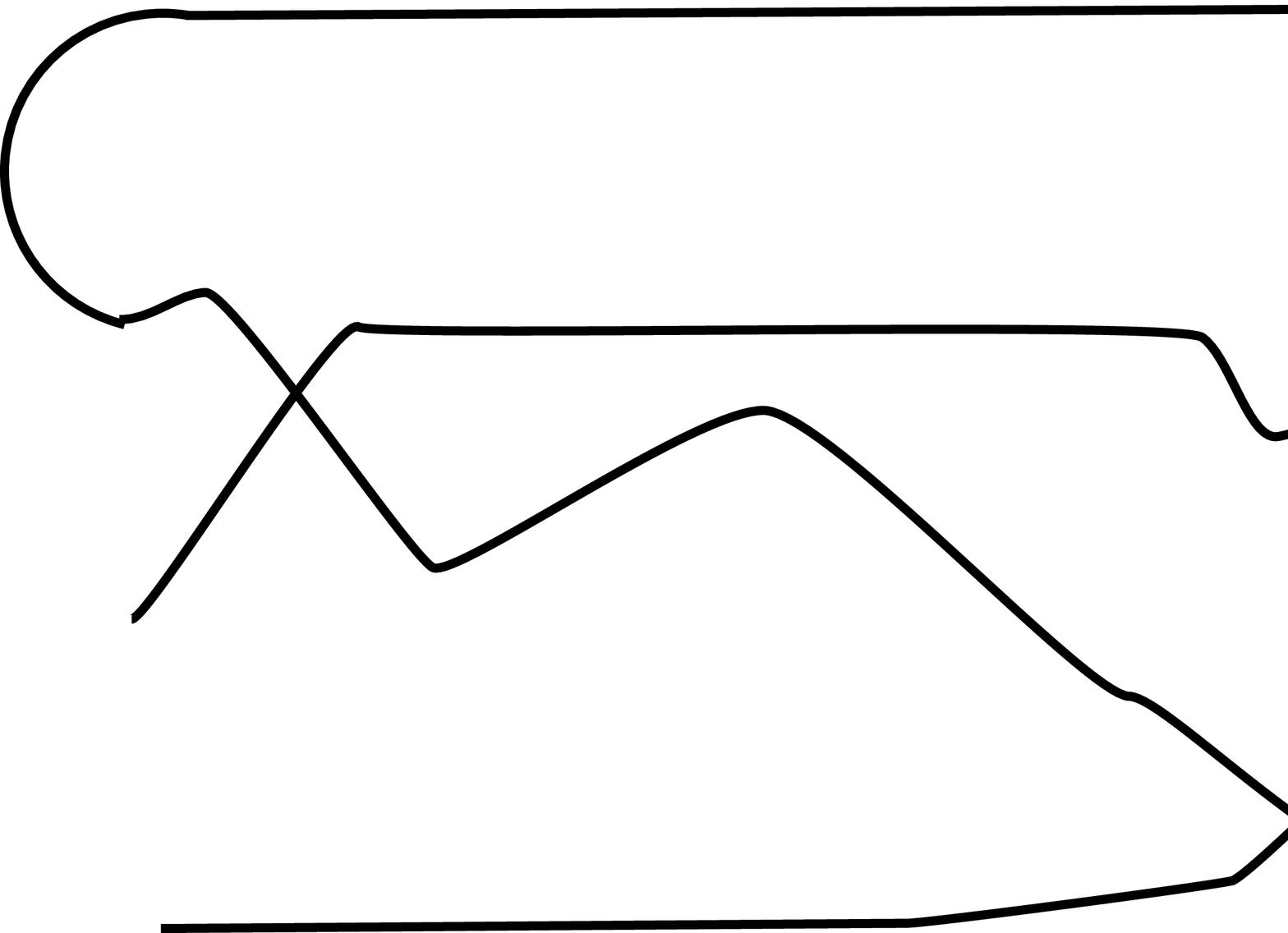}$\\

nat.:
$\includegraphics[height=0.4in]{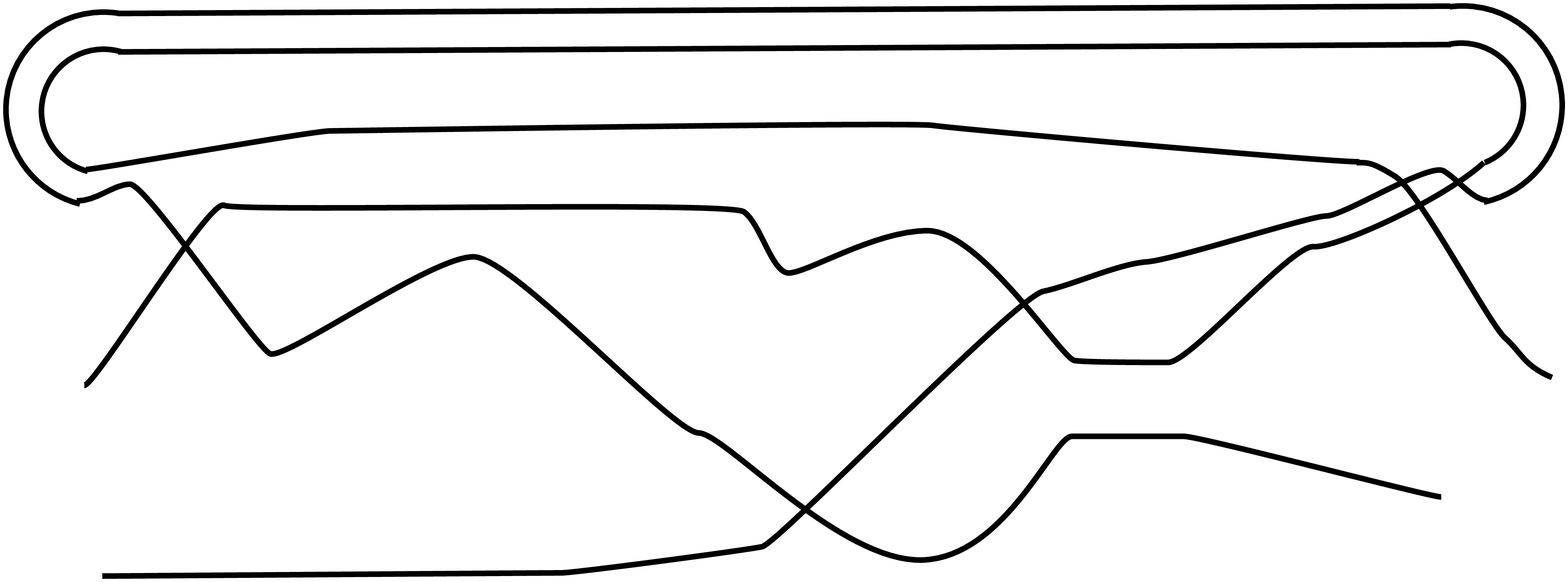}$
coh.:
$\includegraphics[height=0.4in]{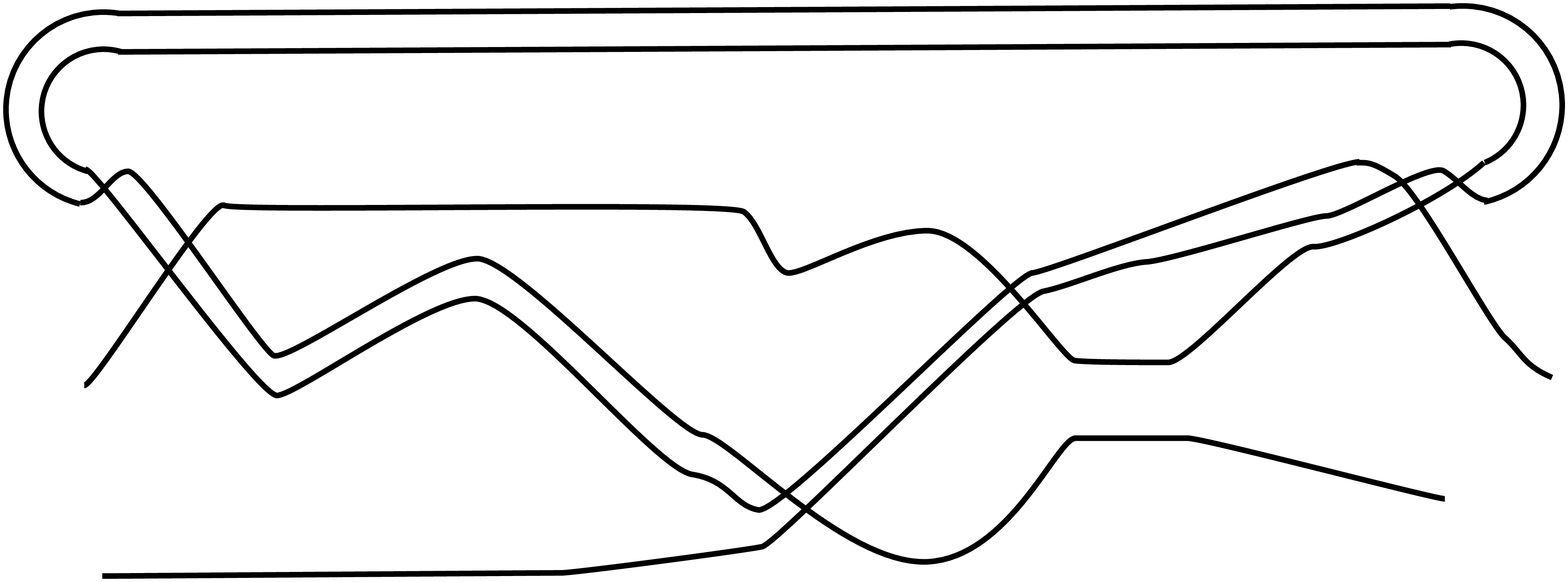}$
dinaturality:
$\includegraphics[height=0.4in]{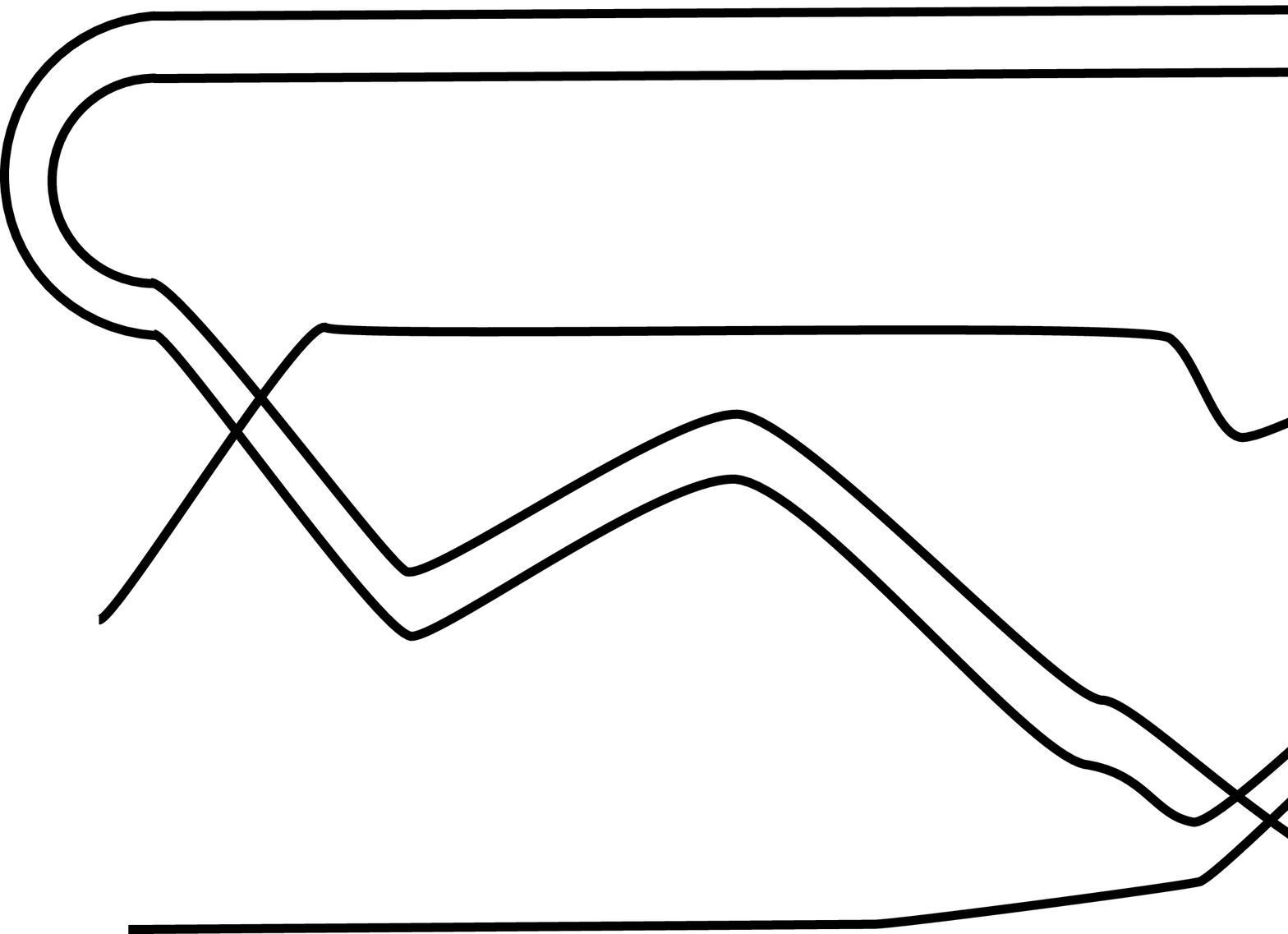}$\\

yanking:
$\includegraphics[height=0.4in]{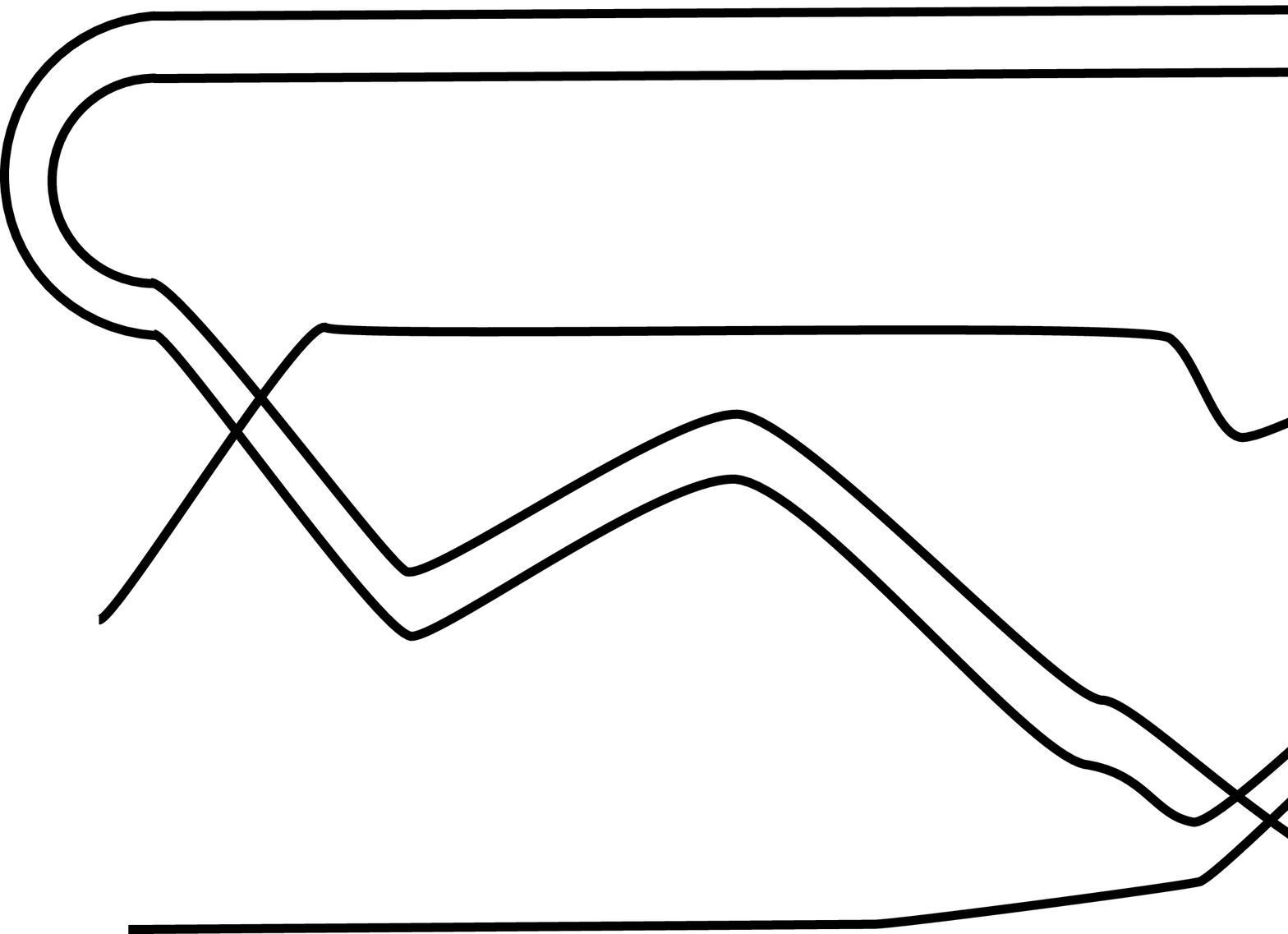}$
naturality:
$\includegraphics[height=0.4in]{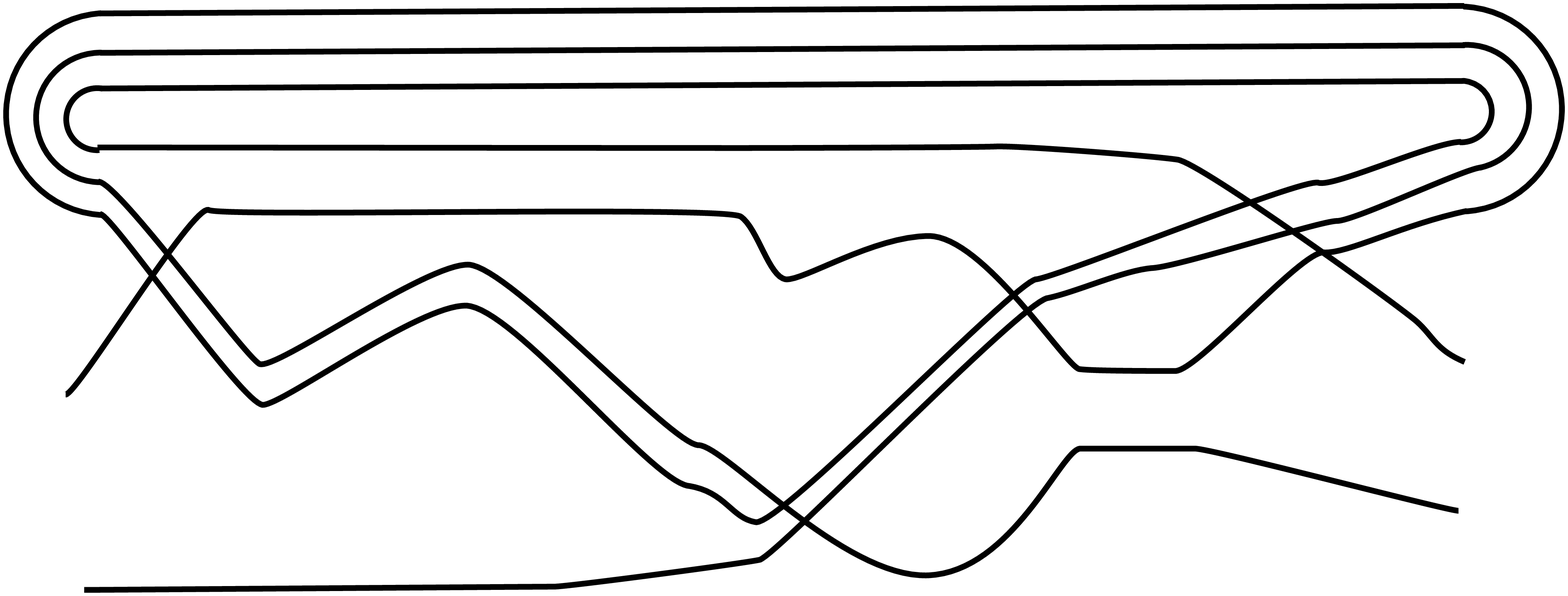}$
coherence:
$\includegraphics[height=0.4in]{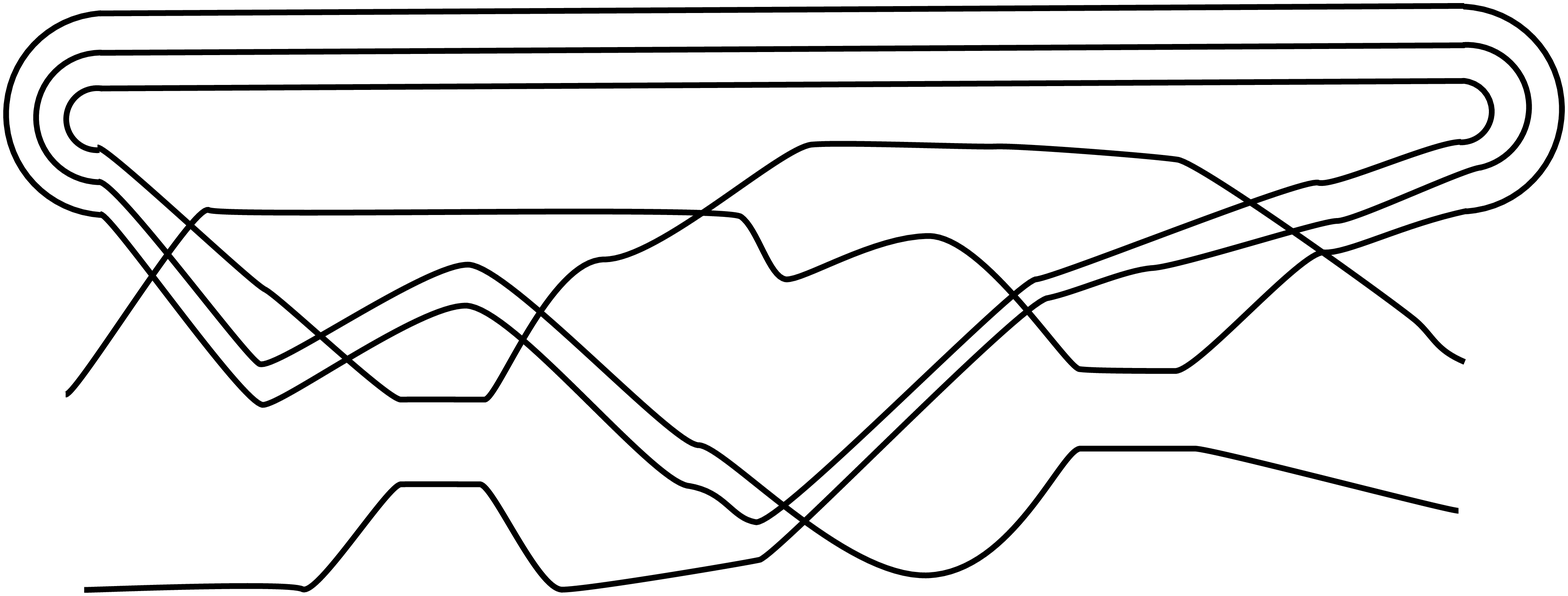}$\\

vanishing II:
$$\includegraphics[height=0.8in]{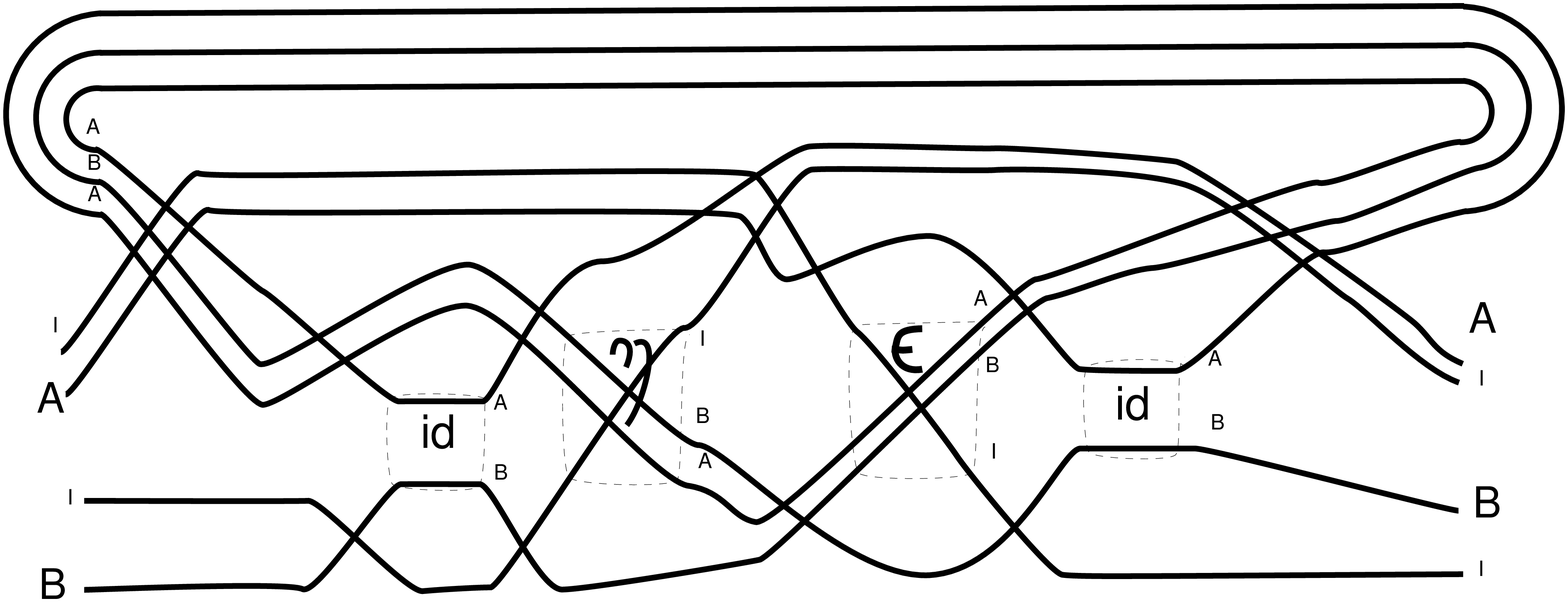}$$

\end{proof}

\begin{corollary}
Let $\cC$ be partially traced. Then $Int^p(\cC)$ is a compact closed paracategory.
\end{corollary}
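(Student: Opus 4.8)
The plan is to assemble the corollary from the two results immediately preceding it, supplying the dual objects and the unit/counit data explicitly. First I would observe that Theorem~\ref{INT ES ssmpc} already establishes that $(Int^p(\cC),[-],\otimes,I,\sigma)$ is a strict symmetric monoidal paracategory, so the only thing left is to exhibit, for each object, a dual together with unit and counit arrows satisfying the two definedness conditions and the two snake equations in the definition of a compact closed paracategory.

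For an object $(A^+,A^-)$ I would take the dual to be $(A^+,A^-)^*=(A^-,A^+)$. Using the tensor rule $(A^+,A^-)\otimes(B^+,B^-)=(A^+\otimes B^+,B^-\otimes A^-)$ one computes $(A^+,A^-)\otimes(A^+,A^-)^*=(A^+\otimes A^-,A^+\otimes A^-)$ and $(A^+,A^-)^*\otimes(A^+,A^-)=(A^-\otimes A^+,A^-\otimes A^+)$. The unit $\eta:(I,I)\to(A^+,A^-)\otimes(A^+,A^-)^*$ is by definition an arrow $I\otimes(A^+\otimes A^-)\to(A^+\otimes A^-)\otimes I$ of $\cC$, which, since $\cC$ is strict, I take to be the identity $1_{A^+\otimes A^-}$; dually the counit $\varepsilon:(A^+,A^-)^*\otimes(A^+,A^-)\to(I,I)$ is the $Int^p$-arrow given by the identity on $A^-\otimes A^+$ in $\cC$. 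This matches the description of $\eta$ and $\varepsilon$ as identity maps recorded just before Lemma~\ref{COMPACT EQUATION INT}.

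Finally I would verify the four conditions required by the definition, namely $[\eta\otimes 1,1\otimes\varepsilon]\downarrow$, $[1\otimes\eta,\varepsilon\otimes 1]\downarrow$, $[\eta\otimes 1,1\otimes\varepsilon]=1$ and $[1\otimes\eta,\varepsilon\otimes 1]=1$. These are exactly the statement of Lemma~\ref{COMPACT EQUATION INT}, so no further computation is needed.

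The hard part---establishing the snake identities through the graphical manipulations (yanking to introduce a traced symmetry, superposing and naturality to absorb it into the diagram, and the twofold application of Vanishing~II to split the resulting trace)---has already been discharged in the proof of Lemma~\ref{COMPACT EQUATION INT}. For the corollary itself the only delicate point is bookkeeping: confirming that the $Int^p$-tensors above really place $\eta\otimes 1_{(A^+,A^-)}$ and $1_{(A^+,A^-)}\otimes\varepsilon$ in composable position, so that the composite $[\eta\otimes 1,1\otimes\varepsilon]$ named in the lemma is the same expression demanded by the definition of a compact closed paracategory, and symmetrically for the other snake. Once this identification is made the corollary follows immediately.
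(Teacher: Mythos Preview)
Your proposal is correct and follows the same approach as the paper: the paper's proof is the one-liner ``This is a consequence of Lemma~\ref{COMPACT EQUATION INT}'', which implicitly relies on Theorem~\ref{INT ES ssmpc} for the ssmpc structure and on the description of $\eta,\varepsilon$ as identity maps given just before that lemma---exactly the ingredients you assemble. You have simply made the paper's implicit steps (choice of dual $(A^+,A^-)^*=(A^-,A^+)$, identification of $\eta$ and $\varepsilon$, and the citation of the lemma) explicit.
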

\begin{proof}
This is a consequence of Lemma~\ref{COMPACT EQUATION INT}.
\end{proof}

Our final result for this section is that there exists a full and faithful, trace preserving functor from $\cC$ to $Int^p(\cC)$.

\begin{definition}
In a similar way as done in~\cite{JSV96}, we define a fully faithful functor between paracategories $N:\cC\rightarrow Int^{p}(\cC)$ defined by $N(A)=(A,I)$ and $N(f)=f$ by strictness of the category $\cC$.
\end{definition}
\begin{lemma}\label{N faithful}
 $N$ is a well-defined, full and faithful functor of paracategories.
\end{lemma}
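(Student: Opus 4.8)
The plan is to split the statement into the formal part (well-definedness, fullness, faithfulness), which follows at once from strictness, and the computational part (preservation of composition), which is the only place where the trace axioms are needed.

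First I would check well-definedness and full-faithfulness simultaneously. By the definition of $Int^p(\cC)$, a morphism $N(A)\to N(B)$, i.e. $(A,I)\to(B,I)$, is exactly an arrow $A\otimes I\to B\otimes I$ in $\cC$; since $\cC$ is strict, $A\otimes I=A$ and $B\otimes I=B$, so
$$Int^p(\cC)(N(A),N(B))=\cC(A\otimes I,B\otimes I)=\cC(A,B).$$
Thus the assignment $f\mapsto N(f)=f$ is literally the identity map between these hom-sets. This shows at once that $N$ is a well-defined graph morphism and that it is a bijection on each hom-set, hence both full and faithful.

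Next I would verify that $N$ is a functor of paracategories, i.e. that whenever $[\vec p\,]\downarrow$ in $\cC$ we have $N[\vec p\,]=[\,\vec{Np}\,]$ in $Int^p(\cC)$. Regarding $\cC$ as a total paracategory, composition is always defined with $[p_1,\dots,p_n]=p_n\circ\cdots\circ p_1$, so the content is to show that for every path $f_1,\dots,f_n$ of composable arrows of $\cC$ (with $f_i:X_i\to X_{i+1}$) the $Int^p$-composite $[N(f_1),\dots,N(f_n)]$ is defined and equals $N(f_n\circ\cdots\circ f_1)$. The empty and length-one cases are immediate from the definition of $[-]$; in particular $1_{N(A)}=1_{A\otimes I}=1_A=N(1_A)$, so identities are preserved. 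For the general case the crucial observation is that every negative component of the objects $N(X_i)=(X_i,I)$ equals the unit $I$, so in the composition formula of Definition~\ref{DEFINITION INT PARACATEGORY} the object traced out is $U=X_n^-\otimes\cdots\otimes X_2^-=I\otimes\cdots\otimes I=I$.

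Finally I would invoke Vanishing I to conclude. Because the trace is taken over $U=I$, the operator $\Tr^{I}$ is total, which is precisely what guarantees that $[N(f_1),\dots,N(f_n)]$ is defined; and by Vanishing I it is just conjugation by the unit isomorphism $\rho$. Moreover, since $\sigma_{A,I}=id_A$ and $\sigma_{I,A}=id_A$ for every $A$, all the symmetries occurring in the pyramid $\epsilon(f_1,\dots,f_n)$ and in the permutation $\gamma_{n-1}$ that act on the $I$-labelled wires collapse to identities, so by coherence the inner arrow $\epsilon(f_1,\dots,f_n)\circ(1_{X_1}\otimes 1_{I}\otimes\gamma_{n-1})$ reduces, up to unit isomorphisms, to the plain composite $f_n\circ\cdots\circ f_1$. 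Applying $\Tr^{I}$ and Vanishing I then cancels these isomorphisms and yields $[N(f_1),\dots,N(f_n)]=f_n\circ\cdots\circ f_1=N(f_n\circ\cdots\circ f_1)$, as required. The main obstacle is exactly this bookkeeping step: one must track the wiring of $\epsilon$ and of the permutations and use coherence to confirm that, once all the $I$-factors are erased, nothing survives but the linear composite of the boxes. I expect this to be cleanest in the graphical language, in the same spirit as the preceding lemmas of this section.
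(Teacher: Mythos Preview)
Your proposal is correct and follows essentially the same approach as the paper: both rely on strictness to identify $\cC(A,B)$ with $Int^p(\cC)((A,I),(B,I))$ for fullness and faithfulness, and on Vanishing~I (together with the fact that the traced object is $U=I\otimes\cdots\otimes I=I$) to show that the $Int^p$-composition of the $N(f_i)$ is always defined and collapses to the ordinary composite. Your treatment of the bookkeeping step, where the symmetries on the $I$-labelled wires vanish by coherence, is more explicit than the paper's, which simply asserts that Vanishing~I gives $\Tr^I_{A,B}(f)=f$ without unpacking the pyramid $\epsilon$ or the permutation $\gamma$.
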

\begin{proof} To prove well-definedness, note that we are considering the category $\cC$ as a paracategory with composition $[f_1,\ldots,f_n] = f_n\circ\ldots\circ f_1$ as its partial operation, and $[-]'$ the partial composition defined in $Int^{p}(\cC)$. Thus, $N([\vec{f}])=\vec{[N(f)]}'$, since by the Vanishing I axiom, the trace operator is totally defined when we restrict it to this type of arrows i.e., $\Trc^I_{A,B}=\cC(A\otimes I,B\otimes I)$ and $\Tr^I_{A,B}(f)=f$.

By definition, $N(f)=N(g)$ implies $f=g$, which proves faithfulness. If we take and arrow in $Int^p((A,I),(B,I))$, let us say for example $f:(A,I)\rightarrow (B,I)$, which really means in $\cC$  an arrow of type $f:A\otimes I\rightarrow B\otimes I$, then we just choose the same $f$ obtaining $Nf=f$. This proves fullness.
\end{proof}

\begin{lemma}\label{N PRESERVES TRACE}
The functor $N:\cC\rightarrow Int^p(\cC)$ preserves the trace, i.e., if $f:A\otimes U\rightarrow B\otimes U$ is in $\Trc^U_{A,B}$ then $N(\Tr^U_{A,B}(f))=\Tr^{NU}_{NA,NB}(Nf):(A,I)\rightarrow (B,I)$ which means
$$N(\Tr^U_{A,B}(f))=[1\otimes \eta;f\otimes 1;1\otimes \sigma;1\otimes\varepsilon].$$
\end{lemma}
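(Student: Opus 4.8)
I want to show that the functor $N:\cC\rightarrow Int^p(\cC)$, which sends $A\mapsto (A,I)$ and $f\mapsto f$, preserves the partial trace. Concretely, for $f:A\otimes U\rightarrow B\otimes U$ in $\Trc^U_{A,B}$, I must verify that the composite $[1\otimes\eta;\,f\otimes 1;\,1\otimes\sigma;\,1\otimes\varepsilon]$ in $Int^p(\cC)$ is defined and equals $N(\Tr^U_{A,B}(f))=\Tr^U_{A,B}(f)$. The key point is that in $Int^p(\cC)$, composition of a path is \emph{defined by an ambient trace} in $\cC$ (Definition~\ref{DEFINITION INT PARACATEGORY}), so this is really a statement about collapsing a bracket expression into a single $\Tr^U_{A,B}$ in the base category. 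First I would unwind the types: with $N(U)=(U,I)$, the dual $(U,I)^*$ is $(I,U)$, and the unit and counit $\eta:(I,I)\to (U,I)\otimes(I,U)$ and $\varepsilon:(I,U)\otimes(U,I)\to(I,I)$ are, by the strictness remarks preceding Lemma~\ref{COMPACT EQUATION INT}, carried by identity maps in $\cC$. So each morphism in the path $1\otimes\eta,\,f\otimes 1,\,1\otimes\sigma,\,1\otimes\varepsilon$ is a concrete $\cC$-arrow of the appropriate $Int^p$-type, and I can draw the whole path in the graphical language.

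\textbf{The main computation.} Having set up the path, I would apply Definition~\ref{DEFINITION INT PARACATEGORY} directly: the value $[1\otimes\eta;f\otimes 1;1\otimes\sigma;1\otimes\varepsilon]$ is, by definition, a single trace $\Tr^{V}\bigl(\epsilon(\vec p)\,(1\otimes 1\otimes\gamma)\bigr)$ over the concatenated negative objects $V$, where $\epsilon(\vec p)$ is the pyramid of symmetries built from the four path-arrows. The heart of the argument is a graphical simplification: the unit $\eta$ and counit $\varepsilon$ are identities, the $\epsilon$-pyramid consists only of coherence symmetries, and so by repeated use of the \textbf{Yanking}, \textbf{Naturality}, \textbf{Dinaturality} and coherence axioms of the partially traced category $\cC$, the internal wires created by $\eta$ and $\varepsilon$ cancel against the symmetries, leaving exactly the single feedback loop on $U$ that defines $\Tr^U_{A,B}(f)$. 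This is the same style of graphical reduction already carried out in Lemma~\ref{COMPACT EQUATION INT} and Lemma~\ref{EXISTENCE SYMM COMP}; I would reuse that methodology rather than redo it in symbolic form. In particular, the yanking axiom collapses the $\eta$--$\varepsilon$ cap-and-cup against the $\sigma$ into straight wires, and the naturality/superposing axioms let me slide $f$ into position.

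\textbf{Definedness and the obstacle.} I must also check that the left-hand trace is \emph{defined} precisely when the right-hand one is, i.e.\ that membership in the $Int^p$-trace class reduces to $f\in\Trc^U_{A,B}$. Since the partial trace axioms are stated as Kleene (in)equalities, definedness propagates forward through each axiom application: because $\eta,\varepsilon$ and all symmetries are totally traceable (yanking is total), the only partiality entering the reduction is that of $f$ itself under $\Tr^U$. Thus $f\in\Trc^U_{A,B}$ is exactly the condition guaranteeing the whole bracket converges, and the final equality follows once both sides are known to be defined. The hard part will be bookkeeping the graphical reduction honestly: making sure each step is licensed in the \emph{partial} setting (where, say, superposing is only a directed Kleene equality $\funnel$ and vanishing~II requires an explicit trace-class hypothesis), so that I never silently assume a trace exists before it is justified. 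This is the recurring subtlety of the whole chapter, and here I would handle it exactly as in the preceding lemmas — carrying the trace-class membership along as I apply each axiom, rather than treating the identities as if the ambient category were totally traced.
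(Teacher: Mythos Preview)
Your proposal is correct and uses essentially the same machinery as the paper: a graphical argument driven by yanking, superposing, naturality, and vanishing~II, with careful tracking of trace-class membership in the partial setting. The one difference worth noting is direction: you describe starting from the bracket $[1\otimes\eta;\,f\otimes 1;\,1\otimes\sigma;\,1\otimes\varepsilon]$ and \emph{simplifying down} to $\Tr^U_{A,B}(f)$, whereas the paper runs the argument the other way---it starts from the map $\Tr^U_{A,B}(f)$ (defined by hypothesis), then repeatedly \emph{introduces} a new trivial loop via yanking, enlarges the traced region via superposing and naturality, and invokes vanishing~II to merge the traces, building up through $\Trc^{U\otimes U}$ and $\Trc^{U\otimes U\otimes U}$ until the diagram coincides with the bracket. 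The build-up direction makes the definedness bookkeeping automatic: at each application of vanishing~II the required hypothesis (that the inner trace is defined) is already in hand from the previous step, so one never has to argue backwards from a bracket whose definedness is not yet established. Your reduce-down description would work too, since the relevant axiom instances are Kleene equalities~$\funnels$, but you would still need the build-up chain somewhere to anchor definedness---so in practice you would end up reproducing the paper's order.
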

\begin{proof}
Let us start with $N(\Tr^U_{A,B}(f)):A\otimes I \rightarrow B\otimes I$ in $\cC$ which is represented by

$$\includegraphics[height=0.9in]{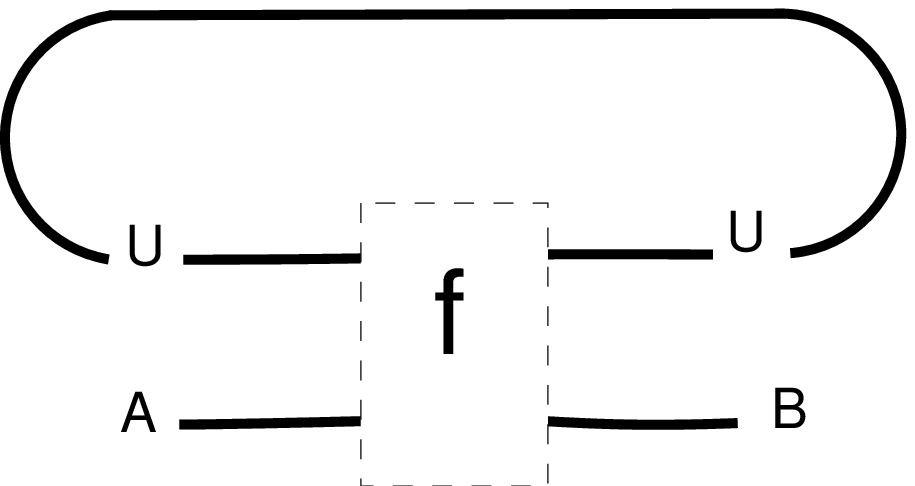}$$

Notice that by hypothesis we have $f\in\Trc^U$. Let us call this hypothesis: condition $(A)$.

By the yanking axiom $\sigma_{U,U}\in\Trc^U$ where the trace is locally represented by  $$\includegraphics[height=1.2in]{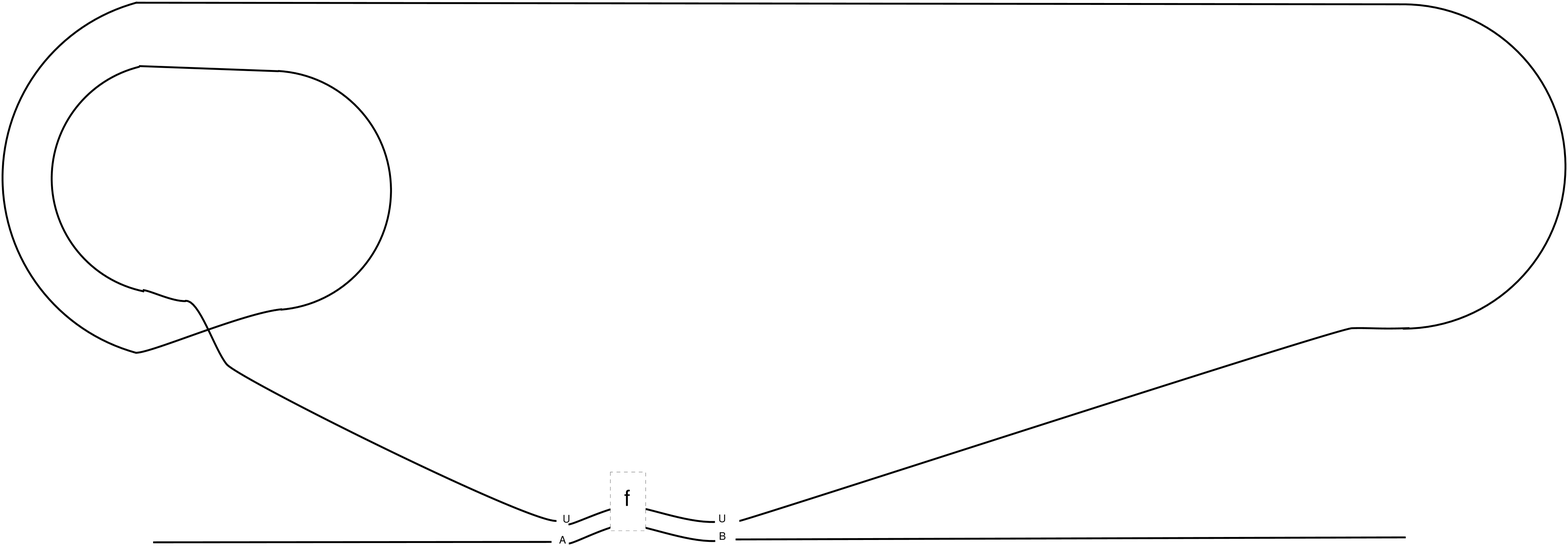}$$

and by applying superposing axiom $\sigma\otimes 1_A\in\Trc^U$ and then by applying the naturality axiom we obtain that the full diagram below this trace is in $\Trc^U$ (let us call it condition $(B)$), i.e.,

$$\includegraphics[height=0.6in]{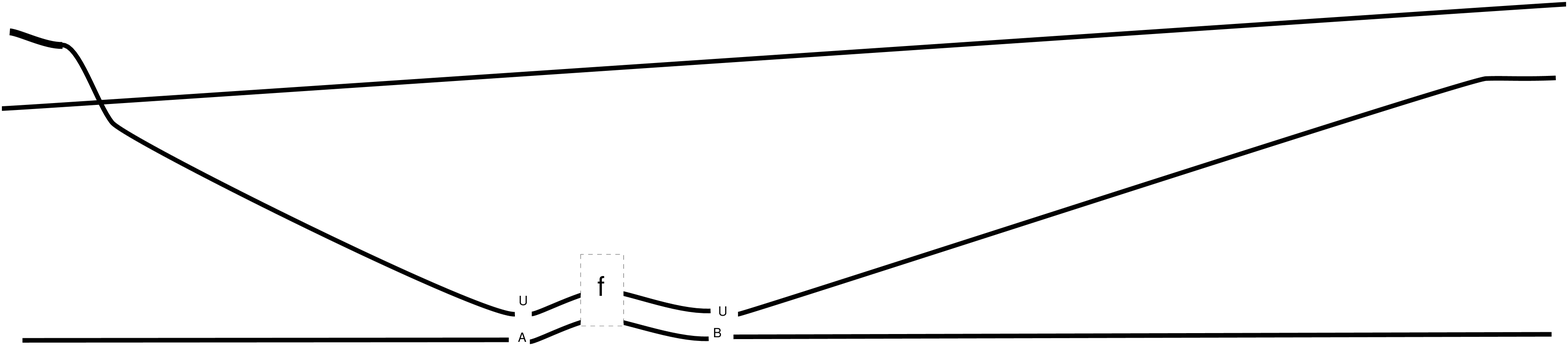}\,\,\,\in \Trc^U.$$

The trace of this graph is equal to $f$ which implies by condition $(A)$ that is in $\Trc^U$ i.e.,
$$\includegraphics[height=0.8in]{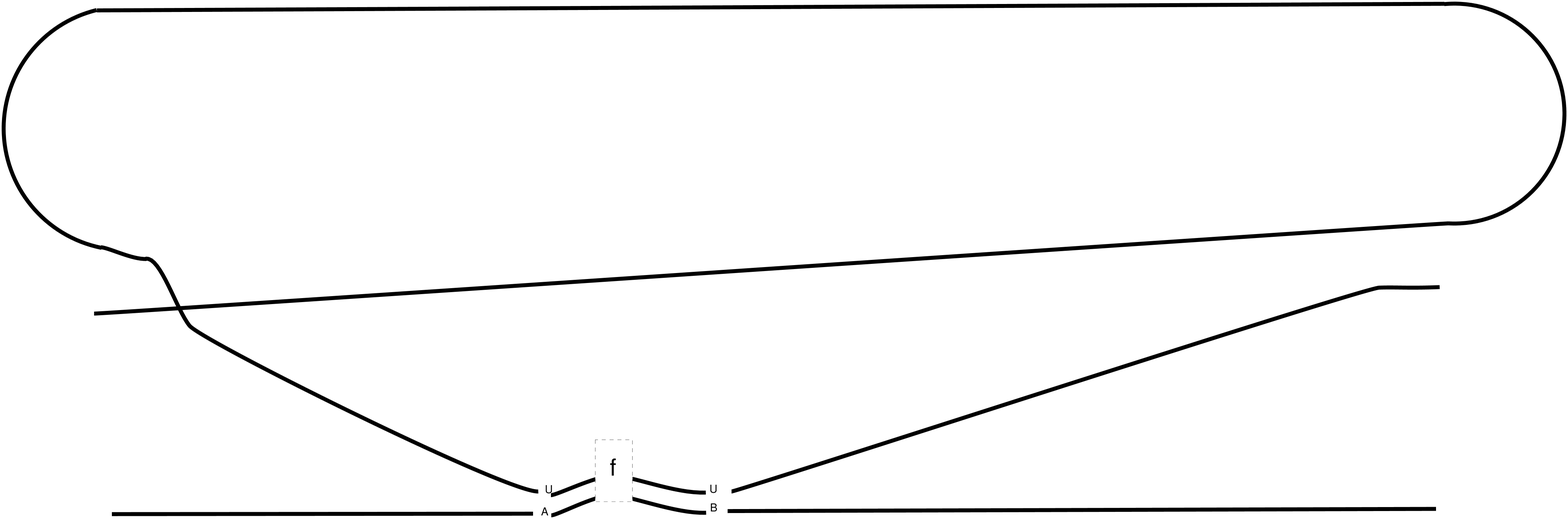}\,\,\,\in \Trc^U.$$

From condition $(A)$ and $(B)$  and the vanishing II axiom we conclude that

$$\includegraphics[height=0.6in]{tracepreservation15BB.eps}\,\,\,\in \Trc^{U\otimes U}$$

(let us call it condition $A+B$) and the trace is represented by:
$$\includegraphics[height=1.4in]{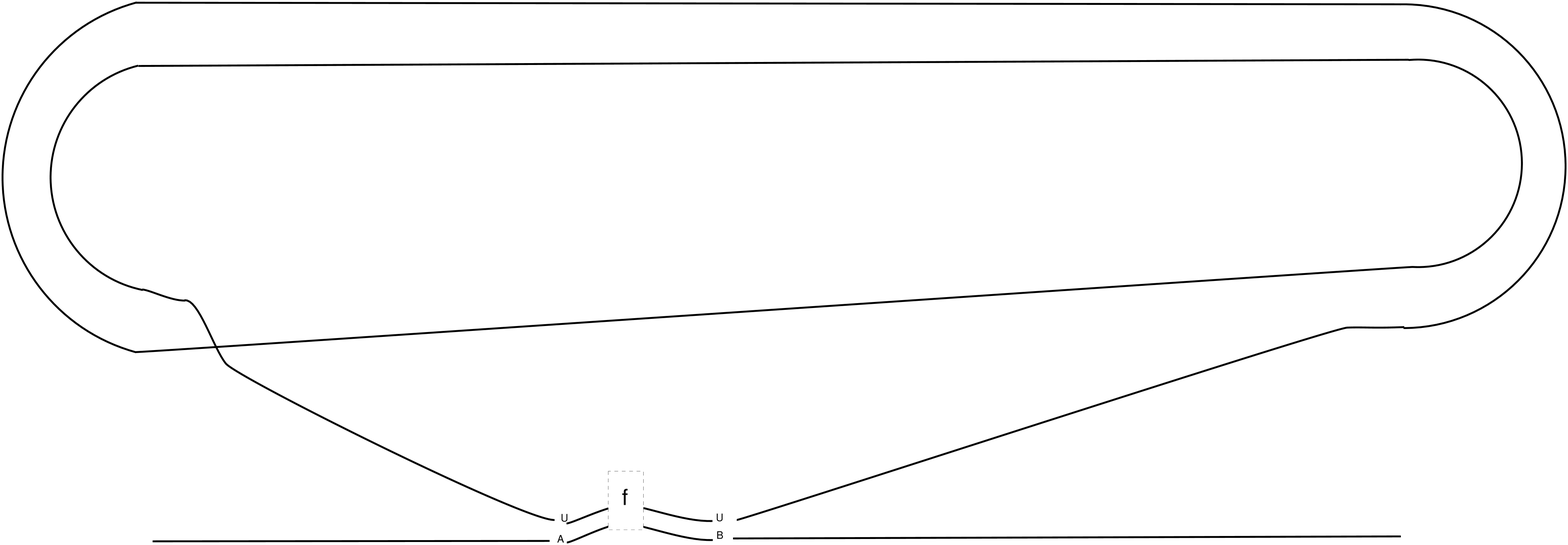}.$$
We repeat this operation, by yanking:$$\includegraphics[height=1.4in]{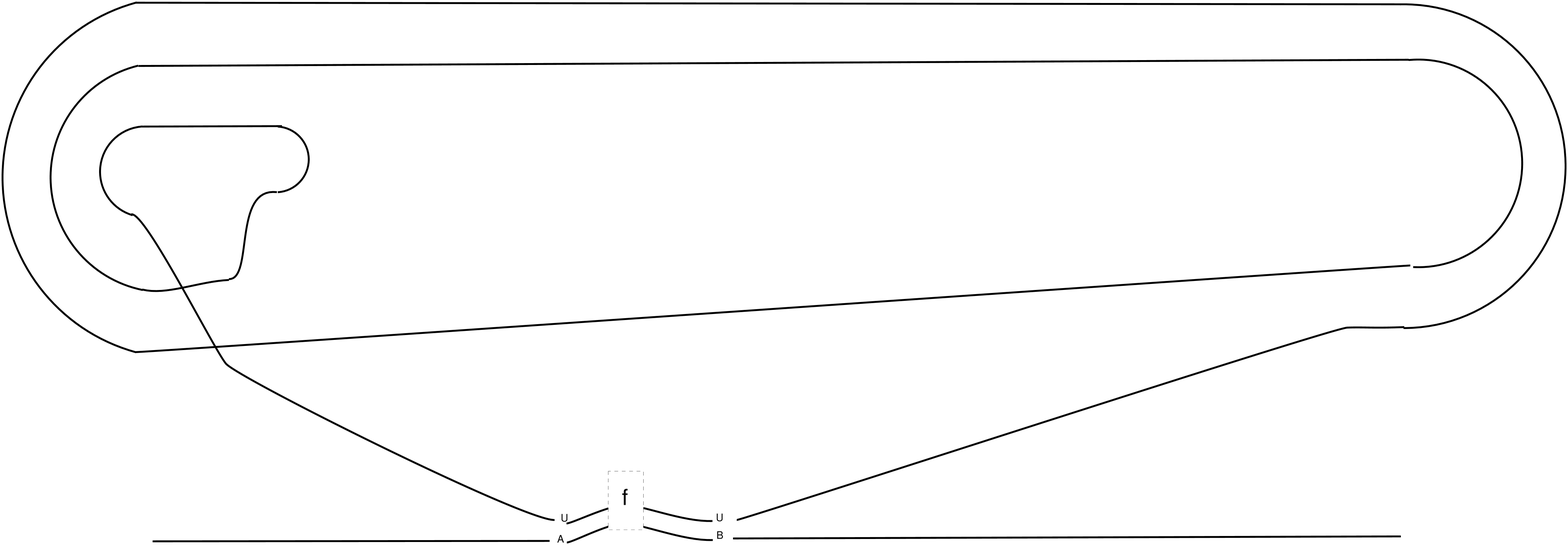}$$

and naturality we obtain that the diagram in the dotted box:$$\includegraphics[height=1.4in]{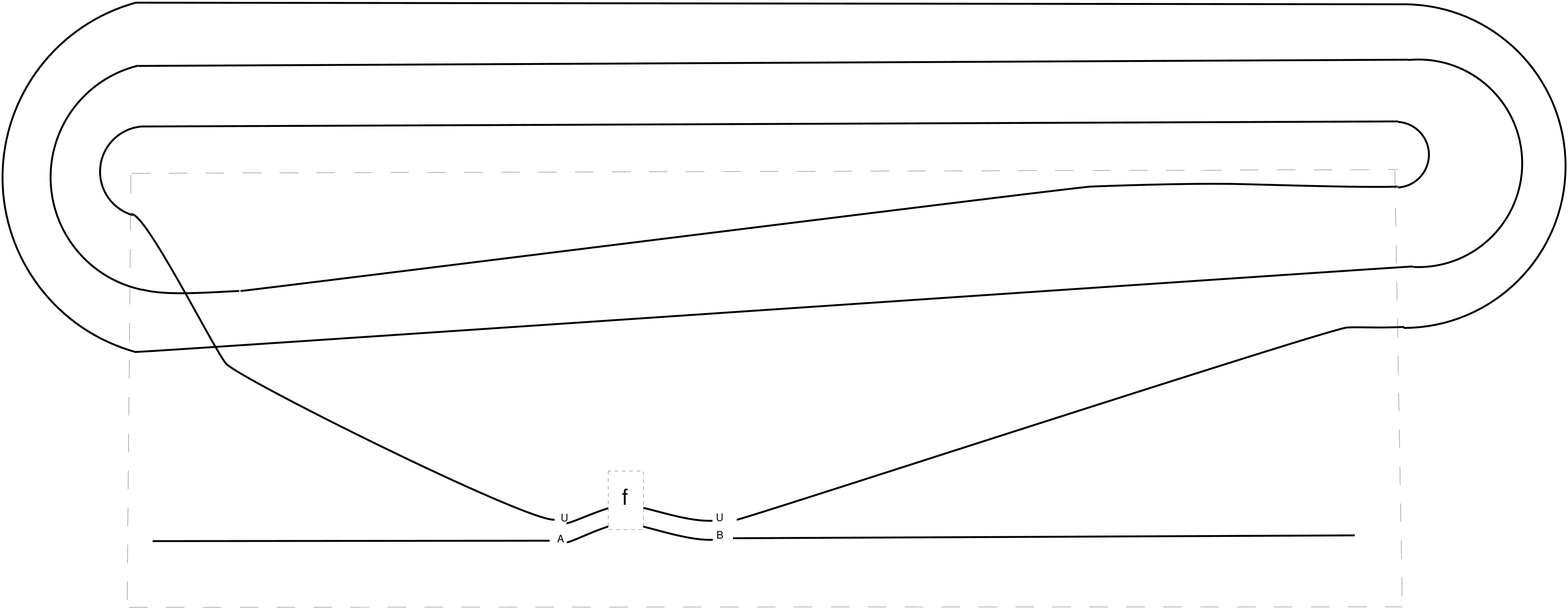}$$
is in $\Trc^U$.

Hence, after any further coherent change we made in the graph, it will remain in the trace class $\Trc^U$. Let us call it condition $(C)$; where the trace will be represented by  $$\includegraphics[height=1.4in]{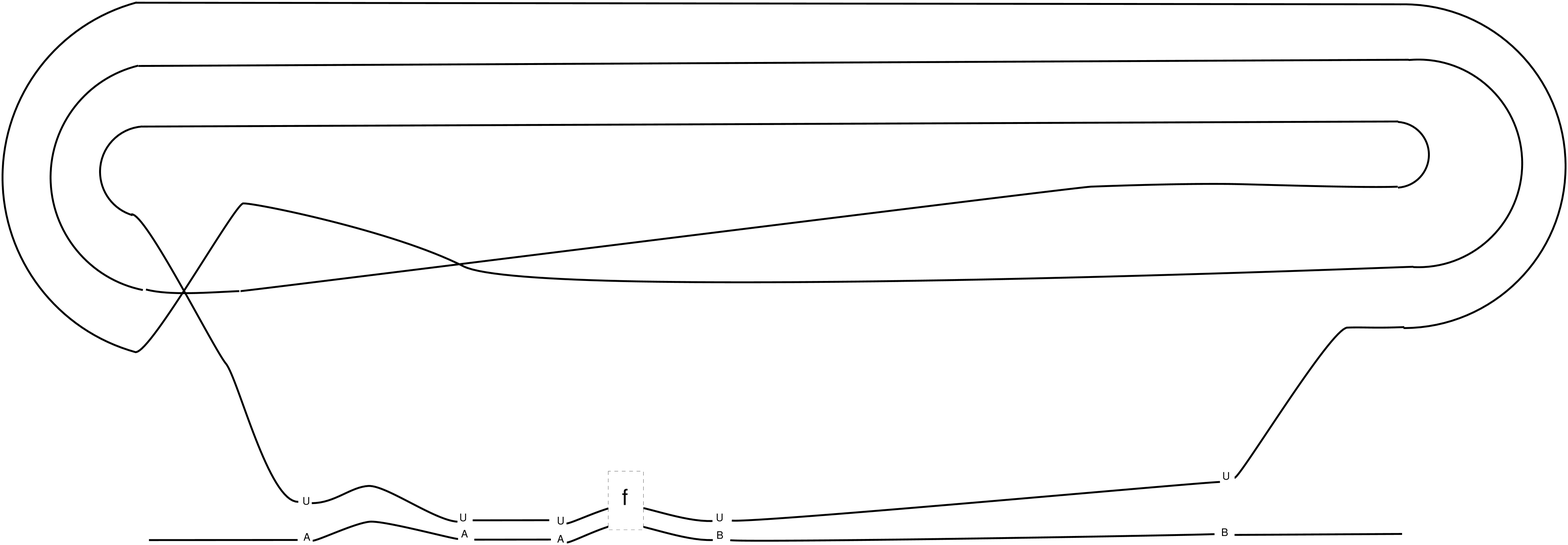}$$

coherence:
$$\includegraphics[height=0.8in]{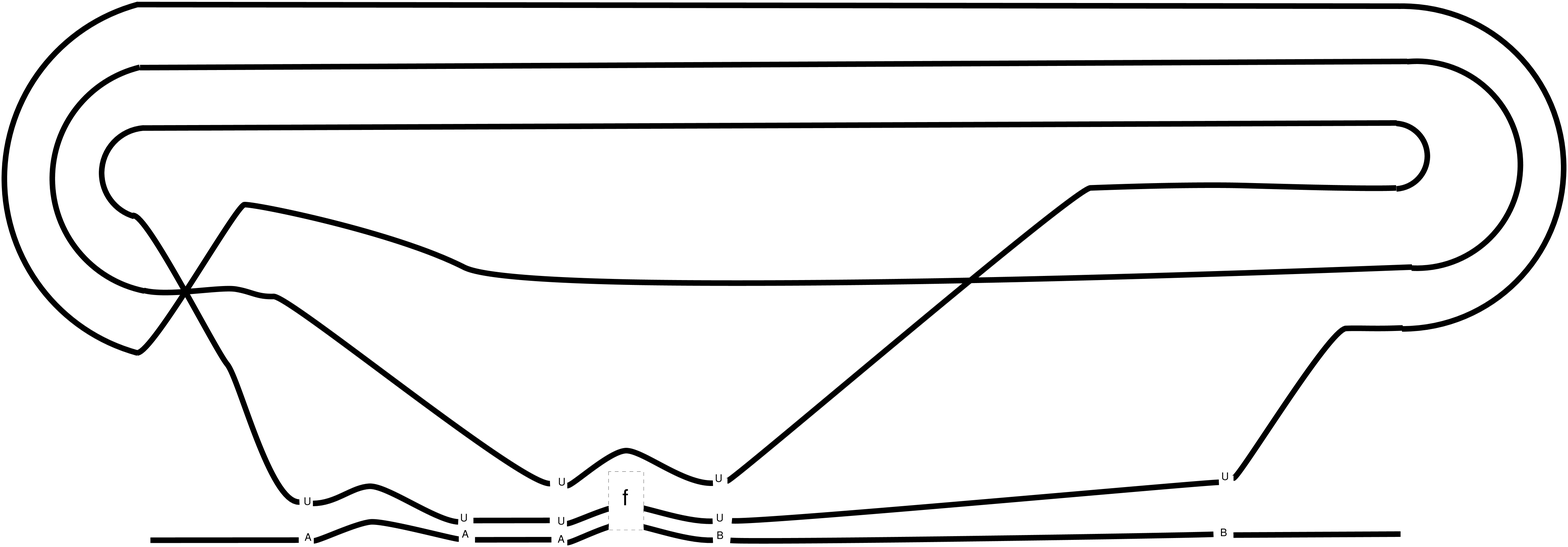}\,\,\, \mbox{and}\,\,\,\includegraphics[height=0.8in]{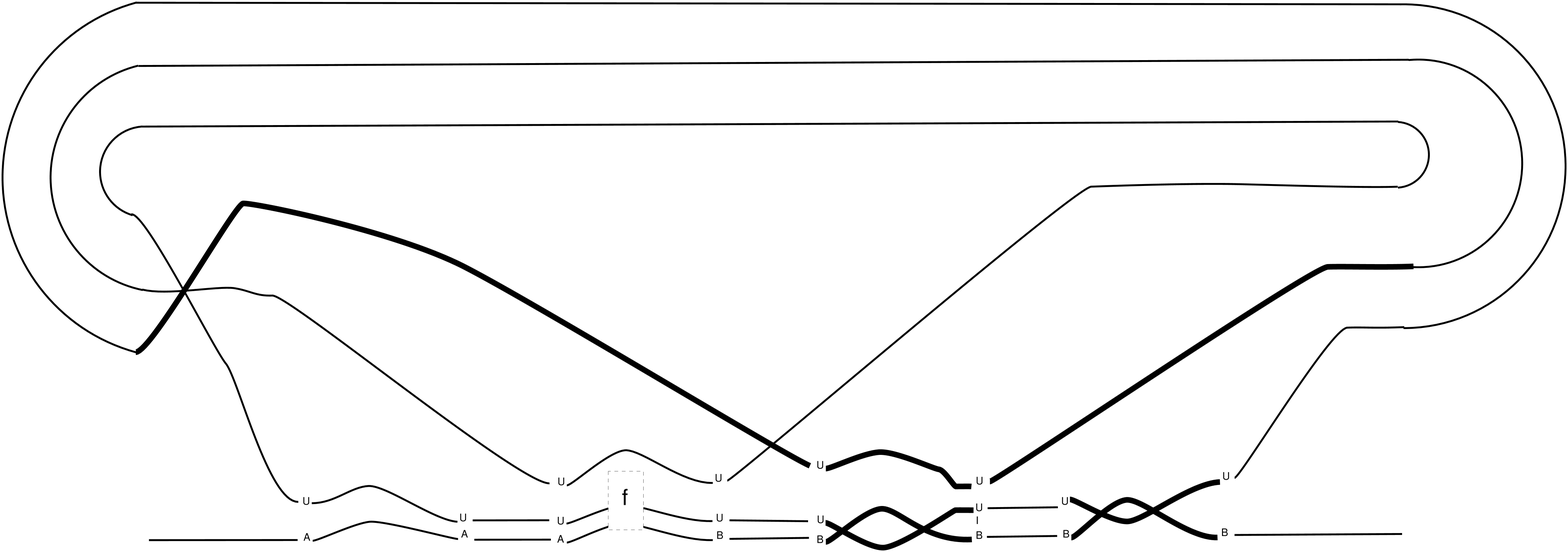}.$$

In the same way as above: by condition $A+B$, $C$ and the vanishing II axiom we obtain that the graph is in the trace class $\Trc^{U\otimes U\otimes U}$ and the trace given by
$$\includegraphics[height=2in]{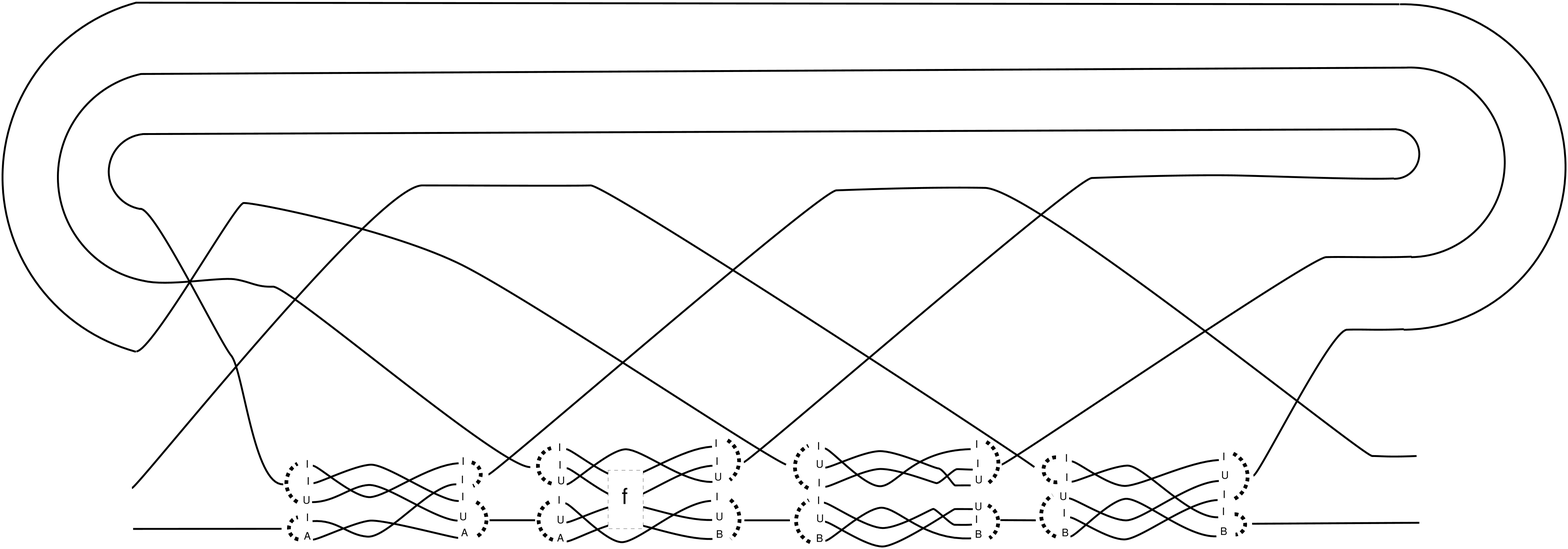}$$

Now, since $\cC$ is a strict category we can represent the last diagram in the following way:
$$\includegraphics[height=2in]{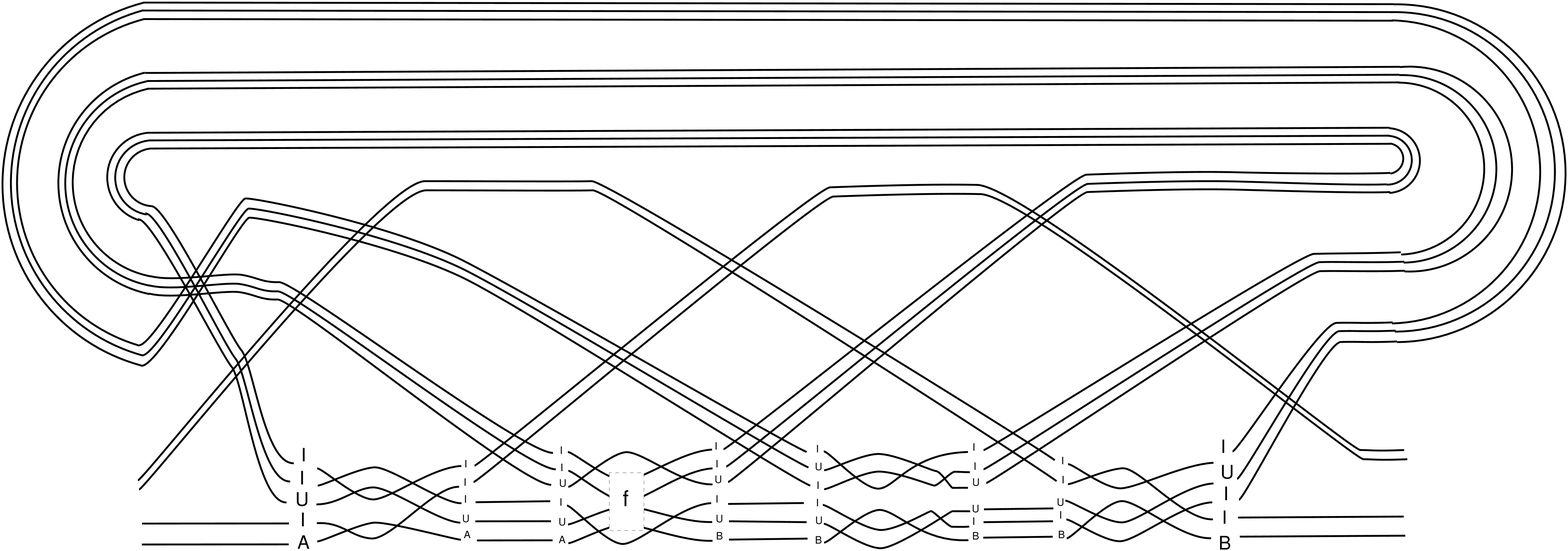}$$

which is equal to $[1\otimes \eta;f\otimes 1;1\otimes \sigma;1\otimes\varepsilon].$

\end{proof}

\section{Representation theorem for partially traced categories}

\begin{theorem}
Every (strict) symmetric partially traced category can be faithfully embedded in a totally traced category.
\end{theorem}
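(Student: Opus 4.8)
The plan is to chain together the constructions developed in this chapter. Given a strict symmetric partially traced category $\cC$, I would first form the partial $Int$ construction $Int^p(\cC)$. By Theorem~\ref{INT ES ssmpc} this is a strict symmetric monoidal paracategory, and by the corollary following Lemma~\ref{COMPACT EQUATION INT} it is in fact a compact closed paracategory. Applying Theorem~\ref{FAITHFUL EMBED COMP CLOSED PARA}, there is then a faithful embedding $F\colon Int^p(\cC)\rightarrow \cD$ into a genuine compact closed category $\cD$ (concretely $\cD=\mathcal{P}(Int^p(\cC))/{\sim}$). Finally, every compact closed category carries the canonical (total) trace, so $\cD$ is totally traced, and by Proposition~\ref{uniqueness compact} this trace is the unique one.

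Next I would assemble the embedding of $\cC$ itself. Composing the functor $N\colon \cC\rightarrow Int^p(\cC)$ of Lemma~\ref{N faithful} with the embedding $F$ gives a functor $E=F\circ N\colon \cC\rightarrow\cD$. Since $N$ is faithful (Lemma~\ref{N faithful}) and $F$ is faithful, $E$ is faithful. One checks that $N$ is strong (indeed strict) symmetric monoidal by strictness of $\cC$, and that $F$ preserves the monoidal and symmetric structure, so $E$ is a faithful strong symmetric monoidal functor into a totally traced category, which is the desired embedding at the level of structure.

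The crux of the argument --- and the step I expect to be the main obstacle --- is showing that $E$ preserves the trace: for $f\in\Trc^U_{A,B}$ I must verify that $E(\Tr^U_{A,B}(f))=\Tr^{EU}_{EA,EB}(E(f))$, where the right-hand trace is the canonical one in $\cD$. Here I would use Lemma~\ref{N PRESERVES TRACE}, which expresses $N(\Tr^U_{A,B}(f))$ as the partial composite $[1\otimes\eta;\,f\otimes 1;\,1\otimes\sigma;\,1\otimes\varepsilon]$ in $Int^p(\cC)$ --- precisely the canonical trace formula written in terms of the compact closed structure $(\eta,\varepsilon)$ of the paracategory. Applying $F$ and using that $F$ is a functor of paracategories preserving $\eta$, $\varepsilon$, $\sigma$, and $\otimes$, this partial composite is sent to the corresponding, now totally defined, composite in $\cD$, which is by definition the canonical trace $\Tr^{FNU}_{FNA,FNB}(FNf)$.

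The delicate point is to confirm that the composition operation of $Int^p(\cC)$, which is only partially defined and which itself encodes the trace of $\cC$, is carried by $F$ onto exactly the canonical trace computation in the total category $\cD$; this rests on $F$ being structure-preserving together with the uniqueness of the trace in compact closed categories (Proposition~\ref{uniqueness compact}), which removes any ambiguity about which trace $\cD$ carries. Once trace preservation is in hand, the theorem follows at once: $E\colon\cC\rightarrow\cD$ faithfully embeds the partially traced category $\cC$ into the totally traced category $\cD$, preserving the partial trace wherever it is defined.
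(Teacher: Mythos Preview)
Your proposal is correct and follows essentially the same route as the paper: compose the faithful functor $N:\cC\to Int^p(\cC)$ with the faithful embedding $Int^p(\cC)\to \mathcal{P}(Int^p(\cC))/{\sim}$ into a compact closed (hence totally traced) category, invoking Lemmas~\ref{N faithful}, \ref{N PRESERVES TRACE}, and Theorem~\ref{FAITHFUL EMBED COMP CLOSED PARA}. Your discussion of trace preservation is a bit more explicit than the paper's terse proof, but the paper handles this point in the Remark immediately following the theorem, using exactly the computation you outline.
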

\begin{proof} This follows from the various lemmas.
Let $\cC$ be a strict symmetric partially traced category. By Lemmas~\ref{N faithful} and~\ref{N PRESERVES TRACE}, $\cC$
 can be faithfully embedded in a compact closed paracategory $Int^p(\cC)$,
 and the embedding is trace preserving. By Lemma~\ref{FAITHFUL EMBED COMP CLOSED PARA}, $Int^p(\cC)$ can be
 faithfully embedded in a compact closed category $\cP(Int^p(\cC))/{\sim}$
 (and the embedding preserves the compact closed structure, hence the
 trace). Since $\cP(Int^p(\cC))/{\sim}$ is compact closed, it is totally
 traced, which proves the theorem.
\end{proof}

\begin{remark}
Notice that by the Lemma~\ref{N PRESERVES TRACE} above if $f:A\otimes U\rightarrow B\otimes U$ is in $\Trc^U_{A,B}$ then $[1\otimes \eta;f\otimes 1;1\otimes \sigma;1\otimes\varepsilon]\downarrow$; therefore the projection functor
$$F:Int^p(\cC)\rightarrow \cP(Int^p(\cC))/{\sim}$$
also preserves the trace $F(\Tr^U_{A,B}(f))=\Tr^{FU}_{FA,FB}(Ff)$
since we have that

$F(\Tr^U_{A,B}(f))=F[1\otimes \eta;f\otimes 1;1\otimes \sigma;1\otimes\varepsilon]=\overline{[1\otimes \eta;f\otimes 1;1\otimes \sigma;1\otimes\varepsilon]}=\overline{1\otimes \eta;f\otimes 1;1\otimes \sigma;1\otimes\varepsilon}=\overline{1\otimes \eta}\circ \overline{f\otimes 1}\circ\overline{1\otimes \sigma}\circ\overline{1\otimes\varepsilon}=\overline{1}\hat{\otimes} \overline{\eta}\circ\overline{f}\hat{\otimes} \overline{1}\circ\overline{1}\hat{\otimes} \overline{\sigma}\circ\overline{1}\hat{\otimes}\overline{\varepsilon}=\Tr^{FU}_{FA,FB}(Ff).$
\end{remark}

\section{Universal property}

The category $(\cP(Int^p(\cC))/{\sim},\hat{\otimes},I,s)$ satisfies the following universal property.

\begin{proposition}
Let $\cC$ be a partially traced category and $\cD$ a compact closed category. If $F:\cC\rightarrow\cD$ is a strict monoidal traced functor then there exists a unique monoidal functor $L:\cP(Int^p(\cC))/{\sim}\rightarrow\cD$ such that
$$\xymatrix@=25pt{
\mathcal{C}\ar[rrr]^{\hat{N}}\ar[rrrd]_{F}
  &&& \cP(Int^p(\cC))/{\sim} \ar[d]^{L}\\
  &&& \mathcal{D}
}$$
where $\hat{N}$ is $\cC\stackrel{N}\longrightarrow Int^p(\cC)\stackrel{\pi}\longrightarrow\cP(Int^p(\cC))/{\sim}$
\end{proposition}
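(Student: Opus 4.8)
The statement asserts a universal property of the composite functor $\hat{N}=\pi\circ N:\cC\to\cP(Int^p(\cC))/{\sim}$ factoring through any traced strict monoidal functor $F:\cC\to\cD$ into a compact closed category. The strategy I would pursue is to reduce this to the freeness property already established in Theorem~\ref{Freeness}, which gives the universal property of $\cP(Int^p(\cC))/{\sim}$ relative to the paracategory $Int^p(\cC)$. Concretely, I would first extend $F:\cC\to\cD$ to a strict symmetric monoidal functor $\tilde{F}:Int^p(\cC)\to\cD$ of paracategories, and then apply Theorem~\ref{Freeness} to $\tilde{F}$ to obtain the desired $L:\cP(Int^p(\cC))/{\sim}\to\cD$, with uniqueness inherited from the uniqueness in that theorem.

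\textbf{Step 1: construct $\tilde{F}$ on $Int^p(\cC)$.} Since $\cD$ is compact closed, each object has a dual, and I would define $\tilde{F}(A^+,A^-)=F(A^+)\otimes F(A^-)^*$ on objects, using the duality in $\cD$. On an arrow $f:(A^+,A^-)\to(B^+,B^-)$, which is a $\cC$-map $f:A^+\otimes B^-\to B^+\otimes A^-$, I would define $\tilde{F}(f)$ by transporting $F(f):F(A^+)\otimes F(B^-)\to F(B^+)\otimes F(A^-)$ across the compact closed structure of $\cD$ (bending the $B^-$ and $A^-$ wires using unit/counit), exactly mirroring the standard $Int$ construction of Joyal--Street--Verity. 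This $\tilde{F}$ is strict symmetric monoidal by construction, since $F$ is.

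\textbf{Step 2: verify $\tilde{F}$ is a functor of paracategories.} This is the crux: I must show that whenever $[\,\vec{p}\,]\downarrow$ in $Int^p(\cC)$, we have $\tilde{F}[\,\vec{p}\,]=\tilde{F}(p_n)\circ\cdots\circ\tilde{F}(p_1)$ in $\cD$. By Definition~\ref{DEFINITION INT PARACATEGORY} the partial composition in $Int^p(\cC)$ is defined via the partial trace $\Tr$ of $\cC$. Because $F$ is a \emph{traced} monoidal functor, it sends the $\cC$-trace to the canonical (compact closed) trace of $\cD$; and in $\cD$ the canonical trace is precisely what implements composition of the corresponding $Int(\cD)$-style arrows. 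Thus $\tilde{F}$ carries the trace-defined composite to the genuine composite in $\cD$. I expect this verification, which requires carefully matching the permutation maps $\gamma,\epsilon(\vec{p})$ of Definition~\ref{DEFINITION INT PARACATEGORY} against the compact closed yanking in $\cD$, to be the main obstacle; it is essentially a coherence computation like those in Lemma~\ref{COMPACT EQUATION INT} and Lemma~\ref{N PRESERVES TRACE}.

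\textbf{Step 3: apply freeness and check the triangle.} Once $\tilde{F}:Int^p(\cC)\to\cD$ is a strict symmetric monoidal functor of paracategories, Theorem~\ref{Freeness} yields a unique strict symmetric monoidal $L:\cP(Int^p(\cC))/{\sim}\to\cD$ with $L\circ\pi=\tilde{F}$, where $\pi$ is the inclusion of Theorem~\ref{PARACAT7}. It then remains to confirm $L\circ\hat{N}=F$: we compute $L\circ\pi\circ N=\tilde{F}\circ N$, and by Step~1 the composite $\tilde{F}\circ N$ sends $A\mapsto\tilde{F}(A,I)=F(A)\otimes F(I)^*=F(A)$ (using $F(I)=I$ and $I^*=I$) and acts as $F$ on arrows, so $\tilde{F}\circ N=F$. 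For uniqueness of $L$, any $L'$ satisfying $L'\circ\hat{N}=F$ must agree with $\tilde{F}$ on the image of $N$, and since every arrow of $\cP(Int^p(\cC))/{\sim}$ is a tensor/composite of such images together with the compact closed structure maps $\eta',\epsilon'$ (which $L'$ must preserve), $L'$ is forced to coincide with $L$ by the uniqueness clause of Theorem~\ref{Freeness}.
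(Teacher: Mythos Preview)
Your proposal is correct and follows essentially the same route as the paper: your intermediate functor $\tilde{F}:Int^p(\cC)\to\cD$ is exactly the paper's $K$ (defined identically on objects and morphisms, with the paper deferring the monoidality verification to \cite{JSV96}), and both you and the paper then invoke Theorem~\ref{Freeness} to produce $L$. The only cosmetic difference is in the uniqueness argument: the paper first asserts that $K$ itself is the unique paracategory functor with $K\circ N=F$ and then applies the uniqueness clause of Theorem~\ref{Freeness}, whereas you argue directly that $L'$ is determined on generators; these amount to the same observation that every object and arrow of $Int^p(\cC)$ is built from the image of $N$ and the compact structure.
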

\begin{proof}
We first construct a monoidal functor $K:Int^p(\cC)\rightarrow\mathcal{D}$ such that $K\circ N=F$. This functor is defined in the same way as in~\cite{JSV96}, and is in fact unique.

On objects $K(A,U)=FA\otimes (FU)^*$ and given $(A,U)\stackrel{f}\longrightarrow (B,V)$ we define $K(f)$ as
$$FA\otimes FU^* \stackrel{1\otimes\eta\otimes 1}\longrightarrow FA\otimes FV\otimes FV^*\otimes FU^* \stackrel{Ff\otimes 1}\longrightarrow FB\otimes FU\otimes FV^*\otimes FU^* \stackrel{(1\otimes\sigma\otimes 1)\circ(1\otimes \varepsilon\sigma)}\longrightarrow FB\otimes FV^*$$

Graphically this is represented by the following diagram

$$\includegraphics[height=0.8in]{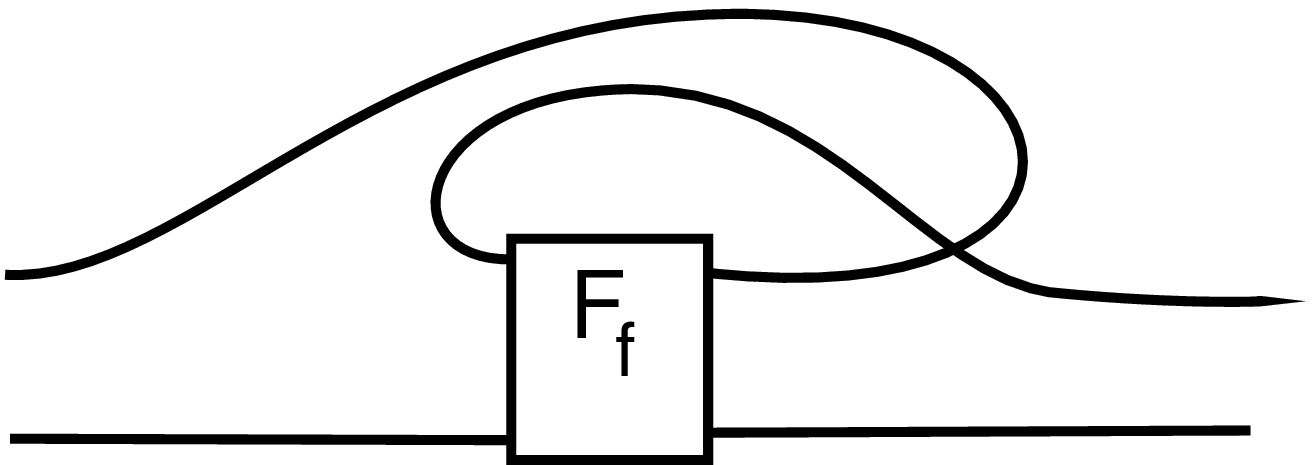}.$$

We need to prove that $K$ is a functor between paracategories, i.e., if $[f_1,\dots f_n]\downarrow$ then
$K[f_1,\dots f_n]=[Kf_1,\dots Kf_n]$. The remaining properties of $K$ are proved as in~\cite{JSV96}.

Without loss of generality we take $n=4$.
Therefore we have
\begin{equation}\label{K FUNCTOR}
K[f_1,\dots f_4]=\includegraphics[height=0.5in]{universal16.eps}.
\end{equation}
where
$$f=\includegraphics[height=1.2in]{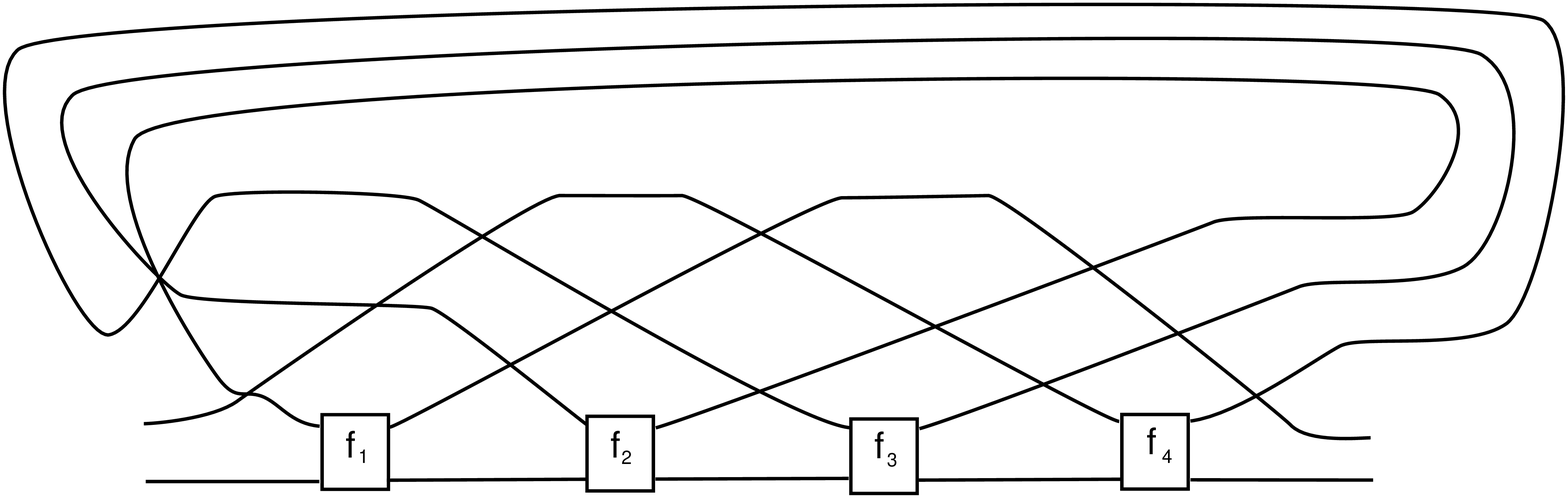}$$

Since $F$ preserves the trace, composition and symmetries we have that equation~(\ref{K FUNCTOR}) is equal to the following diagram

$$\includegraphics[height=1.5in]{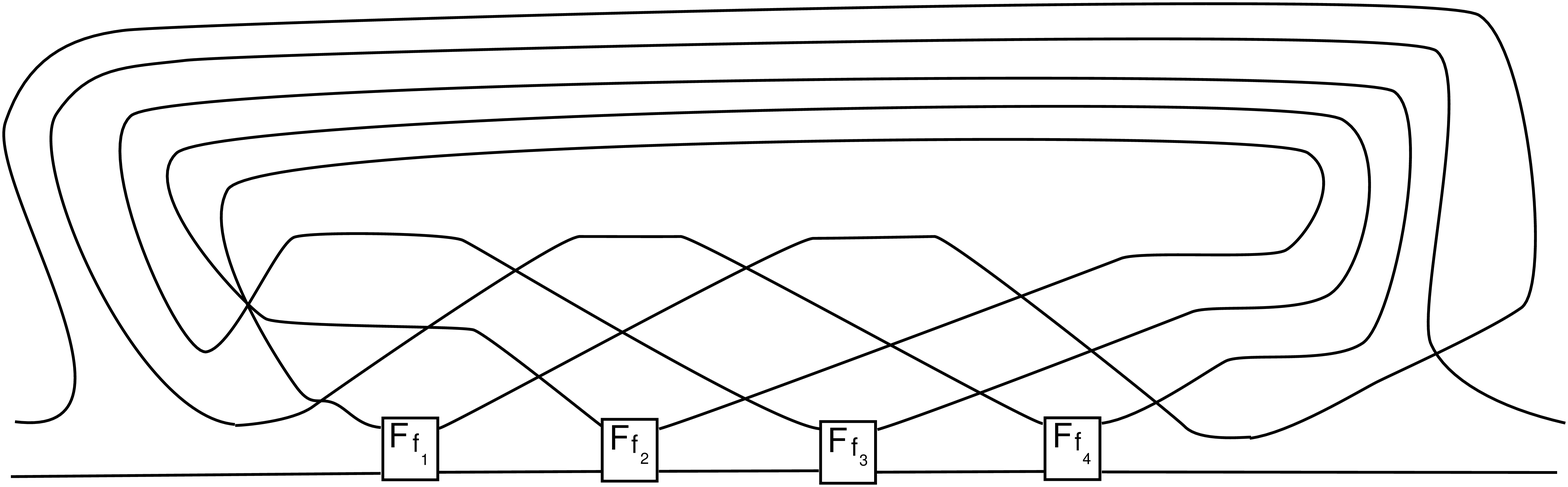}.$$

Notice that the category $\cD$ is compact closed and its trace is totally defined and given by composition of unit $\eta$, counit $\varepsilon$, symmetries $\sigma$ and arrows $Ff_i$ in $\cD$, $i=1\dots 4$. Therefore, by coherence in $\cD$, we transform the previous diagram into

$$\includegraphics[height=0.7in]{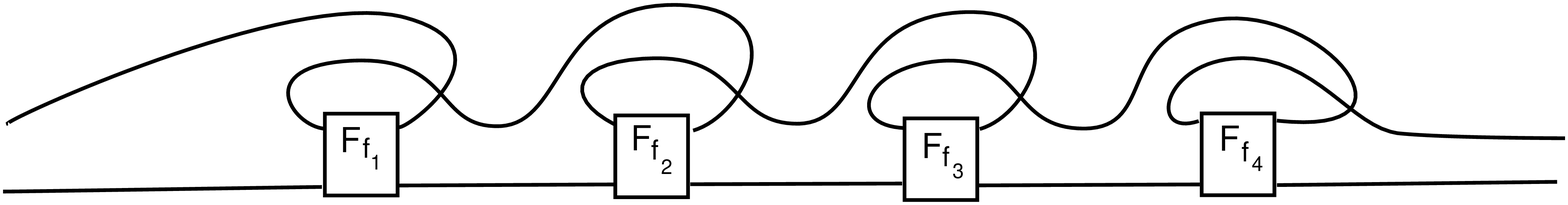}$$

i.e., $[Kf_1,\dots Kf_4]$.

Given $K$, we use Theorem~\ref{Freeness} to obtain a unique $L$ such that:
$$\xymatrix@=25pt{
\mathcal{C}\ar[r]^{N}\ar[rrd]_{F}&Int^p(\cC)\ar[r]^{\pi}\ar[dr]^{K}& \cP(Int^p(\cC))/{\sim} \ar[d]^{L}\\
  && \mathcal{D}
}$$

Uniqueness: Suppose $L':\cP(Int^p(\cC))/{\sim}\rightarrow\mathcal{D}$ is another monoidal functor such that $L'\circ\pi\circ N=F$. Then $K'=L'\circ\pi$ satisfies $K'\circ N=F$ so by uniqueness of $K$, it follows that $K=K'$. But then $L'\circ\pi=K$, and by uniqueness of $L$, we have $L=L'$.

\end{proof}

\chapter{Background material on presheaf categories}
Here we review some of the basic and advanced concepts of functor categories that will be used in Chapters~\ref{PREASHEAVES MODELS OF Q C} and~\ref{A CONCRETE MODEL}.
For additional details, see \cite{MacLane 91}, \cite{Borceux94}, \cite{Lambek66}, \cite{KELLY82}.

\section{Universal arrows, representable functors, and the Yoneda Lemma}
\begin{definition}
\rm Let $F:\cA\rightarrow\cB$ be a functor and $B\in\cB$. A pair $(A,f)$ where $A\in\cA$ and $f:B\rightarrow F(A)$ is said to be a \textit{universal arrow} from $B$ to $F$ when for every arrow $f':B\rightarrow F(A')$ there is a unique arrow $g:A\rightarrow A'$ in the category $\cA$ such that
$$\xymatrix@=25pt{
B\ar[r]^{f}\ar[rd]_{f'} & F(A)\ar[d]^{F(g)}\\
  & F(A')
}$$
is a commutative diagram.
\end{definition}
\begin{definition}
\rm A \textit{universal element} of the functor $F:\cA\rightarrow{\bf Set}$ is an object $A\in\cA$ and an element
$x\in F(A)$ such that for any other pair $A'\in\cA$ and $x'\in F(A')$ there exists a unique $f:A\rightarrow A'$ that satisfies $F(f)(x)=x'$.
\end{definition}

\begin{definition}\label{CATEGORY OF ELEMENTS}
\rm
Let $F:\cC\rightarrow {\bf Set}$ be a functor. The category $El(F)$ of {\em elements} is given by the following data:
\begin{itemize}
\item[(a)] objects of $El(F)$ are pairs $(C,x)$ where $x\in FC$ and $C\in |\cC|$.
\item[(b)] morphisms  $f:(C,x)\rightarrow (D,y)$ are arrows $f:C\rightarrow D$ in the category $\cC$ such that $Ff(x)=y$.
\end{itemize}
\end{definition}
\begin{definition}
\rm
An object $A\in\cA$ is said to be the {\em representing object} of a functor $F:\cA\rightarrow {\bf Set}$ when there is a natural isomorphism $\phi$:
$$\cA(A,-)\stackrel {\phi}\rightarrow F.$$
When this occurs we said that $F$ is a {\em representable functor}. There is a distinguished element of this isomorphism $\phi_A(1_A)\in F(A)$, which is called the {\em unit} of the representation.
\end{definition}
\begin{theorem}[The Yoneda Lemma]
\label{YONEDA LEMMA}
Let $F:\cA\rightarrow {\bf Set}$ be a functor, $A\in\cA$. There exists a bijection
$$\vartheta_{F,A}:[\cA,{\bf Set}](\cA(A,-),F)\rightarrow F(A)$$
which is natural in $A$ and if $\cA$ is a small category $\vartheta$ is natural in $F$.
\end{theorem}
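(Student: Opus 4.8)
The statement to prove is the Yoneda Lemma: for a functor $F:\cA\rightarrow\Set$ and an object $A\in\cA$, there is a bijection
\[
\vartheta_{F,A}:[\cA,\Set](\cA(A,-),F)\rightarrow F(A)
\]
natural in $A$, and natural in $F$ when $\cA$ is small.

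Let me sketch my approach.

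\textbf{The plan.} The plan is to define $\vartheta$ and its inverse explicitly, check they are mutually inverse, and then verify the two naturality statements. First I would define the map $\vartheta_{F,A}$ on a natural transformation $\tau:\cA(A,-)\Rightarrow F$ by evaluating its component at $A$ on the identity arrow:
\[
\vartheta_{F,A}(\tau)=\tau_A(1_A)\in F(A).
\]
Conversely, given an element $x\in F(A)$, I would define a natural transformation $\psi(x):\cA(A,-)\Rightarrow F$ whose component at an object $B$ sends an arrow $g:A\rightarrow B$ to $F(g)(x)\in F(B)$. The bulk of the routine work is to confirm that $\psi(x)$ really is natural: for $h:B\rightarrow B'$ one must check that $F(h)\circ\psi(x)_B=\psi(x)_{B'}\circ\cA(A,h)$, which reduces to the functoriality identity $F(h)(F(g)(x))=F(h\circ g)(x)$.

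\textbf{Mutual inverseness.} Next I would show $\vartheta_{F,A}$ and $\psi$ are inverse to each other. In one direction, $\vartheta_{F,A}(\psi(x))=\psi(x)_A(1_A)=F(1_A)(x)=x$, using that $F$ preserves identities. In the other direction, given $\tau$, I must show $\psi(\vartheta_{F,A}(\tau))=\tau$, i.e.\ that the two natural transformations agree componentwise. For $g:A\rightarrow B$,
\[
\psi(\tau_A(1_A))_B(g)=F(g)(\tau_A(1_A)),
\]
and this equals $\tau_B(g)$ precisely by the naturality of $\tau$ applied to the arrow $g$ (the naturality square for $\tau$ at $g$, evaluated on $1_A\in\cA(A,A)$, gives $F(g)(\tau_A(1_A))=\tau_B(\cA(A,g)(1_A))=\tau_B(g)$). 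This is the conceptual heart of the lemma: the identity arrow plus naturality forces the whole transformation.

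\textbf{Naturality in $A$ and in $F$.} Finally I would address the two naturality claims. For naturality in $A$, I regard both sides as functors of $A$: the domain sends $A$ to $[\cA,\Set](\cA(A,-),F)$ (contravariant in $A$ via precomposition with $\cA(u,-)$ for $u:A'\rightarrow A$) and the codomain sends $A$ to $F(A)$; I would verify that $\vartheta$ commutes with the action of an arrow $u:A'\rightarrow A$, again by chasing $1_A$ through and invoking functoriality of $F$. For naturality in $F$ (when $\cA$ is small, so that $[\cA,\Set]$ is legitimately a category and the hom-functor into it makes sense), given a natural transformation $\theta:F\Rightarrow G$ I would check $\theta_A\circ\vartheta_{F,A}=\vartheta_{G,A}\circ(\theta\circ-)$, which unwinds to $\theta_A(\tau_A(1_A))=(\theta\circ\tau)_A(1_A)$, true by definition of vertical composition of natural transformations. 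I expect no serious obstacle here; the only subtlety worth flagging is the smallness hypothesis, which is exactly what guarantees the functor-category hom-sets are small so that naturality in $F$ is well-posed. The main ``aha'' step, rather than a difficult one, is the mutual-inverse computation where naturality of $\tau$ at a general $g$ is invoked on the single element $1_A$.
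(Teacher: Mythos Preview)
Your proof is correct and is the standard argument for the Yoneda Lemma. The paper does not actually prove this statement; its ``proof'' consists solely of a citation to Borceux~\cite{Borceux94}, so there is no approach to compare against---your explicit construction of $\vartheta$ and its inverse $\psi$, together with the naturality checks, is exactly what one finds in that reference.
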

\begin{proof}
\cite{Borceux94}
\end{proof}

\begin{theorem}
Let $F:\cA\rightarrow {\bf Set}$ be a functor, $F$ is representable iff it has a universal element.
\end{theorem}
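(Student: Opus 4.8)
The statement to prove is: a functor $F:\cA\rightarrow\Set$ is representable if and only if it has a universal element. This is a standard equivalence, and the plan is to unwind both definitions and exhibit an explicit correspondence, using the Yoneda Lemma (Theorem~\ref{YONEDA LEMMA}) to make the bookkeeping clean. The key observation is that a representation is exactly a natural isomorphism $\phi:\cA(A,-)\stackrel{\cong}\rightarrow F$, and by Yoneda such a natural transformation corresponds bijectively to an element $\phi_A(1_A)\in F(A)$, which the excerpt already names the \emph{unit} of the representation. So the heart of the argument is to show that this unit is a universal element precisely when $\phi$ is an isomorphism.

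First I would prove the forward direction. Assume $F$ is representable, so there is a natural isomorphism $\phi:\cA(A,-)\Rightarrow F$, and set $u=\phi_A(1_A)\in F(A)$. I claim $(A,u)$ is a universal element. Given any $(A',x')$ with $x'\in F(A')$, naturality of $\phi$ applied to a morphism $g:A\rightarrow A'$ gives the square relating $\cA(A,g)$ and $F(g)$; chasing $1_A$ around it yields $F(g)(u)=\phi_{A'}(g)$. Since $\phi_{A'}$ is a bijection, there is a unique $g:A\rightarrow A'$ with $\phi_{A'}(g)=x'$, hence a unique $g$ with $F(g)(u)=x'$. This is exactly the universal element condition.

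Next I would prove the converse. Suppose $(A,u)$ is a universal element. Define $\phi_{A'}:\cA(A,A')\rightarrow F(A')$ by $\phi_{A'}(g)=F(g)(u)$ for each object $A'$. Naturality in $A'$ is a direct functoriality check: for $h:A'\rightarrow A''$ one has $F(h)(\phi_{A'}(g))=F(h)(F(g)(u))=F(h\circ g)(u)=\phi_{A''}(h\circ g)$, which is the required commuting square. The universal element property says exactly that for every $x'\in F(A')$ there is a unique $g$ with $F(g)(u)=x'$, i.e.\ $\phi_{A'}$ is a bijection. Since each component is a bijection and $\phi$ is natural, $\phi$ is a natural isomorphism, so $F$ is representable with representing object $A$.

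I do not expect a serious obstacle here: the whole proof is definition-chasing, and the only subtle point is keeping straight that $\phi_{A'}(g)=F(g)(u)$ is forced by naturality together with the normalization $\phi_A(1_A)=u$. The mild care required is to verify that the $\phi$ defined from a universal element is genuinely natural (not just a pointwise family of bijections), but this follows immediately from functoriality of $F$ as shown above. Invoking the Yoneda Lemma is optional: it packages the bijection between natural transformations $\cA(A,-)\Rightarrow F$ and elements of $F(A)$, so that one may simply observe that the universal element condition is the precise translation of ``the transformation corresponding to $u$ is an isomorphism.'' I would present the direct component-wise argument as the main proof, since it is self-contained and transparent.
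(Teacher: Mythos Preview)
Your proof is correct and is the standard argument. The paper itself does not give a proof but simply cites Mac Lane~\cite{MacLane 91}; your direct component-wise argument is exactly the one found there.
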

\begin{proof}
\cite{MacLane 91}
\end{proof}

\section{Limits and colimits}

Let $\cA$ and $\cB$ be categories. For every object $A\in \cA$ the constant functor is defined to be $\Delta_A:\cB\rightarrow\cA$ with $\Delta_A(B)=A$ and $\Delta_A(f)=1_A$ when $B\stackrel {f}\rightarrow B'$. If $A\stackrel {g}\rightarrow A'$ is an arrow in $\cA$ there is a natural transformation $\Delta(g):\Delta _A\Rightarrow\Delta_{A'}$ defined $(\Delta(g))(B)=g$. These functors and natural transformations define a functor $\Delta:\cA\rightarrow [\cB,\cA]$.\\

Let $F:\cJ\rightarrow\cA$ be a functor. The definition of limits and colimits can be characterized by objects that represent the following functors:
\begin{equation}
\cA(-,\mbox{lim}\, F)\cong [\cJ,\cA](\Delta-,F):\cA^{op}\rightarrow {\bf Set} \label{REPRESLIMIT}
\end{equation}
and
\begin{equation}
\cA(\mbox{colim}\, F,-)\cong [\cJ,\cA](F,\Delta-):\cA\rightarrow {\bf Set}. \label{REPRESCOLIMIT}
\end{equation}
To see this, suppose we have $\cA(-,\mbox{lim}\, F)\stackrel {\phi}\rightarrow [\cJ,\cA](\Delta -,F)$. Then $\phi_{limF}(1_{limF}):\Delta_{limF}\Rightarrow F$ is a cone determined by the universal element. If $\Delta\stackrel {\alpha}\Rightarrow F$ is another cone then $\phi^{-1}_A(\alpha):A\rightarrow lim\,F$ is an arrow on the category $\cA$ such that by naturality we have:

$$\xymatrix@=25pt{
 \cA(\mbox{lim}\,F,\mbox{lim}\, F)\ar[rr]^{\phi_{lim F}}\ar[d]_{\cA(\phi^{-1}_A(\alpha),\mbox{lim}\, F)}& & [\cJ,\cA](\Delta \mbox{lim}\,F,F)\ar[d]^{[\cJ,\cA](\Delta (\phi^{-1}_A(\alpha)),F)}\\
 \cA(A,\mbox{lim}\, F)\ar[rr]^{\phi_{A}} && [\cJ,\cA](\Delta A,F)
}$$
which implies by evaluating at $1_{\mbox{lim}\, F}$ that:
$$\phi_{limF}(1_{limF})\circ \Delta (\phi^{-1}_A(\alpha))=\phi_{A}(\phi^{-1}_{A}(\alpha)):\Delta A\Rightarrow F.$$
Graphically:
$$\xymatrix@=25pt{
 \Delta A\ar@{=>}[r]^{\alpha}\ar@{=>}[rd]_{\Delta (\phi^{-1}_A(\alpha))} & F\\
 & \Delta \mbox{lim}\,F\ar@{=>}[u]_{\phi_{limF}(1_{limF})}
}$$
Therefore, evaluating at $i\in\cJ$:
$$\xymatrix@=25pt{
  A\ar[r]^{\alpha_i}\ar[rd]_{\phi^{-1}_A(\alpha)} & F(i)\\
 & \mbox{lim}\,F\ar[u]_{(\phi_{limF}(1_{limF}))(i)}
}$$

\section{Dinatural transformations, ends, and co-ends}

Next, we recall the notion of dinatural transformation. The case which interests us the most is when one of the functors involved is a constant functor.
\begin{definition}[Dinatural transformation]
\rm
Suppose we have two functors $F,G:\cA^{op}\times\cA\rightarrow \cB$, a family of maps $\alpha:F\stackrel{..}\longrightarrow G=\{\alpha_A:F(A,A)\rightarrow G(A,A)\}_{A\in |\cA|}$ is called a \textit{dinatural transformation} when for every  arrow $f:A\rightarrow B$ the following holds:
$$\xymatrix@=25pt{
&F(A,A)\ar[r]^{\alpha_A}&G(A,A)\ar[rd]^{G(1,f)}&\\
F(B,A)\ar[ru]^{F(f,1)}\ar[rd]_{F(1,f)}&&&G(A,B)\\
&F(B,B)\ar[r]_{\alpha_B}&G(B,B)\ar[ru]_{G(f,1)}&
}$$
\end{definition}
\begin{example} \label{EXAMPLE COEND AS DINATURAL MAP}
Let $S:\cA^{op}\rightarrow {\bf Set}$ be a functor, and let $B\in |\cA|$. There are two functors $F,G:\cA^{op}\times\cA\rightarrow {\bf Set}$ defined by $F(A',A)=S(A')\times \cA(B,A)$, and $G=\Delta(S(B))$, the constant functor. Let us consider maps of type $\lambda_A:S(A)\times \cA(B,A)\rightarrow S(B)$ with $\lambda_A(x,f)=S(f)(x)$. Then $\lambda:F\to G$ is
a dinatural transformation: for all $f:A'\to A$,
$$\xymatrix@=25pt{
&S(A)\times \cA(B,A)\ar[rd]^{\lambda_A}&\\
S(A)\times\cA(B,A')\ar[ru]^{1\times \cA(B,f)}\ar[rd]_{S(f)\times 1}&&S(B)\\
&S(A')\times \cA(B,A')\ar[ru]_{\lambda_{A'}}&
}.$$
\end{example}
\begin{definition}[Wedge]
\rm
Given a functor $F:\cA^{op}\times\cA\rightarrow \cB$, a \textit{wedge} is a dinatural transformation from a constant  functor to $F$,
$$\lambda:\Delta(E)\stackrel{..}\longrightarrow F.$$
\end{definition}
\begin{definition}[End]
\rm
Given a functor $F:\cA^{op}\times\cA\rightarrow \cB$, an \textit{end} is a wedge
$$\lambda:\Delta(E)\stackrel{..}\longrightarrow F$$
satisfying a universal property: if there is another wedge $\alpha:\Delta(A)\stackrel{..}\longrightarrow F$ then there is a unique $g:A\rightarrow E$ with $\lambda_A\circ g=\alpha_A$ for every $A\in \cA$.
\end{definition}
In an analogous way we define the notion of co-end.\\
\begin{example}
In the example above we have that $S(B)$ with component $\lambda$ is a co-end for the functor $F$. Given a dinatural transformation
$\alpha_A:S(A)\times \cA(B,A)\longrightarrow X$ there is a unique $g:S(B)\rightarrow X$ given by $g(y)=\alpha_B(y,1_B)$ that satisfies the definition.
\end{example}

From the uniqueness of the universal property we conclude that, up to isomorphism, all the ends are equal. This justifies the following notation to indicate an end $E$ with components $\lambda_A$:
$$\int_{A}F(A,A)\stackrel{\lambda_A}\longrightarrow F(A,A)$$
and in the same way the co-end:
$$F(A,A)\stackrel{\lambda_A}\longrightarrow \int^{A}F(A,A).$$
\begin{theorem}
\label{PROPERTIES OF COEND 1}
Let $\alpha:F\Rightarrow G:\cA^{op}\times\cA\rightarrow \cB$ be a natural transformation. Suppose also that there exists the ends induced by $F$ and $G$:
\begin{equation}
\int_{A}F(A,A)\stackrel{\lambda_A}\longrightarrow F(A,A)\,\,\,\,\mbox{and}\,\,\,\,
\int_{A}G(A,A)\stackrel{\mu_A}\longrightarrow G(A,A)
\end{equation}
then there is a unique map $\int_{A}\alpha_{A,A}$ in the category $\cB$
such that:
$$\xymatrix@=25pt{
\int_A F(A,A)\ar[r]^{\lambda_A}\ar[d]^{\int_{A}\alpha_{A,A}}&F(A,A)\ar[d]^{\alpha_{A,A}}\\
\int_A G(A,A)\ar[r]_{\mu_{A}}&G(A,A)
}$$
\end{theorem}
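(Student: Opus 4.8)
The plan is to obtain the map $\int_A\alpha_{A,A}$ from the universal property of the end $\int_A G(A,A)$, exactly as one does when establishing the functoriality of $\mathrm{lim}$ and $\mathrm{colim}$. The key observation is that, once we demand the square to commute, the composite $\mu_A\circ\bigl(\int_A\alpha_{A,A}\bigr)$ is forced to equal $\alpha_{A,A}\circ\lambda_A$; so I will first check that this prescribed family is a wedge, and then read off both the required map and its uniqueness directly from the universal property of the end of $G$.

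Concretely, I would set $E=\int_A F(A,A)$ together with its universal wedge $\lambda:\Delta(E)\stackrel{..}\longrightarrow F$, and define a family $\beta_A=\alpha_{A,A}\circ\lambda_A:E\to G(A,A)$. I claim that $\beta:\Delta(E)\stackrel{..}\longrightarrow G$ is again a wedge, i.e. that $G(1,f)\circ\beta_A=G(f,1)\circ\beta_B$ for every $f:A\to B$ in $\cA$. To verify this I would invoke the naturality of $\alpha:F\Rightarrow G$ twice, applied to the two morphisms $(1_A,f):(A,A)\to(A,B)$ and $(f,1_B):(B,B)\to(A,B)$ of $\cA^{op}\times\cA$. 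These yield $G(1,f)\circ\alpha_{A,A}=\alpha_{A,B}\circ F(1,f)$ and $G(f,1)\circ\alpha_{B,B}=\alpha_{A,B}\circ F(f,1)$. Substituting into the two sides of the wedge condition for $\beta$, it reduces to $\alpha_{A,B}\circ F(1,f)\circ\lambda_A=\alpha_{A,B}\circ F(f,1)\circ\lambda_B$, which holds because $\lambda$ is already a wedge for $F$, namely $F(1,f)\circ\lambda_A=F(f,1)\circ\lambda_B$.

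Finally, since $\mu:\Delta\bigl(\int_A G(A,A)\bigr)\stackrel{..}\longrightarrow G$ is the universal wedge for $G$ and $\beta$ is a wedge, the universal property supplies a unique $g:E\to\int_A G(A,A)$ with $\mu_A\circ g=\beta_A=\alpha_{A,A}\circ\lambda_A$ for all $A$; I define $\int_A\alpha_{A,A}:=g$. This equation is precisely the commutativity of the square in the statement, and uniqueness of $g$ is exactly the uniqueness clause in the definition of the end. The only mildly delicate point — the sole place where an error could creep in — is the bookkeeping of variances when applying naturality of $\alpha$, i.e. keeping straight that the first argument is contravariant, so that $(f,1_B)$ is indeed a morphism $(B,B)\to(A,B)$ and the target $G(A,B)$ is the common one for both composites; everything else is a direct diagram chase with no genuine obstacle.
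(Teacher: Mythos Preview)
Your proof is correct and is exactly the standard argument: compose the universal wedge $\lambda$ of $F$ with $\alpha$ to obtain a wedge into $G$, then invoke the universal property of $\int_A G(A,A)$. The paper itself gives no proof, merely citing \cite{MacLane 91}, so your argument is essentially the one found in that reference.
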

\begin{proof}
\cite{MacLane 91}
\end{proof}

\begin{theorem}
Let $F:\cA\times\cB^{op}\times\cB\rightarrow \cC$ be a functor such that for each $A\in |\cA|$ there exists an end $$\int_{B,B}F(A,B,B)\stackrel{\lambda_B^A}\longrightarrow F(A,B,B).$$
Then there is a unique functor $U:\cA\rightarrow \cC$ with $U(A)=\int_BF(A,B,B)$ making $\lambda_B^A$ natural in $A\in |\cA|$.
\end{theorem}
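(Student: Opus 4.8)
The statement to prove is a parametrized end theorem: given $F:\cA\times\cB^{op}\times\cB\rightarrow\cC$ such that each $\int_{B}F(A,B,B)$ exists, there is a unique functor $U:\cA\rightarrow\cC$ with $U(A)=\int_B F(A,B,B)$ making the universal wedge $\lambda_B^A$ natural in $A$.

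The plan is to define $U$ on objects by $U(A)=\int_B F(A,B,B)$, using the chosen end for each $A$, and then to define $U$ on morphisms via the universal property of ends. First I would fix a morphism $g:A\rightarrow A'$ in $\cA$ and observe that, for each $B$, the functor $F$ gives an arrow $F(g,1_B,1_B):F(A,B,B)\rightarrow F(A',B,B)$. Composing the universal wedge $\lambda_B^A:\int_B F(A,B,B)\rightarrow F(A,B,B)$ with these arrows yields a family $F(g,1,1)\circ\lambda_B^A$, and the key step is to check that this family is itself a wedge $\Delta(\int_B F(A,B,B))\stackrel{..}{\rightarrow}F(A',-,-)$. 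This dinaturality follows by pasting the dinaturality hexagon for $\lambda^A$ against the naturality of $F(g,1,1)$ in the $\cB$-variables, which holds because $F$ is a functor on the product category and $g$ commutes with every $\cB$-morphism in $F$. Given that this is a wedge into $F(A',-,-)$, the universal property of the end $\int_B F(A',B,B)$ produces a \emph{unique} mediating arrow, which I would define to be $U(g):U(A)\rightarrow U(A')$; concretely it is characterized by $\lambda_B^{A'}\circ U(g)=F(g,1_B,1_B)\circ\lambda_B^A$ for all $B$, and this equation is exactly the naturality of $\lambda_B^A$ in $A$ that the theorem demands.

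Next I would verify functoriality of $U$. For identities, the family $F(1_A,1,1)\circ\lambda_B^A=\lambda_B^A$ already is the defining wedge of $\int_B F(A,B,B)$, so the unique mediating arrow is $1_{U(A)}$, giving $U(1_A)=1_{U(A)}$. For composition, given $g:A\rightarrow A'$ and $h:A'\rightarrow A''$, I would compute $\lambda_B^{A''}\circ U(h)\circ U(g)$ by applying the defining equation twice: first $\lambda_B^{A''}\circ U(h)=F(h,1,1)\circ\lambda_B^{A'}$, then $\lambda_B^{A'}\circ U(g)=F(g,1,1)\circ\lambda_B^A$, so the composite equals $F(h,1,1)\circ F(g,1,1)\circ\lambda_B^A=F(hg,1,1)\circ\lambda_B^A$ by functoriality of $F$. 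Since $U(hg)$ is the unique arrow satisfying $\lambda_B^{A''}\circ U(hg)=F(hg,1,1)\circ\lambda_B^A$, uniqueness forces $U(hg)=U(h)\circ U(g)$. This same ``uniqueness of mediating arrows'' argument underlies every equation, and it also delivers the overall uniqueness claim: any functor $U'$ with $U'(A)=\int_B F(A,B,B)$ and $\lambda_B^A$ natural in $A$ must satisfy the same defining equation for each $U'(g)$, hence agrees with $U$ on morphisms.

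I expect the main obstacle to be the verification that $F(g,1,1)\circ\lambda_B^A$ is genuinely a wedge, i.e. checking the dinaturality hexagon in the $\cB$-variables after transporting along $g$. This is where one must use carefully that $F$ is functorial in all three arguments simultaneously, so that $F(g,1,1)$ commutes past $F(1,f,1)$ and $F(1,1,f)$ for every $f:B\rightarrow B'$; the diagram chase amounts to pasting the original dinaturality square of $\lambda^A$ onto two naturality squares for $F(g,-,-)$. Everything afterward is a formal consequence of the universal property, so once this wedge condition is established the remaining steps are routine applications of the uniqueness of the factoring arrow, exactly in the style of Theorem~\ref{PROPERTIES OF COEND 1}.
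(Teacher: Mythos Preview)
Your proposal is correct and follows the standard argument. The paper itself does not give a proof of this statement; it simply cites \cite{MacLane 91}, where precisely the construction you outline appears: define $U$ on objects by the chosen ends, define $U(g)$ as the unique mediating arrow through the wedge $F(g,1,1)\circ\lambda_B^A$, and deduce functoriality and uniqueness from the universal property. There is nothing to compare against beyond the reference.
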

\begin{proof}
\cite{MacLane 91}
\end{proof}

\section{Indexed limits and colimits}
\begin{definition}\label{isbell functor}
Let $\cA$ be a small category and $G:\cA\rightarrow\cB$ be functors. We define a functor $\hat{G}:\cB^{op}\rightarrow [\cA,{\bf Set}]$ whose values on objects are functors $$\hat{G}(B)=\cB(B,G-):\cA\rightarrow {\bf Set}$$
 and whose value on a morphism $B\stackrel {f}\rightarrow B'$ is a natural transformation $$\cB(f,G-):\cB(B',G-)\rightarrow\cB(B,G-).$$
 \end{definition}
 Let $F:\cA\rightarrow {\bf Set}$ be a functor.
 Thus we have a composition of functors:
$$\cB^{op}\stackrel {\hat{G}}\rightarrow [\cA,{\bf Set}]\stackrel {[\cA,{\bf Set}](F,-)}\longrightarrow{\bf Set}.$$

Suppose now that this composition admits a representation:
$$\phi:\cB(-,C)\cong [\cA,{\bf Set}](F,\hat{G}(-)).$$
\begin{definition}[Indexed limit]
Let us denote $C=\{F,G\}$, so we have that
$$\cB(B,\{F,G\})\cong [\cA,{\bf Set}](F,\cB(B,G-))$$
natural in $B$ with counit $\mu=\phi_{\{F,G\}}(1_{\{F,G\}}):F\rightarrow \cB(\{F,G\},G-)$ which has the property of being a universal element. Following Kelly's definition~\cite{KELLY82}, we name this pair $(\{F,G\},\mu)$ the limit of $G$ indexed by $F$.
\end{definition}
Thus $\mu\in [\cA,{\bf Set}](F,\cB(\{F,G\},G-))$ and if there is another $\lambda\in [\cA,{\bf Set}](F,\cB(B',G-))$ then there exists a unique $\{F,G\}\stackrel {g^{op}}\rightarrow B'$ in the category $\cB^{op}$ such that $([\cA,{\bf Set}](F,\cB(g,G-)))(\mu)=\lambda$ which means that $\cB(g,G-)\comp\mu=\lambda$.

Therefore,

$$\xymatrix@=25pt{
F(A)\ar[rr]^{\mu_A}\ar[rrd]_{\lambda_{A}}& & \cB(\{F,G\},G(A))\ar[d]^{\cB(g,G(A))}\\
  && \cB(B',G(A))
}.$$
Thus, after evaluating at $x\in F(A)$ we obtain:

$$\xymatrix@=25pt{
B'\ar[r]^{g}\ar[rd]_{\lambda_{A}(x)}& \{F,G\}\ar[d]^{\mu_{A}(x))}\\
  & G(A)
}$$
There is a bijection:
$$[\cA,{\bf Set}](F,\cB(B,G-))\cong [El(F),\cB](\Delta B,G\comp \pi_{F})$$
natural in $B$, with the projection $\pi_{F}:El(F)\rightarrow \cA$.\\
By equation~(\ref{REPRESLIMIT}):
$$\cB(B,\mbox{lim}\,G\comp \pi_{F})\cong [El(F),\cB](\Delta B,G\comp \pi_{F})$$
we conclude that:
\begin{proposition}
$$\mbox{lim}\,G\comp\pi_{F}=\{F,G\}$$
\end{proposition}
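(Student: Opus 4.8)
The plan is to show that $\{F,G\}$ and $\mbox{lim}\,G\comp\pi_F$ represent one and the same functor $\cB^{op}\rightarrow{\bf Set}$, and then to invoke the Yoneda Lemma (Theorem~\ref{YONEDA LEMMA}) to conclude that they are isomorphic. Concretely, I would assemble a chain of bijections, each natural in $B\in\cB$:
$$\cB(B,\{F,G\})\cong [\cA,{\bf Set}](F,\cB(B,G-))\cong [El(F),\cB](\Delta B,G\comp\pi_F)\cong \cB(B,\mbox{lim}\,G\comp\pi_F).$$

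First I would invoke the defining representation of the indexed limit: by the definition of $\{F,G\}$ we have $\cB(B,\{F,G\})\cong [\cA,{\bf Set}](F,\cB(B,G-))$, natural in $B$. Next I would use the bijection recorded just above the statement, namely $[\cA,{\bf Set}](F,\cB(B,G-))\cong [El(F),\cB](\Delta B,G\comp\pi_F)$, which is again natural in $B$. Finally, applying the characterisation of limits as representing objects, equation~(\ref{REPRESLIMIT}), to the functor $G\comp\pi_F:El(F)\rightarrow\cB$ gives $[El(F),\cB](\Delta B,G\comp\pi_F)\cong \cB(B,\mbox{lim}\,G\comp\pi_F)$, natural in $B$.

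Composing these three natural isomorphisms yields a natural isomorphism $\cB(-,\{F,G\})\cong\cB(-,\mbox{lim}\,G\comp\pi_F)$ of functors $\cB^{op}\rightarrow{\bf Set}$. Since a representing object is determined up to isomorphism, the Yoneda Lemma then forces $\{F,G\}\cong\mbox{lim}\,G\comp\pi_F$; tracking the image of the identity morphism through the composite identifies the canonical comparison map explicitly, so the isomorphism is canonical as the statement's ``$=$'' intends.

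The only genuine content — and hence the step I would expect to be the main obstacle — is the middle bijection $[\cA,{\bf Set}](F,\cB(B,G-))\cong [El(F),\cB](\Delta B,G\comp\pi_F)$. Unwinding it amounts to observing that a natural transformation $\alpha:F\Rightarrow\cB(B,G-)$ assigns to each object $(A,x)$ of $El(F)$ (that is, each $x\in F(A)$) a morphism $\alpha_A(x):B\rightarrow G(A)$, and that the naturality of $\alpha$ with respect to a map $f:A\rightarrow A'$ of $\cA$ is exactly the identity $G(f)\comp\alpha_A(x)=\alpha_{A'}(F(f)(x))$, which is precisely the commutation demanded of a cone $\Delta B\Rightarrow G\comp\pi_F$ along the morphism $f:(A,x)\rightarrow(A',F(f)(x))$ of $El(F)$. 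Checking that this assignment is a bijection and is natural in $B$ is the routine but essential part; everything else is formal manipulation of representables.
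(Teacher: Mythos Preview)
Your proposal is correct and follows essentially the same route as the paper: the paper also chains the defining isomorphism of $\{F,G\}$, the bijection $[\cA,{\bf Set}](F,\cB(B,G-))\cong [El(F),\cB](\Delta B,G\comp\pi_F)$, and equation~(\ref{REPRESLIMIT}), and its proof block is devoted precisely to unwinding that middle bijection by translating naturality of $\alpha$ into the cone condition $G(f)\comp\alpha_A(a)=\alpha_{A'}(F(f)(a))$, just as you describe.
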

\begin{proof}
To see this bijection we have that every natural transformation $\alpha\in[\cA,{\bf Set}](F,\cB(B,G-))$ and for every $f:A\rightarrow A'$ there is a diagram:

$$\xymatrix@=25pt{
 F(A)\ar[rr]^{\alpha_A}\ar[d]_{F(f)}& & \cB(B,G(A))\ar[d]^{\cB(B,G(f))}\\
 F(A')\ar[rr]^{\alpha_{A'}} && \cB(B,G(A'))
}$$
which translates into a diagram:
$$\xymatrix@=25pt{
 B\ar[d]_{\alpha_A(a)}\ar[drr]^{\alpha_{A'}(F(f)(a))}& & \\
 G(A)\ar[rr]^{G(f)} && G(A')
}$$
for every $a\in F(A)$.\\
\end{proof}
\begin{remark}
When we choose $F=\Delta 1$
$$\cB(B,\mbox{lim}\,G)\cong[\cA,\cB](\Delta B,G)\cong[\cA,{\bf Set}](\Delta 1,\cB(B,G-))$$
we obtain by definition that
$$\mbox{lim}\,G=\{\Delta 1,G\}$$
\end{remark}
\begin{definition}[Indexed colimit]\label{index colimits}
In the same way as above by duality we define the colimit of $G:\cA\rightarrow \cB$ indexed by $F:\cA^{op}\rightarrow {\bf Set}$ as the representing pair $(F\star G,\lambda)$ of the functor:
$$[\cA^{op},{\bf Set}](F,\tilde {G}(-)):\cB\rightarrow {\bf Set}$$
where $\tilde {G}:\cB\rightarrow [\cA^{op},{\bf Set}]$
whose values on objects are functors $$\tilde{G}(B)=\cB(G-,B):\cA^{op}\rightarrow {\bf Set}$$ and whose value on a morphism $B\stackrel {f}\rightarrow B'$ is a natural transformation $$\cB(G-,f):\cB(G-,B)\rightarrow\cB(G-,B').$$
\end{definition}
Therefore, we have that
\begin{equation}\label{ADJOINT COND ON INDEX COLIMIT}
\cB(F\star G,B)\cong [\cA^{op},{\bf Set}](F,\cB(G-,B))
\end{equation}
and after evaluating the representation isomorphism on the identity with $B=F\star G$ we obtain a unit  $\lambda:F\rightarrow\cB(G-,F\star G)$.
\begin{remark}
With enough conditions, for example when $\cB$ in cocomplete, there is a functor $\bullet\star G:[\cA^{op},{\bf Set}]\rightarrow \cB$. Also, from equation~(\ref{ADJOINT COND ON INDEX COLIMIT})
we conclude that $\bullet\star G$ is left adjoint of the functor $\cB(G-,\bullet):\cB\rightarrow [\cA^{op},{\bf Set}]$ where $\cB(G-,\bullet)(B)=\cB(G-,B):\cA^{op}\rightarrow{\bf Set}$. We write $\bullet\star G\dashv\cB(G-,\bullet)$.

The functor $\bullet\star G$ is the unique, up to isomorphism, colimit preserving functor such that the following diagram commutes:
$$\xymatrix@=15pt{
\cA\ar[rrr]^{Y}\ar[rrrd]_{G} && &[\cA^{op},{\bf Set}]\ar[d]^{\bullet\star G}\\
  &&& \cB
}$$
In the next section we shall discuss this construction in more detail in the context of a coproduct preserving Yoneda embedding.
\end{remark}
\begin{proposition}
If $F:\cA^{op}\rightarrow {\bf Set}$ and $G:\cA\rightarrow \cB$ then
$$\mbox{colim}\,G\comp\pi_{F}^{op}\cong F\star G$$
\end{proposition}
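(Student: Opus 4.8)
The statement to prove is the colimit formula
$$\mbox{colim}\,G\comp\pi_{F}^{op}\cong F\star G,$$
which is the dual of the previously established proposition $\mbox{lim}\,G\comp\pi_{F}=\{F,G\}$. The plan is to obtain the result by a direct duality argument, rather than redoing the entire computation. Here $F:\cA^{op}\rightarrow{\bf Set}$ and $G:\cA\rightarrow\cB$, so the category of elements $El(F)$ is built from the functor $F$ on $\cA^{op}$, and $\pi_F^{op}:El(F)^{op}\rightarrow\cA$ is the opposite of the projection. The object $F\star G$ is characterized by the representation~(\ref{ADJOINT COND ON INDEX COLIMIT}), namely $\cB(F\star G,B)\cong[\cA^{op},{\bf Set}](F,\cB(G-,B))$ natural in $B$.

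First I would set up the natural bijection
$$[\cA^{op},{\bf Set}](F,\cB(G-,B))\cong[El(F)^{op},\cB](G\comp\pi_F^{op},\Delta B)$$
natural in $B$, which is the dualized form of the bijection used in the proof of the indexed-limit proposition $[\cA,{\bf Set}](F,\cB(B,G-))\cong[El(F),\cB](\Delta B,G\comp\pi_F)$. Concretely, a natural transformation $\alpha:F\Rightarrow\cB(G-,B)$ assigns to each $A$ and each $x\in F(A)$ a morphism $\alpha_A(x):G(A)\rightarrow B$, and naturality of $\alpha$ with respect to $f:A\rightarrow A'$ in $\cA$ is exactly the condition that these morphisms form a cocone over the diagram $G\comp\pi_F^{op}$ indexed by $El(F)^{op}$. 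This is the same dictionary as in the indexed-limit case, read in the opposite category; the verification is the routine translation between the naturality square for $\alpha$ and the cocone compatibility triangles.

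Next I would invoke the colimit representation~(\ref{REPRESCOLIMIT}), applied to the functor $G\comp\pi_F^{op}:El(F)^{op}\rightarrow\cB$, which gives
$$\cB(\mbox{colim}\,G\comp\pi_F^{op},B)\cong[El(F)^{op},\cB](G\comp\pi_F^{op},\Delta B)$$
natural in $B$. Composing this with the bijection from the previous paragraph and with the defining representation of $F\star G$ yields
$$\cB(\mbox{colim}\,G\comp\pi_F^{op},B)\cong\cB(F\star G,B)$$
natural in $B$. By the Yoneda Lemma (Theorem~\ref{YONEDA LEMMA}), a natural isomorphism between the representable functors $\cB(\mbox{colim}\,G\comp\pi_F^{op},-)$ and $\cB(F\star G,-)$ forces an isomorphism of the representing objects, giving the desired $\mbox{colim}\,G\comp\pi_F^{op}\cong F\star G$.

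The main obstacle, and the only step requiring genuine care, is getting the opposite-category bookkeeping right in the first bijection: since $F$ is contravariant on $\cA$ (a functor on $\cA^{op}$) while $G$ is covariant, one must be careful that $\pi_F$ goes from $El(F)$ into $\cA^{op}$, so that $G\comp\pi_F^{op}$ is the correctly-typed diagram on $El(F)^{op}$, and that the direction of the cocone matches the variance of the naturality square for $\alpha$. Everything else is dualization of arguments already given in the excerpt, so I expect no new mathematical difficulty beyond ensuring these variances line up consistently.
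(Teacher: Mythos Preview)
Your proof is correct and follows essentially the same approach as the paper: both establish the natural bijection $[\cA^{op},{\bf Set}](F,\cB(G-,B))\cong[El(F)^{op},\cB](G\comp\pi_F^{op},\Delta B)$ by dualizing the indexed-limit argument, combine it with the colimit representation~(\ref{REPRESCOLIMIT}) and the defining isomorphism for $F\star G$, and conclude via Yoneda. The paper's proof is simply more terse, saying ``analogously'' where you spell out the duality and the variance bookkeeping.
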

\begin{proof}
Analogously, there is a bijection:
$$[\cA^{op},{\bf Set}](F,\cB(G-,B))\cong [El(F)^{op},\cB](G\comp \pi_{F}^{op},\Delta B)$$
natural in $B$, with projection $\pi_{F}^{op}:El(F)^{op}\rightarrow \cA$.\\
From this since by equation~(\ref{REPRESCOLIMIT}):
$$\cB(\mbox{colim}\,G\comp \pi_{F}^{op},B)\cong [El(F)^{op},\cB](G\comp \pi_{F}^{op},\Delta B)$$
we conclude that:
$$\mbox{colim}\,G\comp\pi_{F}^{op}\cong F\star G.$$
\end{proof}
\begin{remark}
Since all colimits may be expressed in terms of coproducts and coequalizers we have the following explicit formula:
$$\xymatrix{
\coprod_{x\in F(A), f:A'\rightarrow A}G(A')\ar@<1ex>[rr]^{\theta}\ar@<-1ex>[rr]_{\tau}&&\coprod_{A,x\in F(A)}G(A)\ar[rr]^{\lambda}&&F\star G}
$$
where $\lambda$ is a coequalizer of the unique maps $\tau$ and $\theta$:
$$\begin{array}{cc}
\xymatrix@=14pt{
G(A')\ar[r]^{id}\ar[d]_{i_{(x,f)}}  & G(A')\ar[d]^{i_{(A',F(f)(x))}}\\
\coprod_{x\in F(A),A'\stackrel{f}\rightarrow A}G(A')\ar[r]_<>(.5){\theta}& \coprod_{A,x\in F(A)}G(A)
} \hspace{.0cm}&
\xymatrix@=14pt{
G(A') \ar[r]^{G(f)}\ar[d]_{i_{(x,f)}}& G(A) \ar[d]^{i_{(A,x)}}\\
\coprod_{x\in F(A), A'\stackrel{f}\rightarrow A}G(A')\ar[r]_<>(.5){\tau}& \coprod_{A,x\in F(A)}G(A)
}\end{array}$$
obtained by the coproduct definition.
\end{remark}
Now, suppose we take $F=\cA(-,A):\cA^{op}\rightarrow{\bf Set}$, then for every $B$ we have that:
$$\cB(\cA(-,A)\star G,B)\cong [\cA^{op},{\bf Set}](\cA(-,A),\cB(G-,B))=\cB(G(A),B)$$
by the Yoneda Lemma. Therefore $\cA(-,A)\star G\cong G(A)$. In the same way we obtain that $\{\cA(A,-),G\}\cong G(A)$.

\begin{proposition}
$$\int^{A}F(A)\otimes G(A)\cong F\star G$$
\end{proposition}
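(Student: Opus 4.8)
The plan is to show that both $\int^{A}F(A)\otimes G(A)$ and $F\star G$ represent the same functor $\cB\rightarrow{\bf Set}$, namely $B\mapsto[\cA^{op},{\bf Set}](F,\cB(G-,B))$, and then to conclude by the Yoneda Lemma (Theorem~\ref{YONEDA LEMMA}). Here $F(A)\otimes G(A)$ denotes the copower (the tensor of a set with an object), so $F(A)\otimes G(A)=\coprod_{F(A)}G(A)$, which comes equipped with the defining adjunction isomorphism $\cB(F(A)\otimes G(A),B)\cong{\bf Set}(F(A),\cB(G(A),B))$, natural in all its arguments. Note that the integrand $(A',A)\mapsto F(A')\otimes G(A)$ is a bifunctor $\cA^{op}\times\cA\rightarrow\cB$, contravariant via $F$ and covariant via $G$, so the coend is well-formed.

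First I would fix an object $B\in\cB$ and apply the contravariant hom-functor $\cB(-,B)$ to the coend. Since $\cB(-,B)$ sends colimits in $\cB$ to limits in ${\bf Set}$, and a coend is a particular colimit (the coequalizer of coproducts recalled in the explicit formula above), it carries the coend to the matching end:
$$\cB\Big(\int^{A}F(A)\otimes G(A),\,B\Big)\cong\int_{A}\cB(F(A)\otimes G(A),B).$$
Next I would insert the copower adjunction inside the end to rewrite the integrand, obtaining $\int_{A}{\bf Set}(F(A),\cB(G(A),B))$.

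The central step is to recognize this end as a hom-set of natural transformations. Both $F$ and $\cB(G-,B)$ are functors $\cA^{op}\rightarrow{\bf Set}$, and for any pair of such functors one has the standard identity expressing natural transformations as an end, $[\cA^{op},{\bf Set}](F,H)\cong\int_{A}{\bf Set}(F(A),H(A))$; taking $H=\cB(G-,B)$ gives
$$\int_{A}{\bf Set}(F(A),\cB(G(A),B))\cong[\cA^{op},{\bf Set}](F,\cB(G-,B)).$$
Chaining these isomorphisms yields $\cB(\int^{A}F(A)\otimes G(A),B)\cong[\cA^{op},{\bf Set}](F,\cB(G-,B))$, which by equation~(\ref{ADJOINT COND ON INDEX COLIMIT}) is precisely $\cB(F\star G,B)$.

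Finally I would verify that every isomorphism above is natural in $B$—the copower adjunction, the colimit/limit interchange, and the end-as-naturals identity are each natural in $B$—so that the composite is a natural isomorphism $\cB(-,\int^{A}F(A)\otimes G(A))\cong\cB(-,F\star G)$ of functors $\cB^{op}\rightarrow{\bf Set}$; the Yoneda Lemma then forces $\int^{A}F(A)\otimes G(A)\cong F\star G$. I expect the main obstacle to be the careful justification that $\cB(-,B)$ transports the coend to the corresponding end (tracking the dinaturality wedge condition through the coequalizer presentation) together with the naturality bookkeeping in $B$ at each stage; by contrast, the copower adjunction and the end description of natural transformations are routine once the interchange of $\cB(-,B)$ with the coend is established.
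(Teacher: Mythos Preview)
Your proof is correct and follows essentially the same route as the paper: apply $\cB(-,B)$ to convert the coend to an end, use the copower adjunction, recognize the resulting end as $[\cA^{op},{\bf Set}](F,\cB(G-,B))$, and conclude via the defining isomorphism~(\ref{ADJOINT COND ON INDEX COLIMIT}) for $F\star G$. The paper's version is more terse, but the chain of isomorphisms and their justifications are the same as yours.
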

\begin{proof}
Let $F:\cA\rightarrow{\bf Set}$ and $G:\cA\rightarrow\cB$ be functors and suppose now that the category $\cB$ has copowers\footnote{If $X$ is a set and $B$ an object, the {\em
copower} $X\times B$ is defined to be a coproduct of $X$ copies of
$B$, i.e., $\coprod_{x\in X}B$.}. We denote by $X\otimes A=\coprod_{X} A\in |\cB|$ where $X$ is a set and $A\in |\cB|$.   Then we have
\begin{center}
$\cB(\int^{A} F(A)\otimes G(A),B)\cong \int_{A}\cB(F(A)\otimes G(A),B)\cong \int_{A}[F(A),\cB(G(A),B)]\cong
[\cA^{op},{\bf Set}](F,\cB(G(-),B))$
\end{center}
by properties of ends, copowers, hom as end in the functor category.

Thus, by definition this implies that
$$\int^{A}F(A)\otimes G(A)\cong F\star G.$$
\end{proof}
In particular when $G=Y:\cA\rightarrow [\cA^{op},{\bf Set}]$ we have that:
$$\int^{A}F(A)\otimes \cA(-,A)\cong F\star Y\cong F$$ as we already have proved (Example~\ref{EXAMPLE COEND AS DINATURAL MAP}).

\section{Idempotent adjunctions}
\begin{proposition}\label{F FULL FAITHFUL IFF ETA IS AN ISO}
Let
$\xymatrix{
\mathcal{A}\ar@<1ex>[r]^{F}& \mathcal{B} \ar@<1ex>[l]^{G}_{\bot}}$
be an adjunction with unit $\eta:1_{\cA}\Rightarrow GF$ and counit $\varepsilon:FG\Rightarrow 1_{\cB}$. Then
(i) $F$ is full and faithful if and only if (ii) $\eta$ is an isomorphism. When these conditions are satisfied,\, $\varepsilon\ast G$ and $F\ast\varepsilon$ are isomorphisms. \\
Dually, $G$ is full and faithful iff and only if $\varepsilon$ is an isomorphism. When this happens $\eta\ast G$ and $F\ast \eta$ are isomorphism as well.
\end{proposition}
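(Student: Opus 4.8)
This is the standard characterization of full-faithfulness of an adjoint in terms of invertibility of the unit/counit. Let me sketch how I would prove it.

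The plan is to prove the equivalence (i) $\iff$ (ii) for $F$, namely that $F$ is full and faithful if and only if $\eta$ is an isomorphism; the dual statement for $G$ and $\varepsilon$ then follows by applying the result to the opposite adjunction $G^{op} \dashv F^{op} : \cB^{op} \rightarrow \cA^{op}$, in which the roles of unit and counit are swapped. First I would recall that the adjunction gives, for each pair of objects $A, A' \in \cA$, a bijection (the adjunction isomorphism)
$$
\Phi : \cB(FA, FA') \stackrel{\cong}\longrightarrow \cA(A, GFA'),
$$
and that under this bijection the action of $F$ on hom-sets, $F : \cA(A,A') \rightarrow \cB(FA, FA')$, corresponds to postcomposition with the unit, i.e., $\Phi(F(g)) = \eta_{A'} \circ g$ for every $g : A \rightarrow A'$. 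This identity is the crucial bookkeeping step, and it follows directly from the triangle identities and naturality of $\eta$; one can read it off from the formula for $C^{-1}$ given in the Kleisli comparison material earlier in the excerpt, where exactly this correspondence $g \mapsto G(g) \circ \eta_A$ appears.

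Granting this, the equivalence is almost immediate. The map $F$ on $\cA(A,A')$ is the composite of the bijection $\Phi^{-1}$ with the function $\cA(A,A') \rightarrow \cA(A, GFA')$ given by $g \mapsto \eta_{A'} \circ g$. Hence $F$ is full and faithful (i.e.\ bijective on all hom-sets) if and only if postcomposition with $\eta_{A'}$ is a bijection $\cA(A, A') \rightarrow \cA(A, GFA')$ for every $A$ and every $A'$. By the Yoneda principle, postcomposition with a fixed morphism $\eta_{A'}$ induces a bijection on $\cA(A, -)$-sets for all $A$ precisely when $\eta_{A'}$ is an isomorphism. Taking $A = GFA'$ and chasing the identity through gives an explicit two-sided inverse of $\eta_{A'}$ in the forward direction; the reverse direction is trivial since if $\eta$ is an iso then postcomposition with it is manifestly a bijection. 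This yields (i) $\iff$ (ii).

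For the final clause, assuming $F$ is full and faithful (equivalently $\eta$ an isomorphism), I would show $\varepsilon * G$ and $F * \varepsilon$ are isomorphisms using the triangle identities $(G\varepsilon)\circ(\eta G) = 1_G$ and $(\varepsilon F)\circ(F\eta) = 1_F$. Since $\eta G$ and $F\eta$ are isomorphisms (being whiskerings of the iso $\eta$), each triangle identity exhibits the respective whiskered counit as $1$ composed with an iso, forcing $G\varepsilon = (\eta G)^{-1}$ and $\varepsilon F = (F\eta)^{-1}$ to be isomorphisms as well. The dual statement follows as noted. The main obstacle, and the only place requiring genuine care, is establishing the hom-set identity $\Phi(F(g)) = \eta_{A'} \circ g$ cleanly and then invoking Yoneda in the correct variance; once that correspondence is pinned down the rest is formal diagram manipulation.
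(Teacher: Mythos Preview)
Your proof is correct and follows essentially the same approach as the paper: both hinge on the identity that, under the adjunction bijection, the action of $F$ on hom-sets corresponds to postcomposition with $\eta_{A'}$, which is exactly the paper's computation $\phi^{-1}\circ\cA(A,\eta_{A'})=F$. The only cosmetic difference is that for (i)$\Rightarrow$(ii) you invoke Yoneda to invert $\eta_{A'}$, whereas the paper constructs the inverse by hand---using fullness to choose $f$ with $F(f)=\varepsilon_{FA}$ and then checking $f\circ\eta_A=1$ and $\eta_A\circ f=1$ via faithfulness and the triangle identity---but this is the same argument unpacked.
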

\begin{proof}
(i)$\Rightarrow$(ii):
We have that $\phi:\cB(FA,B)\rightarrow\cA(A,GB)$, where $\phi^{-1}(g)=\varepsilon_{B}\circ F(g)$.
Since $F$ is full there is an $f$ such that $F(f)=\varepsilon_{FA}$. Hence since F is faithful, $F(f\circ\eta_{A})=F(f)F(\eta_{A})=\varepsilon_{FA}F(\eta_{A})=1_{FA}=F(1_{A})$ implies $f\circ\eta_{A}=1_{A}$ has a left inverse.\\
Therefore we have:
$\phi^{-1}(\eta_{A}\circ f)=\varepsilon_{FB}\circ F(\eta_{A}\circ f)=\varepsilon_{FB}\circ F(\eta_{A})\circ F(f)=\varepsilon_{FB}\circ F(\eta_{A})\circ\varepsilon_{FB}=1_{FA}\circ\varepsilon_{FB}=\varepsilon_{FB}\circ F(1_{GFA})=\phi^{-1}(1_{GFA})$. This implies that $\eta_{A}\circ f=1_{GFA}$ is also a right inverse.\\
(ii)$\Rightarrow$ (i): Consider the following isomorphism

$$\cA(A,A')\stackrel {\cA(A,\eta_{A'})}\longrightarrow \cA(A,GFA')\stackrel {\phi^{-1}}\longrightarrow \cB(FA,FA').$$

When we evaluate at $g:A\rightarrow A'$ we obtain that:

$$\phi^{-1}(\cA(A,\eta_{A'})(g))=\phi^{-1}(\eta_{A'}\circ g))=\varepsilon_{FA'}\circ F(\eta_{A}\circ g)=\varepsilon_{FA'}\circ F(\eta_{A})\circ F(g)=F(g)$$

by definition of adjunction. Thus $\phi^{-1}\circ\cA(A,\eta_{A'})=F$, is an isomorphism.

\end{proof}

\section{Lambek's completion for small categories}\label{LAMBEK COMPLETION}
In this section, we review some material from~\cite{Lambek66} relevant to
the following question: how to embed a small category as a full subcategory of a complete and cocomplete category in which the embedding preserves existing limits and colimits.
\begin{definition}
\rm
Let $G:\cA\rightarrow \cB$ be a functor, $\cA$ a small category.  Recall the functor $\tilde{G}$ defined in Definition~\ref{isbell functor} by $\tilde{G}(B)=\cB(G(-),B)$ on objects and $\tilde{G}(f)=\cB(G(-),f)$ on arrows. We say that $G$ is \textit{left adequate} for the category $\cB$ if the functor $\tilde{G}:\cB\rightarrow [\cA^{op},{\bf Set}]$ is fully faithful.
\end{definition}
\begin{proposition}
\label{PROPO4}
Suppose we have a functor $G:\cA\rightarrow \cB$, $\cA$ a small category, $\cB$ a co-complete category. If $G$ is a left adequate functor then for every $B\in\cB$ there exists a small category $\cI$ and a functor $H:\cI\rightarrow \cA$ such that $\mbox{colim}\,GH=B$.
\end{proposition}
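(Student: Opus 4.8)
The plan is to realize $B$ as the colimit, along a suitable diagram in $\cA$, of the canonical presentation of the presheaf $\tilde{G}(B)$ as a colimit of representables, and then to transport this colimit from $[\cA^{op},{\bf Set}]$ into $\cB$ using the left adequacy hypothesis. Concretely, I would set $F:=\tilde{G}(B)=\cB(G(-),B):\cA^{op}\to{\bf Set}$ and take $\cI:=El(F)^{op}$ together with $H:=\pi_F^{op}:\cI\to\cA$. Since $\cA$ is small and each $F(A)=\cB(G(A),B)$ is a set, the category $El(F)$ has only a set of objects and of arrows, so $\cI$ is small, as required. The objects of $El(F)$ are exactly the pairs $(A,x)$ with $x:G(A)\to B$ in $\cB$, so $GH$ is the evident diagram consisting of all objects $G(A)$ equipped with their maps into $B$.

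Next I would invoke the earlier proposition stating $\mbox{colim}\,G\circ\pi_F^{op}\cong F\star G$ to conclude $\mbox{colim}\,GH\cong F\star G$. It then remains to identify $F\star G$ with $B$. For this I would use the adjunction $\bullet\star G\dashv\cB(G-,\bullet)$ recorded in the earlier remark (valid since $\cB$ is cocomplete), observing that the right adjoint $\cB(G-,\bullet)$ is precisely the functor $\tilde{G}$ of Definition~\ref{isbell functor} that appears in the definition of left adequacy. Because $G$ is left adequate, $\tilde{G}$ is fully faithful; applying the dual part of Proposition~\ref{F FULL FAITHFUL IFF ETA IS AN ISO} to this adjunction (in which $\tilde{G}$ is the right adjoint) shows that the counit $\varepsilon:(\bullet\star G)\circ\tilde{G}\Rightarrow 1_{\cB}$ is an isomorphism. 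Evaluating $\varepsilon$ at $B$ gives $B\cong(\bullet\star G)(\tilde{G}(B))=\tilde{G}(B)\star G=F\star G$, and combining this with the previous isomorphism yields $\mbox{colim}\,GH\cong B$.

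The only point genuinely requiring care is the bookkeeping in this last step: one must verify that the functor $\cB(G-,\bullet)$ of the remark literally coincides with $\tilde{G}$ of Definition~\ref{isbell functor}, so that left adequacy supplies exactly the full faithfulness of the right adjoint needed to trigger Proposition~\ref{F FULL FAITHFUL IFF ETA IS AN ISO}, and that the functor $\bullet\star G$ sends $F$ to $F\star G$ by definition. Everything else is an assembly of results already established earlier in the chapter. I do not expect a substantive obstacle here; the main subtlety is simply matching the two descriptions of the presheaf $\cB(G(-),B)$ and of the realization functor $\bullet\star G$ so that the idempotent-adjunction proposition can be applied cleanly. (As elsewhere in the exposition, the asserted equality $\mbox{colim}\,GH=B$ is understood up to the canonical isomorphism produced above.)
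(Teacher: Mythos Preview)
Your argument is correct and uses the same index category $\cI=El(F)^{op}$ and functor $H=\pi_F^{op}$ as the paper, but the verification that $\mbox{colim}\,GH\cong B$ is carried out quite differently. The paper proceeds concretely: it builds the cocone $\beta_{(A,x)}=x:G(A)\to B$, takes the actual colimit $(C,u_{(A,x)})$ in the cocomplete $\cB$, obtains the comparison $p:C\to B$, shows $p$ is epi using faithfulness of $\tilde G$, then uses fullness of $\tilde G$ to produce a section $b:B\to C$ with $bx=u_{(A,x)}$, and concludes $p$ is an isomorphism. Your route instead packages all of this into the adjunction $\bullet\star G\dashv \tilde G$ recorded in the remark after Definition~\ref{index colimits}, together with the proposition $\mbox{colim}\,G\circ\pi_F^{op}\cong F\star G$ and the dual part of Proposition~\ref{F FULL FAITHFUL IFF ETA IS AN ISO}: left adequacy is precisely full faithfulness of the right adjoint $\tilde G$, so the counit is invertible and $F\star G\cong B$. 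Your approach is shorter and makes explicit that Proposition~\ref{PROPO4} is an instance of the general principle that a fully faithful right adjoint makes the counit an isomorphism; the paper's hands-on argument has the virtue of being self-contained and of exhibiting the inverse isomorphisms concretely, which feeds into the subsequent explicit description of the reflector in that section.
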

\begin{proof}
For every $B\in\cB$ let us consider $F=\cB(G(-),B):\cA^{op}\rightarrow{\bf Set}$. Also consider the category $El(F)^{op}$ of elements of $F$, defined in Definition~\ref{CATEGORY OF ELEMENTS}. We claim that $H=\pi^{op}:El(F)^{op}\rightarrow \cA$, i.e.,
$$\mbox{colim}\,(El(F)^{op}\stackrel {\pi^{op}}\rightarrow \cA\stackrel{G}\rightarrow\cB)\cong B.$$
If $(A',x')\stackrel {f^{op}}\rightarrow(A,x)$ then $(A,x)\stackrel {f}\rightarrow(A',x')$ with

$$\xymatrix@=25pt{
G(A') \ar[r]^{x'}\ar[d]_{G(f)} & B\\
G(A) \ar[ru]^{x} &
}$$
since $x'=F(f^{op})(x)$.\\
We define the following set of arrows $G\pi^{op}(A,x)\stackrel{\beta_{(A,x)}}\rightarrow B$ with $\beta_{(A,x)}=x$.
Naturality follows from the previous diagram:
$$\xymatrix@=25pt{
G\pi^{op}(A,x) \ar[r]^{\beta_{(A,x)}}\ar[d]_{G\pi^{op}(f^{op})} & \Delta B(A',x')\ar[d]^{\Delta B(f^{op})} \\
G\pi^{op}(A,x) \ar[r]^{\beta_{(A,x)}} & \Delta B(A,x)
}$$
for every $(A',x')\stackrel {f^{op}}\rightarrow(A,x)$.
Now since $\cB$ is co-complete we have that there exists a co-cone $(C,u_{(A,x)}:G\pi^{op}(A,x)\rightarrow C)$ such that $\mbox{colim}\,G\pi^{op}=C$.
This implies, by definition of colimit, that there exists a unique $p:C\rightarrow B$ such that the following diagram commutes:
$$\xymatrix@=25pt{
G\pi^{op}(A,x) \ar[rr]^{u_{(A,x)}}\ar[rrd]_{\beta_{(A,x)}}&& C\ar[d]^{p}\\
 & & B
}$$
Actually $p$ is an epimorphism. If $fp=gp$ with $f:B\rightarrow B'$ and $g:B\rightarrow B'$ then we have that $fpu_{(A,x)}=gpu_{(A,x)}$ for every $g(A)\stackrel {x}\rightarrow B$. This implies $fx=f\beta_{(A,x)}=g\beta_{(A,x)}=gx$ for every $g(A)\stackrel {x}\rightarrow B$. Now we use the fact that by hypothesis $\tilde{G}$ is faithful. By definition we have $\tilde{G}(f)=\tilde{G}(g):\cB(G-,B)\rightarrow\cB(G-,B')$ since $\tilde{G}(f)(A)(x)=fx=gx=\tilde{G}(g)(A)(x)$, which implies $f=g$.

Now we define $\alpha_{A}:\cB(G(A),B)\rightarrow\cB(G(A),C)$ with $\alpha_{A}(x)=u_{(A,x)}$ for every $A\in \cA$ and $g(A)\stackrel {x}\rightarrow B$. We check that $\alpha$ is a natural transformation:
$$\xymatrix@=25pt{
\cB(G(A'),B) \ar[r]^{\alpha_{A'}}\ar[d]_{\cB(G(f),B)}& \cB(G(A'),C)\ar[d]^{\cB(G(f),C)}\\
\cB(G(A),B)\ar[r]^{\alpha_{A}} & \cB(G(A),C)
}$$
for every $A\stackrel {f}\rightarrow A'$.
$$\cB(G(f),C)(\alpha_{A'}(x'))=\cB(G(f),C)(u_{(A',x')})=u_{(A',x')}G(f)= \qquad (*)$$
$$u_{(A,x)}=\alpha_{A}(x'G(f))=\alpha_{A}(\cB(G(f),C)(x')).$$
This  equality $(*)$ is justified because $u$ is a co-cone, i.e., for every $(A,x)\stackrel {f^{op}}\rightarrow(A',x')$
$$\xymatrix@=25pt{
G\pi^{op}(A,x) \ar[rd]_{u_{(A,x)}}\ar[rr]^{G\pi^{op}(f^{op})}&& G\pi^{op}(A',x')\ar[ld]^{u_{(A',x')}} \\
 & C &
}$$
since we have that
$$\xymatrix@=25pt{
G(A) \ar[rd]_{u_{(A,x)}}\ar[rr]^{G(f)}&& G(A')\ar[ld]^{u_{(A',x')}} \\
 & C &
}$$
The rest of the proof follows now from the fact that $\tilde{G}$ is a full functor. Hence there exists a morphism $b:B\rightarrow C$ such that $\alpha=\cB(G-,b):\cB(G-,B)\rightarrow\cB(G-,C)$. Therefore using this representation we get that $u_{(A,x)}=\alpha_{A}(x)=\cB(G(A),b)(x)=bx$ for every $(A,x)\in \mbox{El}(F)^{op}$. Thus by definition of colimits we get that $bpu_{(A,x)}=bx=u_{(A,x)}$ for every $(A,x)\in \mbox{El}(F)^{op}$ implies that $bp=1_{C}$. But $p$ is an epimorphism, so we cancel
to obtain $pbp=p1=p$ and thus $pb=1_{B}$, which means it is an isomorphism. Therefore $\mbox{colim}\,G\pi^{op}=(C,u_{(A,x)})_{(A,x)\in El(F)}\cong(B,\beta_{(A,x)})_{(A,x)\in El(F)}$.
\end{proof}
\begin{corollary}
\label{CORO1}
For every $F\in [\cA^{op},{\bf Set}]$
$$F=\mbox{colim}\,(El(F)^{op}\stackrel {\pi^{op}}\rightarrow\cA\stackrel {Y}\rightarrow [\cA^{op},{\bf Set}]).$$
\end{corollary}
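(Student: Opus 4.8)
The plan is to read this off as a direct application of Proposition~\ref{PROPO4} to the Yoneda embedding $Y:\cA\rightarrow[\cA^{op},{\bf Set}]$. Two hypotheses of that proposition must be checked. First, the codomain $[\cA^{op},{\bf Set}]$ is cocomplete, since colimits of presheaves are computed pointwise in ${\bf Set}$. Second, and this is the real content, $Y$ must be \emph{left adequate}, i.e.\ the functor $\tilde{Y}:[\cA^{op},{\bf Set}]\rightarrow[\cA^{op},{\bf Set}]$, $\tilde{Y}(F)=[\cA^{op},{\bf Set}](Y(-),F)$ (in the sense of $\tilde G$ from Definition~\ref{index colimits}), must be fully faithful.

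To verify left adequacy I would compute $\tilde{Y}$ explicitly: $\tilde{Y}(F)(A)=[\cA^{op},{\bf Set}](\cA(-,A),F)$. The Yoneda Lemma (Theorem~\ref{YONEDA LEMMA}) supplies a bijection $\vartheta_{F,A}$ of this hom-set with $F(A)$, natural in both $A$ and $F$ (naturality in $F$ uses that $\cA$ is small). Hence $\tilde{Y}$ is naturally isomorphic to the identity functor on $[\cA^{op},{\bf Set}]$; since the identity is fully faithful and full-and-faithfulness is stable under natural isomorphism, $\tilde{Y}$ is fully faithful, so $Y$ is left adequate.

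With both hypotheses in hand I would apply Proposition~\ref{PROPO4} with $\cB=[\cA^{op},{\bf Set}]$, $G=Y$ and $B=F$, obtaining $\mbox{colim}\,(Y\comp\pi^{op})\cong F$, where the colimit is indexed by $El(\Phi)^{op}$ for $\Phi=[\cA^{op},{\bf Set}](Y(-),F)$. The only remaining point is to identify this indexing category with $El(F)^{op}$: the natural isomorphism $\Phi\cong F$ above induces an isomorphism of categories of elements $El(\Phi)\cong El(F)$, sending an object $(A,x)$ with $x:\cA(-,A)\Rightarrow F$ to $(A,\vartheta_{F,A}(x))$ with $\vartheta_{F,A}(x)\in F(A)$, and this isomorphism commutes with the projections to $\cA$. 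Consequently the two diagrams $Y\comp\pi^{op}$ agree up to this isomorphism of domains, their colimits coincide, and the corollary follows.

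I expect the main obstacle to be not anything deep but the bookkeeping of this last step, namely confirming that the Yoneda bijection really is an isomorphism of the categories of elements compatible with $\pi^{op}$, so that the colimit is transported correctly. As a cross-check I would note a shorter route that sidesteps the identification altogether: the proposition $\mbox{colim}\,G\comp\pi_{F}^{op}\cong F\star G$ specialized at $G=Y$, combined with the already-established computation $F\star Y\cong F$, gives $\mbox{colim}\,(Y\comp\pi_{F}^{op})\cong F\star Y\cong F$ immediately.
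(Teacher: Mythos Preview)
Your proposal is correct and follows essentially the same approach as the paper: verify that $Y$ is left adequate by using the Yoneda Lemma to identify $\tilde{Y}$ with the identity functor, then invoke Proposition~\ref{PROPO4}. The paper's proof is terser---it simply writes $\tilde{Y}(F)=F$ and leaves the identification of $El([\cA^{op},{\bf Set}](Y(-),F))$ with $El(F)$ implicit---whereas you spell out this bookkeeping step explicitly; your cross-check via $F\star Y\cong F$ is a pleasant addition not present in the paper.
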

\begin{proof}
The Yoneda functor $A\stackrel {Y}\rightarrow [\cA^{op},{\bf Set}]$ is left adequate since we have that:\\
$\tilde{Y}:[\cA^{op},{\bf Set}]\rightarrow [\cA^{op},{\bf Set}]$ is defined  $\tilde{Y}(F)=[\cA^{op},{\bf Set}](Y-,F)= F$ on objects and $\tilde{Y}(\alpha)=[\cA^{op},{\bf Set}](Y-,F)\rightarrow [\cA^{op},{\bf Set}](Y-,F')=Id(\alpha)$ on arrows by the Yoneda Lemma.
\end{proof}

\begin{definition}
\rm A functor $F:\cA\rightarrow \cB$ \textit{reflects limits} when for each functor $G:\cI\rightarrow\cA$ with $\cI$ small and given a cone $(A,u_i)_{i\in\cI}$, $u_i:A\rightarrow G(i)$, if $(F(A),F(u_i))_{i\in\cI}$ is a limit of $FG$ then $(A,u_i)_{i\in\cI}$ is a limit of $G$.
\end{definition}

\begin{proposition}
\label{PROPO1}
Let $F:\mathcal{A}\rightarrow \mathcal{B}$ be a functor.
$F$ preserves colimits if and only if $\mathcal{B}(F-,B):\mathcal{A}^{op}\rightarrow {\bf Set}$ preserves limits for every $B\in \mathcal{B}$.
\end{proposition}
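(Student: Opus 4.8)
The plan is to show that both conditions are equivalent to a single statement about one comparison map, and then to conclude by the Yoneda Lemma. First I fix a diagram $G:\cJ\rightarrow\cA$ with a colimiting cocone $\lambda:G\Rightarrow\Delta(\mathrm{colim}\,G)$ and unwind what the right-hand condition says on it. Since a limit in $\cA^{op}$ is exactly a colimit in $\cA$, the functor $\cB(F-,B):\cA^{op}\rightarrow{\bf Set}$ preserves limits if and only if, for every such $G$, it carries the colimit $(\mathrm{colim}\,G,\lambda)$ to a limit of the composite $\cB(FG-,B):\cJ^{op}\rightarrow{\bf Set}$. Because ${\bf Set}$ is complete and its limits are computed as sets of compatible families, the limit of $\cB(FG-,B)$ is precisely the cocone set $[\cJ,\cB](FG,\Delta B)$, and the canonical comparison map out of $\cB(F(\mathrm{colim}\,G),B)$ is
\[
\Phi_B:\cB(F(\mathrm{colim}\,G),B)\rightarrow [\cJ,\cB](FG,\Delta B),\qquad g\mapsto (g\circ F\lambda_j)_{j\in\cJ},
\]
which is natural in $B$. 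Thus, for the fixed $G$, the right-hand condition is equivalent to: $\Phi_B$ is a bijection for every $B$.

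Next I read off the left-hand condition on the same diagram. By the representability characterization of colimits, equation~(\ref{REPRESCOLIMIT}), the pair $(F(\mathrm{colim}\,G),F\lambda)$ is a colimit of $FG$ in $\cB$ if and only if the natural transformation $\Phi$ with components $\Phi_B$ is an isomorphism $\cB(F(\mathrm{colim}\,G),-)\cong[\cJ,\cB](FG,\Delta-)$. By the Yoneda Lemma (Theorem~\ref{YONEDA LEMMA}), a natural transformation between ${\bf Set}$-valued functors is an isomorphism exactly when each of its components is a bijection. Hence $F$ preserves the colimit of $G$ if and only if $\Phi_B$ is a bijection for every $B$ — the very same condition obtained above. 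The crucial point is that the comparison map governing colimit-preservation by $F$ and the comparison map governing limit-preservation by $\cB(F-,B)$ are literally the same map $\Phi_B$.

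Finally I quantify over diagrams. For the implication ($\Rightarrow$), if $F$ preserves colimits, then for every $G$ admitting a colimit the map $\Phi_B$ is bijective for all $B$; since every limit of $\cA^{op}$ arises from a colimit of $\cA$ in this way, $\cB(F-,B)$ sends limits to limits, i.e.\ preserves limits, for each $B$. Conversely ($\Leftarrow$), if each $\cB(F-,B)$ preserves limits, then for every diagram $G$ that has a colimit the map $\Phi_B$ is bijective for all $B$, so $(F(\mathrm{colim}\,G),F\lambda)$ is a colimit of $FG$; thus $F$ preserves colimits.

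I expect the only real obstacle to be the bookkeeping of the $(-)^{op}$ dualization — matching a diagram $G:\cJ\rightarrow\cA$ and its colimit in $\cA$ with the corresponding diagram and limit in $\cA^{op}$, and checking that the two comparison maps coincide under this identification — together with the routine verification that $\Phi_B$ is natural in $B$ so that the Yoneda Lemma applies. One should also keep in mind that, as usual, ``preserves (co)limits'' is quantified only over those diagrams that actually possess a (co)limit, which is exactly what the argument uses.
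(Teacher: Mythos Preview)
Your proof is correct and takes a somewhat different route from the paper's. The paper proves the two directions asymmetrically: for $(\Rightarrow)$ it factors $\cB(F-,B)=\cB(-,B)\circ F^{op}$ and observes that each factor preserves limits; for $(\Leftarrow)$ it passes through the Yoneda embedding $Y:\cB^{op}\to[\cB,{\bf Set}]$, uses that limits in $[\cB,{\bf Set}]$ are computed pointwise (so the hypothesis for each $B$ assembles into a limit of $Y\circ F^{op}\circ G^{op}$), and then invokes the fact that a full and faithful functor reflects limits to descend back to $\cB^{op}$. Your argument is more symmetric and more elementary: you identify a single comparison map $\Phi_B$ and show that both conditions, restricted to a given diagram, amount to $\Phi_B$ being bijective for all $B$. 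This avoids the detour through pointwise limits in a functor category and through reflection of limits by full and faithful functors. One small quibble: the step ``a natural transformation is an isomorphism iff each component is'' is just the pointwise nature of isomorphisms in a functor category, not really an application of the Yoneda Lemma; but the argument stands either way.
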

\begin{proof}
($\Rightarrow$)
Let us first observe that we have a composition of functors $\mathcal{B}(F-,B)=\mathcal{B}(-,B)\circ F^{op}$  where $ F^{op}:\mathcal{A}^{op}\rightarrow \mathcal{B}^{op}$ preserves limits since $F$ preserves colimits and $\mathcal{B}(-,B):\mathcal{B}^{op}\rightarrow {\bf Set}$ preserves limits~\cite{Borceux94}.\\
($\Leftarrow$)
Now consider the functor $G:\cI\rightarrow\cA$ with $\mbox{colim}\, G=(A,u_i)_{i\in \cI}$, $u_i:G(i)\rightarrow A$. Thus
 $\mbox{lim}\, G^{op}=(A,u_i^{op})_{i\in \cI^{op}}$ where $G^{op}:\cI^{op}\rightarrow\cA^{op}$. By hypothesis we know that $\mathcal{B}(F-,B):\mathcal{A}^{op}\rightarrow {\bf Set}$ preserves limits, hence for every $B\in\cB$ the limit takes the form $\mbox{lim}\,\mathcal{B}(F-,B)\circ G^{op}=(\mathcal{B}(F(A),B),\mathcal{B}(F(u_i^{op}),B))_{i\in \cI^{op}}$, so we have:
\begin{center}
 $\cI^{op}\stackrel {G^{op}}\rightarrow \cA^{op}\stackrel {F^{op}}\rightarrow\cB^{op}\stackrel {Y}\rightarrow[\cB,{\bf Set}]$\\
 $i\mapsto G(i)\mapsto F(G(i))\mapsto \mathcal{B}(F(G(i)),-)$,
\end{center}
where $Y(B')=\cB(B',-):\cB\rightarrow{\bf Set}$.\\
Therefore for any $B\in\cB$ it may be verified that $Y\circ F^{op}\circ G^{op}(-)(B):\cI^{op}\rightarrow{\bf Set}$ has a limit by hypothesis, since $\forall B\in\cB$:
\begin{center}
$\mbox{lim}\,Y\circ F^{op}\circ G^{op}(-)(B)=(\mathcal{B}(F(A),B),\mathcal{B}(F(u_i^{op}),B))_{i\in \cI^{op}}$.
\end{center}
Then, by proposition 2.15.1 of~\cite{Borceux94} we have $Y\circ F^{op}\circ G^{op}:\cI^{op}\rightarrow[\cB,{\bf Set}]$ has a limit being compute pointwise. Which means we have:
\begin{center}
$\mbox{lim}\,Y\circ F^{op}\circ G^{op}=(\mathcal{B}(F(A),-),\mathcal{B}(F(u_i^{op}),-))_{i\in \cI^{op}}$.
\end{center}
But $Y$ is a full and faithful functor, it reflects limits (see proposition 2.9.9~\cite{Borceux94}) which implies that (see definition 2.9.6~\cite{Borceux94}) since $(Y(F(A)),Y((F(u_i^{op}))_{i\in \cI^{op}}$ is the limit of $Y\circ F^{op}\circ G^{op}$ then $(F(A),F(u_i^{op}))_{i\in \cI^{op}}$ is the limit of $F^{op}\circ G^{op}$ in $\cB^{op}$. Equivalently, in view of this we are saying that $(F(A),F(u_i))_{i\in \cI}$ is the colimit of $F\circ G$ in $\cB$.
Summarizing, we started with $\mbox{colim}\, G=(A,u_i)_{i\in \cI}$ and we end with $\mbox{colim}\, FG=(FA,Fu_i)_{i\in \cI}$, i.e., $F$ preserves colimits.
 \end{proof}
\begin{proposition}
\label{COPRODUCT CONT EQUIVALENCE}
Let $F:\mathcal{A}\rightarrow \mathcal{B}$ be a functor.
$F$ preserves coproducts if and only if $\mathcal{B}(F-,B):\mathcal{A}^{op}\rightarrow {\bf Set}$ preserves products for every $B\in \mathcal{B}$.
\end{proposition}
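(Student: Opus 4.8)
The statement to prove is Proposition~\ref{COPRODUCT CONT EQUIVALENCE}: for a functor $F:\cA\rightarrow\cB$, the functor $F$ preserves coproducts if and only if $\cB(F-,B):\cA^{op}\rightarrow{\bf Set}$ preserves products for every $B\in\cB$. The plan is to mimic exactly the structure of the proof of Proposition~\ref{PROPO1} that immediately precedes it, specializing the general (co)limit argument to the case of (co)products. The key observation driving both directions is the factorization $\cB(F-,B)=\cB(-,B)\circ F^{op}$, together with the fact that the contravariant hom-functor $\cB(-,B):\cB^{op}\rightarrow{\bf Set}$ always sends colimits to limits, hence in particular coproducts (colimits over a discrete diagram) to products.

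For the forward direction ($\Rightarrow$), I would argue that if $F$ preserves coproducts then $F^{op}:\cA^{op}\rightarrow\cB^{op}$ preserves products, and since $\cB(-,B)$ preserves products (as a representable functor out of $\cB^{op}$, by~\cite{Borceux94}), the composite $\cB(F-,B)=\cB(-,B)\circ F^{op}$ preserves products. This is the short and essentially formal half.

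For the converse ($\Leftarrow$), I would follow the Yoneda-based reflection argument of Proposition~\ref{PROPO1} verbatim, but with the index category $\cI$ taken to be discrete. Concretely: given a coproduct $\coprod_{i} A_i=(A,u_i)_{i}$ in $\cA$, pass to the opposite and consider the discrete diagram $G^{op}:\cI^{op}\rightarrow\cA^{op}$ whose limit is the corresponding product. The hypothesis says that for each $B\in\cB$ the functor $\cB(F-,B)$ preserves this product, so $Y\circ F^{op}\circ G^{op}(-)(B)$ has the expected limit pointwise in $B$; by the pointwise computation of limits in functor categories (Proposition~2.15.1 of~\cite{Borceux94}) the composite $Y\circ F^{op}\circ G^{op}$ into $[\cB,{\bf Set}]$ has a limit, namely $(\cB(F(A),-),\cB(F(u_i^{op}),-))_{i}$. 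Since the Yoneda embedding $Y$ is full and faithful it reflects limits (Proposition~2.9.9 of~\cite{Borceux94}), so $(F(A),F(u_i^{op}))_{i}$ is a limit of $F^{op}\circ G^{op}$ in $\cB^{op}$; equivalently $(F(A),F(u_i))_{i}$ is the coproduct of the $F(A_i)$ in $\cB$, which is exactly preservation of coproducts.

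Since the whole argument is a special case of the preceding proposition, I do not anticipate a genuine obstacle: the only thing to be careful about is the bookkeeping of discreteness of $\cI$, ensuring that ``products'' and ``coproducts'' are correctly matched under the $(-)^{op}$ dualization and that the cited pointwise-limit and limit-reflection lemmas apply to discrete diagrams (they do, as discrete categories are small). Indeed the cleanest exposition would simply invoke Proposition~\ref{PROPO1} directly and remark that a coproduct is precisely a colimit over a discrete index category, so that $F$ preserves coproducts iff it preserves all colimits over discrete $\cI$, iff $\cB(F-,B)$ preserves limits over discrete $\cI^{op}$, i.e.\ products, for every $B$.
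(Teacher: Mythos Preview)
Your proposal is correct and matches the paper's treatment exactly: the paper states Proposition~\ref{COPRODUCT CONT EQUIVALENCE} without proof immediately after Proposition~\ref{PROPO1}, implicitly as the special case where the index category $\cI$ is discrete. Your final paragraph, which simply invokes Proposition~\ref{PROPO1} and observes that coproducts are colimits over discrete diagrams, is precisely the intended argument.
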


\begin{proposition}
Let $F:\mathcal{A}\rightarrow \mathcal{B}$ be a functor.
$F$ preserves limits if and only if $\mathcal{B}(B,F-):\mathcal{A}\rightarrow {\bf Set}$ preserves limits for every $B\in \mathcal{B}$.\\
\end{proposition}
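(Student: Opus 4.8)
The plan is to dualize the proof of Proposition~\ref{PROPO1}, replacing colimits by limits and the contravariant hom-functors $\mathcal{B}(F-,B)$ by the covariant ones $\mathcal{B}(B,F-)$. The statement splits into two implications; only the second carries any content.

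For the forward implication I would simply observe that $\mathcal{B}(B,F-)$ factors as the composite $\mathcal{A}\stackrel{F}{\rightarrow}\mathcal{B}\stackrel{\mathcal{B}(B,-)}{\longrightarrow}{\bf Set}$. By hypothesis $F$ preserves limits, and the covariant hom-functor $\mathcal{B}(B,-)$ always preserves limits~\cite{Borceux94}; hence so does the composite. This direction requires no real work.

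The substance is the reverse implication. Assume $\mathcal{B}(B,F-)$ preserves limits for every $B\in\mathcal{B}$. Given a small diagram $G:\mathcal{I}\rightarrow\mathcal{A}$ with $\mbox{lim}\,G=(A,u_i)_{i\in\mathcal{I}}$, $u_i:A\rightarrow G(i)$, I want to show that $(FA,Fu_i)_{i\in\mathcal{I}}$ is a limit of $FG$. I would consider the covariant Yoneda embedding $Y:\mathcal{B}\rightarrow[\mathcal{B}^{op},{\bf Set}]$, $Y(B')=\mathcal{B}(-,B')$, and form the composite $\mathcal{I}\stackrel{G}{\rightarrow}\mathcal{A}\stackrel{F}{\rightarrow}\mathcal{B}\stackrel{Y}{\rightarrow}[\mathcal{B}^{op},{\bf Set}]$. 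For each fixed $B\in\mathcal{B}$, evaluating this composite at $B$ yields the functor $i\mapsto\mathcal{B}(B,FG(i))$, i.e.\ exactly $\mathcal{B}(B,F-)\circ G$, whose limit is $(\mathcal{B}(B,FA),\mathcal{B}(B,Fu_i))_{i}$ by hypothesis. Since limits in the functor category $[\mathcal{B}^{op},{\bf Set}]$ are computed pointwise (Proposition 2.15.1 of~\cite{Borceux94}), it follows that $Y\circ F\circ G$ has limit $(Y(FA),Y(Fu_i))_{i}$ computed in $[\mathcal{B}^{op},{\bf Set}]$.

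Finally I would invoke that the Yoneda embedding $Y$ is full and faithful and therefore reflects limits (Proposition 2.9.9 of~\cite{Borceux94}): since $(Y(FA),Y(Fu_i))_{i}$ is a limit of $Y\circ F\circ G$, the cone $(FA,Fu_i)_{i}$ is a limit of $FG$ in $\mathcal{B}$, so $F$ preserves limits. The only point demanding care --- the main obstacle, such as it is --- is bookkeeping the variance: one must use the \emph{covariant} embedding $Y(B')=\mathcal{B}(-,B')$ (rather than the contravariant $Y(B')=\mathcal{B}(B',-)$ used in Proposition~\ref{PROPO1}), so that evaluation at $B$ recovers precisely $\mathcal{B}(B,F-)\circ G$ and the pointwise-limit and limit-reflection steps apply directly, without inadvertently passing to $\mathcal{B}^{op}$ or $\mathcal{A}^{op}$.
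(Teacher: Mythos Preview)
The paper gives no proof of this proposition; it is stated immediately after Proposition~\ref{COPRODUCT CONT EQUIVALENCE} and left without argument. Your proposal is correct and is exactly the expected adaptation of the proof of Proposition~\ref{PROPO1}: factor $\mathcal{B}(B,F-)=\mathcal{B}(B,-)\circ F$ for the forward direction, and for the converse use the covariant Yoneda embedding $Y:\mathcal{B}\to[\mathcal{B}^{op},{\bf Set}]$, pointwise computation of limits, and reflection of limits by the full and faithful $Y$. If anything, your version is cleaner than the argument for Proposition~\ref{PROPO1}, since working with limits and covariant hom-functors avoids the detour through $\mathcal{A}^{op}$ and $\mathcal{B}^{op}$ that was needed there.
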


Let $F:\mathcal{A}\rightarrow \mathcal{C}$ be a fully faithful functor. Consider the full subcategories $\cB$ of $\cC$ such that $|F(\cA)|\subseteq |\cB|\subseteq |\cC|$ and define:
$$\cA\stackrel {F_{\cB}}\rightarrow\cB\stackrel {j_{\cB}}\rightarrow\cC$$
with $F=j_{\cB}F_{\cB}$, $F_{\cB}(A)=F(A)$, $F_{\cB}(f)=F(f)$ and $j$ the inclusion functor.
Define $\cB_0$ a full subcategory of $\cC$ in the following way:
 $$|\cB_0|=\{B\in |\cC|:\cC(F(-),B):\cA^{op}\rightarrow {\bf Set}\,\mbox{preserves limits}\}.$$
\begin{remark}
\rm
If $F:\mathcal{A}\rightarrow \mathcal{C}$ is a fully faithful functor then $|F(\cA)|\subseteq |\cB_0|$. To see this we have that  $\mathcal{C}(F-,F(A))\cong \mathcal{A}(-,A)$ are naturally isomorphic which implies that  $\mathcal{C}(F-,F(A))$ preserves limits.
\end{remark}

\begin{proposition}
\label{PROPO2}
Let $F:\mathcal{A}\rightarrow \mathcal{C}$ be a fully faithful functor. Then:\\
(a) if $j_{\cB}F_{\cB}$ preserves colimits then $\cB\subseteq \cB_0$\\
(b) let $\cJ$ be a small category, and consider the following composition of functors:
 $$\cJ\stackrel {\Delta}\rightarrow\cB_{0}\stackrel {j_{\cB_0}}\rightarrow\cC$$
 if $\mbox{lim}\, j_{\cB_0}\Delta=(C, v_j)$ then $C\in |\cB_0|$.
\end{proposition}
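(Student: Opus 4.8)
The plan is to dispatch (a) immediately from Proposition~\ref{PROPO1} and then to prove (b) by exhibiting $\cC(F-,C)$ as a pointwise limit of limit-preserving functors and interchanging limits. For (a), the key observation is that $j_{\cB}F_{\cB}=F:\cA\to\cC$, so the hypothesis ``$j_{\cB}F_{\cB}$ preserves colimits'' is literally the statement that $F$ preserves colimits. Applying Proposition~\ref{PROPO1} to $F:\cA\to\cC$ (that is, with the ambient category taken to be $\cC$), this is equivalent to $\cC(F-,C):\cA^{op}\to{\bf Set}$ preserving limits for \emph{every} $C\in|\cC|$. In particular it holds for every $B\in|\cB|\subseteq|\cC|$, so $|\cB|\subseteq|\cB_0|$; since $\cB$ and $\cB_0$ are both full subcategories of $\cC$, this gives $\cB\subseteq\cB_0$.

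For (b) the goal is to show $C\in|\cB_0|$, i.e.\ that $\cC(F-,C):\cA^{op}\to{\bf Set}$ preserves limits, where $C=\mbox{lim}\,j_{\cB_0}\Delta$ with limiting cone $(v_j)_{j\in\cJ}$ and $\Delta:\cJ\to\cB_0$ is the given diagram. First I would use that each representable functor $\cC(X,-):\cC\to{\bf Set}$ preserves limits (see~\cite{Borceux94}). Evaluating at $X=FA$ and using that the resulting bijections are natural in $A\in\cA$, this produces a natural isomorphism $\cC(F-,C)\cong\mbox{lim}_{j}\,\cC(F-,\Delta(j))$ in the functor category $[\cA^{op},{\bf Set}]$, where the limit on the right is computed pointwise (here smallness of $\cA$ guarantees these pointwise limits exist and are well-behaved).

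Next, since every object $\Delta(j)$ lies in $\cB_0$, each functor $P_j:=\cC(F-,\Delta(j)):\cA^{op}\to{\bf Set}$ preserves limits by the very definition of $\cB_0$. It then remains to show that a pointwise limit of limit-preserving functors is itself limit-preserving. Writing $P=\mbox{lim}_j P_j\cong\cC(F-,C)$, for any small diagram $K:\cI\to\cA^{op}$ with limit $L$ I would compute $P(L)=\mbox{lim}_j P_j(L)\cong\mbox{lim}_j\mbox{lim}_{\cI}P_jK\cong\mbox{lim}_{\cI}\mbox{lim}_j P_jK=\mbox{lim}_{\cI}PK$, where the four steps use, respectively, pointwise computation of limits in $[\cA^{op},{\bf Set}]$, the fact that each $P_j$ preserves the limit $L$, the interchange of limits with limits in ${\bf Set}$, and pointwise computation again. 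Verifying that this chain of isomorphisms is compatible with the canonical comparison cone then shows that $P$ sends the limiting cone of $K$ to a limiting cone, i.e.\ $\cC(F-,C)$ preserves limits, whence $C\in|\cB_0|$.

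The conceptual content sits entirely in the interchange-of-limits step (limits commute with limits), which is standard in ${\bf Set}$; the main care required is bookkeeping, namely checking that the displayed isomorphisms respect the comparison maps so that the conclusion is genuinely that $\cC(F-,C)$ \emph{preserves} the limit rather than merely that the two sets are abstractly isomorphic. I would also flag at the outset that $\Delta$ in the statement denotes an arbitrary diagram $\cJ\to\cB_0$ and not the diagonal functor, to avoid confusion with the notation $\Delta_A$, $\Delta$ used earlier.
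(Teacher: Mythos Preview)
Your proof is correct. For (a) it matches the paper exactly. For (b) you take a genuinely different and cleaner route than the paper.

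The paper proves (b) by an explicit element-level construction: given a cocone $t:F\Gamma\Rightarrow C$, it composes with each projection $v_j:C\to\Delta(j)$ to obtain cocones into the $\Delta(j)\in\cB_0$, uses the defining property of $\cB_0$ to factor each through $F(A)$ via some $x_j$, checks by hand that the $x_j$ assemble into a cone over $\Delta$, invokes the limit $(C,v_j)$ to get a unique $y:F(A)\to C$, and finally verifies $t_i=yF(u_i)$ and uniqueness. Your argument packages exactly the same content as the single statement that $\cC(F-,C)\cong\lim_j\cC(F-,\Delta(j))$ is a pointwise limit of limit-preserving functors, hence limit-preserving by interchange of limits in ${\bf Set}$. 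The explicit factorizations the paper builds are precisely what the interchange isomorphism encodes. Your approach is shorter and makes the underlying reason transparent; the paper's approach has the virtue of being self-contained and avoids any appeal to naturality bookkeeping in the interchange step, which you correctly flag as the one place requiring care.
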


\begin{proof}
(a) Take $B\in |\cB|$, since $j_{\cB}F_{\cB}$ preserves colimits then by Proposition~\ref{PROPO1} $\mathcal{C}(j_{\cB}(F_{\cB}(-)),B)=\mathcal{C}(F(-),B)$ preserves limits, which by definition means that $B\in |\cB_0|$.\\
(b) We are going to prove that $|\cB'|=|\cB_0|\cup\{C\}$ also satisfies property of part (a) above. This implies that
$\cB'\subseteq\cB_0$ i.e., $C\in |\cB_0|$.
We have that
$$\cA\stackrel {F_{\cB'}}\rightarrow\cB'\stackrel {j_{\cB'}}\rightarrow\cC$$
and we want to show that if $\cI\stackrel {\Gamma}\rightarrow\cA$ with $\mbox{colim}\,\Gamma =(A,u)$, with $\Gamma(i)\stackrel{u_i}\rightarrow A$, $i\in\cI$ then
$\mbox{colim}\,F_{\cB'}\Gamma =(F_{\cB'}(A),F_{\cB'}(u_i))_{i\in \cI}=(F(A),F(u_i))_{i\in \cI}$.\\
Let $t:F_{\cB'}\Gamma \Rightarrow C$ be a co-cone.
Without loss of generality, we assume that $F_{\cB'}(\Gamma(i))\neq C$ for every $i\in \cI$. If there exists a $i_0$ with $F_{\cB'}(\Gamma(i_0))= C$ then since $|F(\cA)|\subseteq |\cB|$ this implies that $C\in |\cB|$. \\
We fix an object $j\in |\cJ|$. Therefore since $t$ is a co-cone we consider the following co-cone:
$$F(\Gamma(i)) \stackrel {t_{i}}\rightarrow C\stackrel {v_{j}}\rightarrow\Delta(j).$$
These arrows are contained in the category $\cB$ because $F(\Gamma(i))$ and $\Delta(j)$ are object of $\cB$.
We know by part (a) that $F_{\cB_0}$ has the property of preserving colimits:
$$\mbox{colim}\,F_{\cB_0}\Gamma =(F_{\cB_0}(A),F_{\cB_0}(u_i))_{i\in \cI}=(F(A),F(u_i))_{i\in \cI}.$$
For that reason there exists a unique $x_j:F(A)\rightarrow \Delta(j)$ such that

$$\xymatrix@=25pt{
F(\Gamma(i)) \ar[r]^{t_i}\ar[d]_{F(u_i)} & C\ar[d]^{v_j}\\
F(A) \ar[r]^{x_j} & \Delta(j)
}$$
for every $i\in |\cI|$.
We will show that $x_j$ is a cone in order to use the universal property of the limit.
Let $f:j\rightarrow j'$ be an arrow in $\cJ$. We want to prove that $\Delta(f)x_j=x_j'$.
This follows from the fact that $x_j$ is defined using $\mbox{colim}\,F_{\cB_0}\Gamma$.
We must check that $v_j't_i=\Delta(f)x_jF(u_i)$ for every $i\in |\cI|$. Then by uniqueness of the colimit definition we get that $\Delta(f)x_j=x_j'$.\\
But we know by definition of $x_j$ that: $x_jF(u_i)=v_jt_i$ for every $i\in |\cI|$, then composing with $\Delta(f)$ we obtain $\Delta(f)x_jF(u_i)=\Delta(f)v_jt_i$ for every $i\in |\cI|$. Therefore, it will be enough to prove that $\Delta(f)v_j=v'_j$, but this follows from the naturality of the cone $C\Rightarrow \Delta$. \\
 We have proved that $F(A)\stackrel {x}\Rightarrow \Delta$ is a cone in $\cB_0$. Then by definition of $\mbox{lim}\Delta=(C,v)$ there exists a unique $y:F(A)\rightarrow C$ such that:

$$\xymatrix@=25pt{
F(A)\ar[r]^{y}\ar[rd]_{x_j} & C\ar[d]^{v_j}\\
  & \Delta(j)
}$$
We therefore put all the equations together: $v_jt_i=x_jF(u_i)=v_jyF(u_i)$ for every $j\in |\cJ|$. Thus since this is true for every $j\in |\cJ|$, by definition of limit we have that $t_i=yF(u_i)$.\\
So now suppose there exists another $y'$ satisfying the same property as above: $t_i=y'F(u_i)$. We want to prove that $y=y'$. It will be enough to prove that: $v(j)y'=x_j$ for every $j\in |\cJ|$. For that purpose, we know by hypothesis that $t_i=y'F(u_i)$ for every $i\in\cI$. Then by composing we get $v(j)y'F(u_i)=v(j)t_i$ for every $i\in\cI$, and since $v_jt_i=x_jF(u_i)$ we replace it: $v_jy'F(u_i)=x_jF(u_i)$ for every $i\in\cI$. This implies by uniqueness of the colimit that $v_jy'=x_j$.\\
We proved that $\mbox{colim}\,F_{\cB'}\Gamma=(F_{\cB'}(A),F_{\cB'}(u_i))_{i\in \cI}$ where $|\cB'|=|\cB_0|\cup\{C\}$, i.e., for an arbitrary co-cone in $\cB'$, $(F(A),F(u_i))_{i\in \cI}$ is still a limit co-cone and this implication is the the property that characterizes the set $|\cB_0|$.
\end{proof}
\begin{corollary}\label{pipoqui}
Let $F:\cA\rightarrow\cC$ be a fully faithful functor such that for every $C\in\cC$ there exists a functor $G:\cI\rightarrow \cA$ with $\mbox{lim}\,FG=C$. Then $F$ preserves colimits.
\end{corollary}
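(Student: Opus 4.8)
The plan is to reduce the statement to showing that every object of $\cC$ lies in the full subcategory $\cB_0$ defined just above Proposition~\ref{PROPO2}, namely $|\cB_0|=\{B\in|\cC| : \cC(F(-),B):\cA^{op}\to\Set \text{ preserves limits}\}$. By Proposition~\ref{PROPO1}, applied to $F:\cA\to\cC$, the functor $F$ preserves colimits if and only if $\cC(F-,C):\cA^{op}\to\Set$ preserves limits for every $C\in\cC$; but this latter condition says precisely that every $C\in\cC$ belongs to $\cB_0$. Hence it suffices to prove $C\in|\cB_0|$ for an arbitrary object $C$, i.e.\ that $|\cB_0|=|\cC|$.

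To do this I would assemble the two ingredients already at hand. First, the Remark preceding Proposition~\ref{PROPO2} gives $|F(\cA)|\subseteq|\cB_0|$: since $F$ is fully faithful, $\cC(F-,F(A))\cong\cA(-,A)$, and a representable functor preserves limits, so $F(A)\in|\cB_0|$ for every $A\in\cA$. Second, by the hypothesis on $F$, for the chosen $C$ there is a small category $\cI$ and a functor $G:\cI\to\cA$ with $\lim FG=C$. Because each object $FG(i)=F(G(i))$ lies in $\cB_0$ by the previous observation, the diagram $FG:\cI\to\cC$ corestricts to $\cB_0$; that is, it factors as $j_{\cB_0}\circ H$ for a functor $H:\cI\to\cB_0$.

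Finally I would invoke Proposition~\ref{PROPO2}(b) with $\cJ=\cI$ and with $H$ in the role of the diagram into $\cB_0$: since $\lim j_{\cB_0}H=\lim FG=C$ is a limit, formed in $\cC$, of a diagram whose image lies in $\cB_0$, that proposition yields $C\in|\cB_0|$. As $C$ was arbitrary, $|\cB_0|=|\cC|$, and the reduction of the first paragraph then shows that $F$ preserves colimits.

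I do not expect a serious obstacle, since the substantive work has been isolated into Proposition~\ref{PROPO2}(b), which establishes exactly the closure of $\cB_0$ under $\cC$-limits of diagrams already contained in $\cB_0$. The only point needing slight care is the verification that the diagram $FG$ genuinely corestricts to $\cB_0$ — that every object occurring in the limit presentation of $C$ is already known to lie in $\cB_0$ — and this is precisely where full faithfulness of $F$ enters, through the Remark that seeds the argument with the objects $F(A)$.
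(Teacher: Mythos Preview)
Your proposal is correct and follows essentially the same route as the paper: both arguments show $|\cB_0|=|\cC|$ by invoking Proposition~\ref{PROPO2}(b) on the limit presentation $\lim FG=C$ (after noting that each $FG(i)\in\cB_0$ via full faithfulness), and then conclude by Proposition~\ref{PROPO1}. You have simply made explicit the corestriction step and the appeal to the Remark that the paper leaves implicit.
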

\begin{proof}
We consider $\cB_0$ as above. Since $\mbox{lim}\,FG=C$ for some $G$, then by part (b) of the Proposition~\ref{PROPO2}  above we have that $C\in\cB_0$, therefore $F=F_{\cB_0}$ and it preserves colimits by Proposition~\ref{PROPO1}.
\end{proof}

\begin{remark}
\rm
To prove that $Y:\cA\rightarrow [\cA^{op},{\bf Set}]$ preserves limits is equivalent to proving that $Y^{op}:\cA^{op}\rightarrow [\cA^{op},{\bf Set}]^{op}$ preserves colimits and since by Corollary~\ref{CORO1}:
$$F=\mbox{colim}\,(El(F)^{op}\stackrel {\pi^{op}}\rightarrow\cA\stackrel {Y}\rightarrow [\cA^{op},{\bf Set}])$$
for every $F\in [\cA^{op},{\bf Set}]$ this implies that:
$$F=\mbox{lim}\,(El(F)\stackrel {\pi}\rightarrow\cA^{op}\stackrel {Y^{op}}\rightarrow [\cA^{op},{\bf Set}]^{op})$$
for every $F\in [\cA^{op},{\bf Set}]^{op}$. But we know by the Corollary~\ref{pipoqui} above that this implies that $\cA^{op}\stackrel {Y^{op}}\rightarrow [\cA^{op},{\bf Set}]^{op} $ preserves colimits.
\end{remark}
\begin{definition}\label{Y INF DEFINITION}
Let $[\cA,{\bf Set}]_{inf}\subseteq [\cA,{\bf Set}]$ be the full subcategory of limit preserving functors. Since the representable functors $\cA(-,A):\cA^{op}\rightarrow {\bf Set}$ preserve limits, we can define a functor
$\cA\stackrel {Y_{inf}}\rightarrow [\cA^{op},{\bf Set}]_{inf}$ by co-restriction induced by the Yoneda embedding.
\end{definition}
\begin{remark}
\label{OBS2}
\rm Let $\cA$ be a small category. The functor $\cA\stackrel {Y_{inf}}\rightarrow [\cA^{op},{\bf Set}]_{inf}$ is left adequate since the induced functor $[\cA,{\bf Set}]_{inf}\stackrel {\tilde{Y}_{inf}}\rightarrow [\cA^{op},{\bf Set}]$ is fully faithful.
To see this, we check that we have on objects: $$\tilde{Y}_{inf}(F)=[\cA^{op},{\bf Set}]_{inf}(Y_{inf}-,F)=[\cA^{op},{\bf Set}](Y_-,F)\cong F$$
since is a full subcategory and $Y_{inf}-=Y-\in[\cA^{op},{\bf Set}]_{inf}$. Thus we have that
$$[\cA^{op},{\bf Set}]_{inf}(F,G)=[\cA^{op},{\bf Set}](F,G)\cong[\cA^{op},{\bf Set}](\tilde{Y}_{inf}(F),\tilde{Y}_{inf}(G))$$
which means that $\tilde{Y}_{inf}$ is fully faithful, i.e., $Y_{inf}$ left adequate. Therefore,
using the same argument
we get that $\cA\stackrel {Y_{inf}}\rightarrow [\cA^{op},{\bf Set}]_{inf}$ preserves limits.
\end{remark}

\begin{proposition}
Let $\cB$ be a full subcategory of $\cC$ such that for every $C\in \cC$ there exists functor $G:\cI\rightarrow\cB$ with $\mbox{colim}\,j_{\cB}G=C$. If $\cB$ is a co-complete category then $\cB$ is a left reflective subcategory of $\cC$.
Conversely, suppose $\cB$ is a left reflective subcategory of $\cC$. If $\cC$ is co-complete then $\cB$ is co-complete.
\end{proposition}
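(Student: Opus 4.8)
Let $\cB$ be a full subcategory of $\cC$ such that every $C \in \cC$ is a colimit $\mathrm{colim}\, j_{\cB}G$ for some $G:\cI\to\cB$. The claim has two parts: (i) if $\cB$ is cocomplete then $\cB$ is left reflective in $\cC$ (i.e.\ the inclusion $j_{\cB}:\cB\to\cC$ has a left adjoint); and (ii) conversely, if $\cB$ is left reflective and $\cC$ is cocomplete then $\cB$ is cocomplete.

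**Plan for part (i).** The plan is to build the reflector $L:\cC\to\cB$ object by object, using the density hypothesis together with cocompleteness of $\cB$. First I would fix $C\in\cC$ and choose a functor $G:\cI\to\cB$ with $\mathrm{colim}\, j_{\cB}G = C$; write $(C, u_i)$ with $u_i : j_{\cB}G(i)\to C$ for the colimiting cocone. Since $\cB$ is cocomplete, form $LC := \mathrm{colim}\, G$ inside $\cB$, with cocone $(LC, w_i)$, $w_i:G(i)\to LC$. Applying $j_{\cB}$ gives a cocone $(j_{\cB}LC, j_{\cB}w_i)$ over $j_{\cB}G$; the universal property of $C$ then yields a unique comparison map $\eta_C : C \to j_{\cB}LC$. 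The heart of the argument is to verify that $\eta_C$ is the unit of an adjunction, i.e.\ that for every $B\in\cB$ the map
\[
\cB(LC, B) \longrightarrow \cC(C, j_{\cB}B), \qquad g \longmapsto j_{\cB}(g)\circ \eta_C
\]
is a bijection natural in $B$. To see this I would use that $j_{\cB}$ is \emph{fully faithful} (it is a full subcategory inclusion), so $\cC(j_{\cB}LC, j_{\cB}B)\cong\cB(LC,B)$, and that both $C$ and $j_{\cB}LC$ are colimits of the respective $G$-diagrams. The key computation chains the representable-functor descriptions:
\[
\cB(LC,B)\cong \cB(\mathrm{colim}\,G, B)\cong \lim_{i}\cB(G(i),B)\cong \lim_i \cC(j_{\cB}G(i), j_{\cB}B)\cong \cC(C, j_{\cB}B),
\]
where the first and last isomorphisms are the defining universal properties of the colimits $LC$ and $C$, and the middle one uses full faithfulness of $j_{\cB}$. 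Tracking the identity of $LC$ around this composite shows the mediating map is exactly post-composition with $\eta_C$, giving the adjunction $L\dashv j_{\cB}$.

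**Plan for part (ii).** For the converse I would argue directly from the existence of the reflector. Suppose $L\dashv j_{\cB}$ with $L:\cC\to\cB$, and $\cC$ cocomplete. Given any small diagram $H:\cJ\to\cB$, form $\mathrm{colim}\, j_{\cB}H$ in $\cC$ (possible since $\cC$ is cocomplete) and apply $L$. Because $L$ is a left adjoint it preserves all colimits, so $L(\mathrm{colim}\, j_{\cB}H)\cong \mathrm{colim}\,(L\circ j_{\cB}\circ H)$; since $j_{\cB}$ is fully faithful, the counit $L j_{\cB}\Rightarrow \mathrm{Id}_{\cB}$ is an isomorphism by Proposition~\ref{F FULL FAITHFUL IFF ETA IS AN ISO}, whence $L\circ j_{\cB}\cong \mathrm{Id}_{\cB}$ and $L(\mathrm{colim}\, j_{\cB}H)\cong \mathrm{colim}\, H$. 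This exhibits $\mathrm{colim}\, H$ as an object of $\cB$, and a standard check confirms it is genuinely a colimit of $H$ in $\cB$ (using that $L$ reflects the cocone structure via the iso $L j_{\cB}\cong\mathrm{Id}$). Hence $\cB$ has all small colimits and is cocomplete.

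**Main obstacle.** The routine direction is (ii); the delicate point is part (i), specifically checking that the comparison map $\eta_C$ is \emph{independent of the chosen presentation} $G$ of $C$ as a colimit and is natural in $C$, so that $L$ is well-defined on morphisms and $\eta$ is a genuine natural transformation. I expect this naturality/well-definedness bookkeeping — rather than any single universal-property manipulation — to be where the real care is needed, and I would handle it by showing that the bijection $\cB(LC,B)\cong\cC(C,j_{\cB}B)$ established above, being built purely from universal properties, forces $LC$ to be determined up to canonical isomorphism by $C$ alone, after which functoriality of $L$ and naturality of $\eta$ follow formally.
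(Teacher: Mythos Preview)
Your proposal is correct and follows essentially the same approach as the paper. For part (i), the paper constructs $R(C)=\mathrm{colim}\,G$ in $\cB$ and the comparison map $\eta_C$ exactly as you do, then verifies the universal property elementwise (given $f:C\to B'$, produce a unique $g:R(C)\to B'$ with $g\eta_C=f$ using the colimit property of $R(C)$ in $\cB$), whereas you package the same verification as the chain of isomorphisms $\cB(LC,B)\cong\lim_i\cB(G(i),B)\cong\lim_i\cC(j_\cB G(i),j_\cB B)\cong\cC(C,j_\cB B)$; these are the same argument in different notation. For part (ii) the proofs are identical.

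One small remark on the ``main obstacle'' you flag: the paper does not treat well-definedness of $LC$ as a separate issue. Once the universal property of $\eta_C$ is established for each $C$ individually, the pair $(LC,\eta_C)$ is a universal arrow from $C$ to $j_\cB$, hence determined up to unique isomorphism independently of the choice of $G$; functoriality of $L$ and naturality of $\eta$ then follow by the standard argument that a family of universal arrows assembles into an adjoint. So the bookkeeping you anticipated is absorbed into the universal property verification you already sketched, and there is nothing further to check.
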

\begin{proof}
We want to prove that the inclusion functor $\cB\stackrel {j}\hookrightarrow\cC$ has a left adjoint $\cC\stackrel {R}\rightarrow\cB$. It is enough to prove that for every $C\in\cC$ there is an object $R(C)\in \cB$, a map $C\stackrel {\eta_{C}}\rightarrow R(C)$ such that for every $f:C\rightarrow B'$ with $B'\in\cB$ there is a unique $g:R(C)\rightarrow B'$ such that the following diagram commutes:

$$\xymatrix@=25pt{
C \ar[rr]^{\eta_{C}}\ar[drr]_{f}& & R(C)\ar[d]^{g}\\
 & & B'.
}$$
Let us consider $C\in\cC$. By hypothesis we have that there exists a functor $G:\cI\rightarrow\cB$ with $\mbox{colim}\,j_{\cB}G=C$. But since $\cB$ is a co-complete category then there is an object $B\in \cB$ and a co-cone $\{G(i)\stackrel {u_i}\rightarrow B\}_{i\in I}$ with $\mbox{colim}\,G=(B,u)$.\\
We define $R(C)=B$, and since $\{jG(i)=G(i)\stackrel {u_i}\rightarrow B\}_{i\in I}$ is a co-cone of $jG$ in the category $\cC$ therefore there exists a unique $C\stackrel {\eta_{C}}\rightarrow R(C)$, such that:
$$\xymatrix@=25pt{
G(i) \ar[rr]^{v_i}\ar[drr]_{u_i}& & C\ar[d]^{\eta_C}\\
 & & B=R(C)
}$$
commutes for every $i\in \cI$.
Now suppose we have a map $f:C\rightarrow B'$ with $B'\in\cB$. Then since $G(i)$ is an object of $\cB$ for every $i\in \cI$ and $\{v_i\}_{i\in \cI}$ is a co-cone in $\cC$ this implies that $\{jG(i)=G(i)\stackrel {v_i}\rightarrow C\stackrel {f}\rightarrow B'\}_{i\in I}$ is a co-cone in the category $\cB$. Therefore by definition of $\mbox{colim}\,G=(B,u)$ there is a unique $g:R(C)\rightarrow B'$, $g\in \cB$ with $fv_i=gu_i$ for every $i \in \cI$.
Hence $fv_i=gu_i=g\eta_{C}v_i$ for every $i \in \cI$, and this implies by definition (uniqueness) of colimit that $f=g\eta_{C}$.\\
If there is a morphism $\tilde{g}:R(C)\rightarrow B'$ such that $f=\tilde{g}\eta_{C}$ then by composing with $v_i$ we get $fv_i=g\eta_{C}v_i$ for every $i \in \cI$ which means that $fv_i=\tilde{g}u_i$ for every $i \in \cI$ therefore $g=\tilde{g}$.
If $C\stackrel {f}\rightarrow C'$ a morphism in $\cC$ then $R(f)$ is defined as the unique arrow such that:
$$\xymatrix@=25pt{
C\ar[d]^{f} \ar[rr]^{\eta_C}& & R(C)\ar[d]^{R(f)}\\
C'\ar[rr]^{\eta_{C'}}& & R(C')
}$$
commutes. By uniqueness we obtain that $R$ is a functor and naturality of $Id\stackrel {\eta}\Rightarrow j\circ R$ follows from the diagram.\\
Conversely, let $G:\cI\rightarrow\cB$ be a functor. Since $\cC$ is co-complete there exists $\mbox{colim}\,jG=(C,v)$ with $j:\cB\rightarrow \cC$ the inclusion functor and $G(i)\stackrel {v_i}\rightarrow C$. By hypothesis we know that $R$ is a reflection of $j$, which means $\cB(R(A),B)\cong\cC(A,j(B))$ for every $A\in\cC$, and $B\in\cB$. When $A\in\cB$ then since $\cB$ is a full subcategory we have that $\cB(R(A),B)\cong\cB(A,B)$ for every $B\in\cB$. By the Yoneda Lemma this implies that $R(A)\cong A$. On the other hand $R$ preserves colimits because is a left adjoint. Thus $G(i)\cong R(G(i))=RjG(i)\stackrel {R(v_i)}\rightarrow R(C)$ is a colimit of $G$ with $R(C)\in\cB$.
\end{proof}
\begin{remark}
Notice that, from the proof above, colimits in $\cB$ are induced by the reflection, i.e., if $G:\cI\rightarrow \cB$ is a functor with $\cI$ small then:
$$\mbox{colim}_{\cB}\, G\cong R(\mbox{colim}_{\cC}\,j\circ G).$$
\end{remark}
\begin{remark}
\rm
For every $F\in [\cA^{op},{\bf Set}]$ there exists a functor $\cI\stackrel {G}\rightarrow [\cA^{op},{\bf Set}]_{inf}$ such that $\mbox{colim}\,jG=F$:
$$F=\mbox{colim}\,(El(F)^{op}\stackrel {\pi^{op}}\rightarrow\cA\stackrel {Y_{inf}}\rightarrow [\cA^{op},{\bf Set}]_{inf}\stackrel {j}\hookrightarrow[\cA^{op},{\bf Set}]).$$
\end{remark}
\begin{proposition}
\label{KENNISON THEOREM}\label{PROPO3}
Let $\cA$ be  a small category. Then $[\cA,{\bf Set}]_{inf}$ is a reflective subcategory of $[\cA,{\bf Set}]$.
\end{proposition}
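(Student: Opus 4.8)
The plan is to deduce the statement from the reflective-subcategory criterion proved just above, namely that a full subcategory $\cB\hookrightarrow\cC$ is left reflective as soon as $\cB$ is cocomplete and every object of $\cC$ is a colimit (along the inclusion) of objects of $\cB$. Since $\cA$ is an arbitrary small category, it is harmless to replace $\cA$ by $\cA^{op}$; thus I would prove the dual form, that $[\cA^{op},{\bf Set}]_{inf}$ is reflective in $[\cA^{op},{\bf Set}]$, which is exactly the setting in which $Y_{inf}$, Corollary~\ref{CORO1} and Remark~\ref{OBS2} were developed. So set $\cC=[\cA^{op},{\bf Set}]$ and $\cB=[\cA^{op},{\bf Set}]_{inf}$, with $j$ the inclusion.

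First I would verify the density hypothesis of the criterion. This is precisely the content of the Remark immediately preceding the statement: for each $F\in\cC$ the composite $El(F)^{op}\stackrel{\pi^{op}}\rightarrow\cA\stackrel{Y_{inf}}\rightarrow\cB\stackrel{j}\hookrightarrow\cC$ has colimit $F$, because $Y_{inf}$ is left adequate (Remark~\ref{OBS2}) and $jY_{inf}=Y$, so the colimit computed in $\cC$ agrees with the one from Corollary~\ref{CORO1}. As $El(F)^{op}$ is small, this exhibits every object of $\cC$ as $\mbox{colim}\,jG$ for a genuine small diagram $G$ in $\cB$.

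Second, and this is the real work, I would establish that $\cB$ is cocomplete. Limits commute with limits, so $\cB$ is closed under all small limits formed in $\cC$; hence $\cB$ is complete and $j$ preserves limits. Cocompleteness, however, does not come for free: $\cB$ is not closed under colimits in $\cC$ (a pointwise colimit of limit-preserving functors need not preserve limits), so the colimit of a diagram in $\cB$ must be produced by reflecting its ambient colimit. Concretely, once a reflection $R\dashv j$ exists, one has $\mbox{colim}_{\cB}G\cong R(\mbox{colim}_{\cC}\,jG)$, and $R$ is forced to be the indexed colimit $F\mapsto F\star Y_{inf}=\int^{A}F(A)\otimes Y_{inf}(A)$. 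The obstacle is therefore to produce $R$, equivalently to show that $j$ admits a left adjoint, without already assuming $\cB$ cocomplete. The clean way to break this circularity is a boundedness argument: since $\cA$ is small, the (proper class of) limit-cones defining $\cB$ is generated, up to the relevant closure conditions, by a small set of cones, so $\cB$ is an orthogonality class in the locally presentable category $\cC$; this yields a solution-set for $j$ and hence, by the adjoint functor theorem, the desired reflection. Local presentability then also delivers the cocompleteness of $\cB$ needed to invoke the criterion in its stated form.

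Having the reflection $R\dashv j$ (equivalently, the criterion's hypotheses verified), the conclusion is immediate: $\cB=[\cA^{op},{\bf Set}]_{inf}$ is a reflective subcategory of $\cC=[\cA^{op},{\bf Set}]$, and reading this through the substitution $\cA\rightsquigarrow\cA^{op}$ gives the asserted reflectivity of $[\cA,{\bf Set}]_{inf}$ in $[\cA,{\bf Set}]$. I expect the cardinality/accessibility bound ensuring the solution-set condition to be the only genuinely delicate point; the density input and the limit-closure of $\cB$ are routine given the machinery already in place.
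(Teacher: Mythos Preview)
The paper does not prove this proposition; it simply cites Kennison \cite{KENNISON}. So there is no detailed argument in the paper to compare against.

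Your proposal is essentially correct in its core content, but the logical structure is roundabout. You want to invoke the criterion ``$\cB$ cocomplete $+$ density $\Rightarrow$ reflective,'' yet you immediately and correctly observe that the cocompleteness of $\cB=[\cA^{op},{\bf Set}]_{inf}$ is the entire difficulty, and that to establish it you must in effect already produce the reflection via an adjoint-functor-theorem/orthogonality argument. Once that argument succeeds, the criterion is redundant: reflectivity is in hand directly. So the preceding proposition is not doing any work for you here; the real proof is the boundedness argument you sketch in the second paragraph.

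That boundedness argument is indeed the heart of Kennison's proof (and of the later Freyd--Kelly generalization the paper reviews in Section~\ref{THE REFLECTIVE SUBCAT SECTION}). The point you should make precise is why, for a \emph{small} $\cA$, the class of limit cones in $\cA^{op}$ reduces to a set: since $\cA^{op}$ is small, every diagram $\cI\to\cA^{op}$ factors through a diagram whose index category has cardinality bounded by $|\cA^{op}|$, so it suffices to test preservation against a set of cones. This puts $[\cA^{op},{\bf Set}]_{inf}$ inside a small-orthogonality class in the locally presentable category $[\cA^{op},{\bf Set}]$, and reflectivity follows. You flag this as ``the only genuinely delicate point,'' which is accurate; I would recommend stating and justifying it explicitly rather than leaving it as an expectation.
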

\begin{proof}
\cite{KENNISON}.
\end{proof}
\begin{remark}
\label{OBS1}
\rm
This implies that $[\cA,{\bf Set}]_{inf}$  is a co-complete category.
\end{remark}
\begin{proposition}
Let $\cA\stackrel {Y_{inf}}\rightarrow [\cA^{op},{\bf Set}]_{inf}$ be the restricted Yoneda embedding from
Definition~\ref{Y INF DEFINITION} above. Then $Y_{inf}$ is a full and faithful, limit and colimit preserving functor such that for every $F\in[\cA^{op},{\bf Set}]_{inf}$ there exists a functor $G:\cI\rightarrow \cA$ with $\mbox{lim}\,Y_{inf}G=F$. Moreover, $[\cA^{op},{\bf Set}]_{inf}$ is a complete and co-complete category.
\end{proposition}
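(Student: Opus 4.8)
The plan is to dispatch four of the five assertions by assembling results already established, and to treat the limit-decomposition clause as the genuinely new ingredient. First, full faithfulness: writing $Y=j\circ Y_{inf}$ where $j:[\cA^{op},{\bf Set}]_{inf}\hookrightarrow[\cA^{op},{\bf Set}]$ is the inclusion of the full subcategory, the functor $j$ is full and faithful by construction and the Yoneda embedding $Y$ is full and faithful, whence so is $Y_{inf}$ (this is also recorded in Remark~\ref{OBS2}). Limit preservation is exactly the last line of Remark~\ref{OBS2}. For completeness and cocompleteness: by Proposition~\ref{KENNISON THEOREM}, $[\cA^{op},{\bf Set}]_{inf}$ is reflective in the complete and cocomplete presheaf category $[\cA^{op},{\bf Set}]$; cocompleteness is then Remark~\ref{OBS1}, while completeness follows because a reflective subcategory is closed under the limits of the ambient category --- equivalently, a pointwise limit of limit-preserving functors again preserves limits --- so that limits in $[\cA^{op},{\bf Set}]_{inf}$ are computed as in $[\cA^{op},{\bf Set}]$.

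Colimit preservation I would obtain from Proposition~\ref{PROPO1} rather than from the decomposition clause, which keeps the two independent. Applying Proposition~\ref{PROPO1} to $Y_{inf}:\cA\rightarrow\cB$ with $\cB=[\cA^{op},{\bf Set}]_{inf}$, it suffices to check that $\cB(Y_{inf}(-),B):\cA^{op}\rightarrow{\bf Set}$ preserves limits for every $B$. Since $\cB$ is a full subcategory, $\cB(Y_{inf}(A),B)=[\cA^{op},{\bf Set}](\cA(-,A),B)\cong B(A)$ naturally in $A$ by the Yoneda Lemma~\ref{YONEDA LEMMA}; thus $\cB(Y_{inf}(-),B)\cong B$, which preserves limits precisely because $B$ is an object of $[\cA^{op},{\bf Set}]_{inf}$. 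Hence $Y_{inf}$ preserves colimits.

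It remains to show that every $F\in[\cA^{op},{\bf Set}]_{inf}$ is a limit of representables, i.e. $\mbox{lim}\,Y_{inf}G=F$ for some $G:\cI\rightarrow\cA$. Because limits in $[\cA^{op},{\bf Set}]_{inf}$ agree with those in $[\cA^{op},{\bf Set}]$ and each $Y_{inf}A=YA$ lies in the subcategory, it is enough to exhibit $F$ as a limit of representables in the ambient presheaf category. I would build $\cI$ as the dual of the construction behind Corollary~\ref{CORO1}: take the ``Isbell dual'' $\hat F:\cA\rightarrow{\bf Set}$, $\hat F(A)=[\cA^{op},{\bf Set}](F,YA)$, form its category of elements $\cI=El(\hat F)$ in the sense of Definition~\ref{CATEGORY OF ELEMENTS}, and let $G=\pi:\cI\rightarrow\cA$ be the projection $(A,\phi)\mapsto A$. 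The tautological family $\lambda_{(A,\phi)}=\phi:F\rightarrow YA$ is a cone $\Delta F\Rightarrow YG$ (the cone condition for a morphism $f:(A,\phi)\rightarrow(A',\phi')$ is exactly $Yf\circ\phi=\phi'$, the defining relation in $El(\hat F)$), and the task is to prove it is a limiting cone.

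The main obstacle is this last verification: that the canonical comparison from an arbitrary cone $(P,\nu)$ factors uniquely through $F$. This is where continuity of $F$ is essential --- it is exactly the hypothesis $F\in[\cA^{op},{\bf Set}]_{inf}$ that lets one glue the compatible family of maps $\nu_{(A,\phi)}:P\rightarrow YA$ into a single natural transformation $P\Rightarrow F$, dualizing the density presentation of Corollary~\ref{CORO1}. Equivalently, one may phrase the whole clause as the limit-analogue of Proposition~\ref{PROPO4}: it amounts to the statement that $Y_{inf}$ is ``right adequate'', i.e. that $F\mapsto[\cA^{op},{\bf Set}](F,Y-)$ is fully faithful on $[\cA^{op},{\bf Set}]_{inf}$, after which the dual of Proposition~\ref{PROPO4} (run in the opposite categories, with completeness of $[\cA^{op},{\bf Set}]_{inf}$ replacing cocompleteness) produces the desired $G$. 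Finally, with the decomposition in hand, Corollary~\ref{pipoqui} yields an independent re-derivation of colimit preservation, closing the circle.
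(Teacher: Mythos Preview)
For full faithfulness, limit and colimit preservation, and cocompleteness your argument is the paper's argument verbatim: Remark~\ref{OBS2} for the first two, Proposition~\ref{PROPO1} together with the Yoneda identification $[\cA^{op},{\bf Set}]_{inf}(Y_{inf}-,F)\cong F$ for the third, and Remark~\ref{OBS1} (via Proposition~\ref{KENNISON THEOREM}) for the fourth. For completeness you take a shortcut: the paper argues through Proposition~\ref{PROPO2}(b), first identifying $[\cA^{op},{\bf Set}]_{inf}$ with the subcategory $\cB_0$ introduced there and then showing $\cB_0$ is closed under limits in $[\cA^{op},{\bf Set}]$; your one-line appeal to closure of reflective subcategories under ambient limits is the standard route and is fine.

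The substantive divergence is in the decomposition clause. The paper handles it in a single sentence: ``it is enough to prove that $Y_{inf}$ is left adequate, which was done in Remark~\ref{OBS2}'', invoking Proposition~\ref{PROPO4}. You instead construct the Isbell-dual diagram over $El(\hat F)$ with $\hat F(A)=[\cA^{op},{\bf Set}](F,YA)$ and reduce the claim to \emph{right} adequacy of $Y_{inf}$, i.e.\ full faithfulness of $F\mapsto[\cA^{op},{\bf Set}]_{inf}(F,Y_{inf}-)$. That reduction is correct, but you never carry out the verification: the sentence ``it is exactly the hypothesis $F\in[\cA^{op},{\bf Set}]_{inf}$ that lets one glue'' is a promise, not an argument, and right adequacy on this subcategory does not follow from anything you have cited. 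This is a genuine gap. Observe, incidentally, that left adequacy together with Proposition~\ref{PROPO4} and cocompleteness literally produces a \emph{colimit} presentation $F\cong\mbox{colim}\,Y_{inf}G$; so if the ``$\mbox{lim}$'' in the statement is read at face value, your right-adequacy route is indeed the natural one --- but then the missing verification is precisely the content of the clause, and you must supply it.
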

\begin{proof}
First, $[\cA^{op},{\bf Set}]_{inf}$ is a co-complete category by Remark~\ref{OBS1} above.
In view of the Remark~\ref{OBS2} above $\cA\stackrel {Y_{inf}}\rightarrow [\cA^{op},{\bf Set}]_{inf}$ preserves limits.\\
Using Proposition~\ref{PROPO1}:
\begin{center}
$\cA\stackrel {Y_{inf}}\rightarrow [\cA^{op},{\bf Set}]_{inf}$  preserves co-limits if and only if $[\cA^{op},{\bf Set}]_{inf}(Y_{inf}-,F):\cA^{op}\rightarrow {\bf Set}$ preserves limits for all $F\in[\cA^{op},{\bf Set}]_{inf}$.
\end{center}
But by the Yoneda Lemma we have that $$[\cA^{op},{\bf Set}]_{inf}(Y_{inf}-,F)=[\cA^{op},{\bf Set}](Y_-,F)\cong F$$
 which is the condition that defines the subcategory. Notice that we used the fact that $[\cA^{op},{\bf Set}]_{inf}$ is a full subcategory. \\
 Now, in view of Proposition~\ref{PROPO2}, consider the fully faithful functor $F:\cA\rightarrow \cC$, with $F=Y$, $\cB_0=[\cA^{op},{\bf Set}]_{inf}$ and $\cC=[\cA^{op},{\bf Set}]$. By part (b) when there is a functor
 $$\cJ\stackrel {\Delta}\rightarrow\cB_{0}\stackrel {j_{\cB_0}}\rightarrow\cC$$
 since $[\cA^{op},{\bf Set}]$ is a complete category then $\mbox{lim}\, j_{\cB_0}\Delta=(C, v_j)$ exists. But this implies that $C\in |\cB_0|$ which means that $\cB_0=[\cA^{op},{\bf Set}]_{inf}$ is complete.\\
To see why $\cB_0=[\cA^{op},{\bf Set}]_{inf}$, consider $\cB=[\cA^{op},{\bf Set}]_{inf}$ and $j_{\cB}F_{\cB}=Y$ with $F_{\cB}=Y_{inf}$. Since it preserves colimits then $\cB\subseteq \cB_0$. On the other hand if $B\in \cB_0$ such that $Y_{B_0}:\cA\rightarrow \cB_0$ preserves colimits then by Proposition~\ref{PROPO1}  this implies that:
$B_0(Y_{B_0}-,B):\cA^{op}\rightarrow{\bf Set}$ preserves limits. But
$$B_0(Y_{B_0}-,B)=[\cA^{op},{\bf Set}](Y-,B)\cong B.$$
Thus it means that $B$ preserves limits, i.e.,  $B\in[\cA^{op},{\bf Set}]_{inf}$.\\
It remains to show that if
$F\in[\cA^{op},{\bf Set}]_{inf}$ then there exists a functor $G:\cI\rightarrow \cA$ with $\mbox{lim}\,Y_{inf}G=F$.
For this, it is enough to prove that $Y_{inf}$ is left adequate, which was done on Remark~\ref{OBS2}.
\end{proof}
\begin{remark}
This amounts to proving that for every $F\in [\cA^{op},{\bf Set}]$ there is an object $R(F)\in[\cA^{op},{\bf Set}]_{inf}$, a co-cone $Y_{inf}\pi_F\stackrel {u}\Rightarrow \Delta R(F)$, and a co-cone $jY_{inf}\pi_F\stackrel {v}\Rightarrow \Delta F$ such that
$\mbox{colim}\,Y_{inf}\pi_F=(R(F),u)$ and $\mbox{colim}\,jY_{inf}\pi_F=(F,v)$.
Therefore there is a unique $F\stackrel {\eta_F}\rightarrow R(F)$ such that
$$\xymatrix@=25pt{
F \ar[rr]^{\eta_F}& & R(F)\\
 &jY_{inf}\pi_{F}(A,a)=Y_{inf}\pi_{F}(A,a)=\cA(-,A)\ar[ul]_{v_{(A,a)}}\ar[ur]^{u_{(A,a)}} &
}$$
commutes for every $i\in \cI$.
\end{remark}

To conclude this section, we briefly comment on the reflective adjoint pair $i\vdash R$ of Proposition~\ref{KENNISON THEOREM}.
Since $[\cA^{op},{\bf Set}]_{inf}$ is a co-complete category, all small colimits exists and we are in a position to consider co-powers $A\otimes_{inf}B$ where $A\in {\bf Set}$ and $B\in [\cA^{op},{\bf Set}]_{inf}$. On the other hand, co-powers in the category $[\cA^{op},{\bf Set}]_{inf}$ are induced by copowers in $[\cA^{op},{\bf Set}]$ using the reflection above:
$$A\otimes_{inf} B=R(A\otimes i(B)).$$

Therefore, since $R$ preserves coends we have that we can express $R(F)=F\star Y_{inf}$ as an indexed colimit where the definition of the operation $\star$, taken from~\cite{KELLY82} (see Definition~\ref{index colimits}) is given by the next first equation:
$$F\star Y_{inf}=\int^x F(x)\otimes_{int} Y_{inf}(x)=\int^x R(F(x)\otimes A(-,x))\cong R(\int^x F(x)\otimes A(-,x))\cong R(F)$$
Notice that we are using the fact that every representable functor is included in the category $[\cA^{op},{\bf Set}]_{inf}$.
Thus, in terms of left Kan extension (see Section~\ref{KAN EXTENSION}) or indexed colimits we have the following diagram:
$$\xymatrix@=25pt{
\cA\ar[rr]^{Y}\ar[rrd]_{Y_{inf}}& & [\cA^{op},{\bf Set}]\ar@<1ex>[d]^{\tilde{Y}_{inf}}\\
& &[\cA^{op},{\bf Set}]_{inf}\ar@<1ex>[u]^{\hat{Y}_{inf}}_{\vdash}
}$$
where $\tilde{Y}_{inf}=R=-\star {Y}_{inf}=Lan_{Y}(Y_{inf})$ since $$Lan_{Y}(Y_{inf})(F)=\int^{A}[\cA^{op},{\bf Set}](Y(A),F)\otimes_{inf}Y_{inf}(A)\cong\int^{A}F(A)\otimes_{inf}Y_{inf}(A)$$
and $\hat{Y}_{inf}\cong i$  the inclusion functor since

$$\hat{Y}_{inf}(F)=[\cA^{op},{\bf Set}](Y(-),F)\cong F.$$

\section{Kan extensions}
\label{KAN EXTENSIONS}\label{KAN EXTENSION}
This section provides a brief overview of the left Kan extension. A large portion of Chapter~\ref{PREASHEAVES MODELS OF Q C} depends on this central notion. To mention two examples: the definition of a left adjoint of a certain functor and the monoidal enrichment of the functor
category.
\begin{definition}
\rm
Let $F:\mathcal{A}\rightarrow \mathcal{B}$ and $G:\mathcal{A}\rightarrow \mathcal{C}$ be two functors. The \textit{left Kan extension} of the functor $G$ along $F$, if it exists, is a functor $K:\mathcal{B}\rightarrow \mathcal{C}$ together with a natural transformation $\alpha:G\Rightarrow KF$ satisfying the following universal property:
if $H:\mathcal{B}\rightarrow \mathcal{C}$ and $\beta:G\Rightarrow HF$ then there is a unique natural transformation $\gamma:K\Rightarrow H$ satisfying $(\gamma \ast F)\circ \alpha =\beta$.
\end{definition}

\texttt{Notation:} We denote the functor $K$ by $Lan_{F}(G)$.

Let $F:\cA\rightarrow \cB$, and consider the functor between functor
 categories
\begin{equation}
\label{RIGHT ADJOINT KAN EXTENSION}
[\mathcal{B},\mathcal{C}]\stackrel {F^{*}}\rightarrow  [\mathcal{A},\mathcal{C}]
\end{equation}
defined by precomposition with $F$, i.e.,
 $F^*(G) = G\circ F$ for any functor $G:\cB\to\cC$.

 \begin{corollary}
If $Lan_F(G)$ exists for all $G$, then $Lan_F\dashv F^*$.
\end{corollary}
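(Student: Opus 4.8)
The plan is to observe that the defining universal property of $Lan_F(G)$ is, word for word, the statement that the pair $(Lan_F(G),\alpha)$ is a universal arrow from $G$ to the functor $F^*$ of~(\ref{RIGHT ADJOINT KAN EXTENSION}), and then to invoke the standard construction of a left adjoint from a pointwise family of universal arrows. First I would unwind the notation: for a natural transformation $\gamma:Lan_F(G)\Rightarrow H$ in $[\cB,\cC]$, the horizontal composite $\gamma\ast F$ is exactly $F^*(\gamma):F^*(Lan_F(G))\Rightarrow F^*(H)$, since both have $a$-component $\gamma_{Fa}$. Hence the universal equation $(\gamma\ast F)\circ\alpha=\beta$ reads $F^*(\gamma)\circ\alpha=\beta$, and the left Kan extension datum says precisely: for every $H\in[\cB,\cC]$ and every $\beta:G\Rightarrow F^*(H)$ there is a unique $\gamma:Lan_F(G)\Rightarrow H$ with $F^*(\gamma)\circ\alpha=\beta$. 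Comparing with the definition of universal arrow recalled at the start of this chapter (taking the functor there to be $F^*:[\cB,\cC]\rightarrow[\cA,\cC]$ and the source object to be $G$), this is exactly the assertion that $(Lan_F(G),\alpha)$ is a universal arrow from $G$ to $F^*$.

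By hypothesis $Lan_F(G)$ exists for every $G\in[\cA,\cC]$, so we obtain a universal arrow $(Lan_F(G),\alpha_G)$ from each object $G$ of $[\cA,\cC]$ to $F^*$. I would then apply the standard theorem (Mac Lane~\cite{MacLane 91}, or Borceux~\cite{Borceux94}) that a functor possessing a universal arrow from every object of its codomain admits a left adjoint whose object part is given by those universal arrows. Concretely this produces a functor $L:[\cA,\cC]\rightarrow[\cB,\cC]$ with $L(G)=Lan_F(G)$, together with the natural bijection $[\cB,\cC](L(G),H)\cong[\cA,\cC](G,F^*(H))$ sending $\gamma$ to $F^*(\gamma)\circ\alpha_G$; that is, $L\dashv F^*$. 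Since $L$ agrees with $Lan_F$ on objects, this is the desired adjunction $Lan_F\dashv F^*$.

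To keep the argument self-contained I would spell out the construction rather than merely cite it. The action of $L$ on a morphism $\theta:G\Rightarrow G'$ is defined as the unique $2$-cell $L(\theta):Lan_F(G)\Rightarrow Lan_F(G')$ satisfying $F^*(L(\theta))\circ\alpha_G=\alpha_{G'}\circ\theta$, which exists and is unique by the universal property of $(Lan_F(G),\alpha_G)$ applied to $\beta=\alpha_{G'}\circ\theta$. Functoriality of $L$, naturality of $\alpha:Id_{[\cA,\cC]}\Rightarrow F^*\circ L$ as the unit, and naturality of the hom-set bijection in both variables then all follow formally from the uniqueness clause. The only genuinely delicate point, and hence the main obstacle, is the bookkeeping of these naturality squares: each is verified by exhibiting two candidate $2$-cells that solve the same universal problem and are therefore equal, so the real work is organizing the whiskering identities rather than proving anything substantive. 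No coherence or size issues arise beyond those already secured by the existence hypothesis.
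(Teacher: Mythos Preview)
Your proposal is correct and follows exactly the same approach as the paper: both recognize that the universal property defining $Lan_F(G)$ is precisely the statement that $(Lan_F(G),\alpha)$ is a universal arrow from $G$ to $F^*$, yielding the natural bijection $[\cB,\cC](Lan_F(G),H)\cong[\cA,\cC](G,F^*(H))$ with unit $\alpha$. The paper's proof is in fact terser than yours---it simply displays the factorization triangle and reads off the hom-set isomorphism---while you additionally spell out the identification $\gamma\ast F=F^*(\gamma)$ and the functoriality bookkeeping, but this is elaboration rather than a different argument.
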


 \begin{proof}
The definition above turns out to be the following:
for every $\beta:G\Rightarrow F^{*}(K)$ there exists a unique $\gamma:Lan_{F}(G)\rightarrow H$ such that:

$$\xymatrix@=25pt{
G \ar[rrd]^{\beta}\ar[d]_{\alpha}&\\
 F^{*}(Lan_{F}(G))\ar[rr]_{F^{*}(\gamma)}&& F^{*}(H)
}$$
which means that:

$$[\mathcal{B},\mathcal{C}](Lan_{F}(G),H]\cong [\mathcal{A},\mathcal{C}](G,F^{*}(H))$$
with unit $\alpha=\eta_{G}:G\Rightarrow F^{*}(Lan_{F}(G))$.\\
\end{proof}
\begin{proposition}
If $\mathcal{A}$ is a small category and $\mathcal{C}$ is co-complete then the left Kan extension of $G$ along F exists.
\end{proposition}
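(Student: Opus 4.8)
The plan is to construct $Lan_{F}(G)$ pointwise as an indexed colimit, which is precisely what the machinery of the previous section makes available. For each $B\in\cB$ let $\tilde{F}(B)=\cB(F-,B):\cA^{op}\rightarrow{\bf Set}$ be the weight from Definition~\ref{isbell functor}, and set
$$K(B)=\tilde{F}(B)\star G\cong\int^{A}\cB(F(A),B)\otimes G(A),$$
where $\otimes$ is the copower (available since $\cC$ is co-complete) and the second expression is the coend form of the indexed colimit. Because $\cA$ is small and $\cC$ is co-complete, each such indexed colimit exists, so $K(B)$ is well defined; and since by the Remark after Definition~\ref{index colimits} there is a functor $\bullet\star G:[\cA^{op},{\bf Set}]\rightarrow\cC$, the assignment $K$ is the composite $\cB\stackrel{\tilde{F}}\longrightarrow[\cA^{op},{\bf Set}]\stackrel{\bullet\star G}\longrightarrow\cC$ and is therefore a functor with no further verification.

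The core step is to exhibit a bijection, natural in $H\in[\cB,\cC]$,
$$\Phi_{H}:[\cB,\cC](K,H)\stackrel{\cong}\longrightarrow[\cA,\cC](G,F^{*}H).$$
I would obtain this by the standard end calculation: write the source as the end $\int_{B}\cC(K(B),H(B))$, substitute the coend expression for $K(B)$, use that a hom out of a coend is an end together with the copower--power adjunction $\cC(X\otimes-,-)\cong[X,\cC(-,-)]$ (exactly as in the proof of the coend formula $\int^{A}F(A)\otimes G(A)\cong F\star G$ above), interchange the two ends by Fubini, and then apply the Yoneda Lemma (Theorem~\ref{YONEDA LEMMA}) in its end form $\int_{B}{\bf Set}(\cB(F(A),B),P(B))\cong P(F(A))$ to the functor $P=\cC(G(A),H-)$. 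The expression collapses to $\int_{A}\cC(G(A),H(F(A)))=[\cA,\cC](G,F^{*}H)$, recalling $F^{*}H=H\circ F$ from~(\ref{RIGHT ADJOINT KAN EXTENSION}).

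Finally I would read off the Kan extension from $\Phi$. Put $\alpha=\Phi_{K}(1_{K}):G\Rightarrow F^{*}K=KF$. For any $\gamma:K\Rightarrow H$ the naturality square of $\Phi$ applied to $1_{K}$ gives $\Phi_{H}(\gamma)=(F^{*}\gamma)\circ\alpha=(\gamma\ast F)\circ\alpha$; since $\Phi_{H}$ is a bijection, for every $\beta:G\Rightarrow HF$ there is a unique $\gamma$ with $(\gamma\ast F)\circ\alpha=\beta$. This is exactly the universal property defining the left Kan extension, so $(K,\alpha)=Lan_{F}(G)$ exists.

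The main obstacle is not conceptual but lies in justifying the end manipulations: one must check that the Fubini interchange of the two ends and the end-form Yoneda step are legitimate and natural in $H$, which rests on Theorem~\ref{PROPERTIES OF COEND 1} and the functoriality of ends established in the previous section. Once the naturality of $\Phi$ in $H$ is secured, the identification of $\Phi_{H}(\gamma)$ with $(\gamma\ast F)\circ\alpha$ is automatic from the naturality square, so no separate and delicate matching of an independently-defined unit is required.
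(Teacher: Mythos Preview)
Your argument is correct and follows the standard coend route: define $K(B)=\int^{A}\cB(F(A),B)\otimes G(A)$ and verify the universal property by the end--coend--Fubini--Yoneda calculation. The paper does not actually supply a proof of this proposition; it states the result and then records the coend formula $Lan_{F}(G)(b)=\int^{a}\cB(F(a),b)\times G(a)$ in the subsequent Remark, so your proposal simply spells out the details behind that formula and is entirely in line with the paper's intended approach.
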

\begin{remark}
We can also formulate the left Kan extension as a coend.
If $\forall a,a'\in \mathcal{A}$ and $b\in \mathcal{B}$ the copowers $\mathcal{B}(F(a'),b)\times G(a)$ exist in $\mathcal{C}$; and the following coend exists $\forall b\in \mathcal{B}$ then:
$$Lan_{F}(G)(b)=\int^{a}\mathcal{B}(F(a),b)\times G(a).$$
\end{remark}
\texttt{Notation:} For the sake of brevity we sometimes write $Lan_{F}$ instead of $Lan_{F^{op}}$ when the extension is along the opposite functor $F^{op}:\cA^{op}\rightarrow \cB^{op}$.
\begin{remark}
Notice that for a functor $\Phi:\cA\rightarrow \cB$ we can express the adjunction $Lan_{\Phi}\dashv \Phi^{*}$ as a left Kan extension of $Y\circ\Phi:\cA\rightarrow[\cB^{op},{\bf Set}]$ along $Y:\cB\rightarrow[\cB^{op},{\bf Set}]$ in the following way: for some $F:\cA^{op}\rightarrow{\bf Set}$ we have
\begin{center}
$Lan_{Y}(Y\circ\Phi)(F)=(\int^{a}[\cA^{op}{\bf Set}](Y(a),-)\times Y\circ\Phi(a))(F)=\int^{a}[\cA^{op}{\bf Set}](Y(a),F)\times Y\circ\Phi(a)\cong\int^{a}F(a)\times \cB(-,\Phi(a))=Lan_{\Phi^{op}}(F)$
\end{center}
and also for some $G:\cB^{op}\rightarrow{\bf Set}$:
\begin{center}
$[\cB^{op},{\bf Set}](Y(\Phi(-)),G)\cong G(\Phi(-))=\Phi^{*}(G).$
\end{center}
\end{remark}

\section{Day's closed monoidal convolution}\label{DAY S CLOSED MONO CONVO}
A symmetric monoidal category can be fully and faithfully embedded in a symmetric monoidal closed category in such a way that the tensor is preserved.
This construction is a particular instance of a more general notion called promonoidal categories defined by Day~\cite{B.Day70}. In fact there is a correspondence between promonoidal categories and biclosed monoidal structures defined on the functor categories.

\begin{proposition}\label{monoidal convolution}
 Let $\cA$ be a symmetric monoidal category. Then $[\cA^{op},{\bf Set}]$ can be equipped with a symmetric monoidal structure (called the {\em Day tensor}~\cite{B.Day70}), such that the Yoneda embedding $Y:\cA\to [\cA^{op},{\bf Set}]$ is a strong monoidal functor. Moreover, $[\cA^{op},{\bf Set}]$ is monoidal closed.
\end{proposition}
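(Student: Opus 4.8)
The plan is to define the Day tensor by the coend formula
$$(F \ast G)(C) \;=\; \int^{A,B} F(A)\times G(B)\times \cA(C,\, A\otimes B),$$
for presheaves $F,G:\cA^{op}\to{\bf Set}$, with unit object $J = Y(I) = \cA(-,I)$, and then to verify in turn that this yields a symmetric monoidal closed structure for which $Y$ is strong monoidal. Since $\cA$ is small and ${\bf Set}$ is cocomplete the displayed coend exists pointwise, and by the theorem on functoriality of (co)ends in a parameter (the dual of the parametrized-end theorem recalled above) it assembles into a functor in $C$ and, jointly, into a bifunctor $\ast$ on $[\cA^{op},{\bf Set}]$; functoriality in $F$ and $G$ is forced by the universal property of the coend.

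First I would establish that $Y$ is strong monoidal, which is the computational heart of the argument and uses only the co-Yoneda (density) isomorphism recalled at the end of the section on indexed colimits. Indeed,
$$(Y(A)\ast Y(B))(C)=\int^{X,Y}\cA(X,A)\times\cA(Y,B)\times\cA(C,\, X\otimes Y),$$
and performing the coend over $X$ and then over $Y$ by density (with $\cA(C, X\otimes Y)$ treated as covariant in each of $X,Y$) collapses this to $\cA(C, A\otimes B)=Y(A\otimes B)(C)$, naturally in $C$; the unit coherence is the definitional identity $J=Y(I)$. The associativity, unit, and symmetry isomorphisms for $\ast$ are obtained by the same mechanism: Fubini for coends merges the iterated coends, density eliminates the representable factors, and the structural isomorphisms $\alpha,\rho,\lambda,\sigma$ of $\cA$ are transported inside $\cA(C,-)$. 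For instance, both $(F\ast G)\ast H$ and $F\ast(G\ast H)$ reduce to $\int^{A,B,C'}F(A)\times G(B)\times H(C')\times\cA(-,\,A\otimes(B\otimes C'))$.

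For the closed structure I would introduce the internal hom
$$[G,H](A)=\int_{B}{\bf Set}(G(B),\,H(A\otimes B))\;\cong\;[\cA^{op},{\bf Set}](G,\,H(A\otimes -)),$$
an end, hence a limit, and prove the required adjunction $[\cA^{op},{\bf Set}](F\ast G,H)\cong[\cA^{op},{\bf Set}](F,[G,H])$ by a coend computation: the hom out of a coend turns into an end over the two variables (continuity of the hom-functor), the representable factor is removed by the Yoneda Lemma to leave $\int_{A,B}{\bf Set}(F(A)\times G(B),\,H(A\otimes B))$, and currying together with the end formula for $[G,H]$ and the expression of a natural transformation as an end completes the identification. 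This exhibits $-\ast G\dashv[G,-]$, so $\ast$ is closed.

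The main obstacle I anticipate is not any single isomorphism but the verification of the coherence axioms --- the pentagon, the triangle, and the hexagon --- for the constructed $\alpha,\lambda,\rho,\sigma$. After transporting through the density and Fubini isomorphisms, each such diagram reduces to the corresponding coherence diagram of the symmetric monoidal category $\cA$ sitting inside the representable $\cA(C,-)$; the delicate point is to check that the density and Fubini isomorphisms invoked are natural and mutually compatible enough that this reduction is legitimate rather than merely plausible. An alternative that avoids grinding through these diagrams is to invoke Day's promonoidal machinery indicated above: a genuinely symmetric monoidal $\cA$ is in particular a symmetric promonoidal category, and the convolution structure is then symmetric monoidal closed by Day's theorem. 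I would adopt the explicit coend description as the working definition, prove strong monoidality of $Y$ and the closedness adjunction directly as above, and defer the coherence bookkeeping to Day.
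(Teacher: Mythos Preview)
Your proposal is correct and follows essentially the same approach as the paper: both give the Day tensor by the coend formula $\int^{a,b}F(a)\times G(b)\times\cA(-,a\otimes b)$ with unit $\cA(-,I)$ and internal hom $\int_b[G(b),H(-\otimes b)]$, obtain the structural isomorphisms by transporting $\alpha,\lambda,\rho,\sigma$ along density and Fubini, and defer full coherence verification to Day's original work. The paper's proof is explicitly labeled a sketch and writes out the unit, associativity, and symmetry isomorphisms as specific coend composites, but otherwise matches your outline; your additional detail on the closedness adjunction and on strong monoidality of $Y$ via co-Yoneda is fine and compatible with the paper's treatment.
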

\begin{proof}(sketch)

We consider the monoidal closed case on functor categories
$$([\mathcal{A}^{op},{\bf Set}],\otimes_D,I_D,\multimap).$$
This structure is obtained by using the Kan extension to closed functor categories:

$$\xymatrix@=25pt{
\cA\times\cA\ar[rrr]^{Y\times Y}\ar[d]_{\otimes}&&& [\mathcal{A}^{op},{\bf Set}]\times [\mathcal{A}^{op},{\bf Set}]\ar[d]^{\textit{Lan}_{Y\times Y}(-\otimes-)}\\
\cA\ar[rrr]^{Y}&&& [\mathcal{A}^{op},{\bf Set}]
}$$
In more detail the following data is obtained:

\begin{itemize}
\item $-\otimes_D-:[\mathcal{A}^{op},{\bf Set}]\times[\mathcal{A}^{op},{\bf Set}]\rightarrow[\mathcal{A}^{op},{\bf Set}]$ is defined by
$$S\otimes_DT=\int^a S(a)\times\int^b T(b)\times\mathcal{A}(-,a\otimes b)$$
This operation is also called the {\em convolution} of
 $S$ and $T$.
\item $I_D=\mathcal{A}(-,I)$
\item $l:I_D\otimes_D T\rightarrow T$ is given by:\\
$\int^xI_D(x)\times(\int^a T(a)\times\mathcal{A}(-,x\otimes a))\cong\int^a(\int^xI_D(x)\times\mathcal{A}(-,x\otimes a))\times T(a)\stackrel {\int\lambda^{*}\times 1}\rightarrow \int^a\mathcal{A}(-,a)\times T(a)\cong T $\\
where $\lambda^{*}:\int^x\mathcal{A}(x,I)\times\mathcal{A}(-,x\otimes a))\cong \mathcal{A}(-,I\otimes a)\stackrel {\mathcal{A}(-,\lambda)}\rightarrow \mathcal{A}(-,a)$
\item $r:T\otimes_D I_D\rightarrow T$: analogous.
\item $a:(R\otimes_DS)\otimes_DT\rightarrow R\otimes_D(S\otimes_DT)$\\
$(R\otimes_DS)\otimes_DT={}$\\
$\int^x(\int^aR(a)\times(\int^bS(b)\times\mathcal{A}(x,a\otimes b))\times(\int^cT(c)\times \mathcal{A}(-,x\otimes c)))$\\
$\stackrel {\cong\circ(\int 1\times\int 1\times\int(1\times\alpha))\circ\cong}\rightarrow$\\
$\int^aR(a)\times\int^x(\int^bS(b)\times(\int^cT(c)\times\mathcal{A}(x,b\otimes c)))\times \mathcal{A}(-,a\otimes x))=R\otimes_D(S\otimes_DT)$
\item $c:S\otimes_D T\rightarrow T\otimes_D S$ is\\
$$S\otimes_D T=\int^a S(a)\times(\int^b T(b)\times\mathcal{A}(-,a\otimes b))\cong\int^b T(b)\times(\int^a S(a)\times\mathcal{A}(-,a\otimes b))$$
$$\stackrel {\int1\times(\int1\times\sigma)}\rightarrow \int^b T(b)\times(\int^a S(a)\times\mathcal{A}(-,b\otimes a))=T\otimes_D S$$
\item the internal hom is:
$$[S,T]_D\cong\int_b[S(b),T(-\otimes b)]$$
\end{itemize}
\end{proof}

For more details on this construction we refer the reader to~\cite{B.Day70}.

\section{The reflective subcategory $[\cC,\cA]_{\Gamma}$}\label{THE REFLECTIVE SUBCAT SECTION}

In this section we give a brief overview the methodology of Freyd and Kelly~\cite{Freyd-Kelly 1972} in order to build reflections in a more general way using the notion of orthogonality. In particular, we are interested in some full subcategories of presheaves.
This construction generalizes Lambek's presentation in Section~\ref{LAMBEK COMPLETION} by regarding the condition of preserving limits as a special case of the continuity of functors over a certain class of cylinders.

Given an object $A\in \cA$, we define a preorder among the class of monomorphisms with codomain $A$: if $f:B\rightarrow A $, $g:C\rightarrow A$ are two monomorphisms $f$ is said to be smaller than $g$ $(f\leq g)$ when $f$ factors through $g$ i.e., $f=gk$ for some $k:B\rightarrow C$. Note that $k$ is unique and also a monomorphism.

We have an equivalence relation $f\equiv g$ iff $f\leq g$ and $g\leq f$.
\begin{definition}
\rm A \textit{subobject} of $A$ is an equivalence class of these monomorphisms.
\end{definition}
The class of subobjects is partially ordered by the order induced by the representatives.
\begin{definition}
\rm We say that a category $\cA$ is \textit{well-powered} when for every $A\in \cA$ the class of subobjects of $A$ is a set.
\end{definition}
The dual notions applied to epimorphisms are called \textit{quotient} for an equivalence class of epimorphisms, and \textit{co-well-powered}.
\begin{definition}
\rm Let $A$ be an object. The \textit{intersection} of a family of subobjects of $A$, if it exists, is the greatest lower bound defined in the partially ordered class of subobjects of $A$. Analogously, by the \textit{union} we mean the least upper bound, if it exists.
\end{definition}
Concretely, we mean the following: if $\{A_i\stackrel{f_i}\rightarrow A\}_{i\in I}$ are subobjects of $A$ then there exists an arrow $\cap_{i\in I}A_i\stackrel{f}\rightarrow A$ satisfying the following properties:
\begin{itemize}
\item[-]$f\leq f_i\,\,\, \forall i\in I$, i.e., for every $i\in I$ there exists an arrow $\cap_{i\in I}A_i\stackrel{t_i}\rightarrow A_i$ such that $f_i\circ t_i=f$.
\item[-]if there exists a $p$ such that $p\leq f_i\,\,\, \forall i \in I$ then $p\leq f$, i.e., if there are maps $B\stackrel{p}\rightarrow A$ and $B\stackrel{p_i}\rightarrow A_i$ with the property $f_i\circ p_i=p\,\,\, \forall i\in I$ then there exists a unique $h:B\rightarrow \cap_{i\in I}A_i$ such that $p=f\circ h$.
\end{itemize}

\begin{definition}
\rm An infinite limit cardinal $\alpha$ is \textit{regular} when it is equal to its cofinality: ${\rm cf}(\alpha)=\alpha$.
Here ${\rm cf}(\alpha)$ is the least limit ordinal $\beta$ such that there
exists an increasing sequence $\{\alpha_{\eta}\}_{\eta < \beta}$ with ${\rm lim}_{\eta\rightarrow \beta}\,\,\alpha_{\eta}=\alpha.$
\end{definition}
The fact that $\alpha$ is regular means cannot be written as a sum of a lesser number of cardinals less than $\alpha$.
\begin{definition}
\rm Let $\alpha$ be a regular cardinal. An ordered set $J$ is \textit{$\alpha$-directed} when for every subset $I\subseteq J$ with $|I|\leq \alpha$ there exists an upper bound in $J$.
\end{definition}
\begin{definition}
\rm Let $S=\{f_{\xi}:C_{\xi}\rightarrow B\}$ be a family of subobjects of $B$ with the monotonic property: $f_{\xi}\leq f_{\zeta}$ whenever $\xi\leq\zeta$. The family $S$ is called \textit{$\alpha$-directed} provided that the set $J$ is \textit{$\alpha$-directed}.
\end{definition}
\begin{definition}
\rm We say that an object $A\in \cA$ is \textit{bounded} by a regular cardinal $\alpha$ when for every morphism from $A$ to a $\alpha$-directed union $\cup_{\xi\in J}C_{\xi}$ factors through a union $\cup_{\xi\in K}C_{\xi}$ for some $K\subseteq J$ with $|K|<\alpha$. We call $\cA$ \textit{bounded} if each $A\in \cA$ is bounded.
\end{definition}

\begin{definition}
\rm Let $E,M\subseteq {\rm Mor}(\cA)$ be two classes of morphisms. A \textit{factorization system} $(E,M)$ on a category $\cA$ consists of the following data:
\begin{itemize}
\item[-]${\rm Isos}(\cA)\subseteq E\cap M$, isomorphisms belong to the intersection of the two classes
\item[-]$E$ and $M$ are closed under composition
\item[-]for every morphism $f$ there is a factorization $f=m\circ e$ with $e\in E$ and $m\in M$
\item[-]for every $f$ and $g$ if $m'\circ e'\circ f=g\circ m\circ e$ with $e,e'\in E$ and $m,m'\in M$ then there exists a unique $w$ making the whole diagram
$$\xymatrix@=25pt{
\bullet\ar[d]_{f}\ar[r]^{e}&\bullet\ar@{-->}[d]_{w}\ar[r]^{m}&\bullet\ar[d]^{g}\\
\bullet\ar[r]_{e'}&\bullet\ar[r]_{m'}&\bullet
}$$
commutative. A factorization system $(E,M)$ is called a \textit{proper factorization} when $E\subseteq {\rm Epis}(\cA)$, $M\subseteq {\rm Monos}(\cA)$ where ${\rm Epis}(\cA)$ is the class of all epimorphisms of $\cA$ and ${\rm Monos}(\cA)$ is the class of all monomorphisms of $\cA$.
\end{itemize}
\end{definition}

\begin{definition}
\rm An epimorphism $p$ is called \textit{extremal} provided that whenever we have $p=m\circ g$, where $m$ is a monomorphism then $m$ is also an isomorphism. Dually we define the notion of \textit{extremal monomorphism}. ${\rm Epi}^{*}$ denotes the class of extremal epimorphism and ${\rm Mon}^{*}$ the class of extremal monomorphism.
\end{definition}
\begin{proposition}
If one of these two conditions below are satisfied
\begin{itemize}
\item[-]the category $\cA$ is finitely complete and has arbitrary intersections of monomorphisms
\item[-]the category $\cA$ is finitely co-complete and has arbitrary co-intersections of extremal epimorphisms
\end{itemize}
then $({\rm Epi}^{*},{\rm Mon})$ is a proper factorization system.
\end{proposition}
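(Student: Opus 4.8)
The statement to prove is the standard factorization-system result: if $\cA$ is finitely complete with arbitrary intersections of monomorphisms, then $(\mathrm{Epi}^{*},\mathrm{Mon})$ is a proper factorization system (and dually in the finitely cocomplete case).

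\textbf{Overall approach.} The plan is to focus on the first hypothesis (finitely complete plus arbitrary intersections of monomorphisms); the second case follows by duality. The key idea is to construct, for each morphism $f:A\rightarrow B$, a canonical factorization through the \emph{smallest} subobject of $B$ through which $f$ factors. This smallest subobject exists precisely because we can take the intersection of the (possibly proper) class of all subobjects of $B$ admitting a factorization of $f$; finite completeness is what makes each such intersection behave like a monomorphism into $B$, and the arbitrary-intersection hypothesis guarantees the infimum exists. I would then show the induced map into this minimal subobject is an extremal epimorphism, and verify the remaining axioms.

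\textbf{Key steps, in order.} First I would establish that arbitrary intersections of monomorphisms exist as a monomorphism $m:M\rightarrow B$ satisfying the universal property spelled out in the excerpt's definition of intersection. Second, given $f:A\rightarrow B$, I would define $m:M\rightarrow B$ to be the intersection of all subobjects of $B$ through which $f$ factors (this family is nonempty since $1_B$ belongs to it), obtaining a unique $e:A\rightarrow M$ with $f=m\circ e$. Third, I would prove $e\in\mathrm{Epi}^{*}$: suppose $e=m'\circ g$ with $m'$ a monomorphism; then $m\circ m'$ is a smaller subobject of $B$ through which $f$ still factors, so by minimality of $m$ it must be equivalent to $m$, forcing $m'$ to be an isomorphism — exactly the defining property of an extremal epimorphism. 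Fourth, I would check that $\mathrm{Isos}(\cA)\subseteq \mathrm{Epi}^{*}\cap\mathrm{Mon}$ (immediate, since an isomorphism is both a monomorphism and, trivially, an extremal epimorphism), and that both classes are closed under composition — routine for monomorphisms, and for extremal epimorphisms a short diagram chase using that a monomorphism dividing a composite of extremal epimorphisms must be invertible. Finally, I would verify the unique diagonal fill-in: given $m'\circ e'\circ f = g\circ m\circ e$ with $e,e'$ extremal epimorphisms and $m,m'$ monomorphisms, I would pull back $m'$ along $g$ (using finite completeness), use the extremality of $e$ to factor through the pullback, and then use that $m$ is monic to obtain the unique diagonal $w$; uniqueness follows from $m'$ being monic.

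\textbf{Main obstacle.} The delicate point is Step Three, controlling the intersection over a proper \emph{class} of subobjects and arguing that the resulting $e$ is genuinely extremal rather than merely epic. Since the definition of intersection in the excerpt is stated for a family indexed by a set, I would first need to observe that equivalence classes of monomorphisms into a fixed $B$ can be handled (either invoking well-poweredness implicitly, or noting the intersection universal property is formulated so that it applies to the diagram of \emph{all} such subobjects without a cardinality restriction). The extremality argument itself is clean once minimality is in hand, so the real work is setting up the intersection correctly and confirming $m\circ m'$ again lies in the defining family. The properness claim $E\subseteq\mathrm{Epis}(\cA)$ and $M\subseteq\mathrm{Monos}(\cA)$ is then automatic: extremal epimorphisms are epimorphisms by definition, and $M=\mathrm{Mon}$ consists of monomorphisms by construction.
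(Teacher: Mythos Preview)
The paper gives no proof at all: it simply cites Freyd--Kelly~\cite{Freyd-Kelly 1972}. Your outline is essentially the standard argument one finds there (factor through the intersection of all subobjects of $B$ through which $f$ factors), so there is no meaningful comparison to make with the paper's own reasoning.

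Your sketch is correct in outline, but there is one genuine omission. In Step~3 you verify only the \emph{extremality} condition for $e$ (any monomorphism through which $e$ factors is invertible), yet by the paper's definition an extremal epimorphism is first of all an epimorphism. You never argue that $e$ is epic. This is easily repaired using the finite-completeness hypothesis: if $ge = he$, form the equalizer $m'$ of $g,h$; then $e$ factors through the monomorphism $m'$, so by the extremality condition $m'$ is an isomorphism and $g=h$. You should make this explicit, since without equalizers the extremality condition alone does \emph{not} force $e$ to be epic.

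A smaller imprecision: in the diagonal fill-in you say to pull back $m'$ along $g$, but what you actually need is the pullback of $m'$ along $g\circ m$ (equivalently, a two-stage pullback first along $g$ and then along $m$). Only then does the resulting mono land in the codomain of $e$, so that extremality of $e$ applies directly. Your phrase ``use that $m$ is monic'' suggests you may have this second pullback in mind, but as written the step is too compressed to be correct. Finally, your worry about intersecting a proper class is legitimate but is exactly what the hypothesis ``arbitrary intersections of monomorphisms'' is meant to cover; no separate well-poweredness assumption is needed.
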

\begin{proof}
\cite{Freyd-Kelly 1972}
\end{proof}
In the case of the category of sets a direct calculation shows that ${\rm Epi}^{*}={\rm Epi}$ and ${\rm Mon}^{*}={\rm Mon}$ since we have:
if $p\in {\rm Epi}$ with $p=m\circ g$ then $a\circ m=b\circ m$ implies $a\circ m\circ g=b\circ m\circ g$ which is $a\circ p=b\circ p$ and this $a=b$.

\begin{definition}
\rm
Given factorization system $({E},{M})$ a \textit{generator} of the category $\cA$ is a small full subcategory $\cG$ such that for each $A\in\cA$,  $\,\,\,\cup_{G\in\cG}\cA(G,A)\subseteq {E}$.
\end{definition}
When a factorization system $({E},{M})$ is proper and $\cG$ a generator then given any pair of morphisms $f,g:A\rightarrow B$ then for every $p:G\rightarrow A$ with $G\in\cG$ we have that $f\circ p=g\circ p\,\,\Rightarrow\,\,f=g$.

If $\cA$ has coproducts then $\cG$ is a generator if and only if for every $A\in\cA$ the map:
$$k_A:\coprod_{G\in\cG}(\coprod_{\cA(G,A)})\rightarrow A$$
is in ${E}$;
where $k_A$ is defined by the universal property of the coproduct, i.e., $k_A\circ i_{G,f}=f:G\rightarrow A$ and $i_{G,f}:G\rightarrow\coprod_{G\in\cG}(\coprod_{\cA(G,A)})$ is the coproduct injection.

\begin{definition}
\rm
Let $P,Q:\cK\rightarrow \cC$ be functors with $\cK$ a small category. A \textit{cylinder} in $\cC$ is just a natural transformation $\alpha:P\rightarrow Q$.
\end{definition}
\begin{definition}
\rm
A functor $T:\cC\rightarrow \cA$ is \textit{continuous} with respect to the cylinder $\alpha$ when:
\begin{itemize}
\item[-] there exists ${\rm lim}\, TP$ and ${\rm lim}\, TQ$ as cones in $\cA$.
\item[-] the unique morphism ${\rm lim}\, T\alpha: {\rm lim}\, TP\rightarrow {\rm lim}\, TQ$ determined by the definition of limit $({\rm lim}\, TQ,\pi_K^{\cQ})$:
$$\xymatrix@=25pt{
TPK\ar[rr]^{T\alpha_K}&&TQK\\
\lim\, TP\ar[u]^{\pi_K^{\cP}}\ar@{-->}[rr]^{{\rm lim}\,T\alpha}&&{\rm lim}\, TQ\ar[u]^{\pi_K^{\cQ}}
}$$
is an isomorphism.
\end{itemize}
\end{definition}

\begin{remark}
\rm
In the case when $P=\Delta C$ is a constant functor, $C\in\cC$,  then $\alpha$ is just a cone in the usual sense and continuity is the standard definition of continuity of functors.
\end{remark}
\begin{definition}
Let $\Gamma$ be a class of cylinders in the category $\cC$. Then $[\cC,\cA]_\Gamma$ is the full
subcategory of $[\cC,\cA]$ of functors $T$ that are continuous
w.r.t. each $(P,Q,\alpha)\in\Gamma$.
\end{definition}

\begin{definition}
\rm
Consider an arrow $f:A\rightarrow B$ and an object $C\in\cA$. We say that $f$ is \textit{orthogonal} to $C$, and we write $C\perp f$, if for every morphism $y:A\rightarrow C$ there exists a unique $x:B\rightarrow C$ such that $x\circ f=y$.
\end{definition}
This definition is basically the definition of a bijective function since is equivalent to the fact that the representables $\cA(B,C)\stackrel{\cA(f,C)}\longrightarrow \cA(A,C)$ are isos in the category of sets.

Dually we consider $f\perp C$.
\begin{definition}
Given a class $\Delta$ of morphisms in a category $\cA$, let us consider the full subcategory of $\cA$ defined by the following object: $\Delta^{\perp}=\{B\in \cA: B\perp f, \forall f\in\Delta \}$.
\end{definition}
\begin{definition}
Let us consider $X\in {\bf Set}$, where $A\in \cA$. The tensor product $X\otimes A\in \cA$ is the co-power, i.e., the coproduct of $|X|$ copies of the object $A$ in the category $\cA$ characterized by the following natural isomorphism:
$$\cA(X\otimes A,B)\cong{\bf Set}(X,\cA(A,B)).$$
\end{definition}
Now, to each cylinder $\alpha:P\rightarrow Q: \cK \rightarrow \cC$ we associate an arrow $\tilde{\alpha}:\tilde{Q}\rightarrow\tilde{P}$ in the presheaf category $[\cC,\cA]$ in the following way.

First we consider the functor $\hat{P}:\cK^{op}\rightarrow [\cC,{\bf Set}]$ defined by:
$$\cK^{op}\stackrel{P^{op}}\longrightarrow\cC^{op}\stackrel{Y}\longrightarrow [\cC,{\bf Set}]$$

thus $\hat{P}(K)=\cC(P(K),-)$, and $\cC(f,-)$ if $f^{op}\in\cK^{op}$.

Then, we take $\tilde{P}=colim\,\hat{P}$ the pointwise colimit in the category $[\cC,{\bf Set}]$, i.e., $\tilde{P}\in[\cC,{\bf Set}]$.

In the same way, at the level of arrows we get:
$$\hat{\alpha_K}=\cC(\alpha_K,-):\cC(QK,-)\longrightarrow \cC(PK,-)$$
and then we obtain:
$$\xymatrix@=25pt{
\hat{Q}(K)\ar[d]^{\pi^{\cQ}_K}\ar[rr]^{\hat{\alpha}_K}&&\hat{P}(K)\ar[d]^{\pi^{\cP}_K}\\
\tilde{Q}\ar@{-->}[rr]^{\tilde{\alpha}}&&\tilde{P}
}$$
by definition of colimit $(\tilde{Q},\pi^{\cQ}_K)$, since $\pi^{\cP}_K\circ\hat{\alpha}_K$ is natural in $K$. So,  $\tilde{\alpha}$ is given as the unique arrow in $[\cC,{\bf Set}]$ making the previous diagram commute.
Now we consider the class of morphisms $\Delta\subseteq[\cC,\cA]$ depending on a choice of a class of cylinders $\Gamma$:
$$\Delta=\{\tilde{\alpha}\otimes A:\tilde{Q}\otimes A\rightarrow\tilde{P}\otimes A, {\rm with}\,\,\, A\in\cA,\,\,\alpha \in \Gamma\}$$
where $\tilde{Q}\otimes A:\cC\rightarrow\cA$ and $\tilde{\alpha}\otimes A$ are defined using the pointwise co-power as $(\tilde{Q}\otimes A)(C)=\tilde{Q}(C)\otimes A$.

\begin{proposition}
Let $\cA$ be a complete and co-complete category and let $\Gamma$ be a class of cylinders in the small category $\cC$.
Then $[\cC,\cA]_{\Gamma}=\Delta^{\perp}$.
\end{proposition}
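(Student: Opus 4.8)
The plan is to reduce everything to the level of objects: both $[\cC,\cA]_{\Gamma}$ and $\Delta^{\perp}$ are \emph{full} subcategories of $[\cC,\cA]$, so it suffices to prove that a functor $T:\cC\rightarrow\cA$ is continuous with respect to every cylinder $\alpha\in\Gamma$ if and only if $T\perp(\tilde{\alpha}\otimes A)$ for every $\alpha\in\Gamma$ and every $A\in\cA$. Fixing a single cylinder $\alpha:P\rightarrow Q:\cK\rightarrow\cC$, recall that $T\perp(\tilde{\alpha}\otimes A)$ means precisely that the precomposition map
$$[\cC,\cA](\tilde{P}\otimes A,T)\longrightarrow[\cC,\cA](\tilde{Q}\otimes A,T)$$
induced by $\tilde{\alpha}\otimes A$ is a bijection. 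My strategy is to identify each of these hom-sets with a hom-set in $\cA$ out of $A$, so that this map becomes $\cA(A,{\rm lim}\,T\alpha)$, and then to invoke the Yoneda lemma to pass from ``bijection for all $A$'' to ``${\rm lim}\,T\alpha$ is an isomorphism''.

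The key computation is a chain of natural isomorphisms. First, writing the natural transformations as an end and applying the copower adjunction $\cA(X\otimes A,B)\cong{\bf Set}(X,\cA(A,B))$ pointwise,
$$[\cC,\cA](\tilde{P}\otimes A,T)\cong\int_{C}\cA(\tilde{P}(C)\otimes A,T(C))\cong\int_{C}{\bf Set}(\tilde{P}(C),\cA(A,T(C)))\cong[\cC,{\bf Set}](\tilde{P},\cA(A,T-)).$$
Next, since $\tilde{P}={\rm colim}\,\hat{P}$ is the colimit over $\cK^{op}$ of the representables $\hat{P}(K)=\cC(PK,-)$, and the hom-functor of $[\cC,{\bf Set}]$ carries colimits in its first argument to limits, the Yoneda lemma yields
$$[\cC,{\bf Set}](\tilde{P},\cA(A,T-))\cong{\rm lim}_{K}\,[\cC,{\bf Set}](\cC(PK,-),\cA(A,T-))\cong{\rm lim}_{K}\,\cA(A,T(PK)),$$
the limit now being taken over $\cK$. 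Finally, because $\cA$ is complete and $\cK$ is small, the limit ${\rm lim}\,TP$ of $TP:\cK\rightarrow\cA$ exists, and $\cA(A,-)$ preserves it, so ${\rm lim}_{K}\,\cA(A,T(PK))\cong\cA(A,{\rm lim}\,TP)$. The same computation applied to $Q$ gives $[\cC,\cA](\tilde{Q}\otimes A,T)\cong\cA(A,{\rm lim}\,TQ)$.

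It then remains to check that, under these identifications, the map induced by $\tilde{\alpha}\otimes A$ corresponds to $\cA(A,{\rm lim}\,T\alpha)$, where ${\rm lim}\,T\alpha:{\rm lim}\,TP\rightarrow{\rm lim}\,TQ$ is the canonical comparison from the definition of continuity. Granting this, $T\perp(\tilde{\alpha}\otimes A)$ for every $A\in\cA$ says that $\cA(A,{\rm lim}\,T\alpha)$ is a bijection for all $A$, which by the Yoneda lemma is equivalent to ${\rm lim}\,T\alpha$ being an isomorphism; since both limits already exist, this is exactly the statement that $T$ is continuous with respect to $\alpha$. Taking the conjunction over all $\alpha\in\Gamma$ identifies the objects of $\Delta^{\perp}$ with those of $[\cC,\cA]_{\Gamma}$, and fullness finishes the proof.

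The main obstacle is this last naturality verification. One must track $\tilde{\alpha}$ through the Yoneda isomorphism and the colimit presentation $\tilde{P}={\rm colim}\,\hat{P}$ and confirm that it induces precisely the comparison map ${\rm lim}\,T\alpha$, and that every isomorphism in the chain above is natural in the variable being precomposed. Concretely, this amounts to unwinding the definition of $\tilde{\alpha}$ as the unique arrow induced on colimits by the maps $\hat{\alpha}_{K}=\cC(\alpha_{K},-)$, and matching it component by component against the limit cone $(\pi^{\cQ}_{K})$ that defines ${\rm lim}\,T\alpha$. Once this bookkeeping is done, the remainder is a routine assembly of the standard adjunction, end, and Yoneda isomorphisms already recorded earlier in the text.
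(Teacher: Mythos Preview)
Your proof is correct and follows essentially the same route as the paper's: both reduce to objects by fullness, pass through the copower adjunction to land in $[\cC,{\bf Set}](\tilde{P},\cA(A,T-))$, unwind the colimit presentation of $\tilde{P}$ via Yoneda to reach $\cA(A,{\rm lim}\,TP)$, and then conclude by the Yoneda lemma that the orthogonality condition for all $A$ is equivalent to ${\rm lim}\,T\alpha$ being an isomorphism. Your presentation is somewhat more streamlined, packaging the steps as a chain of standard isomorphisms rather than the paper's more element-level diagram chase, but the content and the naturality verification you flag as the ``main obstacle'' are exactly what the paper carries out.
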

\begin{proof}
Since both categories are full it is enough to check that they contain the same objects. We want to prove that $T\in \Delta^{\perp}$ if and only if $T\in [\cC,\cA]_{\Gamma}$.

By definition of the orthogonal class, $T\in \Delta^{\perp}$ if and only if for every $\tilde{\alpha}\otimes A\in\Delta$ we have that $[\cC,\cA](\tilde{\alpha}\otimes A,T)$ is a bijective map, i.e., for every $\mu:\tilde{Q}\otimes A\rightarrow T$ there exists a unique $\nu$ such that,
$$\xymatrix@=25pt{
\tilde{Q}\otimes A\ar[d]_{\tilde{\alpha}\otimes A}\ar[rr]^{\mu}&&T\\
\tilde{P}\otimes A \ar[rru]_{\nu}&&
}$$
But since when $\cA(F(X),B)\cong{\bf Set}(X,G(B))$ is natural with $F:{\bf Set}\rightarrow\cA$, $F(X)=X\otimes A$ and $G:\cA\rightarrow{\bf Set}$, $G(B)=\cA(A,B)$. Then we have that:
$$\xymatrix@=25pt{
\cA(F(X),B)\ar[d]_{\cA(F(f),g)}\ar[rr]^{\phi_{X,B}}&&{\bf Set}(X,G(B))\ar[d]^{{\bf Set}(f,G(g))}\\
\cA(F(X'),B')\ar[rr]_{\phi_{X',B'}}&&{\bf Set}(X',G(B'))
}$$
with $X'\stackrel{f}\rightarrow X$ and $B\stackrel{g}\rightarrow B'$.

This implies that $G(g)\circ \phi_{X,B}(x)\circ f=\phi_{X',B'}(g\circ x\circ F(f))$ for every $x:F(X)\rightarrow B$.
Therefore choosing $g=1$, $x=\nu$, $X=\tilde{P}$, $X'=\tilde{Q}$, $f=\tilde{\alpha}$, $B=T$ we have that since $\nu\circ F(\tilde{\alpha})=\mu$ then $\phi_{X',B'}(\nu\circ F(\tilde{\alpha}))=\phi_{X',B'}(\mu)$ and then $G(1)\circ \phi_{X,B}(\nu)\circ \tilde{\alpha}=\phi_{X',B'}(\mu)$.

Using the natural isomorphism let us call $\nu'=\phi_{X,B}(\nu):\tilde{P}\rightarrow\cA(A,T-)$ where $\nu:F(\tilde{P})\rightarrow T$ and
$\mu'=\phi_{X',B'}(\mu):\tilde{Q}\rightarrow\cA(A,T-)$ where $\mu:F(\tilde{Q})\rightarrow T$.
So this turns out to be $\nu'\circ \tilde{\alpha}=\mu$,
$$\xymatrix@=25pt{
\tilde{Q}\ar[d]_{\tilde{\alpha}}\ar[rr]^{\mu'}&&\cA(A,T-)\\
\tilde{P}\ar[rru]_{\nu'}&&
}$$
Then by definition of $\tilde{Q}={\rm colim}\,\hat{Q}$ with injection $\hat{Q}K\stackrel{i_{K}^{Q}}\rightarrow \tilde{Q}$ and $\tilde{P}={\rm colim}\,\hat{P}$ with injection $\hat{P}K\stackrel{i^{P}_{K}}\rightarrow \tilde{P}$ we define $\mu''$ and $\nu''$ by the following compositions:
$\mu''=\mu'\circ i^{Q}_K$ where $\cC(QK,-)=\hat{Q}K\stackrel{i^{Q}_K}\longrightarrow\tilde{Q}\stackrel{\mu'}\longrightarrow\cA(A,T-)$
and
$\nu''=\nu'\circ i^{P}_K$ where $\cC(PK,-)=\hat{P}K\stackrel{i^{P}_K}\longrightarrow\tilde{P}\stackrel{\mu'}\longrightarrow\cA(A,T-)$.
Therefore we have

$$\xymatrix@=25pt{
\cC(QK,-)\ar[d]_{\cC(\alpha_K,-)}\ar[rr]^{i^Q_K}&&\tilde{Q}\ar[d]_{\tilde{\alpha}}\ar[rr]^{\mu'}&&\cA(A,T-)\\
\cC(PK,-)\ar[rr]_{i^P_K}&&\tilde{P}\ar[rru]_{\nu'}&&
}$$
Let us call $F=\cA(A,T-)$, then by naturality of the Yoneda Lemma with respect to $\alpha_K$ we have that:
$$\xymatrix@=25pt{
[\cC,{\bf Set}](\cC(PK,-),F)\ar[d]_{[\cC,{\bf Set}](\cC(\alpha_K,-),F)}\ar[rrr]^{\theta_P}&&&F(PK)\ar[d]_{F(\alpha_K)}\\
[\cC,{\bf Set}](\cC(QK,-),F)\ar[rrr]^{\theta_Q}&&&F(QK)
}$$
Thus if we evaluate $\nu:\cC(PK,-)\rightarrow F$ we obtain:
$$\theta_Q(\nu\circ \cC(\alpha_K,-))=F(\alpha_K)(\theta_P(\nu))$$
and since $F=\cA(A,T-)$ then we get
$$\theta_Q(\nu\circ \cC(\alpha_K,-))=T(\alpha_K)\circ\theta_P(\nu)$$
Therefore since $\mu''=\nu''\circ \cC(\alpha_K,-)$ we have by choosing $\nu=\nu''$:
$$\theta _Q(\mu'')=T(\alpha_K)\circ\theta_P(\nu'')$$
where $\theta _Q(\mu'')\in F(QK)=\cA(A,TQK)$, $\theta _Q(\mu''):A\rightarrow TQK$ and
$\theta _Q(\nu'')\in F(PK)=\cA(A,TPK)$, $\theta _P(\nu''):A\rightarrow TPK$.

So by naturality of $K$ and the definition of limit we obtain the following diagram:
$$\xymatrix@=25pt{
&&&&{\rm lim}\,TQ\ar[ld]^{\pi^Q}\\
A\ar[rrrru]^{\bar{\mu}}\ar@{=}[d]_{}\ar[rrr]_{\theta _Q(\mu'')}&&&TQK&\\
A\ar[rrrrd]_{\bar{\nu}}\ar[rrr]^{\theta _P(\nu'')}&&&TPK\ar[u]_{T(\alpha_K)}&\\
&&&&{\rm lim}\,TP\ar[uuu]_{{\rm lim}\,T\alpha}\ar[lu]_{\pi^P}
}$$
Thus the condition of $T\in[\cC,\cA]_{\Gamma}$ (continuity) is by definition that ${\rm lim}\,T\alpha$ is an isomorphism  and $T\in \Delta^{\perp}$ (orthogonality) iff $[\cC,\cA](\tilde{\alpha}\otimes A,T)$ is an isomorphism.
\end{proof}
\begin{theorem}
Let $\cA$ be a complete and co-complete category with a given proper factorization system $(E,M)$. Let $\cA$ be bounded and co-well-powered. Let us consider the class $\Delta=\Phi\cup\Psi$ where $\Phi$ is small and where $\Psi\subseteq{E}$. Then $\Delta^{\perp}$ is a reflective subcategory of $\cA$.
\end{theorem}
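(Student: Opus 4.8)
The plan is to realize the reflection of an arbitrary object $A$ into $\Delta^{\perp}$ by a transfinite ``small-object'' construction that forces orthogonality to every member of $\Delta$, using the hypotheses to guarantee the process terminates. First I would reformulate the condition $B\in\Delta^{\perp}$ as the requirement that for each $f:X\to Y$ in $\Delta$ the map $\cA(f,B):\cA(Y,B)\to\cA(X,B)$ be a bijection, so that membership in $\Delta^{\perp}$ splits into a \emph{surjectivity} part (every $u:X\to B$ factors through $f$) and an \emph{injectivity} part (any two $v,v':Y\to B$ with $vf=v'f$ coincide). It then suffices to produce, for each $A$, a morphism $\eta_A:A\to RA$ with $RA\in\Delta^{\perp}$ such that every $g:A\to C$ with $C\in\Delta^{\perp}$ factors uniquely through $\eta_A$; the functoriality of $R$ and the naturality of $\eta$ follow formally from this universal property, yielding the reflective adjunction $\Delta^{\perp}\hookrightarrow\cA$.

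Next I would build a chain $A=A_0\to A_1\to\cdots\to A_\beta\to\cdots$ indexed by the ordinals. At a successor stage $A_\beta\to A_{\beta+1}$ I would repair both kinds of defect simultaneously: for the surjectivity defects I form the pushout of $\coprod X\xrightarrow{\coprod f}\coprod Y$ along $\coprod u:\coprod X\to A_\beta$, the coproducts ranging over all pairs $(f,u)$ with $f:X\to Y$ in $\Delta$ and $u:X\to A_\beta$; for the injectivity defects I coequalize all pairs $v,v':Y\to A_\beta$ with $vf=v'f$. Cocompleteness of $\cA$ makes these colimits exist and also gives $A_\lambda=\mathrm{colim}_{\beta<\lambda}A_\beta$ at limit ordinals. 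The aim is to reach an ordinal $\kappa$ at which $A_\kappa\in\Delta^{\perp}$ and $A_\kappa\to A_{\kappa+1}$ is invertible; I would then set $RA=A_\kappa$ and let $\eta_A$ be the transfinite composite.

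The termination argument is where every hypothesis is used, and it is the step I expect to be the main obstacle, chiefly because $\Psi$ is allowed to be a proper class. The device is to handle $\Phi$ and $\Psi$ differently. Since the factorization is proper, $\Psi\subseteq E\subseteq\mathrm{Epis}(\cA)$, so for $f\in\Psi$ the injectivity half of orthogonality is automatic and only the surjectivity (factorization) steps matter; thus $\Psi$ never contributes coequalizer steps. For the surjectivity steps I would choose a single regular cardinal $\alpha$ bounding all the domains and codomains occurring in $\Delta$, which is possible because $\cA$ is bounded: by the definition of boundedness every morphism out of such an object into an $\alpha$-directed union of the subobjects generated along the chain factors through an $\alpha$-small sub-union, hence through some earlier stage $A_\beta$. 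Taking $\kappa$ of cofinality at least $\alpha$ then forces all surjectivity defects to be killed by stage $\kappa$. For the coequalizer steps, which now come only from the \emph{small} set $\Phi$, each genuine identification produces a strictly smaller $E$-quotient of the object, and co-well-poweredness of $\cA$ bounds the length of any strictly descending chain of $E$-quotients; this makes the injectivity defects stabilize as well. Organizing the bookkeeping so that one and the same ordinal $\kappa$ annihilates all defects for all of $\Delta$ at once, while legitimately applying boundedness to a filtration that is not literally a chain of monomorphisms, is the delicate point.

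Finally I would verify the universal property. Given $C\in\Delta^{\perp}$ and $g:A\to C$, I would extend $g$ along the chain stage by stage: at each successor step the defining pushout and coequalizer admit a unique compatible map into $C$ precisely because $C\perp f$ for every $f\in\Delta$, the pushout square and the coequalizer diagram being exactly the universal data that orthogonality to $f$ resolves uniquely, and at limit stages one uses the colimit property. Passing to the colimit produces the required factorization $\bar{g}:RA\to C$, and uniqueness throughout shows $\eta_A$ is the reflection unit. This is essentially the reflectivity theorem of Freyd and Kelly~\cite{Freyd-Kelly 1972}, to whom I would defer for the full transfinite cardinality estimates.
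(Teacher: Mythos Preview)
Your proposal is correct in spirit and in fact goes well beyond what the paper does: the paper's own proof of this statement consists solely of the citation \cite{Freyd-Kelly 1972}, with no argument given. Your sketch of the Freyd--Kelly transfinite small-object construction is an accurate high-level account of that reference, and your final deferral to \cite{Freyd-Kelly 1972} for the precise cardinality bookkeeping lands you exactly where the paper already is.

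One small caution on your sketch: when you write ``choose a single regular cardinal $\alpha$ bounding all the domains and codomains occurring in $\Delta$,'' this is not quite how Freyd and Kelly proceed, since $\Psi$ may be a proper class and the hypothesis only says each individual object of $\cA$ is bounded by \emph{some} cardinal. The actual argument treats $\Psi\subseteq E$ by a separate mechanism (essentially an $(E,M)$-closure step rather than the pushout step), exploiting that orthogonality to an $E$-morphism is purely a factorization condition; the small-object-style iteration with a uniform cardinal bound is applied only to the genuinely small set $\Phi$. You flag this as ``the delicate point,'' which is right, but your one-sentence resolution of it is not quite the one Freyd--Kelly use.
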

\begin{proof}
\cite{Freyd-Kelly 1972}
\end{proof}
\begin{theorem}\label{abstract1}
Let $\cA$ be a complete and co-complete category with a given proper factorization system $(\textit{E},\textit{M})$. Let $\cA$ be bounded with a generator, and co-well-powered. Let $\Gamma$ be a class of cylinders in the small category $\cC$, and let all but a set of these cylinders be cones. Then $[\cC,\cA]_{\Gamma}$ is a reflective subcategory of $[\cC,\cA]$.
\end{theorem}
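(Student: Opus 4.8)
The plan is to reduce the statement to the reflectivity theorem of Freyd and Kelly quoted immediately above --- the one asserting that $\Delta^{\perp}$ is a reflective subcategory of $\cA$ whenever $\Delta=\Phi\cup\Psi$ with $\Phi$ small and $\Psi\subseteq E$ --- applied with the functor category $[\cC,\cA]$ in the role of $\cA$. The connecting device is the preceding proposition, which already identifies $[\cC,\cA]_{\Gamma}=\Delta^{\perp}$ for the class $\Delta=\{\tilde{\alpha}\otimes A : A\in\cA,\ \alpha\in\Gamma\}$. Thus it suffices to (i) verify that $[\cC,\cA]$ satisfies the structural hypotheses of that theorem, and (ii) exhibit, for this particular $\Delta$, a class $\Delta'=\Phi\cup\Psi$ of the required shape with $(\Delta')^{\perp}=\Delta^{\perp}$.

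For (i) I would transport each piece of structure from $\cA$ along the pointwise principle, using that $\cC$ is small. Completeness and cocompleteness hold since limits and colimits in $[\cC,\cA]$ are computed pointwise. A proper factorization system $(E',M')$ is defined pointwise: a natural transformation lies in $E'$ (resp.\ $M'$) iff each of its components lies in $E$ (resp.\ $M$); the diagonal fill-in is the componentwise fill-in, natural by its uniqueness, and properness passes up because componentwise epis and monos are epis and monos in $[\cC,\cA]$. Co-well-poweredness follows because a quotient of $T$ is an $E'$-quotient, hence a $\cC$-indexed (so set-indexed) family of pointwise $E$-quotients, each drawn from a set. The only delicate inheritance is boundedness, and this is exactly where the generator $\cG$ of $\cA$ enters: $\alpha$-directed unions are pointwise, a morphism out of $T$ is a family of components indexed by the small category $\cC$, and bounding each value of $T$ by a regular cardinal (uniformly, using that $\cG$ bounds $\cA$) lets one factor through a small sub-union and amalgamate over $\cC$.

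For (ii), which is the crux, write $\Gamma=\Gamma_{c}\cup\Gamma_{0}$, where $\Gamma_{c}$ is the (possibly proper) class of cylinders that are cones and $\Gamma_{0}$ is the exceptional \emph{set} of remaining cylinders supplied by the hypothesis. Following Freyd and Kelly~\cite{Freyd-Kelly 1972}, the maps attached to cones should be placed in the left class: I would show that for $\alpha\in\Gamma_{c}$ every $\tilde{\alpha}\otimes A$ lies in $E'$, so that $\Psi=\{\tilde{\alpha}\otimes A : \alpha\in\Gamma_{c},\ A\in\cA\}\subseteq E'$ --- a legitimate choice even though $\Psi$ is a proper class, since the theorem asks only $\Psi\subseteq E'$. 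For the exceptional set I would take $\Phi=\{\tilde{\alpha}\otimes G : \alpha\in\Gamma_{0},\ G\in\cG\}$, which is small because $\Gamma_{0}$ is a set and $\cG$ is small. It then remains to prove $(\Delta')^{\perp}=\Delta^{\perp}$ for $\Delta'=\Phi\cup\Psi$; the nontrivial inclusion is that orthogonality to $\Phi$ forces, for each fixed $\alpha\in\Gamma_{0}$, orthogonality to $\tilde{\alpha}\otimes A$ for \emph{all} $A$. Here one uses the generator again: the canonical map $k_{A}$ from a coproduct of objects of $\cG$ to $A$ lies in $E$, copowers and the functor $\tilde{\alpha}\otimes(-)$ carry this to $E'$-maps and to coproducts of the $\tilde{\alpha}\otimes G$, and orthogonality is stable under coproducts, so the $A$-quantifier collapses to the generator.

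The main obstacle I anticipate is precisely this decomposition step: verifying that the cone-cylinders genuinely produce maps in the left class $E'$ (the orientation and the colimit defining $\tilde{\alpha}$ make this less transparent than it first appears), together with the generator reduction that keeps $\Phi$ small without enlarging the orthogonal complement. Once (i) and (ii) are in place, the quoted Freyd--Kelly theorem applies verbatim to $([\cC,\cA],(E',M'),\Delta')$ and yields that $(\Delta')^{\perp}=[\cC,\cA]_{\Gamma}$ is a reflective subcategory of $[\cC,\cA]$.
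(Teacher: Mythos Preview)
Your proposal is correct and is precisely the argument of Freyd and Kelly~\cite{Freyd-Kelly 1972}, which the paper simply cites without reproducing any details. You have accurately reconstructed both the reduction to the preceding theorem via $[\cC,\cA]_{\Gamma}=\Delta^{\perp}$ and the key decomposition $\Delta'=\Phi\cup\Psi$ with the cone-cylinders contributing to $\Psi\subseteq E'$ and the residual set handled by the generator; your caveat about verifying $\tilde{\alpha}\otimes A\in E'$ for cones is exactly the technical point Freyd and Kelly address.
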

\begin{proof}
\cite{Freyd-Kelly 1972}
\end{proof}

\section{Day's reflection theorem}\label{Day s reflection theorem}
Let $\cB$ be a symmetric monoidal closed category. Day's so-called
 reflection theorem~\cite{B.Day72} can be used to derive a monoidal closed
 structure in a reflective subcategory of $[\cB^{op},{\bf Set}]$.
In Chapter~\ref{PREASHEAVES MODELS OF Q C}, we shall utilize this to determine a strong
 monoidal functor which, in turns, determines a monoidal adjunction.
 Here, we review Day's reflection theorem.

\begin{definition}
\rm A class of objects $\cA\subseteq |\cB|$ is \textit{strongly generating} when $\cB(1,f):\cB(A,B)\rightarrow\cB(A,B')$ is an isomorphism for every $A\in\cA$ implies that $f:B\rightarrow B'$ is an isomorphism in $\cB$.

Dually we define the notion of \textit{strongly cogenerating} class of object by considering the maps $\cB(f,1)$.
\end{definition}
\begin{example}
The class $\cA\subseteq [\cB^{op},{\bf Set}]$, where $\cA=\{\cB(-,B): B\in |\cB|\}$ are representables, is strongly generating. To see this we must prove that if $(1,\alpha):[\cB^{op},{\bf Set}](\cB(-,B),F)\rightarrow [\cB^{op},{\bf Set}](\cB(-,B),G)$ is an isomorphism for every $B\in \cB$, where $(1,\alpha)=[\cB^{op},{\bf Set}](\cB(-,B),\alpha)$ acts on natural transformations as $(1,\alpha)(\beta)=\alpha\circ\beta$, then $\alpha:F\Rightarrow G$ is an isomorphism. To prove this, consider the following diagram:

$$\xymatrix@=25pt{
[\cB^{op},{\bf Set}](\cB(-,B),F)\ar[rrr]^{(1,\alpha)} & & &[\cB^{op},{\bf Set}](\cB(-,B),G)\ar[d]_{\phi_{G}}\\
 F(B)\ar[u]_{\phi_{F}^{-1}}\ar[rrr]_{\alpha_{B}}&& & G(B)
}$$
where
$\phi_{F}^{-1}:F(B)\rightarrow [\cB^{op},{\bf Set}](\cB(-,B),F)$ is defined $\phi_{F}^{-1}(x):\cB(-,B)\Rightarrow F$ as
$(\phi_{F}^{-1}(x))_{C}(g)=F(g)(x)$ and $\phi_{G}:[\cB^{op},{\bf Set}](\cB(-,B),G)\rightarrow G(B)$ is defined as $\phi_{G}(\beta)=\beta_{B}(1_{B})$.

Therefore, we have
$$(\phi_{G}\circ(1,\alpha)\circ\phi_{F}^{-1})(x)=\phi_{G}((1,\alpha)((\phi_{F}^{-1}(x))))=$$
$$=\phi_{G}(\alpha\circ \phi_{F}^{-1}(x))=(\alpha\circ \phi_{F}^{-1}(x))_{B}(1_{B})=\alpha_{B}\circ (\phi_{F}^{-1}(x))_{B}(1_{B})=$$
$$=\alpha_{B}((\phi_{F}^{-1}(x))_{B}(1_{B}))=\alpha_{B}(F(1_{B})(x))=\alpha_{B}(1_{FB})(x))=\alpha_{B}(x),$$
which means $\phi_{G}\circ(1,\alpha)\circ\phi_{F}^{-1}=\alpha_{B}$.
\end{example}
\begin{theorem}
(Day's reflection theorem)\label{DAY'S REFLECTION THEOREM}
Let $(\cB,\otimes,I,[-])$ be a symmetric monoidal closed category, and let
$\xymatrix{
\mathcal{B}\ar@<1ex>[r]^{F}& \mathcal{C} \ar@<1ex>[l]^{G}_{\bot}}$
be an adjunction from $\cB$ to $\cC$, where $G$ is full and faithful. Let $\cA\subseteq |\cB|$ be a strongly generating class in $\cB$ and $\cD\subseteq |\cC|$ be a strongly cogenerating class in $\cC$. Then the following are equivalent:
\begin{itemize}
\item[(0)] there exists a monoidal closed structure on $\cC$ for which $F$ is a monoidal strong functor.
\item[(a)]   $\eta:[B,GC]\rightarrow GF[B,GC]$, is an isomorphism for all $C\in\cC$, $B\in\cB$.
\item[(b)]   $\eta:[A,GD]\rightarrow GF[A,GD]$, is an isomorphism for all $A\in\cA$, $D\in\cD$.
\item[(c)]   $[\eta,1]:[GFB,GC]\rightarrow [B,GC]$, is an isomorphism for all $C\in\cC$, $B\in\cB$.
\item[(d)]   $F(\eta\otimes 1):F(B\otimes B')\rightarrow F(GFB\otimes B')$, is an isomorphism for all $B,B'\in\cB$.
\item[(e)]   $F(\eta\otimes 1):F(B\otimes A)\rightarrow F(GFB\otimes A)$, is an isomorphism for all $A\in\cA$, $B\in\cB$.
\item[(f)]   $F(\eta\otimes \eta):F(B\otimes B')\rightarrow F(GFB\otimes GFB')$, is an isomorphism for all $B,B'\in\cB$.
\end{itemize}

\end{theorem}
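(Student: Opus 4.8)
The plan is to establish a web of implications showing all seven conditions equivalent, organised around the observation that, since $G$ is full and faithful, the counit $\varepsilon$ is an isomorphism by (the dual of) Proposition~\ref{F FULL FAITHFUL IFF ETA IS AN ISO}; consequently an object $X\in\cB$ satisfies ``$\eta_X$ iso'' if and only if $X$ is \emph{local}, i.e. $\cB(\eta_B,X)$ is a bijection for every $B$ (equivalently $X\perp\eta_B$ for all $B$). Two implications are immediate: (a)$\Rightarrow$(b) and (d)$\Rightarrow$(e) are mere restrictions of the quantifiers to the subclasses $\cA$ and $\cD$. The remaining work splits into the ``global'' equivalences among (0), (a), (c), (d), (f), proved by adjunction calculus, and the two ``upgrading'' steps (e)$\Rightarrow$(c) and (b)$\Rightarrow$(d), which are where the hypotheses that $\cA$ is strongly generating and $\cD$ is strongly cogenerating are actually consumed.

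For the global part I would first convert each condition into a statement about representables. Using the tensor--hom adjunction $\cB(X\otimes Y,Z)\cong\cB(X,[Y,Z])$ together with the main adjunction $\cC(FB,C)\cong\cB(B,GC)$, one computes that $F(\eta_B\otimes 1_{B'})$ is invertible (tested against all $C$) precisely when $\cB(B',[\eta_B,GC])$ is a bijection, that this transposes to $\cB(\eta_B,[B',GC])$, and that $[\eta_B,1]\colon[GFB,GC]\to[B,GC]$ is invertible precisely when $F(1_{B'}\otimes\eta_B)$ is; so the symmetry $\sigma$ identifies (c) with (d). Likewise $F(\eta_B\otimes 1_{B'})$ is invertible for all $B$ iff $[B',GC]$ is local, which by the remark above is exactly (a); this gives (a)$\Leftrightarrow$(c)$\Leftrightarrow$(d), and factoring $\eta_B\otimes\eta_{B'}=(1\otimes\eta_{B'})(\eta_B\otimes 1)$ together with symmetry yields (d)$\Leftrightarrow$(f). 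Finally, for (0)$\Leftrightarrow$(f) I would define the candidate structure on $\cC$ by $C\otimes_\cC C'=F(GC\otimes GC')$, unit $FI$, and internal hom determined by $G[C,C']_\cC=[GC,GC']$ (which lands in the image of $G$ exactly by (a)); the comparison map making $F$ strong monoidal is $F(\eta\otimes\eta)$, invertible iff (f) holds, while coherence is inherited from $\cB$ through the reflection.

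The crux is the upgrading steps. For (e)$\Rightarrow$(c): condition (e) says $\cB(A,[\eta_B,GC])$ is a bijection for every $A\in\cA$, every $B$, and every $C$; since $\cA$ is strongly generating, a morphism seen as a bijection by all $\cB(A,-)$ with $A\in\cA$ is itself invertible, so $[\eta_B,GC]$ is invertible, which is (c). For (b)$\Rightarrow$(d) both hypotheses are needed at once: rewriting, (b) asserts $[A,GD]$ is local, i.e. $\cB(\eta_B,[A,GD])\cong\cB(A,[\eta_B,GD])$ is a bijection for all $A\in\cA$, $B$, $D\in\cD$; strong generation then forces $[\eta_B,GD]$ invertible for all $B$ and all $D\in\cD$, whence $\cB(B',[\eta_B,GD])$ is a bijection for every $B'$; transporting back through the adjunction, $\cC(F(\eta_B\otimes 1_{B'}),D)$ is a bijection for all $D\in\cD$, and strong cogeneration of $\cD$ upgrades this to invertibility of $F(\eta_B\otimes 1_{B'})$ itself, i.e. (d). I expect this last implication to be the main obstacle, both because it is the only place where the two generation hypotheses must be combined in the correct order and because it requires keeping the several tensor--hom and adjunction transposes strictly aligned; careful bookkeeping of which variable is quantified at each transposition is essential. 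Chaining (0)$\Rightarrow$(a)$\Rightarrow$(b)$\Rightarrow$(d)$\Rightarrow$(e)$\Rightarrow$(c)$\Rightarrow$(a) together with (d)$\Leftrightarrow$(f)$\Leftrightarrow$(0) then closes the web and proves the theorem.
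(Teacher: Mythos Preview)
Your proposal is correct and rests on the same two ingredients as the paper's proof: the tensor--hom/adjunction transpositions that interconvert $\cB(-,[\eta,GC])$, $\cB(\eta,[-,GC])$, and $\cC(F(\eta\otimes 1),-)$, together with the strong (co)generation hypotheses to upgrade from the subclasses $\cA,\cD$. The organization differs, however. The paper runs a single cycle $(a)\Rightarrow(b)\Rightarrow(e)\Rightarrow(c)\Rightarrow(d)\Rightarrow(f)\Rightarrow(a)$, applying cogeneration first (in $(b)\Rightarrow(e)$) and generation second (in $(e)\Rightarrow(c)$); your $(b)\Rightarrow(d)$ does both in one step, in the opposite order. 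Your separation into ``global'' equivalences $(a)\Leftrightarrow(c)\Leftrightarrow(d)\Leftrightarrow(f)$ versus ``upgrading'' steps is conceptually cleaner than the paper's cycle, since it isolates exactly where each hypothesis is consumed.

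One point deserves care: your claim that factoring $\eta\otimes\eta=(1\otimes\eta)(\eta\otimes1)$ gives $(f)\Rightarrow(d)$ is not automatic, since invertibility of a composite does not give invertibility of its factors. The missing observation is that $\eta_{GC}$ is always an isomorphism (this you do note at the outset, as a consequence of $\varepsilon$ being iso), so applying $(f)$ with $B'$ replaced by $GFB'$ and cancelling the iso $1\otimes\eta_{GFB'}$ yields $F(\eta_B\otimes 1_{GFB'})$ invertible; combining with $(f)$ again then gives $(d)$. The paper sidesteps this by proving $(f)\Rightarrow(a)$ directly, exhibiting an explicit inverse $\nu$ to $\eta_{[B,GC]}$ built from the evaluation map and $F(\eta\otimes\eta)^{-1}$; this is more laborious but avoids the cancellation trick. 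For $(0)\Rightarrow(f)$, note that the paper simply cites Kelly~\cite{KELLY 1969}, so your brief treatment of that direction is in line with the source.
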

\begin{proof}
$(a)\Rightarrow (b)$ Since $\cA\subseteq\cB$ and $\cD\subseteq \cC$.\\
$(b)\Rightarrow (e)$
$$\xymatrix@=25pt{
\cC(F(GFB\otimes A),D)\ar[d]_{\mbox{\tiny{adjunction}}} \ar[rrr]^{\cC(F(\eta\otimes 1),1)} && &\cC(F(B\otimes A),D)\ar[d]_{\mbox{\tiny{adjunction}}}\\
 \cB(GFB\otimes A,GD)\ar@{}[urrr]^{(4)}\ar[d]_{\mbox{\tiny{closed}}} \ar[rrr]^{\cB(\eta\otimes 1,1)} &&  &\cB(B\otimes A,GD)\ar[d]_{\mbox{\tiny{closed}}}\\
 \cB(GFB,[A,GD])\ar@{}[urrr]^{(3)}\ar[d]_{\mbox{\tiny{$\eta$ iso by hypothesis}}}\ar@{}[d]^{\cB(1\otimes\eta)} \ar[rrr]^{\cB(\eta,1)} && &\cB(B,[A,GD])\ar[d]_{\mbox{\tiny{$\eta$ iso by hypothesis}}}\ar@{}[d]^{\cB(1\otimes\eta)}\\
 \cB(GFB,GF[A,GD])\ar@{}[urrr]^{(2)}\ar[d]_{\mbox{\tiny{$G$ fully faithful}}} \ar[rrr]^{\cB(\eta,1)} & & &\cB(B,GF[A,GD])\\
 \cC(FB,F[A,GD])\ar@{}[urr]^{(1)}\ar[urrr]_{\mbox{\tiny{adjunction}}}&& &
}$$
(1) commutes since we have $\theta(f)=G(f)\circ \eta_{B}=(\cB(\eta,1)\circ G_{FB,F[A,GD]})(f)$
$$\xymatrix@=25pt{
\cB(GFB,GF[A,GD])\ar[rrr]^{\cB(\eta,1)} & & &\cB(B,GF[A,GD])\\
 \cC(FB,F[A,GD])\ar[u]_{G}\ar[urrr]_{\theta}&& &
}$$
(2) by functoriality; (3) and (4) by naturality. The vertical and bottom arrows are isos then the top is an isomorphism. Hence since $D\subseteq\cC$ is strongly cogenerating we have that $F(\eta\otimes 1):F(B\otimes A)\rightarrow F(GFB\otimes A)$ is an isomorphism for every $A\in\cA$ and $B\in\cB$.\\

$(e)\Rightarrow (c)$\\
$$\xymatrix@=25pt{
\cC(F(GFB\otimes A),C)\ar[d]_{\mbox{\tiny{adjunction}}} \ar[rrr]^{\cC(F(\eta\otimes 1),1)} && &\cC(F(B\otimes A),C)\ar[d]_{\mbox{\tiny{adjunction}}}\\
 \cB(GFB\otimes A,GC)\ar@{}[urrr]^{(2)}\ar[d]_{\mbox{\tiny{closed}}} \ar[rrr]^{\cB(\eta\otimes 1,1)} &&  &\cB(B\otimes A,GC)\ar[d]_{\mbox{\tiny{closed}}}\\
 \cB(A,[GFB,GC])\ar@{}[urrr]^{(1)} \ar[rrr]^{\cB(1,[\eta,1])} && &\cB(A,[B,GC])
 }$$

(1) and (2) commute by naturality. The top arrow is an isomorphism by hypothesis, also the vertical arrows are isomorphism, this implies that the bottom arrow is an iso and since $\cA$ is strongly generating then $[\eta,1]:[GFB,GC]\rightarrow[B,GC]$ is an isomorphism as well.\\

$(c)\Rightarrow (d)$ We use the same diagram with $A\in\cB$.\\

$(d)\Rightarrow (f)$\\
By functoriality
$$\xymatrix@=25pt{
F(B\otimes B')\ar[rr]^{F(\eta\otimes \eta)}\ar[rd]_{F(\eta\otimes 1)} & &F(GFB\otimes GFB')\\
  & F(GFB\otimes B')\ar[ru]_{F(1\otimes \eta)}&
}$$

$(f)\Rightarrow (a)$\\
We want to find an arrow $\nu:GF[B,GC]\rightarrow [B,GC]$ such that $\eta\circ\nu=\nu\circ\eta=1$. From naturality of the following diagram

$$\xymatrix@=25pt{
\cB(GF[B,GC]\otimes B,GC)\ar[rr]^{\phi} \ar[d]^{\cB(\eta\otimes 1,1)}& &\cB(GF[B,GC],[B,GC])\ar[d]^{\cB(\eta,1)}\\
\cB([B,GC]\otimes B,GC)\ar[rr]^{\phi} &  &\cB([B,GC],[B,GC])
}$$

\noindent
we obtain $\cB(\eta\otimes 1,1)(\phi^{-1}(\nu))=\phi^{-1}(\cB(\eta,1)(\nu))$ which implies that $\phi^{-1}(\nu)\circ(\eta\otimes 1)=\phi^{-1}(\nu\circ\eta)$. On the other hand we have that
\begin{center}
$1= \nu\circ\eta$ if and only if $\phi^{-1}(1)=\phi^{-1}( \nu\circ\eta)$ if and only if $ev=\phi^{-1}(\nu)\circ(\eta\otimes 1).$
\end{center}
 Therefore by uniqueness it is enough to find an arrow $x$ of the correct type which is a solution of the following equation
$$ev=x\circ(\eta\otimes 1)$$
for then $x=\phi^{-1}(\nu)$, i.e., $\phi(x)=\nu$. We choose $x=G(\theta^{-1}(ev))GF(\eta\otimes\eta)\eta (1\otimes\eta)$ satisfying the following diagram

$$\xymatrix@=25pt{
[B,GC]\otimes B\ar[dd]_{\eta\otimes 1}\ar[rdd]^{\eta\otimes\eta}\ar[rrd]_{\eta} \ar[rr]^{ev}& &GC\ar@{}[dl]_{(1)}\\
  & &GF([B,GC]\otimes B)\ar[u]_{G(\theta^{-1}(ev))}\\
 GF[B,GC]\otimes B \ar[r]_{1\otimes\eta}&GF[B,GC]\otimes GFB\ar[r]_{\eta}&GF(GF[B,GC]\otimes GFB)\ar[u]_{GF(\eta\otimes\eta)}
}$$

To justify (1), let $\phi:\cB([B,GC]\otimes B,GC)\rightarrow \cB([B,GC],[B,GC])$ be the tensor adjunction. By definition we have $ev=\phi^{-1}(1_{[B,GC]})$.
Now consider the adjunction between functors $F$ and $G$,
$$\theta:\cC(F([B,GC]\otimes B),C)\rightarrow \cB([B,GC]\otimes B,GC)$$
and take $e'=\theta^{-1}(ev)$. Then we have that $G(e')\circ \eta_{{[B,GC]\otimes B}} =\theta(e')=\theta(\theta^{-1}(ev))=ev$.\\
It remains to prove that $\eta\circ\nu=1$. Since $G:\cC\rightarrow\cB$ is  a fully faithful functor, there is a unique $f$ such that $G(f)=\eta\circ\nu$. Also we know that $\nu\circ\eta=1$
Hence, we have
$$G(f)\circ\eta=(\eta\circ\nu)\circ\eta=\eta\circ(\nu\circ\eta)=\eta\circ(1)=\eta=G(1)\circ\eta.$$
Finally, from the adjunction
$\theta:\cC(F[B,GC],F[B,GC]\rightarrow\cB([B,GC],GF[B,GC])$
we obtain $\theta(f)=G(f)\circ\eta_{[B,GC]}$, which implies that $\theta(f)=\theta(1)$, i.e., $f=1$. Therefore $\eta\circ\nu=G(1)=1$.

$(0)\Rightarrow (f)\,\,\,$ See~\cite{KELLY 1969}.\\

$(f)\Rightarrow (0)$\\
\noindent
\textbf{The monoidal closed structure induced on $\cC$:}\\
Now using Theorem~\ref{DAY'S REFLECTION THEOREM} we are able to induce a monoidal structure on the category $\cC$.
Define
$C\tilde{\otimes}C'=F(GC\otimes GC')$ \ and \
$f\tilde{\otimes}g=F(Gf\otimes Gg)$.
Also define
$\tilde{I}=FI$ and
$(F,m)$ is monoidal functor, where
$m_{A,B}:F(A)\tilde{\otimes}F(B)\rightarrow F(A\otimes B)$ is given by: $m_{A,B}=(F(\eta\otimes\eta))^{-1}$ with
$(F(\eta\otimes\eta))^{-1}:F(GFA\otimes GFB)\rightarrow F(A\otimes B)$.\\
The tensor has right adjoint given by the following formula $[C,E]_{\cC}=F[GC,GE]$, $C,E\in |\cC|$
$$\cC(D\tilde{\otimes}C,E)=\cC(F(GD\otimes GC),E)\cong\cB(GD\otimes GC,GE)$$
$$\cong\cB(GD,[GC,GE])\cong\cB(GD, GF[GC,GE])\cong\cC(D,F[GC,GE])\cong\cC(D,[C,E]_{\cC}).$$
In order to obtain a monoidal structure on the category $\cC$ we define natural isomorphisms $\tilde{\lambda}$, $\tilde{\rho}$ and $\tilde{\alpha}$ determined by the following diagrams:
$$\xymatrix@=25pt{
\tilde{I}\tilde{\otimes}C=F(GFI\otimes GC)\ar[rr]^{\tilde{\lambda}}\ar[d]_{F(\eta_{I}\otimes 1)^{-1}}& & C\\
 F(I\otimes GC)\ar[rr]_{F(\lambda)} && FGC\ar[u]_{\epsilon_{C}}
}$$

$$\xymatrix@=25pt{
C\tilde{\otimes}\tilde{I}=F(GC\otimes GFI)\ar[rr]^{\tilde{\rho}}\ar[d]_{F(1\otimes\eta_{I})^{-1}}& & C\\
 F(GC\otimes I)\ar[rr]_{F(\rho)} && FGC\ar[u]_{\epsilon_{C}}
}$$

$$\xymatrix@=25pt@u@R=25pt{&\\
\makebox[0in][r]{$(C\tilde{\otimes} C')\tilde{\otimes} C''={}$}F(GF(GC\otimes GC')\otimes GC'')\ar[r]^{\tilde{\alpha}}\ar[d]_<>(.5){F(\eta\otimes 1)^{-1}} & \makebox[0in][r]{$C\tilde{\otimes} (C'\tilde{\otimes} C'')={}$}F((GC\otimes GF(GC'\otimes GC'))\\
F((GC\otimes GC')\otimes GC'')\ar[r]_{F(\alpha)} & F((GC\otimes (GC'\otimes GC''))\ar[u]_<>(.5){F(1\otimes\eta)}
}$$
For example we want to check that:

$$\xymatrix@=25pt{
(C\tilde{\otimes}\tilde{I})\tilde{\otimes}C'\ar[rr]^{\tilde{\alpha}}\ar[dr]_{\tilde{\rho}\tilde{\otimes}1}& &C\tilde{\otimes} (\tilde{I}\tilde{\otimes} C')\ar[dl]^{1\tilde{\otimes}\tilde{\lambda}}\\
 & C\tilde{\otimes}C'&
}$$
\nopagebreak
this diagram is the center (F) of the following diagram:

\begin{tiny}
\[
\xymatrix@u@R=-85pt@C=25pt{
 &&&&&&&&\\
 &&&&&&&&\\
  &F((GC\otimes I)\otimes
GC')\ar@{}[dddddddddrr]^{A}\ar@{}[uurrrrrr]^{B}\ar@/^20ex/[rrrrrr]^{F(\alpha)}\ar[ddddddddd]^{F(\eta_{GC\otimes
I}\otimes 1)}\ar[ldddddddddddddddddd]_{F(\rho\otimes 1)}&&F((GC\otimes GFI)\otimes
GC')\ar@{}[dddddddddrr]^{C}\ar[ll]^{F((1\otimes\eta)\otimes
1)^{-1}}\ar[rr]_{F(\alpha)}&&F(GC\otimes (GFI\otimes
GC'))\ar@{}[dddddddddrr]^{E}\ar[ddddddddd]_{F(1\otimes\eta)}&&F(GC\otimes
(I\otimes GC'))\ar[ll]^{F(1\otimes(\eta\otimes
1))}\ar[ddddddddd]_{F(1\otimes\eta_{I\otimes
GC'})}\ar[rdddddddddddddddddd]^{F(1\otimes\lambda)}&\\
  &&&&&&&&\\
  &&&&&&&&\\
  &&&&&&&&\\
  &&&&&&&&\\
  &&&&&&&&\\
  &&&&&&&&\\
  &&&&&&&&\\
  &&&&&&&&\\
&F(G(F(GC\otimes I))\otimes
GC')\ar@{}[ddddddddd]^{D}\ar@{}[dddddddddrrrrrr]^{F}\ar[rddddddddd]^{F(G(F(\rho))\otimes
1)}&&F(G(F(GC\otimes GFI))\otimes
GC')\ar@{}[dddddddddl]^{H}\ar[rddddddddd]_{\tilde{\rho}\tilde{\otimes}1}\ar[ll]^{F(G(F(1\otimes\eta_{I})^{-1})\otimes
1)}\ar[rr]_{\tilde{\alpha}}\ar[uuuuuuuuu]_{F(\eta\otimes 1)^{-1}}&&F(GC\otimes
GF(GFI\otimes
GC'))\ar@{}[dddddddddr]^{I}\ar[lddddddddd]^{1\tilde{\otimes}\tilde{\lambda}}\ar[rr]_{F(1\otimes
G(F(\eta\otimes 1)^{-1}))}&&F(GC\otimes GF(I\otimes GC'))\ar[lddddddddd]_{F(1\otimes
GF(\lambda))}\ar@{}[ddddddddd]^{G}&\\
&&&&&&&&\\
&&&&&&&&\\
&&&&&&&&\\
&&&&&&&&\\
&&&&&&&&\\
&&&&&&&&\\
&&&&&&&&\\
&&&&&&&&\\
F(GC\otimes GC')\ar@/_15ex/[rrrr]_<>(.8){1}\ar[rr]^{F(\eta_{GC}\otimes
1)}&&F(G(F(GC))\otimes GC')\ar[rr]^{F(G(\varepsilon_{C})\otimes 1)}&&F(GC\otimes
GC')&&F(GC\otimes GF(GC'))\ar[ll]_{F(1\otimes G(\varepsilon_{C'}))}&&F(GC\otimes
GC')\ar[ll]_{F(1\otimes\eta_{GC'})}\ar@/^15ex/[llll]^<>(.8){1}\\
&&&&&&&&
}
\]
\end{tiny}

Diagram A: By naturality of $\eta$ with $1\otimes\eta_{I}:GC\otimes I\rightarrow GC\otimes GFI$, then by functoriality of $-\otimes GC'$ and F we obtain:
$$\xymatrix@=25pt{
F((GC\otimes I)\otimes GC')\ar[rrr]^{F(\eta_{GC\otimes I}\otimes 1)}\ar[d]_{F((1\otimes\eta_{I})\otimes 1)}&& & F(GF(GC\otimes I)\otimes GC')\ar[d]^{F(GF(1\otimes\eta_{I})\otimes 1)}\\
 F((GC\otimes GFI)\otimes GC')\ar[rrr]_{F(\eta_{GC\otimes GFI}\otimes 1)} &&& F(GF(GC\otimes GFI)\otimes GC')
}$$

Since $F(1_{GC}\otimes\eta_{I})$, $F(\eta_{GC\otimes I}\otimes 1_{GC'})$ and $F(\eta_{GC\otimes GFI}\otimes 1_{GC'})$ are invertible map this implies that $F((1_{GC}\otimes\eta_{I})\otimes 1_{GC'})$ is invertible as well.

Diagram D: by naturality of $\eta$ with $\rho:GC\otimes I \rightarrow GC$ we have that $GF(\rho)\circ \eta_{GC\otimes I}=\eta_{GC}\circ \rho$ then by functoriality of $-\otimes GC'$ and $F$.

Diagram H: by definition we have $\tilde{\rho}=F(1\otimes\eta)^{-1};F(\rho);\varepsilon$, then we apply functor $-\tilde{\otimes}-=F(G(-)\otimes G(-))$ to the pair of arrows $(\tilde{\rho}, 1_{C'})$.

Diagram C: by definition of $\tilde{\alpha}$.

Diagram B: by considering the diagram $A$,  the map $F((1_{GC}\otimes\eta_{I})\otimes 1_{GC'})^{-1}$ makes sense, also by naturality of $\alpha$ with $1_{GC}$, $\eta_{I}$ and $1_{GC'}$, and then compose with $F$.

Diagram E:  this is analogous to diagram A.  We consider naturality of $\eta$ with the map $\eta_{I}\otimes 1:I\otimes GC'\rightarrow GFI\otimes GC'$, then compose with the functor $GC\otimes -$ and  $F$. Since $F(\eta_{I}\otimes 1)$ is invertible then $F(1\otimes GF(\eta_{I}\otimes 1))$ is invertible and we have that:
$$F(1\otimes \eta_{I\otimes GC'})=F(1\otimes G((F(\eta_{I}\otimes 1)^{-1}))\circ F(1\otimes \eta_{GFI\otimes GC'})\circ F(1\otimes(\eta_{I}\otimes 1))$$

Diagram G: this is analogous to diagram D. Naturality of $\eta$ with $\lambda :I\otimes GC'\rightarrow GC'$ then compose with $GC\otimes-$ and $F$.

At the bottom of the diagram we have an adjoint equation: $\eta_G\circ G(\varepsilon)=1$.

We can also define $\rho$ on the image of $F$ in the following way:

$$\xymatrix@=25pt{
FB\tilde{\otimes}\tilde{I}=F(GFB\otimes GFI)\ar[rr]^{\tilde{\rho}}\ar[d]_{F(\eta_{B}\otimes\eta_{I})^{-1}}& & FB\\
 F(B\otimes I)\ar[rru]_{F(\rho)} &&
}$$

\noindent
This coincides with the above definition:

$$\xymatrix@=25pt{
F(GFB\otimes GFI)\ar[dd]_{F(\eta_{B}\otimes\eta_{I})^{-1}}\ar[rr]^{F(1_{GFB}\otimes\eta_{I})^{-1}}& & F(GFB\otimes I)\ar[d]_{F(\rho)}\\
  &&F(G(F(B)))\ar[d]_{\varepsilon_{FB}} \\
 F(B\otimes I)\ar[rr]_{F(\rho)}&&FB
}$$

\noindent
To see this we have that:\\
$F(\rho)\circ F(\eta_{B}\otimes\eta_{I})^{-1}=\varepsilon_{FB}\circ F(\rho)\circ F(1_{GFB}\otimes\eta_{I})^{-1}$ iff\\
$F(\rho)=\varepsilon_{FB}\circ F(\rho)\circ F(1_{GFB}\otimes\eta_{I})^{-1}\circ F(\eta_{B}\otimes\eta_{I})$ iff\\
$F(\rho)=\varepsilon_{FB}\circ F(\rho)\circ F(1_{GFB}\otimes\eta_{I})^{-1}\circ F(1_{GFB} \otimes\eta_{I})\circ F(\eta_{B}\otimes 1_{I})$ iff\\
$F(\rho)=\varepsilon_{FB}\circ F(\rho)\circ F(\eta_{B}\otimes 1_{I})$ iff\\
$\varepsilon_{FB}^{-1}\circ F(\rho)=F(\rho)\circ F(\eta_{B}\otimes 1_{I})$ iff\\
$F(\eta_{B})\circ F(\rho)=F(\rho)\circ F(\eta_{B}\otimes 1_{I})$ iff\\
$F(\eta_{B}\circ\rho)=F(\rho\circ(\eta_{B}\otimes 1_{I}))$ \\
where the last two equations are justified by naturality of $\rho$ with $\eta_{B}:B\rightarrow GFB$ and, since $G$ is full and faithful,  we have that $\varepsilon$ is an isomorphism and $\varepsilon_{FB}^{-1}=F(\eta_{B})$.

We can also define an associativity isomorphism on the image of $G$
$$\tilde{\alpha}:(GC\tilde{\otimes} GC')\tilde{\otimes}GC''\rightarrow GC\tilde{\otimes}( GC'\tilde{\otimes}GC'')$$
in the following way:

$$\xymatrix@=25pt@C=10pt{
F(G(F(G(FB)\otimes G(FB')))\otimes G(FB''))\ar[rrr]^{\tilde{\alpha}}\ar[d]_{F(G(F(\eta\otimes\eta)^{-1})\otimes 1)}&& & F(GFB\otimes GF(GFB'\otimes GFB''))\\
F(GF(B\otimes B')\otimes GFB'')\ar[d]_{F(\eta\otimes\eta)^{-1}}& && F(GFB\otimes GF(B'\otimes B''))\ar[u]_{F(1\otimes GF(\eta\otimes\eta))}\\
F((B\otimes B')\otimes B'')\ar[rrr]_{F(\alpha)}&&&F(B\otimes(B'\otimes B''))\ar[u]_{F(\eta\otimes\eta)}
}$$

$$\xymatrix@=25pt@C=10pt{
F(G(F(G(FB)\otimes G(FB')))\otimes G(FB''))\ar[d]_{F(G(F(\eta\otimes\eta)^{-1})\otimes 1)}\ar[rrr]^<>(.5){F(\eta\otimes 1)^{-1}}&&&F((GFB\otimes GFB')\otimes GFB'')\ar[d]_{F(\alpha)}\\
F(GF(B\otimes B')\otimes GFB'')\ar[d]_{F(\eta\otimes\eta)^{-1}}\ar@{}[urr]_{A}&&&F(GFB\otimes (GFB'\otimes GFB''))\ar[d]_{F(1\otimes\eta)}\\
F((B\otimes B')\otimes B'')\ar[d]_{F(\alpha)}\ar[uurrr]_{\quad F((\eta\otimes\eta)\otimes\eta)}&&&F(GFB\otimes GF(GFB'\otimes GFB''))\\
F(B\otimes(B'\otimes B''))\ar@{}[uurr]^{B}\ar[rrr]_{F(\eta\otimes\eta)}\ar[uurrr]_{\quad F(\eta\otimes(\eta\otimes\eta))}&&&F(GFB\otimes GF(B'\otimes B''))\ar[u]_{F(1\otimes GF(\eta\otimes\eta))}\ar@{}[ul]_{C}
}$$

\noindent
Diagram A  commutes by naturality of $\eta$ with $\eta\otimes\eta:B\otimes B'\rightarrow GFB\otimes GFB'$: we apply $-\otimes\eta_{B''}$
$$\xymatrix@=25pt{
(B\otimes B')\otimes B''\ar[r]_{\eta\otimes\eta}\ar[d]_{(\eta\otimes\eta)\otimes\eta}&GF(B\otimes B')\otimes GFB''\ar[d]_{GF(\eta\otimes\eta)\otimes 1}\\
(GFB\otimes GFB')\otimes GFB''\ar[r]_{\eta\otimes 1}&GF(GFB\otimes GFB')\otimes GFB''
}$$
and then we apply functor $F$.\\
Diagram B commutes by naturality of the  isomorphism $\alpha$\\
Diagram C is analogous to diagram A: it commutes by naturality of $\eta$ with $\eta\otimes\eta:B'\otimes B''\rightarrow GFB'\otimes GFB''$, then we apply $\eta\otimes-$
and finally we evaluate the functor $F$ on this diagram.
\end{proof}

\section{Application of Day's reflection theorem to presheaves}\label{abstract4}
Now we consider a particular case of Theorem~\ref{DAY'S REFLECTION THEOREM} studied in~\cite{B.Day73}. Let us consider
$\xymatrix{
[\cB^{op},{\bf Set}]\ar@<1ex>[rr]^{F}& &\mathcal{C} \ar@<1ex>[ll]^{G}_{\bot}}$ with $G$ fully faithful and
where $([\cB^{op},{\bf Set}],\otimes, I)$ has the monoidal structure induced by the convolution product (defined in Proposition~\ref{monoidal convolution}).
When $A=\cB(-,B)$ is a representable functor, by the Yoneda Lemma we have that:
\begin{equation}
[A,G(C)]=\int_{B'}[\cB(B',B),G(C)(-\otimes B')]\cong G(C)(-\otimes B)\label{CONDITION OF ENRICHMENT HOM}
\end{equation}
Now suppose there exists $C'\in\cC$ such that
\begin{equation}
 G(C)(-\otimes B)\cong G(C')\label{CONDITION OF ENRICHMENT}
\end{equation}

\noindent
is a natural isomorphism between functors.
Let us explicitly call $\phi$ the composition of these two isomorphisms~(\ref{CONDITION OF ENRICHMENT HOM}) and~(\ref{CONDITION OF ENRICHMENT}) above: $\phi:[A,G(C)]\rightarrow G(C')$. Then we have:
$$\xymatrix@=25pt{
[A,G(C)]\ar[rr]^{\phi}\ar[d]^{\eta_{[A,G(C)]}}&&G(C')\ar[d]^{\eta_{G(C')}}\\
GF[A,G(C)]\ar[rr]^{GF(\phi)}&&GF(G(C'))
}$$
From this diagram we conclude that the condition of $\eta_{[A,G(C)]}$ being an isomorphism is equivalent to the condition of $\eta_{G(C')}$ of being an isomorphism. Thus, since $G$ is fully faithful we have by Proposition~\ref{F FULL FAITHFUL IFF ETA IS AN ISO} that $\eta\ast G$ is always an isomorphism which implies that $\eta_{[A,G(C)]}$ is an isomorphism as well. Therefore, the adjunction is monoidal if and only if condition~(\ref{CONDITION OF ENRICHMENT}) is satisfied.\\

In the particular case when $G$ is an inclusion this translates to the condition that there exists an isomorphism $C(-\otimes B)\cong C'$ where $C\in\cC\subseteq [\cB^{op},{\bf Set}]$, $B\in\cB$ for some $C'\in \cC$.

\begin{remark}\label{GAMMA CLASS OF CONES}
\rm
Consider $\cC=[\cB^{op},{\bf Set}]_{inf}$. Suppose we have two functors $F$ and $H$  isomorphic in  $[\cB^{op},{\bf Set}]$. Then $F$ preserves limits if and only if $H$ preserves limits. Therefore the condition $C(-\otimes B)\cong C'\in \cC$ implies that $C(-\otimes B)$ preserves limits, i.e., $C(-\otimes B)\in [\cB^{op},{\bf Set}]_{inf}$. We have by hypothesis that $C\in\cC$ and hence it depends on whether the functor $-\otimes B:\cB^{op}\rightarrow \cB^{op}$ preserves limits. The same is valid if we consider not all but some specific limits: a certain class $\Gamma$.
\end{remark}

\chapter[Presheaf models]
         {Presheaf models of a quantum lambda calculus}\label{PREASHEAVES MODELS OF Q C}

In this chapter we study a categorical model for the quantum lambda calculus of
Selinger and Valiron~\cite{SelVal 2006A}. We focus on exploring the existence of such a model using presheaf categories.

In \cite{Sel 2004}, Selinger defined an elementary quantum flow chart language and gave a denotational model in terms of superoperators. This axiomatic framework captures the behavior and interconnection between the basic quantum computation concepts such as the manipulation of quantum bits by considering two basic operations: measurement and unitary transformation in a lower-level language. In particular, the semantics of this framework is very well understood: each program corresponds to a concrete superoperator.

Higher-order functions are functions that can input or output other functions. In order to deal with higher-order functions, Selinger and Valiron introduced, in several papers \cite{SelVal 2006B}, \cite{SelVal 2008}, \cite{SelVal 2009} a typed lambda calculus for quantum computation and investigated several aspects of its semantics. In this context, they combined two very well-established languages in the literature of computer science: the intuitionistic fragment of Girard's linear logic~\cite{GIRARD 87} and the computational monads introduced by Moggi in~\cite{Moggi88}.

The type system of Selinger and Valiron's quantum lambda calculus is based on intuitionistic linear logic, where the rules of weakening and contraction are controlled in a sensitive way by an operator $``!"$ called ``of course" or ``exponential". This operator creates a bridge between two different kinds of computation.
More precisely, a value of a general type $A$ can only be used once,
 whereas a value of type $!A$ can be copied and used multiple
 times. The impossibility of copying quantum information is one of the
 fundamental differences between quantum information and classical
 information, and is known as the {\em no-cloning property}.
 From the logical perspective, it therefore seems natural to relate quantum computation and linear logic. Note that the operator
``$!$'' satisfies the properties of a {\em comonad}.

Since we have higher-order functions, as well as probabilistic operations
(namely quantum measurement), the language needs to address the question of evaluation strategies. Otherwise, in some concrete situation, it would be impossible to give a coherent outcome every time for identical circumstances. In order to deal with this issue, Selinger and Valiron chose to incorporate a methodology \textit{\`a la} Moggi by making the distinction between values and computations. Moggi \cite{Moggi88} proposed the notion of a monad as an appropriate tool for interpreting computational behavior. At the level of the denotational model, this will be reflected by a strong monad.

To summarize, let us say that the exponential operator $!$ will be modelled by a monoidal comonad arising from an adjunction between a cartesian category (accounting for classical duplicability) and a symmetric monoidal category (accounting for quantum non-duplicability) while the manipulation of the probabilistic aspect of the quantum computation is handled by a monoidal monad. The result of combining these two methodologies is what Selinger and Valiron call a {\em linear category for duplication}.

This is not the first time that this interaction between a monad and a comonad has been invoked in order to express denotational aspects of a system in computer science (see \cite{BENTON 1996} for example). But what is new in Selinger and Valiron's work, is putting this interaction in the context of quantum computation.

 In this thesis, we will focus exclusively on the categorical aspects
 of the model construction. Thus, we will not review the syntax of the
 quantum lambda calculus itself. Instead, we will take as our starting
 point Selinger and Valiron's definition of a {\em categorical model
 of the quantum lambda calculus}~\cite{SelVal 2009}. It was already proven in~\cite{SelVal 2009} that the quantum lambda calculus forms an internal language for the class of such models. This is similar to the well-known interplay
 between typed lambda calculus and cartesian closed categories~\cite{LambekScott86}. What was left open in~\cite{SelVal 2009} was the construction of a concrete
 such model (other than that given by the syntax itself). This is the
 question we answer here.

The use of category theory to model and to explain formal languages has an established tradition in logic, but in quantum computation it constitutes a relatively recent trend. We finish this introduction by stressing that the field of quantum computation in connection with category theory is fast-growing.
 The ability to create bridges among these different branches of mathematics that are apparently far from one another is one of the motivating goals of this thesis and we hope to contribute in this direction.

\section{Definition of a categorical model for quantum lambda calculus}\label{CAT MODEL QLC}

In the introduction we informally described the main ideas and motivation of what should be a categorical model for quantum lambda calculus. Here we shall take the formal definition in~\cite{SelVal 2009} as our starting point. However, before presenting it, we will give some preliminary definitions and we shall make some remarks about how to simplify its presentation. Several of the definitions sketched here will be made more precise in Section~\ref{CAT MODELS OF LL} and
beyond.\\
Let $(\cC,\otimes,I,\alpha,\rho,\lambda,\sigma)$ be a symmetric monoidal category.
\begin{definition}
\rm
A symmetric \textit{monoidal comonad} $(!,\delta,\varepsilon,m_{A,B},m_I)$ is a comonad $(!,\delta,\varepsilon)$ where the functor $!$ is a monoidal functor $(!,m_{A,B},m_I)$, i.e., with natural transformations $m_{A,B}:{!}A\otimes{!}B\to{!}(A\otimes B)$ and $m_I:I\to{!}I$ satisfying the coherence axioms
of Definition~\ref{MONOIDAL FUNCTOR}, such that  $\delta$ and $\varepsilon$ are symmetric monoidal natural transformations.
\end{definition}
\begin{definition}
\label{LINEAR EXPONENTIAL COMONAD}
\rm
A \textit{linear exponential comonad} is a symmetric monoidal comonad $(!,\delta,\varepsilon,m_{A,B},m_I)$ in which the following conditions hold:
\begin{itemize}
\item[-]for every $A\in \cC$ there exists a commutative comonoid, with
$d_A:!A\rightarrow !A\otimes !A$ and $e_A:!A\rightarrow I$ as associated maps,
\item[-] $d_A$ and $e_A$ are monoidal natural transformation with respect to the natural transformations $m$,
\item[-]$d_A$ and $e_A$ are coalgebra morphisms when we consider $(!A,\delta_A)$, $(!A\otimes !A,m_{!A,!A}\circ (\delta_A\otimes\delta_A))$, and $(I,m_I)$ as coalgebras,
\item[-] the maps $\delta_A:(!A,e_A,d_A)\rightarrow (!!A,e_{!A},d_{!A})$ are comonoid morphisms.
\end{itemize}
\end{definition}
\begin{definition}
\rm
Let $(T,\eta,\mu)$ be a strong monad. We say that $\cC$ has \textit{Kleisli exponentials} if there exists a functor $[-,-]_{k}:\cC^{op}\times \cC\rightarrow \cC$ and a natural isomorphism:
$$\cC(A\otimes B,TC)\cong\cC(A,[B,C]_{k}))$$
\end{definition}
\begin{remark}
When the category $(\cC,\otimes,[-,-])$ is a monoidal closed category then it certainly has Kleisli exponentials just by putting $[B,C]_{k}=[B,TC]$.
\end{remark}

\begin{definition}[Linear category for duplication~\cite{SelVal 2009}]
\label{LINEAR CATEGORY FOR DUPLICATION}
\rm
A \textit{linear category for duplication} consists of a symmetric monoidal category $(\cC,\otimes,I)$ satisfying the following data:
\begin{itemize}
\item[-]an idempotent, strongly monoidal, linear exponential comonad $(!,\delta,\varepsilon,d,e)$,
\item[-]a strong monad $(T,\mu,\eta,t)$,
\item[-]$\cC$ has Kleisli exponentials.
\end{itemize}
\end{definition}
Further, if the unit $I$ is a terminal object we shall speak of an {\em affine linear category for duplication}, cf. Definition~\ref{AFFINE CATEGORY}.
\begin{remark}
\label{ADJUNCTION PRESENTATION}
\rm
The definition of a linear category for duplication (Definition~\ref{LINEAR CATEGORY FOR DUPLICATION}) is equivalent to the existence
 of a pair of monoidal adjunctions (\cite{BENTON 1994},~\cite{MELLIES} and~\cite{KOCK72}):
\[\xymatrix{
(\mathcal{B},\times,1)\ar@<1ex>[rr]^{(L,l)}&&(\mathcal{C},\otimes,I)\ar@<1ex>[rr]^{(F,m)}\ar@<1ex>[ll]^{(I,i)}_{\bot}&&(\mathcal{D},\otimes,I)\ar@<1ex>[ll]^{(G,n)}_{\bot}}\]
where the category $\cB$ has finite products and $\cC$ and $\cD$ are symmetric monoidal closed categories.
The monoidal adjoint pair of functors on the left represents a linear-non-linear model in the sense of Benton~\cite{BENTON 1994} in which we obtain a monoidal comonad by $!=L\circ I$. The monoidal adjoint on the right gives rise to $T=G\circ F$ a strong monad in the sense of Kock~\cite{KOCK70},~\cite{KOCK72} which is also a computational monad in the sense of Moggi~\cite{Moggi88}.
\end{remark}
We now state the main definition of a model of the quantum lambda calculus.

\begin{definition}[Model of the quantum lambda calculus~\cite{SelVal 2009}]
\label{DEF MODEL OF QUANTUM LAMBDA CALCULUS}
\rm
An \textit{abstract model of the quantum lambda calculus} is an affine linear category for duplication $\cC$ with finite coproducts, preserved by the comonad $!$. Moreover, a {\em concrete model of the quantum lambda calculus} is an abstract model of the
quantum lambda calculus such  that there exists a full and faithful embedding $\textbf{Q}\hookrightarrow \cC_{T}$, preserving tensor $\otimes$ and coproduct $\oplus$, from the category $\textbf{Q}$ of norm non-increasing superoperators (see Definition~\ref{Q OPLUS CATEGORY}) into the Kleisli category generated by the monad $T$.
\end{definition}
\begin{remark} To make the connection to quantum lambda calculus: the
 category $\cC$, the Kleisli category $\cC_T$, and the co-Kleisli
 category $\cC_{!}$ all have the same objects, which correspond to
 {\em types} of the quantum lambda calculus. The morphism $f:A\to B$
 of $\cC$ correspond to {\em values} of type $B$ (parameterized by
 variables of type $A$). A morphism $f:A\to B$ in $\cC_T$, which is
 really a morphism $f:A\to TB$ in $\cC$, corresponds to a {\em
 computation} of type $B$ (roughly, a probability distribution of
 values). Finally, a morphism $f:A\to B$ in $\cC_{!}$, which is really
 a morphism $f:{!A}\to B$ in $\cC$, corresponds to a {\em classical
 value} of type $B$, i.e., one which only depends on classical
 variables. The idempotence of ``$!$'' implies that morphisms ${!A}\to
 B$ are in one-to-one correspondence with morphisms ${!A}\to{!B}$,
 i.e., classical values are duplicable. For details, see
 ~\cite{SelVal 2009}.
 \end{remark}

\section{Outline of the procedure for obtaining the model}\label{OUTLINE OF THE PROCEDURE}

 Our complete process for obtaining a categorical model of the quantum
 lambda calculus consists of two stages. In the first stage, we will
 construct {\em abstract} models of the quantum lambda calculus by
 applying a certain presheaf construction to suitable sequences of
 functors $\cB\rightarrow\cC\rightarrow\cD.$
 This construction is very general, and the
 base categories $\cB$, $\cC$, and $\cD$ can be viewed as parameters. We will
 identify the precise conditions required of the base categories (and
 the functors connecting them) in order to obtain a valid abstract
 model. This is the content of Chapter~\ref{PREASHEAVES MODELS OF Q C}.

In the second stage, we will construct a {\em concrete} model of the
 quantum lambda calculus by identifying particular base categories so
 that the remaining conditions of Definition~\ref{DEF MODEL OF QUANTUM LAMBDA CALCULUS} are satisfied. This
 is the content of Chapter~\ref{A CONCRETE MODEL}.

 We briefly outline the main steps of the construction; full details
 will be given in later sections.

\begin{itemize}
\item The basic idea of the construction is to lift a sequence of functors
      $$\cB\stackrel{\Phi}\rightarrow\cC\stackrel{\Psi}\rightarrow\cD$$ into a pair of adjunctions between presheaf categories

      \[\xymatrix{
[\cB^{op},{\bf Set}]\ar@<1ex>[rr]^{L}&&[\cC^{op},{\bf Set}]\ar@<1ex>[rr]^{F_1}\ar@<1ex>[ll]^{\Phi^{*}}_{\bot}&&[\cD^{op},{\bf Set}]
\ar@<1ex>[ll]^{\Psi^{*}}_{\bot}}\]

   Here, $\Phi^{*}$ and $\Psi^{*}$ are the precomposition functors, and $L$ and
  $ F_1$ are their left Kan extensions.  By Remark~\ref{ADJUNCTION PRESENTATION}, such a pair of
   adjunctions potentially yields a linear category for duplication,
   and therefore, with additional conditions, an abstract model of
   quantum computation. Our goal is to identify the particular
   conditions on $\cB$, $\cC$, $\cD$, $\Phi$, and $\Psi$, that make this construction
   work correctly.
  \item By Day's construction, the requirement that $[\cC^{op},{\bf Set}]$ and $[\cD^{op},{\bf Set}]$
   are monoidal closed can be achieved by requiring $\cC$ and $\cD$ to be
   monoidal. The requirement that the adjunctions $L \dashv \Phi^{*}$ and
   $F_1 \dashv \Psi^{*}$ are monoidal is directly related to the fact
   that the functors $\Psi$ and $\Phi$ are strong monoidal. More precisely,
   this implies that the left Kan extension is a strong monoidal
   functor which in turn determines the enrichment of the adjunction.
   We also note that the category $\cB$ must be cartesian.

    \item One important complication with the model, as discussed so far, is
   the following. The Yoneda embedding $Y : \cD\rightarrow [\cD^{op},{\bf Set}]$ is full
   and faithful, and by Day's result, also preserves the monoidal
   structure $\otimes$. Therefore, if one takes $\cD={\bf Q}$, all but one of
   the conditions of a concrete model (from Definition~\ref{DEF MODEL OF QUANTUM LAMBDA CALCULUS}) are
   automatically satisfied. Unfortunately, the Yoneda embedding does
   not preserve coproducts, and therefore the remaining condition of
   Definition~\ref{DEF MODEL OF QUANTUM LAMBDA CALCULUS} fails. For this reason, we modify the construction
   and use the modified presheaf category and coproduct-preserving
   Yoneda embedding from Section~\ref{THE REFLECTIVE SUBCAT SECTION}. Our adjunctions, and the
   associated Yoneda embeddings, now look like this:

      $$\xymatrix@=25pt{
[\cB^{op},{\bf Set}]\ar[rr]^{L \dashv \Phi^{*}}&& [\cC^{op},{\bf Set}]\ar[rr]^{F \dashv G}&&[\cD^{op},{\bf Set}]_{\Gamma}\\
\cB\ar[u]^{Y}\ar[rr]^{\Phi}&&\cC\ar[u]^{Y}\ar[rr]^{\Psi}&&\cD\ar[u]^{Y_{\Gamma}}
}$$

   The second pair of adjoint functors $F \dashv G$ is generated by the
   composition of two adjunctions:

   \[\xymatrix{
[\cC^{op},{\bf Set}]\ar@<1ex>[rr]^{F_1}&&[\cD^{op},{\bf Set}]\ar@<1ex>[rr]^{F_2}\ar@<1ex>[ll]^{\Psi^{*}}_{\bot}&&[\cD^{op},{\bf Set}]_{\Gamma}\ar@<1ex>[ll]^{G_2}_{\bot}}\]

   Here, the pair of functors $F_2 \dashv G_2$ arises as a reflection of
   $[\textbf{Q}^{op},{\bf Set}]_{\Gamma}$ in $[\textbf{Q}^{op},{\bf Set}]$,
   and depends on a choice of a
   certain class $\Gamma$ of cones. The structural aspects of the modified
   Yoneda embedding $\textbf{Q} \to [\textbf{Q}^{op},{\bf Set}]_{\Gamma}$ depend crucially on general
   properties of the functor categories, which go back to the study of
   continuous functors by Lambek (see Section~\ref{LAMBEK COMPLETION}) and Freyd and Kelly
   (see Section~\ref{THE REFLECTIVE SUBCAT SECTION}).

   But, as we mentioned before, at the same time we still require that
   the reflection functor remain strongly monoidal. Here will will use
   Day's results (see Section~\ref{Day s reflection theorem}) on the conditions that are needed
   for the reflection to be strong monoidal, by inducing a monoidal
   structure from the category $[\textbf{Q}^{op},{\bf Set}]$ into its subcategory $[\textbf{Q}^{op},{\bf Set}]_{\Gamma}$ (see Section~\ref{Day s reflection theorem}). In particular, this induces a
   constraint on the choice of $\Gamma$ considered above: all the cones
   considered in $\Gamma$ must be preserved by the opposite functor of
   the tensor function in $\cD$ (see Remark~\ref{GAMMA CLASS OF CONES}).

   \item Notice that the above adjunctions are examples of what in topos
   theory is named an {\em essential geometric} morphism, in which
   both functors are left adjoint to some other two functors: $L \dashv
   \Phi^{*} \dashv \Phi_{*}$. Therefore, this shows that the comonad ``$!$''
   obtained will preserve finite coproducts.

   \item The condition for the comonad $``!"$ to be idempotent turns out to
   depend on the fact that the functor $\Phi$ is full and faithful.

 \item In addition to the requirement that $``!"$ preserves coproducts, we
 also need $``!"$ to preserve the tensor, i.e., to be strongly
 monoidal, as required in Definition~\ref{DEF MODEL OF QUANTUM LAMBDA CALCULUS}.
This property is unusual for models of intuitionistic linear logic and puts some restriction on the range of possible choices we have for the category $\cC$. In brief, since the left Kan extension along $\Phi$ is a strong monoidal functor we find that a concrete condition in the category $\cC$ is necessary to ensure that this property holds when we lift the functor $\Phi$ to the category of presheaves; see Section~\ref{A STRONGLY COMONAD}.

  \item Once we have constructed this categorical environment our next task is to translate these properties to the Kleisli category. To achieve this we use the comparison Kleisli functor for passing from the framework we have already established to the Kleisli monoidal adjoint pair of functors. Also, at the same time in this context, we shall find it convenient to characterize the  functor $H:\cD\rightarrow [\cC^{op},{\bf Set}]_T$ as a strong monoidal functor.

 All of the above steps yield an abstract model of quantum
 computation, parametric on the sequence of functors $\cB\rightarrow \cC \rightarrow \cD$.

 \item Finally, as we shall see in Section~\ref{Q''CATEGORY AND FUNCTORS PSI AND PHI}, we will identify specific
   categories $\cB$, $\cC$, and $\cD$ that yield a concrete model of quantum
   computation. We let $\cD=\textbf{Q}$, the category of superoperators. The
   categories $\cB$ and $\cC$ must be chosen in such a way as to satisfy all
   of the properties outlined above. For $\cB$, we take the category of
   finite sets.

   Identifying a suitable candidate for the category $\cC$ is more
   tricky. For example, here are two of the requirements directly
   concerning the semantics: $\cC$ must be affine monoidal and must
   satisfy the condition of equation~(\ref{EQUATION MULTIPLICATIVE KERNEL}) in Section~\ref{A STRONGLY COMONAD}.

   In a series of intermediate steps, with the help of some universal
   constructions, we introduce a category $\cC={\bf Q}''$
   related to the category $\textbf{Q}$ of superoperators.

\end{itemize}

As we have noted, the category ${\bf Q}''$ plays a central role in our construction. It is in some sense the ``barycenter" of our model.
While the basic structural properties occur at the level of the functor categories, providing a general mathematical setting, the development of the concrete quantum meaning of the model occurs mostly at this base level.

\section{Categorical models of linear logic}\label{CAT MODELS OF LL}

The first definition of a categorical model of linear logic was given by Seely~\cite{SEELY}. Other pioneering studies in this area were Lafont's thesis~\cite{LAFONT} and Abramsky's paper~\cite{ABRAMSKY93}. Also, Melli\`{e}s' survey~\cite{MELLIES} is an excellent introduction to the topic.\\
Now we formulate Bierman's definition of linear category~\cite{BIERMAN95} which is based upon the above-mentioned previous work on the Topic. We also state an equivalent alternative simplified version that we take from Benton~\cite{BENTON 1994} (this is the notion we outlined in Remark~\ref{ADJUNCTION PRESENTATION}).
For the purpose of this thesis, since it is clear that the linear fragment of Definition~\ref{DEF MODEL OF QUANTUM LAMBDA CALCULUS} does not impose any constraints on the rest of the definition, it follows that it will be more helpful to work with Benton's version representing the underlying linear fragment. In any case, to appreciate
the details behind these categorical models, Bierman's definition will occupy the rest of the present section.
\begin{definition}[Bierman]\label{LINEAR CATEGORY BIERMAN}
A \textit{linear category} $\mathcal{C}$ consists of a symmetric monoidal closed
category $(C,I,\otimes,\multimap,\alpha,\lambda,\rho,\gamma)$ with
a symmetric monoidal comonad $({!},\varepsilon,\delta,m_I,m_{A,B})$ defined on $\cC$
and monoidal natural transformations $e:!(-)\rightarrow I$,
$d:{!}(-)\rightarrow {!}(-)\otimes {!}(-)$ such that:
\begin{itemize}
\item[-] $e_A:{!}(A)\rightarrow I$, $d_A:{!}(A)\rightarrow {!}(A)\otimes {!}(A)$ are coalgebra
morphisms for each $A$;
\item[-] $(({!}A,\delta _A),e_A,d_A)$ is a commutative comonoid
for every free coalgebra $({!}A,\delta _A)$ and
\item[-] morphisms between
free coalgebras $f:({!}A,\delta _A)\rightarrow ({!}B,\delta _B)$ are
also comonoid commutative morphisms.
\end{itemize}
\end{definition}

We will now consider the meaning of each of these conditions:
\begin{itemize}
\item[-]for every $A\in \cC$ there exists a commutative comonoid, with
$d_A:!A\rightarrow !A\otimes !A$ and $e_A:!A\rightarrow I$ as associated maps. This means the following:\\
The assumption that $((!A,\delta _A),e_A,d_A)$ is a
commutative comonoid for every free coalgebra $(!A,\delta _A)$
means that:
$$
\xymatrix@=40pt{ !A \ar[rr]^{d_A} \ar[d]_{d_A}
  && !A\ot !A \ar[d]^{d_A\ot 1_{!A}}\\
!A\ot !A \ar[r]_{1_{!A}\ot d_A}
  & !A\ot (!A\ot !A) \ar[r]^{\alpha}
    & (!A\ot !A)\ot !A
}$$

$$\begin{array}{cc}
 \xymatrix@=30pt{
 !A
  & !A \ar[r]^{1_{!A}} \ar[d]_{d_A} \ar[l]_{1_{!A}}
    & !A \\
 I\ot !A \ar[u]^{\lambda_{!A}}
  & !A\ot !A \ar[r]^{1_{!A}\ot e_A} \ar[l]_{e_A\ot 1_{!A}}
    & !A\ot I \ar[u]_{\rho_{!A}}
 }\hspace{2cm} & \xymatrix@=25pt{
!A \ar[r]^{d_A}\ar[rd]_{d_A}
  & !A\ot !A \ar[d]^{\gamma_{!A,!A}}\\
  & !A\ot !A
} \end{array}$$

\item[-] $d_A$ and $e_A$ are monoidal natural transformation with respect to the natural transformation $m$.

The transformations $e:!(-)\rightarrow I$ and $d:!(-)\rightarrow
!(-)\otimes (-)$ are monoidal natural transformations between
monoidal functors; if $f:A\rightarrow B$ then
$e:(!,m_{A,B},m_I)\rightarrow (I,\lambda_I,1_I)$ is the
statement that the following diagrams commute:

 $$\xymatrix@=25pt{ !A \ar[r]^{!f}\ar[rd]_{e_A}
  & !B \ar[d]^{e_B}\\
  & I
}$$

$$\begin{array}{cc}
\xymatrix@=25pt{ !A\ot !B \ar[rr]^{m_{A,B}}\ar[d]_{e_A\ot e_B}
  && !(A\ot B) \ar[d]^{e_{A\ot B}}\\
I\ot I \ar[rr]_{\lambda_I}
  && I
} & \xymatrix@=25pt{ I \ar[r]^{m_I}\ar[rd]_{id_I}
  & !I \ar[d]^{e_I}\\
  & I
} \end{array}$$

and $d:(!,m_{A,B},m_I)\rightarrow (!\ot !,t_{A,B},t_I)$:

$$
\xymatrix@=25pt{ !A \ar[rr]^{d_A}\ar[d]_{!f}
  && !A\ot !A  \ar[d]^{!f\ot !f}\\
!B \ar[rr]_{d_B}
  && !B\ot !B
}$$

$$
\xymatrix@=70pt{ !A\ot !B \ar[rr]^{m_{A,B}} \ar[d]_{d_A\ot d_B}
  && !(A\ot B) \ar[d]^{d_{A\ot B}}\\
!A\ot !A\ot !B\ot !B \ar[r]_{Id_{!A}\ot \gamma_{!A,!B}\ot Id_{!B}}
  & !A\ot !B\ot !A\ot !B  \ar[r]^{m_{A,B}\ot m_{A,B}}
    & !(A\ot B)\ot !(A\ot B)
}$$

with $t_{A,B}=(m_{A,B}\ot m_{A,B})\comp \,\, Id_{!A}\ot
\gamma_{!A,!B}\ot Id_{!B}$ and

$$\xymatrix@=25pt{
I \ar[rr]^{m_I}\ar[d]_{\lambda_I^{-1}}
  && !I \ar[d]^{d_I}\\
I\ot I \ar[rr]_{m_I\ot m_I}
  && !I\ot !I
}$$

with $t_I=(m_I\ot m_I)\comp \,\, \lambda_I^{-1}$.

\item[-]$d_A$ and $e_A$ are coalgebra morphisms when we consider $(!A,\delta_A)$, $(!A\otimes !A,m_{!A,!A}\circ (\delta_A\otimes\delta_A))$, and $(I,m_I)$ as coalgebras:

The definition of linear category characterizes
$e_A:(!A,\delta_A)\rightarrow (I,m_I)$ and
$d_A:(!A,\delta_A)\rightarrow (!A\ot !A, m_{!A,!A}\circ
(\delta_A\ot \delta_A ))$ as coalgebra morphisms which means
that the following diagrams commute:

$$\begin{array}{cc}
 \xymatrix@=25pt{ !A \ar[rr]^{\delta_A}\ar[d]_{e_A}
  && !!A \ar[d]^{!e_A}\\
I \ar[rr]_{m_I}
  && !I
}\hspace{2cm} & \xymatrix@=35pt{ !A \ar[rr]^{\delta_A}\ar[d]_{d_A}
  && !!A \ar[d]^{!d_A}\\
!A\ot !A \ar[rr]_{m_{!A,!A}\circ (\delta_A\ot \delta_A)}
  && !(!A\ot !A)
} \end{array}$$

\item[-]
Morphisms between free coalgebras $f:(!A,\delta _A)\rightarrow
(!B,\delta _B)$ are also comonoid commutative morphisms. This
means that if $f:!A\rightarrow !B$ is an arrow with $!f\circ
\delta_A=\delta_B \circ f$ then is also true that
$f:(!A,d_A,e_A)\rightarrow (!B,d_B,e_B)$ is a map between
commutative comonoids that is $f:!A\rightarrow !B$ is an arrow
that satisfies:

$$\begin{array}{cc}
\xymatrix@=25pt{ !A \ar[rr]^{d_A}\ar[d]_{f}
  && !A\ot !A  \ar[d]^{f\ot f}\\
!B \ar[rr]_{d_B}
  && !B\ot !B
} \hspace{2cm} & \xymatrix@=25pt{ !A \ar[r]^{f}\ar[rd]_{e_A}
  & !B \ar[d]^{e_B}\\
  & I
} \end{array}$$

\end{itemize}

To complete the list of conditions let us show the structural conditions.
The natural transformations $\varepsilon :!(-)\rightarrow I$ and
$\delta : !(-)\rightarrow !!(-)$ are monoidal. If
$(!,m_{A,B},m_I)$ and $(Id,1_{A\otimes B},1_I)$ are monoidal
functors then $\varepsilon:(!,m_{A,B},m_I)\rightarrow
(Id,1_{A\otimes B},1_I)$ is a monoidal natural transformation
which is compatible in the sense that the following diagrams
commute:
$$\begin{array}{cc}
\xymatrix@=25pt{
!A\otimes !B \ar[rr]^{m_{A,B}}\ar[d]_{\varepsilon_A\ot
\varepsilon_B}
  && !(A\otimes B) \ar[d]^{\varepsilon_{A\ot B}}\\
A\ot B \ar[rr]_{1}
  && A\ot B
} \hspace{2cm}&
\xymatrix@=25pt{
I \ar[r]^{m_I}\ar[rd]_{1}
  & !I \ar[d]^{\varepsilon_I}\\
  & I
}\end{array}$$

Also $\delta :(!,m_{A,B},m_I)\rightarrow (!!,t_{A,B},t_I)$ is a
monoidal natural transformation between monoidal functors; with
$t_{A,B}=!(m_{A,B})\comp\, m_{!A,!B}$ and $t_I=!(m_I)\comp\, m_I$:

$$
\xymatrix@=40pt{ !A\ot !B \ar[rr]^{m_{A,B}} \ar[d]_{\delta _A \ot
\delta_B}
  && !(A\ot B) \ar[d]^{\delta _{A\ot
B}}\\
!!A\ot !!B \ar[r]_{m_{!A,!B}}
  & !(!A\ot !B) \ar[r]^{!(m_{A,B})}
    & !!(A\ot B)
}$$

$$\xymatrix@=35pt{
I \ar[r]^{m_I}\ar[rd]_{t_I}
  & !I \ar[d]^{\delta _I}\\
  & !!I
}$$

Recalling that a symmetric monoidal comonad $(!,\varepsilon ,\delta
,m_{A,B},m_I)$ is a comonad $(!,\varepsilon ,\delta )$ equipped
with a symmetrical monoidal functor $(!,m_{A,B},m_I)$, where:
$!:C\rightarrow C$ is a functor, for every object $A$ and $B$
there is a morphism $m_{A,B}:!A\otimes !B\rightarrow !(A\otimes
B)$ natural in $A$ and $B$, for the unit $I$ there is a morphism
$m_I :I\rightarrow !I$.

 These morphisms with the structural maps
$\alpha ,\lambda ,\rho ,\gamma $ must make the following diagrams
commute:
$$
\xymatrix@=40pt{ !A\otimes (!B\otimes !C) \ar[r]^{id_{!A}\otimes
m_{B,C}} \ar[d]_{\alpha}
  & !A\otimes !(B\otimes C) \ar[r]^{m_{A,B\otimes C}}
    & !(A\otimes (B\otimes C) \ar[d]^{!\alpha}\\
(!A\otimes !B)\otimes !C \ar[r]_{m_{A,B}\otimes id_{!C}}
  & !(A\otimes B)\otimes !C \ar[r]^{m_{A\otimes B,C}}
    & !((A\otimes B)\otimes C)
}$$

$$\begin{array}{cc}
\xymatrix@=25pt{
!B\otimes I \ar[rr]^{\rho_{!B}}\ar[d]_{id_{!B}\otimes m_I}  && !B \\
!B\otimes !I \ar[rr]_{m_{B,I}}  && !(B\otimes I)
\ar[u]^{!(\rho_B)}} & \xymatrix@=25pt{
I\otimes !B \ar[rr]^{\lambda_{!B}}\ar[d]_{m_I\otimes id_{!B}}  && !B \\
!I\otimes !B \ar[rr]_{m_{I,B}}  && !(I\otimes B)
\ar[u]^{!(\lambda_B)}} \end{array}$$

$$\xymatrix@=25pt{
!A\otimes !B \ar[rr]^{\gamma_{!A,!B}}\ar[d]_{m_{A,B}}
  && !B\otimes !A \ar[d]^{m_{B,A}}\\
!(A\otimes B) \ar[rr]_{!(\gamma_{A,B})}
  && !(B\otimes A)
}$$

\begin{definition}[Benton~\cite{BENTON 1994}]
\label{DEF L-N-L BENTON MODEL}
A \textit{linear-non-linear category} consists of:
\begin{itemize}
\item[(1)] a symmetric monoidal closed category $(\mathcal{C},\otimes ,I, \multimap )$
\item[(2)] a category $(\mathcal{B},\times,1)$ with finite product
\item[(3)] a symmetric monoidal adjunction:
\[\xymatrix{
(\mathcal{B},\times,1)\ar@<1ex>[rr]^{(F,m)}&&(\mathcal{C},\otimes,I)\ar@<1ex>[ll]^{(G,n)}_{\bot}}.\]
\end{itemize}
\end{definition}

Note that Definition~\ref{DEF L-N-L BENTON MODEL} is far simpler than Definition~\ref{LINEAR CATEGORY BIERMAN}. Its significance is in the following:

\begin{proposition}
Every linear-non-linear category gives rise to a linear category. Every linear category defines a linear-non-linear category, where $(\cB,\times,1)$ is the category of coalgebras of the comonad $(!,\varepsilon, \delta)$.
\end{proposition}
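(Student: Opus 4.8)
The plan is to prove the two directions of the equivalence separately, relying on the established dictionary between monoidal adjunctions and comonads.

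For the first direction, suppose we are given a linear--non-linear category, i.e.\ a symmetric monoidal adjunction $(F,m)\dashv(G,n)$ between a cartesian category $(\cB,\times,1)$ and a symmetric monoidal closed category $(\cC,\otimes,I,\multimap)$. I would set $!=F\circ G$ and observe, by the comonad analogue of Theorem~\ref{HUBER}, that $(!,\varepsilon,\delta)$ is a comonad, with $\varepsilon$ the counit of the adjunction and $\delta=F\eta G$ (where $\eta$ is the unit). Because the adjunction is monoidal, Proposition~\ref{KELLY STRONG-MONOIDAL ADJ} guarantees that the left adjoint $F$ is \emph{strong} monoidal; hence $!$ is a symmetric monoidal comonad and $\varepsilon,\delta$ are monoidal natural transformations. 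The essential point is then to produce the comonoid maps $d_A$ and $e_A$. Since $\cB$ is cartesian, every object $GA$ carries a canonical commutative comonoid structure, namely the diagonal $\Delta_{GA}:GA\to GA\times GA$ and the unique map $\diamond_{GA}:GA\to 1$. Transporting this structure along the strong monoidal $F$, I would define
$$ d_A = m_{GA,GA}^{-1}\comp F(\Delta_{GA}):\ {!}A\to{!}A\otimes{!}A, \qquad e_A = m_I^{-1}\comp F(\diamond_{GA}):\ {!}A\to I. $$

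For the remaining linear-exponential axioms of Definition~\ref{LINEAR EXPONENTIAL COMONAD}, I would argue that each follows mechanically from a corresponding property in $\cB$ together with strong monoidality of $F$: the commutative comonoid laws for $(d_A,e_A)$ are the image under $F$ of the (automatic) comonoid laws for $(\Delta_{GA},\diamond_{GA})$ in the cartesian category; naturality and the statement that $d_A,e_A$ are monoidal transformations reduce to naturality of the diagonal and uniqueness of maps into the terminal object; and the coalgebra-morphism and comonoid-morphism conditions (that $d_A,e_A$ are coalgebra maps and that each $\delta_A$ is a comonoid map) follow from monoidal naturality of $\eta$ and $\varepsilon$ together with the triangle identities. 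Idempotence is not required here, so this yields a linear category in the sense of Definition~\ref{LINEAR CATEGORY BIERMAN}.

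For the converse, given a linear category $\cC$ with linear exponential comonad $(!,\varepsilon,\delta,d,e)$, I would take $\cB=\cC^{!}$ to be the Eilenberg--Moore category of $!$-coalgebras. The forgetful functor $U:\cC^{!}\to\cC$ is the left adjoint, and the cofree functor $A\mapsto({!}A,\delta_A)$ is its right adjoint $G$, with $U\circ G=\,!$. The key claim is that $\cC^{!}$ has finite products, computed by the tensor $\otimes$ of $\cC$: the terminal object is $(I,m_I)$, and the product of two coalgebras is their tensor, the projections and diagonal being supplied by the comonoid maps $e$ and $d$. Granting this, $U$ is a strong monoidal functor from $(\cC^{!},\times,1)$ to $(\cC,\otimes,I)$; since it has a right adjoint, Proposition~\ref{KELLY STRONG-MONOIDAL ADJ} upgrades the adjunction $U\dashv G$ to a symmetric monoidal adjunction, producing a linear--non-linear category whose induced comonad $U\circ G$ is the original $!$.

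I expect the main obstacle to be exactly this claim that the tensor $\otimes$ restricts to a categorical product on $\cC^{!}$. This is false for a general symmetric monoidal comonad; it is precisely the linear-exponential axioms (the commutative comonoid structure $d,e$, the requirement that $d_A,e_A$ be coalgebra morphisms, and that coalgebra morphisms between free coalgebras be comonoid morphisms) that force the universal property of the product to hold. Verifying the existence and uniqueness of the mediating map from these axioms --- and then checking that the two constructions are mutually inverse up to the appropriate equivalence --- is the technical heart of the argument; the remaining coherence verifications are routine diagram chases of the type displayed in Definition~\ref{LINEAR CATEGORY BIERMAN}.
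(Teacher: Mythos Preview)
Your proposal is correct and follows exactly the standard argument due to Benton (with refinements in Melli\`es' survey); the paper itself does not give a proof but simply refers the reader to~\cite{BENTON 1994} and~\cite{MELLIES}, so you have supplied what the paper omits. The one point you rightly flag as the technical heart --- that the tensor on $\cC$ becomes a cartesian product on the category of $!$-coalgebras precisely because of the linear-exponential axioms --- is indeed where the work lies, and your identification of which axioms drive which parts of the universal property is accurate.
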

\begin{proof}
See~\cite{BENTON 1994} or ~\cite{MELLIES}.
\end{proof}

\begin{remark}
Kelly's characterization of monoidal adjunctions (see Proposition~\ref{KELLY STRONG-MONOIDAL ADJ})
   allows us to replace condition $(3)$ in the definition of linear-non-linear categories by the following new statement in Definition~\ref{DEF L-N-L BENTON MODEL}:
\begin{itemize}
\item[(3')] an adjunction:
\[\xymatrix{
(\mathcal{B},\times,1)\ar@<1ex>[rr]^{F}&&(\mathcal{C},\otimes,I)\ar@<1ex>[ll]^{G}_{\bot}}\]
and there exist isomorphisms
$$m_{A,B}:FA\otimes FB\rightarrow F(A\times B)\,\,\,,\,\,\,  m_I:I\rightarrow F(1)$$
making $(F,m_{A,B},m_I):(\mathcal{B},\times,1)\rightarrow (\mathcal{C},\otimes ,I) $ a strong symmetric monoidal functor.
\end{itemize}
\end{remark}

\section{Linear-non-linear models on presheaf categories}\label{abstract3}
Our purpose here is to characterize Benton's linear-non-linear models
 of intuitionistic linear logic, in the sense of Definition~\ref{DEF L-N-L BENTON MODEL}, on
 presheaf categories using Day's monoidal structure from Section~\ref{DAY S CLOSED MONO CONVO}. This is an application of monoidal enrichment of the Kan
 extension see~\cite{B.Day-Street95}. We use Kelly's equivalent
 formulation of monoidal adjunctions from Proposition~\ref{KELLY STRONG-MONOIDAL ADJ}.

\begin{proposition}
Suppose we have a strong monoidal functor $\Phi:(\mathcal{A},\times,1)\rightarrow(\mathcal{B},\otimes,I)$ from a cartesian category to a monoidal category, i.e., we have a natural isomorphism $\Phi(a)\otimes\Phi(b)\cong\Phi(a\times b)$ and $I\cong \Phi(1)$.

Let us consider the left Kan extension along $\Phi$ in the functor
category $[\cB^{op},{\bf Set}]$ where the copower is product on sets:
$$Lan_{\Phi}(F)=\int^{a}\mathcal{B}(-,\Phi(a))\times F(a)$$
Then $Lan_{\Phi}$ is strong monoidal.
\end{proposition}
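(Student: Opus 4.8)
The plan is to exploit that $Lan_{\Phi}$ is a left adjoint (to $\Phi^{*}$), hence cocontinuous, and that the Day tensor $\otimes_D$ preserves colimits in each variable, being a left adjoint in each argument since $[\cB^{op},{\bf Set}]$ is monoidal closed by Proposition~\ref{monoidal convolution}. Combined with the density presentation of Corollary~\ref{CORO1}, which writes every presheaf as a colimit of representables, this reduces the entire statement to the behaviour of $Lan_{\Phi}$ on representables. The one computational input is the isomorphism $Lan_{\Phi}(Y(a))\cong Y(\Phi(a))$, which follows from the co-Yoneda lemma applied to the defining coend:
\[
Lan_{\Phi}(Y(a))=\int^{x}\cB(-,\Phi(x))\times\cA(x,a)\cong \cB(-,\Phi(a)) = Y(\Phi(a)).
\]

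First I would treat the unit comparison. The Day units are $I_D=\cA(-,1)=Y(1)$ on the source side and $\cB(-,I)=Y(I)$ on the target side. Using the representable computation together with strong monoidality of $\Phi$ (so $\Phi(1)\cong I$),
\[
Lan_{\Phi}(I_D)=Lan_{\Phi}(Y(1))\cong Y(\Phi(1))\cong Y(I)=I_D,
\]
which supplies the unit iso $m_I$.

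Next, for the tensor comparison I would expand both sides using density and cocontinuity. Writing $F\cong\int^{a}F(a)\times Y(a)$ and $G\cong\int^{b}G(b)\times Y(b)$, and invoking in turn that $Y$ is strong monoidal (Proposition~\ref{monoidal convolution}), that $\Phi$ is strong monoidal, that $\otimes_D$ is cocontinuous in each variable, and that $Lan_{\Phi}$ preserves colimits, one side becomes
\begin{align*}
Lan_{\Phi}(F\otimes_D G)
&\cong Lan_{\Phi}\Big(\int^{a}\!\int^{b}F(a)\times G(b)\times\big(Y(a)\otimes_D Y(b)\big)\Big)\\
&\cong \int^{a}\!\int^{b}F(a)\times G(b)\times Lan_{\Phi}\big(Y(a\times b)\big)\\
&\cong \int^{a}\!\int^{b}F(a)\times G(b)\times Y\big(\Phi(a)\otimes\Phi(b)\big),
\end{align*}
while the other side becomes
\begin{align*}
Lan_{\Phi}(F)\otimes_D Lan_{\Phi}(G)
&\cong \Big(\int^{a}F(a)\times Y(\Phi(a))\Big)\otimes_D\Big(\int^{b}G(b)\times Y(\Phi(b))\Big)\\
&\cong \int^{a}\!\int^{b}F(a)\times G(b)\times\big(Y(\Phi(a))\otimes_D Y(\Phi(b))\big).
\end{align*}
Since $Y$ is strong monoidal, $Y(\Phi(a)\otimes\Phi(b))\cong Y(\Phi(a))\otimes_D Y(\Phi(b))$, so the two coends coincide, yielding a natural isomorphism $m_{F,G}:Lan_{\Phi}F\otimes_D Lan_{\Phi}G\xrightarrow{\cong} Lan_{\Phi}(F\otimes_D G)$.

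Finally I would verify the coherence axioms of Definition~\ref{MONOIDAL FUNCTOR}. I expect the production of the isomorphisms to be easy and the commutativity of the associativity and unit diagrams to be the real obstacle, though a formal rather than a computational one. Every isomorphism above is assembled from maps that are themselves coherent: the Day associator and unitors (Proposition~\ref{monoidal convolution}), the structure maps of the strong monoidal functors $Y$ and $\Phi$, and the Fubini and co-Yoneda isomorphisms for coends. Because $Lan_{\Phi}$ and $\otimes_D$ are cocontinuous, it suffices to check the pentagon and triangle after restricting all arguments to representables, where they reduce to the coherence of the composite strong monoidal functors $Y\circ\Phi$ and $Y$, which is already available. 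Keeping the several simultaneous Fubini and density isomorphisms correctly aligned during this reduction is where the main bookkeeping lies.
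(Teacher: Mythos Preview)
Your proposal is correct and follows essentially the same route as the paper. Both arguments rest on the co-Yoneda/density formula, Fubini, the identification $Lan_{\Phi}(Y(a))\cong Y(\Phi(a))$, and the strong monoidality of $\Phi$ and of $Y$; the paper simply unrolls these into one explicit chain of coend isomorphisms rather than invoking ``cocontinuity'' and ``$Y$ strong monoidal'' as packaged facts. If anything, your treatment is slightly more complete: the paper stops once the isomorphisms $m_{F,G}$ and $m_I$ are exhibited and does not address the coherence pentagon and triangles, whereas you sketch the reduction-to-representables argument for them.
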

\begin{proof}
By the Yoneda Lemma, the strong functor $\Phi$, Fubini and coend properties:\\
$Lan_{\Phi}(F\times G)=Lan_{\Phi}(\int^a\mathcal{A}(-,a)\times F(a)\times \int^b\mathcal{A}(-,b)\times G(b))$ by the Yoneda Lemma and pointwise product

$=Lan_{\Phi}(\int^{a\,\, b}\mathcal{A}(-,a)\times\mathcal{A}(-,b)\times F(a)\times G(b))=
Lan_{\Phi}(\int^{a\,\, b}\mathcal{A}(-,a\times b)\times F(a)\times G(b))$ cartesian product

$=\int^c\mathcal{B}(-,\Phi(c))\times\int^{a\,\, b}\mathcal{A}(c,a\times b)\times F(a)\times G(b)=$
$\int^{a\,\, b}(\int^c\mathcal{B}(-,\Phi(c))\times\mathcal{A}(c,a\times b))\times F(a)\times G(b)$ definition of Kan extension

$=\int^{a\,\, b}\mathcal{B}(-,\Phi(a\times b))\times F(a)\times G(b)=
\int^{a\,\, b}\mathcal{B}(-,\Phi(a)\otimes \Phi(b))\times F(a)\times G(b)
$ $\Phi$ strong functor

$=\int^{a\,\, b}(\int^y\mathcal{B}(y,\Phi(a))\times\mathcal{B}(-,y\otimes \Phi(b)))\times F(a)\times G(b)=
\int^{a\,\, b}\int^y\mathcal{B}(y,\Phi(a))\times(\int^z\mathcal{B}(-,y\otimes z)\times\mathcal{B}(z, \Phi(b)))\times F(a)\times G(b)
$ by the Yoneda Lemma

$=
\int^{y\,\, z}(\int^a\mathcal{B}(y,\Phi(a))\times F(a))\times (\int^b\mathcal{B}(z,\Phi(b))\times G(b))\times \mathcal{B}(-,y\otimes z)=
\int^{y\,\, z}((Lan_{\Phi}(F))(y))\times ((Lan_{\Phi}(G))(z))\times \mathcal{B}(-,y\otimes z)$ by Fubini and copower preserves colimits

$=Lan_{\Phi}(F)\otimes_{D}Lan_{\Phi}(G)$ by definition of Kan extension and convolution\\
and also the units:\\
$Lan_{\Phi}(I_D^{\mathcal{A}})=Lan_{\Phi}(\mathcal{A}(-,1))=\int^a\mathcal{B}(-,\Phi(a))\times\mathcal{A}(a,1)=\mathcal{B}(-,\Phi(1))=
\mathcal{B}(-,I)=I_D^{\mathcal{B}}$.
\end{proof}
\begin{remark}
\rm
Note that, in view of the line of arguments used above, the case where $\cA$ is monoidal has the same proof, i.e., if we have
$\Phi(a)\otimes\Phi(b)\cong\Phi(a\otimes b)$
and $I\cong \Phi(I')$
we start directly from the convolution product:
$$Lan_{\Phi}(F\otimes_D G)=Lan_{\Phi}(\int^{a\,\, b}\mathcal{A}(-,a\otimes b)\times F(a)\times G(b))$$
and we repeat the same proof.
Also notice that when we have a product in $\cA$ the convolution is a pointwise product of functors:
$$F\times G=\int^{a\,\, b}\mathcal{A}(-,a\times b)\times F(a)\times G(b)).$$
\end{remark}

\begin{remark}\rm
If the unit of a monoidal category $\cC$ is a terminal object then the unit of the convolution is also terminal.
Let us consider a morphism $\alpha:F\rightarrow \cC(-,I)$ in the functor
category $[\cC^{op},{\bf Set}]$.
Then for every $V$ there is only one way to define the map $\alpha_V:F(V)\rightarrow \cC(V,I)$ which is $\alpha_V(x)={!}$ for every $x\in F(V)$ in the category of sets. Hence there is a unique $\alpha$. Therefore it is a terminal object in the functor
category.
\end{remark}

\section{Idempotent comonad in the functor category}\label{abstract2}
A comonad $({!},\epsilon,\delta)$ is said to be {\em idempotent} if $\delta:{!}\Rightarrow {!!}$ is an isomorphism.
Let $(!,\epsilon,\delta)$ be the comonad generated by the adjunction:

\[\xymatrix{
(\mathcal{D},\times ,1)\ar@<1ex>[r]^{F}& (\mathcal{V},\otimes,I,\multimap) \ar@<1ex>[l]^{G}_{\bot}}\]
then $\delta=F\eta_G$ with $\eta:A\rightarrow GFA$.
Thus if $\eta$ is an isomorphism then $\delta$ is also an isomorphism.
Now consider the unit of the Kan extension:
$$G\Rightarrow F^{*}(Lan_F(G)).$$
It is given by:

$$\xymatrix@=25pt{
G(a) \ar[rr]^{i_{1_{F(a)}}}\ar[rrdd]_{(\eta_G)(a)}
 & & \mathcal{B}(F(a),F(a))\times G(a) \ar[dd]^{(w_a)_{F(a)}}\\
 & &\\
&  & \int^{a'}\mathcal{B}(F(a'),F(a))\times G(a')
},$$
where $i$ is the injection of the copower and $w$ is the wedge of the coend.

\begin{proposition}\label{F FULL AND FAITHFUL ETA ISO}
If $F$ is a full and faithful functor then $\eta_G:G\Rightarrow F^{*}(Lan_F(G))$ is an isomorphism.
\end{proposition}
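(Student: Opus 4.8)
The plan is to compute the value of $F^{*}(Lan_{F}(G))$ at an object $a$ directly from the coend formula and then to recognise the unit component $(\eta_G)(a)$ as an isomorphism built from full faithfulness together with the co-Yoneda lemma. Recall that $F^{*}(Lan_{F}(G))(a)=Lan_{F}(G)(F(a))=\int^{a'}\cB(F(a'),F(a))\times G(a')$, and that, as displayed just before the statement, $(\eta_G)(a)$ is the composite of the copower injection $i_{1_{F(a)}}:G(a)\rightarrow \cB(F(a),F(a))\times G(a)$ with the coend wedge $(w_a)_{F(a)}$; concretely it sends $x\in G(a)$ to the class $[(1_{F(a)},x)]$ living in the $a'=a$ summand of the coend.

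First I would exploit that $F$ is full and faithful: the function $\varphi_{a',a}:\cA(a',a)\rightarrow \cB(F(a'),F(a))$, $g\mapsto F(g)$, is a bijection, natural in $a'$ and in $a$. Viewing $(a'',a')\mapsto\cA(a'',a)\times G(a')$ and $(a'',a')\mapsto\cB(F(a''),F(a))\times G(a')$ as functors $\cA^{op}\times\cA\rightarrow\cC$ (via the copower), the maps $\varphi_{a'',a}\times 1$ assemble into a natural isomorphism of integrands, so by Theorem~\ref{PROPERTIES OF COEND 1} (in its coend form) they induce an isomorphism $\int^{a'}\cA(a',a)\times G(a')\cong \int^{a'}\cB(F(a'),F(a))\times G(a')=F^{*}(Lan_{F}(G))(a)$. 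Next I would apply the co-Yoneda (density) isomorphism established in the indexed-colimit section, namely $\int^{a'}\cA(a',a)\times G(a')\cong G(a)$, whose structure wedge sends $x\in G(a)$ to the class of $(1_a,x)$ in the $a'=a$ summand. Composing the two isomorphisms yields an isomorphism $G(a)\cong F^{*}(Lan_{F}(G))(a)$.

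The crux of the argument, and the step I expect to be the main obstacle, is verifying that this composite is exactly $(\eta_G)(a)$, and not merely some isomorphism between the same two objects. This reduces to tracking the distinguished element through both steps: the density isomorphism is governed by the identity $1_a$, and under $\varphi$ we have $\varphi_{a,a}(1_a)=F(1_a)=1_{F(a)}$, so the composite sends $x\mapsto[(1_{F(a)},x)]$, which is precisely the description of $(\eta_G)(a)$ recalled above. Care is needed to confirm that the full-faithfulness bijection is natural in $a$ as well, so that the coend replacement justified by Theorem~\ref{PROPERTIES OF COEND 1} is legitimate, and to check that the wedge components agree on the diagonal. Having identified $(\eta_G)(a)$ with an isomorphism for every $a$, and since $\eta_G$ is a natural transformation, it is a natural isomorphism, which proves the claim. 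This is the functor-category counterpart of Proposition~\ref{F FULL FAITHFUL IFF ETA IS AN ISO}: full faithfulness of $F$ makes $Lan_{F}$ full and faithful, forcing the unit of $Lan_{F}\dashv F^{*}$ to be invertible; the coend computation above simply makes the inverse explicit.
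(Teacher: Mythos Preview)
Your argument is correct and is the standard coend proof: use full faithfulness to replace $\cB(F(a'),F(a))$ by $\cA(a',a)$ in the integrand, then invoke co-Yoneda, and finally verify that the resulting isomorphism is precisely the unit component by tracking the identity. The only minor point to tighten is the appeal to Theorem~\ref{PROPERTIES OF COEND 1}: as stated in the paper that result is for ends, so you are implicitly using its dual for coends, which is of course routine.

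As for comparison with the paper: there is nothing to compare. The paper does not give its own proof of this proposition; it simply cites \cite{Borceux94}. Your write-up therefore supplies an explicit argument where the paper defers to a reference, and it does so using exactly the ingredients already assembled in the surrounding sections (the coend formula for $Lan_F$, the description of the unit via the wedge and copower injection displayed just before the statement, and the density formula from the indexed-colimit discussion). This is almost certainly the argument Borceux gives as well, so your proof is in line with the intended one.
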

\begin{proof}
\cite{Borceux94}
\end{proof}

\section{A strong comonad}
\label{A STRONGLY COMONAD}
In this section we study conditions that allow us to force the idempotent comonad to be a strong monoidal functor.
This property, part of the model we are building, is a main difference with other previously intuitionistic linear models.
In order to achieve this, consider a full and faithful functor
$\Phi:\cA\to\cB$ as in Proposition~\ref{F FULL AND FAITHFUL ETA ISO}. Let $\Phi^*$ be the functor we had seen earlier in Section~\ref{KAN EXTENSION}: $$[\cB^{op},{\bf Set}]\stackrel{\Phi^{*}}\longrightarrow[\cA^{op},{\bf Set}],$$
i.e., the right adjoint of the left Kan extension.
\begin{lemma}\label{MULTIPLICATIVE KERNEL CONDITIONS}
If there exists a natural isomorphism:
\begin{equation}
\label{EQUATION MULTIPLICATIVE KERNEL}
\cB(\Phi(a),b)\times\cB(\Phi(a),b')\cong\cB(\Phi(a),b\otimes b')
\end{equation}
where $a\in \cA$ and $b,b'\in\cB$ and $\Phi$ is a fully faithful, strong monoidal functor
then $\Phi^{*}$ is a strong monoidal functor.
\end{lemma}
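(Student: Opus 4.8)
The plan is to produce the strong monoidal comparison maps for $\Phi^{*}$ with respect to the Day convolution $\otimes_{D}$ on both presheaf categories by a direct coend calculation, and to obtain coherence for free from Kelly's result. First I would recall that by the Proposition of Section~\ref{abstract3} the left Kan extension $Lan_{\Phi}$ is strong monoidal, and that $Lan_{\Phi}\dashv\Phi^{*}$; hence by Proposition~\ref{KELLY STRONG-MONOIDAL ADJ} the adjunction is monoidal, so $\Phi^{*}$ already carries a canonical lax monoidal structure $(n_{S,T},n_{I})$ whose coherence axioms (Definition~\ref{MONOIDAL FUNCTOR}) hold automatically. It therefore suffices to prove that these comparison maps are isomorphisms.

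For the tensor comparison I would compute both presheaves pointwise. On the left, since $\cA$ is cartesian the Day tensor on $[\cA^{op},{\bf Set}]$ is the pointwise product (the remark following the Proposition of Section~\ref{abstract3}), so $(\Phi^{*}S\otimes_{D}\Phi^{*}T)(a)=S(\Phi a)\times T(\Phi a)$. On the right I would unfold the convolution on $\cB$ and apply the three ingredients in turn:
\begin{align*}
\Phi^{*}(S\otimes_{D}T)(a)
&=\int^{b_1}\!S(b_1)\times\int^{b_2}\!T(b_2)\times\cB(\Phi a,b_1\otimes b_2)\\
&\cong\int^{b_1,b_2}\!S(b_1)\times T(b_2)\times\cB(\Phi a,b_1)\times\cB(\Phi a,b_2)\\
&\cong\Big(\int^{b_1}\!S(b_1)\times\cB(\Phi a,b_1)\Big)\times\Big(\int^{b_2}\!T(b_2)\times\cB(\Phi a,b_2)\Big)\\
&\cong S(\Phi a)\times T(\Phi a),
\end{align*}
using the hypothesis~(\ref{EQUATION MULTIPLICATIVE KERNEL}) in the first step, Fubini together with the fact that $-\times X$ preserves colimits in ${\bf Set}$ (being a left adjoint, as ${\bf Set}$ is cartesian closed) in the second, and the density (co-Yoneda) formula $\int^{b}S(b)\times\cB(\Phi a,b)\cong S(\Phi a)$ in the third. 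This shows the two functors agree, naturally in $S$, $T$, and $a$, using that the isomorphism~(\ref{EQUATION MULTIPLICATIVE KERNEL}) is natural in all its variables. For the unit, strong monoidality and fullness--faithfulness of $\Phi$ give $\Phi^{*}(I_{D}^{\cB})(a)=\cB(\Phi a,I)\cong\cB(\Phi a,\Phi 1)\cong\cA(a,1)=I_{D}^{\cA}(a)$, again naturally in $a$.

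The main obstacle will be to tie these computations to the \emph{canonical} comparison maps $n_{S,T}$ and $n_{I}$: knowing that the source and target functors are naturally isomorphic does not by itself make the specific transposed maps of the monoidal adjunction invertible. I would therefore trace the coend chain above along the components of $n_{S,T}$ (the transpose of $Lan_{\Phi}(\Phi^{*}S\otimes_{D}\Phi^{*}T)\cong Lan_{\Phi}\Phi^{*}S\otimes_{D}Lan_{\Phi}\Phi^{*}T\stackrel{\varepsilon\otimes\varepsilon}{\longrightarrow}S\otimes_{D}T$), checking that each displayed isomorphism is exactly the relevant component, so that $n_{S,T}$ is pointwise bijective; this is precisely where hypothesis~(\ref{EQUATION MULTIPLICATIVE KERNEL}) does its essential work, since without it the tensor comparison is merely the whiskering induced by $\Phi$ on the coend variables and need not be invertible. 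Once $n_{S,T}$ and $n_{I}$ are known to be isomorphisms, $\Phi^{*}$ is strong monoidal and the lemma follows; all remaining verifications (naturality and the coherence diagrams) are either routine or already guaranteed by Proposition~\ref{KELLY STRONG-MONOIDAL ADJ}.
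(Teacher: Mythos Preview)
Your coend calculation is exactly the chain of isomorphisms in the paper's proof (up to direction: the paper starts from $\Phi^{*}F\times\Phi^{*}G$ and unfolds via co-Yoneda, you start from $\Phi^{*}(S\otimes_{D}T)$ and apply the hypothesis first), and the treatment of the unit is identical. The one difference is that you wrap the computation in Kelly's Proposition~\ref{KELLY STRONG-MONOIDAL ADJ} to obtain the canonical lax structure on $\Phi^{*}$ and hence coherence for free, whereas the paper simply displays the natural isomorphisms $\Phi^{*}F\times\Phi^{*}G\cong\Phi^{*}(F\otimes_{D}G)$ and $\Phi^{*}(I_{D}^{\cB})\cong I_{D}^{\cA}$ and stops, without checking the monoidal-functor axioms or identifying the isomorphism with any particular comparison map. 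Your extra care about the ``main obstacle'' (that an abstract isomorphism need not coincide with the transposed $\varepsilon\otimes\varepsilon$) is therefore a genuine strengthening of the paper's argument rather than a deviation from it; the paper implicitly relies on the same point but does not spell it out.
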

\begin{proof}
To see this:
$\Phi^{*}(F)\times\Phi^{*}(G)=F(\Phi(-))\times G(\Phi(-))\cong\int^{b}F(b)\times\cB(\Phi(-),b)\times \int^{b'}G(b')\times\cB(\Phi(-),b')\cong$ by the Yoneda Lemma, definition of $\Phi^{*}$ and the fact that convolution in $[\cA^{op},{\bf Set}]$ is pointwise cartesian product

$\cong\int^{b\,b'}F(b)\times G(b')\times\cB(\Phi(-),b)\times\cB(\Phi(-),b')\cong\,\,\,\,\,\,\,$ by properties of coends (preservation)

$\cong\int^{b\,b'}F(b)\times G(b')\times\cB(\Phi(-),b\otimes b')=\,\,\,\,\,\,\,\,$ by hypothesis~(\ref{EQUATION MULTIPLICATIVE KERNEL}) and Lemma~\ref{PROPERTIES OF COEND 1}

$=(F\otimes G)(\Phi(-))=\Phi^{*}(F\otimes G)\,\,\,\,\,\,\,$ by definition of convolution in $[\cB^{op},{\bf Set}]$ and definition of $\Phi^{*}$.

Moreover the units are isomorphic,\\
$\Phi^{*}(\cB(-,I))=\cB(\Phi(-),I)\cong\,\,\,\,\,\,\,$ by definition of $\Phi^{*}$

$\cong\cB(\Phi(-),\Phi(1))\,\,\,\,\,\,$  since $\Phi$ is strong

$\cA(-,1)\,\,\,\,\,\,$ since $\Phi$ is fully faithful.
\end{proof}

\begin{remark}
\rm
At this point it is useful to mention that the conditions of Lemma~\ref{MULTIPLICATIVE KERNEL CONDITIONS} are an example of a \textit{multiplicative kernel} $K:\cB\times\cA^{op}\rightarrow {\bf Set}$ from the monoidal category $\cB$ to $\cA$ in the sense of \cite{B.Day06}. In fact, $K$ is explicitly defined as $K(b,a)=\cB(\Phi(a),b)$ satisfying the two following equations as part of the definition:
$$\int^{yz}K(a,y)\times K(b,z)\times \cA(x,y\times z)\cong \int^c K(c,x)\times\cB(c,a\otimes b)$$
$$\int^b\cB(\Phi(x),b)\times \cB(b,I)\cong \cA(x,1)$$
\end{remark}
In Section~\ref{Q double prime AND CATEGORY AND FUNCTOR PSI} we shall built a category satisfying this specific requirement among others. More precisely, from our viewpoint this will depend on the construction of a certain category that we will name $\textbf{Q}''$ which is a modification of the category $\textbf{Q}$ of superoperators. Also we consider the functor $\Phi$ of Section~\ref{THE FUNCTOR PHI FROM SETS} where $\cC^{+}=\textbf{Q}''$.

\section{If $\cC$ has finite coproducts then $\cC_{T}$ has finite coproducts}
\label{COPRODUCT INDUCED KLEISLI}
An important property of the Kleisli construction is that if we assume that the original category has finite coproducts then we can define finite coproducts in the Kleisli category.
\begin{proposition}Kleisli categories inherit
coproducts, i.e., if $\cC$ has finite coproducts then $\cC_{T}$ also has finite coproducts.
\end{proposition}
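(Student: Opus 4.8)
The plan is to exhibit coproducts in $\cC_T$ by transporting them directly from $\cC$, using the fact (established by the Kleisli adjunction recalled after Definition~\ref{Kleisli category}) that $F_T:\cC\to\cC_T$ is a left adjoint which is the identity on objects. Conceptually the result is immediate: a left adjoint preserves all colimits, in particular finite coproducts, and since $F_T$ fixes objects the $\cC$-coproduct $A\oplus B$ \emph{is} the $\cC_T$-coproduct. However, since the paper favours explicit constructions, I would carry out the direct verification. First I would take the finite coproduct $A\oplus B$ of $\cC$, with injections $in_1:A\to A\oplus B$ and $in_2:B\to A\oplus B$, and declare the same object $A\oplus B$ to be the coproduct in $\cC_T$, equipped with the \emph{Kleisli} injections $j_k^{K}=F_T(in_k)=\eta_{A\oplus B}\circ in_k:A\to T(A\oplus B)$. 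Given Kleisli arrows $f^{K}:A\to C$ and $g^{K}:B\to C$, i.e.\ $\cC$-arrows $f:A\to TC$ and $g:B\to TC$, the candidate mediating arrow is the ordinary $\cC$-copairing $[f,g]:A\oplus B\to TC$, regarded as a Kleisli arrow $[f,g]^{K}:A\oplus B\to C$.

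The heart of the proof is then a short calculation verifying the universal property. Unfolding the Kleisli composition formula of Definition~\ref{Kleisli category}, the relevant composite $j_k^{K}\circ_K [f,g]^{K}$ is computed in $\cC$ as $\mu_C\circ T([f,g])\circ \eta_{A\oplus B}\circ in_k$. Here I would apply naturality of $\eta$ at the arrow $[f,g]$, giving $T([f,g])\circ\eta_{A\oplus B}=\eta_{TC}\circ[f,g]$, and then the monad unit law $\mu_C\circ\eta_{TC}=1_{TC}$, so that the composite collapses to $[f,g]\circ in_k$, which equals $f$ for $k=1$ and $g$ for $k=2$, exactly as required. For uniqueness, if $h^{K}:A\oplus B\to C$ satisfies the two equations, the same computation shows $h\circ in_1=f$ and $h\circ in_2=g$ in $\cC$, so $h=[f,g]$ by the uniqueness of copairings in $\cC$; hence $[f,g]^{K}$ is the unique mediating Kleisli arrow.

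Finally I would treat the nullary case: the initial object $0$ of $\cC$ is initial in $\cC_T$, since $\cC_T(0,B)=\cC(0,TB)$ is a singleton for every $B$, giving the empty coproduct. I do not expect a genuine obstacle here, as the argument is entirely routine; the only point demanding care is bookkeeping, namely remembering that the Kleisli injections are the $\cC$-injections post-composed with $\eta$ (so that they really are $F_T$ applied to the $\cC$-injections) and respecting the diagrammatic orientation of $\circ_K$ used in Definition~\ref{Kleisli category} when performing the composite calculation. Once these conventions are fixed, naturality of $\eta$ together with the unit law does all the work.
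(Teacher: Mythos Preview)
Your proof is correct and takes essentially the same approach as the paper: the same object $A\oplus B$, the same Kleisli injections $\eta_{A\oplus B}\circ in_k$, the same mediating arrow $[f,g]$, and the same verification via naturality of $\eta$ and the monad unit law. Your additions---the conceptual remark that $F_T$ is a left adjoint, and the explicit treatment of the initial object---are not in the paper's proof but are harmless extras.
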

\begin{proof}
Suppose we have that $A\stackrel{f^{K}}\rightarrow C$ and $B\stackrel{g^{K}}\rightarrow C$ two arrows in the category $\cC_{T}$. We take $A\oplus_{K}B=A\oplus B$ on objects, and

$$\xymatrix@=25pt{
A\ar[dr]_{i_{A}}\ar[rr]^{i^{T}_{A}}&&T(A\oplus B)&&&B\ar[dr]_{i_B}\ar[rr]^{i^{T}_{B}}&&T(A\oplus B)\\
&A\oplus B\ar[ur]_{\eta_{A\oplus B}}&&&&&A\oplus B\ar[ur]_{\eta_{A\oplus B}}&\\
}$$
as injections in the category $\cC_{T}$.

We want to find a unique $A\oplus_{K}B\stackrel{[f^{K},g^{K}]_{K}}\longrightarrow C$ such that $f^{K}=[f^{K},g^{K}]_{K}\circ_{K} i^{T}_{A}$ and $g^{K}=[f^{K},g^{K}]_{K}\circ_{K} i^{T}_{B}$ commute.
This is verified by the following diagram:
$$\xymatrix@=25pt{
A\ar[rrd]_{f}\ar[rr]^{i_{A}}&&A\oplus B\ar[d]^{[f,g]}\ar[rr]^{\eta_{A\oplus B}}&&T(A\oplus B)\ar[d]^{T[f,g]}&&A\oplus B\ar[d]^{[f,g]}\ar[ll]_{\eta_{A\oplus B}}&&B\ar[dll]^{g}\ar[ll]_{i_{B}}&\\
&&TC\ar[drr]_{1_{TC}}\ar[rr]^{\eta_{TC}}&&T^{2}C\ar[d]^{\mu_{C}}&&TC\ar[ll]_{\eta_{TC}}\ar[lld]^{1_{TC}}&&&\\
&&&&TC&&&\\
}$$
where $[f,g]$ is the unique morphism that defines coproduct in $\cC$. This last diagram commutes by naturality of $\eta$ with respect to $[f,g]$ and by definition of monad.\\
Uniqueness follows from the following reasoning: suppose there is an arrow $A\oplus_{K}B\stackrel{h_{K}}\longrightarrow C$, i.e.,
$A\oplus B\stackrel{h}\longrightarrow TC$, such that $\mu_{C}\circ T(h)\circ \eta_{A\oplus B}\circ i_A=f$ and $\mu_{C}\circ T(h)\circ \eta_{A\oplus B}\circ i_B=g$ then by naturality and monad definition we have that $h\circ i_A=f$ and $h\circ i_A=g$, thus by uniqueness in $\cC$ we have that $h=[f,g]$.
\end{proof}

We notice that $C:\cC_{T}\rightarrow\cD$ preserves finite coproducts. To see this, by definition we have that $i^{T}_{A}=\eta_{A\oplus B}\circ i_A$ and $i^{T}_{B}=\eta_{A\oplus B}\circ i_B$. Then
$$C(i^{T}_{A})=c(\eta_{A\oplus B}\circ i_A)=\varepsilon_{F(A\oplus B)}\circ F(\eta_{A\oplus B}\circ i_A)=$$
$$=\varepsilon_{F(A\oplus B)}\circ F(\eta_{A\oplus B})\circ F(i_A)=1_{A\oplus B}\circ F(i_A)=F(i_A).$$
In the same way $C(i^{T}_{B})=F(i_B)$. \\
Given that right adjoint preserves coproducts then  $C(A\oplus_{K}B)=C(A\oplus_{\cC}B)=F(A\oplus_{K}B)=FA\oplus_{\cD} FB$ and
$$C(A\stackrel{i^{T}_{A}}\longrightarrow A\oplus_{K} B)=C(A)\stackrel{C(i^{T}_{A})}\longrightarrow C(A\oplus_{K} B)=F(A)\stackrel{F(i_{A})}\longrightarrow F(A)\oplus_{\cD} F(B)$$
which is a coproduct in $\cD$.\\
In the same way we can apply a similar reasoning with $B$.

\section{The functor $H:\cD\rightarrow \hat{\cC}_{T}$}
\subsection{Preliminaries}
Let $\cC$ and $\cD$ be categories, and let $\hat\cC$ and $T=G\circ F$ be defined as
 in Section~\ref{OUTLINE OF THE PROCEDURE}.

 \[\xymatrix{
[\cC^{op},{\bf Set}]\ar@<1ex>[rr]^{F}&&[\cD^{op},{\bf Set}]_{\Gamma}\ar@<1ex>[ll]^{G}_{\bot}}\]

In this section we consider the construction of a coproduct and tensor preserving functor $H:\cD\rightarrow \hat{\cC}_{T}$ with properties similar to the Yoneda embedding. We investigate the role of a general category $\cD$ fully embedded into a Kleisli category $\hat{\cC}_{T}$. Certain properties of this functor are introduced in order to apply this to the category of superoperators $\textbf{Q}$ as well as to develop a methodology for obtaining higher-order models
in the sense of Section~\ref{CAT MODEL QLC}.

Let  $F_{1}\dashv G_{1}$ and $F_{2}\dashv G_{2}$ be two monoidal adjoint pairs with associated natural transformations $(F_{1},m_{1})$, $(G_{1},n_{1})$ and $(F_{2},m_{2})$, $(G_{2},n_{2})$.
We shall use the following notation $F=F_2\circ F_1$, $G=G_1\circ G_2$, $T=G\circ F$.
We now describe a typical situation of this kind generated by a functor $\Phi:\cC\rightarrow\cD$.

Let us consider $F_1=Lan_{\Phi}$ and $G_1=\Phi^{*}$. With some co-completeness condition assumed, we can express
$F_1(A)=\int^{c}\cD(-,\Phi(c))\otimes A(c)$ and $G_1=\Phi^{*}$.

On the other hand we consider
$$\xymatrix@=25pt{
 \cD\ar[rrdd]_{Y_{\Gamma}}\ar[rr]^{Y}&&[\cD^{op},{\bf Set}]\ar@<1ex>[dd]^{F_2} \\
 &&\\
 &&[\cD^{op},{\bf Set}]_{\Gamma}\ar@<1ex>[uu]^{G_{2}}_{\vdash}
}$$

where we take $F_2=Lan_{Y}(Y_{\Gamma}):[\cD^{op},{\bf Set}]\rightarrow[\cD^{op},{\bf Set}]_{\Gamma}$, and
$Y_{\Gamma}:\cD\rightarrow [\cD^{op},{\bf Set}]_{\Gamma}$ is given by $Y_{\Gamma}(d)=\cD(-,d)$. Thus we have that
$F_2(D)=D\star Y_{\Gamma}=\int^{d}D(d)\otimes Y_{\Gamma}(d)$.

Assuming that $[\cD^{op},{\bf Set}]_{\Gamma}$ is co-complete and contains the representable presheaves then the right adjoint is given by
$$G_2(F)=[\cD^{op},{\bf Set}]_{\Gamma}(Y_{\Gamma}-,F)=[\cD^{op},{\bf Set}](Y-,F)\cong F$$
since it is a full subcategory and by the Yoneda Lemma. Therefore we consider $G_2$ as the inclusion functor up to isomorphism.

\subsection{Definition of $H$.}
\label{CANONICAL CHOICE}
We want to study the following situation:
\vspace{5pt}

$$\xymatrix@=25pt{
&\hat{\cC}\ar@<1ex>[dd]^{F_{T}}\ar@<1ex>[rr]^{F_{1}}&&\hat{\cD}\ar@<1ex>[ll]^{G_{1}}_{\bot}\ar@<1ex>[rr]^{F_{2}}&&\hat{\cD}_{\Gamma}\ar@<1ex>[ll]^{G_{2}}_{\bot} \\
 &&&&&\\
 \cC\ar@/_/[rd]_{\Phi}\ar@/^/[ruu]^{Y}&\hat{\cC}_{T}\ar@<1ex>[uu]^{G_{T}}_{\vdash}\ar[rrrruu]_{C}&&&&&\\
 &\cD\ar@{-->}[u]_{H}\ar@/_/[rrrruuu]_{Y_{\Gamma}}&&&&&
}$$
\vspace{5pt}

The goal is to determine a fully faithful functor, $H$ in this diagram, that preserves tensor and coproduct.

First, notice that the perimeter of this diagram
commutes on objects:
$$F_1(\cC(-,c))=\int^{c'}\cD(-,\Phi(c'))\otimes \cC(c',c)=\cD(-,\Phi(c))$$
When we evaluate again we obtain:
$$F_2(\cD(-,\Phi(c)))=\int^{d'}\cD(d',\Phi(c))\otimes Y_{\Gamma}(d')=Y_{\Gamma}(\Phi(c))=\cD(-,\Phi(c))$$
Summing up we have that $F(\cC(-,c))=\cD(-,\Phi(c))$ up to isomorphism.

Suppose now that $\Phi$ is onto on objects. We have that:
$$\cD(-,d)=\cD(-,\Phi(c))$$
for some $c\in\cC$, i.e.,
 we can make a choice, for every
 $d\in |\cD|$, of some $c\in |\cC|$ such that $\Phi(c)\cong d$. Let us
 call this choice a ``choice of preimages''. We can therefore define
 a map $H:|\cD|\rightarrow |\hat{\cC}_{T}|$ by $H(d)=\cC(-,c)$ on objects.

Hence, we can define a functor $H:\cD\rightarrow \hat{\cC}_{T}$ in the following way: \\
let $d\stackrel {f}\rightarrow d'$ be an arrow in the category $\cD$, then we apply $Y_{\Gamma}$ obtaining $\cD(-,d)\stackrel {Y_{\Gamma}(f)}\rightarrow \cD(-,d')$. This arrow is equal to $\cD(-,\Phi(c))\stackrel {Y_{\Gamma}(f)}\rightarrow \cD(-,\Phi(c'))$ for some $c,c'\in\cC$ and for the reason stipulated above is equal to  $F(\cC(-,c))\stackrel {Y_{\Gamma}(f)}\rightarrow F(\cC(-,c'))$. Now we use the fact that the comparison functor
$C:\hat{\cC}_{T}\rightarrow \hat{\cD}_{\Gamma}$,
$$C:\hat{\cC}_{T}(\cC(-,c),\cC(-,c'))\rightarrow \hat{\cD}_{\Gamma}(F(\cC(-,c)),F(\cC(-,c')))$$
is fully faithful, i.e, there is a unique $\gamma:\cC(-,c)\rightarrow\cC(-,c')$ such that $C(\gamma)=Y_{\Gamma}(f)$. Then we define: $H(f)=\gamma$ on morphisms and $H(d)=\cC(-,c)$ on objects, where $c$ is given by our choice of preimages.

Explicitly on arrows we have that $H:\cD\rightarrow \hat{\cC}_{T}$ is given by $H(f)=G(Y_{\Gamma}(f))\circ \eta_{\cC(-,c)}$ i.e.,
$$\cC(-,c)\stackrel {\eta_{\cC(-,c)}}\longrightarrow GF(\cC(-,c))\stackrel {G(Y_{\Gamma}(f))}\longrightarrow GF(\cC(-,c'))$$

\begin{remark}
\rm
We notice that:
$$C\circ H(d)=C(\cC(-,c))=F(\cC(-,c))=\cD(-,\Phi(c))=\cD(-,d)=Y_{\Gamma}(d)$$
Also since $\Phi(c)=d\stackrel{f}\longrightarrow d'=\Phi(c')$ then
$$F(\cC(-,c))=\cD(-,\Phi(c))\stackrel{Y_{\Gamma}(f))}\longrightarrow\cD(-,\Phi(c'))=F(\cC(-,c'))$$
Moreover,
$$C\circ H(f)=C(\,\,\,\cC(-,c)\stackrel {\eta_{\cC(-,c)}}\longrightarrow GF(\cC(-,c))\stackrel {G(Y_{\Gamma}(f))}\longrightarrow GF(\cC(-,c'))\,\,\,)=Y_{\Gamma}(f)$$
since
$$\xymatrix@=25pt{
 F(\cC(-,c))\ar[rr]^{F(\eta_{\cC(-,c)})}\ar[drr]_{1}& & FGF(\cC(-,c))\ar[d]^{\varepsilon_{F(\cC(-,c))}}\ar[rr]^{FG(Y_{\Gamma}(f))}&& FGF(\cC(-,c'))\ar[d]^{\varepsilon_{F(\cC(-,c'))}}\\
 &&F(\cC(-,c))\ar[rr]^{Y_{\Gamma}(f)} &&F(\cC(-,c'))
}$$

Thus $C\circ H=Y_{\Gamma}$.
\end{remark}

\begin{remark}
\rm
Suppose that we are in the above situation where $(F_{2},m_{2})\dashv (G_{2},n_{2})$ is a monoidal adjunction. The Yoneda embedding is a strong monoidal functor respecting  the Day's convolution monoidal structure. Then we have:

$$\xymatrix@=25pt{
&\hat{\cD}\ar@<1ex>[rr]^{(F_{2},m_{2})}&&\hat{\cD}_{\Gamma}\ar@<1ex>[ll]^{G_{2}}_{\bot} \\
&\cD\ar[u]^{(Y,y)}\ar@/_/[rru]_{Y_{\Gamma}}&&
}$$
Since the adjunction is monoidal $F_{2}$ is a strong monoidal functor. This implies that $Y_{\Gamma}$ is a strong monoidal functor by composition.

\end{remark}

\subsection{$C:\hat{\cC}_{T}\rightarrow\hat{\cD}_{\Gamma}$ is a strong monoidal functor}

We define $C(A)\otimes_{\hat{\cD}_{\Gamma}}C(B)\stackrel {u_{AB}}\longrightarrow C(A\otimes_{\cC_{T}}B)$ by the following arrow:
$F(A)\otimes_{\hat{\cD}_{\Gamma}}F(B)\stackrel {m_{AB}}\longrightarrow F(A\otimes B)$. We want to check naturality:
for every $A\stackrel {f}\rightarrow A'$, $B\stackrel {g}\longrightarrow B'$, where $f$, $g\in \hat{\cC}_{T}$

$$\xymatrix@=25pt{
C(A)\otimes_{\hat{\cD}_{\Gamma}}C(B)\ar[rr]^{u_{AB}}\ar[d]^{C(f)\otimes_{\hat{\cD}_{\Gamma}}C(g)}& &C(A\otimes_{\cC_{T}}B)\ar[d]^{C(f\otimes_{\cC_{T}}g)}\\
C(A')\otimes_{\hat{\cD}_{\Gamma}}C(B')\ar[rr]_{u_{A'B'}}& & C(A'\otimes_{\cC_{T}}B')
}.$$

This turns out to be

$$\xymatrix@=25pt{
F(A)\otimes_{\hat{\cD}_{\Gamma}}F(B)\ar[rr]^{m_{AB}}\ar[d]^{\varepsilon_{FA'}F(f)\otimes\varepsilon_{FB'}F(g)}& &F(A\otimes_{\cC}B)\ar[d]^{\varepsilon_{F(A'\otimes B')}F(G(m_{A'B'})n(f\otimes g))}\\
F(A')\otimes_{\hat{\cD}_{\Gamma}}F(B')\ar[rr]_{m_{A'B'}}& & F(A'\otimes_{\cC} B')
}$$

where $f^{K}\otimes_{\cC_{T}}g^{K}$ is equal to

$$A\otimes B\stackrel {f\otimes g}\longrightarrow GFA'\otimes_{\cC} GFB'\stackrel {n_{FA'FB'}}\longrightarrow G(FA'\otimes_{\hat{\cD}_{\Gamma}} FB')\stackrel {G(m_{A'B'})}\longrightarrow GF(A'\otimes_{\hat{\cC}} B').$$

We define $I\stackrel {u_{I}=m_{I}}\longrightarrow C(I)=F(I)$.

$$\xymatrix@=25pt{
F(A)\otimes F(B)\ar[r]^{m_{AB}}\ar[d]^{F(f)\otimes F(g)} &F(A\otimes B)\ar[r]^{F(f\otimes g)}& F(TA'\otimes TB')\ar@{}[dl]^{(a)}\ar[r]^{F(n)}&F(G(FA'\otimes FB'))\ar[d]^{F(G(m_{A'B'}))}\ar[llldd]^{\varepsilon_{FA'\otimes FB'}}\\
FGF(A')\otimes FGF(B')\ar[urr]_{m_{GFA' GFB'}}\ar[d]^{\varepsilon_{FA'}\otimes\varepsilon_{FB'}} & &&FG(F(A'\otimes B'))\ar[d]^{\varepsilon_{F(A'\otimes B')}}\ar@{}[ll]^{(b)}\\
FA'\otimes FB'\ar[rrr]_{m_{A'B'}}&&&F(A'\otimes B')
}$$
(a) commutes since $\varepsilon$ is a monoidal natural transformation of the monoidal adjunction $(F,m)\dashv (G,n)$.\\
(b) $\varepsilon$ is natural with $m$.\\

Since $m_{AB}$ and $m_{I}$ are invertible in $\hat{\cD}_{\Gamma}$ then $u_{AB}$ and $u_{I}$ are invertible. This implies that $(C,m)$ is a strong functor.

Now we want to check that

$$\xymatrix@=25pt{
 C(A)\otimes_{\hat{\cD}_{\Gamma}} I\ar[rr]^{\rho}\ar[d]^{1\otimes u_{I}}&&C(A)\\
 C(A)\otimes_{\hat{\cD}_{\Gamma}} C(I)\ar[rr]^{u_{AI}}&&C(A\otimes_{\cC_{T}} I)\ar[u]^{C(\rho^{T})}
}$$

since $A\otimes_{\cC_{T}} I\stackrel {\rho^{T}}\rightarrow A$ is by definition $A\otimes_{\cC} I\stackrel {\rho}\rightarrow A\stackrel {\eta}\rightarrow GFA$ this implies that $C(\rho^{T})$ is

$$F(A\otimes I)\stackrel {F(\rho)}\rightarrow FA\stackrel {F(\eta)}\rightarrow FGFA\stackrel {\varepsilon_{FA}}\rightarrow FA$$
i.e., $C(\rho^{T})=F(\rho)$.
Thus we obtain that

$$\xymatrix@=25pt{
 F(A)\otimes I\ar[rr]^{\rho}\ar[d]^{1\otimes m_{I}}&&F(A)\\
 F(A)\otimes F(I)\ar[rr]^{m_{AI}}&&F(A\otimes I)\ar[u]_{F(\rho)}
}$$
and this is satisfied since $(F,m)$ is a monoidal functor. The same is true for the $\lambda$ axiom.

$$\xymatrix@=25pt{
C(A)\otimes (C(A')\otimes C(A''))\ar[rr]^{\alpha}\ar[d]^{1\otimes u_{A'A''}}&&(C(A)\otimes C(A'))\otimes C(A'')\ar[d]^{u_{AA'}\otimes 1}\\
C(A)\otimes C(A'\otimes A'')\ar[d]^{u_{A, A'\otimes A''}}&&C(A\otimes A')\otimes C(A'')\ar[d]^{u_{A\otimes A', A''}}\\
C(A\otimes (A'\otimes A''))\ar[rr]_{C(\alpha)}&&C((A\otimes A')\otimes A'')
}$$
For the same reasons as above we have that $C(\alpha^{T})=F(\alpha)$,
since $\alpha^{T}=\eta_{A\otimes A',A''}\circ \alpha$ by definition.

\subsection{$H$ is a strong monoidal functor}

We want to define a natural transformation $H(A)\otimes_{\cC^{T}} H(B)\stackrel {\psi_{A,B}}\longrightarrow H(A\otimes_{\cD} B)$ that makes $H$ into a strong monoidal functor.\\

Definition of $\psi$.

We begin by recalling that $(C,u)$ and $(Y_{\Gamma},y)$ are strong monoidal functors, i.e., $u$ and $y$ are isomorphisms, and since $C$ is a fully faithful functor this allows us to define $\psi_{A,B}$ as the unique map making the following diagram commute:
$$\xymatrix@=25pt{
 Y_{\Gamma}(A)\otimes Y_{\Gamma}(B)\ar[rr]^{y_{A,B}}\ar[dr]^{u_{HA,HB}}&&Y_{\Gamma}(A\otimes B)=C\circ H(A\otimes B)\\
 &C(H(A)\otimes H(B))\ar[ru]^{C(\psi_{A,B})}&
}$$

In the same way we define $\psi_{I}$ as the unique map $\psi_{I}:I\rightarrow H(I)$ making the following diagram commute:
$$\xymatrix@=25pt{
 I\ar[rr]^{y_{I}}\ar[dr]^{u_{I}}&&Y_{\Gamma}(I)=C\circ H(I)\\
 &C(I)\ar[ru]^{C(\psi_{I})}&
}$$
i.e., since $C$ is fully faithful the unique $\psi_{I}$ such that $C(\psi_{I})=y_{I}\circ u^{-1}_{I}$.

Notice that since $\cC_{T}\stackrel{C}{\rightarrow}\cD_{\Gamma}$ is fully faithful and $u$ and $y$ are invertible maps this implies that $\phi$ is an invertible map.

We shall prove naturality of $\phi$.
\[
\xymatrix@=25pt@C-3ex{
 *{CH(A)\otimes CH(B)\makebox[0cm][l]{${}=Y_{\Gamma}(A)\otimes Y_{\Gamma}(B)$}}\ar@{}[drr]|{(a)}\ar[ddd]^{Y_{\Gamma}(f)\otimes Y_{\Gamma}(g)}_{CH(f)\otimes CH(g)=}\ar[rrrd]^{y_{A,B}}\ar[dr]_{u_{HA,HB}}&&&\\
 &C(H(A)\otimes H(B))\ar@{}[dl]^<>(.2){(c)}\ar[d]^{C(H(f)\otimes H(g))}\ar[rr]_<>(.5){C(\psi_{A,B})}&&CH(A\otimes B)=Y_{\Gamma}(A\otimes B)\ar@{}[dl]_<>(.6){(d)}\ar[d]_{}\ar[d]^{CH(f\otimes g)=Y_{\Gamma}(f\otimes g)}\\
 &C(H(A')\otimes H(B'))\ar[rr]^<>(.5){C(\psi_{A',B'})}&&CH(A'\otimes B')=Y_{\Gamma}(A'\otimes B')\\
 CH(A')\otimes CH(B')\makebox[0cm][l]{${}=Y_{\Gamma}(A')\otimes Y_{\Gamma}(B')$}\ar@{}[urr]|{(b)}\ar[rrru]_{y_{A',B'}}\ar[ur]^{u_{HA',HB'}}&&&
}
\]
(a) and (b) by definition of $\psi$.\\
(c) naturality of $u$ where $(C,u)$ is a monoidal functor.\\
The perimeter of the diagram commutes by naturality of $y$ where $(Y_{\Gamma},y)$ is a monoidal functor.
Using the fact that $u_{HA,HB}$ is an iso, all this implies that the interior square $(d)$ commutes.
Thus we obtain that $C(\psi_{A',B'})\circ C(H(f)\otimes H(g))=CH(f\otimes g)\circ C(\psi_{A,B})$ therefore since $C$ is faithful $\psi_{A',B'}\circ (H(f)\otimes H(g))=H(f\otimes g)\circ \psi_{A,B}$.

Now we want to prove that this natural transformation satisfies all the axioms of a monoidal structure. We start with the following axiom:
\begin{equation}
\xymatrix@=25pt{
 H(A)\otimes H(I)\ar[rr]^{\rho}\ar[d]^{1\otimes \Psi_{I}}&&H(A)\\
 H(A)\otimes H(I)\ar[rr]^{\psi_{AI}}&&H(A\otimes I)\ar[u]^{H(\rho)}
}\label{DIAGRAMA 1}
\end{equation}

This turns to be the following diagram:

\[
\xymatrix@=25pt@C-3ex{
CH(A)\otimes CH(I)\ar@{}[drrr]_{(d)}\ar[ddddr]_{1}&&&&&\\
&CH(A)\otimes C(I)\ar@{}[ddr]_{(c)}\ar[dr]^{u_{HA,I}}\ar[ul]^{C1\otimes C\psi_{I}}\ar[ddd]^{C1\otimes C\psi_{I}} &&&&CH(A)\otimes I\ar[llll]^{1\otimes u_{I}}\ar[d]^{\rho}\ar[lllllu]_{1\otimes y_{I}}\ar@{}[dll]_{(a)}\\
& &C(H(A)\otimes I))\ar@{}[drrr]^{(e)}\ar[d]^{C(1\otimes \psi_{I})}\ar[rrr]^{C(\rho)}&&&CH(A)\\
&&C(H(A)\otimes H(I))\ar[rrr]^{C\psi_{A,I}}&&&CH(A\otimes I)\ar[u]^{CH(\rho)=}\ar[u]_{Y_{\Gamma(\rho)}}\\
&CH(A)\otimes CH(I)\ar@{}[urr]|<>(.6){(b)}\ar[ur]^{u_{HA,HI}} \ar[urrrr]_{y_{A,I}}&&&&
}
\]
We use the same argument again and we show that it satisfies the required equation.\\
(a) $C$ is a monoidal functor.\\
(b) by definition of $\psi_{A,I}$.\\
(c) naturality of $u$ where $(C,u)$ is a monoidal functor.\\
(d) definition of $\psi_{I}$.\\
The exterior diagram commutes because $(Y_{\Gamma},y)$ is a monoidal functor. Using the fact that $u_{HA,I}$ is an iso, all this implies that the interior square $(e)$ commutes. Again, since $C$ is faithful we get $H(\rho)\circ\psi_{A,I}\circ(1\otimes  \psi_{I})=\rho$ which is diagram~(\ref{DIAGRAMA 1}).

In the same way we can verify that $H(\lambda)\circ\psi_{I,A}\circ(\psi_{I}\otimes 1)=\lambda$.

Now we move to proving the associativity axiom.
\begin{small}
\[\hspace{-80pt}\raisebox{1.6in}[0in][4.6in]{x$
\xymatrix@dr@=65pt{
&&CHA\otimes (CHA'\otimes CHA'')\ar@{}[ddr]|{(a)}\ar[ddll]_{1\otimes y_{A',A''}}\ar[ddl]_{1\otimes u}\ar[d]_{1\otimes u}\ar[r]^{\alpha}&(CHA\otimes CHA')\otimes CHA''\ar[d]_{u\otimes 1}\ar@/^4ex/@<1ex>[ddr]_{u\otimes 1}\ar@<2ex>[rrdd]^{y_{A,A'}\otimes 1}&&\\
&&CHA\otimes C(HA'\otimes HA'')\ar[d]_{u}&C(HA\otimes HA')\otimes CHA'')\ar[d]_{u}&&\\
CHA\otimes CH(A'\otimes A'')\ar[rd]_{1}&CHA\otimes C(HA''\otimes HA'')\ar@{}[l]_<>(.4){(i)}\ar[d]|{1\otimes C(\psi_{A',A''})}\ar[r]^{u}&C(HA\otimes (HA'\otimes HA''))\ar@{}[ddr]|{(k)}\ar@{}[dl]_{(b)}\ar[d]|{C(1\otimes \psi_{A',A''})}\ar[r]^{C(\alpha)}\ar@{}[ul]|<>(.3){(f)}&C((HA\otimes HA')\otimes HA'')\ar@{}[ur]^{(j)}\ar[d]|{C(\psi_{A,A'}\otimes 1)}&C(HA\otimes HA')\otimes CHA''\ar@{}[u]_{(g)}\ar@{}[dl]_{(h)}\ar[l]_{u}\ar[d]|{C(\psi_{A,A'})\otimes 1}&CH(A\otimes A')\otimes CHA''\ar[dl]_{1}\\
&CHA\otimes CH(A'\otimes A'')\ar[rd]_{y_{A,A'\otimes A''}}\ar[r]^{u}\ar@{}[drr]|<>(.2){(c)}&C(HA\otimes H(A'\otimes A''))\ar[d]|{C(\psi_{A,A'\otimes A''})}&C(H(A\otimes A')\otimes HA'')\ar[d]|{C(\psi_{A\otimes A',A''})}&CH(A\otimes A')\otimes CHA''\ar[dl]_{y_{A\otimes A',A''}}\ar@{}[dll]_<>(.25){(d)}\ar[l]_{u}&\\
&&CH(A\otimes (A'\otimes A''))\ar[r]_{CH(\alpha)}&CH((A\otimes A')\otimes A'')&&
}$}
\]
\end{small}
The goal is to prove that the diagram $(k)$ commutes. We have that:\\
\begin{itemize}
\item[-] (a): $(C,u)$ is a monoidal functor.
\item[-] (b) and (h): $u$ is natural with $\Psi$ and $1$.
\item[-] (c) and (d): definition of $\psi_{A,A'\otimes A''}$ and $\psi_{A\otimes A',A''}$.
\item[-] (i) and (g): definition of $\psi_{A', A''}$ and $\psi_{A,A'}$ and functoriality of the tensor.
\item[-] (f) and (j): are equal.
\item[-] The exterior diagram commutes because $(Y_{\Gamma},y)$ is a monoidal functor.
\end{itemize}
Since $1\otimes u_{HA',HA''}$ and $u_{HA,HA'\otimes HA''}$ are isos it is enough to check that\\
(top leg of (h))${}\circ u\circ (1\otimes u)=$(bottom leg of (h))${}\circ u\circ (1\otimes u)$. Then we use the fact that $C$ is a faithful functor.

\begin{remark}
Notice that since $C$ and $Y_{\Gamma}$ are fully faithful functor, $H$ is fully-faithful as well.
\end{remark}

\subsection{$H$ preserves coproducts}
\label{H PRESERVES COPRODUCTS}
In this section we focus on the specific problem of the preservation of finite coproducts of the functor $H$ defined in Section~\ref{CANONICAL CHOICE}. First, we notice that the category $[\cC^{op},{\bf Set}]$ has finite coproducts. These coproducts are computed pointwise: if $F$ and $G$ are in $[\cC^{op},{\bf Set}]$ then
$(F\oplus G)(C)=F(C)\oplus G(C)$ for every $C\in\cC$ and with injections as in the category ${\bf Set}$.

Also these coproducts are preserved going to the category $[\cD^{op},{\bf Set}]_{\Gamma}$ via the left adjoint $F=F_2\circ F_1$, where $F_2=R$ is the left adjoint of the reflection determined by the class $\Gamma$. The coproducts in  $[\cD^{op},{\bf Set}]_{\Gamma}$ are induced by this reflection $i\vdash R$.
More precisely: $A\oplus_{\Gamma}B=R(i(A)\oplus i(B))$ and $in_{\Gamma}=R(in)$ where $A$ and $B$ are in $[\cD^{op},{\bf Set}]_{\Gamma}$. Then, it makes sense to think about finite coproducts in $[\cD^{op},{\bf Set}]_{\Gamma}$.

Finally, the Kleisli category $\hat{\cC}_{T}$ inherits the coproduct structure from $\hat{\cC}$ as we proved in Section~\ref{COPRODUCT INDUCED KLEISLI}. Therefore, $[\cC^{op},{\bf Set}]_{T}$ has finite coproducts.
Recall that the comparison functor
$$C:[\cC^{op},{\bf Set}]_{T}\rightarrow[\cD^{op},{\bf Set}]_{\Gamma}$$ is fully faithful.
Also, by Corollary~\ref{COPRODUCT CONT EQUIVALENCE},
$H:\cD\rightarrow [\cC^{op},{\bf Set}]_{T}$ preserves coproducts iff $[\cC^{op},{\bf Set}]_{T}(H-,A):\cD^{op}\rightarrow{\bf Set}$ preserves products for every $A\in [\cC^{op},{\bf Set}]_{T}$.

But, we have
$$[\cC^{op},{\bf Set}]_{T}(H-,A)\cong[\cD^{op},{\bf Set}]_{\Gamma}(CH-,CA).$$
More precisely, since $C$ is fully faithful then the following functors:
$$\cD^{op}\stackrel {H^{op}}\rightarrow\hat{\cC}_{T}^{op}\stackrel {\hat{\cC}_{T}(-,A)}\rightarrow{\bf Set}$$
and
$$\cD^{op}\stackrel {(CH)^{op}}\rightarrow\hat{\cD}_{T}^{op}\stackrel {\hat{\cD}_{\Gamma}(-,CA)}\rightarrow{\bf Set}$$
are naturally isomorphic.

Therefore, we have:
$$[\cD^{op},{\bf Set}]_{\Gamma}(CH-,CA)=[\cD^{op},{\bf Set}]_{\Gamma}(Y_{\Gamma}-,CA)$$
because $CH=Y_{\Gamma}$. Also,
$$[\cD^{op},{\bf Set}]_{\Gamma}(Y_{\Gamma}-,CA)\cong[\cD^{op},{\bf Set}](Y-,CA)$$
because $[\cD^{op},{\bf Set}]_{\Gamma}$ is a full subcategory of $[\cD^{op},{\bf Set}]$
and $Y_{\Gamma}-=Y-$ evaluated on $-$. Finally,
$$[\cD^{op},{\bf Set}](Y-,CA)\cong CA$$
holds because by the Yoneda Lemma~\ref{YONEDA LEMMA} there is a bijection which is natural in $-$, i.e., these functors are naturally isomorphic.
But $CA\in [\cD^{op},{\bf Set}]_{\Gamma}$ for every $A\in [\cC^{op},{\bf Set}]_{T}$, which by definition means that $CA$ satisfies the property of continuity i.e., preserves all the cylinders
and limit cones that are in $\Gamma$. In particular, since natural isomorphisms preserve limits, it will be enough to impose that condition on the class $\Gamma$. From this, we conclude that $\Gamma$ contains all the finite products. This is another requirement to obtain a model.

\section{$F_{T}\dashv G_{T}$ is a monoidal adjunction}
In this section we show how a monoidal adjoint pair $(F,m)\dashv
 (G,n)$ induces a monoidal structure for the adjunction $F_{T}\dashv
 G_{T}$ associated with the Kleisli construction, where $T=GF$.

 \begin{lemma}
   Let $F\dashv G$ be a monoidal adjunction, let $T=GF$, and consider
   the Kleisli adjunction $\xymatrix{\mathcal{C}\ar@<1ex>[r]^{F_{T}}&
     \mathcal{C}_{T}\ar@<1ex>[l]^{G_{T}}_{\bot}}$ as in
   Definition~\ref{Kleisli category}. Then $\cC_{T}$ is a monoidal
   category and $F_{T}\dashv G_{T}$ is a monoidal adjunction.
 \end{lemma}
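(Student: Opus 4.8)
The plan is to reduce this statement to Lemma~\ref{2.3.2a}, which already establishes the desired conclusion whenever $T$ is a \emph{monoidal} monad on a monoidal category. It therefore suffices to show that the monad $(T,\eta,\mu)$ with $T=GF$ and $\mu=G\varepsilon F$, produced from the adjunction by Huber's Theorem~\ref{HUBER}, is in fact a monoidal monad in the sense of the definition preceding Lemma~\ref{2.3.2a}. Everything then follows formally.

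First I would equip $T=GF$ with the structure of a monoidal functor. Writing the monoidal adjunction as $(F,m)\dashv(G,n)$, the composite of the monoidal functors $(F,m)$ and $(G,n)$ is again monoidal, with coherence maps
$$m^{T}_{A,B}=G(m_{A,B})\circ n_{FA,FB}:GFA\otimes GFB\rightarrow GF(A\otimes B)$$
and $m^{T}_{I}=G(m_{I})\circ n_{I}:I\rightarrow GFI$. Checking that $m^{T}$ satisfies the coherence axioms of Definition~\ref{MONOIDAL FUNCTOR} is a routine diagram chase that only uses the fact that $m$ and $n$ individually satisfy them, together with naturality.

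Next I would verify that $\eta:Id\Rightarrow T$ and $\mu:T^{2}\Rightarrow T$ are monoidal natural transformations with respect to $m^{T}$, in the sense of Definition~\ref{MONOIDAL NATURAL TRANFORMATION}. For $\eta$ this is immediate: it is exactly part of the hypothesis that the adjunction $(F,m)\dashv(G,n)$ is monoidal, since by definition the unit of a monoidal adjunction is a monoidal natural transformation. For $\mu=G\varepsilon F$ the point is that whiskering a monoidal natural transformation by monoidal functors on either side yields again a monoidal natural transformation; applying this to the counit $\varepsilon$, which is monoidal by hypothesis, whiskered on the left by $G$ and on the right by $F$, shows that $\mu$ is monoidal with respect to $m^{T}$. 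Once these verifications are complete, $(T,\eta,\mu,m^{T})$ is a monoidal monad and Lemma~\ref{2.3.2a} applies verbatim, giving both that $\cC_{T}$ is monoidal and that $F_{T}\dashv G_{T}$ is a monoidal adjunction.

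I expect the main obstacle to be the verification that $\mu=G\varepsilon F$ is monoidal: this is the only step requiring genuine diagram chasing, because one must compare the coherence cell $m^{T}=G(m)\circ n$ applied through two iterations of $T$ with a single application, threading the counit $\varepsilon$ past the coherence cells of both $F$ and $G$. All remaining ingredients are either formal consequences of the definitions or direct invocations of the monoidality of the unit, the counit, and the two functors $F$ and $G$.
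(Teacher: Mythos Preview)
Your proposal is correct and matches the paper's approach exactly: the paper's proof simply states that since $F\dashv G$ is a monoidal adjunction, $T=GF$ is a monoidal monad, and then invokes Lemma~\ref{2.3.2a}. You have merely spelled out in more detail why $T$ is a monoidal monad (composite monoidal functor structure, monoidality of $\eta$ by hypothesis, and monoidality of $\mu=G\varepsilon F$ by whiskering), which the paper leaves implicit.
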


 \begin{proof}
   Since $F\dashv G$ is a monoidal adjunction, it follows that $T=GF$
   is a monoidal monad. The result then follows from Lemma~\ref{2.3.2a}.
 \end{proof}

\section{Abstract model of the quantum lambda calculus}\label{MAIN THEOREM OF CHAPTER 4}
To sum up the sections of this chapter we have the following theorem.
\begin{theorem} \label{the theorem MAIN THEOREM OF CHAPTER 4}
Given categories $\cB$, $\cC$ and $\cD$, and functors $\Phi:\cB\rightarrow\cC$, and $\Psi:\cC\rightarrow\cD$, satisfying
\begin{itemize}
\item[-]$\cB$ has finite products, $\cC$ and $\cD$ are symmetric monoidal,
\item[-]$\cB$, $\cC$, and $\cD$ have coproducts, and they are distributive w.r.t. tensor,
\item[-]$\cC$ is affine,
\item[-]$\Phi$ and $\Psi$ are strong monoidal,
\item[-]$\Phi$ and $\Psi$ preserve coproducts,
\item[-]$\Phi$ is full and faithful,
\item[-]$\Psi$ is essentially surjective on objects,
\item[-]for every $b\in\cB$, $c,c'\in\cC$ we have
$$\cC(\Phi(b),c)\times\cC(\Phi(b),c')\cong\cC(\Phi(b),c\otimes c').$$
\end{itemize}

\noindent
Let $\Gamma$ be any class of cones preserved by the opposite tensor
functor, including all
the finite product cones and $Lan_{\Phi}$, $\Phi^{*}$, $F$ and $G$ be defined as
 in Section~\ref{OUTLINE OF THE PROCEDURE}. Then
 \[\xymatrix{
[\cB^{op},{\bf Set}]\ar@<1ex>[rr]^{Lan_{\Phi}}&&[\cC^{op},{\bf Set}]\ar@<1ex>[rr]^{F}\ar@<1ex>[ll]^{\Phi^{*}}_{\bot}&&[\cD^{op},{\bf Set}]_{\Gamma}\ar@<1ex>[ll]^{G}_{\bot}}\]

 \noindent
 forms an abstract model of the quantum lambda calculus.
\end{theorem}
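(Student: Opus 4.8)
The plan is to exhibit the middle category $\mathcal{M}=[\cC^{op},{\bf Set}]$, equipped with Day's convolution (Proposition~\ref{monoidal convolution}), as an affine linear category for duplication with finite coproducts preserved by the comonad, and then conclude by Definition~\ref{DEF MODEL OF QUANTUM LAMBDA CALCULUS}. I would organize the verification around the two-sided adjunction presentation of Remark~\ref{ADJUNCTION PRESENTATION}, treating the left adjunction $Lan_{\Phi}\dashv\Phi^{*}$ as the source of the comonad $!=Lan_{\Phi}\circ\Phi^{*}$ on $\mathcal{M}$, and the composite right adjunction $F\dashv G$ as the source of the monad $T=G\circ F$ on $\mathcal{M}$. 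First I would record the structural facts that are essentially immediate: $\mathcal{M}$ is symmetric monoidal closed by Day's theorem; it is affine because $\cC$ is affine, so the convolution unit is terminal (Section~\ref{abstract3}); it has pointwise finite coproducts; and it has Kleisli exponentials since any monoidal closed category does (put $[B,C]_{k}=[B,TC]$). Because $\cB$ has finite products, the convolution on $[\cB^{op},{\bf Set}]$ is the pointwise cartesian product, so the left-hand category is cartesian, as Benton's linear--non-linear framework requires.

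Next I would assemble the comonad. Since $\Phi:(\cB,\times,1)\to(\cC,\otimes,I)$ is strong monoidal, the left Kan extension $Lan_{\Phi}$ is strong monoidal (Section~\ref{abstract3}); being a left adjoint to $\Phi^{*}$, Kelly's Proposition~\ref{KELLY STRONG-MONOIDAL ADJ} upgrades $Lan_{\Phi}\dashv\Phi^{*}$ to a monoidal adjunction, i.e.\ a linear--non-linear model, and Benton's correspondence then yields a linear exponential comonad $(!,\delta,\varepsilon,d,e)$. Three further properties of $!$ must be checked. Idempotence follows from Section~\ref{abstract2}: the comultiplication is $\delta=Lan_{\Phi}\,\eta\,\Phi^{*}$, and since $\Phi$ is full and faithful the unit $\eta$ is a natural isomorphism by Proposition~\ref{F FULL AND FAITHFUL ETA ISO}, whence $\delta$ is invertible. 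Strong monoidality of $!$ — the property the text flags as atypical — follows from Lemma~\ref{MULTIPLICATIVE KERNEL CONDITIONS}: the hypothesis $\cC(\Phi(b),c)\times\cC(\Phi(b),c')\cong\cC(\Phi(b),c\otimes c')$ makes $\Phi^{*}$ strong monoidal, and a composite of strong monoidal functors is strong monoidal. Finally, coproduct preservation by $!$ comes from the essential geometric morphism observation: $\Phi^{*}$ has a right adjoint $\Phi_{*}=Ran_{\Phi}$ in addition to its left adjoint $Lan_{\Phi}$, so both $Lan_{\Phi}$ and $\Phi^{*}$ are left adjoints and hence preserve coproducts, and so does their composite $!$.

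For the monad side I would build $F\dashv G$ as the composite of $F_{1}=Lan_{\Psi}\dashv\Psi^{*}=G_{1}$ with the reflection $F_{2}\dashv G_{2}$ of $[\cD^{op},{\bf Set}]_{\Gamma}$ in $[\cD^{op},{\bf Set}]$. The first adjunction is monoidal by the same Kelly argument applied to the strong monoidal $\Psi$. For the second, I must first know the subcategory is reflective: this is Freyd and Kelly's Theorem~\ref{abstract1}, applicable because $\Gamma$ is a class of cylinders all but a set of which are cones, and $[\cD^{op},{\bf Set}]$ is complete, cocomplete, bounded, co-well-powered and has a generator with a proper factorization system. Then I would use Day's reflection theorem (Theorem~\ref{DAY'S REFLECTION THEOREM}, in the presheaf form of Section~\ref{abstract4}) to make $F_{2}$ strong monoidal; here the assumption that every cone in $\Gamma$ is preserved by the opposite tensor functor is exactly what guarantees that $C(-\otimes d)$ remains in $[\cD^{op},{\bf Set}]_{\Gamma}$ (Remark~\ref{GAMMA CLASS OF CONES}). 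Composing the two monoidal adjunctions produces a monoidal monad $T$, and a symmetric monoidal monad carries a canonical strength by Kock's construction, so $T$ is a strong monad. Collecting the pieces establishes all clauses of Definition~\ref{LINEAR CATEGORY FOR DUPLICATION} together with the affineness and coproduct-preservation clauses of Definition~\ref{DEF MODEL OF QUANTUM LAMBDA CALCULUS}.

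I expect the genuine obstacle to lie in the monad half, specifically in making the reflection $F_{2}\dashv G_{2}$ simultaneously \emph{exist} and be \emph{monoidal}. Existence is the Freyd--Kelly machinery, but monoidality via Day's reflection theorem rests on a compatibility between the chosen class $\Gamma$ and the convolution tensor that is simple to state yet must be threaded carefully through Remark~\ref{GAMMA CLASS OF CONES}; this is also where the distributivity of tensor over coproducts and the requirement that $\Gamma$ contain all finite product cones (needed later, in Section~\ref{H PRESERVES COPRODUCTS}, for the concrete model) enter. The strong monoidality of the comonad, although routine once Lemma~\ref{MULTIPLICATIVE KERNEL CONDITIONS} is invoked, is the other delicate point, since it is the hypothesis $\cC(\Phi(b),c)\times\cC(\Phi(b),c')\cong\cC(\Phi(b),c\otimes c')$ that carries the whole argument and has no counterpart in standard models of intuitionistic linear logic.
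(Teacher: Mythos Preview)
Your proposal is correct and follows essentially the same approach as the paper, whose own proof consists solely of a list of section references (Sections~\ref{KAN EXTENSION}--\ref{A STRONGLY COMONAD}); you have simply fleshed out how those sections assemble into the argument, matching the outline given in Section~\ref{OUTLINE OF THE PROCEDURE}. One small remark: you are right that the finite-product-cone hypothesis on $\Gamma$ is used for the concrete model via Section~\ref{H PRESERVES COPRODUCTS} rather than for the abstract model as such, but since it appears as a standing hypothesis of the theorem there is nothing to adjust.
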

 \begin{proof} Relevant propositions from
 sections~\ref{KAN EXTENSION},~\ref{DAY S CLOSED MONO CONVO},~\ref{THE REFLECTIVE SUBCAT SECTION},~\ref{Day s reflection theorem},~\ref{abstract4},~\ref{CAT MODELS OF LL},~\ref{abstract3},~\ref{abstract2},~\ref{A STRONGLY COMONAD}.
 \end{proof}
\chapter[A concrete model]
         {A concrete model of the quantum lambda calculus}\label{A CONCRETE MODEL}

\section{An example: $\textbf{Srel}_{fn}$}
Before we give the main model for higher-order quantum computation,
 it is instructive to consider a simpler model for higher-order {\em
 probabilistic} computation. In the sense of Section~\ref{OUTLINE OF THE PROCEDURE}, we let
 $\cD$ be the category ${\bf Srel}_{\it fn}$ of sets and stochastic
 relations, see Definition~\ref{STOCHASTIC RELATION}, and we let $\cC$ be the category of finite sets and
 functions. In this setting, we let ``$!$'' be the identity comonad,
 i.e., $\cB=\cC$. The latter is justified because in the context of
 classical probabilistic computation, there are no quantum types and
 no no-cloning property; all types are classical and hence ${!A}=A$.

 \begin{lemma}\label{DA}
   ${\bf Srel}_{fn}$ has finite coproducts, satisfying
 distributivity $(A\oplus B)\otimes C \cong A\otimes C\oplus B\otimes C$.
  \end{lemma}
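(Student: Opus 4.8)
The plan is to identify $\textbf{Srel}_{fn}$ as the Kleisli category of the finitary subdistribution monad $D$ on $\textbf{FinSet}$, where $DB$ is the set of subprobability distributions on $B$: a morphism $A\to DB$ assigns to each $a\in A$ a map $B\to[0,1]$ summing to at most $1$, which is exactly a stochastic matrix $f:B\times A\to[0,1]$ with $\sum_b f(b,a)\le 1$. Under this identification the Kleisli inclusion $F_D:\textbf{FinSet}\to\textbf{Srel}_{fn}$ is the identity on objects and sends a function $h$ to its encoding $\hat h$ of Definition~\ref{STOCHASTIC RELATION} (the Dirac map $a\mapsto\delta_{h(a)}$). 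Since $\textbf{FinSet}$ has finite coproducts, the existence of finite coproducts in $\textbf{Srel}_{fn}$ is then immediate from Section~\ref{COPRODUCT INDUCED KLEISLI} (Kleisli categories inherit coproducts): $A\oplus B=A+B$ is the disjoint union, the injections are $F_D(in_A)=\widehat{in_A}$ and $F_D(in_B)=\widehat{in_B}$, and the empty set is initial.

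First I would, should a self-contained argument be preferred, spell out the universal property directly. Given stochastic $f:A\to C$ and $g:B\to C$, define the copairing $[f,g]:A+B\to C$ casewise by $[f,g](c,x)=f(c,a)$ when $x=in_A(a)$ and $[f,g](c,x)=g(c,b)$ when $x=in_B(b)$. This is stochastic because for $x=in_A(a)$ one has $\sum_c[f,g](c,x)=\sum_c f(c,a)\le 1$, and likewise on the $B$-part; the equations $[f,g]\circ\widehat{in_A}=f$ and $[f,g]\circ\widehat{in_B}=g$ follow by unwinding matrix multiplication, and uniqueness holds because every $x\in A+B$ is either some $in_A(a)$ or some $in_B(b)$, so these equations already determine $[f,g]$. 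The one point requiring care here is that the universal property must be verified against \emph{arbitrary} stochastic maps and not merely against encoded functions, since $F_D$ is faithful but far from full.

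For distributivity I would exploit that the monoidal product $A\otimes B=A\times B$ of $\textbf{Srel}_{fn}$ is transported from the cartesian product of $\textbf{FinSet}$, and that $F_D$ is strong monoidal: a direct computation gives $\hat h\otimes\hat k=\widehat{h\times k}$, so the coherence maps are themselves encoded bijections. Consequently the canonical distributivity morphism $(A\otimes C)\oplus(B\otimes C)\to(A\oplus B)\otimes C$ determined by $[\,\widehat{in_A}\otimes 1_C,\ \widehat{in_B}\otimes 1_C\,]$ is precisely $F_D$ applied to the canonical distributivity map $(A\times C)+(B\times C)\to(A+B)\times C$ of $\textbf{FinSet}$. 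Since $\textbf{FinSet}$ is a distributive category this latter map is an isomorphism, and functors preserve isomorphisms, so its image under $F_D$ is the required isomorphism $A\otimes C\oplus B\otimes C\cong(A\oplus B)\otimes C$.

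The main obstacle, such as it is, is bookkeeping rather than conceptual: one must confirm that both $\oplus$ and $\otimes$ act on morphisms as transports along the \emph{same} strong monoidal, coproduct-preserving functor $F_D$, so that the distributivity isomorphism of $\textbf{FinSet}$ genuinely corresponds to the morphism built from the injections and the tensor in $\textbf{Srel}_{fn}$. Once this coherence is pinned down, distributivity is inherited for free from the distributivity of $\textbf{FinSet}$, and no probabilistic computation beyond the elementary stochasticity check above is needed.
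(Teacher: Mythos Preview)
Your overall strategy is sound, but the monadic framing has a genuine gap: the subdistribution monad is \emph{not} a monad on $\textbf{FinSet}$. For a finite nonempty set $B$, the set $DB$ of subprobability distributions on $B$ is an uncountable subset of $[0,1]^B$, so $D$ does not restrict to an endofunctor of $\textbf{FinSet}$. Consequently you cannot literally invoke Section~\ref{COPRODUCT INDUCED KLEISLI}, which requires a monad on the base category. The correct statement is that ${\bf Srel}_{fn}$ is the full subcategory on finite objects of the Kleisli category of the subdistribution monad on ${\bf Set}$; but then the coproduct argument needs an extra step to show this full subcategory is closed under the inherited coproducts. Fortunately your self-contained verification of the universal property is correct and does not depend on the monadic picture, so the coproduct half survives. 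Likewise, your distributivity argument only uses the identity-on-objects functor $h\mapsto\hat h:\textbf{FinSet}\to{\bf Srel}_{fn}$ and the facts that it is strong monoidal and preserves finite coproducts; none of this requires $D$ to be a monad on $\textbf{FinSet}$, and the argument goes through.

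By contrast, the paper's proof is entirely elementary: it writes down the disjoint union and the stochastic injection matrices explicitly, asserts the universal property, and for distributivity simply observes that the canonical map $d=[i_1\otimes C,\,i_2\otimes C]$ is seen to be an isomorphism by precomposing with the injections and using the coproduct universal property. Your transport-along-a-functor argument for distributivity is more conceptual and arguably cleaner---it isolates exactly why the result holds (distributivity in $\textbf{FinSet}$ plus a structure-preserving functor)---whereas the paper's approach stays at the level of matrix entries. Either is acceptable; just drop or repair the ``monad on $\textbf{FinSet}$'' claim.
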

 \begin{proof}
  The coproduct of two objects is given by their disjoint union,
 $A\oplus B=(A\times {1})\cup (B\times {2})$. Injections are given by the following stochastic
 maps: $i_1 : A \to A\oplus B$ and $i_2 : B \to A\oplus B$, where
 \[ i_{1}((x,j),y)=\left \{ \begin{array}{ll}
1& \textrm{if $j=1$ and $x=y$}\\
0& \textrm{otherwise.}
\end{array} \right.\]

  and
  \[ i_{2}((x,j),y)=\left \{ \begin{array}{ll}
1& \textrm{if $j=2$ and $x=y$}\\
0& \textrm{otherwise.}
\end{array} \right.\]
 It is easy to verify that these satisfy the required universal
 property. The natural map
 \[ d_{A,B,C}:A\otimes C\oplus B\otimes C \to (A\oplus B)\otimes C
 \]
 is defined as $[i_1\otimes C, i_2\otimes C]$. The map $d$ is easily seen to
 be a natural isomorphism by precomposing with injections $i_1$, $i_2$ and using the universal property for coproducts.
 \end{proof}

\begin{definition}\label{STRONG FUNCTOR ON SREL}
  Let
 $\Psi:{\bf FinSet}\to{\bf Srel}_{fn}$ be the functor that is the identity on objects, and
defined on morphisms by
 \[ \Psi(f)(x,y) = \left\{\begin{array}{ll}1&\mbox{if $x=y$}\\
                                           0&\mbox{otherwise.}
                   \end{array}\right.
 \]
\end{definition}
\begin{remark}\label{strong functor on srel preserves coproducts}
The functor $\Psi$ is strong monoidal and preserves coproducts.
\end{remark}

\begin{theorem}\label{srel-theorem}
The choice $\cB={\bf FinSet}$, $\cC={\bf FinSet}$,
$\cD={\bf Srel}_{fn}$ with $\Phi=id$ and $\Psi$ as in
Definition~\ref{STRONG FUNCTOR ON SREL}. Let $\Gamma$ be the class of all
 finite product cones in $\cD^{op}$. This choice satisfies all the properties required by the
Theorem~\ref{the theorem MAIN THEOREM OF CHAPTER 4}. Therefore, this gives an abstract model of the quantum lambda
calculus.
\end{theorem}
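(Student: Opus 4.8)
The plan is simply to check, one by one, that the choice $\cB=\cC={\bf FinSet}$ with Cartesian monoidal structure $(\times,1)$, $\cD=({\bf Srel}_{fn},\otimes,I)$, $\Phi=\mathrm{id}$ and $\Psi$ as in Definition~\ref{STRONG FUNCTOR ON SREL} satisfies every hypothesis of Theorem~\ref{the theorem MAIN THEOREM OF CHAPTER 4}; once this is done the conclusion is immediate. Most of the hypotheses are either standard facts about ${\bf FinSet}$ or are trivial because $\Phi$ is the identity functor. Specifically: ${\bf FinSet}$ has finite products and finite coproducts (disjoint unions), and it is a distributive category, so products distribute over coproducts; $({\bf FinSet},\times,1)$ is affine since $1$ is terminal (Definition~\ref{AFFINE CATEGORY}); and ${\bf Srel}_{fn}$ is symmetric monoidal with $A\otimes B=A\times B$, with finite coproducts satisfying distributivity by Lemma~\ref{DA}. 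The functor $\Phi=\mathrm{id}$ is trivially strong monoidal, full and faithful, and coproduct preserving, while $\Psi$ is strong monoidal and coproduct preserving by Remark~\ref{strong functor on srel preserves coproducts} and, being the identity on objects, is essentially surjective on objects.

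Two of the hypotheses deserve a genuine verification. The first is the multiplicative-kernel condition of Lemma~\ref{MULTIPLICATIVE KERNEL CONDITIONS}: for $b\in\cB$ and $c,c'\in\cC$ one needs a natural isomorphism $\cC(\Phi(b),c)\times\cC(\Phi(b),c')\cong\cC(\Phi(b),c\otimes c')$. Since $\Phi=\mathrm{id}$ and the tensor on $\cC={\bf FinSet}$ is the Cartesian product, this reduces to ${\bf FinSet}(b,c)\times{\bf FinSet}(b,c')\cong{\bf FinSet}(b,c\times c')$, which is exactly the universal property of the product in ${\bf FinSet}$, and the bijection is manifestly natural in all arguments. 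The second is the requirement on the class $\Gamma$: here $\Gamma$ is taken to be all finite product cones in $\cD^{op}$, so it automatically contains all finite product cones, and what must be checked is that each such cone is preserved by the opposite of the tensor functor of $\cD$ (cf.\ Remark~\ref{GAMMA CLASS OF CONES}).

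This last point is the one requiring the most care, and I regard it as the only real obstacle, albeit a mild one: a finite product cone in $\cD^{op}$ is exactly a finite coproduct cone in $\cD$, and the opposite of the functor $-\otimes B:\cD\to\cD$ preserves it precisely when $-\otimes B$ preserves finite coproducts in $\cD$. But in ${\bf Srel}_{fn}$ we have $-\otimes B=-\times B$, and the distributivity isomorphism $(A\oplus A')\otimes B\cong (A\otimes B)\oplus(A'\otimes B)$ of Lemma~\ref{DA}, together with the fact that the initial (empty) object is absorbing for $\otimes$, says exactly that $-\otimes B$ preserves finite coproducts. Hence every cone in $\Gamma$ is preserved by the opposite tensor functor, as required. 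With all hypotheses verified, Theorem~\ref{the theorem MAIN THEOREM OF CHAPTER 4} applies and yields an abstract model of the quantum lambda calculus, completing the proof.
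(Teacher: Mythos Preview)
Your proof is correct and follows essentially the same approach as the paper's own proof, which is the terse ``By Lemma~\ref{DA} and Remark~\ref{strong functor on srel preserves coproducts}.'' You have simply expanded this into an explicit verification of each hypothesis of Theorem~\ref{the theorem MAIN THEOREM OF CHAPTER 4}, including the multiplicative-kernel condition (trivial here since the tensor on $\cC$ is cartesian) and the condition on $\Gamma$ (handled via distributivity from Lemma~\ref{DA}), which the paper leaves to the reader.
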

\begin{proof}By Lemma~\ref{DA}
 and Remark~\ref{strong functor on srel preserves coproducts}.
\end{proof}
\begin{remark}
 Such a model could be considered to be a
concrete model of ``probabilistic lambda calculus", i.e., of
higher-order probabilistic computation.
\end{remark}

 \begin{remark}
 By Lemma~\ref{DA}, the functor $-\otimes X$ preserves
 finite coproducts for all $X\in {\bf Srel}_{fn}$. It is possible to show
that
 this functor in fact preserves all existing colimits (due to the
 natural isomorphism $A\otimes X \cong A\oplus A\oplus\ldots\oplus A$,
$|X|$ times
 for any fixed $X$). Therefore, in Theorem~\ref{srel-theorem}, we could have
 alternatively defined $\Gamma$ to be the class of all limit cones. In
 fact, any class of limit cones that contains at least all finite
 product ones would do. Each such choice yields an a priori different
 model.
\end{remark}

\section{The category ${\bf Q}''$ and the functors $\Phi$ and $\Psi$}
\label{Q''CATEGORY AND FUNCTORS PSI AND PHI}\label{Q double prime AND CATEGORY AND FUNCTOR PSI}
Recall the definition of the
category ${\bf Q}$ of superoperators from Section~\ref{SUPEROPERATORS}.
 In this section, we discuss a category ${\bf Q}''$
 related to superoperators $\textbf{Q}$, together with functors ${\bf FinSet}\stt{\Phi}{\bf Q}''\stt{\Psi}\textbf{Q}$. Here, the goal is to choose ${\bf Q}''$ and the functors $\Phi$ and $\Psi$
 carefully so as to satisfy the requirement of Theorem~\ref{the theorem MAIN THEOREM OF CHAPTER 4}.

Recall the definition of the free affine monoidal category $\cF wm(\cK)$ from Section~\ref{FREE AFFINE MONOIDAL CATEGORY}.
We apply this universal construction to situation where $\cK$ is a discrete category. For later convenience, we let $\cK$ be the discrete category
with finite dimensional Hilbert spaces as objects.
Then $\cF wm(\cK)$ has sequences of Hilbert spaces as objects and dualized, compatible, injective functions as arrows:
\begin{itemize}
\item[-] objects: finite sequences of finite dimensional Hilbert spaces
\item[-] a morphism from $\{V_1, \ldots, V_n\}$ to $\{W_1, \ldots, W_m\}$ is given by an
injective function $f:\{1,\ldots, m\}\rightarrow \{1,\ldots, n\}$, such that for all $i,$
 $V_{f(i)} = W_i$.
\end{itemize}
\begin{remark}
Since the objects of ${\bf Q}$ and $\cF wm(\cK)$ are finite sequences of finite-dimensional Hilbert
spaces, and there are only countably many finite-dimensional Hilbert
spaces up to isomorphism, we may w.l.o.g. assume that ${\bf Q}$ and $\cF wm(\cK)$ are small categories.
\end{remark}

Now consider the identity-on-objects inclusion functor $F:\cK\rightarrow \cQ'_s$ where $\cQ'_s$ is the category of simple trace-preserving superoperator defined in Section~\ref{SUPEROPERATORS}. Since $\cQ'_s$ is affine, by Proposition~\ref{FREE WEEK SYM MON} there exists a unique (up to natural isomorphism) strong monoidal functor $\hat{F}$ such that:
$$\xymatrix@=25pt{
\cK\ar[d]_{I}\ar[r]^{F}&\cQ'_s\\
\cF wm(\cK)\ar[ru]_{\hat{F}}&
}$$

\begin{remark}
\rm
This reveals the purpose of using the equality instead of
 $\leq$ in the definition of a trace-preserving superoperator
 (Definition~\ref{TRACE PRESERVING CATEGORY}). When the codomain is the unit,
 there is only one map $f(\rho) = \tr(\rho)$, and therefore $\cQ'_s$ is
 affine.
\end{remark}

\begin{remark}\label{IN AND E EMBEDDING}
\rm

 By definition, $\textbf{Q}_s$ is a full subcategory of $\textbf{Q}$, and the
 inclusion functor $In:\textbf{Q}_s\rightarrow \textbf{Q}$ is strong monoidal.
 Also, since every trace preserving superoperator is trace non-increasing,
 $\cQ'_s$ is a subcategory of $\textbf{Q}_s$, and the inclusion functor $E:\cQ'_s\rightarrow \textbf{Q}_s$
 is strong monoidal as well.
\end{remark}

Then we apply the machinery of Proposition~\ref{MONOIDAL FUNCTOR PSI} to the functor:
$$\cF wm(\cK)\stackrel{\hat{F}}\rightarrow\cQ'_s\stackrel{E}\rightarrow \textbf{Q}_s\stackrel{In}\rightarrow \textbf{Q}.$$
where $In$ and $E$ are as defined in Remark~\ref{IN AND E EMBEDDING}.
\begin{definition}\label{DEFINITION OF THE FUNCTOR PSI}
Let $\textbf{Q}''=(\cF wm(\cK))^{+}$ and let $\Psi$ be the unique finite
coproduct preserving functor making the following diagram
commute:
\begin{equation}
\label{THE FUNCTOR PSI}
\xymatrix@=25pt{
\cF wm(\cK)\ar[d]_{I}\ar[r]^{\hat{F}}&\cQ'_s\ar[r]^{E}&\textbf{Q}_s\ar[r]^{In}&\textbf{Q}&\\
(\cF wm(\cK))^{+}\ar[rrru]_{\Psi}&&&
}
\end{equation}
Note that such a functor exists by Proposition~\ref{COCOMPLETION OF A CATEGORY}, and it is strong monoidal by Proposition~\ref{MONOIDAL FUNCTOR PSI}.
\end{definition}

\begin{remark}
\rm
Since
$$\Psi\{\{V_{i}^{a}\}_{i\in[n_a]}\}_{a\in A}=\coprod_{a\in A}\{(V_1^a\otimes\ldots\otimes V_{n_a}^a)_{*}\}_{*\in 1}$$
the functor $\Psi$ is essentially onto objects. Specifically,
 given any object $\{V_a\}_{a\in A}\in |{\bf Q}|$, we can choose a preimage
 (up to isomorphism) as follows:
\begin{equation}
\Psi\{\{V_{i}^{a}\}_{i\in[1]}\}_{a\in A}=\coprod_{a\in A}\{(V_1^a)_{*}\}_{*\in 1}\cong\{V_a\}_{a\in A}.
\end{equation}
\end{remark}

Here is the full picture of categories and functors:
$$\xymatrix@=25pt{
\cK\ar[r]_{F}\ar[d]_{I}&\cQ'_s\ar[r]^{E}&{\bf Q}_s\ar[d]^{In}\\
\cF wm(\cK)\ar[d]_{I}\ar[ru]_{\hat{F}}&&\textbf{Q}\\
(\cF wm(\cK))^{+}\ar[rru]_{\Psi}
}$$

 \begin{remark}\label{quequeco}
 Since $Fwm(\cK)$ is an affine category and ${\bf Q}''=Fwm(\cK)^{+}$, let us consider the functor $$\Phi:{\bf Finset}\rightarrow{\bf Q}''$$ defined by Lemma~\ref{fully-faithful strong monoidal functor PHI}.
\end{remark}
\begin{theorem}\label{THE MODEL}
The choice $\cB={\bf FinSet}$, $\cC={\bf Q}''$,
$\cD={\bf Q}$ with the functors $\Phi$ as in Remark~\ref{quequeco} and $\Psi$ as in Definition~\ref{DEFINITION OF THE FUNCTOR PSI} satisfies all the properties required by Theorem~\ref{the theorem MAIN THEOREM OF CHAPTER 4}.
\end{theorem}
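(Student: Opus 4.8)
The plan is to verify, one by one, the eight hypotheses of Theorem~\ref{the theorem MAIN THEOREM OF CHAPTER 4} for the data $\cB={\bf FinSet}$, $\cC={\bf Q}''=(\cF wm(\cK))^{+}$, $\cD={\bf Q}$, and the functors $\Phi$, $\Psi$. Most of these are immediate consequences of results already established, so the proof is mainly an exercise in assembling earlier propositions; the only genuinely new content is the multiplicative-kernel identity, which I would treat last.

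First I would dispatch the structural conditions on the categories. The symmetric monoidal condition holds because ${\bf FinSet}$ has finite products, ${\bf Q}$ is symmetric monoidal (established in the chapter on completely positive maps), and ${\bf Q}''=(\cF wm(\cK))^{+}$ is symmetric monoidal by Proposition~\ref{MONO FINITE COMPL}, since $\cF wm(\cK)$ is. For coproducts and distributivity: ${\bf FinSet}$ has finite coproducts and is distributive; ${\bf Q}''$ has finite coproducts by the coproduct lemma for $\cC^{+}$ and is distributive by the remark following Proposition~\ref{MONO FINITE COMPL}; and ${\bf Q}$ has finite coproducts, with the isomorphism $(A\oplus B)\otimes C\cong A\otimes C\oplus B\otimes C$ inherited from the biproduct structure of ${\bf CPM}$ (the comparison map is built from the trace-preserving injections, hence lies in ${\bf Q}$). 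The remaining structural point is that $\cC={\bf Q}''$ is affine: since $\cF wm(\cK)$ is affine, its unit $I$ is terminal, and a morphism in $(\cF wm(\cK))^{+}$ from any family $\{V_a\}_{a\in A}$ to $I=\{I_*\}_{*\in 1}$ consists of the unique function $A\to 1$ together with the unique morphisms $V_a\to I$; hence $I$ is terminal in ${\bf Q}''$.

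The conditions on the functors are supplied by the construction itself. Lemma~\ref{fully-faithful strong monoidal functor PHI} gives at once that $\Phi$ is strong monoidal, full and faithful, and coproduct-preserving. For $\Psi$, Definition~\ref{DEFINITION OF THE FUNCTOR PSI} already records that it is strong monoidal by Proposition~\ref{MONOIDAL FUNCTOR PSI} (whose hypothesis, distributivity of tensor over coproducts in ${\bf Q}$, is exactly the condition verified above) and coproduct-preserving by Proposition~\ref{COCOMPLETION OF A CATEGORY}, while the remark after that definition shows $\Psi$ is essentially surjective on objects via the chosen preimages exhibited there. I would also note that the class $\Gamma$ of all finite-product cones in ${\bf Q}^{op}$ is admissible: these cones are preserved by the opposite tensor functor precisely because tensor distributes over coproducts in ${\bf Q}$, as required by Remark~\ref{GAMMA CLASS OF CONES}.

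The heart of the proof, and the step I expect to be the main obstacle, is the multiplicative-kernel identity $\cC(\Phi(b),c)\times\cC(\Phi(b),c')\cong\cC(\Phi(b),c\otimes c')$ of Lemma~\ref{MULTIPLICATIVE KERNEL CONDITIONS}; the whole point of introducing ${\bf Q}''$ was to arrange it. Here I would compute the relevant hom-sets explicitly. Writing $\Phi(b)=\{I\}_{a\in b}$ and $c=\{c_d\}_{d\in D}$, a morphism $\Phi(b)\to c$ in $(\cF wm(\cK))^{+}$ is a function $\phi:b\to D$ together with morphisms $I\to c_{\phi(a)}$ in $\cF wm(\cK)$; but since $I$ is the empty sequence, $\cF wm(\cK)(I,c_d)$ is a singleton when $c_d=I$ and empty otherwise. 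Hence, setting $D_I=\{d\in D\mid c_d=I\}$, one gets $\cC(\Phi(b),c)\cong D_I^{\,b}$, the set of functions $b\to D_I$ (consistent with the full faithfulness of $\Phi$, where $D_I=b'$). Writing $c'=\{c'_e\}_{e\in E}$ and using that the tensor of $\cF wm(\cK)$ is concatenation, a component $c_d\otimes c'_e$ equals $I$ iff both $c_d=I$ and $c'_e=I$, so the empty-component index set of $c\otimes c'$ is $D_I\times E_I$. The exponential law in ${\bf Set}$ then yields $\cC(\Phi(b),c\otimes c')\cong(D_I\times E_I)^{\,b}\cong D_I^{\,b}\times E_I^{\,b}\cong\cC(\Phi(b),c)\times\cC(\Phi(b),c')$. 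The remaining work is to check that this bijection is natural in $c$ and $c'$, which follows from the universal property of products in ${\bf Set}$ and functoriality of the constructions; verifying this naturality carefully is the most delicate part of the argument. With this last condition in hand, all hypotheses of Theorem~\ref{the theorem MAIN THEOREM OF CHAPTER 4} are met, completing the proof.
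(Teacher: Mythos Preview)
Your proposal is correct and follows essentially the same approach as the paper: the paper's own proof is the single line ``By relevant propositions from Section~\ref{Q''CATEGORY AND FUNCTORS PSI AND PHI},'' and your argument simply spells out which propositions verify which hypothesis. In fact you supply more than the paper does, since you give an explicit computation of the multiplicative-kernel identity (using that $\cF wm(\cK)(\{\},W)=\emptyset$ unless $W=\{\}$, and that concatenation of sequences is empty iff both factors are), whereas the paper only remarks in Section~\ref{A STRONGLY COMONAD} that this condition will be arranged by the construction of ${\bf Q}''$ and never writes out the verification.
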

\begin{proof}
By relevant propositions from Section~\ref{Q''CATEGORY AND FUNCTORS PSI AND PHI}.
\end{proof}

\section{A concrete model}
\begin{theorem}
 Let ${\bf Q}$ and ${\bf Q}''$ be defined as in Sections~\ref{SUPEROPERATORS} and~\ref{Q double prime AND CATEGORY AND FUNCTOR PSI}. Let $\Gamma$ be the class of all finite product cones in $\cD^{op}$ where $\cD={\bf Q}$. Then

\[\xymatrix{
[{\bf FinSet}^{op},{\bf Set}]\ar@<1ex>[rr]^{Lan_{\Phi}}&&[{\bf Q}''^{op},{\bf Set}]\ar@<1ex>[rr]^{F}\ar@<1ex>[ll]^{\Phi^{*}}_{\bot}&&[{\bf Q}^{op},{\bf Set}]_{\Gamma}\ar@<1ex>[ll]^{G}_{\bot}}\]
  forms a concrete model of the quantum lambda calculus.
 \end{theorem}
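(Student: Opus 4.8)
The plan is to recognize that nearly all of the substantive work has already been carried out, so that the statement follows by assembling three earlier results: Theorem~\ref{THE MODEL}, the abstract-model Theorem~\ref{the theorem MAIN THEOREM OF CHAPTER 4}, and the construction of the functor $H$ from Section~\ref{CANONICAL CHOICE}. Recall from Definition~\ref{DEF MODEL OF QUANTUM LAMBDA CALCULUS} that a concrete model of the quantum lambda calculus is an abstract model together with a full and faithful, tensor- and coproduct-preserving embedding of $\mathbf{Q}$ into the Kleisli category generated by $T$. Thus the proof splits cleanly into two tasks: first producing the abstract model, and then exhibiting this embedding.

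First I would verify that the hypotheses of Theorem~\ref{the theorem MAIN THEOREM OF CHAPTER 4} hold for the data $\cB=\mathbf{FinSet}$, $\cC=\mathbf{Q}''$, $\cD=\mathbf{Q}$, with $\Phi$ as in Remark~\ref{quequeco} and $\Psi$ as in Definition~\ref{DEFINITION OF THE FUNCTOR PSI}. This is precisely the content of Theorem~\ref{THE MODEL}, which checks affineness of $\mathbf{Q}''$, strong monoidality and coproduct preservation of $\Phi$ and $\Psi$, full faithfulness of $\Phi$, essential surjectivity of $\Psi$, and the multiplicative-kernel isomorphism. It then remains only to confirm that the chosen $\Gamma$ --- the class of all finite product cones in $\mathbf{Q}^{op}$ --- is admissible in the sense required by Theorem~\ref{the theorem MAIN THEOREM OF CHAPTER 4}: it must contain all finite product cones (which it does by definition) and be preserved by the opposite tensor functor. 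The latter amounts to $-\otimes X:\mathbf{Q}\to\mathbf{Q}$ preserving finite coproducts, which holds because $\mathbf{Q}$ has finite coproducts over which the tensor distributes. With these verifications, Theorem~\ref{the theorem MAIN THEOREM OF CHAPTER 4} yields that the displayed pair of adjunctions is an abstract model.

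Next I would supply the extra ingredient that upgrades the abstract model to a concrete one, namely a full and faithful functor $H:\mathbf{Q}\to[\mathbf{Q}''^{op},\mathbf{Set}]_{T}$ into the Kleisli category of $T=G\circ F$, preserving $\otimes$ and $\oplus$. This is exactly the functor $H$ constructed in Section~\ref{CANONICAL CHOICE}. Its definition requires $\Psi$ to be essentially surjective on objects, which holds here through the choice of preimages $\Psi\{\{V_i^a\}_{i\in[1]}\}_{a\in A}\cong\{V_a\}_{a\in A}$. The three structural properties are then inherited from the general development: $H$ is fully faithful because the comparison functor $C$ and the coproduct-preserving Yoneda embedding $Y_{\Gamma}$ are; $H$ is strong monoidal, hence preserves $\otimes$; and $H$ preserves finite coproducts by the argument of Section~\ref{H PRESERVES COPRODUCTS}, which reduces coproduct preservation to the continuity of each $CA$ with respect to $\Gamma$.

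The main obstacle is not any single hard computation but the delicate interplay of constraints on the choice of $\Gamma$. Coproduct preservation of $H$ hinges on the chain of natural isomorphisms $[\mathbf{Q}''^{op},\mathbf{Set}]_{T}(H{-},A)\cong[\mathbf{Q}^{op},\mathbf{Set}]_{\Gamma}(Y_{\Gamma}{-},CA)\cong CA$ coming from the Yoneda Lemma (Theorem~\ref{YONEDA LEMMA}), together with the observation that the continuity of $CA$ with respect to $\Gamma$ --- guaranteed precisely because $\Gamma$ includes all finite product cones --- is what forces $[\mathbf{Q}''^{op},\mathbf{Set}]_{T}(H{-},A)$ to carry finite coproducts to products. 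At the same time, $\Gamma$ must be compatible with Day's reflection theorem so that the reflection functor remains strong monoidal, which is exactly the requirement (Remark~\ref{GAMMA CLASS OF CONES}) that the cones of $\Gamma$ be preserved by the opposite tensor functor. The single choice of $\Gamma$ as the class of all finite product cones in $\mathbf{Q}^{op}$ is what simultaneously satisfies both demands, and confirming that this one choice threads the needle is the crux of the argument.
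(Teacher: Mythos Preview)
Your proposal is correct and follows the same approach as the paper: invoke Theorem~\ref{THE MODEL} and Theorem~\ref{the theorem MAIN THEOREM OF CHAPTER 4} for the abstract model, then use the functor $H$ of Section~\ref{CANONICAL CHOICE} for the full and faithful, tensor- and coproduct-preserving embedding $\mathbf{Q}\hookrightarrow[\mathbf{Q}''^{op},\mathbf{Set}]_T$ required by Definition~\ref{DEF MODEL OF QUANTUM LAMBDA CALCULUS}. The paper's own proof is extremely terse---it literally cites only the two theorems---so your write-up is in fact more explicit than the original about why the model is \emph{concrete} and not merely abstract, and about why the particular choice of $\Gamma$ works on both the Day-reflection side and the coproduct-preservation side.
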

\begin{proof}
 The proof is by Theorem~\ref{THE MODEL}, by and by Theorem~\ref{the theorem MAIN THEOREM OF CHAPTER 4}.
 \end{proof}
\chapter{Conclusions and future work}

In the first part of this thesis, we established that the partially
 traced categories, in the sense of Haghverdi and Scott, are precisely
 the monoidal subcategories of totally traced categories.  This was
 proved by a partial version of Joyal, Street, and Verity's
 ``Int''-construction, and by considering a strict symmetric compact
 closed version of Freyd's paracategories.

 We also introduced some new examples of partially traced categories,
 in connection with some standard models of quantum computation such
 as completely positive maps and superoperators.

 One question that we did not answer is whether specific partially
 traced categories can always be embedded in totally traced categories
 in a ``natural'' way. For example, the category of finite dimensional
 vector spaces, with the biproduct $\oplus$ as the tensor, carries a  partial trace. By our proof, it follows that it can be
 faithfully embedded in a totally traced category. However, we do not
 know any concrete ``natural'' example of such a totally traced category
 (i.e., other than the free one constructed in our proof) in which it
 can be faithfully embedded.

 In the second part, we constructed mathematical (semantical) models
 of higher-order quantum computation, and more specifically, for the
 quantum lambda calculus of Selinger and Valiron. The central idea of
 our model construction was to apply the presheaf construction to a
 sequence of three categories and two functors, and to find a set of
 sufficient conditions for the resulting structure to be a valid
 model. The construction depends crucially on properties of presheaf
 categories, using Day's convolution theory, Lambek's modified Yoneda
 embedding, and Kelly and Freyd's notion of continuity of functors.

 We then identified specific base categories and functors which
 satisfy these abstract conditions, based on the category of
 superoperators. Thus, our choice of base categories ensures that the
 resulting model has the ``correct'' morphisms at base types, whereas
 the presheaf construction ensures that it has the ``correct''
 structure at higher-order types.

 Our work has concentrated solely on the existence of such a model.
 One question that we have not yet addressed is specific properties of
 the interpretation of quantum lambda calculus in this model. It would
 be interesting, in future work, to analyze whether this particular
 interpretation yields new insights into the nature of higher-order
 quantum computation, or to use this model to compute properties of
 programs.

\end{document}